
\documentclass[11pt,reqno,tbtags,a4paper]{amsart}

\title[Fringe trees,  branching processes and $m$-ary search trees]
{Fringe trees, Crump--Mode--Jagers branching processes and $m$-ary search trees}

\date{14 January, 2016}

\usepackage{amssymb}
\usepackage{url}
\usepackage[square,numbers]{natbib}
\bibpunct[, ]{[}{]}{;}{n}{,}{,}

\author{Cecilia Holmgren}
\thanks{CH partly supported by the Swedish Research Council}

\address{Department of Mathematics, Uppsala University, PO Box 480,
SE-751~06 Uppsala, Sweden}
\email{cecilia.holmgren@math.uu.se}

\author{Svante Janson}
\thanks{SJ partly supported by the Knut and Alice Wallenberg Foundation}
\address{Department of Mathematics, Uppsala University, PO Box 480,
SE-751~06 Uppsala, Sweden}
\email{svante.janson@math.uu.se}
\urladdr{http://www.math.uu.se/svante-janson}

\keywords{Random trees; Fringe trees; Extended fringe trees;
$m$-ary
search trees; Random recursive trees; Preferential attachment trees;
Fragmentation trees; Protected nodes; Clades; Branching processes}

\subjclass[2010]
{Primary: 60C05 
Secondary: 05C05; 05C80; 60J80; 60J85; 68P05; 68P10}

\overfullrule 0pt 

\numberwithin{equation}{section}

\renewcommand\le{\leqslant}
\renewcommand\ge{\geqslant}



\allowdisplaybreaks


\theoremstyle{plain}
\newtheorem{theorem}{Theorem}[section]
\newtheorem*{theorem*}{Theorem}
\newtheorem{lemma}[theorem]{Lemma}

\newtheorem{corollary}[theorem]{Corollary}

\theoremstyle{definition}
\newtheorem{example}[theorem]{Example}
\newtheorem{definition}[theorem]{Definition}
\newtheorem{problem}[theorem]{Problem}
\newtheorem{remark}[theorem]{Remark}

\theoremstyle{remark}

\newenvironment{romenumerate}[1][-10pt]{
\addtolength{\leftmargini}{#1}\begin{enumerate}
 \renewcommand{\labelenumi}{\textup{(\roman{enumi})}}%
 \renewcommand{\theenumi}{\textup{(\roman{enumi})}}%
 }{\end{enumerate}}

\newcounter{oldenumi}
{\setcounter{oldenumi}{\value{enumi}}
\begin{romenumerate} \setcounter{enumi}{\value{oldenumi}}}
{\end{romenumerate}}

\newcounter{thmenumerate}
\newenvironment{thmenumerate}
{\setcounter{thmenumerate}{0}%
 \def\item{\par
 \refstepcounter{thmenumerate}\textup{(\roman{thmenumerate})\enspace}}
}
{}

\newenvironment{xenumerate}[1][-10pt]{
\addtolength{\leftmargini}{#1}\begin{enumerate}}{\end{enumerate}}


\newcommand\pfitemx[1]{\par#1:}
\newcommand\pfitemref[1]{\pfitemx{\ref{#1}}}


\newcommand{\refT}[1]{Theorem~\ref{#1}}
\newcommand{\refC}[1]{Corollary~\ref{#1}}
\newcommand{\refL}[1]{Lemma~\ref{#1}}
\newcommand{\refR}[1]{Remark~\ref{#1}}
\newcommand{\refS}[1]{Section~\ref{#1}}
\newcommand{\refSS}[1]{Section~\ref{#1}}
\newcommand{\refP}[1]{Problem~\ref{#1}}
\newcommand{\refD}[1]{Definition~\ref{#1}}
\newcommand{\refE}[1]{Example~\ref{#1}}

\newcommand{\refApp}[1]{Appendix~\ref{#1}}




\newcommand\REM[1]{{\raggedright\texttt{[#1]}\par\marginal{XXX}}}



\begingroup
  \count255=\time
  \divide\count255 by 60
  \count1=\count255
  \multiply\count255 by -60
  \advance\count255 by \time
  \ifnum \count255 < 10 \xdef\klockan{\the\count1.0\the\count255}
  \else\xdef\klockan{\the\count1.\the\count255}\fi
\endgroup

\newcommand\nopf{\qed}   



\newcommand{\sumk}{\sum_{k=0}^\infty}

\newcommand{\sumn}{\sum_{n=0}^\infty}
\newcommand{\sumni}{\sum_{n=1}^\infty}
\newcommand{\sumji}{\sum_{j=1}^\infty}
\newcommand{\sumii}{\sum_{i=1}^\infty}
\newcommand{\sumin}{\sum_{i=1}^n}

\newcommand{\sumjb}{\sum_{j=1}^b}

\newcommand{\prodji}{\prod_{j=1}^\infty}

\newcommand{\prodjn}{\prod_{j=1}^n}
\newcommand{\prodko}{\prod_{k=0}^\infty}

\newcommand\set[1]{\ensuremath{\{#1\}}}
\newcommand\bigset[1]{\ensuremath{\bigl\{#1\bigr\}}}
\newcommand\Bigset[1]{\ensuremath{\Bigl\{#1\Bigr\}}}

\newcommand\xpar[1]{(#1)}
\newcommand\bigpar[1]{\bigl(#1\bigr)}
\newcommand\Bigpar[1]{\Bigl(#1\Bigr)}

\newcommand\lrpar[1]{\left(#1\right)}

\newcommand\xcpar[1]{\{#1\}}
\newcommand\bigcpar[1]{\bigl\{#1\bigr\}}
\newcommand\Bigcpar[1]{\Bigl\{#1\Bigr\}}

\newcommand\bigabs[1]{\bigl|#1\bigr|}
\newcommand\Bigabs[1]{\Bigl|#1\Bigr|}

\newcommand\lrabs[1]{\left|#1\right|}
\def\rompar(#1){\textup(#1\textup)}    
\newcommand\xfrac[2]{#1/#2}

\newcommand\parfrac[2]{\lrpar{\frac{#1}{#2}}}

\newcommand\Bigparfrac[2]{\Bigpar{\frac{#1}{#2}}}

\def\xexp(#1){e^{#1}}
\newcommand\ceil[1]{\lceil#1\rceil}
\newcommand\floor[1]{\lfloor#1\rfloor}

\newcommand\ntoo{\ensuremath{{n\to\infty}}}

\newcommand\ktoo{\ensuremath{{k\to\infty}}}
\newcommand\mtoo{\ensuremath{{m\to\infty}}}
\newcommand\Mtoo{\ensuremath{{M\to\infty}}}

\newcommand\ttoo{\ensuremath{{t\to\infty}}}
\newcommand\xtoo{\ensuremath{{x\to\infty}}}

\newcommand\downto{\searrow}
\newcommand\upto{\nearrow}
\newcommand\punkt{.\spacefactor=1000}    
\newcommand\iid{i.i.d\punkt}    
\newcommand\ie{i.e\punkt}
\newcommand\eg{e.g\punkt}
\newcommand\viz{viz\punkt}
\newcommand\cf{cf\punkt}
\newcommand{\as}{a.s\punkt}

\newcommand\whp{w.h.p\punkt}

\newcommand{\tend}{\longrightarrow}
\newcommand\dto{\overset{\mathrm{d}}{\tend}}
\newcommand\pto{\overset{\mathrm{p}}{\tend}}
\newcommand\asto{\overset{\mathrm{a.s.}}{\tend}}
\newcommand\eqd{\overset{\mathrm{d}}{=}}

\newcommand\bbR{\mathbb R}
\newcommand\bbC{\mathbb C}
\newcommand\bbN{\mathbb N}

\newcommand\bbZ{\mathbb Z}
\newcommand\bbZleo{\mathbb Z_{\le0}}
\newcommand\bbZgeo{\mathbb Z_{\ge0}}

\newcounter{CC}
\newcommand{\CC}{\stepcounter{CC}\CCx} 
\newcommand{\CCx}{C_{\arabic{CC}}}     
\newcommand{\CCdef}[1]{\xdef#1{\CCx}}     
\newcommand{\CCreset}{\setcounter{CC}0} 
\newcounter{cc}

\renewcommand\Re{\operatorname{Re}}

\newcommand\E{\operatorname{\mathbb E{}}}
\renewcommand\P{\operatorname{\mathbb P{}}}

\newcommand\Var{\operatorname{Var}}

\newcommand\Exp{\operatorname{Exp}}
\newcommand\Po{\operatorname{Po}}
\newcommand\Bi{\operatorname{Bi}}
\newcommand\Bin{\operatorname{Bin}}

\newcommand\Geo{\operatorname{Ge_0}}
\newcommand\Gei{\operatorname{Ge_1}}
\newcommand\NBin{\operatorname{NBin}}

\newcommand\ga{\alpha}
\newcommand\gb{\beta}
\newcommand\gd{\delta}

\newcommand\gf{\phi}
\newcommand\gam{\gamma}
\newcommand\gG{\Gamma}
\newcommand\gk{\kappa}
\newcommand\gl{\lambda}
\newcommand\gL{\Lambda}
\newcommand\go{\omega}
\newcommand\gO{\Omega}
\newcommand\gs{\sigma}

\newcommand\gth{\theta}
\newcommand\eps{\varepsilon}

\newcommand\cE{\mathcal E}
\newcommand\cF{\mathcal F}

\newcommand\cI{\mathcal I}

\newcommand\cL{{\mathcal L}}

\newcommand\cP{\mathcal P}
\newcommand\cQ{\mathcal Q}

\newcommand\cS{{\mathcal S}}
\newcommand\cT{{\mathcal T}}

\newcommand\cX{{\mathcal X}}
\newcommand\cY{{\mathcal Y}}

\newcommand\ett[1]{\boldsymbol1\xcpar{#1}}

\newcommand\qw{^{-1}}
\newcommand\qww{^{-2}}

\newcommand\intoi{\int_0^1}
\newcommand\intot{\int_0^t}
\newcommand\intoo{\int_0^\infty}

\newcommand\oi{[0,1]}
\newcommand\ooo{[0,\infty)}

\newcommand\setoi{\set{0,1}}

\newcommand\dd{\,\mathrm{d}}
\newcommand\ddx{\mathrm{d}}
\newcommand\ddd[1]{\frac{\ddx}{\ddx#1}}

\newcommand{\pgf}{probability generating function}

\newcommand\lhs{left-hand side}
\newcommand\rhs{right-hand side}

\newcommand\heir{\txi^*}
\newcommand\Xia{\widehat\Xi(\ga)}
\newcommand\Xib{\widehat\Xi(\gth)}
\newcommand\zgf{Z^\gf}
\newcommand\zpsi{Z^\psi}
\newcommand\friT{\overline{\cT}}
\newcommand\sinT{\widetilde{\cT}}
\newcommand\sinTo{\sinT_{\text{\rm leaf}}}
\newcommand\sinTox[1]{\sinT_{\text{\rm leaf},#1}}
\newcommand\sinTQ{\sinT_{\cQ}}
\newcommand\taux{{\overline\tau}}
\newcommand\tauo{\tau_o}
\newcommand\tauxx{{\tau'}}
\newcommand\tauy[1]{\xi_{(#1)}}
\newcommand\mary{$m$-ary}
\newcommand\mst{\mary{} search tree}
\newcommand\emst{extended \mary{} search tree}
\newcommand\bst{binary search tree}
\newcommand\rrt{random recursive tree}
\newcommand\CMJ{Crump--Mode--Jagers}
\newcommand\CMJbp{\CMJ{} branching process}
\newcommand\CMJp{\CMJ{} process}
\newcommand\mphi{m_\phi}
\newcommand\mpsi{m_\psi}

\newcommand\xToo{\widetilde{\xT}}
\newcommand\mm[1]{^{(#1)}}
\newcommand\y[1]{^{-#1}}
\newcommand\Tyy[2]{T^{#1,-#2}}
\newcommand\Tvk{\Tyy{v}{k}}

\newcommand\xty[1]{\xT^{-#1}}
\newcommand\xtk{\xty{k}}
\newcommand\x{^*}
\newcommand\Tx{T\x}
\newcommand\Txx[1]{\Tyy*{#1}}
\newcommand\Txk{\Txx{k}}

\newcommand\tX{\widetilde X}
\newcommand\tY{\widetilde Y}
\newcommand\tS{\widetilde S}
\newcommand\hXi{\widehat \Xi}

\newcommand\hmu{\widehat\mu}
\newcommand\loghmu{\log\hmu}
\newcommand\tXi{\widetilde \Xi}
\newcommand\txi{\widetilde \xi}
\newcommand\hphi{\hat \phi}
\newcommand\hpsi{\hat \psi}

\newcommand\mx[1]{^{[m]}}
\newcommand\xT{\cF}
\newcommand\nun{n}
\newcommand\numc{n_{\mathsf{mc}}}
\newcommand\pmc{P_{\mathsf{mc}}}
\newcommand\nuxi{n_{\mathsf{x}}}

\newcommand\cPxi{\cP_{\mathsf{x}}}
\newcommand\ts{S}
\newcommand\sL{\mathsf{L}}
\newcommand\sR{\mathsf{R}}
\newcommand\iso{\approx}
\newcommand\voo{V_\infty}
\newcommand\dout{\degree^+}
\newcommand\degree{d} 
\newcommand\dep{h}
\newcommand\Xix[1]{\Xi([0,#1])}
\newcommand\Xit{\Xix{t}}

\newcommand\ott{(t)}
\newcommand\wws[1]{\frac{w_{#1}}{w_{#1}+s}}
\newcommand\prodwk{\prod_{k=0}^Mw_k}
\newcommand\xxi{\bar\xi}
\newcommand\ctt{\cT_t}

\newcommand\cttaux{\cT_\taux}
\newcommand\cpk{\cP_k}

\newcommand\refBP{\ref{BPfirst}--\ref{BPmub}}
\newcommand\refBPphi{\ref{BPfirst}--\ref{BPlast}}
\newcommand\bnu{\boldsymbol{\nu}}
\newcommand\FF{F}
\newcommand\FFx{{}_2F_1}
\newcommand\HG{\mathsf{HG}}
\newcommand\fall[2]{(#1)_{#2}} 
\newcommand\rise[2]{\langle#1\rangle_{#2}} 
\newcommand\gll{\gl_s+\gl_g}
\renewcommand\SS{S}
\newcommand\tSS{\tS}
\newcommand\gax{a}
\newcommand\gbx{b}
\newcommand\gaxx{\ga^*}
\newcommand\hgaxx{\widehat\ga^*}
\newcommand\tgaxx{\widetilde\ga^*}
\newcommand\gaxxx{\ga^{**}}
\newcommand\chgaxx{\check\ga^*}
\newcommand\chga{\check\ga}
\newcommand\chgam{\check\gamma}
\newcommand\chm{\check m}

\newcommand\htx{\widehat T^1}
\newcommand{\euler}[2]{\genfrac{ < }{ > }{0pt}{}{#1}{#2}}
\newcommand{\vv}{\ensuremath{\mathbf{V}}}
\newcommand{\vvb}{\ensuremath{(V_1,\dots,V_b)}}
\newcommand\fT{\widehat T}
\newcommand\fTxx{\fT_{x_0/x_1}}
\newcommand\Eio{\cE_{1}}
\renewcommand\ln{\log}
\newcommand\medianllbst{median-of-$(2\ell+1)$ binary search tree}
\newcommand\Next{N^{\mathrm e}}
\newcommand\Nint{N^{\mathrm i}}
\newcommand\nle[1]{n_{\le#1}}
\newcommand\nge[1]{n_{\ge#1}}
\newcommand\nz[1]{n_{#1}}
\newcommand\nzfloor[1]{n_{\floor{#1}}}

\newcommand\Ngechi[1]{N^{\chi}_{\ge#1}}
\newcommand\Ngei[1]{N^1_{\ge#1}}
\newcommand\chNgechi[1]{\check N^{\chi}_{\ge#1}}
\newcommand\chNgei[1]{\check N^1_{\ge#1}}
\newcommand\Nlechi[1]{N^{\chi}_{\le#1}}
\newcommand\Nlei[1]{N^1_{\le#1}}
\newcommand\bara{\bar x_+}
\newcommand\baram{\bar x_-}
\newcommand\barao{\bar x_0}
\newcommand\barai{\gamma_*}
\newcommand\chbaram{-\baram}
\newcommand\oichar{$0$--$1$ characteristic}
\newcommand\hx{h_*}
\newcommand\gabw{(\ga\gb)\qw} 
\newcommand\AAA{A_-}
\newcommand\AAAo{A_0}
\newcommand\AAAoo{[\AAAo,\infty)}
\newcommand\Iga{I_{\ga}} 
\newcommand\MM{^{(M)}}
\newcommand\MMo{^{(M_0)}}
\newcommand\hgaxxMM{\hgaxx{}^{(M)}}
\newcommand\oot{[0,t]}
\newcommand\xoot{(\oot)} 
\newcommand\TPL{L}
\newcommand\gamx{\gamma_-}
\newcommand\bgam{\bar\gamma} 
\newcommand\bgamx{\bgam_-}
\newcommand\xoo{[x,\infty)}
\newcommand\xxoo[1]{[#1,\infty)}
\newcommand\taul{\tau_L} 
\newcommand\iii{^{(i)}}
\newcommand\nb{\overline n}
\newcommand\no{n^0}
\newcommand\vo{V^0}
\newcommand\aaa{\tgaxx_1}
\newcommand\Sh{S_{1/2}}
\newcommand\bxi{\bar\xi}


\newcommand{\Holder}{H\"older}
\newcommand{\Polya}{P\'olya}

\newcommand{\maple}{\texttt{Maple}}

\hyphenation{Upp-sala}
\hyphenation{Prod-inger}

\begin{document}

\begin{abstract} 
This survey studies asymptotics of
random fringe trees and extended fringe trees in
random trees that can be constructed as family trees of a
Crump--Mode--Jagers branching process, stopped at a suitable time.
This includes random recursive trees, preferential attachment trees, 
fragmentation trees,
binary search trees and (more generally) $m$-ary search trees, as well 
as some
other classes of random trees.

We begin with general results, mainly due to Aldous (1991) and 
Jagers and Nerman (1984).
The general results are applied to fringe trees and extended fringe trees for 
several particular types of random trees, where the theory is developed in
detail. 
In particular, we consider fringe trees of $m$-ary search trees in detail;
this seems to be new.

Various applications are given,
including degree distribution, protected nodes and maximal clades
for various types of random trees. Again, we emphasise results for $m$-ary
search trees, and give for example new results on protected nodes in $m$-ary
search trees.

A separate section surveys results on height, saturation level, typical
depth and total path length,
due to Devroye (1986), Biggins (1995, 1997) and others.

This survey contains well-known basic results 
together with some additional general results 
as well as many new examples and applications for various classes of random
trees. 
\end{abstract}

\maketitle

\tableofcontents

\section{Introduction}\label{S:intro}

\citet{Aldous} introduced the concept of a random fringe subtree of a random
tree. (See \refS{Sfringe} below for definitions.)
This is a useful concept since many properties of a
tree can be formulated in terms of fringe trees, and thus results on 
the asymptotic distribution of fringe trees can
imply various other asymptotic results; 
a simple example is the degree distribution 
(considered already in \cite{Aldous})
and some other examples are given
in  \refS{Srank} (protected nodes and rank).
(See also
\citet{SJ283} 
and \citet{HolmgrenJanson1}
for some recent related applications of fringe trees.)
Moreover, \citet{Aldous} also introduced the extended fringe tree
that allows for consideration of \eg{} parents and siblings of a chosen
node; see \refS{Sclades} for some applications (\eg{}  maximal clades).

It is thus of interest to describe the asymptotic distribution of random
fringe trees and extended fringe trees
for various classes of (random) trees. 
\citet{Aldous} 
gave several examples of asymptotic fringe trees, 
including the case of random binary search trees;
he also, more briefly, gave examples of asymptotic extended fringe trees.
One of the purposes of the present paper is to extend these examples. In
particular, we describe asymptotic fringe trees and extended fringe trees
for \mst{s}
(see \refS{Smst-def} for a definition and \refS{Smst} for results).
We give some applications of these results for \mst{s} in Sections \ref{Srank}
and \ref{Sclades}. 

Our characterization uses some of the ideas in \citet{Aldous}, in particular the
reduction to results for continuous-time Crump--Mode--Jagers 
branching processes by \citet{JagersNerman1984}, \cite{NermanJagers1984}.
(The \mst{s} have earlier been studied
by similar methods by \citet{Pittel1994}; 
however there the focus was on the height of the trees and not on the fringe
trees.) 
In a sense, the results are implicit in \cite{Aldous}, 
and partly in \cite{JagersNerman1984,NermanJagers1984},
but 
the details are not completely trivial so we give a detailed explicit
treatment.

We therefore begin with a survey of fringe trees and extended fringe trees
for family trees of \CMJbp{es}, including many other examples besides the
\mst{s}. 
The general theory is described in
Sections \ref{Sfringe} and \ref{S:BP}. In \refS{Sex}, several examples are
studied in detail, in particular various versions of preferential attachment
trees, 
which earlier have been studied by these methods by \citet{OSpencer},
\citet{RudasTV} and \citet{RudasT}. 
We then specialise on  \mst{s}; explicit results for them are given in
\refS{Smst}. 
In  \refS{Smedian} we consider the random 
median-of-$(2\ell+1)$ binary search tree as yet another example.

Furthermore, 
as another novel example,
we consider in  \refS{Sfrag} 
the class of fragmentation trees; these too can be constructed using family
trees of \CMJbp{es}, but in a slightly different way from the preceding
examples. We extend  the results for the asymptotic distribution of random
(extended) fringe trees to this case too.

In Sections \ref{Srank} and  \ref{Sclades}, as mentioned above, 
we give  some applications of the results on
asymptotic fringe trees and extended fringe trees 
to protected nodes and maximal clades (and related properties). 
This serves partly to illustrate the general theory and its uses and
some results are old, but we also give a number of new results 
for \mst{s}.
In particular, we give a recursion that yields 
the asymptotic probability that a random node in an \mst{} is $k$-protected,
for general $m$ and $k$,
and a closed formula for the case $k=2$, together with
asymptotics as \mtoo{} of this probability for $k=2$.

In the main part of the paper, we consider the fringe tree or other
properties of a uniformly random node in the tree.
In Section \ref{Ssample} we consider variations for a random node
with a non-uniform distribution.
We study first restricted sampling, 
where we sample only nodes with some given property, for example a random leaf. 
For \mst{s}, 
we study also
the node containing a random key.

In Sections \ref{Sfringe}--\ref{Ssample}, we study (more or less) local
properties of the tree, that are related to (extended) fringe trees. 
Branching process methods have also
for a long time, 
beginning with \citet{Devroye1986}, 
been used to study
global properties of random trees, such as the height and other properties
related to the distance to the root from the nodes.
As a complement to the previous sections, we give 
in \refS{Sheight}
a survey of such results for the height, saturation level, profile,
typical depth and total path length.
This uses the same set-up as the preceding sections 
with random trees constructed as family trees of \CMJbp{es}, 
but the methods are different and  based on
results on branching random walks
by  Biggins \cite{Biggins76,Biggins77,Biggins95,Biggins97}.
This section is thus essentially independent of the previous sections,
except for definitions and some basic results.
The main results are well-known, but we believe that some results are new.

In this paper, we concentrate on results 
obtained by general branching  process methods,
in particular results 
on the asymptotic distribution of (extended) fringe trees 
and applications of such results.
Typical results can be expressed as convergence in  probability or almost surely
(\as) of the fraction of fringe trees that are isomorphic to some
given tree, 
see for example \eqref{tbpq}; see also \eqref{nupp}--\eqref{nupas} and
\refR{Rk}. 
Such results can be seen as a law of large numbers for fringe trees,
and
typical applications 
yield first-order results for the 
proportion or number of nodes that have a certain property
(see Sections \ref{Sex}--\ref{Ssample}).
In some special cases, 
for example for some properties of the binary search tree, much more precise
results have been derived by other methods. We give some references to such
results, but we do not attempt completeness. 

A natural next step would be to show a general central limit theorem, \ie,
asymptotic 
normality of the number of fringe trees of a given type, under suitable
conditions. This will not be attempted in the present paper, but we give some
comments and references in \refS{Sfurther}; 
in particular we note that such results have been proved, by other methods,
for some 
special cases (the binary search tree and \rrt), 
but that they do not hold in other cases (\mst{} with $m\ge27$).

The appendices contain some results
that are used in the main part of the paper.

\begin{remark}\label{RcGWt}
  In the present paper we consider random trees that are generated by
  stopping a supercritical branching process at a suitable (random) time,
for example when its size (the number of individuals) is a given number.

Note that the results are quite different from the results for fringe trees
of conditioned Galton--Watson trees, 
where we also start with a branching process but
instead of stopping it, we let it run until extinction and
condition on its total size being a given finite number, 
see \cite{Aldous,BenniesK,SJ264,SJ285}.
\end{remark}
 
\section{Some notation}\label{Snotation}

The trees considered here are \emph{rooted} and \emph{finite}, unless otherwise
indicated. (The infinite sin-trees, that arise as limits in \refS{Sfringe},
are important exceptions.) 
Furthermore, the trees are \emph{ordered}, again unless otherwise indicated;
unordered trees may be considered by giving them an arbitrary (\eg{} random)
ordering of the children of each node.

Moreover, there may be further information on the children of each node.
In a \emph{binary tree}, 
each child is labelled as left or right (with at most one child of
each type at any node); the tree is ordered, with a left child before a
right child, but also a single child is labelled left or right.
More generally, in an \emph{\mary} tree, see \refS{Smst-def},
a node has $m$ slots for children and
the children are labelled with distinct numbers in \set{1,\dots,m}; these
numbers determine the order of the children, but not conversely, since a
node may have less than $m$ children and thus only use a subset of these labels.
(In an extended \mary{} tree, each node has either $m$ children or 0, so
these labels are determined by the order and are therefore redundant.)

We write
$T_1\iso T_2$ when $T_1$ and $T_2$ are isomorphic rooted trees.
We often identify trees that are isomorphic. 
We may regard all finite rooted trees as subtrees of the infinite
Ulam--Harris tree with node set $\voo:=\bigcup_{n=0}^\infty \bbN^n$
consisting of 
all finite strings of natural numbers, where $\emptyset$ is the root and the
mother of $i_1\dotsm i_k$ is $i_1\dotsm i_{k-1}$, see \eg{} 
\cite[\S\,VI.2]{Harris} and \cite{Neveu}.

Let $|T|$ be the number of nodes in a tree $T$.

We regard the edges in a tree as directed from the root. Thus the
\emph{outdegree} $\dout(v)=\dout_T(v)$ of a node $v$ in a tree $T$ is its number
of children. 

The \emph{depth} $\dep(v)$ of a node $v$ is its distance from the root.

Given a tree $T$ and a node $v\in T$, let $T^v$ denote
the subtree rooted at $v$, \ie, the subtree consisting of $v$ and its
descendants.

If $T$ and $S$ are trees, let $\nun_S(T)$ be the number of nodes $v$ in $T$
such that  $T^v\iso S$.
Similarly,
given a property $\cP$ of nodes in a tree, let 
$\nun_{\cP}(T)$ be the number of nodes $v$
in $T$ that have the property $\cP$.
(Thus $\nun_S(T)=\nun_{\cP_S}(T)$ if $\cP_S$ is the property of $v$ that
$T^v\iso S$.)

For a random rooted tree $\cT$ and a fixed tree $S$, 
let $p_S(\cT)=\P(\cT\iso S)$. Furthermore, if $\cP$ is a property of nodes,
let $p_{\cP}(\cT)$ be the probability that the root of
$\cT$ has the  property $\cP$. 
(Note that $p_S(\cT)=p_{\cP_S}(\cT)$ with $\cP_S$ as in the preceding
paragraph,
 so the notation is consistent.)

Note that when we talk  about a property $\cP$ of nodes, it is implicit
that the property depends also on the tree containing the node, so it is
really a property of pairs $(v,T)$ with $v\in T$.
We will frequently consider properties of a node $v$ that depend only 
on $v$ and its descendants, \ie, 
on the subtree $T^v$. In this case (but not in general), we may also regard 
the property $\cP$ as a property of rooted trees: we say that a tree $T$ has
$\cP$ if the root of $T$ has $\cP$.
In this case we also use $\cP$ for the set of rooted trees that have the
property $\cP$;
thus a node $v$ in a tree $T$ has $\cP$ $\iff T^v\in\cP$. 

If $\cP$ is a property of nodes, we sometimes write $v\in\cP$ for the event
that $v$ has $\cP$.

By $\Exp(\gl)$, we mean the exponential distribution with \emph{rate}
$\gl>0$, and thus mean $1/\gl$: if $X\sim\Exp(\gl)$ then $\P(X>t)=e^{-\gl t}$.
$\Geo(p)$ denotes the geometric distribution with probability function
$p(1-p)^k$, $k\ge0$, and 
$\Gei(p)$ denotes the shifted geometric distribution with probability function
$p(1-p)^{k-1}$, $k\ge1$.

We let $\rise{x}k$ and $\fall xk$ denote the rising and falling factorials:
\begin{align}
\label{rise}
\rise{x}k&:=x(x+1)\dotsm(x+k-1)=\gG(x+k)/\gG(x),
\\
\fall xk&:=x(x-1)\dotsm(x-(k-1))=\gG(x+1)/\gG(x-k+1).   
\label{fall}
\end{align}

We let $\bbZgeo:=\set{0,1,2,\dots}$ and $\bbZleo:=\set{0,-1,-2,\dots}$.

We say that a function $f(x)$ is \emph{decreasing}
if $x<y$ implies $f(x)\ge f(y)$; note that we allow equality. 
(This is sometimes called \emph{weakly  decreasing}.) 
If $x<y$ implies $f(x)< f(y)$, we may say \emph{strictly decreasing}.
\emph{Increasing} and \emph{strictly increasing} are defined similarly.

We consider asymptotics of various random trees
when some parameter $n$ (for example the
number of nodes, or number of keys in an \mst)
tends to infinity. 
Similarly, for the continuous-time branching processes, 
we consider limits as the time $t$ tends to infinity.
As usual, \emph{\whp} (\emph{with high probability}) means with probability
tending to 1.

\section{$m$-ary search trees}\label{Smst-def}
An  \emph{$ m $-ary search tree}, where $ m\ge 2 $ is a fixed number, is
an \mary{} tree
constructed recursively from a sequence of distinct \emph{keys} (real numbers)
as follows, see \eg{} \cite{Mahmoud:Evolution} or \cite{Drmota}. 
(In the case $m=2$, we say \emph{binary search tree}.) 
The $ m $-ary search trees were first introduced in  \cite{Muntz71}.

Each node may store up to $ m-1 $ keys.  
We start with a tree containing just an empty root. 
The first $ m-1 $ keys are stored in the root.
When the $(m-1)$:th key is placed in the root, so the root becomes full,
we add $m$ new nodes, initially empty, as children of the root.
Furthermore, the $m-1$ keys in the root divide the set of real numbers into $m$
intervals $J_1,\dots,J_m$. Each further key is passed to one of the children
of the root
depending on which interval it belongs to; a key in $J_i$ is passed to the 
$i$:th child.

This construction yields the \emph{extended \mst}. Nodes containing
at least one key are called \emph{internal} and empty nodes are called
\emph{external}. Usually one eliminates all external nodes and consider the
tree consisting of the internal nodes only; this is the \emph{\mst}.

For both versions, we often wish to keep track of the number of keys in each
node, so we regard the trees as labelled trees where each node has a
label in \set{0,\dots,m-1} indicating the number of keys. (Thus external
nodes have label 0 while internal nodes have labels in \set{1,\dots,m-1}.)

We assume that the keys are \iid{} random variables with a continuous
distribution, for example $U\oi$. With a given number $n$ of keys, this 
gives a \emph{random \mst} $T_n$. 
(As is customary, we usually omit the word ``random'' for convenience.
Also, we regard $m$ as fixed, and omit it from the
notation.) 
Note that only the order of the keys matter; hence we obtain the
same random \mst{} $T_n$ also
if we instead let the keys be a uniformly random permutation
of \set{1,\dots,n}.

Note that in $T_n$ we have fixed the number of keys; not the number of nodes.
A node may contain $1,\dots,m-1$ keys, and the total number of
nodes will be random when $m\ge3$.
(The binary case $m=2$ is an exception; each internal node contains exactly
one key, 
so the number of (internal) nodes equals the number $n$ of keys, and the
number of external nodes is $n+1$.)

In an extended \mst, 
say that a node with $i\le m-2$ keys has $i+1$ \emph{gaps}, while a full
node has no gaps. It is easily seen that an extended $m$-ary search tree
with $n$ keys 
has $n+1$ gaps; the gaps correspond to the intervals of real numbers between
the keys (and $\pm\infty$), 
and a new key  
has the same probability $1/(n+1)$ of belonging to any of the gaps.
Thus the evolution of the extended
$m$-ary search tree may be described by choosing a
gap uniformly at random at each step. Equivalently, the probability that the
next key is added to a node is proportional to the number of gaps at that
node.
For the \mst{} (with only internal nodes) the same holds with minor
modifications; a full node now has one gap for each external node in the
extended version,  \ie, $m-d$ gaps if there are $d$ children, and a key
added to one of its gaps now starts a new node.

\section{Fringe trees and extended fringe trees}\label{Sfringe}

Given a (finite, rooted) tree $T$,
the \emph{random fringe tree} of $T$
is the  
random tree obtained by taking the subtree $T^v$ with
$v$ chosen uniformly at random from
the nodes of $T$; we denote the random fringe tree of $T$ by  $\Tx$.

Consider a sequence $T_n$ of (possibly random) trees
such that
the random fringe tree $T_n^*$ converges in distribution to some random tree
$\xT$:
\begin{equation}\label{fringe}
  T_n^*\dto\xT,
\end{equation}
which simply means (since the set of finite trees is countable)
\begin{equation}\label{fringet}
  \P(T_n^*\iso\ts) \to \P(\xT\iso\ts)
\end{equation}
for every finite rooted tree $\ts$.
We then say, following \citet{Aldous}, that $\xT$ (or rather its
distribution) is the asymptotic fringe distribution of $T_n$.

If the trees $T_n$ are deterministic, then \eqref{fringet}
can be written
\begin{equation}\label{nut}
 \frac{\nun_{S}(T_n)}{|T_n|}\to p_{S} (\xT), 
\end{equation}
for every tree $S$; this is  equivalent 
to the seemingly more general
\begin{equation}\label{nup}
 \frac{\nun_{\cP}(T_n)}{|T_n|}\to p_{\cP} (\xT), 
\end{equation}
for every property  $\cP$ of a node $v$ that depends only on the subtree
$T^v$, \ie, on $v$ and its descendants.

In the more general case when $T_n$ are random (which is the case we are
interested in),
\eqref{fringet} instead
can be written
\begin{equation}\label{nupt}
\E \frac{\nun_{S}(T_n)}{|T_n|}\to p_{S} (\xT)
\end{equation}
or, more generally but equivalently,
\begin{equation}\label{nupe}
\E \frac{\nun_{\cP}(T_n)}{|T_n|}\to p_{\cP} (\xT)
\end{equation}
for properties $\cP$ as above.
In interesting cases, we may typically strengthen \eqref{nupt}--\eqref{nupe}
to convergence in probability: 
\begin{equation}\label{nupp}
\frac{\nun_{\cP}(T_n)}{|T_n|}\pto p_{\cP} (\xT);
\end{equation}
\citet[Proposition 7]{Aldous} gives
a general criterion for this (the distribution of $\xT$
is extremal in the set of fringe distributions), but we will instead prove
\eqref{nupp} directly in the cases considered here; moreover,
we will in our cases prove convergence almost surely:
\begin{equation}\label{nupas}
\frac{\nun_{\cP}(T_n)}{|T_n|}\asto p_{\cP} (\xT).
\end{equation}

\begin{remark}\label{Rk}
Note that
\begin{equation}\label{rks}
\frac{\nun_{\ts}(T_n)}{|T_n|}=  \P\bigpar{T_n^*\iso \ts\mid T_n}
\end{equation}
and, more generally, for a property $\cP$ as above,
\begin{equation}\label{rkp}
\frac{\nun_{\cP}(T_n)}{|T_n|}=  \P\bigpar{T_n^*\in \cP\mid T_n}.
\end{equation}
It follows from \eqref{rks} that \eqref{nupp} and \eqref{nupas} 
(for all properties $\cP$ considered there)
are equivalent to  conditional versions of \eqref{fringe}:
\begin{align}
  \label{fringeCp}
\cL\bigpar{T_n^*\mid T_n}
&\pto\cL\bigpar{\xT}
\intertext{and}
\label{fringeCas}
\cL\bigpar{T_n^*\mid T_n}
&\asto\cL\bigpar{\xT},
\end{align}
respectively,
with convergence in probability or \as{} of the conditional distribution, in
the space of probability distributions on trees.
(Note that any such
property $\cP$ corresponds to a set of finite rooted trees $T$, and conversely.)

Results such as \eqref{fringeCp} and \eqref{fringeCas}, where we fix a
realization $T_n$ of a 
random tree and then study the distribution of its fringe tree (or something
else), as a random variable depending on $T_n$, are usually called
\emph{quenched},
while results such as \eqref{fringe}, where we consider
the random fringe tree of a random tree as a combined random event,
are called
\emph{annealed}. See further \eg{} \cite{SJ283} and \cite{SJ285}.
\end{remark}

\subsection{Extended fringe trees}\label{Sextended}

The fringe tree $\Tx$ considers only the descendants of a random node. 
\citet{Aldous} introduced also the \emph{extended fringe trees} that include
the nearest ancestors and other close relatives. 
If $k\ge0$ and $v\in T$ with $\dep(v)\ge k$, let $v\mm{k}$ be the
ancestor of $v$ that is $k$ generations earlier (\ie, with
$\dep(v\mm{k})=\dep(v)-k$),
and let $\Tvk$ be the subtree rooted at $v\mm{k}$, with the node $v$ marked.
(Or, equivalently, with the path from the root $v\mm{k}$ to $v$ marked.)
Thus $\Tvk$ is a rooted tree with a distinguished node of depth $k$.
(Note that $\Tyy{v}0=T^v$.)

We define the random extended fringe tree $\Txk$ as $\Tvk$ for a uniformly
random node $v\in T$; this is really not defined when $\dep(v)<k$, but we may
define $\Tvk$ in this case too by some supplementary definition, 
for example as a path of length
$k-\dep(v)$ with a copy of $T$ attached, with $v$ marked. We are only
interested in asymptotics of the random extended fringe trees for sequences
of trees $T$ such that 
\begin{equation}\label{doo}
\dep(v)\pto\infty   
\end{equation}
for a random node $v$, \ie, 
$\P(\dep(v)<k)\to0$ for every fixed $k$, and thus each $\Txk$ is
well-defined \whp, 
and then the supplementary definition does not matter.

\citet{Aldous} showed that if $T_n$ is a sequence of (possibly random) trees
such that \eqref{doo} holds and
an asymptotic fringe distribution
exists, \ie, \eqref{fringe} holds, 
then, more generally, 
each $\Txk_n$ converges in distribution to some random tree $\xT\y{k}$ with
a distinguished node $o$ of depth $k$.
Note that the trees $\Tvk_n$ are consistent in an obvious way, with
$\Tyy{v}{(k-1)}_n$ a subtree of $\Tvk_n$, and thus the same holds for 
the limits $\xtk$ (after a suitable coupling).
Hence it is possible to regard the trees $\xtk$ as subtrees
of a (random) infinite tree $\xToo$ with 
a distinguished node $o$ and
an infinite line $o, o\mm1,o\mm2,\dots$ of ancestors of  $o$,
such that
$\xtk=\xToo^{o,-k}=\xToo^{o\mm{k}}$. 
Furthermore, every node in $\xToo$ has a finite number of descendants; thus
there are no other infinite paths from $o$. (\citet{Aldous} calls such a
tree a \emph{sin-tree}, for single infinite path.)
We may then say that the extended fringe trees converge to 
the random sin-tree $\xToo$, in the sense that
$\Txk_n\dto\xToo^{o\mm{k}}$ for each $k$, or, equivalently, using the product
topology on the set of sequences of (finite) trees,
\begin{equation}\label{extended}
\bigpar{\Txk_n}_{k=0}^\infty
\dto\bigpar{\xToo^{o\mm{k}}}_{k=0}^\infty.
\end{equation}

For a random sin-tree $\xToo$ and a property $\cP$ of nodes,
let $p_{\cP}(\xToo)$ be the probability that the distinguished node $o$
has the  property $\cP$.
Then, \cf{} \eqref{nupe} (which is the  case $k=0$), 
\eqref{extended} implies, and is equivalent to,
\begin{equation}\label{xnup}
\E \frac{\nun_{\cP}(T_n)}{|T_n|}\to p_{\cP} (\xToo), 
\end{equation}
for every property  $\cP$ that depends only on $\Tvk$ for some $k$, 
\ie, on $v$ and its descendants and the
descendants of its ancestors at most a fixed number of generations back.
Again, we may typically strengthen \eqref{xnup} to convergence in
probability, and in our cases we shall prove convergence \as:
\begin{equation}\label{xnupas}
\frac{\nun_{\cP}(T_n)}{|T_n|}\asto p_{\cP} (\xToo).
\end{equation}

By standard truncation arguments, it may be possible to extend \eqref{xnup}
or \eqref{xnupas} also
to some more general properties $\cP$, depending on an unlimited number of
ancestors, see Sections \ref{SStrunc} and  \ref{Sclades} for some examples.

\begin{remark}
Similarly to \refR{Rk}, \eqref{xnupas}
is equivalent to a conditional version of \eqref{extended}:
\begin{equation}\label{extendedCas}
\cL\bigpar{\bigpar{\Txk_n}_{k=0}^\infty\mid T_n}
\asto\cL\bigpar{\bigpar{\xToo^{o\mm{k}}}_{k=0}^\infty}.
\end{equation}
\end{remark}

\section{Family trees of general branching processes}\label{S:BP}

A \emph{Crump--Mode--Jagers} process is a general branching process defined
as follows, see \eg{} \cite{Jagers} for further details and for basic facts
used below.

The branching process starts 
with a single individual born at time 0. 
This individual has a random number $N$ of children, born at random times
$(\xi_i)_{i=1}^N$; here $0\le N\le\infty$, and we assume $0\le\xi_1\le
\xi_2\le\dotsm$. 
It is convenient to describe the birth times $\set{\xi_i}_1^N$ as a 
point process $\Xi$ on $[0,\infty)$.
Every child that is
born evolves in the same way, \ie, every individual $x$ has its own copy 
$\Xi_x$ of $\Xi$ (where now $\xi_i$ means the age of the mother when child
$i$ is born); 
these copies are assumed to be independent and identically distributed.
Denote the time an individual $x$ is born by $\gs_x$.

Recall that 
formally a point process $\Xi$ is best defined as an integer-valued random
measure, where $\Xi(A)$ is the number of points in a set $A$, 
see \eg{}  \cite{Kallenberg}. In our setting,
$\Xi=\sum_{i=1}^N\gd_{\xi_i}$, where $\gd_t$ is a point mass 
(Dirac measure)
at $t$.
In particular, 
we have $N=\Xi\xpar{[0,\infty)}$. 
Furthermore, for $j\le N$, $\Xi([0,t])\ge j$ if and only if $\xi_j\le t$.
We let $\mu:=\E\Xi$ denote the intensity measure
 of $\Xi$, and write $\mu(t):=\mu\xpar{[0,t]}=\E\Xi\xpar{[0,t]}$.
In particular, $\mu(\infty)=\E N$.

Usually one also assumes that each individual has a random lifetime
$\gl\in[0,\infty]$; for our purposes this plays no role, so we ignore
it. (Formally, we may assume that $\gl=\infty$.) There may also be other
random variables associated to the individuals. Formally, we give each
possible individual $x$ its own copy $(\gO_x,\cF_x,\mu_x)$
of some probability space $(\gO,\cF,\mu)$ on which there are defined
some given functions defining $N$, $\xi_i$ (and thus $\Xi$),
and possibly other random variables describing the life history
such as the marks $\nu_i$ 
or label $\ell(t)$ in Remarks \ref{Rorder} and \ref{Rlabel}
below;
the branching process then is defined on the
product $\prod_x (\gO_x,\cF_x,\mu_x)$ of these probability spaces.
(The individuals may be labelled in a natural way by strings 
in $\voo:=\bigcup_{n=0}^\infty \bbN^n$; hence the set of individuals that
are realized in the branching process is a random subset of $\voo$, and we
may extend the product over $x\in\voo$.)

Let $Z_t$ be the number of individuals  at time $t\ge0$; since we
assume no deaths, this equals the number of individuals born in $[0,t]$.
(We follow standard custom and let all processes be right-continuous; thus
an individual born at $t$ exists at $t$ and is included.)
We say that the process is \emph{finite} 
(or \emph{dies out}) 
if $Z_\infty<\infty$, \ie, only a finite number of individuals are ever born.

Let $\cT_\infty$ be the \emph{family tree} of the branching process. This
is a (generally infinite) tree
obtained from the branching process by ignoring the time structure;
in other words, it has the individuals as nodes, with the
initial individual as the root, and the children of a node
in the tree are the same as the children in the branching process.
Let $\cT_t$ be the subtree consisting of all individuals born up to time $t$.
Note that the number of nodes $|\cT_t|=Z_t$. (We are mainly interested in
cases where $Z_t<\infty$ for every finite $t$, but $Z_\infty=\infty$.)

\begin{remark}\label{Rorder}
  This defines the family tree $\cT_t$ as an unordered tree. Sometimes
we want an ordered
  tree, so we have to add an ordering of the children of each individual.
This can be done by taking the children in order of birth 
(which is the standard custom), 
but in our examples we rather want a random order. In general, we 
can obtain ordered family trees by assuming that each individual has a
marked point 
process $\Xi^*$ (augmenting the plain $\Xi$ above),
where each point $\xi_i$ has a mark $\nu_i\in\set{1,\dots,i}$ telling at which
position the new child is inserted among the existing ones.
(This includes both the birth order case, with $\nu_i=i$, and the random
order case, with $\nu_i$ uniform and independent of everything else.)

For the $m$-ary search trees in \refS{Smst}, we want further
information; this is obtained by instead giving each of the $m$ children a
distinct mark $\nu_i\in\set{1,\dots,m}$ telling the position of the child
among all (existing and future) children. (Equivalently, we may equip each
individual with a random permutation of $\set{1,\dots,m}$ 
giving the order of birth of the children.)
\end{remark}

\begin{remark}\label{Rlabel}
We may also have labels on the nodes of $\cT_t$; this is important for our
application to \mst{s}, since they have nodes labelled with the number of
keys, see \refS{Smst-def}. In general, we may assume that each individual 
has a label given by some random function $\ell(t)$ of its age. We assume
that the set of possible labels is countable (with the discrete topology); we
may assume that the labels are integers. We also assume that the function 
$\ell(t)\in D\ooo$; thus $\ell(t)$ is constant on some intervals
$[t_i,t_{i+1})$. 
(As everything else in the branching process, the label may depend on $\Xi$
and other properties of the same individual, but not on other individuals,
and they have the same distribution for all individuals;
this is also a consequence of the formalism with probability spaces
$(\gO_x,\cF_x,\mu_x)$ above.)
\end{remark}

A \emph{characteristic} of an individual, 
see \eg{} \cite{Jagers,JagersNerman1984,Nerman,NermanJagers1984},
is a random
function $\gf(t)$ of the age $t\ge0$;
we assume that $\gf(t)\ge0$ and that 
$\gf$ belongs to the space $D\ooo$ of right-continuous functions with left
limits. 
(Note that we consider only $t\ge0$.
We may extend $\gf$ to $(-\infty,\infty)$ by setting $\gf(t)=0$ for $t<0$.)
We assume that each individual has its own copy $\gf_x$,
and we at first for simplicity assume 
that the pairs $(\Xi_x,\gf_x)$ for all individuals are independent
and identically distributed; this assumption can (and will) be relaxed, see
\refR{Rnerman7} below.

Given a characteristic $\gf$,
let 
\begin{equation}\label{zgf}
\zgf_t:=\sum_{x:\gs_x\le t} \gf_x(t-\gs_x)  
\end{equation}
be the total characteristic
at time 
$t$ of all individuals 
that have been born so far.
(Recall that $x$ is born at time $\gs_x$, and thus has age $t-\gs_x$ at time
$t$.) 

The random tree $\cT_t$ has a random size. We are usually interested in random
trees 
with a given number of nodes, or trees where something else is given,
for example the number of keys in an \mst{}. 
We can obtain such random trees by stopping the
branching process as follows. 
Fix a characteristic $\psi(t)$, which we shall call
\emph{weight}, 
and let 
$\tau(n):=\inf\set{t:\zpsi_t\ge n}$, \ie, the first time the total weight is
at least $n$. (As usual, we define $\inf\emptyset=\infty$.)
We  exclude the trivial case when
$\psi(t)=0$ for all $t\ge0$ \as{} (which would give $\tau(n)=\infty$ \as).
 Define $T_n:=\cT_{\tau(n)}$, the family tree at the time
the total weight reaches $n$ (provided this ever happens).

Random trees $T_n$ defined in this way, for some \CMJ{} branching process and
some weight $\psi(t)$, are the focus of the present paper.
We shall always denote the weight by $\psi$  and the random tree, stopped as
above, 
by $T_n$ 
omitting $\psi$ from the notation for simplicity. 
(In all our examples, $\psi$ is integer-valued, so it is natural to let $n$
be an integer. This is not necessary, however, and all our results are valid
for arbitrary real $n\to\infty$.)

\begin{example}\label{E1}
If $\psi(t)=1$, $t\ge0$, then $\zpsi_t=Z_t$, and $T_n$ is the
family tree of the branching process stopped when there are $n$ nodes or
more; if the birth times have continuous distributions
and there are no twins,
then \as{} no two
nodes are born simultaneously, and thus we stop when there are exactly
$n$ nodes, so $|T_n|=n$. 
(This weight is used in all examples in \refS{Sex}, but not for the \mst{s}
in \refS{Smst}.)
\end{example}

We define the Laplace transform of a function $f$ on $\ooo$ by
\begin{equation}\label{Lf}
  \widehat f(\gth) = \gth\intoo e^{-\gth t}f(t) \dd t,
\qquad \gth>0,
\end{equation}
and the Laplace transform of a measure $m$ on $\ooo$ by
\begin{equation}\label{Lm}
  \widehat m(\gth) = \intoo e^{-\gth t}m(\ddx t), \qquad -\infty<\gth<\infty.
\end{equation}
(Note that there is a factor $\gth$ in \eqref{Lf} but not in \eqref{Lm}.
A justification of this difference is that a measure $m$ has the same
Laplace transform $\widehat m$ as the function $m(t):=m([0,t])$, 
as is easily verified by an integration by parts, or by Fubini's theorem for
the integral $\iint_{s\le t}\gth e^{-\gth t} m(\ddx s)$.)

Some standing assumptions in this paper are:
\begin{xenumerate}
\renewcommand{\theenumi}{{\upshape{(A\arabic{enumi})}}}
\renewcommand{\labelenumi}{\theenumi}
\item \label{BPfirst}
$\mu\set0=\E\Xi\set0<1$. 
(This rules out a rather trivial case with explosions
already at the start.
In all our examples, $\mu\set0=0$.)
\item \label{BPnonlattice}
$\mu$ is not concentrated on any lattice $h\bbZ$, $h>0$.
(The results extend to the lattice case with suitable modifications, but we
  ignore it.) 
\item \label{BPsuper}
$\E N>1$. 
(This is known as  the \emph{supercritical} case.)
For simplicity, we further assume that $N\ge1$ a.s., but see \refR{Rsuper}.
(In this case, every individual has at least one child, so the process never
dies out and $Z_\infty=\infty$.)
\item \label{BPmalthus}
There exists a real number $\ga$ (the Malthusian parameter) such that
$\hmu(\ga)=1$, \ie, 
\begin{equation}\label{malthus}
\intoo e^{-\ga t}\mu(\ddx t) =1.  
\end{equation}
(By \ref{BPsuper}, $\ga>0$.)

\item \label{BPmub}
$\hmu(\gth)<\infty$ for some $\gth<\ga$.

\item \label{BP6.2}\label{BPlast}
\xdef\bplast{A\arabic{enumi}}
The random variable $\sup_t \bigpar{e^{-\gth t}\gf(t)}$ has finite
expectation for 
some $\gth<\ga$.
\end{xenumerate}

\newcommand\refBPlastpsi{{\upshape(\bplast$\psi$)}}

\citet[Theorem 6.3]{Nerman} 
(see also \citet[Section 6.10]{Jagers} for related results)
shows that under the conditions \refBPphi, 
as \ttoo,
\begin{equation}\label{olle}
\frac{\zgf_t}{Z_t} \asto m_\gf:=\E \hat\gf(\ga) = \widehat{\E\gf}(\ga)
=\ga\intoo e^{-\ga t}\E \gf(t)\dd t.
\end{equation}
The \rhs{} of \eqref{olle} is finite by \ref{BP6.2}. 
Thus, if we  exclude the trivial case when
$\gf(t)=0$ for all $t\ge0$ \as, $0<m_\gf<\infty$.

Note that \ref{BPfirst}--\ref{BPmub} are conditions on the branching
process, while \ref{BPlast} is a condition on the characteristic $\phi$ (and
$\ga$), and thus is relevant only we consider some $\phi$. When discussing
trees $T_n$ defined by stopping using a weight 
$\psi$ as above, we sometimes want \ref{BPlast} to hold for $\psi$; 
we denote
this version of the condition by \refBPlastpsi.
(However, for most of our results, \refBPlastpsi{} is not required.
In any case, in \refE{E1} and in all our examples in Sections \ref{Sex} and
\ref{Smst}, 
$\psi(t)$ is bounded, so \refBPlastpsi{} holds trivially.)

\begin{remark}
As a consequence of \ref{BPmalthus}, 
  $\mu(t)<\infty$ for every $t<\infty$. 
(However, $\mu(\infty)=\E N$ may be infinite.)
It is a standard result that this implies that $Z_t$ and $\E Z_t$ are finite
for every 
$t<\infty$.
\end{remark}

\begin{remark}\label{Rsuper}
We do not really need the assumption $N\ge1$ in \ref{BPsuper}; it suffices that 
$\E N>1$. 
In this case, the extinction probability $q:=\P(Z_\infty<\infty)<1$, so
there is a positive probability that the process is infinite,
and \eqref{olle}
and the results below hold conditioned on the event $Z_\infty=\infty$.
(This is the standard setting in
\cite{Nerman,JagersNerman1984,NermanJagers1984}.)
\end{remark}

\begin{remark}
\label{RBPmalthusage}
By \eqref{malthus}, $e^{-\ga t}\mu(\ddx t)$ is a probability measure on $\ooo$.
See \refR{Rage} for an interpretation of this distribution.
\end{remark}

\begin{remark}
 \label{RBPxib}
By the definitions,
\begin{equation}\label{xib}
  \Xib:=\intoo e^{-\gth t}\Xi(\ddx t)=\sum_{i=1}^N e^{-\gth\xi_i}.
\end{equation}
Since $\mu=\E\Xi$, we have $\hmu(\gth)=\E\hXi(\gth)$ and
\eqref{xib} yields
\begin{equation}\label{hmub}
  \hmu(\gth)=\E\sum_{i=1}^N e^{-\gth\xi_i}.
\end{equation}
Thus,
\eqref{malthus} can be written $\E\hXi(\ga)=1$, or
\begin{equation}\label{malthustau}
  \E\sum_{i=1}^N e^{-\ga\xi_i}=1.  
\end{equation}
Similarly,
\ref{BPmub} says that
the random variable 
$\Xib$
has finite expectation
$\E\Xib<\infty$ for some $\gth<\ga$.
\end{remark}

\begin{remark}
The conditions \ref{BPmub} and \ref{BP6.2} may be weakened somewhat
if we further assume $\E \Xia\log^+\Xia<\infty$, see
\cite[Conditions 5.1 and 5.2]{Nerman},
but the versions above are
sufficient for our applications.   
\end{remark}

\begin{remark}\label{Rmulti}
  The results can be extended 
to multi-type branching processes, see \citet{JagersNerman1996}.
\end{remark}

\begin{remark}
  \label{Rnerman7}
We have for simplicity assumed above that the characteristic
$\gf_x(t)$ 
associated to an individual $x$ is independent of the life histories of all
other individuals. As shown by \cite[Section 7]{Nerman}, the results 
extend to characteristics  $\gf_x(t)$ that may depend also on the
descendants of $x$; we may let $\gf_0(t)$ be any non-negative random
function that depends on the entire branching process
(and belongs to $D\ooo$ and satisfies \ref{BP6.2}), and
then define $\gf_x(t)$ as $\gf_0(t)$ evaluated for the branching process
consisting of $x$ and its descendants (shifting the origin of time to the
birth of $x$). This will be important below.
\end{remark}

\begin{remark}\label{RW}
\citet{Nerman} showed also that,
under the assumptions \refBP{} above, 
there exists a random
variable $W$ such that, as \ttoo,
\begin{equation}\label{olle1}
  e^{-\ga t}Z_t \asto W
\end{equation}
and, more generally, for every $\gf$ satisfying \ref{BP6.2},
\begin{equation}\label{olle2}
  e^{-\ga t}\zgf_t \asto \mphi W.
\end{equation}
If furthermore
\begin{equation}\label{xlogx}
\E \Xia\log^+\Xia<\infty, 
\end{equation}
which is the case in our applications, then
$W>0$ \as{} (on the event \set{Z_\infty=\infty}), see also \cite{Doney}, 
so $Z_t$ and $\zgf_t$ grow exactly at
rate $e^{\ga t}$; moreover, \eqref{olle} then follows from
\eqref{olle1}--\eqref{olle2}.
However, if \eqref{xlogx} fails, then $W=0$ \as, so $\zgf_t=o\xpar{e^{\ga t}}$
\as{}; nevertheless, also in this case
\begin{equation}\label{cam}
e^{-\gth t}\zgf_t\asto\infty  
\end{equation}
for every $\gth<\ga$,
as follows easily by truncating the offspring distribution $\Xi$ to at most
$M$ children, for some large $M$, and applying \eqref{olle2} to the
truncated process.
It follows easily from \eqref{olle2} and \eqref{cam} that if \refBPphi{}
hold, then
\begin{equation}
  \label{chu}
\log \zgf_t/t \asto\ga.
\end{equation}
(See also \cite[Theorem 2.1]{Biggins97} for this result under a slightly
weaker condition.)

Furthermore, 
the expectation $\E\zgf_t$ always grows as $e^{\ga t}$, even when \eqref{xlogx}
fails so $W=0$ a.s.; more precisely
\cite[Proposition 2.1]{Nerman},
with $\gb>0$ is given by \eqref{el} below,
\begin{equation}\label{pyret}
\E \zgf_t \sim(\ga\gb)\qw m_\gf e^{\ga t}.
\end{equation}
If \eqref{xlogx} holds, then 
\begin{equation}
  \label{EW}
\E W=(\ga\gb)\qw, 
\end{equation}
so \eqref{pyret}
says that the expectation converges in \eqref{olle1}. (However, as just said,
\eqref{pyret} holds also when \eqref{olle1} holds with $W=0$.)
\end{remark}

Our main results are now simple consequences of the
general results by \citet{Nerman} above.
We consider the random trees $T_n$ defined by stopping the branching process
according to some
fixed weight $\psi$ as above.
We begin by noting that $\tau(n)<\infty$ so that
$T_n$ really is well-defined.
(See \eg{} \cite{Devroye1998}.)

\begin{theorem}\label{Tex}
  \begin{thmenumerate}
  \item \label{tex1}
Under the assumptions \refBP, and for any weight $\psi$,
$\zpsi_t\asto\infty$  as \ttoo; thus \as{} $\tau(n)<\infty$ for
every $n\ge0$ and $T_n:=\cT_{\tau(n)}$ is a well-defined finite random tree.
Furthermore, $\tau(n)\asto\infty$ as \ntoo.
\item \label{tex2}
If moreover \refBPlastpsi{}  holds, then
\begin{equation}\label{tex2e}
\frac{|T_n|}{n}\asto \frac{1}{m_\psi}\in(0,\infty)
\end{equation}
and
\begin{equation}
  \label{tex2tau}
\frac{\tau(n)}{\log n} \asto \frac{1}{\ga}.
\end{equation}
  \end{thmenumerate}
\end{theorem}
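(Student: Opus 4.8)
The plan is to deduce both parts from Nerman's strong laws \eqref{olle} and \eqref{chu}, using only the monotonicity of $Z_t$ in $t$ and of $\tau(n)$ in $n$ to transfer the continuous-time asymptotics to the stopped trees $T_n=\cT_{\tau(n)}$.

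For \ref{tex1} I would first note $Z_t\asto\infty$: by \ref{BPsuper} every individual has at least one child, so the family tree contains an infinite ray, $Z_\infty=\infty$, and $Z_t$ is non-decreasing. (Under the weaker hypothesis $\E N>1$ of \refR{Rsuper} one instead works on the event $\set{Z_\infty=\infty}$.) Since $\zpsi$ itself need not be monotone, I would bound it below by a bounded characteristic: with $\gf:=\psi\wedge1$, which is bounded and so satisfies \ref{BPlast}, and which is not identically $0$ \as{} (because $\set{\gf\equiv0}=\set{\psi\equiv0}$ and the latter has probability $<1$ by hypothesis), Nerman's theorem \eqref{olle} gives $\zgf_t/Z_t\asto m_\gf\in(0,\infty)$, hence $\zpsi_t\ge\zgf_t=(\zgf_t/Z_t)Z_t\asto\infty$. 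Then $\set{t:\zpsi_t\ge n}$ is \as{} nonempty for every $n$, so $\tau(n)<\infty$ and $T_n$ is well defined, and $|T_n|=Z_{\tau(n)}<\infty$ since $Z_t<\infty$ for finite $t$ (a consequence of \ref{BPmalthus}). Finally $n\mapsto\tau(n)$ is non-decreasing; if it were bounded by some $T$, then using $\zpsi_{\tau(n)}\ge n$ (right-continuity of $\zpsi$) we would get $\sup_{s\le T}\zpsi_s=\infty$, contradicting $\sup_{s\le T}\zpsi_s\le\sum_{x:\gs_x\le T}\sup_{0\le u\le T}\psi_x(u)<\infty$ \as; hence $\tau(n)\asto\infty$.

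For \ref{tex2}, \refBPlastpsi{} makes $\psi$ an admissible characteristic, so \eqref{olle} and \eqref{chu} apply to $\psi$ directly: $\zpsi_t/Z_t\asto m_\psi\in(0,\infty)$ and $(\log\zpsi_t)/t\asto\ga$. Throughout I would use the two elementary sandwich facts $\zpsi_{\tau(n)}\ge n$ (right-continuity) and $\zpsi_{\tau(n)-}\le n$ (definition of the infimum). For \eqref{tex2tau}: fix $\eps\in(0,\ga)$; for $t$ large $e^{(\ga-\eps)t}\le\zpsi_t\le e^{(\ga+\eps)t}$, and since $\tau(n)\asto\infty$ this applies at $t=\tau(n)$, giving $n\le\zpsi_{\tau(n)}\le e^{(\ga+\eps)\tau(n)}$ and, letting $t\uparrow\tau(n)$ in the lower bound, $e^{(\ga-\eps)\tau(n)}\le\zpsi_{\tau(n)-}\le n$; thus $(\ga+\eps)\qw\le\tau(n)/\log n\le(\ga-\eps)\qw$ eventually, and $\eps\downto0$ gives \eqref{tex2tau}. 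For \eqref{tex2e}: from $\zpsi_{\tau(n)}\ge n$ and $\zpsi_t/Z_t\to m_\psi$ one gets $Z_{\tau(n)}\ge n/(m_\psi+\eps)$ eventually; and from $\zpsi_t<n$ for $t<\tau(n)$ one gets $Z_t<n/(m_\psi-\eps)$ for large such $t$, hence $Z_{\tau(n)-}\le n/(m_\psi-\eps)$; writing $Z_{\tau(n)}=Z_{\tau(n)-}+(Z_{\tau(n)}-Z_{\tau(n)-})$ and letting $\eps\downto0$ would give \eqref{tex2e} as soon as the overshoot $Z_{\tau(n)}-Z_{\tau(n)-}$ — the number of individuals born at the instant $\tau(n)$ — is $o(n)$.

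This overshoot estimate is the only genuinely delicate step. In all the examples in this paper the birth times are continuous and twins do not occur, so the overshoot is at most $1$ and there is nothing more to do. In general, individuals born simultaneously form clusters whose growth is driven by the number of age-$0$ offspring, a mechanism that is subcritical by \ref{BPfirst} ($\mu\set0<1$); such clusters are \as{} finite with finite mean size, so a routine maximum-of-\iid{} bound gives $(Z_t-Z_{t-})/Z_t\asto0$ as \ttoo, whence $Z_{\tau(n)}=Z_{\tau(n)-}\bigpar{1+o(1)}$ and the argument closes. Everything outside this point is bookkeeping around the cited strong laws.
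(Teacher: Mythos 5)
Your part \ref{tex1} and your derivation of \eqref{tex2tau} are essentially sound and close to the paper's own argument: the paper gets $Z^{\psi}_t\asto\infty$ from \eqref{cam} applied to the truncation $\psi\wedge1$ rather than from \eqref{olle} together with $Z_t\to\infty$, but the two routes are interchangeable here, and for \eqref{tex2tau} the jump of $Z^\psi$ at $\tau(n)$ is indeed harmless after taking logarithms, exactly as you say.

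The proof of \eqref{tex2e}, however, has a genuine gap, and it sits precisely where you flagged it. You reduce the lower bound on $n/|T_n|$ to the claim that the jump $Z_{\tau(n)}-Z_{\tau(n)-}$ is $o(n)$, and you justify this by asserting that the simultaneous-birth clusters are driven by the age-$0$ offspring mechanism, which is subcritical since $\mu\set0<1$, so that the clusters have finite mean and a maximum-of-\iid{} bound applies. This misdescribes the clusters: a single parent may produce an arbitrarily large batch of children at one and the same \emph{positive} age (in the extended \mst{} all $m$ children are born simultaneously, so this is not exotic), and the assumptions \refBP{} do not force $\E N<\infty$ — as the paper remarks, $\mu(\infty)=\E N$ may be infinite; for instance, all $N\approx e^{\xi}$ children born at age $\xi\sim\Exp(1)$ satisfies \ref{BPmalthus}--\ref{BPmub} with $\ga=1$ but has $\E N=\infty$. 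Hence the cluster sizes need not have finite mean, the ``routine'' bound is unavailable, and in fact the assertion $Z_{\tau(n)}-Z_{\tau(n)-}=o(n)$ is equivalent to the case $\psi=1$ of \eqref{tex2e} itself, so it cannot be dismissed as a side estimate. The paper avoids the issue entirely: instead of left limits it introduces the time-shifted characteristic $\psi_\gd(t):=\psi(t-\gd)$, so that $Z^{\psi_\gd}_{\tau(n)}=Z^{\psi}_{\tau(n)-\gd}<n\le Z^{\psi}_{\tau(n)}$ sandwiches $n$ between two characteristics both evaluated at the \emph{same} time $\tau(n)$; Nerman's theorem \eqref{olle} then applies to each, and the loss $m_{\psi_\gd}=e^{-\ga\gd}m_\psi$ disappears as $\gd\downto0$. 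You should replace your overshoot step by this device, or else supply a genuine proof of the overshoot bound, which appears to require an argument of comparable depth rather than a routine one.
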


\begin{proof}
\pfitemref{tex1}
If \refBPlastpsi{} holds, then \eqref{cam} (with $\gth=0$) shows that
$\zpsi_t\asto\infty$. 
In general, apply \eqref{cam} to the truncated weight $\psi_1(t):=\psi(t)\land1$
and deduce
$\zpsi_t\ge Z_t^{\psi_1}\asto\infty$.  
Hence, $\tau(n)<\infty$ for every $n$.
Finally, 
for every finite $t$,
$\zpsi_t<\infty$ \as, and thus $\tau(n)>t$ for large $n$; 
hence $\tau(n)\to\infty$.

\pfitemref{tex2}
By the definition of $\tau(n)$ (and right-continuity of the process and
$\psi$), $\zpsi_{\tau(n)}\ge n$, while
$\zpsi_{\tau(n)-\gd}< n$ for any $\gd>0$. 
Fix $\gd>0$ and define the characteristic
$\psi_\gd(t):=\psi(t-\gd)$, with $\psi_\gd(t):=0$ for $t<\gd$.
Then $Z^{\psi_\gd}_t=\zpsi_{t-\gd}$. Consequently,
\begin{equation}\label{ini}
  Z^{\psi_\gd}_{\tau(n)}=\zpsi_{\tau(n)-\gd}
<n
\le 
Z^{\psi}_{\tau(n)}.
\end{equation}
Recall also that $|T_n|=|\cT_{\tau(n)}|=Z_{\tau(n)}$.

We have assumed \ref{BP6.2} for $\psi$, and it follows that it holds for
$\psi_\gd$ too. 
Consequently, \eqref{olle} applies to both $\psi$ and $\psi_\gd$, which by
\eqref{ini} implies
\begin{align}\label{fu1}
\limsup_\ntoo \frac{n}{|T_n|}
&\le
\limsup_\ntoo \frac{\zpsi_{\tau(n)}}{Z_{\tau(n)}}
=\mpsi,
\\
\liminf_\ntoo \frac{n}{|T_n|}
&\ge
\liminf_\ntoo \frac{Z^{\psi_\gd}_{\tau(n)}}{Z_{\tau(n)}}
=m_{\psi_\gd}.\label{fu2}
\end{align}
Furthermore, 
\begin{equation}\label{fu3}
  \begin{split}
m_{\psi_\gd}
&=
\ga\intoo e^{-\ga t}\E \psi_\gd(t)\dd t    
=
\ga\int_\gd^\infty e^{-\ga t}\E \psi(t-\gd)\dd t    
\\&
=
\ga\int_0^\infty e^{-\ga (u+\gd)}\E \psi(u)\dd u
=e^{-\ga \gd}\mpsi.    
  \end{split}
\raisetag{\baselineskip}
\end{equation}
Since $\gd>0$ is arbitrary, the result \eqref{tex2e} follows from
\eqref{fu1}--\eqref{fu2} and \eqref{fu3}
by letting $\gd\to0$.

Similarly, \eqref{chu} applies to both $\psi$ and $\psi_\gd$, which by
\eqref{ini} yields $\log n/\tau(n)\asto\ga$, which is \eqref{tex2tau}.
\end{proof}

\begin{remark}
Note that \eqref{tex2e} does \emph{not} hold in the lattice case (in this
paper excluded
by \ref{BPnonlattice}), since then the population and $\zpsi_t$ grow
in discrete steps with asymptotically a fixed factor $>1$ each time.
\end{remark}

We next study the fringe tree $T_n^*$.
Note that the following theorem (and its proof)
applies both if we consider $\cT_t$ 
as an unordered tree and if we consider
it as an ordered (or \mary) tree as in \refR{Rorder}; in the latter case 
$T_n$  
and 
the fringe tree $T_n^*$ are random ordered (or \mary) trees, and $T$ below
should be an ordered (or \mary) tree. We may also have labels on the nodes,
defined by some random function $\ell(t)$ as in \refR{Rlabel}; then $T$
should be a tree with (arbitrary) labels on the nodes.

Recall from \refS{Snotation} that a property 
of a node $v$ that depends only on $v$ and its
descendants may also be regarded as a property of rooted trees (and conversely).

\begin{theorem}[Jagers, Nerman, Aldous] \label{TBP}
  Under the assumptions \refBP, 
the following hold:
\begin{romenumerate}
\item\label{TBPa}
 (Annealed version.)
The random fringe tree
$T_n^*$ converges in distribution as \ntoo{} to the random tree 
$\friT:=\cT_\taux$, where $\taux\sim\Exp(\ga)$ is a random time, independent
  of the   branching process.
\item \label{TBPq}
(Quenched version.)
For every finite tree $T$, 
as \ntoo,
\begin{align}\label{tbpq}
  \P\bigpar{T_n^*\iso T\mid T_n} &= \frac{\nun_T(T_n)}{|T_n|}
\asto \P(\friT\iso T).
\intertext{More generally,
for every property  $\cP$ of a node $v$ that depends only 
on $v$ and its descendants,}
\label{tbpp}
 \P\bigpar{T_n^*\in \cP\mid T_n} 
&=
 \frac{\nun_{\cP}(T_n)}{|T_n|}\asto p_{\cP} (\friT).
\end{align}
Furthermore, for a property of this type,
\begin{equation}
  \label{ak}
p_{\cP}(\friT)=\intoo \ga e^{-\ga t} p_{\cP}(\ctt)\dd t
=\mphi
=\E\hphi(\ga),
\end{equation}
where $\phi(t)$ is the characteristic $\ett{\ctt\in\cP}$.
\end{romenumerate}
\end{theorem}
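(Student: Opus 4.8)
The plan is to derive everything from Nerman's convergence result \eqref{olle} by choosing the right characteristic. First I would prove the quenched statement \eqref{tbpp} for a fixed property $\cP$ depending only on $v$ and its descendants, and then read off \eqref{tbpq} as the special case $\cP=\cP_T$ and \eqref{ak} as the identification of the limiting constant; the annealed statement \eqref{TBPa} then follows from \eqref{tbpq} applied to every finite tree $T$, since convergence in distribution of $T_n^*$ just means \eqref{fringet} for every $\ts$, and the two sides there are the expectations of the two sides of \eqref{tbpq} (or, more directly, \eqref{TBPa} is the $T_n$-unconditional consequence of \eqref{tbpp} by bounded convergence). So the whole theorem reduces to \eqref{tbpp} together with the formula \eqref{ak}.

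For \eqref{tbpp}, the key observation is that for a node $v$ in the family tree, whether $v\in\cP$ depends only on the subtree $\cT_\infty^v$ rooted at $v$, which as a branching process (with time shifted to the birth of $v$) is an independent copy of the whole process. Given such a $\cP$, define the characteristic $\phi(t):=\ett{\ctt\in\cP}$, i.e.\ $\phi_x(t)=\mathbf{1}$ exactly when the subtree of descendants of $x$ born within time $t$ of $x$'s birth has property $\cP$; this is a $\{0,1\}$-valued characteristic of the type allowed by \refR{Rnerman7} (it depends on $x$ and its descendants), it lies in $D\ooo$ since $\ctt$ changes only at the finitely many birth times, and it trivially satisfies \ref{BP6.2} because it is bounded. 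Then $\zgf_t=\sum_{x:\gs_x\le t}\ett{\cT_{t-\gs_x}^x\in\cP}$ counts exactly those individuals born by time $t$ whose subtree-so-far has $\cP$. I would then evaluate this at the stopping time $t=\tau(n)$: by \refT{Tex}\ref{tex1} we have $\tau(n)\asto\infty$, so Nerman's \eqref{olle} gives $\zgf_{\tau(n)}/Z_{\tau(n)}\asto m_\phi$. Since $|T_n|=Z_{\tau(n)}$ and $\zgf_{\tau(n)}$ is precisely $\nun_{\cP^*}(T_n)$ where $\cP^*$ is ``$v\in\cP$ computed inside $T_n=\cT_{\tau(n)}$'', the only remaining point is that $\nun_{\cP^*}(T_n)=\nun_{\cP}(T_n)$, i.e.\ that $\cT_{\tau(n)}^v\in\cP \iff \cT_\infty^v\in\cP$ for all $v\in T_n$; but $\cP$ is a property of finite rooted trees and $\cT_{\tau(n)}^v=T_n^v$ is the actual subtree of the finite tree $T_n$ below $v$, so this is an identity, not an approximation. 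Hence $\nun_{\cP}(T_n)/|T_n|\asto m_\phi$, and by \eqref{rkp} this is the conditional probability $\P(T_n^*\in\cP\mid T_n)$, giving \eqref{tbpp} with limit $m_\phi$.

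It remains to identify $m_\phi$ with $p_\cP(\friT)$. By the last expression in \eqref{olle}, $m_\phi=\E\hphi(\ga)=\ga\intoo e^{-\ga t}\E\phi(t)\dd t=\ga\intoo e^{-\ga t}\P(\ctt\in\cP)\dd t$, which is the first displayed equality in \eqref{ak}; and since $\taux\sim\Exp(\ga)$ has density $\ga e^{-\ga t}$ on $\ooo$ and is independent of the branching process, $\intoo \ga e^{-\ga t}\P(\ctt\in\cP)\dd t=\P(\cT_\taux\in\cP)=p_\cP(\friT)$ by conditioning on $\taux$. This also confirms \eqref{TBPa}: applying the above with $\cP=\cP_T$ for each finite tree $T$ shows $\P(T_n^*\iso T)=\E[\nun_T(T_n)/|T_n|]\to \P(\friT\iso T)$, which is convergence in distribution $T_n^*\dto\friT$.

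The main obstacle is a bookkeeping one rather than a hard estimate: one must be careful that Nerman's theorem is being applied to a characteristic of the ``generalized'' type in \refR{Rnerman7} (depending on $x$'s descendants, not i.i.d.\ in the naive sense), and one must check that stopping the branching process at the data-dependent random time $\tau(n)$ is legitimate --- here the point is simply that $\tau(n)$ is a finite deterministic-looking time for each fixed realization and $\tau(n)\to\infty$ a.s., so the a.s.\ convergence in \eqref{olle} holds along $t=\tau(n)$ without any further argument. A secondary subtlety worth a sentence is the labelled / ordered / $\mary$ setting of \refR{Rorder} and \refR{Rlabel}: the same proof works verbatim because ``$\ctt\in\cP$'' is then interpreted for the ordered (or labelled) subtree, and Nerman's results are insensitive to this extra marking.
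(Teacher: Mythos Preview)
Your proposal is correct and follows essentially the same route as the paper: define the characteristic $\phi(t)=\ett{\ctt\in\cP}$ (of the type in \refR{Rnerman7}), apply Nerman's convergence \eqref{olle} to get $n_\cP(\ctt)/|\ctt|\asto m_\phi$, pass to the stopped tree via \refT{Tex}\ref{tex1}, identify $m_\phi=p_\cP(\friT)$ through the $\Exp(\ga)$ density of $\taux$, and deduce the annealed statement by dominated convergence. One minor remark: your intermediate step introducing $\cP^*$ and invoking $\cT_\infty^v$ is unnecessary --- by construction $\phi_x(t-\gs_x)=\ett{\ctt^x\in\cP}$ already refers to the finite subtree of $\ctt$ rooted at $x$, so $\zgf_t=n_\cP(\ctt)$ is an identity with nothing further to check.
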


More precisely, the characteristic $\phi$ in \eqref{ak}
is defined as in
\refR{Rnerman7} with $\phi_0(t):=\ett{\ctt\in\cP}$.

\begin{proof}
This is a special case of the main results in
\citet{JagersNerman1984} and \cite{NermanJagers1984}, and is one of the main
examples in \citet{Aldous}, but we give the simple proof for completeness
and in our setting.

Note first that \eqref{tbpq} is a special case of \eqref{tbpp}, with
$\cP=\cP_T$, see \refS{Snotation}, and thus $\phi(t)=\ett{\ctt\iso T}$.
(Some readers might prefer to consider this case first.)

The function $\phi(t)$ is clearly a \setoi-valued random function in
$D\ooo$,
so $\phi$ is a characteristic.
The assumption \ref{BP6.2} holds trivially since $\phi$ is
bounded.
Furthermore, \eqref{ak} is a consequence of the definition
$\friT:=\cT_{\taux}$,
the fact that $\tau\sim\Exp(\ga)$ has the density function $\ga e^{-\ga t}$, 
$\E\phi(t)=\P(\ctt\in\cP)=p_{\cP}(\ctt)$,
and \eqref{olle}.

Moreover, the characteristic $\phi_x(t-\gs_x)$ of $x$
at time $t$ is the indicator $\ett{\ctt^x\in\cP}$ 
that the subtree $\cT_{t}^x$ of $\cT_t$ rooted
at $x$ satisfies $\cP$.
Thus, the total characteristic $\zgf_t$ is
the number of nodes $v\in\ctt$ such that $\ctt^v\in\cP$,
which by definition holds if and only if $v$ has the property $\cP$;
hence, $\zgf_t=n_{\cP}(\cT_t)$.
Consequently, 
\eqref{olle} yields
\begin{equation}\label{london}
  \frac{n_\cP(\cT_t)}{|\cT_t|}
=
\frac{\zgf_t}{Z_t}
\asto \mphi.
\end{equation}
By \refT{Tex}, we also have (\as) $\tau(n)<\infty$ for every $n$
and $\tau(n)\to\infty$ as \ntoo; thus \eqref{london} implies,
as \ntoo,
\begin{equation}\label{paddington}
  \frac{n_\cP(T_n)}{|T_n|}
=
  \frac{n_\cP(\cT_{\tau(n)})}{|\cT_{\tau(n)}|}
\asto \mphi.
\end{equation}
The result \eqref{tbpq} follows from \eqref{paddington} and \eqref{ak}.
As said above, \eqref{tbpq} is a special case, and
the annealed version \ref{TBPa} follows by taking the expectation in
\eqref{tbpq}, yielding (by dominated convergence) 
$\cP(\Tx_n\iso T)\to\P(\friT\iso T)$ for every fixed tree $T$.
(Recall that there is only a countable set of finite trees $T$, so this
shows convergence in distribution. Alternatively, one can take the
expectation of \eqref{tbpp}.)
\end{proof}

\begin{remark}\label{RC}
As said above, \eqref{tbpq} is a special case of \eqref{tbpp}. Conversely,
again because  there is only a countable set of finite trees $T$,
\eqref{tbpq} is equivalent to the \as{} convergence 
of the distributions in \eqref{fringeCas},
and thus to \eqref{tbpp}, \cf{} \refR{Rk}.
(In general, for distributions on a countable sample space, convergence of
the individual point probabilities is equivalent to convergence in total
variation \cite[Theorem 5.6.4]{Gut}.)
Hence, \eqref{tbpq} and \eqref{tbpp} are equivalent. (We state both versions
for convenience in later applications.)
\end{remark}

\begin{remark}\label{RX}
We have stated the result \eqref{tbpq}
for the stopped trees $T_n$, but proved it 
by proving the corresponding result for
  the full branching process, see \eqref{london} and \eqref{paddington}.
In fact, the two types of results are equivalent; by choosing the weight
$\psi=1$ as in \refE{E1}, the trees $\cT_t$ run through the same (countable)
set of trees as $\ttoo$ as $T_n$ does as $\ntoo$; hence \eqref{london} and
\eqref{paddington} are equivalent. The same holds for 
\eqref{tbpp} and for \eqref{tbpxq} and \eqref{tbpxq+} in 
Theorems \ref{TBPX}
and \ref{TQ} below, where again we state the results for $T_n$, in view of
our applications in later sections, but the results also hold for $\cT_t$.
\end{remark}

\begin{remark}\label{Rsame}
  Note that the asymptotics in \refT{TBP} do not depend on the choice of
  weight $\psi$; any weight gives the same asymptotic fringe tree
  distribution.
Of course, this is an immediate consequence of the proof using
\eqref{paddington} and \eqref{london}, see also \refR{RX}.
Note that for this proof, it is essential that we consider convergence
almost surely (and not, \eg, in probability).
\end{remark}

\begin{remark}\label{Rnumber}
In cases when $|T_n|$ is  random, 
it is often of interest to study the number
$\nun_{\cP}(T_n)$ rather than the fraction $\nun_{\cP}(T_n)/|T_n|$ in
\eqref{tbpp}. Assuming \refBPlastpsi{}, we can combine \eqref{tbpp}
and \eqref{tex2e} and obtain
\begin{equation}
  \label{tbpp2}
\frac{\nun_{\cP}(T_n)}{n}\asto \frac{p_{\cP} (\friT)}{m_\psi}.
\end{equation}
\end{remark}

\begin{remark}
  \label{Rmean}
Since the fractions in \eqref{tbpq} and \eqref{tbpp} are bounded by 1, the
\as{} convergence above  immediately yields also convergence of the
expectation by the dominated convergence theorem; thus
\begin{equation}
  \label{etbpp}
\E \frac{\nun_{\cP}(T_n)}{|T_n|}\to p_{\cP} (\friT).
\end{equation}
This is particularly nice in the common case when the weight $\psi(t)=1$, so
$|T_n|=n$ deterministically;
then \eqref{etbpp} can be written
\begin{equation}
  \label{etbpp2}
\E \nun_{\cP}(T_n)= p_{\cP} (\friT) n+o(n).
\end{equation}
For other weights $\psi$, we can (assuming \refBPlastpsi) 
use \eqref{tbpp2}.
If we furthermore have a deterministic bound $|T_n|\le Cn$ for some constant
$C$ (which, for example, is the case for the \mst{s} in Sections \ref{SSemst}
and \ref{SSmst}), 
then dominated convergence applies again and yields
\begin{equation}
  \label{etbpp3}
\E\nun_{\cP}(T_n)= \frac{p_{\cP} (\friT)}{m_\psi}n+o(n).
\end{equation}
\end{remark}

We give a simple but important corollary to \refT{TBP}, showing that the degree
distribution in $T_n$ converges to the distribution of $D:=\Xix{\taux}$ (with
$\Xi$ and $\taux$ independent).
\begin{corollary}\label{Cdeg}
Let $n_k(T_n)$ be the number of nodes in $T_n$ with outdegree $k$.
Under the assumptions  \refBP{} above, 
  \begin{equation}\label{cdeg}
	\frac{n_k(T_n)}{|T_n|}\asto \P(D=k),
  \end{equation}
where $D=\Xix{\taux}$ is the degree of the root of $\friT$.
In other words, if $D_n$ denotes the outdegree of a uniformly random node in
$T_n$, then 
\begin{equation}
  \cL(D_n\mid T_n)\asto \cL(D).
\end{equation}
\end{corollary}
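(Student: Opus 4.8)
The plan is to obtain this as an immediate specialisation of \refT{TBP}. Fix an integer $k\ge0$ and let $\cP_k$ be the property of a node $v$ that $\dout(v)=k$, i.e.\ that $v$ has exactly $k$ children. This depends only on $v$ together with its immediate children, hence a fortiori only on the subtree $T^v$, so it is a property of the kind to which the quenched statement \eqref{tbpp} applies. Since by definition $\nun_{\cP_k}(T_n)=n_k(T_n)$, \eqref{tbpp} gives directly
\[
\frac{n_k(T_n)}{|T_n|}=\frac{\nun_{\cP_k}(T_n)}{|T_n|}\asto p_{\cP_k}(\friT).
\]

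It then remains to identify the constant $p_{\cP_k}(\friT)$ as $\P(D=k)$. Recall that $\friT=\cT_\taux$ with $\taux\sim\Exp(\ga)$ independent of the branching process, and that $\cT_t$ consists of exactly those individuals born in $[0,t]$; hence the children of the root in $\cT_\taux$ are precisely the children of the initial individual born at times $\le\taux$, and their number is $\Xix{\taux}$, the number of points of the root's offspring process $\Xi$ in $[0,\taux]$. Since $\taux$ is independent of $\Xi$, this is exactly the random variable $D=\Xix{\taux}$ in the statement (with $\Xi$ and $\taux$ independent), so $p_{\cP_k}(\friT)=\P(D=k)$. Equivalently, \eqref{ak} applied with the characteristic $\phi(t)=\ett{\ctt\in\cP_k}$ gives the explicit formula $p_{\cP_k}(\friT)=\intoo\ga e^{-\ga t}\,\P\bigl(\Xix{t}=k\bigr)\dd t$, which is just $\P(D=k)$ after conditioning on $\taux$. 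This proves \eqref{cdeg}.

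For the statement about conditional laws, note that the state space $\bbZgeo$ of outdegrees is countable, so the \as{} convergence above holds, on a single event of probability $1$, simultaneously for all $k\ge0$; and convergence of all the point probabilities $\P(D_n=k\mid T_n)=n_k(T_n)/|T_n|\to\P(D=k)$ on the countable set $\bbZgeo$ is equivalent to convergence in total variation (\cf{} \refR{RC}), which gives $\cL(D_n\mid T_n)\asto\cL(D)$.

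I do not foresee any real obstacle here: the corollary is a direct reading of \refT{TBP}, and the only two points that require a word are that ``$v$ has outdegree $k$'' is a legitimate property of the fringe subtree and that the root of $\cT_\taux$ has outdegree $\Xix{\taux}$, both of which are immediate from the definitions.
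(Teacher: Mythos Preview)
Your proof is correct and follows essentially the same approach as the paper: apply \eqref{tbpp} from \refT{TBP} to the property ``outdegree equals $k$'' and then identify the limit by noting that the root of $\cT_\taux$ has degree $\Xix{\taux}$. Your additional remark on the conditional-law formulation via countability and \refR{RC} is a welcome elaboration of what the paper leaves implicit.
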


\begin{proof}
  Let $\cP$ be the property of a node that it has outdegree $k$. Then
  $n_k(T)=n_\cP(T)$. Hence, \eqref{tbpp} shows that $n_k(T_n)/|T_n|$ \as{}
  converges to the probability that the root of $\friT$ has (out)degree
  $k$. However, the root of $\cT_t$ has degree $\Xix{t}$, so the degree $D$
of the root of $\friT=\cT_\taux$ equals $\Xix{\taux}$ and \eqref{cdeg}
follows.  
\end{proof}

\begin{remark}\label{RD}
Since $D=\Xix{\taux}$ and
$\taux\sim\Exp(\ga)$,
\begin{equation}\label{D}
 \P(D\ge i)= \P(\taux \ge \xi_i)
=\E e^{-\ga \xi_i}. 
\end{equation}
See further \refR{Rheir} below.
\end{remark}

In order to extend \refT{TBP} to the extended fringe, we first define the 
limiting random sin-tree $\sinT$; this is the family tree of the 
\emph{doubly infinite pedigree process} in \cite{NermanJagers1984}
(\emph{doubly infinite stable population process} in \cite{JagersNerman1984}).
In the latter, we start with an individual $o$ (``ego'') born at time 0, and
grow a branching process starting with it as usual. We also give $o$ an
infinite line of 
ancestors $o\mm1,o\mm2,\dots$ having a modified distribution of their life
histories defined below, and let each child $x$ of each ancestor $o\mm k$,
except $x=o\mm{k-1}$, start a new branching process where all individuals
have the original distribution. We denote the (infinite) family tree of this
branching process by $\sinT_t$, $-\infty<t<\infty$. 
Finally, we stop the entire process at a random time $\taux\sim\Exp(\ga)$ as
before, and let $\sinT:=\sinT_{\taux}$ be the resulting sin-tree, with
distinguished node $o$. 
(Note that the subtree of $\sinT$ rooted at $o$ equals $\friT$ defined in
\refT{TBP}.) 

It remains to define 
the distribution of the life history of an ancestor. This is
really a distribution of a life history with a distinguished child, which we
call the \emph{heir}. The heir may be any child, but the probability
distribution is weighted by $e^{-\ga\tau}$, where $\tau$ is the time the heir
  is born. Thus, recalling that the children are born at times
  $(\xi_i)_{i=1}^N$, 
for any event $\cE$ in the life history,
  \begin{equation}\label{heir}
\P(\cE, \text{ and the heir is the $i$:th child})
=\int_{\cE} e^{-\ga\xi_i} dP,
  \end{equation}
where for $i>N$ we define $\xi_i=\infty$, so $e^{-\ga\xi_i}=0$.
In particular, 
  \begin{equation}\label{heir1}
q_i:=
\P(\text{the heir is the $i$:th child})
=\E e^{-\ga\xi_i}.
  \end{equation}
Note that \eqref{heir} defines a probability distribution, since the total
probability equals
\begin{equation}
  \sum_{i=1}^\infty \int_{\gO} e^{-\ga\xi_i} dP
=\E \sum_{i=1}^N e^{-\ga \xi_i} 
=1
\end{equation}
by 
\eqref{malthustau}.

We may give the children of the ancestor another order as in \refR{Rorder},
still using \eqref{heir}. Note that then \eqref{heir}--\eqref{heir1} 
hold also if we
consider the $i$:th child in the final order and redefine $\xi_i$ as the
birth time of that child; this is seen by summing over all children and
combinations of marks $\nu_j$ that put a certain child in place $i$ at a
given time.

The ancestors $o\mm k$ are given independent copies of this modified life
history distribution, and are put together so that the heir of $o\mm k$ is
$o\mm{k-1}$ (with $o\mm0=o$); 
this also defines recursively the birth times of all $o\mm k$.

\begin{remark}\label{Rage}
Let $\heir$ denote the age of an ancestor when its heir is born.
Then $\heir$ has by \eqref{heir} the distribution 
$e^{-\ga t}\mu(\ddx t)$, \ie, the distribution in \refR{RBPmalthusage}. 
Its Laplace transform is given by 
\begin{equation}\label{rage}
\E e^{-s\heir}=\intoo e^{-st-\ga t}\mu(\ddx t)
=\hmu(\ga+s)=\E\hXi(\ga+s)  ,
\end{equation}
\cf{} \eqref{heir} and \eqref{xib}--\eqref{hmub}.
Assumption \ref{BPmub} thus says that $\E e^{\eps \heir}<\infty$ for some
$\eps>0$. 
In particular, $\heir$ has a finite expectation
\begin{equation}\label{el}
\gb:=\E\heir=\intoo te^{-\ga t}\mu(\ddx t) <\infty.  
\end{equation}
By \eqref{rage}, we also have the formula
\begin{equation}\label{el2}
\gb=  \E\heir = -\ddd{s}\E e^{-s\heir}\Bigm|_{s=0}
=-\hmu'(\ga),
\end{equation}
and directly from \eqref{heir}, or by  \eqref{el},
\begin{equation}\label{el3}
  \gb = \E \sum_{i=1}^N \xi_i e^{-\ga\xi_i}.
\end{equation}
\end{remark}

\begin{remark}\label{Rheir}
Let, as in \eqref{heir1},
$q_i$ be the probability that the heir in the ancestor distribution
is child $i$ (in birth order), and 
let $D=\Xix{\taux}$ be the degree of the root in $\friT$, which by
  \refC{Cdeg} is the 
  limit in distribution of the outdegree of a random node in $T_n$.
  By \eqref{heir1} and \eqref{D},
\begin{equation}\label{heirD}
  q_i=\E e^{-\ga \xi_i} 
=  \P(D\ge i),
\end{equation}
so the two distributions are closely related. 
Note also that \eqref{heirD} implies
\begin{equation}\label{ED}
  \E D = \sumii \P(D\ge i)=\sumii q_i=1,
\end{equation}
so the average asymptotic outdegree is always 1.
This should not be surprising; it just is an asymptotic version of the fact
that in a tree with 
$n$ nodes, there are together $n-1$ children, and thus the average outdegree
is $1-1/n$; see also see \cite[Lemma 1]{Aldous}.
\end{remark}

\begin{remark}\label{RD2}
Recall that we may regard the node set of $\friT$ as a subset of $\voo$, the
node set of the infinite Ulam--Harris tree.
Let $v\in\voo$. 
  By the recursive definition of the branching process $\ctt$
and the
  memoryless property of the exponential random variable $\taux$,
it follows that conditioned on $v\in\friT=\cttaux$,
the subtree of $\friT$ rooted at $v$ has the same distribution as
$\friT$. In particular, conditioned on $v\in\friT$, the outdegree of $v$ has
the same distribution as $D$. 

It follows from this and \eqref{ED}, by induction, that for every $k\ge0$,
the expected number of nodes in the $k$:th generation of $\ctt$ is 1.
In particular, the expected size $\E|\ctt|=\infty$.

Note also that the outdegrees of two
different nodes are not independent, since they both depend on the common
stopping time $\taux$; it is easy too see that for any $v,w\in\voo$,
conditioned on $v,w\in\friT$, the outdegrees $\deg(v)$ and $\deg(w)$ are
(strictly) positively correlated.

In fact, the properties in this remark except the last one
hold for any fringe distribution in
the sense of \citet{Aldous}, see \cite[Section 2.1]{Aldous}.
However, the positive correlation of node degrees is not general; in
particular,
it makes the asymptotic fringe trees $\friT$ studied in this paper
different from the ones obtained from conditioned Galton--Watson trees,
since the latter are just unconditioned Galton--Watson trees, where all
outdegrees are 
independent, see \cite{Aldous}.

\end{remark}

\begin{theorem}[Jagers, Nerman, Aldous]\label{TBPX}
  Under the assumptions \refBP,
as \ntoo,
$\dep(v)\pto\infty$ for a random node $v\in T_n$ and thus each $T_n^{*,-k}$ is
well-defined \whp; moreover,
the following hold:
\begin{romenumerate}
\item\label{TBPXa}
 (Annealed version.)
The extended random fringe tree of
$T_n$ converges in distribution as \ntoo{} to the random sin-tree 
$\sinT$ defined above, in the sense \eqref{extended}, see \refS{Sextended}.
\item \label{TBPXq}
(Quenched version.)
The convergence in \ref{TBPXa} holds also
conditioned on $T_n$, a.s.
Equivalently, 
for every property $\cP$ of nodes $v$ that depends only on $v$, its
descendants and the descendants of 
its ancestors at most a fixed number of generations back,
we have if $v$ is a uniformly random node in $T_n$,
\begin{equation}\label{tbpxq}
\P(v\text{ has }\cP\mid T_n)=
\frac{\nun_{\cP}(T_n)}{|T_n|}\asto p_{\cP} (\sinT).
\end{equation}
\end{romenumerate}
\end{theorem}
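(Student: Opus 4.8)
The statement of \refT{TBPX} extends \refT{TBP} from fringe trees to extended fringe trees, and the strategy is to mimic the proof of \refT{TBP} as closely as possible, reducing everything to an application of Nerman's strong law \eqref{olle} for a suitably chosen characteristic, now one that ``looks back'' a bounded number of generations. First I would dispose of the preliminary claim that $\dep(v)\pto\infty$ for a uniformly random $v\in T_n$. For this, let $\cP_h$ be the property that a node has depth $<h$; this is \emph{not} a property of the form covered by \refT{TBP} (it depends on ancestors, not descendants), so instead I would argue via the branching process directly: in $\ctt$, the number of individuals of depth $<h$ is a.s.\ finite for each fixed $t$ (it is dominated by the total progeny up to generation $h$ of the embedded process, which is a.s.\ finite since $\mu(t)<\infty$), and by \eqref{london}/\eqref{paddington} the denominator $|T_n|=Z_{\tau(n)}\asto\infty$. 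Hence $\nun_{\cP_h}(T_n)/|T_n|\to0$ a.s., which gives $\P(\dep(v)<h\mid T_n)\to0$ a.s., in particular in probability, so $\dep(v)\pto\infty$.

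For the main assertion I would use \refR{Rnerman7}: fix $k$ and a property $\cP$ of nodes depending only on $\Tvk$, i.e.\ on $v$, its descendants, and the descendants of $o\mm1,\dots,o\mm k$. Working in the branching process, I would want a characteristic $\phi$ counting exactly the nodes with property $\cP$. The subtlety is that whether $v$ has $\cP$ depends on data \emph{above} $v$, so $\phi$ cannot be attached to $v$ alone. The clean fix is to attach the characteristic to the ancestor $v\mm k$ instead: define $\phi_0(t)$ to be the number of nodes $w$ of depth exactly $k$ in $\ctt$ (relative to the root of the current subtree) such that the marked tree rooted at the current root with $w$ distinguished lies in $\cP$. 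By \refR{Rnerman7} this is a legitimate characteristic (it depends on the whole subtree below the individual it is attached to, which is allowed), it is bounded by the number of depth-$k$ descendants and hence satisfies \ref{BP6.2}, and $\zgf_t$ counts precisely the pairs $(v\mm k,v)$ with $v$ of depth $\ge k$ in $\ctt$ and $v$ having $\cP$ — i.e.\ $\zgf_t=\nun_{\cP}(\ctt)$, up to the negligible boundary term coming from nodes of depth $<k$ which, as in the first paragraph, contribute $o(|\ctt|)$. (The supplementary definition for $\dep(v)<k$ does not matter precisely because of this.) Then \eqref{olle} gives $\nun_{\cP}(\ctt)/|\ctt|\asto m_\phi$, and passing to the stopped trees via \refT{Tex} exactly as in \eqref{paddington} yields the \as\ limit in \eqref{tbpxq}. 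It remains to identify $m_\phi=\E\hphi(\ga)$ with $p_{\cP}(\sinT)$; this follows by unwinding the definition of the doubly infinite pedigree process: the weighting of the ancestor $o\mm k$'s life history by $e^{-\ga\xi_i}$ in \eqref{heir}, iterated $k$ times up the ancestral line, together with the factor $\ga e^{-\ga t}$ from $\taux\sim\Exp(\ga)$, reproduces exactly the $e^{-\ga(\cdot)}$-weights appearing in $\E\hphi(\ga)=\ga\intoo e^{-\ga t}\E\phi(t)\dd t$ — this is the content of \eqref{heir1}, \eqref{heirD}, and the size-biasing in Nerman--Jagers. Finally, \ref{TBPXa} and the product-topology convergence \eqref{extended} follow from the quenched statement by taking expectations (dominated convergence, as the fractions are bounded by $1$) and by \refR{Rk}-style reasoning: there are only countably many finite marked trees, so convergence of each point probability $\P(\Txk_n\iso T)$ gives convergence in distribution for each $k$, hence jointly in the product topology; equivalently one invokes \cite[Theorem 5.6.4]{Gut} as in \refR{RC}.

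**Main obstacle.** The genuinely non-routine part is the bookkeeping in the second step: attaching the characteristic to the ancestor $v\mm k$ rather than to $v$, and checking that $\zgf_t$ really equals $\nun_{\cP}(\ctt)$ with an error of only $o(|\ctt|)$ coming from shallow nodes. One must be careful that ``depth $k$ relative to the current subtree root'' is the right notion so that summing over all individuals $x=v\mm k$ counts each qualifying $v$ exactly once, and that the characteristic depends only on the subtree below $x$ (so \refR{Rnerman7} applies) — it does, since $\Tvk$ is entirely contained in that subtree. The second, more conceptual obstacle is verifying $m_\phi=p_{\cP}(\sinT)$: this requires understanding that Nerman's limit $m_\phi=\E\hphi(\ga)$ for an ancestor-based characteristic is, by the general Jagers--Nerman theory (\cite{JagersNerman1984,NermanJagers1984}), precisely the probability computed in the size-biased doubly infinite pedigree process, which is how $\sinT$ was \emph{defined}; I would state this as the crux and refer to \cite{Aldous,JagersNerman1984,NermanJagers1984} for the identification rather than re-deriving the pedigree construction from scratch.
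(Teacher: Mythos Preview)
Your approach is close in spirit to the paper's but takes an unnecessary detour and has one genuine gap.

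\textbf{The gap.} Your preliminary argument for $\dep(v)\pto\infty$ is incorrect as stated: you claim the number of individuals of depth $<h$ is dominated by ``the total progeny up to generation $h$ of the embedded process, which is a.s.\ finite since $\mu(t)<\infty$''. But $\mu(t)<\infty$ only says that $\E\Xi([0,t])<\infty$ for finite $t$; it says nothing about $N=\Xi([0,\infty))$, and indeed $N=\infty$ a.s.\ in key examples (the \rrt, positive linear preferential attachment). So the total size of the first $h$ generations is a.s.\ infinite in those cases, and your bound fails. The paper avoids this entirely by deriving $\dep(v)\pto\infty$ \emph{after} the main convergence, as the special case where $A$ is the set of all finite trees with a marked node at depth $k$: then $\cP(A)$ is precisely ``$\dep(v)\ge k$'' and $p_{\cP(A)}(\sinT)=1$.

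\textbf{The detour.} You attach a counting characteristic to the ancestor $v\mm k$ and invoke \refR{Rnerman7} directly. The paper does something simpler: for a fixed marked tree $T$ with distinguished node at depth $k$, it observes the bijection
\[
n_{\cP(T)}(T_n)=n_T(T_n),
\]
since each qualifying $v$ determines a unique $w=v\mm k$ with $T_n^w\iso T$ and conversely (for ordered trees the isomorphism, hence the position of the distinguished node, is unique). The right-hand side is exactly what \refT{TBP} handles, with the indicator characteristic $\ett{\ctt\iso T}$ --- no new characteristic, no new verification of \ref{BP6.2}. Your counting characteristic works too, but your justification of \ref{BP6.2} (``bounded by the number of depth-$k$ descendants'') is loose: that number is not bounded, and one actually needs $\E\sum_{x:\text{depth }k}e^{-\gth\gs_x}=\hmu(\gth)^k<\infty$ for some $\gth\in(0,\ga)$, which follows from \ref{BPmub}.

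\textbf{The identification.} You defer the equality $m_\phi=p_\cP(\sinT)$ to the Jagers--Nerman references. The paper instead gives a three-line computation: writing $V=\sum_{i=0}^{k-1}\tauy{i}$ for the birth time of the distinguished node along the marked path, the memoryless property of $\taux\sim\Exp(\ga)$ yields
\[
\P(\cT_\taux\iso T)=\E\Bigl[\ett{\cT_{V+\tauxx}\iso T}\prod_{i=0}^{k-1}e^{-\ga\tauy{i}}\Bigr],
\]
and the product of factors $e^{-\ga\tauy{i}}$ is exactly the Radon--Nikodym weight \eqref{heir} defining the ancestor distribution in $\sinT$; shifting time so the distinguished node is born at $0$ identifies this with $\P(\sinT^{o\mm k}\iso T)$. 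This is the crux you flagged, and it is short enough that it should be written out rather than cited.
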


\begin{proof}
Again, this is a special case of the main results in
\citet{JagersNerman1984} and \cite{NermanJagers1984}, and is at least
implicit in  \citet{Aldous}, but we give the proof for completeness.

First
consider the case of ordered trees (possibly with labels) with the children
taken in order of birth. 
Fix a finite tree $T$ with a distinguished node of depth $k\ge0$, and let
$v_0\dotsm v_k$ be the path in $T$ from the root to the distinguished node;
also, let $v_i$ be the $j_i$:th child of $v_{i-1}$.
Let $\cP=\cP(T)$ be the property of a node $v$ that it has depth at least
$k$ and 
that, if $w$ is its $k$:th ancestor, the subtree $T^w$, with $v$ as
distinguished node, is isomorphic to $T$.
Then $n_\cP(T_n)$, the number of $v\in T_n$ that have this property, equals
the number of $w\in T_n$ such that $T_n^w\iso T$, \ie, $n_T(T_n)$.
Thus, by \refT{TBP},
\begin{equation}\label{sn}
\frac{\nun_{\cP(T)}(T_n)}{|T_n|}
=
\frac{\nun_{T}(T_n)}{|T_n|}
\asto \P(\friT\iso T)=\P(\cT_\taux\iso T).
\end{equation}

Construct $\friT=\cT_{\taux}$ as above, and let $\tauy{i}=\xi_{v_i,j_i}$ be
the age of $v_i$ when its distinguished child 
$v_{i+1}$ is born. The distinguished node $v_k$
is thus born at time $V=\sum_{i=0}^{k-1}\tauy{i}$.  
If $\cT_{\taux}\iso T$, then necessarily $\taux\ge V$. Moreover, conditioned on
$\taux\ge V$, $\taux$ has the same distribution as $V+\tauxx$ with
$\tauxx\sim\Exp(\ga)$ and independent of everything else. Thus,
by conditioning on $V$,
\begin{equation}\label{sno}
  \begin{split}
\P(\cT_\taux\iso T)
=\E\bigpar{e^{-\ga V}\ett{\cT_{V+\tauxx}\iso T}}
=\E\Bigpar{\ett{\cT_{V+\tauxx}\iso T}\prod_{i=0}^{k-1} e^{-\ga \tauy{i}}},
  \end{split}
\end{equation}
By shifting the time parameter in $\cT$ by $V$, so that the distinguished
node $v_k$ becomes born at time 0, and recalling that the subtree
$\sinT^{o\mm k}$ has the modified distribution \eqref{heir} for the
ancestors of the distinguished node, we see that \eqref{sno} equals 
\begin{equation}\label{snq}
 \P\bigpar{\sinT^{o\mm k}\iso T}
=p_{\cP(T)}(\sinT).
\end{equation}

Consequently, \eqref{sn}--\eqref{snq} show that
\begin{equation}\label{snx}
\frac{\nun_{\cP(T)}(T_n)}{|T_n|}\asto \P\bigpar{\sinT^{o\mm k}\iso T}
=p_{\cP(T)}(\sinT)
\end{equation}
for every finite tree with a distinguished node of depth $k$.

More generally, for any fixed $k\ge0$ and any set $A$ of finite trees, each
having a distinguished node of depth $k$, let $\cP=\cP(A):=\bigcup_{T\in
  A}\cP(T)$ be the property of a node $v$ that it has depth at least $k$
and that $T^w\in A$, where $w$ is its $k$:th ancestor.
Then as in \eqref{sn}, by \refT{TBP} applied to the property $T^w\in A$, 
and using again \eqref{sno}--\eqref{snq},
\begin{equation}\label{sna}
  \begin{split}
\frac{\nun_{\cP(A)}(T_n)}{|T_n|}\asto \P(\cT_\taux\in A)
&= \sum_{T\in A} \P(\cT_\taux \iso T)
= \sum_{T\in A} \P\bigpar{\sinT^{o\mm k}\iso T}
\\&
=  \P\bigpar{\sinT^{o\mm k}\in A}
=p_{\cP(A)}(\sinT).	
  \end{split}
\end{equation}
In particular, taking $A$ to be the set of all finite trees, $\cP(A)$ is the
property that $\dep(v)\ge k$ and $p_{\cP(A)}(\sinT)=1$, so \eqref{sna} shows
that for any $k$, $\P(\dep(v)\ge k)\to1$ for a random node $v$ in $T_n$. 
Since $k$ is
arbitrary, thus $\dep(v)\pto\infty$.
Moreover, 
every property $\cP$ in \eqref{tbpxq} is of the form $\cP(A)$ for some $k$ and
$A$, and thus the result \eqref{tbpxq} follows.

As in \refT{TBP}, the annealed case follows from the quenched case by taking
expectations. 

The case of unordered trees follows by ignoring the order.

Finally, if $\cT_t$ is an ordered tree with the order of children
defined by marks
$\nu_i$ as in \refR{Rorder}, we first fix an integer $M$ and 
consider $\cT_t$ and $\sinT$ ordered by
birth order and with each node labelled with 
the sequence of marks $\bnu_M:=(\nu_i)_{i=1}^{M\land N}$
(in addition to existing labels, if any). 
(We use a cut-off $M$ in order to keep the space of labels countable.)
We have just shown that \eqref{sna} holds
for any set $A$ of ordered trees with such a label $\bnu_M$ on each
node. Since the 
birth order and the marks define the true order in the trees, it follows
immediately that \eqref{snx} holds also with the true order in $T_n$ and
$\sinT$, for any tree $T$ with such marks and with maximum degree at most
$M$. Since $M$ is arbitrary, it holds with the true order
for any $T$, and we may then
forget the marks. (For an \mary{} tree, we keep the marks.) 
Then \eqref{sna} and \eqref{tbpxq} follow as above.
\end{proof}

\subsection{An extension to some more general properties}\label{SStrunc}
In \refT{TBPX}, we consider only properties of a node $v$
that depend only on $v$, its ancestors at most a fixed number of generations
back, and their descendants. (\refT{TBP} is even more restrictive.)
In this subsection, we show how this result can be extended to some
properties that depend on all ancestors of $v$. A typical example is the
property that $v$ has no ancestor with outdegree 1; we consider this and
some related examples in \refS{Sclades}.
(This section can be omitted at the first reading.)

\begin{theorem}\label{TQ}
  Let $\cP_0$ and $\cQ$ be two properties of a node $v$ in a tree, such
  that both $\cP_0$ and $\cQ$ depend only on $v$ and its descendants.
Let $\cP$ be the property of a node $v$ that $v$ satisfies $\cP_0$  but no
ancestor of $v$ satisfies $\cQ$.
Suppose, in addition to \refBP, that
\begin{equation}\label{lq0}
  \E \hXi(\ga)^2<\infty
\end{equation}
and that if\/
$\gL:=\sup\set{t:\ctt\in\cQ}$, then
\begin{equation}\label{lqe}
  \E e^{\gd \gL}<\infty
\end{equation}
for some $\gd>0$.

Then, as \ntoo,
if $v$ is a uniformly random node in $T_n$,
\begin{equation}\label{tbpxq+}
\P(v\text{ has }\cP\mid T_n)=
\frac{\nun_{\cP}(T_n)}{|T_n|}\asto p_{\cP} (\sinT).
\end{equation}
\end{theorem}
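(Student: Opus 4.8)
The plan is to reduce $\cP$ to a property of the type covered by \refT{TBPX} by truncating the distance back to ancestors that one must inspect. Fix $k\ge0$ and let $\cP\mm k$ be the property of a node $v$ that $v$ satisfies $\cP_0$ and none of the first $k$ ancestors $v\mm1,\dots,v\mm k$ (those with $\dep\ge1$, up to the root) satisfies $\cQ$. This is a property depending only on $v$, its descendants, and the descendants of its at most $k$ nearest ancestors, so \refT{TBPX} applies and gives $\nun_{\cP\mm k}(T_n)/|T_n|\asto p_{\cP\mm k}(\sinT)$ as \ntoo. Clearly $\cP\mm k\supseteq\cP$ for every $k$, and $\cP\mm k\downto\cP$ as \ktoo{} (in the sense that a node has $\cP$ iff it has $\cP\mm k$ for all $k$); similarly $p_{\cP\mm k}(\sinT)\downto p_{\cP}(\sinT)$ by monotone convergence, since in $\sinT$ there is the infinite ancestral line $o\mm1,o\mm2,\dots$ and $o$ has $\cP$ iff $o$ has $\cP_0$ and no $o\mm j$ has $\cQ$. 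Hence it suffices to show that the truncation error is uniformly small, \ie,
\begin{equation}\label{Q:trunc}
\limsup_\ntoo \frac{\nun_{\cP\mm k}(T_n)-\nun_{\cP}(T_n)}{|T_n|}\asto 0
\quad\text{as \ktoo.}
\end{equation}
The difference $\nun_{\cP\mm k}(T_n)-\nun_{\cP}(T_n)$ counts nodes $v$ that satisfy $\cP_0$, have no $\cQ$-ancestor among the nearest $k$, but do have some $\cQ$-ancestor further back. This is at most $\nun_{\cR\mm k}(T_n)$, where $\cR\mm k$ is the property of $v$ that some ancestor $w$ with $\dep(v)-\dep(w)>k$ satisfies $\cQ$.

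The key estimate is therefore a bound on $\E\nun_{\cR\mm k}(\cT_t)$ (equivalently $\E\nun_{\cR\mm k}(T_n)$, using $\psi=1$ and \refR{RX}). By the many-to-one / spine decomposition underlying the branching-process formalism — or more concretely by summing over pairs $(w,v)$ with $w$ a strict ancestor of $v$ at distance $>k$ and $w\in\cQ$ — one expects a bound roughly of the form
\begin{equation}\label{Q:bound}
\E\nun_{\cR\mm k}(\cT_t)\le C\, e^{\ga t}\E\bigsqpar{e^{-\ga\gL}} \cdot \sup_{j>k}\bigpar{\text{weight of ancestral chains of length }j}.
\end{equation}
The point is that the expected number of descendants $v$ of a fixed $\cQ$-node $w$ at generational distance exactly $j$, counted with the factor $e^{-\ga(\gs_v-\gs_w)}$ that appears when normalising by $e^{\ga t}$, is $1$ per generation (this is exactly \eqref{ED} in \refR{RD2}: each generation of $\ctt$ has expected size $1$ in the Malthusian-normalised sense), but the relevant quantity is not the count — it is the $e^{\ga t}$-normalised count, and one must combine this with the exponential moment \eqref{lqe} on $\gL$ to get summability. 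Concretely: a $\cQ$-node $w$ lives in $\cT_t$ only if $t-\gs_w\le\gL_w$ where $\gL_w$ is its (shifted) copy of $\gL$; its expected contribution, $e^{-\ga t}\E\nun_{\cQ,\,\text{desc.\ at dist}>k}$, is bounded using \eqref{lqe} to control how far back in time $w$ can be, together with \eqref{lq0} (the second-moment condition $\E\hXi(\ga)^2<\infty$) to control the branching along the chain from $w$ down to $v$. The sum over $j>k$ of the per-generation contributions is then geometrically (or at least summably) small in $k$, giving $\sup_n \E[\nun_{\cR\mm k}(T_n)/|T_n|]\to0$ as \ktoo.

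To upgrade from convergence of expectations to the almost-sure statement \eqref{Q:trunc}, I would use that $\nun_{\cR\mm k}$ is itself (for fixed $k$) a monotone-in-$k$, and that one can apply \refT{TBP}/\refT{TBPX} to a truncated version: replace $\cQ$ by $\cQ\wedge\set{\gL\le M}$ and $\cR\mm k$ by $\cR\mm k_M$ depending on a bounded number of generations back (namely, since $\gL\le M$ forces the $\cQ$-ancestor to be within a bounded — random but a.s.\ finite — number of generations once $t$ is large, one truncates both $k$ and the lookback by $M$). For each fixed $M$, \refT{TBPX} gives a.s.\ convergence of $\nun_{\cR\mm k_M}(T_n)/|T_n|$ to $p_{\cR\mm k_M}(\sinT)$, which is $\le p_{\cR\mm k}(\sinT)\le$ the small bound from \eqref{Q:bound}; letting $M\to\infty$ and then $k\to\infty$, and using \eqref{lqe} to control the error from discarding $\set{\gL>M}$, yields \eqref{Q:trunc}. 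Combining \eqref{Q:trunc} with the a.s.\ convergence $\nun_{\cP\mm k}(T_n)/|T_n|\asto p_{\cP\mm k}(\sinT)$ and the convergence $p_{\cP\mm k}(\sinT)\to p_{\cP}(\sinT)$, a standard $\eps/3$ argument gives $\nun_{\cP}(T_n)/|T_n|\asto p_{\cP}(\sinT)$, which is \eqref{tbpxq+}.

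The main obstacle is the quantitative estimate \eqref{Q:bound}: one must show that the expected number of (normalised) $\cQ$-ancestors seen from a random node, restricted to ancestors more than $k$ generations back, decays in $k$ uniformly in $t$ (equivalently in $n$). This is where the two extra hypotheses \eqref{lq0} and \eqref{lqe} enter — the exponential moment on $\gL$ ensures $\cQ$-nodes cannot be born too early relative to $t$, so that summing $e^{-\ga(\gs_v-\gs_w)}$ over the chain gives a finite geometric-type series, and the second moment $\E\hXi(\ga)^2<\infty$ controls the variance/size of each generation along the spine so the many-to-one argument actually produces an $L^1$ (indeed summable) bound rather than just an in-probability one. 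Making the spine/many-to-one bookkeeping precise, and correctly interchanging the limits in $n$, $M$, and $k$, is the only genuinely delicate part; everything else is the soft truncation argument sketched above.
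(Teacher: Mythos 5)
Your overall framework---truncating the ancestral lookback to $k$ generations, applying \refT{TBPX} to each truncated property, and letting $k\to\infty$---is exactly the paper's strategy, and you have correctly identified where \eqref{lq0} and \eqref{lqe} must enter. The gap is the key quantitative estimate, your display (Q:bound), which you leave as a heuristic; moreover the mechanism you propose for it does not work as stated. You suggest that the sum over generations $j>k$ of the Malthusian-normalised contributions is ``geometrically (or at least summably) small in $k$'', but as you yourself note via \eqref{ED}, the normalised expected size of each generation below a fixed node is exactly $1$, so the naive sum over $j>k$ diverges rather than becoming small. All the decay must come from the constraint that the $\cQ$-ancestor $w$ still satisfies $\cQ$ while supporting a descendant more than $k$ generations below it, and ``many generations below'' does not translate into ``much time later'' (arbitrarily many generations can be born in a short time interval), so \eqref{lqe} cannot be applied generation by generation. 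For the same reason your route to almost sure convergence via a second truncation $\gL\le M$ is flawed: bounding $\gL$ bounds a \emph{time}, not a number of generations, so it does not reduce $\cR\mm k$ to a property depending on boundedly many generations back, and the proposed interchange of limits in $n$, $M$, $k$ is not justified.

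The device that closes the gap is combinatorial rather than probabilistic: if $v$ has an offending $\cQ$-ancestor $w$ more than $M$ generations back, then $w$ has at least $M$ descendants, so $|\ctt^w|\ge M$; summing over such pairs $(w,v)$ gives
\begin{equation*}
  n_{\cP_M}(\ctt)-n_{\cP}(\ctt)
  \le \sum_{w\in\ctt}\bigpar{|\ctt^w|-1}\ett{|\ctt^w|\ge M \text{ and }\ctt^w\in\cQ}
  \le M^{-\eta}\sum_{w\in\ctt}|\ctt^w|^{1+\eta}\ett{\ctt^w\in\cQ}
\end{equation*}
for any $\eta>0$. The last sum is $\zgf_t$ for the characteristic $\gf_0(t)=|\ctt|^{1+\eta}\ett{\ctt\in\cQ}$ in the extended sense of \refR{Rnerman7}, and Nerman's theorem \eqref{olle} applies to it once \ref{BP6.2} is checked; that check is exactly where \eqref{lq0} (through the second-moment bound $\E|\ctt|^2\le Ce^{2\ga t}$) and \eqref{lqe} enter, via \Holder's inequality, for $\eta>0$ small enough (this is \refL{LQ}). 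This yields an almost sure bound $\limsup_n\bigabs{n_{\cP_M}(T_n)-n_\cP(T_n)}/|T_n|\le M^{-\eta}a$ directly, with no expectation step and no second truncation, after which your $\eps/3$ argument finishes the proof. Without this (or an equivalent) conversion of ``generational distance $>M$'' into ``subtree size $\ge M$'', your sketch does not close.
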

In other words, \eqref{tbpxq} holds also for properties $\cP$ of this type,
although they are not covered by \refT{TBPX}.

\begin{remark}
The assumption \eqref{lqe} entails $\gL<\infty$ a.s.; moreover,
\eqref{lqe} is equivalent to assuming $\P(\gL\ge t)= O\bigpar{e^{-\gd t}}$
for some (possibly different) $\gd>0$, and thus  to
\begin{equation}\label{lq1}
  \P\bigpar{\cT_u\in\cQ \text{ for some $u\ge t$}}
= O\bigpar{e^{-\gd t}}.
\end{equation}

  In the examples in \refS{Sclades}, the property $\cQ$ is (or can be
  taken as) decreasing in the sense that if it holds for some rooted tree
  $T$, then it holds also for every subtree with the same root; hence if
  $\cQ$ holds for $\cT_u$ with $u\ge t$, then it holds for $\cT_t$, so
  \eqref{lq1} can be simplified to
\begin{equation}\label{lq1x}
  \P\bigpar{\cT_t \in \cQ}
= O\bigpar{e^{-\gd t}}.
\end{equation}
\end{remark}

Before the proof, we give a lemma.

\begin{lemma}\label{LQ}
Suppose that \refBP{} and \eqref{lq0} hold.
Let $\cQ$ be a property of rooted trees 
such that \eqref{lq1} holds for some $\gd>0$.
Then there exists $\eta>0$ and $a<\infty$ such that
\begin{equation}\label{lq}
  \frac{1}{|\cT_t|}\sum_{v\in\cT_t}|\cT_t^v|^{1+\eta}
\ett{\cT_t^v\in\cQ} \asto a.
\end{equation}
  \end{lemma}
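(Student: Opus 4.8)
The plan is to write the left‑hand side of \eqref{lq} as the normalised total $\zgf_t/Z_t$ of an appropriate random characteristic $\gf$ and then apply Nerman's law of large numbers \eqref{olle}. Fix a small $\eta>0$ (to be chosen below) and define $\gf$ in the sense of \refR{Rnerman7} by
\begin{equation*}
\gf_0(a):=|\cT_a|^{1+\eta}\ett{\cT_a\in\cQ}=Z_a^{1+\eta}\ett{\cT_a\in\cQ},\qquad a\ge0,
\end{equation*}
which is a nonnegative element of $D\ooo$ (it is constant between birth times). Since the subtree of $\cT_t$ rooted at an individual $x$ born at time $\gs_x$ consists of $x$ together with those of its descendants born within age $t-\gs_x$, we have $\gf_x(t-\gs_x)=|\cT_t^x|^{1+\eta}\ett{\cT_t^x\in\cQ}$, so that $\zgf_t=\sum_{v\in\cT_t}|\cT_t^v|^{1+\eta}\ett{\cT_t^v\in\cQ}$, while $Z_t=|\cT_t|$. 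Consequently, once we verify that $\gf$ satisfies condition \ref{BP6.2}, \eqref{olle} applies and yields exactly \eqref{lq} with $a:=m_\gf=\E\hphi(\ga)$, which is then finite (it is the \rhs{} of \eqref{olle}). Everything therefore reduces to checking \ref{BP6.2} for $\gf$, i.e.\ that $\E\sup_{a\ge0}\bigl(e^{-\gth a}\gf(a)\bigr)<\infty$ for some $\gth<\ga$; this is where the hypotheses \eqref{lq0} and \eqref{lq1} are used.

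To verify \ref{BP6.2}, set $\gL:=\sup\{t\ge0:\cT_t\in\cQ\}$ (with $\sup\emptyset:=-\infty$). Since $\cT_a\in\cQ$ forces $a\le\gL$, and $Z_a\le W^*e^{\ga a}$ with $W^*:=\sup_{s\ge0}e^{-\ga s}Z_s$, we get for every $\gth<\ga$
\begin{equation*}
\sup_{a\ge0}e^{-\gth a}\gf(a)\le(W^*)^{1+\eta}\sup_{0\le a\le\gL}e^{(\ga(1+\eta)-\gth)a}=(W^*)^{1+\eta}e^{(\ga(1+\eta)-\gth)\gL}
\end{equation*}
(the exponent $\ga(1+\eta)-\gth$ being positive, and the bound being $0$ when $\gL<0$). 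Two inputs control the right‑hand side. First, \eqref{lq1} gives $\P(\gL>t)\le\P\bigpar{\cT_s\in\cQ\text{ for some }s\ge t}=O(e^{-\gd t})$, hence $\E e^{s\gL}<\infty$ for every $s<\gd$. Second, \eqref{lq0} (together with \refBP) implies $\E(W^*)^2<\infty$; this is standard — \eqref{lq0} gives $\E W^2<\infty$ and $L^2$‑boundedness of the associated martingale, and Doob's inequality then bounds $\E(W^*)^2$ (see \cite{Nerman}).

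With these in hand, pick conjugate exponents $p:=\tfrac32$, $q:=3$, and choose $\eta>0$ so small that $\tfrac32(1+\eta)\le2$ and $\ga\eta<\gd/6$; put $c:=\gd/6$ and $\gth:=\ga(1+\eta)-c$, so that $\gth<\ga$ (because $c>\ga\eta$). Then, by the displayed bound and Hölder's inequality,
\begin{equation*}
\E\sup_{a\ge0}e^{-\gth a}\gf(a)\le\E\bigsqpar{(W^*)^{1+\eta}e^{c\gL}}\le\bigpar{\E(W^*)^{\frac32(1+\eta)}}^{2/3}\bigpar{\E e^{3c\gL}}^{1/3}\le\bigpar{\E(W^*)^2}^{2/3}\bigpar{\E e^{(\gd/2)\gL}}^{1/3}<\infty,
\end{equation*}
since $3c=\gd/2<\gd$. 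This verifies \ref{BP6.2} for $\gf$, and the lemma follows from \eqref{olle}.

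The main obstacle is precisely this last estimate. The characteristic $\gf$ grows roughly like $e^{\ga(1+\eta)a}$, i.e.\ faster than $e^{\ga a}$, so \ref{BP6.2} cannot hold with the naive choice $\gth=0$; the saving grace is that the factor $\ett{\cT_a\in\cQ}$ confines $a$ to $[0,\gL]$, where $\gL$ has exponentially light tails by \eqref{lq1}. Choosing $\gth$ close to $\ga$ and $\eta$ small trades the super‑$\ga$ growth of $\gf$ against the tail of $\gL$, and the second‑moment assumption \eqref{lq0} is exactly what makes the Hölder splitting converge; arranging the three quantitative choices ($\eta$, $\gth$, and the exponents in the split) to be simultaneously compatible — which forces $\eta$ to be genuinely small, not merely positive — is the one delicate point.
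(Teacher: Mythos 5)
Your overall strategy coincides with the paper's: encode the left-hand side as $\zgf_t/Z_t$ for the characteristic $\gf_0(t)=|\cT_t|^{1+\eta}\ett{\cT_t\in\cQ}$ (in the sense of Remark \ref{Rnerman7}), and reduce everything to verifying \ref{BP6.2} for some $\gth<\ga$ by trading the excess growth $e^{\ga\eta t}$ against the exponential tail of $\gL$ via \Holder. The difference, and the problem, lies in how you control the population size inside the supremum.

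You bound $Z_a\le W^*e^{\ga a}$ with $W^*:=\sup_{s\ge0}e^{-\ga s}Z_s$ and then need $\E(W^*)^2<\infty$, which you justify by ``$L^2$-boundedness of the associated martingale and Doob's inequality.'' This step is a genuine gap: for a general \CMJbp{} the process $e^{-\ga t}Z_t$ is \emph{not} a martingale (it is only for Markov branching processes), so Doob's inequality does not apply to it directly; and the intrinsic martingale of Nerman ($R_t$, a sum over the coming generation) does not dominate $e^{-\ga t}Z_t$ pointwise, so $L^2$-boundedness of that martingale does not transfer to $W^*$ by any one-line argument. Note also that the naive discretization fails: \eqref{lq0} gives only the fixed-time bound $\E Z_t^2\le Ce^{2\ga t}$, and $\E\sup_n(e^{-\ga n}Z_n)^2\le\sum_n e^{-2\ga n}\E Z_{n+1}^2$ diverges. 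So the uniform-in-time second moment bound $\E(W^*)^2<\infty$ is precisely the kind of statement that needs proof here, and your argument does not supply one.

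The repair is to avoid taking the supremum of the normalized population over all time. Split $\sup_{t\ge0}e^{-\gth t}\gf_0(t)\le\sum_n\sup_{n\le t\le n+1}e^{-\gth t}\gf_0(t)$, bound the $n$:th summand by $e^{-\gth n}|\cT_{n+1}|^{1+\eta}\ett{\cT_u\in\cQ\text{ for some }u\ge n}$ using monotonicity of $|\cT_t|$, and apply \Holder{} with exponents $2/(1+\eta)$ and $2/(1-\eta)$ \emph{at each fixed $n$}. This uses only the fixed-time bound $\E|\cT_{n+1}|^2\le Ce^{2\ga(n+1)}$ (a standard consequence of \eqref{lq0}, cited in the paper from Jagers and from Jagers--Nerman) together with $\P(\gL\ge n)=O(e^{-\gd n})$, and yields a geometric series $\sum_n e^{n(-\gth+\ga(1+\eta)-\gd(1-\eta)/2)}$, which converges for suitable $\gth<\ga$ once $\eta<\min(\gd/4\ga,\tfrac12)$. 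Your choice of exponents and the smallness condition on $\eta$ are in the same spirit, but they must be applied interval by interval rather than to a global supremum.
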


\begin{proof}
  The \lhs{} of \eqref{lq} equals $\zgf_t/|\cT_t|$, where $\gf(t)$ is the
  characteristic given 
as in \refR{Rnerman7} 
with
  \begin{equation}\label{lqq}
	\gf_0(t):=|\cT_t|^{1+\eta} \ett{\cT_t\in\cQ} .
  \end{equation}
The result thus follows from \eqref{olle}, provided we can choose $\eta>0$
such that \ref{BP6.2} holds for this $\gf$.

To verify \ref{BP6.2}, we first note that \eqref{lq0} (together with the
other conditions)
implies 
\begin{equation}\label{lq3}
\E|\cT_t|^2\le \CC e^{2\ga t},  \CCdef\CCsquare
\end{equation}
for some $\CCx<\infty$, see
\cite[Theorem 6.4.3 with Note or Theorem 6.8.1]{Jagers}
or \cite[Theorem 3.5]{JagersNerman1984}.

If $0<\eta<1$, we thus have by 
\eqref{lqq}, \Holder's inequality, \eqref{lq3} and
\eqref{lq1},
\begin{equation}\label{lq5}
  \begin{split}
\E &\sup_{t\ge0}\bigpar{e^{-\gth t}\gf_0(t)	}
\le \sumn \E \sup_{n\le t\le n+1}\bigpar{e^{-\gth t}\gf_0(t)	}
\\&
\le \sumn e^{-\gth n}\E \Bigpar{|\cT_{n+1}|^{1+\eta}
  \ett{\cT_u\in\cQ\text{ for some $u\ge n$}}} 
\\&
\le \sumn e^{-\gth n}\bigpar{\E|\cT_{n+1}|^2}^{(1+\eta)/2}
\bigpar{ \P\bigpar{\cT_u\in\cQ\text{ for some $u\ge n$}}}^{(1-\eta)/2} 
\\&
\le \sumn e^{-\gth n}\bigpar{\CCsquare e^{2\ga (n+1)}}^{(1+\eta)/2}
\bigpar{\CC e^{-\gd n}}^{(1-\eta)/2} 
\\&
= \CC\sumn e^{n(-\gth+\ga(1+\eta)-\gd(1-\eta)/2)}, 
  \end{split}
\end{equation}
which is finite provided 
\begin{equation}\label{lq4}
(1+\eta)\ga<\gth+\gd(1-\eta)/2.  
\end{equation}
If $0<\eta<\min(\gd/4\ga,\xfrac12)$, then 
$\eta\ga< \gd/4<\gd(1-\eta)/2$, and thus there exists $\gth<\ga$ such that
\eqref{lq4} holds; hence the sum in \eqref{lq5} is finite and \ref{BP6.2}
holds, which completes the proof.
\end{proof}

\begin{proof}[Proof of \refT{TQ}]
  For each integer $M$, let $\cP_M$ be the truncated property ``$v$
  satisfies $\cP_0$ but no ancestor at most $M$ generations before $v$
  satisfies $\cQ$.'' Then $\cP_M$ is covered by \refT{TBPX}, so
\begin{equation}\label{tbpxqM}
\frac{\nun_{\cP_M}(T_n)}{|T_n|}\asto p_{\cP_M} (\sinT)
\end{equation}
as \ntoo, for each $M$.
Since $\cP$ is the intersection of the decreasing sequence of properties
$\cP_M$, it is clear that $p_{\cP_M}(\sinT)\to p_\cP(\sinT)$ as \Mtoo.
Furthermore,
$n_{\cP}(\ctt)\le n_{\cP_M}(\ctt)$ and for any $\eta>0$,
writing $w\prec v$ when $w$ is an ancestor of $v$,
\begin{equation}\label{tq5}
  \begin{split}
n_{\cP_M}(\ctt)-n_{\cP}(\ctt)	
&
\le \sum_{v\in\ctt}\sum_{w\in\ctt}
\ett{w\prec v, \dep(w)< \dep(v)-M \text{ and }\ctt^w\in\cQ}
\\&
\le \sum_{w\in\ctt}\sum_{v\succ w}
\ett{|\ctt^w|\ge M \text{ and }\ctt^w\in\cQ}
\\&
= \sum_{w\in\ctt}(|\ctt^w|-1)
\ett{|\ctt^w|\ge M \text{ and }\ctt^w\in\cQ}
\\&
\le M^{-\eta}\sum_{w\in\ctt}|\ctt^w|^{1+\eta}
\ett{\ctt^w\in\cQ}.
  \end{split}
\raisetag{1.5\baselineskip}
\end{equation}
By \refL{LQ} we can choose $\eta>0$ and $a$ such that \eqref{lq} holds.
Then for any $M$, by \eqref{tq5} and \eqref{lq}, a.s.,
\begin{equation}
\limsup_{\ntoo}\frac{\lrabs{n_{\cP_M}(T_n)-n_{\cP}(T_n)}}{|T_n|}
\le
\limsup_{\ttoo}\frac{\lrabs{n_{\cP_M}(\ctt)-n_{\cP}(\ctt)}}{|\ctt|}
\le M^{-\eta}a.
\end{equation}
Consequently, using also \eqref{tbpxqM}, \as,
\begin{multline}
\limsup_{\ntoo}\lrabs{\frac{n_{\cP}(T_n)}{|T_n|}-p_{\cP}(\sinT)}
\le	
\limsup_{\ntoo}\frac{\bigabs{n_{\cP_M}(T_n)-n_{\cP}(T_n)}}{|T_n|}
\\\shoveright{
+
\limsup_{\ntoo}\lrabs{\frac{n_{\cP_M}(T_n)}{|T_n|}-p_{\cP_M}(\sinT)}
+\bigabs{p_{\cP_M}(\sinT)-p_{\cP}(\sinT)}}
\\
\le M^{-\eta}a+0+\bigabs{p_{\cP_M}(\sinT)-p_{\cP}(\sinT)}.  
\end{multline}
The \rhs{} tends to 0 as \Mtoo, and the theorem follows.
\end{proof}

\section{Examples with uniform or preferential attachment}\label{Sex}

We begin with a few standard examples, where we repeat earlier results
by other authors, together with some new results on the limiting sin-trees.
In all examples in this section, $|T_n|=n$, so we stop the branching process
using the weight $\psi(t)=1$ as in \refE{E1}. Since this weight is bounded,
\refBPlastpsi{} holds trivially.

\begin{example}[random recursive tree]\label{ERRT}
 An important example, considered already by \citet{Aldous}, is the
 \emph{random recursive tree}.
This tree, usually considered as an unordered rooted tree, is constructed
recursively by adding nodes one by one, with each new node attached as a
child of a (uniformly) randomly chosen existing node, 
see \cite[Section 1.3.1]{Drmota}. 
It is easy to see, by the
memoryless property of the exponential distribution, that the random
recursive tree with $n$ nodes is the tree $T_n$ defined in \refS{S:BP}
for the branching process where each individual gives birth with constant
intensity 1, \ie{} with independent $\Exp(1)$ waiting times between births,
and weight function $\psi(t)=1$ as in \refE{E1}. In other words, the point
process $\Xi$ describing the births of the children of an individual
is a Poisson process with intensity 1.
This branching process 
(or just the sizes $(|\ctt|)_{t\ge0}$)
is often called the \emph{Yule process}, so the process
$(\cT_t)_t$ of trees is called the \emph{Yule tree process} \cite{Aldous}.
Note that Yule process formed by the size $|\ctt|$ is a pure birth process
where the birth rate $\gl_n=n$,
see \refE{EYule1}.

We will need some notation.
Let $X_i:=\xi_i-\xi_{i-1}$ (with $\xi_0:=0$) be the waiting
times between the births of the children of a given individual. Thus $X_i$
are \iid{} $\Exp(1)$, and $\xi_i=\sum_{j=1}^i X_j \sim \gG(i,1)$ has a Gamma
distribution. 

The intensity measure $\mu$ is Lebesgue measure on $\ooo$, so
\begin{equation}\label{hmuRRT}
  \hmu(\gth)=\intoo e^{-\gth t}\dd t =\frac{1}{\gth},
\qquad \gth>0,
\end{equation}
and
\eqref{malthus} holds with $\ga=1$. 
The conditions \refBP{} are trivially verified. 

As shown by \citet{Aldous}, the limiting fringe tree $\friT=\cT_\taux$ can
also be described as a random recursive tree with a random number $M$ nodes,
where
\begin{equation}\label{rrtM}
  \P(M=n)=\frac{1}{n(n+1)}, \qquad n\ge1.  
\end{equation}
In fact, 
by symmetry, if $M=|\friT|$ and we condition $\friT$ on $M=n$, we get a random
recursive tree on $n$ nodes. Moreover,
if we at some time have $n\ge1$ individuals in the branching process,
then a new child is born with intensity $n$, while the process stops (at
$\taux$) with intensity 1, so the probability that the process 
continues with at least one more individual is $n/(n+1)$. In other words,
$\P(M\ge n+1)=\frac{n}{n+1}\P(M\ge n)$, and thus by induction $\P(M\ge
n)=1/n$ and \eqref{rrtM} follows. 
(For an alternative argument, see \refE{Elinear+-} below.)

As noted by \cite{Aldous}, 
various results for the random recursive tree $T_n$ now follows from
\refT{TBP}.
For example, the asymptotic distribution of the size of a random fringe tree
is given by \eqref{rrtM}. 
Furthermore, 
the asymptotic distribution of the outdegree of the nodes in $T_n$
equals by \refC{Cdeg}  the distribution of the root degree $D$ in $\friT$,
which  is geometric $\Geo(1/2)$ as an immediate consequence of
\eqref{D}. (See \eqref{rrtJ} below and \eqref{heirD}.)
See \refSS{SSrankRRT} for yet another example.

In order to construct the random sin-tree $\sinT$, 
which enables applications of \refT{TBPX} on the extended fringe, we 
have to find the distribution of the life history of the ancestors, given by
\eqref{heir}. 
Consider an ancestor and
denote its successive birth times by $\txi_i$, $i\ge1$, and
let
$\tX_i:=\txi_i-\txi_{i-1}$ (with $\txi_0:=0$) be the succesive waiting
times. Furthermore, let $\tXi$ be the point process of all births of
children of this ancestor
(thus $\tXi=\sum_i\gd_{\txi_i}$)
and let
$J$ be the number of the heir (in birth order). Then, by \eqref{heir1},
\begin{equation}\label{rrtJ}
  \P(J=j)=\E e^{-\xi_j} = \E e^{-\sum_1^j X_i}
=\prod_{i=1}^j \E e^{-X_i}=2^{-j},
\qquad j\ge1.
\end{equation}
Thus $J$ has the (shifted) geometric distribution $\Gei(1/2)$.
Moreover, conditioned on $J=j$, the joint density of $(\tX_1,\dots,\tX_m)$,
for any $m\ge j$, is by \eqref{heir}
\begin{equation}\label{rrttX}
  \frac{1}{P(J=j)}  e^{-\sum_1^j x_i}\prod_{i=1}^m e^{-x_i}
= \prod_{i=1}^j 2e^{-2x_i} \prod_{i=j+1}^m e^{-x_i}.
\end{equation}
Consequently, conditioned on $J=j$, the waiting times $\tX_i$ between the
births for an ancestor are independent, with $\tX_i\sim\Exp(2)$ for $i\le J$
and $\tX_i\sim\Exp(1)$ for $i>J$.

We claim that we can describe $\tXi$ in a simpler way as a Poisson process
$\Xi$ with intensity 1, plus an extra point $Z\sim \Exp(1)$, independent of
$\Xi$, with $Z$ the heir.
To see this, note that with this description, the first point of $\tXi$ is
either the first point of $\Xi$ or $Z$; these two first points are both 
$\Exp(1)$
and independent, 
so the first point $\tX_1$,
which is the smallest of these two points, 
will be $\Exp(2)$. Furthermore, with probability
$1/2$, this point is the heir $Z$, so $J=1$, and then the rest of the
process is $\Xi$, with 
independent $\Exp(1)$ waiting times. And with probability $1/2$, $\tX_1$
comes from $\Xi$, and then the whole process repeats from $\tX_1$, so the
next waiting time $\tX_2\sim\Exp(2)$, and so on. A simple induction shows
that this yields both the distribution of $J$ in \eqref{rrtJ} and the right
conditional distribution of $(\tX_i)_1^\infty$ given $J=j$ for each $j$,
which proves the claim.

In particular, an ancestor's age $\heir$ when its heir is born has distribution
$\Exp(1)$, which also follows directly from \refR{Rage}.
As a consequence, 
$\gb=\E\heir=1$, \cf{} \eqref{el} and \eqref{el2}.

We can thus describe the random sin-tree $\sinT$ as follows: First construct 
an infinite chain of ancestors $o\mm 1,o\mm2,\dots$ of $o$
(backwards in time), with the times between their births \iid{} $\Exp(1)$;
in other word, $(o\mm{k})_{k\ge1}$ are born according to a Poisson process
with intensity 1 on $(-\infty,0)$. Then grow independent Yule tree processes
from all $o\mm k$, $k\ge0$. Finally, stop everything at $\taux\sim\Exp(1)$.
(Cf.\ \citet[Section 4]{Aldous}, where the description is less explicit.)
For an application, see \refT{Tpx1rrt}.
\end{example}

\begin{example}[binary search tree]\label{EBST}
Another important example studied by \citet{Aldous} is the (random)
\emph{\bst}.
This is the case $m=2$ of the \mary{} search tree in \refS{Smst-def}, but it is
simpler than the general case, so we treat it separately, using a slightly
different but
equivalent formulation.
(Since each (internal) node has exactly one key, the number of keys equals
the number of nodes, and we can ignore the keys completely.)

The \bst{} can be grown recursively as follows. (See \eg{} \cite{Drmota} for
other, equivalent, constructions.)
Start with a single node. Since we grow a binary tree, each node may have a
left child and a right child. When the tree has $n$ nodes, there are $n+1$
empty places for children (these places are the external nodes in the
description in \refS{Smst-def}). The tree grows by adding a node to one of
these $n+1$ places, chosen uniformly at random.
Similarly as in \refE{ERRT}, it is easy to see that the binary search tree
is the tree $T_n$ produced by the branching process where each individual
has two children, labelled left and right and born at age $\xi_\sL$ and
$\xi_\sR$, say, with $\xi_\sL$ and $\xi_\sR$ both
$\Exp(1)$ and independent; furthermore we use
again the weight function $\psi(t)=1$ as in \refE{E1}.
(This continuous-time branching process seems to have been first used to
study the \bst{} by \citet{Pittel1984}, who considered the height and
saturation level, see \refS{Sheight}.)

We thus have $N=2$. Since each child is born with the density function
$e^{-x}$, the intensity measure $\mu$ of $\Xi$ has density $2e^{-x}$.
Thus 
\begin{equation}\label{hmuBST}
  \hmu(\gth)=\intoo 2e^{-\gth x-x}\dd x =\frac{2}{1+\gth},
\qquad \gth>-1,
\end{equation}
and
\eqref{malthus} holds with the Malthusian parameter $\ga=1$. 
The conditions \refBP{} are trivially verified. 

Note that if we order the children in order of birth as usual, then 
$\xi_1=\min(\xi_\sL,\xi_\sR)$, and thus $\xi_1\sim\Exp(2)$, while
the waiting time $\xi_2-\xi_1$ for the second child is $\Exp(1)$ and
independent of $\xi_1$.

We see also that the size $|\ctt|$ grows as a pure birth process with birth
rate $\gl_n=n+1$, see \refApp{AA}. Equivalently, $|\ctt|+1$, which can be
interpreted as the number of external nodes, is a pure birth process with rate
$\gl_n=n$, \ie, the Yule process in \refE{EYule1}, and in \refE{ERRT}, 
but started at 2 instead of 1.

As shown by \citet{Aldous}, the limiting fringe tree $\friT=\cT_\taux$ can
be described as a \bst{} with a random number $M$ nodes,
where
\begin{equation}\label{bstM}
  \P(M=n)=\frac{2}{(n+1)(n+2)}, \qquad n\ge1;
\end{equation}
\cf{} the similar result \eqref{rrtM} for the \rrt.
To see this we argue as in \refE{ERRT}; the difference is that when
there are $n$ individuals, there are now $n+1$ places to add a new node, and
thus $n+1$ 
independent $\Exp(1)$ for these, competing with the random time $\taux$ that
stops the process; hence the probability of adding another node is
$(n+1)/(n+2)$ and thus by induction 
$\P(M\ge n)=2/(n+1)$ and \eqref{bstM} follows. 
(For an alternative argument, see \refE{Elinear+-} below.)

By \refT{TBP}, the asymptotic distribution of the size of a random fringe tree
is given by \eqref{bstM}. Another simple calculation 
in \cite{Aldous} shows
that the asymptotic distribution of the outdegree of the nodes in $T_n$,
which by \refC{Cdeg} equals the distribution of the root degree $D$ in $\friT$,
is uniform on \set{0,1,2}, see \eqref{D}. This can also be seen without
calculation: 
$\xi_\sL,\xi_\sR$ and $\taux$ are three
\iid{} $\Exp(1)$ random variables, so the three
events that $\taux$ is the smallest, the middle, or the largest of these three
have by symmetry all the same probability $1/3$. These events equal the
events that the root in $\friT$ has degree $0,1,2$.

To find the random sin-tree $\sinT$, note that by the comments after
\eqref{heir}--\eqref{heir1}, \eqref{heir} holds also when taking the
children in order left--right. 
For an 
individual in $\friT$,
the pair $(\xi_\sL,\xi_\sR)$ has the density function $e^{-x_\sL-x_\sR}$. 
For an ancestor, the probability that the heir is the
left child is $1/2$ (by symmetry or by \eqref{heir1}), and it follows that
conditioned on the heir being the left child, 
the pair $(\xi_\sL,\xi_\sR)$ has the density function 
$2e^{-x_\sL} e^{-x_\sL-x_\sR}= 2e^{-2x_\sL}e^{-x_\sR}$. 
In other words, for an ancestor, given that the heir is 
the left child, 
the age $\heir$ when the heir is born is $\Exp(2)$ and the age
when the other child is born is $\Exp(1)$, and these two ages are independent.
The same holds given that the heir is the right child.
In particular, $\heir\sim\Exp(2)$ and thus $\gb=\E\heir=1/2$, 
\cf{} \refR{Rage}.  

Consequently, the random sin-tree $\sinT$ can be described as follows, \cf{}
the case of the random recursive tree in \refE{ERRT}: 
First construct 
an infinite chain of ancestors $o\mm 1,o\mm2,\dots$ of $o$
(backwards in time), with the times between their births \iid{} $\Exp(2)$;
in other word, $(o\mm{k})_{k\ge1}$ are born according to a Poisson process
with intensity 2 on $(-\infty,0)$. Moreover, make a random choice (uniform
and independent of everything else) for each ancestor to decide whether its
heir is the left or right child.
Then grow independent binary tree processes at all empty places (external
nodes), with independent $\Exp(1)$ waiting times for all new nodes.
Finally, stop everything at $\taux\sim\Exp(1)$.
(Applications are given in \refS{Sclades}.)

\end{example}

\begin{example}[general preferential attachment trees]\label{Epref}

We can generalise the preceding examples as follows,
see \citet{RudasTV} and \citet{RudasT} where this example is studied
using the branching process method described here; 
see also \citet{Bhamidi}. 
(The branching process below was also earlier used by \citet{BigginsGrey1996} to
study the height of these trees.)
We thus give only a
summary and some complements, in particular on sin-trees.
Some special cases are treated in Examples \ref{Elinear+-}--\ref{Epreflast}
below, 
see in particular 
\refE{Elinear+};  
these cases have been studied by many authors, using various methods. 
(Further references are given below, but we do not attempt a complete
history.) 

Suppose that we are given a sequence of non-negative weights
$(w_k)_{k=0}^\infty$, 
with $w_0>0$. Grow a random tree $T_n$ (with $n$ nodes) recursively, starting
with a single node and adding nodes one by one. Each new node is added as a
child of some randomly chosen existing node;
when a new node is added to $T_{n-1}$, the
probability of choosing a node $v\in T_{n-1}$ as the parent is proportional to
$w_{\dout(v)}$, where $\dout(v)$ is the outdegree of $v$ in $T_{n-1}$.
(More formally, this is the conditional probability, given $T_{n-1}$ and the
previous history.
The sequence $(T_n)_{n=1}^\infty$ thus constitutes a Markov process.)
If we want the trees $T_n$ to be ordered trees, we also insert the new child
of $v$ among the existing $\dout(v)$ children in a random position, uniformly
chosen among the $\dout(v)+1$ possibilities.

The random recursive tree in \refE{ERRT} is the special case $w_k=1$, $k\ge0$,
and the \bst{} in \refE{EBST} is the special case with $w_0=2$, $w_1=1$
and $w_k=0$, $k\ge2$ (and, furthermore, each first child randomly assigned
to be left or right).

Note that we require $w_0>0$ (and $w_1>0$ will be implicitly assumed, as a
consequence of \eqref{cond} below), but we allow $w_m=0$ for some larger
$m$, as in the example of the \bst. In this case, no individual will ever
get more than $m$ children; in fact (provided $m$ is chosen minimal), $N=m$
a.s. In this case, the weights $w_{m+1},w_{m+2},\dots$ are irrelevant, so it
suffices to prescribe $w_k$ for $k\le m$. (In this case, we interpret
$1/w_m=\infty$ below, and the infinite sums become finite. We leave such
obvious modifications to the reader.)

In some important examples, for example \refE{Elinear+} below, $w_k$ is a
strictly increasing function of $k$, which means that nodes with a 
high degree are more likely to attract a new node than nodes with a low
degree; hence the name \emph{preferential attachment}, which comes from
\citet{BarabasiA} where this type of model
was introduced (in a more general version, in general yielding graphs and
not trees), see \refE{Elinear+}. The tree version of their model had been
studied earlier under a different name by \citet{Szymanski} and others, see
\refE{EPORT}.  
The model with general $w_k$ was considered by \citet{Mori}.


As in the examples above, the tree $T_n$ can be constructed by a branching
process as in \refS{S:BP}, again with weight $\psi(t)=1$ and
taking the birth times $\xi_i:=\sum_{j=1}^iX_j$, now
with the waiting times between births  
$X_j=\xi_j-\xi_{j-1}\sim\Exp(w_{j-1})$ and independent.
In other words, the stochastic process $\Xit$, $t\ge0$, (\ie, the number
of children of a given individual born up to age $t$) is a pure birth
process, starting at 0 and with birth rate $w_k$ when the  state is $k$.
(See \refApp{AA}.)

Let $\xi_\infty:=\lim_\ntoo\xi_n=\sumji X_j \in(0,\infty]$.
Then
\begin{equation}\label{etau}
  \E \xi_\infty=\sumji\E X_j = \sumk \frac{1}{w_k}.
\end{equation}
We  distinguish between two cases, depending on whether this sum is
finite or not. 

In the \emph{explosive case},
\begin{equation}\label{explosion}
  \E \xi_\infty= 
\sumk \frac{1}{w_k}<\infty.
\end{equation}
Thus $\xi_\infty<\infty$ a.s., \ie, an individual will have an infinite
number of children in a finite time (the point process $\Xi$ explodes).
In this case, the branching process will explode in finite time, and several
of the assumptions in \refS{S:BP} fail.
Nevertheless, this case can be treated separately. It turns out that the
random fringe tree $\Tx_n$ is asymptotically degenerate and
\whp{} consists of a single node only, \ie, $|\Tx_n|\pto1$, 
see \refT{Texp} below. (Equivalently, the proportion of leaves in the tree
$T_n$ tends to 1.)
The case $w_k=(k+1)^p$ 
for some $p>1$ is studied 
by \citet{KrapivskyRL}, \citet{KrapivskyR}
and (rigorously and in detail) by \citet{OSpencer}, who show that if
$p>2$ (but not if  $1<p\le2$), 
the random tree process $T_n$, $n\ge1$, is even more strongly
degenerate: \as{}
there exists a (random) $n_0$ and a node $v\in T_{n_0}$ such that all nodes
added after 
time $n_0$ become children of $v$ (and thus remain leaves forever). 
See also \citet{Athreya}.

In the sequel we consider the \emph{non-explosive case}
\begin{equation}\label{nonexplosion}
  \sumk \frac{1}{w_k}=\infty.
\end{equation}
In this case, by \eqref{etau}, $\E\xi_\infty=\infty$; moreover, 
it is easy to see that  $\xi_\infty=\lim_\ntoo\xi_n=\infty$
a.s., for example by calculating 
\begin{equation}
  \E e^{-\gl\xi_\infty}=\prodji \E e^{-\gl X_j}
=\prodko\frac{w_k}{\gl+w_k}
=\prodko\frac{1}{1+\gl/w_k}
=0
\end{equation}
for any $\gl>0$, see \cite{Athreya}.
Hence, an individual has \as{} only a finite number of children in each
finite interval, \ie, $\Xix{t}<\infty$ for every $t<\infty$.
Furthermore, using \eqref{hmub},
\begin{equation}\label{hmupref}
\hmu(\gth)
=\E\sumni e^{-\gth\xi_n}  
=\sumni \prodjn \E e^{-\gth X_j}
=\sumni \prod_{k=0}^{n-1} \frac{1}{1+\gth/w_k}.
\end{equation}
We assume that there exists $\gth>0$ such that
$1<  \hmu(\gth)<\infty$, \ie,
\begin{equation}\label{cond}
1<\sumn \prod_{k=0}^n \frac{1}{1+\gth/w_k}<\infty.
\end{equation}
(This is also easily seen to imply
\eqref{nonexplosion}.) This implies, by dominated convergence, that
$\gl\mapsto\hmu(\gl)$ is continuous on $[\gth,\infty]$, with
$\hmu(\infty)=0$,
and thus there exists $\ga>\gth$ such that $\hmu(\ga)=1$. Hence
the assumptions \ref{BPmalthus} and \ref{BPmub} hold. The remaining
assumptions are trivially satisfied, and thus 
Theorems \ref{TBP} and \ref{TBPX} apply.

The asymptotic degree distribution is by \refC{Cdeg} and \eqref{D} given by
\begin{equation}\label{prefD}
  \P(D\ge i) =\E e^{-\ga\xi_i}=\prod_{j=1}^{i} \E e^{-\ga X_j}
=\prod_{k=0}^{i-1} \frac{w_k}{w_k+\ga}
\end{equation}
and thus
\begin{equation}\label{prefD=}
  \begin{split}
\P(D=i)&=  \P(D\ge i)-\P(D\ge i+1) 
=\Bigpar{1-\frac{w_i}{w_i+\ga}}\P(D\ge i)
\\&
=\frac{\ga}{w_i+\ga}\prod_{k=0}^{i-1} \frac{w_k}{w_k+\ga}.	
  \end{split}
\end{equation}
This can also be seen as an example of \refT{TA1}.

To describe the life of an ancestor, 
let $\cE_i$ be the event that the heir of the ancestor is child $i$.
We note first that 
if we fix $M<\infty$, then
in the point process $\xi$, the waiting times $X_1,\dots,X_M$ have the joint
density function $\prod_{j=1}^Mw_{j-1}e^{-w_{j-1}x_j}$.
It follows from \eqref{heir} that for any $i$ and $M$ with $1\le i\le M$,
conditioned on $\cE_i$,
the waiting times $(\tX_j)_{j=1}^M$ between the $M$ first
children of the ancestor have a joint density function that is proportional
to
\begin{equation}
e^{-\ga \sum_{j=1}^i x_j}\prod_{j=1}^Me^{-w_{j-1}x_j}
=
\prod_{j=1}^ie^{-(w_{j-1}+\ga)x_j}
\prod_{j=i+1}^Me^{-w_{j-1}x_j}.
\end{equation}
Furthermore, by \eqref{heirD} and \eqref{prefD}
(or by tracking constants in the argument just given),
\begin{equation}\label{prefheir}
q_i:=\P(\cE_i)=\prod_{k=0}^{i-1} \frac{w_k}{w_k+\ga},
\qquad i\ge1.
\end{equation}
Consequently, the point process $\tXi$ describing the births of
the children of an ancestor can be constructed as follows: 
Select the number $I$ of the heir at random, with the
distribution \eqref{prefheir}. Then, conditioned on $I=i$, let the waiting
times $\tX_j$ be independent exponential variables, with
$\tX_j\sim\Exp(w_{j-1}+\ga)$ for $j\le i$ and
$\tX_j\sim\Exp(w_{j-1})$ for $j> i$.

The limiting random sin-tree $\sinT$ then is constructed as in \refS{S:BP}.

In Examples \ref{ERRT} and \ref{EBST}, we have seen alternative, simpler,
constructions of $\tXi$. This will be extended to the linear case in  
\refE{Elinear+-} and \refT{Tancestor}, 
but it does not seem possible to extend it further.
In particular, we show in \refT{Tonlylinear} that the age $\heir$ 
when the heir is
born has an exponential distribution only in the linear case. See also
\refE{E11} for a simple non-linear example.
\end{example}
 
\begin{example}[linear preferential attachment]\label{Elinear+-}
  The simplest, and most studied, case of preferential attachment as in
  \refE{Epref} is the
  linear case 
  \begin{equation}\label{wlinear+-}
w_k=\chi k+\rho, 	
  \end{equation}
for some real parameters $\chi$ and $\rho$,
  with $\rho=w_0>0$.
Note that we obtain the same random trees $T_n$ if we multiply all $w_k$ by a
positive constant. (In the branching processes, only the time scale
changes.) Hence, only the quotient $\chi/\rho$ matters, and 
it suffices to consider $\chi\in\set{1,0,-1}$. 

The case $\chi=0$ is the
(non-preferential) random recursive tree in \refE{ERRT}.
(In this case $\rho$ is irrelevant and we take $\rho=1$.)

The case $\chi=1$ (the increasing case) is studied in \refE{Elinear+}.

In the case $\chi=-1$, so $w_k=\rho-k$, $w_k$ is eventually negative. This
is impossible, and violates our basic assumption in \refE{Epref}. However,
this is harmless if (and only if) $\rho=m$ is an integer; then $w_m=0$ and,
as said above in \refE{Epref}, 
the values $w_k$ for $k>m$ do not matter.
This is the \mary{} case studied in \refE{Elinear-}; the binary search tree
in \refE{EBST} is the special case $\chi=-1$, $\rho=2$.

We continue with some results valid for any linear weight \eqref{wlinear+-},
and refer to Examples \ref{ERRT},  \ref{Elinear+} and \ref{Elinear-} for
further results for the different cases $\chi=0,1,-1$.

Since $\Xit$ is a pure birth process 
with a rate that is a linear function $\chi k+\rho$ of the current state $k$,
and with initial value $0$,
it is easy to see, 
see 
\refT{TAext},
that the expectation $\E\Xit=\mu\ott$ 
is given by 
\begin{equation}
  \mu\ott=
  \begin{cases}
  \frac{\rho}{\chi} \bigpar{e^{\chi t}-1}, & \chi\neq0,
\\
\rho t, &\chi=0.
  \end{cases}
\end{equation}
Hence, $\mu$ has density $\rho e^{\chi t}$ 
(also when $\chi=0$), \cf{} \eqref{ddext}, and thus
\begin{equation}\label{hmulin+-}
  \hmu(\gth)=\intoo \rho e^{\chi t-\gth t}\dd t =\frac{\rho}{\gth-\chi},
\qquad \gth>\chi.
\end{equation}
It follows that \eqref{cond} holds, and that \eqref{malthus} holds with
\begin{equation}\label{alphalin}
  \ga=\chi+\rho = w_1.
\end{equation}
(Alternatively, \eqref{hmulin+-} can be verified algebraically, see
\eqref{hmulin} and \eqref{hmulin-} below.) 

By \refR{Rage} and \eqref{alphalin}, the age $\heir$ when an heir is born to an
ancestor has 
density 
$e^{-\ga t}\mu(\ddx t)=e^{-\ga t}\rho e^{\chi t}\dd t=\rho e^{-\rho t}\dd t$;
thus $\heir$
has an exponential distribution $\Exp(\rho)$. (This also follows from 
\eqref{hmulin+-} and the
formula for the Laplace transform in \refR{Rage}.)
As a consequence, 
generalizing the values of $\gb$ found in Examples \ref{ERRT} and \ref{EBST},
\begin{equation}\label{betalin}
  \gb = \E\heir = \rho\qw=w_0\qw.
\end{equation}

We claim that the life history $\tXi$
of an ancestor can be described as follows
(as a simpler alternative to the general construction in \refE{Epref}),
\cf{} the special cases in Examples \ref{ERRT}--\ref{EBST}; we
postpone the proof to \refT{Tancestor} below:
For an ancestor, the ordinary children are born according to a point process
$\Xi'$ which is a pure birth process, with birth rate $w_{k+1}=w_k+\chi$
when the state (number of ordinary children so far) is $k$, and the heir is
born at an age $\heir\sim\Exp(\rho)$, independent of $\Xi'$.
Consequently, 
the limiting random sin-tree $\sinT$ can be constructed as
follows, generalising the constructions in Examples \ref{ERRT}--\ref{EBST}:
First construct 
an infinite chain of ancestors $o\mm 1,o\mm2,\dots$ of $o$
(backwards in time), with the times between their births \iid{} $\Exp(\rho)$;
in other word, $(o\mm{k})_{k\ge1}$ are born according to a Poisson process
with intensity $\rho$ on $(-\infty,0)$. Give each ancestor additional
children according to independent copies of $\Xi'$ (where the intensities
are shifted from $\Xi$, as said above). 
Then,  every other individual gets
children according to independent copies of $\Xi$.
Finally, stop everything at $\taux\sim\Exp(\ga)=\Exp(\chi+\rho)$.

The linear case \eqref{wlinear+-} treated in this example is simpler than
the general 
case in \refE{Epref} in several ways. 
For example, we have shown that the age $\heir$
when the heir of an ancestor is born has an exponential distribution,
and (as said earlier) it will be shown in 
\refT{Tonlylinear} that this holds only in the linear case.
An important reason 
(perhaps the main reason)
that the linear case is simpler is that the total weight
in a tree depends only on the size of the tree: if $|T|=n$, then the total
weight of the nodes in $T$, which we may label by $1,\dots,n$, is
\begin{equation}\label{totalw}
  \sumin w_{\dout(i)}
=
  \sumin (\chi\dout(i)+\rho)
=
 \chi \sumin \dout(i)+n\rho
=\chi(n-1)+n\rho
=n\ga-\chi.
\end{equation}
This property has several important consequences. 
First, it follows 
(as remarked for the random recursive tree and the \bst{} above)
that if $M=|\friT|$ and we condition $\friT$ on $M=n$, we get 
the random tree $T_n$. 
(The property called \emph{coherence} by \citet[Section 2.6]{Aldous}.)
The distribution of $M$ can be found by the same argument as for the \rrt{}
in \refE{ERRT}, which now, using \eqref{totalw} and \eqref{alphalin}, yields
\begin{equation}\label{Mgelin}
  \P(M\ge n) 
= \frac{\rho}{n\ga-\chi}
= \frac{\rho}{(n-1)\ga+\rho}
\end{equation}
and hence
\begin{equation}\label{Mlin}
  \P(M=n)
= \frac{\rho\ga}{((n-1)\ga+\rho)(n\ga+\rho)}
= \frac{\gk}{(n+\gk-1)(n+\gk)}
\end{equation}
with
\begin{equation}
  \gk:=\frac{\rho}{\ga}=\frac{\rho}{\chi+\rho}=\frac{w_0}{w_1}.
\end{equation}
Consequently, $\friT$ can be described as the random tree $T_M$ with a
random size $M$ given by \eqref{Mlin}.

An alternative way to see \eqref{Mlin} is to note that \eqref{totalw}
implies that the size $Z_t=|\ctt|$ of the branching process is a pure birth
process with 
birth rates $\gl_n=n\ga-\chi=n(\chi+\rho)-\chi$, 
and thus $|\ctt|-1$ is a pure birth process with
birth rates $\gl_n=(n+1)\ga-\chi=n\ga+\rho$.
(The special case $\chi=0$, $\rho=1$, when $|\ctt|$ is a Yule process, was
noted in \refR{ERRT}.) 
\refT{TAlin} shows that 
\begin{equation}
|\friT|-1=|\cttaux|-1\sim\HG(\rho/\ga,1;(\rho+\ga)/\ga+1)
=\HG(\gk,1;\gk+2),
\end{equation}
which by \eqref{hg} and \eqref{hgC}, or simpler
by \eqref{waring}, yields \eqref{Mlin}.
Note also that \refT{TApxt} shows that the size $|\ctt|$ at a fixed time,
minus 1, has a negative binomial distribution. 

Furthermore, \eqref{totalw} implies that if we
label the nodes of $T_n$ by $1,\dots,n$ in the order they are added to
the tree, so that $T_n$ becomes an \emph{increasing tree} 
(or \emph{recursive tree} \cite[Section 1.3]{Drmota}),
then the probability that $T_n$ equals a given ordered increasing tree $T$ (with
$|T|=n$) 
is, by the definition and a simple rearrangement,
\begin{equation}
\frac{\prod_{i=1}^n
\bigpar{\frac{1}{\dout(i)!} \prod_{k=0}^{\dout(i)-1} w_k}}
{\prod_{j=1}^{n-1}(j\ga-\chi)}
\end{equation}
which is proportional to $\prod_{i=1}^n \phi_{\dout(i)}$ with
$\phi_d:=\frac{1}{d!}\prod_{k=0}^{d-1}w_k$. 
Hence $T_n$ has the distribution of a 
\emph{simply generated random increasing tree} \cite[Section 1.3.3]{Drmota},
with  weight sequence 
$\phi_d=\frac{1}{d!} \prod_{k=0}^{d-1} w_k$.
Conversely, a simply generated random increasing tree can be generated by a
random evolution where nodes are added one by one
only when its weight sequence is of this form, for some
$w_k$ of the form \eqref{wlinear+-} 
\cite{KubaPanholzer}, \cite{PanholzerProdinger}. 
(Such trees are called \emph{very simple increasing trees} in
\cite{KubaPanholzer}, \cite{PanholzerProdinger}.) 
In other words,
the random increasing tree generated by a general sequence of
weights  $w_k$ (as in \refE{Epref})
is a simply generated increasing tree if and only if
the weights are of the linear type \eqref{wlinear+-}. (I.e., we are in the
case of the present example.)

Finally,
\eqref{totalw} is very useful when using martingale methods (which we do
not do in the present paper). 
\end{example}

\begin{example}[plane oriented recursive tree]\label{EPORT}
  A random \emph{plane oriented recursive tree}, 
 introduced by \citet{Szymanski},
is constructed similarly
  to the random recursive tree in \refE{ERRT}, but we now consider the trees
  as ordered; an existing node with $k$ children thus has $k+1$ position
  in which a new node can be added, and we give all possible positions of the
  new node the same probability. 
The probability of choosing a node $v$ as
  the parent is thus proportional to $\dout(v)+1$, so the plane oriented
  recursive tree is the case $w_k=k+1$ of \refE{Epref}.
This is the special case $\chi=\rho=1$ of \refE{Elinear+-}, and thus 
the special case $\rho=1$ of the following example
(\refE{Elinear+}), where some results and further
references are given.
\end{example}

\begin{example}[positive linear preferential attachment]\label{Elinear+}

Consider the case $\chi=1$ of \eqref{wlinear+-}, \ie,
\begin{equation}\label{wlinear+}
  w_k=k+\rho, \qquad k\ge0,
\end{equation}
where $\rho>0$ is a parameter. 

Thus, $w_k$ is a strictly increasing function of $k$, so this is a model with
preferential attachment as mentioned in \refE{Epref}.
This is a popular model, that has been studied by many authors (often by
methods different from the branching processes used here).
The original preferential attachment model by \citet{BarabasiA} was the case
$\rho=1$, so $w_k=k+1$; thus the probability of attaching a new node to an
existing node $v$ is proportional to $\dout(v)+1=\degree(v)$, 
the total degree of
the node (except for the root). 
As said above, trees of this type had earlier been studied by
\citet{Szymanski}.
(\citet{BarabasiA}  considered a
more general model where a new node may be attached to more than one
existing node, thus creating graphs that are not trees. We only consider the
tree case here.) \citet{Bollobasetal} made a precise formulation of the
definition, and found (and proved rigorously) the asymptotic degree
distribution (in the general, graph case). 
See also \citet[Chapter 8]{Hofstad}, with many details and references.
The tree model with a general $\rho$ was studied by \citet{Mori}.
See also \citet{AthreyaGS} for an extension with multiple edges, treated by an
extension of the methods used here.
\citet{RudasTV} and \citet{RudasT} also used 
the branching process method described here.

In the case \eqref{wlinear+}, \eqref{hmupref} becomes a hypergeometric series
\begin{equation}\label{hmulin1}
  \hmu(\gth)=\sumni\prod_{k=0}^{n-1}\frac{k+\rho}{k+\rho+\gth}
=\FF(\rho,1;\rho+\gth;1)-1,
\end{equation}
where $\FF$ is a hypergeometric function, 
see \eqref{hyper} in \refApp{ASShyper};
the series converges for $\gth>1$, and then 
\eqref{hmulin1} and \eqref{gauss} yield
\begin{equation}\label{hmulin}
  \hmu(\gth)
=\frac{\gG(\rho+\gth)\gG(\gth-1)}{\gG(\rho+\gth-1)\gG(\gth)}-1
=\frac{\rho+\gth-1}{\gth-1}-1=\frac{\rho}{\gth-1},
\end{equation}
as we have seen by another method in \eqref{hmulin+-}.
Consequently, 
or by \eqref{alphalin},
the Malthusian parameter is
\begin{equation}
  \label{ga+}
\ga=\rho+1. 
\end{equation}

The asymptotic degree distribution is by \eqref{prefD}--\eqref{prefD=} 
and \eqref{ga+}
given by
\begin{equation}\label{pdgelin}
  \P(D\ge i) 
= \prod_{k=0}^{i-1} \frac{k+\rho}{k+2\rho+1}
=\frac{\gG(2\rho+1)\gG(i+\rho)}{\gG(\rho)\gG(i+2\rho+1)},
\qquad i\ge0,
\end{equation}
and thus
\begin{equation}\label{pdlin}
  \begin{split}
\P(D=i)&
=  \P(D\ge i) -\P(D\ge i+1)
=\frac{(\rho+1)\gG(2\rho+1)\gG(i+\rho)}{\gG(\rho)\gG(i+2\rho+2)}	,
\quad i\ge0.
  \end{split}
\end{equation}
This is the hypergeometric distribution $\HG(\rho,1;2\rho+2)$, see \refD{DHG}.
(This also follows from \refT{TAlin}, using \refE{EA1} and \eqref{ga+}.)
This degree distribution has a power-law tail: \eqref{pdlin}
implies,
see \refT{THGtail}, 
\begin{equation}
\P(D=i)\sim c(\rho) i^{-\rho-2}, \qquad \text{as }i\to\infty,   
\end{equation}
for the constant
$c(\rho)=(\rho+1)\gG(2\rho+1)/\gG(\rho)$, as shown by \citet{Mori}.
(Note that this power-law is quite sensitive to the choice of $w_k$, with
the exponent depending on the constant term $\rho$ in \eqref{wlinear+}.)

In the special case $\rho=1$, 
$D\sim\HG(1,1;4)$ and
\eqref{pdlin} becomes
\begin{equation}
  \P(D=i)=\frac{4}{(i+1)(i+2)(i+3)},
\qquad i\ge0,
\end{equation}
found by \citet{Szymanski} (showing the annealed version);
see also \citet{MahmoudSS}, \citet{LuFeng},  
\citet{Bollobasetal}  (quenched version),
and \citet{SJ155}.

By \eqref{heirD} and \eqref{pdgelin}, 
\begin{equation}\label{qilin}
q_i=  \P(D\ge i) 
= \prod_{k=0}^{i-1} \frac{k+\rho}{k+2\rho+1}
= \frac{\rho}{i+2\rho}\prod_{k=0}^{i-2} \frac{k+\rho+1}{k+2\rho+1}
\qquad i\ge1,
\end{equation}
which in the special case $\rho=1$ simplifies to 
\begin{equation}
  q_i=\frac{2}{(i+1)(i+2)},
\qquad i\ge1.
\end{equation}
Again using \refD{DHG}, this says that if $I$ is the index of the heir of an
ancestor, then $I-1\sim\HG(\rho+1,1;2\rho+2)$.

The limiting random sin-tree is given by the construction in \refE{Elinear+-}.
\end{example}

\begin{example}[\mary{} increasing tree, 
negative linear preferentialattachment] 
\label{Elinear-}

We may generalise the binary case \refE{EBST} and grow a random \mary{}
tree as follows, for any $m\ge2$. 
This is sometimes called an \emph{\mary{} increasing tree}.
Note that for $m>2$, this will \emph{not} give the \mst{}
defined in \refS{Smst-def}.
(One difference is that we here fix the number of nodes to
be $n$, while the \mst{} has a random number of nodes,
but this is a minor technicality, see \refR{Rfixednodes}. 
A more essential difference is seen in the asymptotic degree distribution
$D$, see \refT{TMSTdegree})

Start with a single node. Let each node have $m$ positions for children,
labelled $1,\dots,m$. Add each new node to an empty child position in the tree,
chosen uniformly at random. (We may, as in \refS{Smst-def}, regard the empty
child positions as external nodes.)

Since a node with outdegree $d$ has $m-d$ empty positions for children, this
is an instance of the general preferential attachment in \refE{Elinear+-},
with
\begin{equation}\label{wlinear-}
  w_k=m-k, \qquad k=0,\dots, m.
\end{equation}
This is thus the case $\chi=-1$ of the linear case in \refE{Elinear+-}
(with $\rho=m$),
so all results there hold. In particular, 
by \eqref{hmulin+-}--\eqref{alphalin},
$\mu$ has density $m e^{-t}$,
\begin{equation}\label{hmu-}
  \hmu(\gth)=\frac{m}{\gth+1}
\end{equation}
and
\begin{equation}\label{ga-}
  \ga=m-1.
\end{equation}

Also in the case \eqref{wlinear-}, 
\eqref{hmupref} becomes a hypergeometric series; in this case we obtain, 
\cf{} \eqref{hmulin1} and \eqref{hyper},
\begin{equation}\label{hmulin-}
  \hmu(\gth)=\sumni\prod_{k=0}^{n-1}\frac{m-k}{m-k+\gth}
={}_2F_1(-m,1;-m-\gth;1)-1.
\end{equation}
(This is a case where the hypergeometric series is finite.)
Gauss' formula \eqref{gauss} yields another proof of \eqref{hmu-}.

The asymptotic degree distribution is by \eqref{prefD}--\eqref{prefD=}  
and \eqref{ga-}
given by
\begin{equation}\label{pdge-}
  \P(D\ge i) 
  = \prod_{k=0}^{i-1} \frac{m-k}{2m-1-k}
=\frac{m!\,(2m-1-i)!}{(2m-1)!\,(m-i)!}
=\frac{\binom{2m-1-i}{m-1}}{\binom{2m-1}{m-1}}
\end{equation}
and thus
\begin{equation}\label{pd-}
  \begin{split}
\P(D=i)&
=\frac{(m-1)m!\,(2m-2-i)!}{(2m-1)!\,(m-i)!}
=\frac{\binom{2m-2-i}{m-2}}{\binom{2m-1}{m-1}}	,
\qquad i=0,\dots,m.
  \end{split}
\end{equation}
Thus, using \refD{DHG} or \refR{RHG}, $D$ has the hypergeometric
distribution $\HG(-m,1,2-2m)$. 
(Again, this also follows by \refT{TAlin} and \refE{EA1}.)
Note that the distribution \eqref{pd-} 
is the same as the distribution of balls in a given box when $m$
indistinguishable balls are distributed in $m$ distinguishable boxes, with
all distributions having the same probability 
(Bose--Einstein statistics, see \refE{EBose}); 
equivalently,
it is the distribution of the
first term in a random composition of $m$ into $m$ (possibly empty) parts.

It follows easily 
from \eqref{pd-} that as \mtoo, the distribution of $D$ converges
to the Geometric distribution $\Geo(1/2)$, which is the distribution of $D$ for
the \rrt. (Cf.\ \cite[Example 12.2 and Theorem 11.7]{SJ264}.)

The point process $\Xi$ contains $m$ points, with successive
exponential waiting times with rates $m$, $m-1$, \dots,1. 
As is well-known, this process can
also be constructed by taking $m$ \iid{}
$\xxi_1,\dots,\xxi_m\sim\Exp(1)$ and ordering them as
$\xi_1<\dots<\xi_m$.
Since the construction of the \mary{} tree also involves randomly labelling
the children, it follows that if $\xxi_i$ denotes the age when child at
position $i$ is born, then 
$\xxi_1,\dots,\xxi_m$ are \iid{} $\Exp(1)$.
The growing tree  $\cT_t$ is thus the subtree of the (rooted) infinite \mary{}
tree, where each child of each node is born after an $\Exp(1)$ waiting time
(with all these waiting times independent).

Similarly, for an ancestor, the process $\Xi'$ of its ordinary children
described in \refE{Elinear+-} and \refT{Tancestor} simply
consists of $m-1$ \iid{} $\Exp(1)$ points. Furthermore, the age $\heir$ when
the heir is born is $\Exp(m)$ and independent of $\Xi'$. Consequently,
the description of the limiting random sin-tree $\sinT$ in \refE{Elinear+-} 
can be simplified as follows, \cf{} the binary case in \refE{EBST}:
Construct an infinite chain of ancestors $o\mm 1,o\mm2,\dots$ of $o$
(backwards in time), with the times between their births \iid{} $\Exp(m)$;
in other word, $(o\mm{k})_{k\ge1}$ are born according to a Poisson process
with intensity $m$ on $(-\infty,0)$. Moreover, make a random choice (uniform
and independent of everything else) for each ancestor to decide which of 
its $m$ children that is its heir.
Then grow independent \mary{} tree processes at all empty places (external
nodes), with independent $\Exp(1)$ waiting times for all new nodes.
Finally, stop everything at $\taux\sim\Exp(m-1)$.
\end{example}

The  examples above are all cases of \refE{Epref}, and all except the
general \refE{Epref} itself are special cases of
\refE{Elinear+-}. We have seen that in the latter cases, the age $\heir$
when the heir is born to an ancestor has an
exponential distribution, and is independent of the births of the other
children. 
We give a simple example showing that this is not always the case.

\begin{example}[Binary pyramids]\label{E11}\label{Epreflast}
Let  $w_0=w_1=1$ and $w_k=0$ for $k>1$. Thus no node ever gets more than 2
children, and we can regard the result as a binary tree by randomly
labelling children as left or right as in \refE{EBST}, but the difference
is that we here have $w_0=w_1$; hence, when adding a new node, the parent of
the new node is chosen uniformly among all existing nodes with less than 2
children. (I.e., as in \refE{ERRT} but with a cut-off at 2 children.)
This random tree was called a \emph{binary pyramid} by
\citet{Mahmoud:pyramids} who studied their height.
(The name comes from pyramid schemes for chain letters, see
\citet{GastBhatt84}.
As said in \cite{Mahmoud:pyramids}, the definition can be generalized to an
arbitrary cut-off $m\ge2$; we leave this case to the reader.)

We have $\xi_1=X_1\sim\Exp(1)$ with density $e^{-x}$ and
$\xi_2=X_1+X_2\sim\gG(2,1)$ with density $xe^{-x}$. Hence, the intensity
$\mu$ has density $(1+x)e^{-x}$ and Laplace transform, by \eqref{hmub} or
\eqref{hmupref}, 
\begin{equation}\label{hmupyr}
  \hmu(\gth)=\frac{1}{1+\gth}+\frac{1}{(1+\gth)^2},
\qquad \gth>-1.
\end{equation}
Hence \eqref{malthus} is satisfied with $\ga=(\sqrt5-1)/2$
(the inverse golden ratio).
By \refR{Rage}, the age $\heir$ when an heir is born has the density 
$(1+t)e^{-(1+\ga)t}=(1+t)e^{-\frac12(\sqrt5+1)t}$,
and thus, or by \eqref{el2}, 
$\gb=\E\heir=(3\sqrt5-5)/2$.

Furthermore, by \eqref{heir1},
\begin{align}
 q_1&=1/(1+\ga)=\ga=(\sqrt5-1)/2,
\\
q_2&=1/(1+\ga)^2=\ga^2=1-\ga=(3-\sqrt5)/2.
 \end{align}
Thus, by \eqref{heir}, 
$\tXi$, describing the life history of an ancestor, can be described as
a mixture: with probability $q_1=(\sqrt5-1)/2$, an heir is born at age
$\txi_1\sim\Exp(1+\ga)$, and then another child is born after an
independent waiting time $\txi_2-\txi_1\sim \Exp(1)$;
with probability $1-q_1$, first another child is born at  age 
$\txi_1\sim\Exp(1+\ga)$, and then an heir is born after an
independent waiting time $\txi_2-\txi_1\sim \Exp(1+\ga)$.
We obtain also, by this or
directly from \eqref{heir}, the joint density $f(x,y)$ of
the ages when the ordinary  child and the heir is born as
\begin{equation}
  f(x,y)=
  \begin{cases}
	e^{-x-\ga y}, &0<y<x,\\
e^{-(1+\ga)y},&0<x<y.
  \end{cases}  
\end{equation}
Consequently, the two births are not independent (unlike the linear case
in \refE{Elinear+-}).

Since $\heir$ is not exponential, the times of births of the ancestors
$o\mm1, o\mm2, \dots $ do not form a Poisson process on $(-\infty,0)$. 

The asymptotic degree distribution is by \eqref{heirD} given by 
$\P(D=0)=\P(D=2)=1-\ga=(3-\sqrt5)/2$, $\P(D=1)=2\ga-1=\sqrt5-2$.
\end{example}

We end this section by proving some claims made above. 
First the ancestor in the linear case.

\begin{theorem}\label{Tancestor}
  For the linear preferential attachment in \refE{Elinear+-}, 
with weights $w_k=\chi k+\rho$,
the life history $\tXi$ of an ancestor 
consists of an heir born at age $\heir\sim\Exp(\rho)$ together with
ordinary children born according to a pure birth process $\Xi'$, with rate
$w_{k+1}$ when there are $k$ ordinary children, 
with $\heir$ and $\Xi'$ independent.
\end{theorem}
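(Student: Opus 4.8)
The plan is to build, from the two ingredients named in the statement, a single marked point process and then check that its law is exactly the one \refE{Epref} assigns to the life history of an ancestor in the general preferential-attachment model; since in the constructed process the heir time and the ordinary-children process are independent by fiat, that identification is the whole theorem. Concretely, let $\Xi'$ be a pure birth process with birth rate $w_{k+1}$ when it is in state $k$, let $\heir\sim\Exp(\rho)$ be independent of $\Xi'$, and let $\hat\Xi$ be the superposition of $\Xi'$ and the single point $\heir$, with $\heir$ marked as the heir; the goal is to prove $\tXi\eqd\hat\Xi$. By the explicit description of $\tXi$ recalled in \refE{Epref} (with $q_i$ as in \eqref{prefheir}) it suffices to show that $\P(\text{the marked point of }\hat\Xi\text{ is its }i\text{-th point})=q_i$ and that, conditioned on that event, the successive interbirth times of $\hat\Xi$ are independent with the $j$-th one $\sim\Exp(w_{j-1}+\ga)$ for $j\le i$ and $\sim\Exp(w_{j-1})$ for $j>i$. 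The single arithmetic fact that makes these match is $w_j+\rho=w_{j-1}+\ga$ for all $j\ge1$, which holds because $w_j-w_{j-1}=\chi$ and $\ga=\chi+\rho=w_1$ by \eqref{alphalin}; this linear relation is exactly what fails for non-linear weights.

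First I would run the exponential-race (Markov-jump) argument. Before the heir appears, $\hat\Xi$ sits in a state with $k$ ordinary children, where an alarm of rate $w_{k+1}$ (next ordinary child) competes with an alarm of rate $\rho$ (the heir); the holding time there is $\Exp(w_{k+1}+\rho)=\Exp(w_k+\ga)$ and, by the standard competing-exponentials fact, is independent of which alarm fires. The heir is the $i$-th point of $\hat\Xi$ iff the ordinary alarm wins in states $0,\dots,i-2$ and the heir alarm wins in state $i-1$, so
\begin{equation*}
\P(\text{heir is }i\text{-th})=\prod_{k=0}^{i-2}\frac{w_{k+1}}{w_{k+1}+\rho}\cdot\frac{w_0}{w_i+\rho}
=\prod_{k=0}^{i-1}\frac{w_k}{w_k+\ga}=q_i
\end{equation*}
once the identity is applied. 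The first $i$ interbirth times of $\hat\Xi$ are the holding times in states $0,\dots,i-1$, each $\Exp(w_{j-1}+\ga)$; once the heir has appeared its alarm disappears, and by memorylessness the residual of the ordinary-child alarm is again a fresh $\Exp(w_i)$, so the ordinary children continue as a fresh pure birth process and the later interbirth times are $\Exp(w_i),\Exp(w_{i+1}),\dots$, that is $\Exp(w_{j-1})$ for $j>i$. Conditional independence of all these holding times given $\cE_i$ is the usual statement that, for a continuous-time jump chain, holding times are conditionally independent given the embedded jump sequence, which $\cE_i$ fixes. The last piece, the marginal law $\heir\sim\Exp(\rho)$, needs no new computation: \refE{Elinear+-} already records that $\mu$ has density $\rho e^{\chi t}$, so by \refR{Rage} the age $\heir$ has density $e^{-\ga t}\rho e^{\chi t}\dd t=\rho e^{-\rho t}\dd t$.

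The main obstacle is bookkeeping rather than anything substantial: one must treat the instant of the heir's birth with care -- continuing, rather than restarting, the ordinary-child alarm via memorylessness -- and keep the index of the interbirth times straight across that instant so that the conditional distributions and the conditional independence line up with \refE{Epref} term by term. A slightly more computational route would bypass the race picture altogether: start from the explicit joint density $\prod_j w_{j-1}e^{-w_{j-1}x_j}$ of the interbirth times of an ordinary individual, apply the $e^{-\ga\xi_i}$ size-biasing of \eqref{heir} on the event $\cE_i$, change variables to split the $i$-th interbirth interval around the heir time, and read off the same product of exponential densities; the bookkeeping difficulty is then relocated into that change of variables, where the identity $w_j+\rho=w_{j-1}+\ga$ again does the work.
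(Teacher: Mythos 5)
Your argument is correct, and your main route is genuinely different from the paper's. The paper proves \refT{Tancestor} by the density computation you only sketch as an afterthought: it writes the joint density of the first $M+1$ birth times of an ordinary individual as $\prod_{j=0}^{M}w_je^{-w_j(x_{j+1}-x_j)}$, applies the size-biasing factor $e^{-\ga x_i}$ from \eqref{heir} on $\cE_i$, relabels via \eqref{xi'} to separate out $\heir$, and reads off the factorization $\rho e^{-\rho y}\prod_{j=0}^{M-1}w_{j+1}e^{-w_{j+1}(x_{j+1}-x_j)}$ in one algebraic rearrangement using $w_{j+1}-w_j=\chi$ and $\ga=\chi+\rho$. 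Your primary route instead builds the candidate process $\hat\Xi$ as the superposition of $\Xi'$ and an independent $\Exp(\rho)$ point, and matches its law against the description of $\tXi$ already derived in \refE{Epref} (the distribution $q_i$ of the heir's index and the conditional interbirth laws $\Exp(w_{j-1}+\ga)$, $j\le i$, and $\Exp(w_{j-1})$, $j>i$); the identity $w_{k+1}+\rho=w_k+\ga$ then does all the work in the competing-clocks computation, and your verification of $q_i$ and of the conditional holding-time laws is accurate. What your approach buys is probabilistic transparency -- it makes visible exactly where linearity enters and why the heir clock can be "peeled off" independently -- at the cost of leaning on the conditional description from \refE{Epref} and on the standard (but worth stating carefully, as you note) fact that holding times of a Markov jump process are conditionally independent given the embedded jump sequence. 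The paper's computation is more self-contained, starting directly from \eqref{heir}, and disposes of the relabelling across the heir's birth instant in a single change of variables rather than via a memorylessness argument at that instant.
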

\begin{proof}
Consider an ancestor,
let $\heir$ be its age when the heir is born, and denote its age at the births
of the other children by $\xi_1'<\xi_2'<\dots$. (Also, let $\xi'_0=0$.)  
Let $\cE_i$ be the event that the heir is child $i$. Thus $\cE_i$ \as{}
equals the event \set{\xi'_{i-1}<\heir<\xi'_i}; furthermore, 
if this event holds, then
$\heir=\txi_i$ and
\begin{equation}\label{xi'}
  \xi_j'=
  \begin{cases}
	\txi_j,&j<i,\\
\txi_{j+1},& j\ge i.
  \end{cases}
\end{equation}

Fix $i$ and $M>i$. 
For an ordinary individual,
the joint distribution of $(\xi_1,\dots,\xi_{M+1})$ 
has density, on \set{0<x_1<\dots<x_{M+1}} and with $x_0=0$,
\begin{equation}
  \begin{split}
  \prod_{j=0}^{M}w_{j}e^{-w_{j}(x_{j+1}-x_{j})}
&=\prodwk
  \cdot e^{\sum_{j=1}^M(w_j-w_{j-1})x_j-w_Mx_{M+1}}	
\\
&=
  \prodwk\cdot e^{\sum_{j=1}^M \chi x_j-w_Mx_{M+1}}	
.
  \end{split}
\end{equation}
Hence, 
for an ancestor,
 \eqref{heir} shows that
restricted to the event $\cE_i$, the joint density of 
$(\txi_1,\dots,\txi_{M+1})$ is,
on \set{0<x_1<\dots<x_{M+1}},
\begin{equation}
  \begin{split}
e^{-\ga x_i}
  \prodwk\cdot e^{\sum_{j=1}^M \chi x_j-w_Mx_{M+1}}	
=
  \prodwk\cdot e^{\chi\sum_{j=1}^M  x_j-w_Mx_{M+1}-\ga x_i}	
  \end{split}
\end{equation}
and, using \eqref{xi'}, the joint distribution of 
$(\xi'_1,\dots,\xi'_M,\heir)$ on the set
$\set{0<x_1<\dots<x_{i-1}<y<x_i<\dots<x_M}$ is, recalling \eqref{alphalin}
and $w_0=\rho$,
\begin{equation}
  \begin{split}
  \prodwk\cdot 
e^{\chi\lrpar{\sum_{j=1}^{M-1} x_j+ y}-w_Mx_{M}-\ga y}	
&=
w_0  \prod_{k=1}^{M}w_{k}\cdot 
e^{\sum_{j=1}^{M-1}\chi x_j-w_Mx_{M}-\rho y}	
\\
&=
\rho e^{-\rho y}  \prod_{k=1}^{M}w_{k}\cdot 
e^{\sum_{j=1}^{M-1}(w_{j+1}-w_j) x_j-w_Mx_{M}}	
\\
&=
\rho e^{-\rho y}  
\prod_{j=0}^{M-1}w_{j+1}e^{-w_{j+1}(x_{j+1}-x_{j})}
.
  \end{split}
\raisetag{1.5\baselineskip}
\end{equation}
This equals the joint density  of the first $M$ points of the birth process
$\Xi'$ defined in the statement, together with an independent
$\heir\sim\Exp(\rho)$.  
The result follows, since $M$ is arbitrary.
\end{proof}

We have shown in \refE{Elinear+-} that the age $\heir$ when the heir is born
to an ancestor has an exponential distribution in the linear case.
We now show the converse: this happens only in the linear case.
(Recall that if $w_m=0$ for some $m$, the weights $w_{k}$ for $k>m$ are
irrelevant.) 
\begin{theorem}\label{Tonlylinear}
Consider a general preferential attachment tree defined as in \refE{Epref}
by a sequence $(w_k)_0^\infty$ of weights. If the age $\heir$
when an ancestor gets
an heir has an exponential 
distribution, then  $w_k=\chi k+\rho$ for some $\chi\in\bbR$ and $\rho>0$
(at least until $w_k=0$, if that ever happens).  
\end{theorem}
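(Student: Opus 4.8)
The plan is to convert the hypothesis into an identity for the Laplace transform $\hmu$, and then to recover the weights $w_k$ one at a time by a recursion for ``shifted'' transforms. First I would invoke \refR{Rage} (formula \eqref{rage}): the age $\heir$ at which an ancestor gets its heir has Laplace transform $\E e^{-s\heir}=\hmu(\ga+s)$. If $\heir\sim\Exp(\lambda)$ for some $\lambda>0$, this says $\hmu(\ga+s)=\lambda/(\lambda+s)$ for small $s>0$, i.e.\ $\hmu(\gth)=\lambda/(\gth+\lambda-\ga)$ on the half-line where the series \eqref{hmupref} converges (nonempty by \eqref{cond}); since $\hmu$ is analytic there, this is an identity of meromorphic functions. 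From \eqref{hmupref} one has $\hmu(\gth)=\frac{w_0}{w_0+\gth}+O(\gth^{-2})$ as $\gth\to\infty$, the tail $\sum_{n\ge2}\prod_{k=0}^{n-1}\frac{w_k}{w_k+\gth}$ being controlled via $\frac{w_k}{w_k+\gth}\le\frac{w_k}{w_k+\gth_0}$ for $\gth\ge\gth_0$; hence $\gth\,\hmu(\gth)\to w_0$, while $\gth\cdot\frac{\lambda}{\gth+\lambda-\ga}\to\lambda$, so $\lambda=w_0$ and $\hmu(\gth)=\frac{w_0}{\gth+w_0-\ga}$.

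Next I would set, for $j\ge0$, $\hmu^{(j)}(\gth):=\sum_{n\ge1}\prod_{k=j}^{j+n-1}\frac{w_k}{w_k+\gth}$, so that $\hmu^{(0)}=\hmu$; separating the $n=1$ term gives $\hmu^{(j)}(\gth)=\frac{w_j}{w_j+\gth}\bigl(1+\hmu^{(j+1)}(\gth)\bigr)$, equivalently $\hmu^{(j+1)}(\gth)=\frac{w_j+\gth}{w_j}\hmu^{(j)}(\gth)-1$, and the same tail estimate gives $\gth\,\hmu^{(j)}(\gth)\to w_j$ for every $j$. An induction then finishes the proof: from $\hmu^{(0)}(\gth)=\frac{w_0}{\gth+w_0-\ga}$ the recursion gives $\hmu^{(1)}(\gth)=\frac{\ga}{\gth+w_0-\ga}$, so letting $\gth\to\infty$ forces $\ga=w_1$ and hence $\hmu^{(1)}(\gth)=\frac{w_1}{\gth+w_0-w_1}$; and if $\hmu^{(j)}(\gth)=\frac{w_j}{\gth+w_0-w_1}$ then the recursion gives $\hmu^{(j+1)}(\gth)=\frac{w_j+\gth}{\gth+w_0-w_1}-1=\frac{w_j+w_1-w_0}{\gth+w_0-w_1}$, and comparing with $\gth\,\hmu^{(j+1)}(\gth)\to w_{j+1}$ forces $w_{j+1}=w_j+w_1-w_0$ and $\hmu^{(j+1)}(\gth)=\frac{w_{j+1}}{\gth+w_0-w_1}$, closing the induction. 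Therefore $w_k=w_0+k(w_1-w_0)$ for all $k$, i.e.\ $w_k=\chi k+\rho$ with $\chi:=w_1-w_0\in\bbR$ and $\rho:=w_0>0$. (If $w_m=0$ for some smallest $m$, then $\hmu^{(m)}\equiv0$, the series are finite sums, and running the recursion for $j=0,\dots,m-1$ gives the same conclusion for $k\le m-1$ — which is all that is claimed — and additionally forces $\chi m+\rho=0$, consistent with $w_m=0$.)

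The argument is short, and the only points needing care are bookkeeping ones: checking that the tail of \eqref{hmupref} (and of each $\hmu^{(j)}$) is genuinely $O(\gth^{-2})$, so that $w_j=\lim_{\gth\to\infty}\gth\,\hmu^{(j)}(\gth)$ may be read off, and justifying the passage from an identity valid on a half-line to one of meromorphic functions; I do not expect a real obstacle. An alternative, slightly messier route would invert \refT{Tancestor}: write the density of $\mu$ as $\sum_n f_n$ with $f_n$ the density of $\xi_n$ (a sum of independent $\Exp(w_0),\dots,\Exp(w_{n-1})$ variables), impose $e^{-\ga t}\sum_n f_n(t)=w_0e^{-w_0 t}$, and use the convolution recursion for the $f_n$; but the weights inside the convolutions make the Laplace-transform form cleaner.
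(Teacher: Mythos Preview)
Your proof is correct and essentially identical to the paper's: both use the identity $\hmu(\gth)=\frac{\text{const}}{\gth+\text{const}}$ coming from \eqref{rage}, expand via \eqref{hmupref}, and recursively peel off $w_0,w_1,\dots$ from shifted transforms by reading the leading coefficient as $\gth\to\infty$. Your $\hmu^{(j)}$ are exactly the shifted sums in the paper's equation \eqref{rud2}, and your limit $\gth\,\hmu^{(j)}(\gth)\to w_j$ is the paper's step of multiplying by $(w_0+s)/w_0$ and sending $s\to\infty$ (justified there by dominated convergence rather than an explicit $O(\gth^{-2})$ tail bound).
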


\begin{proof}
  The Laplace transform of $\heir$ is by \eqref{rage} $\hmu(s+\ga)$. Hence,
  if $\heir\sim\Exp(\gl)$ for some $\gl>0$, then for all $s>0$,
  \begin{equation}
	\hmu(s+\ga)=\E e^{-s\heir}=\frac{\gl}{\gl+s}.
  \end{equation}
Consequently, by \eqref{hmupref},
\begin{equation}\label{rud0}
\wws0+\wws0\cdot\wws1+\dots = \hmu(s)=\frac{\gl}{\gl-\ga+s},
\end{equation}
at least for $s>\ga$.

Consider, more generally, the equation
\begin{equation}\label{rud}
\wws0+\wws0\cdot\wws1+\dots=\frac{a}{b+s}.
\end{equation}
for some real $a$ and $b$, and all large $s$.
Multiply \eqref{rud} by $(w_0+s)/w_0$. This yields
\begin{equation}\label{olof}
1+\wws1+\wws1\cdot\wws2+\dots = \frac{a(w_0+s)}{w_0(b+s)}.
\end{equation}
Now let $s\to\infty$. On the \lhs, each term except the first decreases to
0, and by dominated convergence, the sum converges to $1+0+\dots$; thus
\eqref{olof} implies
\begin{equation}
1=
1+0+\dots 
=\lim_{s\to\infty} \frac{a(w_0+s)}{w_0(b+s)} =\frac{a}{w_0}.
\end{equation}
Consequently, \eqref{rud} implies $w_0=a$.
Use this in \eqref{olof} and subtract 1 to obtain
\begin{equation}\label{rud2}
\wws1+\wws1\cdot\wws2+\dots = \frac{w_0+s}{b+s}-1
=\frac{w_0-b}{b+s}.
\end{equation}
If $w_1=0$, then the \lhs{} vanishes, so $w_0-b=0$. Otherwise, \eqref{rud2}
is of the same type as \eqref{rud}, with the weights $(w_k)$ shifted to
$(w_{k+1})$, and $a$ replaced by $w_0-b$.
Hence, the argument above yields $w_1=w_0-b$.

Thus, in both cases $w_1=w_0-b$. Moreover, if $w_1\neq0$, we can iterate the
argument, and find $w_2=w_1-b$, $w_3=w_2-b$, and so on, as long as the
weights are non-zero. Thus $w_k=w_0-kb=\chi k+\rho$, with $\chi=-b$ and
$\rho=w_0$.  
\end{proof}

Finally, we prove the result claimed above in the explosive
case \eqref{explosion}.

\begin{theorem}\label{Texp}
Let $T_n$ be a
general preferential attachment tree, defined by a sequence $w_k$,
and assume that the explosion condition \eqref{explosion} holds. Then
$\P(|\Tx_n|=1)\asto1$.
\end{theorem}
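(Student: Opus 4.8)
The plan is to reduce the assertion to a law of large numbers for the number of internal (non-leaf) nodes of $T_n$, and to extract the one non-routine ingredient --- summability of $1/W_n$, where $W_n:=\sum_{v\in T_n}w_{\dout(v)}$ is the total weight of $T_n$ --- from the fact that, under \eqref{explosion}, the branching process of \refE{Epref} explodes in finite time. Let $I_n$ and $L_n=n-I_n$ denote the numbers of internal and leaf nodes of $T_n$. Since $|T_n|=n$ and a uniformly random node is a leaf with conditional probability $L_n/n$, we have $\P(|\Tx_n|=1\mid T_n)=L_n/n$, so it suffices to show $I_n/n\asto0$.

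First I would set up the continuous-time picture: $T_n=\cT_{t_n}$, where $t_n$ is the $n$:th birth time; $t_n\uparrow t_\infty$, and under \eqref{explosion} already the root has infinitely many children born at times accumulating at $\xi_\infty<\infty$, whence $t_\infty\le\xi_\infty<\infty$ a.s. By the Markov property together with the competing-exponentials decomposition, conditionally on $T_n$ the waiting time to the next birth is $\Exp(W_n)$ and is independent of the choice of the new parent (which is picked with probability proportional to its current weight); iterating over $n$, the whole sequence of attachment choices is independent of the sequence $E_n:=W_n\,(t_{n+1}-t_n)$, which is i.i.d.\ $\Exp(1)$. Hence $\sum_n E_n/W_n=t_\infty<\infty$ a.s. I would then deduce $\sum_n 1/W_n<\infty$ a.s.: since $E_n\ge\ett{E_n\ge1}$, and conditionally on the attachment choices the variables $\ett{E_n\ge1}/W_n$ are independent and bounded (by $1/w_0$, as $W_n\ge w_0$), one has $\sum_n\ett{E_n\ge1}/W_n<\infty$ a.s.\ if and only if $\sum_n 1/W_n<\infty$; thus on $\{\sum_n 1/W_n=\infty\}$ we would get $t_\infty\ge\sum_n\ett{E_n\ge1}/W_n=\infty$, contradicting $t_\infty<\infty$ a.s. Therefore $\sum_n1/W_n<\infty$ a.s.

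The rest is an elementary martingale argument on the discrete evolution $T_1\subset T_2\subset\cdots$. At step $n$ the new node attaches to one of the $L_n$ leaves with conditional probability $w_0L_n/W_n$, so $M_n:=I_n-\sum_{j=1}^{n-1}w_0L_j/W_j$ is a martingale with increments bounded by $1$; by the martingale strong law, $M_n/n\asto0$. On the other hand $L_j\le j$, and Kronecker's lemma applied to the convergent series $\sum_j1/W_j$ with weights $j\uparrow\infty$ gives $\frac1n\sum_{j=1}^{n-1}w_0L_j/W_j\le\frac{w_0}{n}\sum_{j=1}^{n-1}j/W_j\asto0$. Combining these two facts yields $I_n/n\asto0$, which proves the theorem.

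The point I expect to cause the most trouble is the implication ``$t_\infty<\infty$ a.s.'' $\Rightarrow$ ``$\sum_n 1/W_n<\infty$ a.s.'': one cannot directly replace the a.s.\ finite random sum $\sum_n E_n/W_n$ by its conditional mean $\sum_n 1/W_n$, which is why the truncation $E_n\ge\ett{E_n\ge1}$ and the independence of the timing variables $(E_n)$ from the attachment mechanism are essential. The martingale estimate, Kronecker's lemma, and the identity $T_n=\cT_{t_n}$ with $t_n\uparrow t_\infty$ are all routine.
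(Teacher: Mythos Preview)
Your proof is correct, but it follows a genuinely different path from the paper's. The paper exploits the continuous-time picture more directly: since the explosion time $\tau(\infty)$ is almost surely finite, all but finitely many individuals are born in the interval $[\tau(\infty)-\gd,\tau(\infty)]$ and therefore have age less than $\gd$ at explosion. For such an individual $i$, the indicator ``$i$ has at least one child in $T_\infty$'' is dominated by the indicator ``$i$'s first child arrives within age $\gd$'', and these latter indicators are \iid{} with probability $1-e^{-w_0\gd}<w_0\gd$. A single application of the law of large numbers for \iid{} sequences, followed by letting $\gd\downarrow0$, then gives $\limsup_n I_n/n=0$ a.s.\ (using $T_n\subset T_\infty$ so that being internal in $T_n$ implies being internal in $T_\infty$).

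Your route instead distils the explosion into the summability statement $\sum_n 1/W_n<\infty$ a.s., and then runs a discrete-time martingale-plus-Kronecker argument on the embedded chain. All the steps check out: the decoupling of the normalised holding times $(E_n)$ from the jump chain is the standard fact for continuous-time Markov chains; your truncation $E_n\ge\ett{E_n\ge1}$ together with the three-series theorem (the terms being bounded by $1/w_0$) gives the implication $t_\infty<\infty\Rightarrow\sum_n 1/W_n<\infty$; the martingale increments lie in $[-1,1]$ so the martingale SLLN applies; and Kronecker's lemma is invoked correctly with $a_j=1/W_j$ and $b_j=j$. The paper's argument is shorter and needs only the \iid{} LLN, while yours extracts the intermediate fact $\sum_n 1/W_n<\infty$, which is of some independent interest and makes the subsequent reasoning purely discrete-time.
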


\begin{proof}
Let $T_\infty:=\cT_{\tau(\infty)}$, the (infinite) tree obtained by stopping
when 
the process explodes. Thus $T_n\subset T_\infty$ for every $n$.
Let, for $1\le i\le n\le \infty$, $I_{i,n}:=\ett{\dout_{T_n}(v_i)>0}$,  
the indicator of the event that the $i$:th node 
(in order of appearance)
$v_i$ has at least one
child in $T_n$.

Fix $\gd>0$, and let $\cE_{i,\gd}$ be the event that the $i$:th individual
(in order of birth) in the branching process gets at least one child before
age $\gd$, \ie, that it has $\xi_1<\gd$. Further, let
$J_{i,\gd}:=\ett{\cE_{i,\gd}}$. The events $\cE_{i,\gd}$ are independent
and have the same probability $\P(\xi_1<\gd)=\P(X_1<\gd)$. 
Thus, by the law of large numbers,
\begin{equation}
\frac{\sumin J_{i,\gd}}{n}\asto\P(X_1<\gd)=1-e^{-w_0\gd}<w_0\gd.
\end{equation}
Furthermore, \as{} $\tau(\infty)<\infty$, and then $\gs_i>\tau(\infty)-\gd$
for all but a finite number of $i$, \ie, all but a finite number of
individuals have age less than $\gd$ when the process explodes. Hence,
$I_{i,\infty}\le J_{i,\gd}$ for all  but a finite number of $i$ and, \as,
\begin{equation}
  \limsup_\ntoo \frac{\sumin I_{i,\infty}}n
\le
\lim_\ntoo \frac{\sumin J_{i,\gd}}{n}<w_0\gd.
\end{equation}
Since $\gd>0$ is arbitrary, this shows 
$  \limsup_\ntoo \xfrac{\sumin I_{i,\infty}}n=0$ a.s.
Furthermore, the finite tree $T_n$ is a subtree of $T_{\infty}$; hence,
a.s.,
\begin{equation}
  \P\bigpar{|\Tx_n|>1}=\frac{\sumin I_{i,n}}{n}
\le \frac{\sumin I_{i,\infty}}{n} \to0
\end{equation}
\end{proof}

\section{$m$-ary search trees and branching processes}\label{Smst}

In this section, as always when we discuss \mst{s}, $m$ is a fixed integer
with $m\ge2$.
We apply the general theory in \refS{S:BP} to the \mst{} in
\refS{Smst-def}.
Recall from \refS{Smst-def} that besides the \mst, we may also consider the
\emst{} (including external nodes).
It turns out that both versions can be described by stopped branching
processes.
It is easy to go between the two versions, but we find it instructive to
treat them separately, and describe the two related but different branching
processes connected to them. The reader is recommended to compare the two
versions, even when we do not explicitly do so.

\begin{remark}\label{Rfixednodes}
  The random \mst{} is defined as in \refS{Smst-def} to have a given number
  of keys, which makes the number of nodes random (in general).
We can also define a random \mst{} with a given number of nodes, by adding
keys until the desired number of nodes is obtained. This is obtained by the
branching processes below, stopping when the number of nodes is a given
number $n$; we thus use the weight $\psi(t)=1$ in \refE{E1} (as in
\refS{Sex}).
The asymptotics are the same for this version, see \refR{Rsame}.
We therefore ignore this version in the sequel, and consider only the
standard version with a given number of keys.
\end{remark}

\subsection{Extended \mst} \label{SSemst}

Recall from \refS{Smst-def}
that we can grow an extended $m$-ary search tree by starting with an
empty 
tree (a single external node) and then adding keys, each new key added with
equal probability to each existing gap. Hence, we can also grow the \emst{}
in continuous time by adding a key to each gap after an exponential
$\Exp(1)$ waiting time (independent of everything else). 
By the construction of the \emst{} in \refS{Smst-def},
this is a \CMJ{} branching process, where the life of each individual is as
follows (\citet{Pittel1994}):

An individual is born as an external node with no keys. It acquires $m-1$  keys
after successive independent waiting times $Y_1,\dots,Y_{m-1}$, where
$Y_i\sim\Exp(i)$ 
(since the node has $i$ gaps when there are $i-1$ keys).
When the $(m-1)$:th key arrives, the individual immediately gets $m$
children.

We let $\psi(t)$ be the number of keys stored at the individual at age $t$. Thus
$\zpsi_t$ is the total number of keys at time $t$ and $\tau(n)$ is the time
the $n$:th key is added. Hence $T_n$ is a random \mst{} with $n$ keys, as we
want. 

Let $S_k:=\sum_{i=1}^k Y_i$, $k=0,\dots,m-1$;
for $1\le k\le m-1$, this is the time the $k$:th key arrives.
Let further $S_m:=\infty$.
Then $\psi(t)=k$ for $S_k\le t<S_{k+1}$.
 For $\gth\ge0$ (in fact, for $\gth>-1$)
and $k\le m-1$, 
\begin{equation}\label{esk}
\E e^{-\gth S_{k}}
=\prod_{i=1}^{k} \E e^{-\gth Y_i}	
=\prod_{i=1}^{k} \frac{i}{i+\gth}
=\frac{k!}{\prod_{i=1}^{k}(i+\gth)}.
\end{equation}
(See also \refT{TC}, which further gives more the distribution of
$S_k$;  in the notation used in \refApp{AC}, $S_k\eqd V_{k,k}$.)

Furthermore,
all children are born at the same time with
$\xi_1=\dots=\xi_m=S_{m-1}$, and thus
the random variable $\Xib$ in
\refR{RBPxib}  equals $me^{-\gth S_{m-1}}$. 
Hence, see \eqref{hmub} and \eqref{esk}, 
\begin{equation}\label{hmumst}
  \begin{split}
\hmu(\gth) = m \E e^{-\gth S_{m-1}}
=\frac{m!}{\prod_{i=1}^{m-1}(i+\gth)},
\qquad \gth>-1.
  \end{split}
\end{equation}
In particular, we see that $\hmu(1)=1$, so 
the Malthusian condition \eqref{malthus} is satisfied with $\ga=1$.
It is easy to see that all other conditions \refBP{}
 are satisfied. (Note that in this case, $N=m$ is non-random.
Furthermore, $\psi$ is bounded, so \refBPlastpsi{} holds too.)
Consequently, \refT{TBP} applies, and shows (in particular) that the
random fringe tree 
$\Tx_n$ converges in distribution to $\friT$, which is obtained by
running the branching process above and stopping it after a
random time $\taux\sim\Exp(1)$.

Similarly, \refT{TBPX} applies. In order to find the sin-tree $\sinT$, note
that since all children of an individual are born at the same time, 
so $\xi_1=\dots=\xi_m=S_{m-1}$,
it does not matter which one is the heir. It thus follows from \eqref{heir},
that if we let $\tY_1,\dots,\tY_{m-1}$ be the successive waiting times
between the arrival of keys for an ancestor, 
so all $m$ children are born at time
$\txi=\sum_{i=1}^{m-1}\tY_i$, then 
$\tY_1,\dots,\tY_{m-1}$  have joint density
\begin{equation}
  me^{-(y_1+\dots+y_{m-1})}\prod_{i=1}^{m-1} i e^{-iy_i}
=
m!\prod_{i=1}^{m-1}  e^{-(i+1)y_i}
= \prod_{i=1}^{m-1} (i+1) e^{-(i+1)y_i}.
\end{equation}
Thus $\tY_1,\dots,\tY_{m-1}$ are independent with $\tY_i\sim\Exp(i+1)$.
(Cf.\ \eqref{rrttX} and the proof of \refT{Tancestor}, with similar
calculations in  different but related situations.) 
The $m$ children are numbered $1,\dots,m$, with the heir chosen uniformly at
random among them.

\begin{remark}
The age $\heir$ when the heir is born is $\heir=\sum_{i=1}^{m-1}\tY_i$, with
mean 
\begin{equation}\label{gbemst}
\gb=
\E \heir=\sum_{i=1}^{m-1}\E \tY_i=\sum_{i=1}^{m-1}\frac{1}{i+1}=H_m-1,  
\end{equation}
where $H_m:=\sum_1^m\frac{1}i$ denotes the $m$:th harmonic number.
(See also \eqref{el2} and \eqref{hmumst}.)
The distribution of $\heir$ is given by \refT{TC}; using the notation there
\begin{equation}\label{heiremst}
\heir=\sum_{i=1}^{m-1}\tY_i\eqd\sum_{i=2}^m Y_i \eqd V_{m,m-1}.  
\end{equation}
In particular, $\heir$ is \emph{not} exponentially distributed unless $m=2$.
\end{remark}

In the construction of $\ctt$ above, the number of gaps is always $1$ $+$ the
number of keys, and we add keys (and thus gaps) with an intensity equal to
the number of gaps. Hence, the number of gaps at time $t$ forms a pure birth
process with birth rates $\gl_k=k$, starting at 1 (this is again the Yule
process in \refE{ERRT}, see \refE{EYule1}), 
and thus the number of keys at time $t$ forms a
pure birth process with birth rates $\gl_k=k+1$, starting at 0.
(Note that this is independent of the choice of $m$.)

Since $\taux$ has the same distribution $\Exp(1)$ here as in \refE{ERRT}, it
follows that the number of gaps in $\friT=\cttaux$ has the same distribution
as the number of nodes $M$ in $\cttaux$ in \refE{ERRT}, given by \eqref{rrtM}. 
Moreover, by symmetry, conditioned on the number of keys $K=k$ in
$\cttaux=\friT$, $\friT$ has the same distribution as the 
random \emst{} $T_k$ with $k$ keys. Hence, we get the following result: 

\begin{theorem}\label{fringedistributionextended}
 The number $K$ of keys in 
the asymptotic fringe tree 
$\friT$ has the distribution
\begin{equation}\label{mstkeys}
  \P(K=k)=\P(M=k+1)=\frac{1}{(k+1)(k+2)},
\qquad k\ge0.
\end{equation}
Furthermore,
$\friT$ can also be described as an \emst{} with
a random number $K$ keys, where $K$ has the distribution \eqref{mstkeys}.
\nopf
\end{theorem}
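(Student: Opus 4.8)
The plan is to assemble the two facts already prepared in the paragraph preceding the statement. First I would recall the continuous-time construction of the extended \mst{} as a \CMJ{} process: at each moment every gap carries an independent $\Exp(1)$ clock, and when a clock rings the associated gap receives a key (and is subdivided into further gaps exactly as in \refS{Smst-def}). Since an extended \mst{} with $j$ keys has precisely $j+1$ gaps, the number of gaps $G_t$ in $\ctt$ jumps up by one at every key arrival, while the instantaneous rate of key arrivals is the current number of gaps. Hence $G_t$ is a pure birth process with rates $\gl_k=k$ started from $1$ --- the Yule process appearing in \refE{ERRT} --- and the weight $\zpsi_t=G_t-1$ counts the keys.

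Next I would compare with \refE{ERRT}, where the size $\abs{\ctt}$ of the random recursive tree process is this same Yule process started from $1$, and where $\friT=\cttaux$ satisfies $\abs{\friT}=M$ with $\P(M=n)=1/\bigpar{n(n+1)}$, $n\ge1$, by \eqref{rrtM}. In both settings $\taux\sim\Exp(1)$ is independent of the branching process (\refT{TBP}), so $G_{\taux}$ and $M$ have the same law, and therefore $K:=\zpsi_{\taux}=G_{\taux}-1$ satisfies $\P(K=k)=\P(M=k+1)=1/\bigpar{(k+1)(k+2)}$ for $k\ge0$, which is \eqref{mstkeys}. For the description of $\friT$ itself I would invoke the standard fact that the embedded jump chain of a pure-jump Markov process is independent of its holding times: the jump chain of $\ctt$ is exactly the discrete evolution of the extended \mst{} of \refS{Smst-def}, since the gap whose $\Exp(1)$ clock rings first is uniformly distributed among the current gaps, and this jump chain produces the full labelled \mary{} tree. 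The number $K$ of keys present at the independent time $\taux$ depends only on the holding times and $\taux$, so conditioning on $\{K=k\}$ leaves the jump chain with its unconditional law, namely that of the random extended \mst{} after $k$ insertions, \ie{} $T_k$. Thus $\friT$ conditioned on $\{K=k\}$ is distributed as $T_k$, which together with \eqref{mstkeys} gives the stated mixture description.

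I do not anticipate a genuine obstacle, since the argument can essentially be read off from the discussion preceding the statement; the only points needing a line of care are (i) that conditioning on $\{K=k\}$ does not bias the jump chain --- this is the independence of jump chain and holding times for a pure-jump Markov process, together with the independence of $\taux$ from the whole branching process --- and (ii) that $\friT$ and $T_k$ are being identified as labelled \mary{} trees and not merely as plane trees, which is automatic because the jump chain described above constructs precisely the labelled \mary{} structure.
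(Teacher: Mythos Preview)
Your proposal is correct and follows essentially the same route as the paper: the text preceding the theorem identifies the gap process as the Yule process from \refE{ERRT}, deduces the law of $K$ from \eqref{rrtM}, and then argues ``by symmetry'' that $\friT$ conditioned on $K=k$ is distributed as $T_k$. Your independence-of-jump-chain-and-holding-times argument is simply a more explicit formulation of that symmetry step, so the two approaches coincide.
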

 
\begin{remark}\label{REMSTgaps} 
Using the notation in \refD{DHG}, $K\sim\HG(1,1;3)$.
(This also follows from \refT{TAlin}, with $\chi=\rho=\ga=1$.)

The property in the second part of the theorem, describing the asymptotic
fringe tree $\friT$ as an \emst{} with 
a random number $K$ keys is called \emph{coherence} by 
\citet[Section 2.6]{Aldous},
and was seen also in \refE{Elinear+-}.
(In the present case with respect to the number of keys; we might call this
\emph{key-coherent}.) 
\end{remark}

We proceed to derive some properties of the random \emst{} $T_n$.
Note that, unlike the examples in \refS{Sex}, $T_n$ does not have $n$ nodes;
$n$ is the number of keys, while the number of nodes is random for $m\ge3$.
(For $m=2$, the number of nodes is $2n+1$, of which $n$ are internal, see
\refS{Smst-def}.)
To find the asymptotic number of nodes, we use \refT{Tex} and obtain the
following result.

\begin{theorem}\label{TEMSTnodes}
  For the \emst{} $T_n$ with $n$ keys,
\begin{equation}\label{kull}
  \frac{|T_n|}n\asto \frac{1}{\mpsi}=\frac{1}{H_m-1}.
\end{equation}
\end{theorem}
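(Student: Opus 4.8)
The plan is to deduce this directly from \refT{Tex}\ref{tex2}. The hypotheses \refBP{} for the \CMJ{} process underlying the extended \mst{} were already verified in \refSS{SSemst}, and since the weight $\psi(t)$ --- the number of keys stored at an individual of age $t$ --- never exceeds $m-1$, it is bounded, so the extra condition \refBPlastpsi{} holds trivially. Hence \refT{Tex}\ref{tex2} gives $|T_n|/n\asto 1/\mpsi$ at once, and the whole task reduces to evaluating the constant $\mpsi$.

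To compute $\mpsi$, I would use the case $\ga=1$ of \eqref{olle}, namely $\mpsi=\E\hpsi(1)=\intoo e^{-t}\E\psi(t)\dd t$, and write $\psi(t)=\sum_{k=1}^{m-1}\ett{S_k\le t}$, where $S_k=\sum_{i=1}^k Y_i$ is the age at which the individual acquires its $k$:th key. All terms being nonnegative, Fubini's theorem permits exchanging the sum, the expectation and the integral, so that
\begin{equation*}
\mpsi=\sum_{k=1}^{m-1}\E\intoo e^{-t}\ett{S_k\le t}\dd t
=\sum_{k=1}^{m-1}\E e^{-S_k}.
\end{equation*}
The Laplace transforms $\E e^{-S_k}$ are already recorded in \eqref{esk}; at $\gth=1$ the product telescopes, $\E e^{-S_k}=\prod_{i=1}^k\frac{i}{i+1}=\frac{1}{k+1}$, whence
\begin{equation*}
\mpsi=\sum_{k=1}^{m-1}\frac{1}{k+1}=\sum_{j=2}^{m}\frac1j=H_m-1,
\end{equation*}
which is exactly \eqref{kull}.

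I do not expect any real obstacle: the statement is a one-line corollary of \refT{Tex} once $\mpsi$ is identified, and that identification is the routine Laplace-transform computation above, for which \eqref{esk} already does most of the work. As consistency checks, this value of $\mpsi$ agrees with $\gb=H_m-1$ from \eqref{gbemst}, and for $m=2$ it yields $|T_n|/n\asto 2$, matching the fact that a binary extended search tree with $n$ keys has $n$ internal and $n+1$ external nodes. The only point needing a moment's thought is noting that $\psi$ is bounded, so that \refBPlastpsi{} is free and no further integrability estimate is required --- and this is immediate.
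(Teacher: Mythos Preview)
Your proof is correct and follows essentially the same route as the paper's: invoke \refT{Tex}\ref{tex2} (with \refBPlastpsi{} immediate since $\psi$ is bounded), write $\psi(t)=\sum_{k=1}^{m-1}\ett{S_k\le t}$, and use \eqref{esk} at $\gth=1$ to get $\mpsi=\sum_{k=1}^{m-1}\frac{1}{k+1}=H_m-1$. The only cosmetic difference is that the paper first computes $\hpsi(1)=\sum_{i=1}^{m-1}e^{-S_i}$ and then takes expectations, whereas you interchange the order --- both justified by the same Fubini step.
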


 The asymptotic value of the expectation $\E |T_n|/n$ was found by
\citet{Baeza-Yates}. We do not know any reference where \eqref{kull} is
stated explicitly, but closely related results for the number of internal
nodes have been shown in several papers, see \refR{RMSTnodes}; the result
follows also immediately from the main result by \citet{KalMahmoud:Degree}.

\begin{proof}
This follows from \refT{Tex}\ref{tex2}, except for the value of $\mpsi$,
which we calculate as follows.
Since $\psi(t)=\sum_{i=1}^{m-1}\ett{S_i\le t}$, 
\begin{equation}\label{aku}
\hpsi(1)=\intoo e^{-t}\psi(t) \dd t 
=\sum_{i=1}^{m-1}\intoo e^{-t}\ett{S_i\le t} \dd t 
=\sum_{i=1}^{m-1} e^{-S_i}.
\end{equation}
By \eqref{esk},
\begin{equation}\label{esk1}
\E e^{-S_i}
=\frac{i!}{(i+1)!}
=\frac{1}{i+1},
\qquad i=0,\dots, m-1,
\end{equation}
and thus, 
\begin{equation}\label{akull}
\mpsi=
\E\hpsi(1)
=\sum_{i=1}^{m-1}\E e^{-S_i}
=\sum_{i=1}^{m-1}\frac{1}{i+1}
=H_m-1.
\end{equation}
\end{proof}

\begin{theorem}\label{TEMSTkeys}
Let $N_k(T_n)$ be the number of nodes in $T_n$ with
$k$ keys, for $k=0,\dots,m-1$. 
Then,
\begin{equation}\label{nk}
  \begin{split}
\frac{N_k(T_n)}{|T_n|}
\asto
\begin{cases}
\frac{1}{(k+1)(k+2)}, & 0\le k<m-1,	
\\
\frac{1}{m}, & k=m-1.
\end{cases}
  \end{split}
\end{equation}
\end{theorem}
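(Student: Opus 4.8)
The plan is to read off \eqref{nk} directly from the quenched fringe-tree convergence of \refT{TBP}. Let $\cP_k$ be the property of a node that it stores exactly $k$ keys. This is a property of a node $v$ depending only on $v$ itself (hence only on the labelled subtree $T^v$), so \refT{TBP} applies — recall that, as noted just before that theorem, it covers trees whose nodes carry labels given by a random function $\ell(t)$, which here records the number of keys. Since $N_k(T_n)=\nun_{\cP_k}(T_n)$, the quenched statement \eqref{tbpp} gives
\begin{equation*}
\frac{N_k(T_n)}{|T_n|}=\frac{\nun_{\cP_k}(T_n)}{|T_n|}\asto p_{\cP_k}(\friT),
\end{equation*}
and it remains only to evaluate $p_{\cP_k}(\friT)$, the probability that the root of the asymptotic fringe tree $\friT$ has $k$ keys.

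For this I would use the formula \eqref{ak} (here $\ga=1$): $p_{\cP_k}(\friT)=\E\hphi_k(1)$, where $\phi_k(t)=\ett{\ctt\in\cP_k}$ is the characteristic indicating that the root has $k$ keys at age $t$. From the description of an individual's life at the start of \refSS{SSemst}, the root node stores exactly $k$ keys precisely on the interval $[S_k,S_{k+1})$ (with $S_m:=\infty$), so $\phi_k(t)=\ett{S_k\le t<S_{k+1}}$, which is bounded so that \ref{BP6.2} is trivial. Hence
\begin{equation*}
p_{\cP_k}(\friT)=\E\hphi_k(1)=\E\intoo e^{-t}\ett{S_k\le t<S_{k+1}}\dd t=\E\bigpar{e^{-S_k}-e^{-S_{k+1}}}.
\end{equation*}
(Equivalently, one could bypass \eqref{ak} and simply compute $p_{\cP_k}(\friT)=\P(S_k\le\taux<S_{k+1})$ with $\taux\sim\Exp(1)$ independent of the $S_i$, obtaining the same expression.)

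Finally I would substitute \eqref{esk1}: for $0\le k<m-1$ this equals $\tfrac1{k+1}-\tfrac1{k+2}=\tfrac1{(k+1)(k+2)}$, while for $k=m-1$ we have $S_m=\infty$, so $e^{-S_m}=0$ and the expression reduces to $\E e^{-S_{m-1}}=\tfrac1m$. This is exactly \eqref{nk}. There is essentially no obstacle here: the whole content is the bookkeeping of identifying the characteristic $\phi_k$ as the indicator of the time window $[S_k,S_{k+1})$ during which the root carries $k$ keys, and correctly handling the boundary case $k=m-1$ where the window is unbounded.
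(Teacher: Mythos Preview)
Your proof is correct and essentially identical to the paper's own argument: both apply \refT{TBP} to the property $\cP_k$ of carrying $k$ keys, identify the characteristic as $\phi_k(t)=\ett{S_k\le t<S_{k+1}}$, compute $\E\hphi_k(1)=\E(e^{-S_k}-e^{-S_{k+1}})$ via \eqref{ak}, and finish with \eqref{esk1}. The paper also offers the same alternative formulation you mention and notes \refT{TA1} as another route.
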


Again,
we do not know any reference where this is stated explicitly;
the asymptotic values of the expectations $\E N_k(T_n)/n$ were found by
\citet{Baeza-Yates}; see also the references in \refR{RMSTnodes}.
The result can also easily be shown using \Polya{} urns, see 
\cite[Example 7.8]{SJ154},
\cite{KalMahmoud:Degree} and
\cite{HolmgrenJanson2}.

\begin{proof}
 $N_k(T_n)=n_\cP(T_n)$, where $\cP$ is the property of a node $v$ that
it contains $k$ keys. Hence, $p_\cP(\friT)$ is the probability that the root
of $\friT=\cttaux$ contains $k$ keys, \ie, that $\psi(\taux)=k$ or,
equivalently, 
$S_k\le\taux<S_{k+1}$. 
We apply \refT{TBP}, and note that the characteristic $\phi$ there is
$\gf(t)=\ett{\psi(t)=k}=\ett{S_k\le t<S_{k+1}}$.
By \eqref{ak}, arguing similarly to \eqref{aku}--\eqref{akull} and in
particular using \eqref{esk1}, and recalling
that 
$S_m:=\infty$, 
\begin{equation}
  \begin{split}
p_\cP(\friT)=
\E\hphi(1)
&
=\E\intoo e^{-t}\ett{S_k\le t<S_{k+1}} \dd t 
= \E\bigpar{e^{-S_k}-e^{-S_{k+1}}}
\\&
=
\begin{cases}
\frac{1}{k+1}-\frac{1}{k+2}, & 0\le k<m-1,	
\\
\frac{1}{k+1}, & k=m-1.
\end{cases}
  \end{split}
\raisetag{\baselineskip}
\end{equation}
The result follows by  \refT{TBP}.
(Alternatively, one can use \refT{TA1}.)
\end{proof}

\begin{remark}
In particular, the fraction of external nodes 
\begin{equation}
  \label{n0}
\frac{N_0(T_n)}{|T_n|}\asto\frac12.
\end{equation}  
and thus the same holds for the number of
internal nodes;
the numbers of external and internal nodes are thus asymptotically the same.
(Perhaps surprisingly, the asymptotic fractions of external and  internal
nodes are thus independent of $m$.)
\end{remark}

\begin{remark}
The asymptotic degree distribution $D$ is not very interesting for the
\emst, 
since every internal node has outdegree $m$ and every external node has
outdegree 0;  
thus, as a corollary of \eqref{n0},
$\P(D=0)=\P(D=m)=1/2$.  
\end{remark}

\subsection{\mst, internal nodes only}\label{SSmst}

Usually, we consider an \mary{} search tree as consisting only of the
internal nodes. This can be obtained from the tree with external nodes in
\refSS{SSemst} by deleting all external nodes, but it may also be constructed
directly as follows, using a different \CMJ{} process.

We now start with a node containing a single key.
Thus each individual is born as a  node with 1 key. It acquires more keys
after successive waiting times $Y_2,\dots,Y_{m-1}$, where $Y_i\sim\Exp(i)$.
At the  arrival of the $(m-1)$:th key, at time
$\SS:=\sum_{i=2}^{m-1}Y_i$, 
the individual becomes fertile;
it then gets $m$ children,
marked by $1,\dots,m$,
with child $i$ born after a further waiting time $X_i$,
\ie, at time $\xi_i=\SS+X_i$,
where $X_1,\dots,X_m$ are independent and $\Exp(1)$. 
(The children are here marked by their final positions in the tree, 
see \refR{Rorder}.) 

Alternatively, taking the children in order of birth,
we may say that after the $(m-1)$:th key, there
are $m$ children born after successive waiting times $X_1',\dots,X_m'$, with
$X_i'\sim\Exp(m+1-i)$, all waiting times independent.

We let again the weight $\psi(t)$ be the number of keys at time $t$
in an individual. 
It is easy to see that then
$T_n$ is a random \mst{} with $n$ keys, as defined in \refS{Smst-def}.

The random variable $\Xib$ in
\refR{RBPxib} is now given by
\begin{equation}
\Xib=
\sum_{j=1}^m e^{-\gth(\SS+X_j)}
=
\sum_{j=1}^m e^{-\gth(\sum_{i=2}^{m-1}Y_i+X_j)}.   
\end{equation}
Its distribution
is not the same as in \refSS{SSemst}, but the mean $\E\Xib=\hmu(\gth)$
is easily seen to be the
same as in \eqref{hmumst}, and thus we still have $\ga=1$;
similarly, by \eqref{el2}, $\gb$ is the same as in \eqref{gbemst},
\ie,
\begin{equation}\label{gbmst}
\gb=
H_m-1.
\end{equation}
(That $\ga$ has to be the same for the \mst{} with and without external nodes is
rather obvious, since the number of internal nodes in $\cT_t$
is the same for both versions, and grows like $e^{\ga t}$ by \eqref{olle2}
and \eqref{n0}.)  
The conditions \refBP{} and \refBPlastpsi{} are satisfied, and
\refT{TBP} shows that the random fringe tree
$\Tx_n$ converges in distribution to $\friT$, which is obtained by
running this branching process and stopping it after 
a random time $\taux\sim\Exp(1)$.

Moreover, the random sin-tree $\sinT$ is constructed
by the general procedure in \refS{S:BP}.
To find the distribution of an ancestor, we note that by symmetry, each
child has the same probability $1/m$ of being the heir.
Furthermore, using $\tY_i$ and $\tX_j$ to denote the waiting times
(corresponding to $Y_i$ and $X_j$ above) 
for an ancestor,  
it follows from \eqref{heir} that conditioned
on the heir being the child marked $k$, the joint distribution of
$\tY_2,\dots,\tY_{m-1},\tX_1,\dots,\tX_m$ has density
\begin{multline}
m e^{-\sum_{i=2}^{m-1} y_i-x_k} \prod_{i=2}^{m-1} ie^{-iy_i}
\prod_{j=1}^{m} je^{-jx_j}	
\\= 
\prod_{i=2}^{m-1} (i+1)e^{-(i+1)y_i}
\prod_{j\neq k} je^{-jx_j}\cdot 2e^{-2x_k}.  
\end{multline}
Consequently, $\tY_i\sim\Exp(i+1)$ and, given that the heir is child $k$,
$\tX_k\sim\Exp(2)$ while $\tX_j\eqd X_j\sim \Exp(1)$ for $j\neq k$, all
waiting times independent (conditioned on $k$). 

\begin{remark}\label{Rmstbirths}
The distributions of the birth times can be obtained from \refT{TC};
it follows that using the notation there, 
$\SS\eqd V_{m-1,m-2}$, and $\xi_i\eqd V_{m-1,m-1}$ (with marks as above),
while the $i$:th child in birth order is born at time $\xi_i'=\SS+ V_{m,i}$,
with $\SS$ and $V_{m,i}$ independent.

Similarly,
an ancestor
becomes fertile at time $\tSS=\sum_{i=2}^{m-1} \tY_i\sim V_{m,m-2}$
and the heir is born at time
\begin{equation}\label{heirmst}
  \heir = \tSS+\tX_k\eqd \sum_{j=2}^{m} Y_j\sim V_{m,m-1}.
\end{equation}
Thus $\heir$ has the same distribution as for the \emst, see
\eqref{heiremst}. (This is not surprising since we really construct the same
trees in two somewhat different ways.)  
\end{remark}

As for the extended \mst{} in \refSS{SSemst},
the number of gaps in the process
(= 1 + the number of keys, \ie, $1+\zpsi_t$) 
forms a Yule process (see \refE{EYule1}),
but in the present case it starts at 2, while it starts with 1 for the
\emst{} in \refSS{SSemst}.
(In other words, the number of gaps is the sum of two independent standard
Yule processes.)
The number of keys in $\ctt$ thus evolves in exactly
the same way for every $m\ge2$, and hence is the same as for the 
binary case $m=2$ treated in \refE{EBST}.
In particular, since also $\taux\sim\Exp(1)$ is the same for all $m$,
the
number $K$ of keys in $\cttaux$ has the distribution  \eqref{bstM}.
Moreover, as for the \emst,
if we condition on $K=k$, then $\friT$ has the same distribution as the
random \mst{} $T_k$ with $k$ keys. Hence, we get the following result:

\begin{theorem}\label{fringedistributionmary}
The number $K$ of keys in the asymptotic fringe tree 
$\friT$ has the distribution  
 \begin{align}\label{msts2}
  \P(K=k)=\frac{2}{(k+1)(k+2)}
  , \qquad k\ge1.
 \end{align} 
 Furthermore,  $\friT$ can also be described as an \mst{} 
$T_K$ with
a random number $K$ keys, where $K$ has the distribution \eqref{msts2}.
\nopf
\end{theorem}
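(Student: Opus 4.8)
The plan is to reduce the whole statement to the binary case already analysed in \refE{EBST}. By \refT{TBP}, applied to the \CMJ{} process constructed in this subsection (whose standing assumptions \refBP{} and \refBPlastpsi{} were checked above), the random fringe tree $\Tx_n$ converges in distribution to $\friT=\cT_{\taux}$, with $\taux\sim\Exp(1)$ independent of the branching process. So it suffices to (a) compute the law of the number of keys $K=\zpsi_{\taux}$ in $\cT_{\taux}$, and (b) identify the conditional law of $\friT$ given $K$. Both are essentially forced by the description of the process.

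For (a) I would argue exactly as in the paragraph preceding the theorem. In $\ctt$ the total number of gaps is always $1+\zpsi_t$, and each gap independently acquires its key after an $\Exp(1)$ waiting time; a node filling up and spawning children only redistributes its gaps among its new children without changing the per-gap rate. Hence, by the memoryless property, the number of gaps is a pure birth process with rate $\gl_k=k$, started at $2$ (as opposed to $1$ in \refSS{SSemst}), for every $m\ge2$. Equivalently $\zpsi_t$ is a pure birth process with rates $\gl_k=k+1$ started at $0$, which is precisely the process governing the internal-node count of the binary search tree of \refE{EBST}. Since $\taux\sim\Exp(1)$ is likewise the same for all $m$, the variable $K=\zpsi_{\taux}$ has the same distribution as the size $M$ of the limiting binary-search-tree fringe, namely \eqref{bstM}; this is \eqref{msts2} (note $K\ge1$ since the root of $\friT$ is an internal node).

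For (b), order the keys of the branching process by their (a.s.\ distinct) arrival times; the embedded discrete-time chain of trees adds, at each step, one key to a gap chosen uniformly at random among the current gaps — again by the memoryless property and the equality of the gap clocks — which is exactly the growth rule of the \mst{} in \refS{Smst-def}. Because $\taux$ is independent of the branching process, conditioned on $\{K=k\}$ the tree $\cT_{\taux}$ is the $k$-th tree of this embedded chain, hence distributed as the random \mst{} $T_k$ with $k$ keys; combining with the law of $K$ from the previous paragraph gives the second assertion. The only point requiring a little care — and the natural place for a reader to stumble — is the verification that the embedded key-arrival chain is genuinely the uniform-gap growth process (so that the conditional law is $T_k$, not some size-biased variant); this is the standard fact that equal-rate exponential clocks, restarted after each ring, select a uniformly random winner, together with the observation above that a node filling up preserves the per-gap rate. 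Everything else is bookkeeping with \eqref{bstM}.
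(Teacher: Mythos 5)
Your proposal is correct and follows essentially the same route as the paper: the paper's (informal, pre-theorem) argument likewise observes that the gap count is a Yule process started at $2$ for every $m$, so the key-count process and hence $K=\zpsi_{\taux}$ coincide in law with the binary case \eqref{bstM}, and then invokes the symmetry of the equal-rate exponential gap clocks to identify the conditional law given $K=k$ as that of $T_k$. Your extra care about the embedded key-arrival chain being the uniform-gap growth rule is exactly the "by symmetry" step the paper relies on.
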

Cf.\ \eqref{mstkeys}, the similar result for the extended $m$-ary search
tree, and  note that the distribution \eqref{msts2} equals the distribution
\eqref{mstkeys} conditioned on $K\ge1$.
Furthermore, the number of keys thus grows as $e^t$, so the number of nodes
has to grow at the same rate, which again shows that $\ga=1$.  
Note that the second part of the theorem is another instance of key-coherence.

As for the extended \mst{} in \refSS{SSemst},
the number of nodes is random for $m\ge3$.
We can again find the asymptotics from \refT{Tex}, yielding the following
theorem. 
(Alternatively, we can obtain the result from
 \eqref{kull} and \eqref{nk} for the extended \mst.)

\begin{theorem}\label{TMSTnodes}
  For the \mst{} $T_n$ with $n$ keys,
\begin{equation}\label{kull-e}
  \frac{|T_n|}n\asto \frac{1}{\mpsi}=\frac{1}{2(H_m-1)}.
\end{equation}
\end{theorem}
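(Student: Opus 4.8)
The plan is to deduce this directly from \refT{Tex}\ref{tex2}, exactly as in the proof of \refT{TMSTnodes}'s companion \refT{TEMSTnodes}. The hypotheses needed for that theorem, namely \refBP{} and \refBPlastpsi{}, were already checked above for the branching process of \refSS{SSmst} (in particular $\psi(t)\le m-1$ is bounded, so \refBPlastpsi{} is automatic). Hence $|T_n|/n\asto 1/\mpsi\in(0,\infty)$, and the whole content of the proof reduces to evaluating $\mpsi=\E\hpsi(1)$ and checking that it equals $2(H_m-1)$.

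To compute $\mpsi$, I would proceed as in \eqref{aku}--\eqref{akull}. Write $\psi(t)$, the number of keys in an individual at age $t$, as $\psi(t)=\sum_{k=1}^{m-1}\ett{T_k\le t}$, where $T_k$ is the time the $k$:th key arrives: here $T_1=0$ (an individual is born already holding one key, the only difference from the extended tree), and $T_k=\sum_{i=2}^{k}Y_i$ for $2\le k\le m-1$, with $Y_i\sim\Exp(i)$ independent. Integrating term by term gives $\hpsi(1)=\intoo e^{-t}\psi(t)\dd t=\sum_{k=1}^{m-1}e^{-T_k}$. Then $\E e^{-T_1}=1$, while for $2\le k\le m-1$ the product $\E e^{-T_k}=\prod_{i=2}^{k}\frac{i}{i+1}$ telescopes to $\frac{2}{k+1}$; summing and re-indexing ($k+1\mapsto j$) yields $\mpsi=1+2\sum_{j=3}^{m}\frac1j=1+2\bigpar{H_m-\tfrac32}=2(H_m-1)$, which is \eqref{kull-e}.

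There is essentially no obstacle; the only thing to be careful about is the index bookkeeping (the shift reflecting the initial key, the telescoping product, and the re-indexing of the harmonic sum). As a sanity check — and an alternative route — one may instead observe that the \mst{} arises from the \emst{} of \refSS{SSemst} by deleting all external nodes, so $|T_n|$ equals the number of internal nodes in the extended tree; combining $|T_n^{\mathrm e}|/n\asto 1/(H_m-1)$ from \eqref{kull} with the fact from \eqref{n0} that external nodes constitute asymptotically half of $T_n^{\mathrm e}$ gives $|T_n|/n\asto \tfrac12\cdot\frac{1}{H_m-1}$, in agreement with \eqref{kull-e}.
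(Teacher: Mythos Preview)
Your proposal is correct and follows essentially the same approach as the paper: apply \refT{Tex}\ref{tex2}, then compute $\mpsi$ by writing $\psi(t)$ as a sum of indicators for the arrival times of the successive keys (with the first key present at birth), integrate term by term, and use the telescoping product $\prod_{i=2}^{k}\frac{i}{i+1}=\frac{2}{k+1}$. The paper also mentions your alternative route via \eqref{kull} and \eqref{n0} for the extended tree.
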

In other words, the average number of keys per node converges \as{} to
$2(H_{m-1}-1)$. 

\begin{remark}\label{RMSTnodes}
This result was first shown, in a weaker form with convergence in
probability, by \citet{MahmoudPittel1989}; convergence of the expectation 
had been shown earlier by
\citet[answer to exercise 6.2.4-8]{Knuth1998}
and \citet{Baeza-Yates}.

For the variance and asymptotic distribution (which we do not consider in
the present paper), there is an interesting phase transition:
the variance is linear in $n$ and the distribution asymptotically normal if
$m\le26$ but not if $m\ge27$, see
\cite{MahmoudPittel1989}, 
\cite{LewMahmoud1994},
\cite{ChernHwang2001},
\cite{ChauvinPouyanne}.
\end{remark}

\begin{proof}
For the present branching
process, $\psi(t)=1+\sum_{i=2}^{m-1}\ett{S_i'\le t}$, where
$S_i':=\sum_{j=2}^{i}Y_j$ is the time the $i$:th key comes to the node.
Arguing as in \eqref{aku}--\eqref{akull} we find (omitting some details)
\begin{equation}\label{aku2-e}
\hpsi(1)
=\intoo e^{-t}\psi(t) \dd t 
=1+\sum_{i=2}^{m-1} e^{-S'_i}
\end{equation}
and thus,
\begin{equation}\label{akull2-e}
\mpsi=
\E\hpsi(1)
=1+\sum_{i=2}^{m-1}\prod_{j=2}^i\E e^{-Y_j}
=1+\sum_{i=2}^{m-1}\frac{2}{i+1}
=2(H_m-1).
\end{equation}
Hence, \refT{Tex}\ref{tex2} yields
\begin{equation}\label{kull2}
  \frac{|T_n|}n\asto \frac{1}{\mpsi}=\frac{1}{2(H_m-1)}.
\end{equation}  
\end{proof}

The asymptotic number of nodes with a given number of keys can be found
similarly. Note that the tree is constructed so that each node contains at
least one key. 
(This theorem is also an immediate corollary of results 
by \citet{KalMahmoud:Degree},
shown using a \Polya{} urn, see also \cite[Example 7.8]{SJ154}.)

\begin{theorem}\label{TMSTkeys}
Let $N_k(T_n)$ be the number of nodes in $T_n$ with
$k$ keys, for $k=1,\dots,m-1$. 
Then,
\begin{equation}\label{nk2}
  \begin{split}
\frac{N_k(T_n)}{|T_n|}
\asto
\begin{cases}
\frac{2}{(k+1)(k+2)}, & 1\le k\le m-2,	
\\
\frac{2}{m}, & k=m-1.
\end{cases}
  \end{split}
\end{equation}
\end{theorem}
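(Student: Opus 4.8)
The plan is to follow the proof of \refT{TEMSTkeys} essentially verbatim, but using the \CMJ{} process for the \mst{} (internal nodes only) constructed in \refSS{SSmst} instead of the one for the \emst. Let $\cP$ be the property of a node that it contains exactly $k$ keys; then $N_k(T_n)=n_{\cP}(T_n)$, so by \eqref{tbpp} in \refT{TBP} it suffices to identify the limit $p_{\cP}(\friT)$, the probability that the root of $\friT=\cttaux$ contains $k$ keys. As noted in the statement, one could alternatively deduce everything from \eqref{kull} and \eqref{nk} for the extended tree by deleting external nodes, but the direct computation is just as short.

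For the present branching process the weight is $\psi(t)=1+\sum_{i=2}^{m-1}\ett{S_i'\le t}$, with $S_i':=\sum_{j=2}^iY_j$, $Y_j\sim\Exp(j)$, and the conventions $S_1':=0$ and $S_m':=\infty$ (a node is born already holding one key, and never exceeds $m-1$ keys). Thus the root of $\ctt$ has exactly $k$ keys precisely when $S_k'\le t<S_{k+1}'$, so the characteristic in \eqref{ak} is $\phi(t)=\ett{S_k'\le t<S_{k+1}'}$. Since $\ga=1$ here, \eqref{ak} gives
\begin{equation*}
p_{\cP}(\friT)=\E\intoo e^{-t}\ett{S_k'\le t<S_{k+1}'}\dd t
=\E\bigpar{e^{-S_k'}-e^{-S_{k+1}'}}.
\end{equation*}
Now $\E e^{-S_i'}=\prod_{j=2}^i\E e^{-Y_j}=\prod_{j=2}^i\frac{j}{j+1}=\frac{2}{i+1}$ for every $i\ge1$ (the empty product at $i=1$ giving $1=\frac{2}{2}$), so for $1\le k\le m-2$ this equals $\frac{2}{k+1}-\frac{2}{k+2}=\frac{2}{(k+1)(k+2)}$, while for $k=m-1$ the second term vanishes (as $S_m'=\infty$) and we get $\frac{2}{m}$. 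The theorem follows from \refT{TBP}.

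There is no real obstacle: every step is an immediate transcription of the \emst{} argument together with the two Laplace-transform evaluations above. The only point requiring a moment's care is bookkeeping of the index conventions ($S_1'=0$ rather than an $\Exp(1)$ waiting time, and $S_m'=\infty$), which is exactly what produces the exceptional boundary value $\frac{2}{m}$ at $k=m-1$ in \eqref{nk2}, paralleling \eqref{nk}.
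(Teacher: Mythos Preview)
Your proof is correct and is exactly the argument the paper has in mind: the paper's own proof says only ``This follows either from \eqref{nk} for the extended \mst{} or by a similar argument as in the proof of \refT{TEMSTkeys} (which we omit),'' and you have spelled out the second alternative verbatim, with the correct bookkeeping ($S_1'=0$, $S_m'=\infty$) and the telescoping evaluation $\E e^{-S_i'}=2/(i+1)$.
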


\begin{proof}
This follows either from \eqref{nk} for the extended \mst{} or by a similar
argument as in the proof of \refT{TEMSTkeys} (which we omit).
\end{proof}

Finally, we give the asymptotic degree distribution $D$.
(This was found, using a \Polya{} urn, by \citet{KalMahmoud:Degree},
generalizing the special case of leaves ($k=0$) given in
\cite{HolmgrenJanson2}.)

\begin{theorem}
  \label{TMSTdegree}
Let $n_k(T_n)$ be the number of nodes of outdegree $k$ in  $T_n$.
Then
\begin{equation}\label{mstD}
\frac{n_k(T_n)}{|T_n|}\asto
\P(D=k)=
\begin{cases}
\frac{m-1}{m+1}, & k=0,
\\
\frac{2}{m(m+1)}, & 1\le k\le m.
\end{cases}
\end{equation}
\end{theorem}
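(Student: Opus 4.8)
The plan is to read off \eqref{mstD} from \refC{Cdeg}, which reduces the statement to identifying the law of the root degree $D=\Xix{\taux}$ of the limiting fringe tree $\friT=\cttaux$, where $\taux\sim\Exp(1)$ is independent of the branching process. Recall from \refSS{SSmst} that in the relevant \CMJp{} the root individual becomes fertile at time $\SS=\sum_{i=2}^{m-1}Y_i$, with $Y_i\sim\Exp(i)$ independent, and then has its $m$ children born at times $\SS+X_1,\dots,\SS+X_m$ with $X_1,\dots,X_m$ \iid{} $\Exp(1)$ and independent of $\SS$; hence $D=\#\set{i:\SS+X_i\le\taux}$.

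First I would record $\E e^{-\SS}=\prod_{i=2}^{m-1}\frac{i}{i+1}=\frac2m$ (a telescoping product; \cf{} the proof of \refT{TMSTnodes}), so that $\P(\taux\ge\SS)=2/m$. The step that needs a little care is the following lack-of-memory fact: conditionally on the event $\set{\taux\ge\SS}$, the overshoot $\taux-\SS$ is $\Exp(1)$ and is independent of $\SS$ and of $(X_1,\dots,X_m)$; this follows from $\P(\taux-\SS>t,\ \SS\in\dd s\mid\taux\ge\SS)=e^{-t}\,e^{-s}\,\P(\SS\in\dd s)/\E e^{-\SS}$. Granting this, for $k\ge1$ the event $\set{D=k}$ forces $\taux\ge\SS$, and conditionally on $\taux-\SS=u$ we have $\#\set{i:X_i\le u}\sim\Bin(m,1-e^{-u})$, so, after the substitution $v=e^{-u}$,
\[
\P(D=k)=\frac2m\intoo e^{-u}\binom mk(1-e^{-u})^ke^{-(m-k)u}\dd u
=\frac2m\binom mk\intoi(1-v)^kv^{m-k}\dd v.
\]
The remaining integral is the Beta integral $k!\,(m-k)!/(m+1)!$, so $\P(D=k)=\frac2m\binom mk\frac{k!\,(m-k)!}{(m+1)!}=\frac{2}{m(m+1)}$ for $1\le k\le m$, and then $\P(D=0)=1-\sum_{k=1}^m\P(D=k)=1-\frac{2}{m+1}=\frac{m-1}{m+1}$ (equivalently, $\P(D=0)$ can be computed directly by the same argument). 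Combining with \refC{Cdeg} gives \eqref{mstD}.

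I do not expect any genuine obstacle: once \refC{Cdeg} is invoked the computation is routine, and the only mildly delicate point is the conditional-independence assertion, whose short verification I have indicated.
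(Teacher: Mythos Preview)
Your proof is correct. The reduction via \refC{Cdeg} and the computation of $\P(\taux\ge\SS)=\E e^{-\SS}=2/m$ coincide with the paper, and your integration via the Beta integral is sound; the memorylessness verification you sketch is exactly what is needed.

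The paper takes a different, and somewhat slicker, route for the second stage. After noting (by the same exponential-clock competition) that all $m-1$ keys arrive before the clock $\taux$ strikes with probability $2/m$, it observes that, conditionally on this event, the $m$ birth times $X_1,\dots,X_m$ and the residual clock $\taux-\SS$ are $m+1$ \iid{} $\Exp(1)$ variables, so by symmetry the rank of the clock among them is uniform on $\set{1,\dots,m+1}$; hence the number of children born before the clock is uniform on $\set{0,\dots,m}$, giving $\P(D=k\mid\taux\ge\SS)=1/(m+1)$ directly for each $k$. This replaces your Beta-integral computation by a one-line symmetry argument and explains \emph{why} the answer is flat on $\set{1,\dots,m}$. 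Your approach, on the other hand, is the ``straightforward calculation'' the paper alludes to but does not carry out; it is more mechanical but requires no cleverness beyond recognizing the Beta integral.
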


The asymptotic degree distribution is thus uniform on \set{1,\dots,m}, but
with a large proportion of the nodes being leaves (outdegree 0). (For $m=2$,
the distribution is uniform on \set{0,1,2}.) 
Note that
$\E D=1$,
as always,  
see \eqref{ED}.

\begin{proof}
  This follows by straightforward calculations from \eqref{D} and
  \refR{Rmstbirths}, for example using \refT{TC}.
However, we find it illuminating to instead give
a less computational proof, using the properties of the exponential
distributions. Recalling that $D$ is the degree of the root of $\friT$,
we consider the life of an individual (the root), 
stopped at $\taux$;
we regard $\taux$ as an exponential clock 
(the \emph{doomsday clock})
that strikes at a random time, and
then stops the process.

After the creation (at $t=0$, and with a single key), 
the next thing that happens is either the
arrival of the second key, or that the doomsday clock strikes. 
Since the second key
arrives with intensity 2 and the clock strikes with intensity 1, 
the probability is $2/3$ that the second key will arrive before the clock
strikes. 
Conditioned on this event, the same argument shows that the
probability that also the third key arrives before the clock strikes is
$3/4$, and so on. It follows that the probability that the node acquires all
$m-1$ keys  before the clock strikes is 
\begin{equation}\label{hag}
\frac23\frac{3}4\dotsm\frac{m-1}m=\frac{2}m.  
\end{equation}
(Note that this argument also yields another proof of \refT{TMSTkeys}.)

After the arrival of all $m-1$ keys, assuming that the doomsday
clock still has not
struck, we wait for the $m$ children. Each child arrives with intensity 1,
and the clock strikes with the same intensity, so by symmetry (and
independence),
the order of the $m$ births
and the strike of the clock is uniform among all $(m+1)!$ possibilities.
In particular, the position of the clock strike is uniform among these $m+1$
events, \ie, the number of children born before the clock strikes is uniform
on \set{0,\dots,m}.
Combining this and \eqref{hag} we obtain, for $1\le k\le m$,
\begin{equation}
  \P(D=k)=\frac{2}m \frac{1}{m+1}
\end{equation}
and, including the cases where less than $m-1$ keys arrive before the clock
strikes, 
\begin{equation}
  \P(D=0)=1-\frac{2}m+\frac{2}m \frac{1}{m+1}.
\end{equation}
The result follows.
\end{proof}

\begin{remark}
   Note that the degree distribution in \eqref{mstD} differs
from the degree distribution \eqref{pd-} for the random \mary{} tree defined
in \refE{Elinear-}; as said there, the two different types of
random \mary{} trees
are thus not even asymptotically equivalent.
\end{remark}

\section{Median-of-$(2\ell+1)$ binary search tree}\label{Smedian}
Let $\ell\ge1$ be a fixed integer.
The random \emph{median-of-$(2\ell+1)$ binary search tree},
see \eg{} \cite{Devroye93},
is a modification of
the \bst{} in \refE{EBST}, where each internal node still contains exactly
one key, but each external node can contain up to $2\ell$ keys.
(We can also include the case $\ell=0$; this is just the extended \bst, \ie,
the special case $m=2$ of \refS{SSemst}.)

The tree is grown recursively, starting with a single external node without
any keys. The first $2\ell$ keys are placed in this node. When the
$(2\ell+1)$:th 
key arrives to the node (or to another external node later in the process),
the node becomes an internal node with two new external nodes as children,
say $v_L$ and $v_R$; moreover, 
the median of the $2\ell+1$ keys now at the node is found and
put in the internal node, while the $\ell$ keys that are smaller than the
median are put in the left child $v_L$
and the $\ell$ keys that are larger than the median are put in the right child
$v_R$. 

In order to model this by a branching process, we start the tree with $\ell$
keys in the root. (This is no restriction, since the first $\ell$ keys always
go there.) Then each external node will contain between $\ell$ and $2\ell$ keys,
throughout the process, and the 
median-of-$(2\ell+1)$ binary search tree is produced by a branching
process with the following life histories:
An individual is born as an external node with $\ell$ keys. It acquires $\ell+1$
additional keys 
after successive independent waiting times $Y_1,\dots,Y_{\ell+1}$, where
$Y_i\sim\Exp(\ell+i)$ 
(since the node has $\ell+i$ gaps when there are $\ell+i-1$ keys).
When the $(\ell+1)$:th key arrives, the individual immediately gets $2$
children.

We let the weight $\psi(t)$ be the number of keys stored at the individual
at age $t$. Thus 
$\zpsi_t$ is the total number of keys at time $t$ and $\tau(n)$ is the time
the $n$:th key is added. Hence, assuming $n\ge \ell$, 
$T_n$ is a random 
median-of-$(2\ell+1)$ binary search tree with $n$ keys.

Note that this construction is very similar to the one for the \emst{} in
\refSS{SSemst}, and we analyse it in the same way.
Let $S_k:=\sum_{i=1}^k Y_i$, $k=0,\dots,\ell+1$;
this is the time the node gets its $(\ell+k)$:th key.
Then 
\begin{equation}
\psi(t)=
  \begin{cases}
\ell+k, & S_k\le t<S_{k+1},\; 0\le k\le \ell,
\\
1, & S_{k+1}\le t.
  \end{cases}
\end{equation}

For $\gth>-\ell-1$
and $k\le \ell+1$, 
\begin{equation}\label{wp}
\E e^{-\gth S_{k}}
=\prod_{i=1}^{k} \E e^{-\gth Y_i}	
=\prod_{i=1}^{k} \frac{\ell+i}{\ell+i+\gth}
=\frac{\rise{\ell+1}k}{\rise{\ell+1+\gth}k}
.
\end{equation}
(See also \refT{TC};
in the notation used in \refApp{AC}, $S_k\eqd V_{\ell+k,k}$.)

Furthermore,
$\xi_1=\xi_2=S_{\ell+1}$, and thus
the random variable $\Xib$ in
\refR{RBPxib}  equals $2e^{-\gth S_{\ell+1}}$. 
Hence, see \eqref{hmub} and \eqref{esk},
\begin{equation}\label{hmumedian}
  \begin{split}
\hmu(\gth) = 2 \E e^{-\gth S_{\ell+1}}
=\frac{2\rise{\ell+1}{\ell+1}}{\rise{\ell+1+\gth}{\ell+1}}
=\frac{\rise{\ell+2}{\ell+1}}{\rise{\ell+1+\gth}{\ell+1}}.
  \end{split}
\end{equation}
In particular, we see that $\hmu(1)=1$, so 
the Malthusian condition \eqref{malthus} is satisfied with $\ga=1$.
(Again, $\ga=1$ has to hold since the number of keys is a Yule process,
although now started with $\ell$ keys.)
It is easy to see that all other conditions \refBP{}
 are satisfied.
Consequently, \refT{TBP} applies;
the asymptotic random fringe tree $\friT$ is obtained by
running the branching process above and stopping it after a
random time $\taux\sim\Exp(1)$.

Theorems \ref{TEMSTnodes} and \ref{TEMSTkeys}  can be
adapted with minor modifications as follows; 
we omit the proofs which are similar to the ones in \refSS{SSemst}, now
using \eqref{wp}.

\begin{theorem}\label{Tmediannodes}
  For the \medianllbst{} $T_n$ with $n$ keys,
\begin{equation}\label{mediankull}
  \frac{|T_n|}n
\asto \frac{1}{\mpsi}
=\frac{1}{(\ell+1)(H_{2\ell+2}-H_{\ell+1})}.
\end{equation}
\nopf
\end{theorem}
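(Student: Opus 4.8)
The plan is to apply \refT{Tex}\ref{tex2} directly. Since the weight $\psi$ here is bounded (it never exceeds $2\ell$), condition \refBPlastpsi{} holds automatically, and the standing assumptions \refBP{} have already been checked above, with Malthusian parameter $\ga=1$. Hence \eqref{tex2e} gives $|T_n|/n\asto 1/\mpsi$ with $\mpsi\in(0,\infty)$, and the whole problem reduces to evaluating $\mpsi=\E\hpsi(\ga)=\E\hpsi(1)$.

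To compute $\hpsi(1)=\intoo e^{-t}\psi(t)\dd t$, I would first write the weight as a superposition of indicators. Recalling that $\psi(t)=\ell+k$ for $S_k\le t<S_{k+1}$ with $0\le k\le\ell$, while $\psi(t)=1$ for $t\ge S_{\ell+1}$, we have, for $t\ge0$,
\[
\psi(t)=\ell+\sum_{k=1}^{\ell}\ett{t\ge S_k}-(2\ell-1)\ett{t\ge S_{\ell+1}},
\]
the last term accounting for the drop from $2\ell+1$ keys down to the single median key when the node splits and becomes internal. Integrating against $e^{-t}$ and using $\intoo e^{-t}\ett{t\ge S_k}\dd t=e^{-S_k}$ yields
\[
\hpsi(1)=\ell+\sum_{k=1}^{\ell}e^{-S_k}-(2\ell-1)e^{-S_{\ell+1}}.
\]

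It then remains to take expectations. By \eqref{wp} with $\gth=1$, $\E e^{-S_k}=\rise{\ell+1}k/\rise{\ell+2}k=(\ell+1)/(\ell+k+1)$, so that
\[
\mpsi=\ell+(\ell+1)\sum_{k=1}^{\ell}\frac{1}{\ell+k+1}-(2\ell-1)\frac{\ell+1}{2\ell+2}
=\tfrac12+(\ell+1)\bigpar{H_{2\ell+1}-H_{\ell+1}},
\]
using $\sum_{k=1}^{\ell}(\ell+k+1)\qw=H_{2\ell+1}-H_{\ell+1}$ and $(2\ell-1)(\ell+1)/(2\ell+2)=(2\ell-1)/2$. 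Since $(\ell+1)\bigpar{H_{2\ell+2}-H_{2\ell+1}}=(\ell+1)/(2\ell+2)=\tfrac12$, this equals $(\ell+1)\bigpar{H_{2\ell+2}-H_{\ell+1}}$, which is \eqref{mediankull}. I do not expect any real obstacle: the only point needing care is the form of $\psi$ after the split (the $-(2\ell-1)\ett{t\ge S_{\ell+1}}$ term), and the rest is the elementary harmonic-sum manipulation indicated above, entirely parallel to the proof of \refT{TEMSTnodes} — indeed the case $\ell=0$ recovers $\mpsi=H_2-1=\tfrac12$ there.
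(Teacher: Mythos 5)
Your proof is correct and follows exactly the route the paper intends: the paper omits the proof of this theorem, stating only that it is analogous to that of \refT{TEMSTnodes} using \eqref{wp}, and your argument is precisely that analogue — apply \refT{Tex}\ref{tex2} (with \refBPlastpsi{} trivial since $\psi$ is bounded), decompose $\psi$ into indicators including the $-(2\ell-1)\ett{t\ge S_{\ell+1}}$ correction for the split, and evaluate $\E e^{-S_k}=(\ell+1)/(\ell+k+1)$ via the telescoping product in \eqref{wp}. The harmonic-sum simplification and the consistency check at $\ell=0$ are both right.
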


\begin{theorem}\label{Tmediankeys}
Let $\Next_k(T_n)$ be the number of external nodes in $T_n$ with
$k$ keys, for $k=\ell,\dots,2\ell$, and let 
$\Nint_k(T_n)$ be the number of internal nodes (all having one key).
Then, for the \medianllbst,
\begin{align}
  \frac{\Next_k(T_n)}{|T_n|}
&\asto \frac{\ell+1}{(k+1)(k+2)}, 
\qquad \ell\le k \le 2\ell,	
\\
\frac{\Nint_k(T_n)}{|T_n|}
&\asto\frac12.
\end{align}
\nopf
\end{theorem}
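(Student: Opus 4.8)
The plan is to apply \refT{TBP} to the \CMJ{} process set up above for the \medianllbst, with an appropriate $0$--$1$ characteristic for each of the two assertions; this is entirely parallel to the proof of \refT{TEMSTkeys} in \refSS{SSemst}, and the only computational ingredient is the Laplace transform \eqref{wp} evaluated at $\gth=1$.

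For the external nodes I would fix $k$ with $\ell\le k\le2\ell$, put $j:=k-\ell\in\set{0,\dots,\ell}$, and let $\cP$ be the property of a node that it is external and stores exactly $k$ keys, so that $\Next_k(T_n)=n_\cP(T_n)$. By \refT{TBP}, $n_\cP(T_n)/|T_n|\asto p_\cP(\friT)$, where $p_\cP(\friT)$ is the probability that the root of $\friT=\cT_\taux$ is external with $k$ keys, \ie{} that $\psi(\taux)=\ell+j$, which by the description of $\psi$ above is the event $\set{S_j\le\taux<S_{j+1}}$. Taking the characteristic $\gf(t)=\ett{\psi(t)=k}=\ett{S_j\le t<S_{j+1}}$ (a $0$--$1$ valued element of $D\ooo$), \eqref{ak} would give $p_\cP(\friT)=\E\hphi(1)=\E\intoo e^{-t}\ett{S_j\le t<S_{j+1}}\dd t=\E\bigpar{e^{-S_j}-e^{-S_{j+1}}}$, the interchange of expectation and integral being justified since all terms are nonnegative. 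Then \eqref{wp} with $\gth=1$ yields $\E e^{-S_i}=\rise{\ell+1}i/\rise{\ell+2}i=(\ell+1)/(\ell+i+1)$, so $\E e^{-S_j}=(\ell+1)/(k+1)$ and $\E e^{-S_{j+1}}=(\ell+1)/(k+2)$, both finite since $j+1\le\ell+1$ and $S_{\ell+1}$ is a finite sum of exponentials; hence $p_\cP(\friT)=(\ell+1)\bigpar{(k+1)\qw-(k+2)\qw}=(\ell+1)/((k+1)(k+2))$.

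For the internal nodes I would instead take $\cP$ to be the property ``$v$ is internal'', equivalently $\psi(\taux)=1$, equivalently $\taux\ge S_{\ell+1}$; the corresponding characteristic is $\gf(t)=\ett{t\ge S_{\ell+1}}$, so $\hphi(1)=e^{-S_{\ell+1}}$ and, by \eqref{wp} with $\gth=1$ and exponent $\ell+1$, $p_\cP(\friT)=\E e^{-S_{\ell+1}}=(\ell+1)/(2\ell+2)=1/2$. \refT{TBP} then gives $\Nint_k(T_n)/|T_n|\asto1/2$ (the subscript $k$ is cosmetic here, recording only that all these nodes carry one key).

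I do not expect a genuine obstacle: once \eqref{wp} is in hand the argument is routine bookkeeping. The two places that warrant a moment's care are the endpoint $k=2\ell$ --- where, in contrast to \refSS{SSemst}, no convention $S_{\ell+1}:=\infty$ is needed because a node becomes internal at the a.s.\ finite time $S_{\ell+1}$ --- and the nonnegativity justification for swapping $\E$ and $\intoo$. As a sanity check one can telescope $\sum_{k=\ell}^{2\ell}(\ell+1)/((k+1)(k+2))=(\ell+1)\bigpar{(\ell+1)\qw-(2\ell+2)\qw}=1/2$, so external nodes, like internal ones, asymptotically make up half of all nodes, in agreement with the analogous identity \eqref{n0} for the extended \bst.
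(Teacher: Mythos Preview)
Your proposal is correct and follows exactly the approach the paper indicates: the paper omits the proof, stating only that it is similar to that of \refT{TEMSTkeys} in \refSS{SSemst}, now using \eqref{wp}. Your computation carries this out in detail, including the correct handling of the endpoint $k=2\ell$ (where, as you note, $S_{\ell+1}$ is a genuine finite random time rather than a convention), and the telescoping sanity check is a nice addition.
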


\begin{remark}\label{Rrandompivot}
\citet{ChernHwangTsai2002} consider (using different methods)
a more general class of trees, where 
an external node has up to $r-1$ keys; when the $r$:th key arrives to the
node, 
a pivot is selected among them at random, such that its rank $R$ (\ie, its
number if the $r$ keys are ordered) has 
some fixed distribution on \set{1,\dots,r}. (The case above is thus
$r=2\ell+1$ and $R=\ell+1$; in this case $R$ is deterministic.) 
The pivot is put in the internal node, and its
children get $R-1$ and $r-R$ keys. 
Translated to the branching process, this means (in general)
that the individuals start with different number of keys, which would
require a multi-type version of the results above (see \refR{Rmulti}).
However, it is possible to modify the branching process by including the
external nodes in the life of their parent. Thus the individuals now
are the internal nodes. (Properties of external nodes can be found using
suitable characteristics.) The life of an individual
starts with $r$ keys; these are immediately split up 
with a random $R$ as above, and we regard the individual as carrying two
unborn children (fetuses) with initially $R-1$ and $r-R$ keys. The fetuses 
get new keys, independently of each other and each with rate 1 + the number of
existing keys, and each is born when it has got
$r$ keys. We omit the details.
\end{remark}

\begin{example}\label{Ehennequin}
  An \mary{} generalisation,
introduced by \citet{Hennequin91} and further studied by
\eg{} \cite{ChernHwang2001}, \cite{ChernHwangTsai2002},
\cite{ChauvinDrmota2006} and \cite{DrmotaJansonNeininger2008},
  has internal nodes with $m-1$ keys and external nodes with up to
  $m\ell+m-2$ keys. 
(Here $m\ge2$ and $\ell\ge0$.)
When an external node gets $m\ell+m-1$ keys, 
it is converted to an internal node with $m$ external children;
the $m\ell+m-1$
keys are ordered and keys number $\ell+1, 2(\ell+1),\dots,(m-1)(\ell+1)$ are
put in the internal node, while the external children get $\ell$ keys each.
The \mst{} in \refSS{SSemst} is the case $\ell=0$.

This version can be treated as above; again each individual starts with
$\ell$ keys, but now it acquires $(m-1)(\ell+1)$ more keys, after waiting
times $Y_i\sim\Exp(\ell+i)$, $i=1,\dots,(m-1)(\ell+1)$. At time
$S_{(m-1)(\ell+1)}$, $m$ children are born. 
Note that \eqref{esk}--\eqref{hmumst} generalize to, 
\cf{} the special case \eqref{hmumedian},
\begin{equation}\label{hmumstl}
  \begin{split}
\hmu(\gth)&=
m \E e^{-\gth S_{(m-1)(\ell+1)}}
=m\prod_{i=1}^{(m-1)(\ell+1)} \E e^{-\gth Y_i}	
\\&
=m\prod_{i=1}^{(m-1)(\ell+1)}  \frac{i+\ell}{i+\ell+\gth}
=\frac{m\rise{\ell+1}{(m-1)(\ell+1)}}{\rise{\ell+1+\gth}{(m-1)(\ell+1)}}
\\&
=\frac{\rise{\ell+2}{(m-1)(\ell+1)}}{\rise{\ell+1+\gth}{(m-1)(\ell+1)}}
  \end{split}
\end{equation}
and hence again $\ga=1$. 
It then follows from \eqref{hmumstl} and \eqref{el2} that
\begin{equation}\label{gbmstl}
  \gb = \sum_{i=1}^{(m-1)(\ell+1)}\frac{1}{i+\ell+1}
= H_{m\ell+m}-H_{\ell+1},
\end{equation}
\cf{} \eqref{gbemst} for the case $\ell=0$.
Results for this model can be derived as
  above, but we leave this to the readers.

More generally, one can similarly make an \mary{} version of the model with
random pivot in \refR{Rrandompivot}, see  \cite{ChernHwangTsai2002}; a
corresponding \CMJbp{} (with the internal nodes as individuals) 
can be constructed as there.
\end{example}

\section{Fragmentation trees}\label{Sfrag}
Another type of example is provided by the following fragmentation process,
introduced by \citet{Kolmogorov}, see also
\citet[Chapter 1]{Bertoin} 
and \citet{Janson2008}, and the further references given there.
Fix $b\ge2$  and the
law for a random vector $\vv=\vvb$; this is
commonly called the \emph{dislocation law}.
We assume that
$0\le V_j\le1$, $j=1,\dots,b$, and
\begin{equation}
  \label{a1}
\sumjb V_j=1,
\end{equation}
\ie, that \vvb{} belongs to the standard simplex.
For simplicity we also assume that each $V_j<1$
a.s. (We allow $V_j=0$.)

Starting with an object of mass  $x_0\ge1$, break
it into $b$ pieces with masses $V_1x_0,\dots,V_bx_0$.
For a given threshold $x_1\in(0,x_0]$,
continue recursively with each piece of mass
$\ge x_1$, using new (independent) copies of the
random vector \vvb{} each time. 
The process
terminates a.s.\ after a finite number of steps,
leaving a finite set of fragments of masses $<x_1$.
We regard the fragments of mass $\ge x_1$ that occur during this process as
the (internal) nodes of a (random) tree, the \emph{fragmentation tree}; the
resulting fragments of mass $<x_1$ can be added as external nodes.

Obviously, the fragmentation tree depends only on the ratio $x_0/x_1$, so we
denote it by $\fT_{x_0/x_1}$.
(We may assume either $x_0=1$ or $x_1=1$ without loss of generality, but we
prefer to be more flexible.)

We can translate the fragmentation process to a \CMJ{} branching process
by regarding a fragment of mass $x$ as born at time $\log(x_0/x)$; an
individual will have $b$ children, born at ages $\xi_1,\dots,\xi_b$ with
$\xi_i:=-\log V_i$. (If some $V_i=0$, we get $\xi_i=\infty$, meaning that
this child is not born at all, so there are fewer than $b$ children. Note
also that in this section, we do not require that $\xi_1,\xi_2,\dots$ are
ordered in increasing order.)
It is easy to see that the fragmentation tree $\fTxx$ defined
above for a threshold $x_1$ is the same as the family tree 
$\cT_{\log(x_0/x_1)}$
of this branching
process at time $\log(x_0/x_1)$.

The relation \eqref{a1} can be written as
\begin{equation}\label{a2}
  \sumjb e^{-\xi_i}=1.
\end{equation}
Taking the expectation we find, see \eqref{hmub}, $\hmu(1)=1$, so the
Malthusian parameter $\ga=1$. It is easy to see that the assumptions 
\refBP{} hold, except possibly \ref{BPnonlattice}; 
we say that the fragmentation process is \emph{non-lattice} if
\ref{BPnonlattice} holds, \ie, if not every $V_i$ is concentrated on 
$\set{r,r^2,r^3\dots}$ for some $r\in(0,1)$.
(A sufficient condition
for \ref{BPnonlattice} is thus that $V_1$ has a continuous distribution.)

Furthermore, \eqref{a2} and \eqref{xib} say that $\Xia=1$ is non-random.
This has the consequence that the random variable $W$ in \refR{RW} also is
deterministic; more precisely, see \cite[Theorem (6.8.1)]{Jagers},
\begin{equation}\label{a3}
  W=1/\gb,
\end{equation}
where by \eqref{el3},
\begin{equation}
  \gb=\E\sumjb \xi_je^{-\xi_j}
=\E\sumjb V_j \log(1/V_j).
\end{equation}
\begin{remark}
The Laplace transform $\Psi(s):=\E e^{-s W}$ of the limit $W$ in \refR{RW}
satisfies the  functional equation 
\begin{equation}\label{Psi}
\Psi(s)=\E\prod_{i=1}^N \Psi\bigpar{se^{-\ga\xi_i}},
\end{equation}
and this equation together 
with \eqref{EW} (provided \eqref{xlogx} holds) determines $W$ uniquely, see
\cite{Doney}. 
Assuming $\ga=1$ (which can be regarded as a normalisation of the time scale),
it is easy to see that a constant $W$ satisfies \eqref{Psi}
if \eqref{a2} holds,
which gives an alternative proof of \eqref{a3}.
Yet another proof of \eqref{a3} is obtained by noting that
when \eqref{a2} holds, the
martingales $R_n$ and $Y_t$ in \cite{Nerman} are constant 1, and then \eqref{a3}
follows from \cite[Corollary 3.2]{Nerman}. (The related intrinsic martingale
in \cite{Bertoin} is constant 1 too, see \cite[Section 1.2.2]{Bertoin}.) 
Note also that the converse holds: $W$ is constant (and then $1/\gb$)
if and only if \eqref{a2} holds; this too follows from the functional equation
\eqref{Psi}.
\end{remark}

\begin{remark}\label{Rfragtau}
Note that unlike the trees studied in the previous sections, we consider the 
family tree $\cT_t$ at a fixed time $t=\log(x_0/x_1)$ instead of stopping
when some weight $\zpsi_t$ reaches a given value. 
However, since $W$ is constant, this makes a very
small difference. In fact, by \eqref{olle1} and \eqref{a3}, $Z_t\sim \gb\qw e^t$
\as, and thus, if we use the characteristic $\psi(t)=1$ again, 
the stopping time $\tau(n)$ when the tree has $n$ nodes satisfies \as{}
\begin{equation}\label{atau}
\tau(n)=\log(\gb n)+o(1)=\log n+\log\gb+o(1).  
\end{equation}
We may define a fragmentation tree $T_n$ of fixed size $n$ by stopping at
$\tau(n)$; in the original formulation this means that we choose the
threshold $x_1$ to be the size of the $n$:th largest fragment in the process,
so that there will be exactly $n$ fragment of size $\ge x_1$ (unless there
is a tie). We see from \eqref{atau} that asymptotically, this is almost the
same as taking a constant time $t=\log n+\log\gb$. For more precise results on
$|Z_t|$, and thus on $\tau(n)$, see \cite{Janson2008}.
\end{remark}

\begin{theorem}\label{Tfrag}
Let $\fTxx$ be a random fragmentation tree defined as above,
for a non-lattice fragmentation process.
Then \refT{TBP} holds also (as $x_0/x_1\to\infty$, and with other obvious
notational modifications) 
for the random fringe tree $\fTxx\x$. 

The limiting random fringe tree $\friT$ can be constructed by the fragmentation
process above, starting at $x_0=1$ and with a random threshold $x_1=U\sim
U(0,1)$, with $U$ independent of the fragmentation. 
\end{theorem}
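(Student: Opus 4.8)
The plan is to observe that almost everything needed has already been set up before the statement: the fragmentation tree $\fTxx$ is literally the family tree $\cT_t$ of the associated \CMJ{} process at the deterministic time $t=\log(x_0/x_1)$. So it suffices to check the standing assumptions \refBP{}, quote \refT{TBP} in the form valid for $\cT_t$ (\cf{} \refR{RX}), and then read off the explicit description of $\friT$ from the identification $\fTxx=\cT_{\log(x_0/x_1)}$.

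First I would verify \refBP{} (i.e.\ \ref{BPfirst}--\ref{BPmub}). With $\xi_i=-\log V_i$, relation \eqref{a1} is exactly \eqref{a2}, so taking expectations and using \eqref{hmub} gives $\hmu(1)=1$; thus \ref{BPmalthus} holds with Malthusian parameter $\ga=1$. Since each $V_j<1$ a.s.\ while $\sumjb V_j=1$, at least two of the $V_j$ are a.s.\ positive, so $N\ge2$ a.s., which is \ref{BPsuper} (in particular the process never dies out and $Z_t\to\infty$). Also $\mu\set0=\E\#\set{i:V_i=1}=0$, which is \ref{BPfirst}. Next, $\hmu(\gth)=\E\sumjb V_j^{\gth}\le b<\infty$ for every $\gth\ge0$, so \ref{BPmub} holds; and \ref{BPnonlattice} is precisely the non-lattice hypothesis of the theorem. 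The condition \ref{BP6.2} on characteristics will be automatic below, since the relevant characteristics $\phi(t)=\ett{\ctt\in\cP}$ are bounded.

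Next I would apply \refT{TBP}. The essential point is $\fTxx=\cT_{\log(x_0/x_1)}$, so letting $x_0/x_1\to\infty$ is the same as letting $t:=\log(x_0/x_1)\to\infty$ in $\cT_t$, and $(\cT_t)_{t\ge0}$ still takes values in the countable set of finite rooted trees. Hence \eqref{london} applies verbatim: for every property $\cP$ depending only on a node and its descendants, $n_\cP(\cT_t)/|\cT_t|\asto p_\cP(\friT)$ as \ttoo, which is the quenched assertion \eqref{tbpq}--\eqref{tbpp} with $T_n$ replaced by $\fTxx$; taking expectations (dominated convergence, the fraction being $\le1$) gives $\P(\fTxx\x\iso T)\to\P(\friT\iso T)$ for each finite $T$, i.e.\ the annealed statement $\fTxx\x\dto\friT=\cT_\taux$ with $\taux\sim\Exp(1)$. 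The identity \eqref{ak} for $p_\cP(\friT)$ carries over unchanged, being a consequence only of $\friT=\cT_\taux$ with $\taux\sim\Exp(\ga)$ and $\ga=1$.

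Finally, for the explicit description: taking $x_0=1$, the tree $\cT_t$ consists exactly of the fragments of mass $\ge e^{-t}$ produced by the process, i.e.\ $\cT_t=\fT_{1/x_1}$ for threshold $x_1=e^{-t}$. Therefore $\friT=\cT_\taux=\fT_{1/U}$ with $U:=e^{-\taux}$; since $\taux\sim\Exp(1)$ we get $\P(U\le u)=\P(\taux\ge-\log u)=u$ for $u\in(0,1)$, so $U\sim U(0,1)$, and $U$ is independent of the fragmentation because $\taux$ is independent of the branching process. This is the asserted construction. I do not expect any genuine obstacle; the only points needing a little care are the (routine) verification of \refBP{} above and the use of the $\cT_t$-version of \refT{TBP} rather than the hitting-time version, which is exactly what \refR{RX} licenses.
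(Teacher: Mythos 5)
Your proposal is correct and follows essentially the same route as the paper: identify $\fTxx$ with $\cT_{\log(x_0/x_1)}$, invoke the fixed-time form of \refT{TBP} (which is what \refR{RX} and the proof via \eqref{london} provide), and translate $\taux\sim\Exp(1)$ into the threshold $U=e^{-\taux}\sim U(0,1)$. The verification of \refBP{} that you include is carried out in the paper in the text preceding the theorem rather than in its proof, but the substance is the same.
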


\begin{proof}
By the equivalence above of the fragmentation process and the \CMJ{}
branching process, $\fTxx=\cT_{\log(x_0/x_1)}$, and the first part follows
from  
\refT{TBP} (and its proof).

The limiting fringe tree $\friT$ is obtained by stopping the branching
process $\ctt$ at a random time $\taux\sim\Exp(1)$; by the equivalence
above, this 
is equivalent to starting the fragmentation process at $x_0=1$ and
stopping at a threshold
$x_1=\exp(-\taux)$. This completes the proof, since
$\exp(-\taux)\sim U(0,1)$.  
\end{proof}

Similarly, \refT{TBPX} holds, and the random sin-tree $\sinT$ can be defined
by a suitable extension of this
random fragmentation process; we leave the details of the general case to the
reader, and discuss only one case in the example below.

\begin{example}[Binary splitting]\label{Ebinaryfrag}
  Let $b=2$ and $\vv=(V_1,V_2)=(V_1,1-V_1)$ with $V_1\sim U(0,1)$.
Thus, at each fragmentation event, the object is split into two parts, with
uniformly random sizes. 

In the corresponding \CMJbp, each individual gets two
children, born at ages $\xi_1$ and $\xi_2$, where $\xi_1,\xi_2\sim\Exp(1)$
and one of them determines the other by 
\begin{equation}\label{xibin}
e^{-\xi_1}+e^{-\xi_2}=1.  
\end{equation}
Note the similarities with the \CMJbp{} for the \bst{} in \refE{EBST}; the
difference is that there $\xi_1$ and $\xi_2$ are independent, while here
they are dependent. For properties that depend only on the individual (marginal)
distributions of $\xi_1,\xi_2$ and not on their joint distribution, we thus
have the same results for both processes; some examples are the intensity
$\mu$, the distribution of $\heir\sim\Exp(2)$ and its mean $\gb=1/2$, and
the expected size of the population $\E Z_t=2(e^t-1)$.
However, many properties really depend on the joint distribution of the
times of birth of the children, and are thus in general different for the
two processes. 
For example, although $\E Z_t$ is the same for both processes, the
distributions of $Z_t$ are not:  for the present process, 
there is by \eqref{xibin} always one child of the root born before time
$\log 2$, so $Z_{\log 2}\ge2$, while for the  process in \refE{EBST},
$\P(Z_t=1)=e^{-2t}>0$ for every $t\ge0$.
Also the fringe tree distributions will be different, as is seen below. 

Let us first consider the asymptotic outdegree distribution in the
fragmentation tree, which equals the distribution of the root $D$ in
$\friT$. We have, using the construction of $\friT$ in \refT{Tfrag},
\begin{equation}\label{pyrD}
  \begin{cases}
	D=0\iff U\ge \max(V_1,1-V_1)
\\
	D=1\iff \min(V_1,1-V_1)<U\le\max(V_1,1-V_1)
\\
	D=2\iff U<\min(V_1,1-V_1)
  \end{cases}
\end{equation}
and simple calculations yield
\begin{equation}\label{pyr0}
\P(D=0)=\P(1-U\le V_1\le U)=\E(2U-1)_+=1/4  
\end{equation}
and similarly
$\P(D=1)=1/2$ and $\P(D=2)=1/4$. Consequently, $D\sim\Bi(2,1/2)$ has a
binomial distribution. (This differs from \refE{EBST}, where $D$ has a uniform
distribution.) 

Furthermore, let $X_1:=V_1/U$ and $X_2:=(1-V_1)/U$ be the masses of the two
children of the root, relative to the threshold $U$. Then $X_1,X_2>0$ and
$X_1+X_2\ge1$, and a calculation of the Jacobian of the mapping
$(U,V_1)\mapsto(X_1,X_2)$ shows that in this region, $(X_1,X_2)$ has the
density $f(x_1,x_2)=(x_1+x_2)^{-3}$.  This enables us to again compute the
distribution of $D$; for example, $D=0\iff X_1,X_2<1$. Moreover, we can now
easily find the distribution of nodes in the second generation too; we give
a few examples.

Denote the children of the root by $v_1$ and $v_2$.
Then $\friT$ contains $v_1$ but not $v_2$
if and only if $X_1\ge1>X_2$; denote this event by $\Eio$.
Conditioned on $\Eio$, the density of
$X_1$ is, by a small calculation, $2\bigpar{x\qww-(x+1)\qww}$, $x\ge1$.
Furthermore, the outdegree of $v_1$ is given by \eqref{pyrD} with $U$
replaced by $1/X_1$ (and $V_1$ by an independent copy $V_{11}$); hence, in
analogy to \eqref{pyr0},
\begin{align}
\P\bigpar{\deg(v_1)=0\mid\Eio}
&
=\E \Bigpar{\Bigpar{\frac{2}{X_1}-1}_+\Bigm|\Eio}
\notag\\&
=\int_1^2\Bigpar{\frac{2}x-1}\Bigpar{\frac{2}{x^2}-\frac{2}{(x+1)^2}}\dd x
\notag\\&
=\frac{3}2-8\ln 2+4\ln 3 \approx0.34927,\label{pyr10}
\intertext{and, similarly,}
\P\bigpar{\deg(v_1)=1\mid\Eio}
&
=2\E \Bigpar{\min\Bigpar{\frac{1}{X_1},1-\frac{1}{X_1}}\Bigm|\Eio}
\notag
\\&
=1+12\ln 2-8\ln3 \approx0.52887,\label{pyr11}
\\
\P\bigpar{\deg(v_1)=2\mid\Eio} 
&
=\E \Bigpar{\Bigpar{1-\frac{2}{X_1}}_+\Bigm|\Eio}\notag
\\&
=-\frac{3}2-4\ln 2+4\ln 3 \approx0.12186.\label{pyr12}
\end{align}
Recall that if we just condition on $v_1\in\friT$, its outdegree distribution
equals the unconditional distribution of $D$, \ie, $\Bin(2,1/2)$;
hence, \eqref{pyr10}--\eqref{pyr12} illustrate the dependencies between the
outdegrees of different nodes (in this case, $o$ and $v_1$), see \refR{RD2}.

We obtain also
$\P(|\friT|=1)=\P(D=0)=1/4$ and, from \eqref{pyr10},
\begin{equation}
  \begin{split}
  \P(|\friT|=2)&=2\P\bigpar{\Eio\text{ and }\deg(v_1)=0}
=2\P\bigpar{\deg(v_1)=0\mid\Eio}\P(\Eio)
\\&
=\frac{3}4-4\ln 2+2\ln 3 \approx0.17464.\label{pyr2}
  \end{split}
\raisetag{\baselineskip}
\end{equation}
Again, this differs from the \bst{} in \refE{EBST}.
We do not know any general formula for the probability distribution of the
size $|\friT|$.

The irrational probabilities in \eqref{pyr10}--\eqref{pyr12} and \eqref{pyr2}
seem to 
exclude any simple combinatorial construction or interpretation of the
asymptotic
fringe tree $\friT$.

To construct the limiting sin-tree $\sinT$ in \refT{TBPX}, we note that the
heir $\heir$ of an ancestor has distribution $\Exp(2)$ by \eqref{heir} (as in
\refE{EBST}). Going back to the mass scale, we note that $Y:=\exp(\heir)$
has the Pareto(2) distribution
\begin{equation}
  \label{Pareto2}
\P(Y>x)=\P(\heir>\log x)=x\qww,
\qquad x>1.
\end{equation}
The random sin-tree $\sinT$ can thus be constructed as follows:
Start with a root $o$ of mass 1 (as in \refT{Tfrag})
and give it an infinite sequence of ancestors of mass $Y_1$, $Y_1Y_2$,
$Y_1Y_2Y_3$, \dots, where $Y_i$ are \iid{} with the distribution
\eqref{Pareto2}; the other child of the ancestors thus has mass $Y_1-1$,
$Y_1(Y_2-1)$, $Y_1Y_2(Y_3-1)$, \dots. Grow independent  fragmentation
trees from these other children and from $o$, using uniformly random binary
splittings, and stop at a common threshold $x_1=U\sim U(0,1)$.
\end{example}

\begin{remark}
  We have, for simplicity, assumed that the branching factor
$b$ is a constant finite integer. (Although we may allow fewer than $b$
  fragments by letting some $V_i=0$.)
We can also allow $b=\infty$, or a random $b$ (which can be reduced to
$b=\infty$ by adding variables $V_i$ that are 0). The results above extend,
provided \ref{BPmub} holds.
\end{remark}

\begin{remark}
  As noted in \refS{S:BP}, the results extend also to the lattice case, with
  minor modifications, but for simplicity we ignore that case.
Only very special fragmentation processes are lattice; one trivial example
is the deterministic symmetric binary splitting
$V_1=V_2=1/2$. More generally, the deterministic binary splitting
$V_1=p$, $V_2=q=1-p$ is lattice if and only if $\log p/\log q$ is rational.
For a random example,  let $r=(\sqrt5-1)/2$,  take $b=3$ and let
$(V_1,V_2,V_3)$ be either $(r,r^2,0)$ or $(r^2,r^2,r^3)$ with probability
$1/2$ each.
\end{remark}

\begin{remark}\label{Rsplit1}
  The \emph{split trees} defined by \citet{Devroye98split} are related to
  fragmentation trees. A split tree is a $b$-ary tree defined using a
  number of balls that enter the root and are distributed (randomly and
  recursively) to  the subtrees of the root and further down in the tree
according to certain rules that are based on a splitting law 
$\vv=\vvb$ satisfying \eqref{a1},
see \cite{Devroye98split} for details. 
(A splitting law is thus the same as a dislocation law.) 
Far away from the fringe, where there are many balls and the law of large
numbers applies, the numbers of balls in different subtrees are distributed
asymptotically as the masses in the corresponding fragmentation tree, so
there are many similarities between the two types of random trees. However,
at the fringe, the details differ, and the asymptotic fringe distributions
are in general not the same. For example, the \bst{} in \refE{EBST} can be
defined as a split tree, where the splitting law $\vv=(V_1,V_2)=(V_1,1-V_2)$
with $V_1\sim U(0,1)$ uniform. The corresponding fragmentation tree is thus
the tree studied in \refE{Ebinaryfrag}, and as noted there the asymptotic
fringe tree distribution is not the same as for the \bst; for example, the
degree distributions differ.
Fringe distributions of split trees will be studied in another paper.
\end{remark}

\section{Rank}\label{Srank}

Define,
following \citet{BonaPittel}, 
the \emph{rank} of a node in a rooted tree to be the smallest
  distance to a descendant that is a leaf. Thus a leaf has rank 0, while a
  non-leaf has rank  $\ge1$. A node with rank $\ge k$ is also said to be
  \emph{$k$-protected}. (For example, $1$-protected = non-leaf; $2$-protected =
  non-leaf and no child is a leaf.)
The simplest ``non-trivial'' case is $2$-protected, which sometimes is
called just \emph{protected}. There has in recent years 
been a number of papers on the number of $2$-protected nodes in various
random trees, or (equivalently) the probability that a random node is
$2$-protected, 
and a few papers on $k$-protected nodes for higher $k$;
see \eg{} \citet{SJ283} and the references therein.
Such results can equivalently be described as results on the distribution of
the rank of a random node.

For a tree $T$ (deterministic or random), let $R(T)$ be the rank of a
uniformly random
node in $T$, and let $R_0(T)$ be the rank of the root of $T$.
(Thus $R(T)$ is a random variable, while $R_0(T)$ is deterministic if $T$ is.)
Since the rank of $v$ depends only on the subtree $T^v$,
$R(T)=R_0(\Tx)$,  the rank of the root of the random fringe tree $\Tx$.
This reduces the study of rank and $k$-protected nodes to the study of
random fringe trees. (This was the method by \citet{SJ283}, there applied to
several classes of random trees, including \rrt{s} and \bst{s} but also
conditioned Galton--Watson trees 
which are not of the type considered in the present paper.)

For the random trees considered here,
\refT{TBP} applies, 
for any fixed $k\ge0$,
to the property that a node has
rank $\ge k$ (\ie, is $k$-protected); we denote this property by $\cpk$ 
(in this section) and deduce the following.
(Note that, depending on one's point of view, \eqref{trank} can be seen 
both as a limit result for the distribution of the rank, and as a limit
result for the proportion of $k$-protected nodes.)

\begin{theorem}\label{Trank}
Suppose that \refBP{} hold.
Then, for any $k\ge0$, as \ntoo,
\begin{equation}\label{trank}
  \P\bigpar{R(T_n)\ge k\mid T_n}=\frac{n_{\cpk}(T_n)}{|T_n|}
\asto
P_k:=
\P\bigpar{R_0(\friT)\ge k}.
\end{equation}
In other words, 
the conditioned random variables $\bigpar{R(T_n)\mid T_n}$ converge in
distribution \as{} as \ntoo,
\begin{equation}
\cL\bigpar{R(T_n)\mid T_n}\asto \cL\bigpar{R_0(\friT) } .
\end{equation}
In particular, the same holds for the unconditioned random variables,
\ie,
\begin{equation}
  R(T_n)\dto R_0(\friT).
\end{equation}
\end{theorem}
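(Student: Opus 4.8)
The plan is to observe that being $k$-protected is exactly a property of a node $v$ that depends only on the subtree $T^v$: by definition $R_0(T)$ is the least distance from the root of $T$ to a leaf of $T$, and hence the rank of $v$ in $T$ equals $R_0(T^v)$, so that $v$ has rank $\ge k$ (i.e.\ has property $\cpk$) if and only if $T^v\in\cpk$. This is already recorded in \refS{Srank} in the form $R(T)=R_0(\Tx)$. Note that $R_0(T)$ is a well-defined finite nonnegative integer for every finite rooted tree $T$, since every finite tree has at least one leaf; in particular $R_0(\friT)$ is well-defined a.s., because $\friT=\cT_\taux$ with $\taux\sim\Exp(\ga)$ fixed and $Z_\taux<\infty$ a.s. Thus $n_{\cpk}(T_n)$ counts exactly the $k$-protected nodes of $T_n$, and conditionally on $T_n$ the event $R(T_n)\ge k$ has probability $n_{\cpk}(T_n)/|T_n|$.

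First I would apply \refT{TBP}\ref{TBPq}, specifically \eqref{tbpp}, with the property $\cP=\cpk$. This gives immediately
\begin{equation*}
\P\bigpar{R(T_n)\ge k\mid T_n}=\frac{n_{\cpk}(T_n)}{|T_n|}\asto p_{\cpk}(\friT)=\P\bigpar{R_0(\friT)\ge k}=:P_k,
\end{equation*}
which is exactly \eqref{trank}. If one also wants the explicit form, \eqref{ak} yields $P_k=\intoo\ga e^{-\ga t}\,\P\bigpar{R_0(\ctt)\ge k}\dd t=\E\hphi(\ga)$ with the characteristic $\phi(t)=\ett{\ctt\in\cpk}$.

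Next I would pass from \eqref{trank} to the stated convergence of conditional laws. Since $R(T_n)$ and $R_0(\friT)$ take values in $\bbZgeo$, it suffices to prove $\P(R(T_n)=k\mid T_n)\asto\P(R_0(\friT)=k)$ for each fixed $k$; this follows by subtracting \eqref{trank} for $k+1$ from \eqref{trank} for $k$, together with the standard fact that on a countable state space pointwise convergence of probabilities is equivalent to convergence in distribution (cf.\ \refR{RC}). Hence $\cL(R(T_n)\mid T_n)\asto\cL(R_0(\friT))$. Finally, taking expectations and invoking dominated convergence (the conditional probabilities are bounded by $1$) gives $\P(R(T_n)=k)\to\P(R_0(\friT)=k)$ for every $k$, i.e.\ $R(T_n)\dto R_0(\friT)$; alternatively this last statement is just the annealed version \refT{TBP}\ref{TBPa}.

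I expect no genuine obstacle: the content of the theorem is entirely the remark that the rank of a node is a fringe-tree functional, after which \refT{TBP} applies verbatim. The only points deserving a word of care are (i) that $R_0$ makes sense on the almost surely finite limit tree $\friT$, and (ii) the routine upgrade from "$\P(\,\cdot\,\ge k)$ for all $k$" to convergence of the full conditional law, which is immediate because the state space is countable.
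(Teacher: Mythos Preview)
Your proposal is correct and follows exactly the same approach as the paper, which simply states that this is ``an immediate application of \refT{TBP}.'' You have spelled out the details that the paper leaves implicit --- that $\cpk$ is a subtree property so \eqref{tbpp} applies, and that the passage from \eqref{trank} to convergence of the full conditional law is routine on a countable state space --- but there is no methodological difference.
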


\begin{proof}
  An immediate application of \refT{TBP}.
\end{proof}

\subsection{\mst}\label{SSprotectedmst}
We consider here the rank and $k$-pro\-tected nodes in the \mst{} in
\refSS{SSmst}.  
(The binary case $m=2$ has been studied by 
\citet{MahmoudWard-bst}, \citet{Bona}, \citet{SJ283}, \citet{BonaPittel},
\citet{HolmgrenJanson1}; 
the case $m=3$ by \citet{HolmgrenJanson2}
and some higher $m$ by \citet{Heimburger}.)

We let as in \eqref{trank} $P_k=P_k(m):=\P\bigpar{R_0(\friT)\ge k}$. Thus, by
\eqref{trank}, the fraction of $k$-protected nodes in an \mst{} $T_n$
converges \as{} to $P_k(m)$. Recall that (for $m\ge3$) the number of nodes
$|T_n|$ is random. Hence it is interesting to study not only the fraction of
$k$-protected nodes, but also the (random) number 
$n_{\cpk}(T_n)$ of them in $T_n$.
(The results are formulated in this way in some of the references above.)
We note that, as an immediate consequence of \eqref{trank} and
\eqref{kull-e},
see \refR{Rnumber},
\begin{equation}\label{mrank}
\frac{n_{\cpk}(T_n)}{n}
\asto
\frac{P_k(m)}{2(H_m-1)}.
\end{equation}
Note also that
this implies asymptotics for the expectation $\E n_{\cpk}(T_n)$, see 
\refR{Rmean}. 

We proceed to the calculation of the numbers $P_k(m)$.
It will be convenient to use the extended \mst{} in \refSS{SSemst}, but note
that we really are interested in the subtree of internal nodes; to emphasize
this we say \emph{internally $k$-protected} for the $k$-protected nodes in
the tree of internal nodes.
As usual, $m$ is fixed and will often be omitted from the notation.

With this in mind, define, for $k\ge0$,
\begin{equation}\label{hk}
  h_k(t):=
\P\xpar{\text{the root of $\ctt$ is internal and internally $k$-protected}}.
\end{equation}
The root becomes an internal node at time $S_{1}\sim\Exp(1)$, when 
it receives its first key. 
Every node is 0-protected, so $h_0(t)$ is the probability
that the root is internal; thus
\begin{equation}\label{h0}
  h_0(t)=\P(S_1\le t)=1-e^{-t}.
\end{equation}

Recall that all $m$ children of the root are born at time $\xi=S_{m-1}$. 
Let $f(t)$ be the density function of $\xi=S_{m-1}$.
For $k\ge1$, the root of $\ctt$ is internal and internally
$k$-protected if and only if $t>\xi$ and the $m$ children of the root either
are external or internally $(k-1)$-protected, but not all external.
Conditioned on $\xi$, with $\xi<t$, the $m$ subtrees of the root of $\ctt$
are independent, and distributed as $\cT_{t-\xi}$. Hence,
the conditional probability (given $\xi$) that a given child is internally
$(k-1)$-protected is $h_{k-1}(t-\xi)$, while the probability that it is
external is $1-h_0(t-\xi)=e^{-(t-\xi)}$. Consequently,
\begin{equation}
  h_k(t)=\intot f(s) \bigpar{(h_{k-1}(t-s)+e^{-(t-s)})^m-e^{-m(t-s)}}\dd s.
\end{equation}
This can be written as a convolution
\begin{equation}\label{axel}
  h_k(t)= f(t)* \bigpar{(h_{k-1}(t)+e^{-t})^m-e^{-mt}}.
\end{equation}
Furthermore, $f(t)$ is the density function of the sum
$S_{m-1}=\sum_{i=1}^{m-1}Y_i$ of independent  random variables, and thus $f(t)$
is itself a convolution of their densities $ie^{-it}$.
Hence, \eqref{axel} can be written
\begin{equation}\label{jb}
  h_k(t)=  \bigpar{(h_{k-1}(t)+e^{-t})^m-e^{-mt}}* 
  \mathop{*}_{i=1}^{m-1} ie^{-it}.
\end{equation}
This, with the initial \eqref{h0}, makes it possible to calculate any
$h_k(t)$ by recursion (preferably using computer algebra).

By \refT{TBP} and \eqref{hk}, the fraction of nodes in $T_n$ that are
internal and internally $k$-protected converges \as{} to
\begin{equation}
  \begin{split}
\P\xpar{\text{the root of $\friT$ is internal and internally $k$-protected}}
=\intoo h_k(t)e^{-t} \dd t.
  \end{split}
\end{equation}

Recall that we are really interested in the internal nodes only. By
\eqref{n0}, asymptotically half of the nodes in the extended \mst{} are
internal, and thus the fraction of $k$-protected nodes in the (internal)
\mst{} $T_n$ converges \as{} to
\begin{equation}\label{pk1}
P_k= P_k(m) := 2\intoo h_k(t)e^{-t} \dd t.
\end{equation}
This can be regarded as a value of a Laplace transform.
Since the Laplace transform transforms convolutions to multiplications,
and 
$\intoo f(t) e^{-t}\dd t=\E e^{-S_{m-1}}=1/m$ by \eqref{esk1},
we have by \eqref{axel} also the alternative formula (that might be better for
numerical calculations)
\begin{equation}\label{pk2}
  P_k = \frac{2}{m}\intoo e^{-t}\bigpar{(h_{k-1}(t)+e^{-t})^m-e^{-mt}} \dd t.
\end{equation}

In the binary case $m=2$, these formulas are equivalent to the formulas
derived by a very similar argument in \citet{SJ283}. (The function $r_k(t)$
in \cite{SJ283} equals $1-e^{-t}-h_k(t)$.)
For a different method to find $P_k(2)$, see \citet{Bona} and
\citet{BonaPittel}.

Consequently, 
\refT{Trank} says that for 
an \mst{} 
the asymptotic distribution of the rank 
is
\begin{equation}
  \P(R(T_n)=k\mid T_n)\asto \P(R_0(\friT)=k)=P_k-P_{k+1},
\end{equation}
where $P_k$ is given by \eqref{pk1}--\eqref{pk2}.

Note that (by induction), each $h_k(t)$ is a polynomial in $e^{-t}$ and $t$
with rational coefficients. Hence, each $P_k(m)$ is a rational number.

Trivially, $P_0=1$ for every $m$ by \eqref{trank}.

For $k=1$, $h_0(t)+e^{-t}=1$ by \eqref{h0}, and thus by \eqref{pk2},
\begin{equation}\label{Pm1}
  P_1=\frac{2}m\intoo e^{-t}(1-e^{-mt})\dd t
= \frac{2}m\Bigpar{1-\frac{1}{m+1}}=\frac{2}{m+1},
\end{equation}
in accordance with \eqref{mstD} (recall that the 1-protected nodes are
precisely the non-leaves).  

Also for $P_2$, we may deduce a rather simple formula.

\begin{lemma}
  For the \mst, with $m\ge2$,
  \begin{equation}\label{h1}
h_1(t)=\xpar{1-e^{-t}}^m.	
  \end{equation}
\end{lemma}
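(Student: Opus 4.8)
The plan is to give a short probabilistic proof, bypassing the recursion \eqref{axel}--\eqref{jb}: I would identify the event in the definition \eqref{hk} of $h_1$ with a simple event about the total number of keys in $\ctt$, and then read off its probability from the Yule (geometric) distribution.

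First I would unwind the definition. By the construction of the extended \mst{} in \refSS{SSemst}, a node gets children only at the instant it becomes full (receives its $(m-1)$:th key), and a child is internal exactly when at least one key has entered its subtree. Hence the root of $\ctt$ is internal and internally $1$-protected if and only if the root is full \emph{and} at least one of its $m$ children has received a key. Since the first $m-1$ keys go to the root while every later key is routed into a child's subtree, this holds if and only if at least $m$ keys have arrived by time $t$, i.e.\ $\zpsi_t\ge m$. Thus $h_1(t)=\P(\zpsi_t\ge m)$.

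Second I would invoke the description of the key-count process from \refSS{SSemst}: $\zpsi_t$ is a pure birth process with rate $k+1$ in state $k$, started at $0$, so $\zpsi_t+1$ is a standard Yule process started at $1$ (see \refE{EYule1} and \refApp{AA}); consequently $\zpsi_t\sim\Geo(e^{-t})$, that is, $\P(\zpsi_t=k)=e^{-t}(1-e^{-t})^k$ for $k\ge0$. Summing the geometric tail gives $\P(\zpsi_t\ge m)=(1-e^{-t})^m$, which is the assertion.

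There is essentially no real obstacle here; the only substantive ingredient is the (standard) marginal distribution of the Yule process, and the rest is a one-line count. Should one instead prefer to argue inside the recursion, the computation is equally short: by \eqref{h0} one has $h_0(t)+e^{-t}=1$, so \eqref{axel} with $k=1$ collapses to $h_1=f*(1-e^{-mt})$, where $f$ is the density of $S_{m-1}$; from \eqref{esk} (or the order-statistics identity $S_{m-1}\eqd\max$ of $m-1$ \iid{} $\Exp(1)$ variables) one gets $\int_0^t f=(1-e^{-t})^{m-1}$ and $f(t)=(m-1)(1-e^{-t})^{m-2}e^{-t}$, and then $\int_0^t f(s)e^{ms}\,\ddx s=(e^t-1)^{m-1}$ via the substitution $u=e^s-1$, so that $h_1(t)=(1-e^{-t})^{m-1}-e^{-t}(1-e^{-t})^{m-1}=(1-e^{-t})^m$. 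Either way, a consistency check in the binary case $m=2$ recovers the expression $h_1(t)=(1-e^{-t})^2$ underlying the computations of \citet{SJ283}.
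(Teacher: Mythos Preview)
Your main argument is correct and is a genuinely cleaner route than the paper's. You identify directly that ``root internal and internally $1$-protected'' is precisely the event $\{\zpsi_t\ge m\}$, and then read off $(1-e^{-t})^m$ from the geometric marginal of the Yule process. The paper instead works through the recursion \eqref{axel}: from $h_0(t)+e^{-t}=1$ it gets $h_1=f*(1-e^{-mt})$, then recognizes this convolution probabilistically as $\P(S_{m-1}+Y_m\le t)=\P(S_m\le t)$ with $Y_m\sim\Exp(m)$ independent, and finally invokes the order-statistics identity $S_m\eqd\max_{i\le m}E_i$ from \refT{TC} to conclude. Both routes land on the same quantity---$\P(S_m\le t)$ is exactly your $\P(\zpsi_t\ge m)$, since $S_m$ is the arrival time of the $m$:th key---but yours bypasses the convolution step entirely, which is a real economy. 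Your alternative computation at the end (explicitly evaluating $f*(1-e^{-mt})$ via the substitution $u=e^s-1$) is a calculus variant of the paper's argument; the paper avoids the integral by spotting the convolution as a distribution function.
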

\begin{proof}
  We extend the notation of \refSS{SSemst} and let $S_j:=\sum_{i=1}^j Y_i$ for
  any integer $j\ge0$, where $Y_i\sim\Exp(i)$ are independent.
(We thus change the earlier special definition of $S_m$.)
By \refT{TC}, 
\begin{equation}
  \label{maxe}
S_j\eqd V_{j,j}=
\max_{1\le i\le j} E_i,
\end{equation}
where
$E_1,E_2,\dots,E_j \sim\Exp(1)$ are i.i.d.

Since $h_1(t)+e^{-t}=1$ by \eqref{h0}, \eqref{axel} yields, recalling that 
$f$ is the density function of $S_{m-1}$ and using \eqref{maxe},
\begin{equation*}
  \begin{split}
h_1(t)&
=f(t)*\xpar{1-e^{-mt}}	
=f(t)*\P(Y_m\ge t)
=\intot f(s)\P(Y_m\le t-s)\dd s 
\\&
= \P(S_{m-1}+Y_m\le t)
= \P(S_m\le t)
=\P\bigpar{\max_{1\le i\le m} E_i \le t}
\\&
=\prod_{i=1}^m\P(E_i\le t)
=(1-e^{-t})^m.
  \end{split}
\qedhere
\end{equation*}
\end{proof}

\begin{theorem}
  \label{Tmprot}
The asymptotic probability that a random node in an \mst{} $T_n$ is
$2$-protected is
\begin{equation}\label{tmprot}
  P_2=P_2(m)=\frac{2}m\sum_{\ell=0}^{m-1}
  \frac{m!}{(m-\ell)!}\cdot
  \frac{(m(m-\ell))!}{(m(m-\ell)+\ell+1)!}.
\end{equation}
\end{theorem}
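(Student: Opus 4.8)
The plan is to start not from \eqref{pk1} but from the alternative representation \eqref{pk2} with $k=2$,
\[
P_2 = \frac{2}{m}\intoo e^{-t}\bigpar{(h_1(t)+e^{-t})^m-e^{-mt}}\dd t,
\]
and to feed in the explicit formula $h_1(t)=(1-e^{-t})^m$ supplied by the preceding lemma \eqref{h1}. (Working instead from \eqref{pk1} would force one to compute $h_2(t)$ explicitly, which is a genuine polynomial in $e^{-t}$ and $t$ and hence messier; \eqref{pk2} is the convenient route precisely because it only needs $h_1$.) With this substitution the integrand becomes $e^{-t}\bigpar{((1-e^{-t})^m+e^{-t})^m-e^{-mt}}$, which is $O(e^{-t})$ since $(1-e^{-t})^m+e^{-t}$ is bounded, so all integrals below converge absolutely and there is no integrability issue to fuss over.

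Next I would expand the outer $m$-th power by the binomial theorem,
\[
\bigpar{(1-e^{-t})^m+e^{-t}}^m=\sum_{\ell=0}^{m}\binom{m}{\ell}(1-e^{-t})^{m(m-\ell)}e^{-\ell t},
\]
and observe that the $\ell=m$ term is exactly $e^{-mt}$, which cancels the $-e^{-mt}$ in the integrand. Hence
\[
P_2 = \frac{2}{m}\sum_{\ell=0}^{m-1}\binom{m}{\ell}\intoo e^{-(\ell+1)t}(1-e^{-t})^{m(m-\ell)}\dd t.
\]
Each integral is then evaluated by the substitution $u=e^{-t}$, $\dd t=-\dd u/u$, which turns it into the Beta integral $\intoi u^{\ell}(1-u)^{m(m-\ell)}\dd u = \mathrm{B}(\ell+1,\,m(m-\ell)+1)=\ell!\,(m(m-\ell))!\big/\bigpar{m(m-\ell)+\ell+1}!$.

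Finally, substituting this back and using $\binom{m}{\ell}\ell!=m!/(m-\ell)!$ gives exactly the claimed identity \eqref{tmprot}. There is essentially no hard step here — the argument is a short computation once $h_1$ is known — so the only points requiring care are the clean cancellation of the $\ell=m$ summand against $e^{-mt}$ and the index bookkeeping in rewriting $\binom{m}{\ell}\ell!$ as $m!/(m-\ell)!$; a sanity check at $m=2$ (which should reproduce the value obtained in \citet{SJ283}, \citet{Bona}, \citet{BonaPittel}) confirms the normalisation. As a by-product of the lemma one also recovers $P_1=2/(m+1)$ in \eqref{Pm1}, consistent with \eqref{mstD}.
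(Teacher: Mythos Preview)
Your proposal is correct and matches the paper's proof essentially step for step: start from \eqref{pk2}, insert $h_1(t)=(1-e^{-t})^m$ from the lemma, expand $(h_1(t)+e^{-t})^m-e^{-mt}$ binomially (the $\ell=m$ term cancelling), substitute $x=e^{-t}$ to get a Beta integral, and simplify $\binom{m}{\ell}\ell!=m!/(m-\ell)!$. The only cosmetic difference is that the paper writes the binomial expansion of the difference directly as a sum over $\ell=0,\dots,m-1$, whereas you spell out the cancellation of the top term.
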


\begin{proof}
  By \eqref{pk2},
a binomial expansion, 
 \eqref{h1}, 
the change of variables $x=e^{-t}$ and a
standard evaluation of a beta integral,
\begin{equation}
  \begin{split}
P_2&
=\frac{2}m\intoo e^{-t}\sum_{\ell=0}^{m-1} \binom{m}{\ell}
  h_1(t)^{m-\ell}e^{-\ell t}\dd t	
\\&
=\frac{2}m \sum_{\ell=0}^{m-1} \binom{m}{\ell}
\intoo (1-e^{-t})^{m(m-\ell)}e^{-(\ell+1) t}\dd t	
\\&
=\frac{2}m \sum_{\ell=0}^{m-1} \binom{m}{\ell}
\intoi (1-x)^{m(m-\ell)}x^{\ell}\dd t	
\\&
=\frac{2}m \sum_{\ell=0}^{m-1} \binom{m}{\ell} B\bigpar{m(m-\ell)+1,\ell+1}
\\&
=\frac{2}m \sum_{\ell=0}^{m-1} \binom{m}{\ell} 
\frac{(m(m-\ell))!\,\ell!}{(m(m-\ell)+\ell+1)!}.
  \end{split}
\end{equation}
\end{proof}

\begin{remark}
We can also prove this result using a more combinatorial proof with balls
and boxes. 
We recall from \refT{fringedistributionmary}
that the asymptotic fringe tree $\friT$
can be constructed as an \mst{} with a random number $K$ of keys, 
where by
\eqref{msts2}
$\P(K=k)=\frac{2}{(k+1)(k+2)}=1/\binom{k+2}2$, $k\ge1$.
We condition on $K=k$ and find the probability that the root of $T_k$ is
2-protected.

Recall that a node is 2-protected if it is
not a leaf and has no child that is a leaf. 
Thus, the root of $T_k$ is 2-protected if and only if it is filled with
$m-1$ keys and
each of the $ m $ subtrees of the  root has the property that it is either
empty or contains at least $m$ keys, and at least one of the subtrees is
nonempty; in particular, we must have $k\ge 2m-1$.

For $k\ge 2m-1$ we order the keys in increasing order and represent the
$k-m+1$ keys that are distributed to the $m$ subtrees of the root by 0's
and the $m-1$ keys that stay in the root by 1's. We also add two additional
1's first and last. 
This gives a string of length $k+2$, beginning and ending with 1, and with
$m-1$ additional 1's. There are 
$\binom{k}{m-1}$  such strings, and all occur with the same probability.

Furthermore, the corresponding tree $T_k$ is 2-protected
if and only if between every pair of 1's in the string,
there is either no 0's, or at least $m$ 0's. In other words, the 1's appear
in clusters, separated by at least $m$ 0's. Let the number of clusters be
$r+1$, and note that $1\le r\le m$.
To count the number of strings of length $k+2$ such that these properties
are satisfied for a given $r$, 
we first  distribute the $m+1$ 1's into $r+1$
boxes, such that no box is empty. This gives $\binom{m}{r}$
different choices. 
We then distribute the $k-m+1$ 0's into the $r$ gaps between the clusters;
it is required that there should be at least $m$ 0's in each gap, 
but the remaining $k-m+1-mr$ 0's can be distributed arbitrarily into the $r$
gaps. This  can be done in $\binom{k+r-m(r+1)}{r-1}$ ways.  
Hence, summing over $r$ and $k$ and using (\ref{msts2}),  we obtain
\begin{align}\label{2prot}
P_2=P_2(m)
=\sum_{r=1}^{m}\sum_{k=m(r+1)-1}^{\infty}
 \frac{\binom{m}{r}\binom{k+r-m(r+1)}{r-1}}{\binom{k+2}{2}\binom{k}{m-1}}.
\end{align}
The sum over $k$ can be written as a hypergeometric sum \eqref{hyper}, 
which using Gauss' formula \eqref{gauss} simplifies and yields
\begin{equation}
  \begin{split}
P_2(m)&
=\sum_{r=1}^{m} 2\binom{m}{r}\frac{(m-1)!\,(mr)!}{(mr+m+1)!}F(mr+1,r;mr+m+2;1)
\\&
=2\sum_{r=1}^{m} \frac{(m-1)!\,(mr)!}{r!\,(mr+m-r+1)!},
  \end{split}
\end{equation}
which is equivalent to \eqref{tmprot} by letting $r=m-\ell$.
\end{remark}

We can find the asymptotics of $P_2(m)$ as \mtoo{} from \eqref{tmprot}.

\begin{theorem}
  As \mtoo, the probability $P_2=P_2(m)$ in \refT{Tmprot} is
  \begin{equation}
	P_2(m)=\frac{2}{m^3}+O\bigpar{m^{-4}}.
  \end{equation}
\end{theorem}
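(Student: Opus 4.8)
The plan is to start from the closed formula \eqref{tmprot} and do an elementary term‑by‑term analysis. After the substitution $r=m-\ell$, \eqref{tmprot} becomes $P_2(m)=\frac{2}{m}\sum_{r=1}^{m}a_r$ with
\begin{equation*}
 a_r:=\frac{m!}{r!}\cdot\frac{(mr)!}{(mr+m-r+1)!}
 =\frac{(r+1)(r+2)\cdots m}{(mr+1)(mr+2)\cdots(mr+m-r+1)},
\end{equation*}
a ratio of $m-r$ factors over $m-r+1$ factors. The top term is $a_m=\dfrac{(m^2)!}{(m^2+1)!}=\dfrac{1}{m^2+1}$, which already accounts for the main contribution: $\frac{2}{m}a_m=\frac{2}{m(m^2+1)}=\frac{2}{m^3}+O(m^{-5})$. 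So the whole statement will follow once I prove $\sum_{r=1}^{m-1}a_r=O(m^{-3})$, since then $P_2(m)=\frac{2}{m(m^2+1)}+\frac{2}{m}\,O(m^{-3})=\frac{2}{m^3}+O(m^{-4})$.

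To control $a_r$ for $r\le m-1$, I would first record the elementary bound $\dfrac{r+k}{mr+k}\le\dfrac1r$ for $1\le k\le m-r$ (the left side is increasing in $k$, and at $k=m-r$ it equals $\frac{m}{mr+m-r}\le\frac1r$). Pairing the numerator factor $r+k$ with the denominator factor $mr+k$ for $k=1,\dots,m-r$, and bounding the leftover denominator factor $mr+m-r+1$ below by $mr$, gives for $r\ge2$
\begin{equation*}
 a_r\le\frac{1}{mr}\prod_{k=1}^{m-r}\frac1r=\frac{1}{m\,r^{m-r+1}}.
\end{equation*}
I would then split the sum at $r=\lfloor m/2\rfloor$. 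For $2\le r\le m/2$ the exponent $m-r+1$ is at least $m/2+1$, so each such $a_r$ is at most $2^{-(m/2+1)}/m$; since there are fewer than $m/2$ of them, they sum to $O(2^{-m/2})$, and the single term $r=1$ equals $a_1=(m!)^2/(2m)!=1/\binom{2m}{m}$, also exponentially small. For $m/2<r\le m-1$, writing $n=m-r$ with $1\le n<m/2$ gives $r^{m-r+1}=(m-n)^{n+1}\ge(m/2)^{n+1}$, hence $a_{m-n}\le 2^{n+1}/m^{n+2}$, and summing this geometric series over $n\ge1$ yields $\sum_{m/2<r\le m-1}a_r\le\frac{4}{m^2(m-2)}=O(m^{-3})$. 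Adding the three pieces gives $\sum_{r=1}^{m-1}a_r=O(m^{-3})$, as needed.

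The only point that needs a little care is exactly this two‑regime split: the crude estimate ``each numerator factor is $\le m$'' is by itself worthless near $r=1$ (it would give merely $a_1\le 1/m$), so one must use instead that the number $m-r+1$ of denominator factors is large there, which forces $a_r$ to be exponentially small. Everything else — the pairing inequality and the geometric‑series tail — is routine, and the constant $2$ in the leading term comes entirely from the single term $a_m$. As a consistency check, \eqref{tmprot} gives $P_2(2)=a_1+a_2=1/6+1/5=11/30$, the classical value for the binary search tree.
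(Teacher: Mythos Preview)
Your proof is correct and follows essentially the same route as the paper: isolate the dominant term $a_m=1/(m^2+1)$, then split the remaining sum at $r\approx m/2$ and show each half is $O(m^{-3})$. Your pairing inequality $\frac{r+k}{mr+k}\le\frac1r$ yields the same bound $a_r\le\frac{1}{m\,r^{m-r+1}}$ that the paper obtains (in the variable $\ell=m-r$) by estimating numerator and denominator separately; the only cosmetic difference is that you use this single bound in both regimes, whereas the paper switches to a cruder $(3/4)^{m/4}$ estimate for the far tail.
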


\begin{proof}
  Write \eqref{tmprot} as
  \begin{equation}
	\label{fris}
P_2(m)=\frac{2}m\sum_{\ell=0}^{m-1} a_\ell(m)
  \end{equation}
with
\begin{equation}\label{uk}
a_\ell(m)=
    \frac{m!}{(m-\ell)!}\cdot
  \frac{(m(m-\ell))!}{(m(m-\ell)+\ell+1)!}.
\end{equation}
In particular,
\begin{equation}\label{a0}
  a_0(m)=\frac{(m^2)!}{(m^2+1)!}=\frac{1}{m^2+1}.
\end{equation}

For each fixed $\ell$, as \mtoo,
\begin{equation}
  a_\ell(m)\sim m^\ell(m(m-\ell))^{-\ell-1}
\sim m^\ell m^{-2(\ell+1)}
=m^{-\ell-2}.
\end{equation}
Furthermore, rather crudely, if $\ell\le m/2$, then
\begin{equation}\label{sop}
  a_\ell(m)\le m^\ell\frac{1}{(m(m-\ell))^{\ell+1}}
\le \frac{m^\ell}{(m^2/2)^{\ell+1}}
= \frac{2^{\ell+1}}{m^{\ell+2}}
\end{equation}
and thus
\begin{equation}\label{sop-}
  \sum_{\ell=1}^{m/2}a_\ell(m) 
\le \frac{2^{2}}{m^{3}}\Bigpar{1-\frac{2}m}\qw
= O\bigpar{m^{-3}}.
\end{equation}
If $m/2<\ell\le m-1$, we instead note that $\ell-m/4+O(1)\ge m/4+O(1)$ of
the factors $(m-i)$, $i=0,\dots,\ell-1$, are less than $\frac{3}4 m$, and
thus, similarly to \eqref{sop},
\begin{equation}\label{sop2}
  a_\ell(m)\le \Bigparfrac34^{m/4+O(1)}m^\ell\frac{1}{(m(m-\ell))^{\ell+1}}
\le \Bigparfrac34^{m/4+O(1)}
\end{equation}
 and thus
 \begin{equation}\label{sop+}
   \sum_{\ell=\floor{m/2}+1}^{m-1} a_\ell(m)
=
O\lrpar{m\Bigparfrac34^{m/4}}
=O\bigpar{m^{-3}}.
 \end{equation}
Consequently, by \eqref{a0}, \eqref{sop-}
 and \eqref{sop+},
\begin{equation}
  \sum_{\ell=0}^{m-1}a_\ell(m)=a_0(m)+O\bigpar{m^{-3}}
= \frac{1}{m^2}+O\bigpar{m^{-3}}
\end{equation}
and the result follows from \eqref{fris}.
\end{proof}

\begin{remark}
  The proof shows that $\sum_{\ell=k+1}^{m-1}=O\bigpar{m^{-k-3}}$
for any fixed $k$, and thus
\begin{equation}
  P_2(m)=\frac{2}m\sum_{\ell=0}^k a_\ell(m)+O\bigpar{m^{-k-4}}
\end{equation}
which together with \eqref{uk} gives an asymptotic expansion of $P_2(m)$ in
powers of $m\qw$ to arbitrary precision. For example, taking $k=2$,
\begin{equation}
  \begin{split}
  P_2(m)&=\frac{2}m\Bigpar{\frac{1}{m^2+1}+\frac{m}{\rise{m^2-m+1}2}
          +\frac{m(m-1)}{\rise{m^2-2m+1}3}} + O\bigpar{m^{-6}}
\\&
= \frac{2}{m^3}+\frac{2}{m^4}+\frac{4}{m^5}+ O\bigpar{m^{-6}}.	
  \end{split}
\raisetag{\baselineskip}
\end{equation}
\end{remark}

We give some numerical examples for small $m$ and $k$, calculated by \maple{}
using \eqref{pk1}, \eqref{pk2} or (for $k=2$) \eqref{tmprot}.
Recall that
$P_0(m)=1$ and $P_1(m)=2/(m+1)$ by \eqref{Pm1}.
The value $P_2(2)$ was first found by \citet{MahmoudWard-bst}.
\citet{Bona} found also $P_3(2)$ and $P_4(2)$ in an equivalent form; in
our notation he computed $\P(R_0(\friT)=k)$ for $k\le 4$; this was extended
to $k\le6$ by \citet{BonaPittel}, see also \citet{SJ283}.
For $m>2$, $P_2(m)$  was found using \Polya{} urns for $ m=3$ by
\citet{HolmgrenJanson2} and for $m=4,5,6,7 $ by \citet{Heimburger}.
The values of $P_3(3)$ and $P_3(4)$ are new.
(We have also calculated \eg{} $P_4(3)$ and $P_3(5)$, 
but they have too many digits to fit on a line.)
\begin{align*}
P_2(2)&={\tfrac {11}{30}}
\\
P_3(2)&=
{\tfrac {1249}{8100}}
\\
P_4(2)&=
{\tfrac {103365591157608217}{2294809143026400000}}
\\
P_5(2)&=
{\tfrac {
28988432119470126428745503472450231049113704894255010839147852677}{
3353377025022449199852900725670960067418280803797231788288000000000}}
\\
P_2(3)&={\tfrac {19}{140}}
\\
P_3(3)&
={\tfrac {1550707922167467531619}{109171218839281719120000}}
\\
P_2(4)&=
{\tfrac {54731}{1021020}}
\\
P_3(4)&=
{\tfrac {
18669293609273671848329391002724727078204420654827428710651571011801}{
10647619779410811926633805505705710276401405786992612983909862200000000
}}
\\
P_2(5)&=
{\tfrac {3491}{145860}}  
\\
P_2(6)&=
{\tfrac {760687429}{61618399920}}
\\
P_2(7)&=
{\tfrac {30706935422593}{4254556530624400}}
\\
P_2(8)&=
{\tfrac {351290243384177}{75964420635503400}}
\\
P_2(9)&=
{\tfrac {4396518809902327}{1391843538882476680}}
\\
P_2(10)&=
{\tfrac {2342020375280850167304355177}{1036489488884911417519833710040}}
\\
P_2(13)&=
{\tfrac {26484106080648861012732023951265705943439}{
26631299618409351014679549721918492033886700}}
\\
P_2(15)&=
{\tfrac {3345241368583279619305030461877889463409229379479549149}{
5237499866293002299315802678302280652458939709867850337600}}
\\
P_2(18)&=
{\tfrac {
291057072374356381926366502629609543999495527334257770304419581448683067
}{
798438501453518964732193240994834409677783210033312650160515213913065316400
}}
\end{align*}

The numerators and denominators of these
rational numbers evidently grow very rapidly with $k$; \citet{BonaPittel}
note that 
(in our notation)
the denominator of $P_6(2)$ has 274 digits, but the largest prime
factor is only 61, and they show that in general, the largest prime factor
of the denominator of $P_k(2)$ is at most $2^k+1$.
This can be generalized to arbitrary $m$, using the recursion above;
this also gives a new and simpler proof for $m=2$.
(Nothing similar seems to hold for the numerators; they typically have only
a few and often large prime factors in these examples. The numerator of
$P_2(13)$ happens to be a prime with 41 digits.)

\begin{theorem}
The largest prime factor of the denominator of $P_k(m)$ is at most $m^k+1$,
for any $k\ge1$ and $m\ge2$.  
\end{theorem}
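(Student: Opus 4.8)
The strategy is to extract from the recursion \eqref{jb} (together with the base case \eqref{h0} and the Laplace-transform formulas \eqref{pk1}--\eqref{pk2}) a bound on which primes can appear in the denominator of each $h_k(t)$, viewed as a polynomial in $e^{-t}$ and $t$ with rational coefficients, and then track how $P_k(m)$ is obtained from $h_k$. First I would set up the right bookkeeping: write $h_k(t) = \sum_{a,b} c_{a,b}^{(k)}\, t^a e^{-bt}$ with $c_{a,b}^{(k)}\in\bbQ$, and let $D_k$ denote the set of primes dividing some denominator of some $c_{a,b}^{(k)}$ (equivalently, of the reduced common denominator of all of them). The claim to prove by induction on $k$ is that every prime in $D_k$, as well as every prime dividing the denominator of $P_k(m)$, is at most $m^k+1$.

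The base case is immediate: $h_0(t)=1-e^{-t}$ has integer coefficients, $P_0=1$, and $P_1 = 2/(m+1)$ by \eqref{Pm1}, whose denominator has largest prime factor $\le m+1 = m^1+1$ (after possible cancellation it is only smaller). For the inductive step, the engine is the following observation about how denominators behave under the two operations in \eqref{jb}. Raising $h_{k-1}(t)+e^{-t}$ to the $m$-th power and subtracting $e^{-mt}$ only multiplies denominators (via binomial coefficients, which are integers) and does not introduce new primes beyond $D_{k-1}$; call the result $g_k(t)$, a polynomial in $t,e^{-t}$ with denominator primes in $D_{k-1}\subseteq[2,m^{k-1}+1]$. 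Then $h_k = g_k * \bigl(\mathop{*}_{i=1}^{m-1} i e^{-it}\bigr)$. The key computation is that convolution with $i e^{-it}$ sends $t^a e^{-bt}$ to a $\bbQ$-linear combination of $t^{a'} e^{-b't}$ whose denominators are controlled: explicitly, $(i e^{-it}) * (t^a e^{-bt})(t)$ is, for $b\ne i$, a combination of $t^c e^{-it}$ and $t^c e^{-bt}$ with $c\le a$, where the only new denominators that arise are powers of $(b-i)$ and $(i-b)$ — these come from repeated integration by parts of $\int_0^t i e^{-i(t-s)} s^a e^{-bs}\,ds$ — while for $b=i$ it yields $\frac{i}{a+1} t^{a+1} e^{-it}$, contributing only the denominator $a+1$. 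So I would prove a lemma: convolving with $i e^{-it}$ introduces only prime factors dividing integers of the form $j-i$ with $0 < j \le$ (max exponent appearing), or dividing $a+1$ with $a+1\le$ (max degree in $t$). One then needs a uniform bound on the exponents $b$ and the $t$-degrees $a$ that can occur in $h_k$; from the recursion, the largest exponent of $e^{-t}$ in $h_k$ is at most $m$ times the largest in $h_{k-1}$ plus $O(m)$, so it is $\le m^k$ (a clean induction: $h_0$ has max exponent $1$, and $g_k$ has max exponent $\le m\cdot m^{k-1} = m^k$; the convolutions with $i e^{-it}$, $i\le m-1$, do not increase the maximal exponent, only raise the $t$-degree), and the $t$-degree of $h_k$ is also bounded by something $\le m^k$. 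Hence every new prime is $\le m^k$, in particular $\le m^k+1$, closing the induction for $D_k$.

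Finally, $P_k(m) = 2\int_0^\infty h_k(t) e^{-t}\,dt$ by \eqref{pk1}: term by term, $\int_0^\infty t^a e^{-(b+1)t}\,dt = a!/(b+1)^{a+1}$, so the denominator of $P_k(m)$ picks up, beyond the primes in $D_k$, only primes dividing some $b+1$ with $b\le m^k$ — again all $\le m^k+1$ — and an integer factor $a!$ in the numerator which can only cancel, never enlarge, the denominator. (Alternatively one uses \eqref{pk2} and the same estimate applied to $g_k$ together with the already-established bound for $h_{k-1}$, noting $m^{k-1}+1\le m^k+1$.) This yields the claimed bound $m^k+1$ for all $m\ge2$, $k\ge1$; for $m=2$ this recovers and simplifies the B\'ona--Pittel bound $2^k+1$.

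The \textbf{main obstacle} is the bookkeeping in the convolution lemma: one must verify carefully that the denominators produced by $(i e^{-it})*(t^a e^{-bt})$ really are only powers of $(b-i)$ (and factorials of small integers from the $b=i$ case), with no hidden larger primes slipping in through the iterated integration by parts, and one must pin down sharp-enough bounds on the maximal $e^{-t}$-exponent and $t$-degree in $h_k$ so that every such $b-i$ and every such $a+1$ is genuinely $\le m^k$. Everything else is routine induction.
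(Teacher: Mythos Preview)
Your proposal is correct and follows essentially the same approach as the paper: both proofs establish the convolution lemma that $t^j e^{-at} * e^{-bt}$ introduces only denominators that are powers of $b-a$ (or $j+1$ when $a=b$), bound the $e^{-t}$-degree of $h_k$ by $m^k$ by induction on the recursion \eqref{jb}, and finish via \eqref{pk1} where the integral $\int_0^\infty t^a e^{-(b+1)t}\,dt$ contributes the extra factor $b+1\le m^k+1$. The paper is slightly sharper in tracking the $t$-degree (it obtains $(m^{k-1}-1)/(m-1)$ rather than your rougher $\le m^k$), but this refinement is not needed for the conclusion, and your identification of the convolution bookkeeping as the only real work is exactly right.
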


\begin{proof}
A simple calculation shows that
  for integers $j,a,b\ge0$ with $a\neq b$, 
the convolution $t^j e^{-at}*e^{-bt}$ is of the form
$\sum_{i=0}^j c_i t^i e^{-at}+c'e^{-bt}$ with coefficients
$c_i,c'\in(b-a)^{-j-1}\bbZ$; if $a=b$, we obtain instead
$\frac{1}{j+1}t^{j+1}e^{-at}$. 
It follows by \eqref{jb}, \eqref{h1} and induction that for $k\ge1$, 
$h_k(t)$ is a polynomial in $e^{-t}$ and $t$ of degree $m^k$ in $e^{-t}$ and
of degree (at most) $1+m+\dots+m^{k-2}=(m^{k-1}-1)/(m-1)$ in $t$, 
with rational coefficients whose denominators have all their prime factors
$<m^k$. The result then follows from \eqref{pk1}.
\end{proof}

For the binary case $m=2$, the probabilities $P_k(2)$ where shown to have an
exponential decay by \citet{BonaPittel}.
We conjecture that this holds for $m\ge3$ too, but leave that as an open
problem.

\subsection{Random recursive tree}  \label{SSrankRRT}
Consider the \rrt{} in \refE{ERRT}. This has been studied by
\citet{MahmoudWard-rrt} and \citet{SJ283};
we follow here \cite{SJ283}.
Let $\ctt$ be the Yule tree process in
\refE{ERRT} and define
\begin{equation}
p_k(t):=\P(R_0(\ctt)\ge k)
,  
\end{equation}
the probability that the root of $\ctt$ is $k$-protected.
By the construction of the fringe tree $\friT=\cttaux$, with
$\taux\sim\Exp(1)$,
the limit $P_k$ in \eqref{trank} is given by
\begin{equation}\label{pkrrt}
  P_k=\intoo p_k(t)e^{-t}\dd t.
\end{equation}

The functions $p_k(t)$ can, in principle, be found by recursion.
The children of the root in $\ctt$ arrive according to a Poisson process
with intensity 1, and
a child that is born at time $s\le t$ is not $(k-1)$-protected at time $t$
with probability $1-p_{k-1}(t-s)$. 
Hence, for any $k\ge1$, 
the number of children of the root in $\ctt$ that are 
not $(k-1)$-protected at time $t$ is Poisson distributed
with mean 
$\intot \bigpar{1-p_{k-1}(t-s)}\dd s=\intot \bigpar{1-p_{k-1}(s)}\dd s$.
Since the root is $k$-protected if and only if there is no such child,
but there is at least one child, and the probability that there is no child
at all is $e^{-t}$, we obtain the recursion
\begin{equation}\label{rrtrec}
  \begin{split}
p_k(t)&=\exp\lrpar{-\intot \bigpar{1-p_{k-1}(s)}\dd s}-e^{-t}	
\\&
=
e^{-t}\lrpar{ \exp\lrpar{\intot p_{k-1}(s)\dd s}-1},
\qquad k\ge1,\, t\ge0,
  \end{split}
\end{equation}
with $p_0(t)=1$.

Taking $k=1$ in \eqref{rrtrec} we obtain the obvious $p_1(t)=1-e^{-t}$.
Taking $k=2$ in \eqref{rrtrec}, we then find
  \begin{equation}
	p_2(t)=\exp\bigpar{e^{-t}-1}-e^{-t}
  \end{equation}
and thus by \eqref{pkrrt}
  \begin{equation}
	\begin{split}
P_2&
=\intoo \exp\bigpar{e^{-t}-1}e^{-t}\dd t- \intoo e^{-2t}\dd t
\\&
=\intoi \exp(x-1)\dd x-\frac12
=
\frac12-e^{-1},	  
	\end{split}
  \end{equation}
in accordance with \citet{MahmoudWard-rrt}.

In principle, the recursion \eqref{rrtrec} yields $p_k(t)$ and $P_k$ 
for larger $k$ too, but we do not know any closed form for $k\ge3$.

\section{Maximal clades in $m$-ary search trees}\label{Sclades}

We define a \emph{clade} in an $m$-ary tree to be a node with less than $m$
children. 
(In the formulation using extended \mst{s} with external nodes, a clade is
thus a node with at 
least one external child.)  
A \emph{maximal clade} is a clade such that no ancestor is a clade.

\begin{remark}
The reason for this somewhat strange terminology comes from applications 
of the binary case $m=2$ to
mathematical biology,
where the clade is regarded as a set of external nodes, see
\eg{} \citet{BlumF}, \citet{DBF07}, \citet{ChangF10}, \citet{DF10}, 
\citet{DFL} and
(for the elementary equivalence with the definition here)
\citet{SJ299}. We consider here the natural extension to $m$-ary trees.
(As a mathematically interesting example; we do not claim any biological
applications.) 
\end{remark}

The number of clades is thus the number of nodes with outdegree less than
$m$, and the fraction of such nodes is by \refT{TBP} asymptotically given 
by the probability that the root of the asymptotic fringe tree $\friT$ has
outdegree less than $m$. (This is found to be $1-\frac{2}{m(m+1)}$ in
\refT{TMSTdegree}.) 

The property that a clade is maximal, however, depends also on  its ancestors,
and therefore we need the extended fringe and the random sin-tree $\sinT$;
moreover, we have to consider all 
ancestors, so \refT{TBPX} does not apply and we use \refT{TQ}.

\begin{theorem}\label{TMC}
  Let $\numc(T)$ be the number of maximal clades in $T$. 
If $T_n$ is a random \mst{} with $n$ keys, then
  \begin{equation}
	\frac{\numc(T_n)}{|T_n|}\asto 
\pmc=\pmc(m)=\P\bigpar{o\text{ is a maximal clade in }\sinT}.
  \end{equation}
\end{theorem}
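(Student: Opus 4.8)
The plan is to apply Theorem~\ref{TQ} with the right choice of the two properties $\cP_0$ and $\cQ$. Since a maximal clade is a clade (a node with outdegree $<m$) none of whose ancestors is a clade, I would set $\cQ$ to be the property ``the node has outdegree $<m$'' and $\cP_0:=\cQ$ as well; then the property $\cP$ of Theorem~\ref{TQ} --- ``$v$ satisfies $\cP_0$ but no ancestor of $v$ satisfies $\cQ$'' --- is exactly ``$v$ is a maximal clade''. Both $\cP_0$ and $\cQ$ depend only on $v$ and its descendants (indeed only on the outdegree of $v$), so the structural hypothesis of Theorem~\ref{TQ} holds. With $\numc(T)=\nun_{\cP}(T)$, the conclusion \eqref{tbpxq+} gives $\numc(T_n)/|T_n|\asto p_{\cP}(\sinT)$, and $p_{\cP}(\sinT)$ is by definition the probability that the distinguished node $o$ of the sin-tree $\sinT$ is a maximal clade, which is what the theorem claims.

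It remains to verify the two extra hypotheses of Theorem~\ref{TQ}, namely \eqref{lq0} and \eqref{lqe}. For \eqref{lq0} I must check $\E\hXi(\ga)^2<\infty$ for the \CMJ{} process attached to the \mst{} in \refSS{SSmst}; here $\ga=1$, all $m$ children are born at times $\xi_i=\SS+X_i$, and $\hXi(1)=\sum_{j=1}^m e^{-\SS-X_j}\le m e^{-\SS}\le m$, so $\hXi(1)$ is bounded and \eqref{lq0} holds trivially. For \eqref{lqe}, note that $\cQ$ here is the property ``$\cT_t$ has root of outdegree $<m$''; since the root of $\cT_t$ has outdegree $m$ as soon as $t\ge\xi_m=\SS+X_{\max}<\infty$ (all children born), we have $\gL:=\sup\{t:\cT_t\in\cQ\}=\xi_m=\SS+\max_j X_j$, a finite random variable. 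Moreover $\cQ$ is decreasing in the sense described after Theorem~\ref{TQ} (if the root of a tree has outdegree $<m$, so does the root of any subtree with the same root), so by the remark it suffices to check \eqref{lq1x}: $\P(\cT_t\in\cQ)=\P(\xi_m>t)$. Since $\SS=\sum_{i=2}^{m-1}Y_i$ with $Y_i\sim\Exp(i)$ and $X_1,\dots,X_m$ are i.i.d.\ $\Exp(1)$, all independent, the tail $\P(\xi_m>t)$ decays exponentially (a finite sum of exponentials is sub-exponential), so $\P(\cT_t\in\cQ)=O(e^{-\gd t})$ for, e.g., any $\gd<1$; equivalently $\E e^{\gd\gL}<\infty$ for such $\gd$. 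Thus \eqref{lqe} holds.

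With \refBP{} already verified for this branching process in \refSS{SSmst} and the two additional conditions just checked, Theorem~\ref{TQ} applies and yields $\numc(T_n)/|T_n|=\nun_{\cP}(T_n)/|T_n|\asto p_{\cP}(\sinT)=\pmc(m)$, completing the proof. I do not anticipate a real obstacle: the only points requiring care are identifying $\cP$ correctly as ``maximal clade'' and confirming that $\cQ$ is decreasing so that the simplified tail condition \eqref{lq1x} can be used; the tail and second-moment estimates are immediate from the explicit exponential structure of the life history of an individual in this \CMJ{} process.
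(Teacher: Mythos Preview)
Your proposal is correct and follows essentially the same route as the paper: apply \refT{TQ} with $\cP_0=\cQ=$ ``outdegree $<m$'', note that $\hXi(\ga)\le m$ gives \eqref{lq0} trivially, and verify \eqref{lqe} from the fact that the time $\gL$ the root acquires its last child is a sum (and maximum) of independent exponentials with rates $\ge1$, hence has an exponential moment for any $\gd<1$. Your extra observation that $\cQ$ is decreasing (so that \eqref{lq1x} suffices) is a harmless elaboration of the same argument.
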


\begin{proof}
We apply \refT{TQ} with $\cP_0=\cQ=$ ``the outdegree is $<m$'',
\ie, the property that a node is a clade. Then $\cP$ in \refT{TQ} is the
property that a node is a maximal clade.
The assumption \eqref{lq0} holds trivially, since $\Xia\le m$.
The random variable  $\gL$ is the time the root of $\ctt$ gets it final
child; by \refR{Rmstbirths}, this can be written as a sum of a number of
exponential variables (with different rates), and thus
\eqref{lqe} holds for some small $\gd>0$. (In fact, for all $\gd<1$, by
\refR{Rmstbirths} and \refT{TC}.)

Hence, \refT{TQ} applies and the result follows.  
\end{proof}

The constant $\pmc(2)$, \ie, the asymptotic proportion of maximal clades in
a binary search tree, was found to be $(1-e^{-2})/4$ by \citet{DF10}, see also
\cite{DFL} and \cite{SJ299}. We give a different proof of this, using the
properties of the sin-tree $\sinT$ in \refE{EBST}.

\begin{theorem}\label{TMC2}
  \begin{equation}
	\pmc(2)=\frac{1-e\qww}4.
  \end{equation}
\end{theorem}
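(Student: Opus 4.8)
By \refT{TMC}, the quantity $\pmc(2)$ is by definition $\P\bigpar{o\text{ is a maximal clade in }\sinT}$, so the plan is to evaluate this probability directly from the explicit construction of the limiting sin-tree $\sinT$ for the binary search tree given in \refE{EBST}. There $\sinT$ is built from an infinite ancestor chain $o=o\mm0,o\mm1,o\mm2,\dots$ (with independent binary tree processes sprouting at all empty slots) and is stopped at an independent time $\taux\sim\Exp(1)$; by the ancestor distribution computed in \refE{EBST}, each ancestor $o\mm k$ with $k\ge1$ independently begets its heir $o\mm{k-1}$ at age $G_k\sim\Exp(2)$ and its unique ordinary child at age $D_k\sim\Exp(1)$, with $G_k$ and $D_k$ independent (the left/right heir choices affect no outdegree and may be ignored), while $o$ itself has its two children born at ages $\xi_\sL,\xi_\sR\sim\Exp(1)$; all of $\taux$, $(G_k)_{k\ge1}$, $(D_k)_{k\ge1}$, $\xi_\sL$, $\xi_\sR$ are mutually independent. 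Hence $o\mm k$ is born at time $-S_k$ with $S_k:=G_1+\dots+G_k$, so its heir $o\mm{k-1}$ is born at $-S_{k-1}\le0<\taux$ and is always present in $\sinT$, while its ordinary child is born at $-S_k+D_k$.

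First I would rewrite the event. Since $m=2$, the node $o$ is a clade iff $\dout(o)<2$, i.e.\ iff $\taux<\max(\xi_\sL,\xi_\sR)$; and an ancestor $o\mm k$ fails to be a clade (has $\dout(o\mm k)=2$) iff both its children are born by $\taux$, which — its heir always being present — means exactly $D_k\le\taux+S_k$. Therefore
\[
\pmc(2)=\P\Bigpar{\taux<\max(\xi_\sL,\xi_\sR)\ \text{ and }\ D_k\le\taux+S_k\ \text{for all }k\ge1}.
\]
Conditioning on $\taux=t$ and on the whole sequence $(G_k)$ makes the events $\set{D_k\le t+S_k}$ conditionally independent with probabilities $1-e^{-t}e^{-S_k}$, while $\P\bigpar{\taux<\max(\xi_\sL,\xi_\sR)\mid\taux=t}=1-(1-e^{-t})^2=2e^{-t}-e^{-2t}$; so, writing $q=e^{-t}$ and $f(q):=\E\bigsqpar{\prod_{k=1}^\infty(1-q\,e^{-S_k})}$,
\[
\pmc(2)=\intoo e^{-t}\bigpar{2e^{-t}-e^{-2t}}f\bigpar{e^{-t}}\dd t .
\]

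The main step — and essentially the only one needing care — is to show $f(q)=e^{-2q}$ for $q\in\oi$. Put $U_j:=e^{-G_j}$, which has density $2u$ on $\oi$, so $e^{-S_k}=U_1\dotsm U_k$; the self-similar factorization $\prod_{k\ge1}(1-qU_1\dotsm U_k)=(1-qU_1)\prod_{j\ge1}\bigpar{1-(qU_1)\tilde P_j}$, where $\tilde P_j:=U_2\dotsm U_{j+1}$ satisfies $(\tilde P_j)_j\eqd(U_1\dotsm U_j)_j$ and is independent of $U_1$, yields the integral equation $f(q)=\int_0^1 2u(1-qu)f(qu)\dd u=\frac{2}{q^2}\int_0^q v(1-v)f(v)\dd v$. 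Here one uses that the product converges a.s.\ (since $\sum_k e^{-S_k}<\infty$ a.s., as $S_k\sim k/2$) and that the decreasing partial products allow dominated convergence to pass the expectation through; one also checks that $f$ is positive and continuous on $(0,1]$ with $f(0^+)=1$. Differentiating $q^2f(q)=2\int_0^q v(1-v)f(v)\dd v$ gives $f'(q)=-2f(q)$, hence $f(q)=e^{-2q}$. Finally the substitution $x=e^{-t}$ turns the integral above into the elementary
\[
\pmc(2)=\intoi(2x-x^2)e^{-2x}\dd x=\frac14\bigpar{1-e\qww},
\]
which is the claim. The delicate points are thus only the justification of the functional equation and the regularity of $f$ needed to differentiate it; everything else is bookkeeping with exponential clocks.
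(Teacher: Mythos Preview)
Your proof is correct and follows the same overall strategy as the paper: condition on $\taux=t$, factor the event into ``$o$ is a clade'' times ``every ancestor is full'', and integrate. The one point of divergence is how you evaluate $f(q)=\E\prod_{k\ge1}(1-qe^{-S_k})$. The paper observes that since the gaps $G_k$ are \iid{} $\Exp(2)$, the points $(S_k)_{k\ge1}$ form a Poisson process of intensity $2$ on $(0,\infty)$, and then applies the standard identity $\E\prod_{\xi\in\Xi}g(\xi)=\exp\bigpar{-\int(1-g)\,\gl(\ddx x)}$ to get $f(q)=e^{-2q}$ in one line. You instead exploit the renewal structure of $(S_k)$ to derive the integral equation $q^2f(q)=2\int_0^q v(1-v)f(v)\dd v$ and solve the resulting ODE $f'=-2f$. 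Your route is a bit longer and requires the regularity checks you mention, but it is entirely self-contained and avoids invoking the Poisson product formula; the paper's route is shorter but presupposes that formula. Either way the final integral $\intoi(2x-x^2)e^{-2x}\dd x=\tfrac14(1-e^{-2})$ is the same.
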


\begin{proof}
  Recall the general construction of the sin-tree $\sinT$ in \refS{S:BP} and
  the specific version for the binary search tree in \refE{EBST}.
In the construction, we stop the tree at $\taux\sim\Exp(1)$, but we first
consider the tree $\sinT_t$ at a fixed time $t\ge0$. (Equivalently, we
condition on $\taux=t$.)

We thus want to compute the probability 
$\P\bigpar{o\text{ is a maximal clade in }\sinT_t}$.
We first note that $o$ is a clade unless it already has got its two
children; each child has appeared with probability $1-e^{-t}$ and thus
\begin{equation}\label{cp0}
\P\bigpar{o\text{ is a  clade in }\sinT_t}
=1-\xpar{1-e^{-t}}^2
=2e^{-t}-e^{-2t}.
\end{equation}

We also require that no ancestor is a clade, \ie, that each ancestor has two
children. Note that each ancestor has an heir, so it is not a clade if and
only if the other child is not yet born.
Suppose that the ancestors are born at times $-\eta_1, -\eta_2,
\dots$, 
and condition on these times. Ancestor $o\mm i$ thus has age $\eta_i+t$ at
time $t$, so the probability that it is not a clade is $e^{-(\eta_i+t)}$.
Consequently, using the independence of different parts of the sin-tree,
\begin{equation}\label{cp}
  \P\bigpar{o\text{ is a maximal clade in }\sinT_t\mid \eta_1,\dots}
=\bigpar{2e^{-t}-e^{-2t}}
\prod_{i=1}^\infty \bigpar{1-e^{-(\eta_i+t)}}.
\end{equation}

The next step is to find the expectation of \eqref{cp} over all
$\set{\eta_i}$.
In the present case, this is not difficult since, by \refE{EBST}, 
\set{-\eta_i} is a Poisson process with intensity 2 on $(-\infty,0)$,
and thus 
\set{\eta_i} is a Poisson process with intensity 2 on $(0,\infty)$.
For any Poisson process $\Xi=\set{\xi_i}$ on some space $\cS$, with
intensity measure $\gl$, and any function  $f$ on $\cS$ with 
$0\le f(x)\le 1$, there is a standard formula
\begin{equation}\label{pop}
  \E \prod_{\xi\in\Xi}f(\xi) = e^{-\int_{\cS}\xpar{1-f(x)}\gl(\ddx x)}
.
\end{equation}
(See \eg{} \cite[Lemma~12.2]{Kallenberg}.
Or note that \eqref{pop} follows easily if $f$ takes only a finite number of
values, and the general case follows by monotone convergence.)
Consequently, taking $f(x)=1-e^{-x-t}$,
\eqref{cp} yields
\begin{equation}
  \begin{split}
  \P\bigpar{o\text{ is a maximal clade in }\sinT_t}
&
=\bigpar{2e^{-t}-e^{-2t}}
\E\prod_{i=1}^\infty f(\eta_i)
\\&
=\bigpar{2e^{-t}-e^{-2t}}
e^{-\intoo e^{-x-t}2\dd x}	
\\&
=\bigpar{2e^{-t}-e^{-2t}}
e^{-2 e^{-t}}.	
  \end{split}
\end{equation}
Finally, recalling that $\sinT=\sinT_{\taux}$ with $\taux\sim\Exp(1)$,
\begin{equation}
  \begin{split}
  \P\bigpar{o\text{ is a maximal clade in }\sinT}
&
=\intoo\bigpar{2e^{-t}-e^{-2t}}e^{-2 e^{-t}} e^{-t}\dd t
\\&
=\intoi\bigpar{2x-x^{2}}e^{-2 x}\dd x =\frac{1}4-\frac{1}4e^{-2}.
  \end{split}
\end{equation}
\end{proof}

For further, somewhat surprising, results on the number of maximal clades in
the binary case (moments and asymptotic distribution), see
\citet{DFL} and
\citet{SJ299}.

\begin{problem}
Unfortunately, we do not know how to compute $\pmc(m)$ for $m>2$, and we
leave this as an open problem.
Using the description in \refSS{SSmst} of $\sinT$,
it is straightforward to modify \eqref{cp0}--\eqref{cp} (although the result
is more complicated since the birth times do not have exponential
distributions, see \refR{Rmstbirths}), 
but the birth times of the ancestors do not
form a Poisson process so \eqref{pop} does not apply and we do not know how 
compute the expectation.  
\end{problem}

We can use the same method for other, related, problems.
We give two examples.
Let us first consider again the \bst, but we simplify the property
of being a maximal clade studied above
by considering only the condition for the ancestors but ignoring the
number of children.
Thus, let $\cPxi$ be the property of a node that none of its ancestors has
only one child, and let
$\nuxi(T)$ be the number of nodes in $T$ with this property.

\begin{theorem}\label{Tpx1}
If $T_n$ is a random \bst{} with $n$ keys, then
  \begin{equation}
	\frac{\nuxi(T_n)}{|T_n|}\asto 
\frac{1-e\qww}2.
  \end{equation}
\end{theorem}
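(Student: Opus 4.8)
The plan is to follow the proof of \refT{TMC2} almost verbatim, replacing the property ``maximal clade'' by the weaker property $\cPxi$; since $\cPxi$ imposes no condition on the chosen node $o$ itself, the final computation is even simpler. First I would apply \refT{TQ} with $\cP_0$ the property satisfied by every node and $\cQ$ the property that a node has outdegree exactly $1$; the property $\cP$ produced by \refT{TQ} --- that $v$ satisfies $\cP_0$ but no ancestor of $v$ satisfies $\cQ$ --- is then precisely $\cPxi$. To invoke \refT{TQ} I must check its two extra hypotheses for the branching process of \refE{EBST}. Since $\ga=1$ here, \eqref{lq0} is immediate: $\hXi(1)=e^{-\xi_\sL}+e^{-\xi_\sR}\le 2$. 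For \eqref{lqe}, observe that the root of $\ctt$ has outdegree exactly $1$ precisely for $t\in[\min(\xi_\sL,\xi_\sR),\max(\xi_\sL,\xi_\sR))$, so $\gL=\max(\xi_\sL,\xi_\sR)\le\xi_\sL+\xi_\sR$ and hence $\E e^{\gd\gL}\le(\E e^{\gd\xi_\sL})^2=(1-\gd)^{-2}<\infty$ for every $\gd\in(0,1)$. Thus \refT{TQ} applies and yields
$$\frac{\nuxi(T_n)}{|T_n|}\asto p_{\cPxi}(\sinT)=\P\bigl(o\text{ has no ancestor of outdegree }1\text{ in }\sinT\bigr).$$

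Next I would evaluate $p_{\cPxi}(\sinT)$ from the explicit description of the sin-tree $\sinT$ for the binary search tree in \refE{EBST}, mimicking the proof of \refT{TMC2}. Conditioning on $\taux=t$ and on the birth times $-\eta_1,-\eta_2,\dots$ of the ancestors of $o$ (so that $\set{-\eta_i}$ is a Poisson process of intensity $2$ on $(-\infty,0)$, equivalently $\set{\eta_i}$ a Poisson process of intensity $2$ on $(0,\infty)$), each ancestor $o\mm i$ always has its heir among its children, while its other child is born at an independent $\Exp(1)$ age; hence $o\mm i$ has outdegree exactly $1$ at time $t$ iff this other child is not yet born, of conditional probability $e^{-(\eta_i+t)}$. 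As the relevant parts of $\sinT$ are independent and (unlike in \refT{TMC2}) there is no constraint on $o$,
$$\P\bigl(o\text{ has }\cPxi\text{ in }\sinT_t\mid\eta_1,\eta_2,\dots\bigr)=\prod_{i=1}^\infty\bigl(1-e^{-(\eta_i+t)}\bigr).$$
Taking the expectation over $\set{\eta_i}$ by the formula \eqref{pop} with $f(x)=1-e^{-(x+t)}$ gives $\P(o\text{ has }\cPxi\text{ in }\sinT_t)=\exp\bigl(-2\intoo e^{-(x+t)}\dd x\bigr)=e^{-2e^{-t}}$, and integrating against the $\Exp(1)$ density of $\taux$ and substituting $x=e^{-t}$ gives
$$p_{\cPxi}(\sinT)=\intoo e^{-2e^{-t}}e^{-t}\dd t=\intoi e^{-2x}\dd x=\frac{1-e\qww}{2},$$
which is the asserted limit.

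I do not expect a genuine obstacle: the whole argument is a strict simplification of the proof of \refT{TMC2}. The only steps needing a little care are the verification of \eqref{lqe} (using the explicit distribution of the last birth time of the root) and the bookkeeping of which children in $\sinT$ must already be present at the stopping time $\taux$. The one place where the structure special to the binary search tree enters is the evaluation of the expectation over the ancestor process, which relies on $\set{\eta_i}$ being a Poisson process so that \eqref{pop} is available; this is exactly the feature that fails for $m>2$, as noted after \refT{TMC2}.
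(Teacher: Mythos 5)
Your proof is correct and is essentially the paper's own argument: the paper likewise applies Theorem~\ref{TQ} with $\cP_0$ the trivial property and $\cQ$ the clade property (which coincides with your ``outdegree exactly $1$'' on ancestors, since an ancestor always has its heir), and then repeats the computation of Theorem~\ref{TMC2} with the factor \eqref{cp0} replaced by $1$. You have merely written out explicitly the verification of \eqref{lq0} and \eqref{lqe} and the Poisson-process calculation via \eqref{pop} that the paper leaves implicit by reference to the proofs of Theorems~\ref{TMC} and~\ref{TMC2}.
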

\begin{proof}
  We apply \refT{TQ} as in the proof of \refT{TMC}, but with $\cP_0$ 
the  trivial property ``true''. This yields convergence almost surely,
to the limit $\P(o \text{ has $\cPxi$ in }\sinT)$.
This probability is computed as in the proof of \refT{TMC2}, replacing the
factor \eqref{cp0} by 1, which yields the result
\begin{equation}\label{tupx}
\intoo e^{-2 e^{-t}} e^{-t}\dd t
\\
=\intoi e^{-2 x}\dd x =\frac{1}2-\frac{1}2e^{-2}.
\end{equation}
\end{proof}

The property $\cPxi$, as formulated above, can be studied also in other trees.
We consider the random recursive tree as a different simple example.
\begin{theorem}\label{Tpx1rrt}
If $T_n$ is a \rrt{} with $n$ keys, then
  \begin{equation}
	\frac{\nuxi(T_n)}{|T_n|}\asto 
{1-e\qw}.
  \end{equation}
\end{theorem}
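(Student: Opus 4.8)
The plan is to proceed exactly as in the proof of \refT{Tpx1} (and \refT{TMC2}), now using the random sin-tree $\sinT$ for the random recursive tree as constructed in \refE{ERRT}. First I would apply \refT{TQ} with $\cP_0$ the trivial property ``true'' and $\cQ$ the property that a node has outdegree exactly $1$; then the property $\cP$ in \refT{TQ} is precisely $\cPxi$. The hypotheses are readily checked: the standing assumptions \refBP{} hold for the Yule branching process of \refE{ERRT}; condition \eqref{lq0} holds since here $\ga=1$ and $\hXi(1)=\intoo e^{-t}\Xi(\ddx t)$ with $\Xi$ a Poisson process of intensity $1$, so $\E\hXi(1)^2=\bigpar{\intoo e^{-t}\dd t}^2+\intoo e^{-2t}\dd t=\tfrac32<\infty$; and condition \eqref{lqe} holds because $\gL:=\sup\set{t:\ctt\in\cQ}=\sup\set{t:\Xit=1}=\xi_2\sim\gG(2,1)$, whence $\E e^{\gd\gL}=(1-\gd)^{-2}<\infty$ for every $\gd\in(0,1)$. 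Thus \refT{TQ} yields $\nuxi(T_n)/|T_n|\asto p_{\cPxi}(\sinT)=\P\bigpar{o\text{ has }\cPxi\text{ in }\sinT}$.

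The rest is the computation of this probability, following the pattern of \refT{TMC2}. I would condition first on $\taux=t$ and on the birth times of the ancestors of $o$ in $\sinT$. By \refE{ERRT}, these ancestors $o\mm1,o\mm2,\dots$ are born, backwards in time, according to a Poisson process of intensity $1$ on $(-\infty,0)$; writing $-\eta_k$ for the birth time of $o\mm k$, the family $\set{\eta_k}_{k\ge1}$ is then a Poisson process of intensity $1$ on $(0,\infty)$. The heir of $o\mm k$ is $o\mm{k-1}$, which (having birth time $\le0<\taux$) is always a child of $o\mm k$ in $\sinT$; its remaining, ``ordinary'', children arrive according to an independent Poisson process of intensity $1$ started at its birth. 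Hence $o\mm k$ has outdegree exactly $1$ in $\sinT$ if and only if that ordinary-children process has no point in the interval of length $\eta_k+t$ between the birth of $o\mm k$ and time $t$, which given $\eta_k$ and $t$ has probability $e^{-(\eta_k+t)}$. Since $o$ has $\cPxi$ precisely when no ancestor $o\mm k$ has outdegree $1$, and the ordinary-children processes of distinct ancestors are independent,
\[
\P\bigpar{o\text{ has }\cPxi\mid\taux=t,\;\set{\eta_k}}
=\prod_{k=1}^\infty\bigpar{1-e^{-(\eta_k+t)}}.
\]

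Next I would take the expectation over $\set{\eta_k}$ using the Poisson formula \eqref{pop} with $f(x)=1-e^{-(x+t)}$ and $\gl$ Lebesgue measure on $(0,\infty)$, obtaining $\P(o\text{ has }\cPxi\mid\taux=t)=\exp\bigpar{-\intoo e^{-(x+t)}\dd x}=\exp(-e^{-t})$. Finally, integrating over $\taux\sim\Exp(1)$ and substituting $x=e^{-t}$,
\[
p_{\cPxi}(\sinT)=\intoo e^{-e^{-t}}e^{-t}\dd t=\intoi e^{-x}\dd x=1-e\qw,
\]
and the theorem follows from the first paragraph. Everything here is routine once the explicit description of $\sinT$ from \refE{ERRT} is in hand; there is no real obstacle, the only point requiring a little care being the verification of \eqref{lq0} and \eqref{lqe}, which is what allows \refT{TQ} rather than merely \refT{TBPX} to be invoked, since $\cPxi$ depends on all (unboundedly many) ancestors of the chosen node.
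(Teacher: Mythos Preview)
Your proof is correct and follows essentially the same route as the paper: apply \refT{TQ} with $\cP_0$ trivial and $\cQ$ the property of having outdegree $1$, then compute $p_{\cPxi}(\sinT)$ from the explicit description of the random recursive tree sin-tree in \refE{ERRT}, using \eqref{pop} for the Poisson process of ancestor birth times and integrating against $\taux\sim\Exp(1)$. The paper's proof is terser, simply referring back to the proof of \refT{Tpx1}; your version spells out the verification of \eqref{lq0} and \eqref{lqe} and the conditioning argument in full, but there is no difference in substance.
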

\begin{proof}
We argue as in the proof of \refT{Tpx1}, now using the description of the
sin-tree $\sinT$ in \refE{ERRT}.
In this sin-tree, 
the ancestors form a Poisson process with intensity 1 on $(-\infty,0)$, 
and, as in the \bst{} case, for an ancestor, 
the time until birth of the first non-heir
is $\Exp(1)$. Hence the limit 
$\P(o \text{ has $\cPxi$ in }\sinT)$ 
can be calculated by the method above, now yielding, \cf{} \eqref{tupx},
\begin{equation}
\intoo e^{- e^{-t}} e^{-t}\dd t
\\
=\intoi e^{- x}\dd x =1-e^{-1}.
\end{equation}
\end{proof}

\section{Restricted sampling and sampling by a random key}\label{Ssample}

We have so far considered the properties of a random node in the tree $T_n$.
As pointed out by   \citet{JagersNerman1984}, 
one can similarly obtain results for a random node sampled with some
restriction.
(For example, a random leaf, a random non-leaf, a random node with no
sibling, \dots .)

In general, let $\cQ$ be a property of the type in \refT{TBP} or \ref{TBPX}
and sample $v$ uniformly among all nodes in $T_n$ that satisfy $\cQ$.
If $\cP$ is another such property, then, by \refT{TBPX},
\begin{equation}\label{sample}
  \P(v \text{ has } \cP\mid T_n)
= \frac{n_{\cP\land\cQ}(T_n)}{n_{\cQ}(T_n)}
\asto
\frac{p_{\cP\land\cQ}(\sinT)}{p_{\cQ}(\sinT)}.
\end{equation}
If we let $\sinTQ$ denote $\sinT$ conditioned  on $o\in\cQ$, then we can 
write \eqref{sample} as 
\begin{equation}\label{sampleQ}
  \P(v \text{ has } \cP\mid T_n)
= \frac{n_{\cP\land\cQ}(T_n)}{n_{\cQ}(T_n)}
\asto
p_{\cP}(\sinTQ).
\end{equation}

If  \refT{TBP} applies, we can replace $\sinT$ by $\friT$ in
\eqref{sample}--\eqref{sampleQ}. 

\begin{example}
  We have already seen an example of this in the first suggested proof of
  \refT{TMSTkeys}, where we note that sampling a node uniformly in an \mst{}
  is the same as sampling an internal node uniformly in the corresponding \emst.
Thus $T_n$ is the \emst{} and $\cQ$ is ``internal''. 
Furthermore, $\cP=\cP_k$ is the property of having exactly $k$ keys.
(In this example, $\cQ$ is the complement of $\cP_0$, so $\cP_0\land\cQ$ is
the empty property while $\cP_k\land\cQ=\cP_k$ for $k\ge1$.)
\end{example}

Let us consider the example of sampling a random leaf $v$ in more detail.
Of course, the fringe tree $T^v$ rooted at $v$ is trivial, so the interest
is in the extended fringe and in properties of the type in \refT{TBPX}.
For example, \citet{Drmota:shape} study 
the number of internal nodes (and the number of leaves) in the
subtree rooted at the father of a randomly chosen leaf, for a variety of
different types of random trees.

We have the following general result.

\begin{theorem}\label{Tleaf}
Suppose that \refBP{} hold, and that $\cP$ is a property as in  \refT{TBPX}.
If $v$ is a uniformly random leaf in $T_n$, then
\begin{equation}\label{tleaf}
\P(v\text{ has }\cP\mid T_n)
\asto p_{\cP} (\sinTo),
\end{equation}
where $\sinTo$ is $\sinT$ conditioned on $o$ being a leaf.

The random sin-tree $\sinTo$ may be constructed directly from the 
tree process $(\sinT_t)$ in \refS{S:BP} by removing all descendants of $o$
and stopping at a random time $\tauo$ with the density function
\begin{equation}\label{tleaf2}
  \frac{e^{-\ga t}\P(\xi_1>t)}{\intoo e^{-\ga t}\P(\xi_1>t)\dd t},
\qquad t>0,
\end{equation}
where $\xi_1$ is the time of birth of the first child of an individual in
the branching process.
In particular, if $\xi_1\sim\Exp(a)$ for some $a>0$, then
$\tauo\sim\Exp(a+\ga)$.
\end{theorem}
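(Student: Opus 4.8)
The plan is to derive \eqref{tleaf} as an instance of the conditioning formula \eqref{sampleQ} with $\cQ$ the property ``$v$ is a leaf'', i.e.\ ``$\dout(v)=0$''. First I would observe that $\cQ$ is a property of the type covered by \refT{TBPX} (indeed by \refT{TBP}, since it depends only on $v$ and its descendants — in fact only on $v$), and that $p_\cQ(\sinT)=\P(\dout_{\sinT}(o)=0)>0$: by \refR{RD2} and \refC{Cdeg}, conditioned on $o\in\sinT$ the outdegree of $o$ has the same law as $D=\Xix{\taux}$, and $\P(D=0)=\P(\taux<\xi_1)=\E e^{-\ga\xi_1}>0$ by \eqref{D}, using $N\ge1$ so that $\xi_1<\infty$ a.s. Hence sampling a uniformly random leaf $v$ and applying \eqref{sampleQ} with this $\cQ$ gives $\P(v\text{ has }\cP\mid T_n)\asto p_\cP(\sinTQ)=p_\cP(\sinTo)$, which is \eqref{tleaf}. (One also needs that $n_\cQ(T_n)\to\infty$ a.s., which follows from \refT{TBP} applied to $\cQ$ together with $|T_n|\to\infty$; this makes the ratio in \eqref{sampleQ} well-defined for large $n$.)

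Next I would identify the law of $\sinTo$ explicitly. Recall from \refS{S:BP} that $\sinT=\sinT_\taux$ with $\taux\sim\Exp(\ga)$ independent of the branching structure, and that the subtree of $\sinT$ rooted at $o$ is $\friT=\cT_\taux$ with $o$'s offspring process being an ordinary copy $\Xi$. Conditioning on $o$ being a leaf means conditioning on $\taux<\xi_1$, where $\xi_1$ is the first birth time in that copy of $\Xi$; this conditioning affects only $\taux$ and $\xi_1$, leaving the modified ancestor line and all the independent subtrees hanging off it untouched. Since removing the descendants of $o$ just amounts to discarding the (now empty) part of the tree below $o$, the construction becomes: run $(\sinT_t)$ with $o$'s descendants deleted, and stop at $\tauo$ distributed as $\taux$ conditioned on $\taux<\xi_1$. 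The density of this conditioned variable is, by independence of $\taux\sim\Exp(\ga)$ and $\xi_1$,
\begin{equation}
  \frac{\ga e^{-\ga t}\P(\xi_1>t)}{\P(\taux<\xi_1)}
  =\frac{\ga e^{-\ga t}\P(\xi_1>t)}{\intoo \ga e^{-\ga t}\P(\xi_1>t)\dd t}
  =\frac{e^{-\ga t}\P(\xi_1>t)}{\intoo e^{-\ga t}\P(\xi_1>t)\dd t},
\end{equation}
which is \eqref{tleaf2}. The special case $\xi_1\sim\Exp(a)$ gives density proportional to $e^{-\ga t}e^{-at}=e^{-(a+\ga)t}$, i.e.\ $\tauo\sim\Exp(a+\ga)$.

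The main thing to be careful about is the interchange of conditioning with the structure of the sin-tree: one must check that conditioning on $o\in\cQ$ really does leave the law of the ancestor line and of the subtrees rooted at the non-heir children of the ancestors unchanged, so that the only effect is the reweighting of the stopping time. This is immediate from the product construction in \refS{S:BP} — $\taux$ is independent of everything, $\xi_1$ is a functional of $o$'s own offspring process alone, and the event $\{\dout(o)=0\}=\{\taux<\xi_1\}$ involves only these two ingredients — but it is worth spelling out, since it is the only place where the ``sin-tree'' bookkeeping matters. The rest of the argument is the routine conditional-density computation above together with a direct appeal to \refT{TBPX} via \eqref{sampleQ}. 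I do not expect any serious obstacle; the one routine point is verifying $p_\cQ(\sinT)>0$, which as noted reduces to $\E e^{-\ga\xi_1}>0$.
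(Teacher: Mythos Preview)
Your proposal is correct and follows essentially the same route as the paper: apply \eqref{sampleQ} with $\cQ$ the property ``$v$ is a leaf'' to get \eqref{tleaf}, then use the independence of $o$'s descendants from the rest of the sin-tree to reduce the conditioning on $o$ being a leaf to a reweighting of the stopping time $\taux$ by $\ett{\taux<\xi_1}$, yielding the density \eqref{tleaf2}. Your additional checks that $p_\cQ(\sinT)>0$ and $n_\cQ(T_n)\to\infty$ are welcome hygiene that the paper leaves implicit.
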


\begin{proof}
Let $\cQ$ be the property of a node that it is a leaf. Then \eqref{tleaf} is
the same as \eqref{sampleQ}, with $\sinTo=\sinTQ$, \ie, $\sinT$ conditioned
on $o$ being a leaf.
 
To see that $\sinTo$ can be constructed as stated, note that in the
construction of the tree process
 $(\sinT_t)$ in \refS{S:BP}, the descendants of $o$
and the 
rest of the tree are  independent. 
Since $\sinT$ is obtained by stopping $\sinT_t$ at $\taux$,
it follows that if we ignore descendants of $o$,
$\sinTo$ is obtained by stopping 
$\sinT_t$ at an independent 
random time $\tauo$ having the distribution of $\taux$
conditioned on $o$ being a leaf in $\sinT_\taux$.
Moreover, if the first child of $o$ is
born at $\xi_1$, then $o$ is a leaf in $\sinT=\sinT_\taux$ if and only if
$\xi_1>\taux$. Since $\taux$ has the density function $\ga e^{-\ga t}$, 
it follows that conditioned on the event $\xi_1>\taux$, 
$\taux$ has the density function \eqref{tleaf2}.
\end{proof}

\begin{example}\label{EBSTleaf}
  Let $T_n$ be an extended binary search tree with $n$ internal nodes, let
  $v$ be a (uniformly) randomly chosen external node in $T_n$ and let 
$X_n$ be the number of internal nodes in $T_n^{v,-1}$, \ie, the number of
  internal nodes that are descendants of the parent of $v$. Equivalently,
if $v'$ denotes the sister of $v$, 
  $X_n$ is 1 plus the number of internal nodes in the subtree $T_n^{v}$.
It follows from \refT{Tleaf} that if $X$ similarly is 
the number of  internal nodes of $\sinTo$ that are descendants of the parent
$o\mm1$ of $o$, then
$\P(X_n=k\mid T_n)\asto \P(X=k)$ for every $k\ge1$,
\ie, $X_n\dto X$, also conditioned on $T_n$ in the sense 
$\cL(X_n\mid T_n)\asto\cL(X)$, \cf{} \refR{Rk}.

For the extended binary search tree, we use the branching process in
\refSS{SSemst} (with $m=2$), where each individual gets 2 children at the
same time,  at age $\xi_1\sim\Exp(1)$. Hence, the last statement of
\refT{Tleaf} applies with $a=1$. Furthermore, $\ga=1$, and thus
$\tauo\sim\Exp(2)$. 

Since $X$ equals 1 plus the number of internal nodes in the subtree
$\sinTo^{o'}$ rooted at the sister $o'$ of $o$, we do not have to consider the
ancestor $o\mm1$; we just note that $o'$ and $o$ are twins, and thus $o'$
too is born at time 0. The number of internal nodes in $\sinT_t^{o'}$ is a
pure birth process with birth rates $\gl_k=k+1$, started at 0.
(In other words, it  is $\cY_t-1$, where $\cY_t$ is a Yule process, see
\refE{EYule1}.) Stopping this at $\tauo\sim\Exp(2)$ we find by \refT{TAlin}
(with $\chi=\rho=1$, $\ga=2$) $X-1\sim\HG(1,1;4)$.
In other words, using \eqref{hg}--\eqref{hgC},
\begin{equation}
  \P(X=k)=\frac{4}{(k+1)(k+2)(k+3)},
\qquad k\ge1.
\end{equation}
This limit distribution was found by \citet{Drmota:shape} (in a slightly
different setting, keeping track of the position of the nodes).
\end{example}

\begin{example}\label{ERRTleaf}
  Let $T_n$ be a random recursive tree with $n$ nodes, as in \refE{ERRT},
let $v$ be a uniformly random leaf in $T_n$ and let 
$\htx_n:=T_n^{v,-1}\setminus\set{v}$, the random tree consisting
of the parent $v\mm1$ of $v$ and all its decendants except $v$.
Further, let $X_n:=|\htx_n|$, the number of 
nodes that are descendants of the parent  of $v$, including
the parent but excluding $v$. 

As in \refE{EBSTleaf}, it follows from \refT{Tleaf} that if $X$ similarly is 
the number of nodes in $\htx:=\sinTo^{o,-1}\setminus\set{o}$, then
$\P(X_n=k\mid T_n)\asto \P(X=k)$ for every $k\ge1$,
and thus $\cL(X_n\mid T_n)\asto\cL(X)$, \cf{} \refR{Rk}.

By the description at the end of \refE{ERRT}, the tree process
$\sinTox{t}^{o,-1}$, minus $o$ and its descendants, is a Yule tree process,
starting at time $-\heir$ when the parent $o\mm1$ is born. Furthermore,
$\heir\sim\Exp(1)$. 
We stop this tree process at time $\tauo$, where
by the last statement in \refT{Tleaf} (with $a=\ga=1$),
$\tauo\sim\Exp(2)$.
Consequently, $X\eqd \cY_{\heir+\tauo}$, a Yule process (started at time 0 as
usual) stopped at $\heir+\tauo$, with $\heir\sim\Exp(1)$ and
$\tauo\sim\Exp(2)$ independent.

A simple calculation shows that $\heir+\tauo$ has the density function
$2e^{-t}\bigpar{1-e^{-t}}$, $t>0$, while $\cY_t\sim\Gei(e^{-t})$,
see \refE{EYuleA}.
Hence, for any $k\ge1$, with  $x=e^{-t}$,
\begin{equation}\label{blakulla}
  \begin{split}
	  \P(X=k)&
=2\intoo\P(\cY_t=k)e^{-t}\bigpar{1-e^{-t}}\dd t
\\&
=2\intoo e^{-t}\xpar{1-e^{-t}}^{k-1}e^{-t}\bigpar{1-e^{-t}}\dd t
=2\intoi x(1-x)^k\dd x
\\&
=\frac{2}{(k+1)(k+2)}.
  \end{split}
\raisetag{\baselineskip}
\end{equation}

We have here considered only the number of nodes in
$\htx$. However, it is furthermore clear from
the symmetry of the Yule tree process that given $X=|\htx|$, the random tree
$\htx$ is distributed as a random recursive tree of order $X$, \ie, $\htx$
is a random recursive tree with random order $X$ given by \eqref{blakulla}. 
This describes, at least in principle, any properties of $\htx$.
For example, we may as in \cite{Drmota:shape} count leaves and non-leaves
separately in $\htx$. 
It is easy to see by induction that if $k\ge2$, then  
\begin{equation}
  \P(T_k \text{ has $i$ leaves})
=   \P(T_k \text{ has $i$ non-leaves})
= \frac{\euler{k-1}{i-1}}{(k-1)!},
\end{equation}
where $\euler{k-1}{i-1}$ denotes the Eulerian number 
(see \eg{} \cite{CM} or \cite{NIST});
\ie, the number of leaves in $T_k$ is distributed as 1 + the number of ascents
in a random permutation of length $k-1$. (In fact, both random vectors 
$(\text{\#leaves},\text{\#non-leaves})$ and 
$(1+\#\text{ascents},1+\#\text{descents})$
evolve when $k$ is increased as generalized \Polya{} urns with balls of two
colours where we draw a ball and return it together with a ball of the
opposite colour.)
Consequently we find, for $k\ge2$ and $1\le i\le k$, 
\begin{equation}\label{skartorsdag}
  \begin{split}
\hskip4em&\hskip-4em  \P\bigpar{\htx_n\text{ has $i$ non-leaves and $k-i$ leaves}}
\\&
\asto  
\P\bigpar{\htx\text{ has $i$ non-leaves and $k-i$ leaves}}
\\&
=\frac{2}{(k+1)(k+2)}\frac{\euler{k-1}{i-1}}{(k-1)!}	
=\frac{2k\euler{k-1}{i-1}}{(k+2)!}.	
  \end{split}
\end{equation}
Let $p_i:=\P\bigpar{\htx\text{ has $i$ non-leaves}}$.
Summing \eqref{skartorsdag} over $k$ we find for example, after short
calculations (partly assisted by \maple), 
$p_1=6-2e\approx0.563$,
$p_2=11-4e\approx0.127$,
$p_3=\frac{857}{54}-5e-\frac12e^2+\frac{2}{27}e^3\approx0.072$.
\end{example}
Using \cite[(26.14.6)]{NIST}, it is easy to see that $p_i$ is a polynomial
in $e$ with rational coefficients, of degree at most $i$, but we do not know
any simple general formula for $p_i$.

\subsection{Sampling a random key}

Similarly, in an \mst, one might sample a \emph{key} uniformly at random
and consider the properties of the node containing that key.

\begin{theorem}\label{TK}
  Let $T_n$ be a random \mst, and let $\cP$ be a property as in \refT{TBP}.
Sample a random key uniformly, and let $v$ be the node containing that
key. Then, as \ntoo, letting $R(T)$ denote the number of keys in the root of
$T$,
\begin{equation}\label{mp}
  \P\bigpar{v\text{ has }\cP}\asto
\frac{\E\bigpar{R(\friT)\ett{\friT\in\cP}}}{\E R(\friT)}
=\frac{\E\bigpar{R(\friT)\ett{\friT\in\cP}}}{2(H_m-1)}.
\end{equation}
\end{theorem}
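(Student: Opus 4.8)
The plan is to recognise uniform key-sampling as a size-biased form of uniform node-sampling and then invoke \refT{TBP} and \refT{TMSTnodes}. First I would observe that if a key is chosen uniformly among the $n$ keys of $T_n$, then the node $v$ holding that key coincides with a given node $w\in T_n$ with conditional probability $R(T_n^w)/n$ given $T_n$, where $R(T_n^w)\in\set{1,\dots,m-1}$ is the number of keys stored at $w$; here we use that the $n$ keys are partitioned among the nodes, and that for an \mst{} each node carries a label equal to its number of keys (see \refR{Rlabel}). Consequently, for a property $\cP$ of the kind in \refT{TBP},
\begin{equation*}
  \P\bigpar{v\text{ has }\cP\mid T_n}
  =\frac1n\sum_{w\in T_n}R(T_n^w)\ett{T_n^w\in\cP}
  =\frac{|T_n|}{n}\cdot\frac1{|T_n|}\sum_{w\in T_n}R(T_n^w)\ett{T_n^w\in\cP},
\end{equation*}
so it remains to analyse the last average and the prefactor $|T_n|/n$.

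Next I would treat the average by splitting on the number of keys at the root of the fringe tree. For each fixed $j\in\set{1,\dots,m-1}$, ``$T^w\in\cP$ and the root of $T^w$ has exactly $j$ keys'' is again a property of the labelled fringe tree $T_n^w$ to which \eqref{tbpp} applies, giving
\begin{equation*}
  \frac1{|T_n|}\sum_{w\in T_n}\ett{T_n^w\in\cP,\,R(T_n^w)=j}
  \asto\P\bigpar{\friT\in\cP,\,R(\friT)=j}
\end{equation*}
almost surely. Multiplying by $j$ and summing over $j=1,\dots,m-1$ yields
\begin{equation*}
  \frac1{|T_n|}\sum_{w\in T_n}R(T_n^w)\ett{T_n^w\in\cP}
  \asto\sum_{j=1}^{m-1}j\,\P\bigpar{\friT\in\cP,\,R(\friT)=j}
  =\E\bigpar{R(\friT)\ett{\friT\in\cP}}.
\end{equation*}
Since $R$ is bounded by $m-1$, this finite decomposition is the only place where one goes beyond the literal $0$--$1$ valued statement of \refT{TBP}, and it is entirely routine.

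Finally I would combine these with \refT{TMSTnodes}, which gives $|T_n|/n\asto1/\mpsi=1/\bigpar{2(H_m-1)}$, to conclude that $\P(v\text{ has }\cP\mid T_n)\asto\E\bigpar{R(\friT)\ett{\friT\in\cP}}/\bigpar{2(H_m-1)}$ almost surely; taking expectations (the conditional probabilities lie in $[0,1]$, so dominated convergence applies) gives the unconditional statement \eqref{mp}. The denominator identity $\E R(\friT)=2(H_m-1)$ falls out of the same argument with $\cP$ the always-true property: then $\sum_{w\in T_n}R(T_n^w)=n$ identically, so the average above equals $n/|T_n|$, which converges to $2(H_m-1)$ by \refT{TMSTnodes} and at the same time to $\E R(\friT)$ by the preceding step. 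I do not expect a genuine obstacle; the whole argument is a size-biasing of \refT{TBP} together with the normalisation of \refT{TMSTnodes}.
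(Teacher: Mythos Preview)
Your proposal is correct and follows essentially the same approach as the paper: the paper likewise decomposes by the number of keys at the node (defining $\cQ_k$ as ``has $k$ keys''), applies \refT{TBP} to each $\cP\land\cQ_k$, sums with weight $k$ to obtain $\E\bigpar{R(\friT)\ett{\friT\in\cP}}$, and derives $\E R(\friT)=2(H_m-1)$ from $\sum_k k\,n_{\cQ_k}(T_n)=n$ together with \refT{TMSTnodes}. The only cosmetic difference is that the paper organises the expression as a ratio of two sums (numerator over denominator, each divided by $|T_n|$) rather than factoring out $|T_n|/n$ as you do; your explicit conditioning on $T_n$ is if anything a clearer rendering of the same argument.
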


\begin{proof}
  Let $Q_k$ be the property of a node $v$ that it contains $k$ keys.
Then, by \refT{TBP},
\begin{equation}
  \begin{split}
  \P\bigpar{v\text{ has }\cP}
= \frac{\sum_k k n_{\cP\land\cQ_k}(T_n)}{\sum_k k n_{\cQ_k}(T_n)}
&\asto
 \frac{\sum_k k \P(\friT\in\cP\land\cQ_k)}{\sum_k k \P(\friT\in\cQ_k)}
\\&
=
 \frac{\E\bigpar{\sum_k k \ett{\friT\in\cP\land\cQ_k}}}
{\E\bigpar{\sum_k k \ett{\friT\in\cQ_k}}},
  \end{split}
\end{equation}
which equals the second term in \eqref{mp} because $\friT\in\cQ_k\iff
R(\friT)=k$. 
Furthermore, for the same reason,
\begin{equation}\label{gw}
  \begin{split}
 \frac{\sum_k k n_{\cQ_k}(T_n)}{|T_n|}
\asto
\sum_k k \P(\friT\in\cQ_k)
=
\E R(\friT).
  \end{split}
\end{equation}
Since $\sum_k k n_{\cQ_k}(T_n)=n$, the total number of keys,
\eqref{gw} and \refT{TMSTnodes} imply
\begin{equation}\label{ER}
\E R(\friT)=2(H_m-1)
\end{equation}
which completes the proof.
(Alternatively, \eqref{ER} follows from \refT{TMSTkeys}, noting that the
limits in \eqref{nk2} are the probabilities $\P(R=k)$.)
\end{proof}

\begin{remark}
  \refT{TK} extends to properties as in \refT{TBPX} 
(or \refT{TQ})
with only notational
  changes: replace  
$R(\friT)$ by the number of keys in the distinguished node $o$
and
$\ett{\friT\in\cP}$ by $\ett{o\text{ has }\cP}$.
\end{remark}

\begin{example}
Let $K'$ be the number of keys in the node containing a random key in an
  \mst{} $T_n$.
Theorems \ref{TK} and \ref{TMSTkeys} imply that
\begin{equation}\label{nk2k}
  \begin{split}
\P(K'=k)
\asto
\begin{cases}
\frac{1}{H_m-1}\frac{k}{(k+1)(k+2)}, & 1\le k\le m-2,	
\\
\frac{m-1}{m(H_m-1)}, & k=m-1.
\end{cases}
  \end{split}
\end{equation}
For $m=3,4,5$, this yields the limit distributions
$\bigpar{\frac{1}5,\frac{4}5}$,
$\bigpar{\frac{2}{13},\frac{2}{13},\frac{9}{13}}$,
and 
$\bigpar{\frac{10}{77},\frac{10}{77},\frac{9}{77},\frac{48}{77}}$.
\end{example}

\begin{example}
Let $D'$ be the number of children (the outdegree) of the node containing a
random key in an   \mst{} $T_n$.
It follows from Theorems \ref{TK} and \ref{TMSTdegree},
noting that a node with outdegree different from 0 always contains $m-1$ keys,
\begin{equation}\label{mstD'}
  \begin{split}
\P(D'=k)
\asto
\begin{cases}
1-\frac{(m-1)}{(m+1)(H_m-1)}, & k=0,
\\
\frac{(m-1)}{m(m+1)(H_m-1)}, & 1\le k\le m.
\end{cases}
  \end{split}
\end{equation}
For $m=3,4,5$, this yields the limit distributions
$\bigpar{\frac{2}5,\frac{1}5,\frac{1}5,\frac{1}5}$,
$\bigpar{\frac{29}{65},\frac{9}{65},\frac{9}{65},\frac{9}{65},\frac{9}{65}}$
and 
$\bigpar{\frac{37}{77},\frac{8}{77},\frac{8}{77},\frac{8}{77},\frac{8}{77}, 
\frac{8}{77}}$.
\end{example}

\section{Height, profile and typical depth}\label{Sheight}
We consider in this paper fringe properties of random trees. However, 
the connection with \CMJbp{es} has also been used very fruitfully to study
properties related to the distance to the root, in particular the height of
the tree.
This was pioneered by Devroye \cite{Devroye1986}
using results by Kingman \cite{Kingman} 
and Biggins \cite{Biggins76,Biggins77}
for branching random walks with discrete time (based on Galton--Watson
processes), see also
\citet{Devroye1987},
\citet{Mahmoud:Evolution},
the survey 
\citet{Devroye1998}, and 
\citet{BroutinDevroye2006}.
(Partial results for the binary search tree had been proved earlier by
\citet{Pittel1984}, using the same continuous-time branching process
as \cite{Devroye1986} in a somewhat different way.)
The method was further developed by 
Biggins \cite{Biggins95,Biggins97} using the continuous-time \CMJbp{es} used
in the present paper.
We give in this section a description
of the method and some applications and examples;
see the papers just mentioned for further
details and results. (In particular, note the second order results in
\cite{Devroye1987, Devroye1998}.)

Recall that the \emph{depth} $\dep(v)$ of a node $v$ is its distance from
the root. The \emph{height} $H(T)$ of a tree $T$ is defined as 
$\max_{v\in  T} \dep(v)$, the maximum depth of a node. If we consider
$m$-ary trees, we define the \emph{saturation level} $S(T)$ 
(also called \emph{fill-up level}) to be the last
generation that is full, \ie, the largest $k$ such that there are $m^k$
nodes of depth $k$; this equals the minimum depth of a node with outdegree
$<m$. 

The key idea that makes it possible to apply results
on branching random walks 
is to plot the individuals in a branching process in the plane,
using two coordinates that we call \emph{time} and \emph{position}; 
time is the usual time
of birth in the branching process and position is an additional variable.
We assume that for each individual is defined, besides the sequence
$(\xi_i)_{i=1}^N$ of birth times of the children (relative to the birth of
the parent),
also
a sequence $(\eta_i)_{i=1}^N$ (of the same length $N$) of random displacements,
with $-\infty<\eta_i<\infty$;
if the parent is born at time and position $(\gs,y)$, then child $i$ is born
at time and position $(\gs+\xi_i,y+\eta_i)$.
(The general results in \cite{Biggins95}, \cite{Biggins97} allow also a
further random component, describing a random motion of each individual
during its life. For our purposes, we put that motion equal to 0 and
let each individual be static.) 

Results for branching random walks have been applied to the height 
(and other properties) of random trees in two different ways.
In the original application of Devroye \cite{Devroye1986},
see also \cite{Devroye1987,Devroye1998},
the ``position'' is what we have called time in the
\CMJbp, while 
``time'' is the number of the generation, \ie, the depth in the family tree
$\ctt$. 
This means that ``time'' is discrete
and that we consider a
Galton--Watson process where each individual has a position that is its time
of birth in the \CMJ{} process studied elsewhere in the present paper.
(Furthermore, in this application, the Galton--Watson process is
deterministic; in the original application to binary search trees, we
consider an infinite binary tree.)
Note that $H(\ctt)\ge n$ if and only if the minimum position of an
individual in generation $n$ is $\le t$, which gives the required connection
with the theorems on branching random walks.

The alternative approach, described by \citet{Biggins97}, reverses the two
coordinates and lets ``time'' be time in the \CMJbp{} while  ``position'' is
the generation number, \ie, the depth in the family tree. 
The offsets $\eta_i$ are thus non-random with $\eta_i=1$. 
(We sometimes reverse signs and take $\eta=-1$.)
We use this approach in the present section, 
referring to \cite{Biggins97} for further
details on branching random walks and to \cite{Biggins95} for proofs of the
theorems used here.

\subsection{Height}\label{SSheight}

Let $B_t$  be the maximum of the positions $y_x$ of all individuals $x$
that are born before or on time $t$, \ie, with birth time $\gs_x\le t$.
In our case with $\eta_i\equiv1$, $y_x$ is the generation number
of $x$, and thus
$B_t=H(\ctt)$, the height of $\ctt$.

Define the two-dimensional Laplace transform, for
$\zeta,\gth\in(-\infty,\infty)$, 
\begin{equation}\label{m}
  m(\zeta,\gth):=\E\sum_{i=1}^N e^{-\zeta\eta_i-\gth\xi_i}\in (0,\infty].
\end{equation}
Note that by \eqref{hmub}, $m(0,\gth)=\hmu(\gth)$.
In our case with $\eta_i\equiv1$, we thus simply have
\begin{equation}
  \label{mus}
m(\zeta,\gth)=e^{-\zeta}\hmu(\gth).
\end{equation}
Furthermore, let 
\begin{equation}\label{gamma0}
  \begin{split}
  \gamma&=\inf\Bigset{a:\inf_{\zeta<0}\log m(\zeta,-a\zeta)<0}.
  \end{split}
\end{equation}

One of the main results of
\citet{Biggins95,Biggins97} is the following
(valid for general $\eta_i$ under some conditions that are satisfied in our
case, \cf{} \refR{RBIGG} below): 
\begin{theorem}[Biggins \cite{Biggins95,Biggins97}]\label{TBiggins1}
As \ttoo,
\begin{equation}
  B_t/t\asto\gamma. 
\end{equation}
\qed
\end{theorem}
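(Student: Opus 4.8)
The plan is to identify $(B_t)_{t\ge0}$ with the record process of a branching random walk and to read off the limit from the leftmost--particle theory of Biggins \cite{Biggins76,Biggins95,Biggins97}. Since $\eta_i\equiv1$, the position of an individual $x$ is exactly its generation number $\dep(x)$, so $B_t=H(\ctt)\ge n$ if and only if some individual of generation $n$ has been born by time $t$, i.e.\ if and only if $L_n\le t$, where $L_n:=\min\set{\gs_x:\dep(x)=n}$ is the birth time of the earliest individual in generation $n$ (finite \as{} for every $n$ since $N\ge1$ by \ref{BPsuper}). Thus $B_t=\max\set{n:L_n\le t}$, and, as $n\mapsto L_n$ and $t\mapsto B_t$ are nondecreasing, $B_t/t\asto\gamma$ follows once we show $L_n/n\asto1/\gamma$. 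Now $L_n$ is precisely the leftmost particle at generation $n$ of the discrete--time branching random walk in which a particle at position $y$ produces offspring at positions $y+\xi_1,\dots,y+\xi_N$. A short computation with \eqref{gamma0} and \eqref{mus} — put $\gth=-a\zeta$ in $m(\zeta,-a\zeta)=e^{-\zeta}\hmu(\gth)$, so that $\log m(\zeta,-a\zeta)=\gth/a+\log\hmu(\gth)$ — shows that $\gamma=\inf_{\gth>\ga}\gth/\bigpar{-\log\hmu(\gth)}=1/c$ with $c:=\sup_{\gth>\ga}\bigpar{-\log\hmu(\gth)}/\gth$; one also checks $0<\gamma<\infty$, using that $\hmu$ is finite on $[\ga,\infty)$ (by \ref{BPmub}) and strictly decreasing there with $\hmu(\ga)=1$ (strictness uses $\mu\set0<1$, i.e.\ \ref{BPfirst}), so $\hmu(\gth)<1$ for $\gth>\ga$, while $\hmu(\gth)\ge\E e^{-\gth\xi_1}$ cannot decay faster than exponentially, so $(-\log\hmu(\gth))/\gth$ stays bounded as $\gth\to\infty$.

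First I would prove the easy half, the first--moment bound on the population, which gives $\liminf_n L_n/n\ge1/\gamma$ almost surely. The tool is the many--to--one identity $\E\sum_{\dep(x)=n}e^{-\gth\gs_x}=\hmu(\gth)^n$, valid for $\gth\ge\ga$, proved by a one--step induction on $n$ using the i.i.d.\ structure of the offspring point processes $\Xi_x$ (each particle $y$ contributes $\E\bigsqpar{\sum_ie^{-\gth(\gs_y+\xi_i)}\mid\gs_y}=e^{-\gth\gs_y}\hmu(\gth)$). For $\gth>0$, $\ett{\gs_x\le an}\le e^{\gth an}e^{-\gth\gs_x}$, hence
\begin{equation*}
  \E\,\#\set{x:\dep(x)=n,\ \gs_x\le an}\le\bigpar{e^{\gth a}\hmu(\gth)}^n .
\end{equation*}
If $a<1/\gamma=c$ one can pick $\gth>\ga$ with $-\log\hmu(\gth)>a\gth$, i.e.\ $e^{\gth a}\hmu(\gth)<1$, so the right--hand side is summable in $n$; Markov's inequality and Borel--Cantelli give $L_n>an$ for all large $n$ \as, and letting $a\upto c$ yields $\liminf_n L_n/n\ge c=1/\gamma$.

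The hard half is the matching lower bound on the population: for every $a>1/\gamma$ there is \as{} for all large $n$ an individual of generation $n$ born by time $an$, hence $\limsup_n L_n/n\le1/\gamma$. The first--moment estimate alone is useless here (for $a>1/\gamma$ the Markov bound above exceeds $1$ for every $\gth>0$), so one must genuinely show that the relevant count is large rather than merely that it can be. The standard remedy is the truncated second--moment argument of \cite{Biggins76}: count only those generation--$n$ individuals whose \emph{entire} ancestral line stays below the line of slope $a$ (after a suitable vertical shift), estimate the first and the restricted second moment of this count by the many--to--one and many--to--two formulas, deduce that the count is positive with probability bounded away from $0$ along a subsequence of $n$, and finally upgrade this to an almost--sure statement by restarting the argument inside each of the exponentially many subtrees hanging from a fixed early generation and invoking the branching property. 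An alternative route is Hammersley--Kingman subadditivity for $L_n$, which yields \as{} convergence of $L_n/n$ directly, the limit then being pinned down by the first--moment step.

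This second--moment/subadditivity step is where the real work lies and is the main obstacle I anticipate; it is exactly the content of the cited theorems of Biggins. In a survey it is legitimate simply to invoke \cite{Biggins95,Biggins97} at this point, after checking that their standing hypotheses reduce, for the deterministic displacement $\eta\equiv1$, to the assumptions \refBP{} already in force, together with the finiteness of $m(\zeta,\gth)$ near the extremising $(\zeta,-a\zeta)$, which holds by \ref{BPmub}; see \refR{RBIGG}.
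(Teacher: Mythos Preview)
The paper does not prove this theorem: it is stated as a quotation from Biggins \cite{Biggins95,Biggins97} and marked with \qed{} immediately after the statement. Your proposal therefore goes considerably further than the paper, which treats the result as a black box.

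Your sketch is correct. The inversion $B_t=\max\{n:L_n\le t\}$ reduces the continuous-time statement $B_t/t\to\gamma$ to the discrete-generation statement $L_n/n\to1/\gamma$; this is precisely the Devroye-style formulation (generation number as ``time'', birth time as ``position'') that the paper describes, in the paragraphs preceding the theorem, as the alternative to the Biggins set-up it formally adopts. Your first-moment (many-to-one) argument for $\liminf_n L_n/n\ge1/\gamma$ is standard and cleanly done, and you correctly identify the truncated second-moment argument of \cite{Biggins76} (or Kingman subadditivity \cite{Kingman}) as the substantive step for the matching upper bound --- which is indeed the content of the cited papers. The identification of $\gamma$ with $1/c$, $c=\sup_{\gth>\ga}(-\log\hmu(\gth))/\gth$, agrees with the paper's formula \eqref{gamma2}. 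The only comment is that by passing to $L_n$ you have effectively routed the proof through the discrete-time branching random walk results of \cite{Kingman,Biggins76,Biggins77} rather than through the continuous-time framework of \cite{Biggins95,Biggins97} that the paper nominally invokes; both routes are valid and, as the paper itself notes, equivalent in the present setting with $\eta_i\equiv1$.
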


In our case $H(\ctt)=B_t$, so this yields the asymptotic height of $\ctt$;
this translates to the height of $T_n=\cT_{\tau(n)}$ as follows.

\begin{theorem}\label{Theight}
Under the assumptions \refBP{} and \refBPlastpsi,  as \ntoo,
  \begin{equation}\label{theight}
\frac{H(T_n)}{\log n} \asto \bgam:=\frac{\gamma}{\ga}.
  \end{equation}
\end{theorem}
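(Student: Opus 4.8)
The plan is to transfer the almost-sure growth rate for $B_t=H(\ctt)$ from \refT{TBiggins1} to the stopped tree $T_n=\cT_{\tau(n)}$, exactly as was done for $|T_n|$ and $\tau(n)$ in \refT{Tex}. First I would record the explicit value of $\gamma$ for our situation. Using \eqref{mus}, $m(\zeta,-a\zeta)=e^{-\zeta}\hmu(-a\zeta)$, so $\log m(\zeta,-a\zeta)=-\zeta+\log\hmu(-a\zeta)$; since we only move $\zeta<0$, a change of variable $\gth=-a\zeta>0$ (for $a>0$) turns the condition $\inf_{\zeta<0}\log m(\zeta,-a\zeta)<0$ into $\inf_{\gth>0}\bigpar{\gth/a+\log\hmu(\gth)}<0$, i.e.\ $\inf_{\gth>0}\bigpar{\gth+a\log\hmu(\gth)}<0$. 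Thus $\gamma=\inf\set{a>0:\inf_{\gth>0}(\gth+a\log\hmu(\gth))<0}$; this is the constant entering \refT{Theight} once divided by $\ga$. (I would add a remark identifying $\gamma$ with the more familiar Biggins/Hammersley-type constant, but that is cosmetic.)

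The core argument is the sandwich. By \refT{Tex}\ref{tex1}, \as\ $\tau(n)<\infty$ for all $n$ and $\tau(n)\asto\infty$; by \refT{Tex}\ref{tex2} (using \refBPlastpsi) $\tau(n)/\log n\asto 1/\ga$. Since $H(T_n)=H(\cT_{\tau(n)})=B_{\tau(n)}$ and $t\mapsto B_t$ is nondecreasing, \refT{TBiggins1} gives, \as,
\begin{equation}
\frac{H(T_n)}{\log n}
=\frac{B_{\tau(n)}}{\tau(n)}\cdot\frac{\tau(n)}{\log n}
\asto \gamma\cdot\frac1\ga=\frac{\gamma}{\ga},
\end{equation}
provided we may legitimately compose the two limits. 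The only subtlety is that $B_{\tau(n)}/\tau(n)\to\gamma$ needs $\tau(n)\to\infty$ along a \emph{deterministic-free} route: but $\tau(n)\asto\infty$, and on the almost-sure event where both $B_t/t\to\gamma$ and $\tau(n)\to\infty$ hold, $B_{\tau(n)}/\tau(n)\to\gamma$ follows from the elementary fact that if $f(t)\to\gamma$ as \ttoo\ and $s_n\to\infty$ then $f(s_n)\to\gamma$. Intersecting this with the full-probability event from \refT{Tex}\ref{tex2} gives \eqref{theight}.

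The main obstacle is not really in the composition step — that is routine — but in confirming that the hypotheses of \refT{TBiggins1} are actually met in our setting, i.e.\ that the conditions imposed in \cite{Biggins95,Biggins97} on the point process $\set{(\xi_i,\eta_i)}$ follow from \refBP. Here $\eta_i\equiv1$ is bounded and deterministic, which trivializes most of the spatial conditions; the substantive requirements are the finiteness and regularity of $m(\zeta,\gth)$ near the relevant region, which by \eqref{mus} reduce to finiteness of $\hmu(\gth)$ for some $\gth<\ga$ (our \ref{BPmub}) together with \ref{BPmalthus}, \ref{BPsuper} and \ref{BPnonlattice}. I would therefore spend a paragraph (or a remark) verifying that \refBP{} implies the hypotheses under which \refT{TBiggins1} is quoted, in particular that $\gamma\in(0,\infty)$ and that the infimum in \eqref{gamma0} is attained, so that the constant $\bgam$ in \eqref{theight} is finite and positive; the supercriticality \ref{BPsuper} (giving $\hmu(\ga)=1$ with $\ga>0$, hence $\log\hmu(\gth)<0$ for $\gth>\ga$) is what guarantees the defining set in \eqref{gamma0} is nonempty and $\gamma<\infty$, while \ref{BPfirst} ($\mu\set0<1$, no explosion at the start) bounds $\gamma$ away from $+\infty$ in the right way and ensures $H(T_n)$ grows only logarithmically rather than faster.
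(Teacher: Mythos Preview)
Your proposal is correct and takes essentially the same approach as the paper: the paper's proof is the one-line composition $H(T_n)/\tau(n)\asto\gamma$ (from \refT{TBiggins1}, since $H(T_n)=B_{\tau(n)}$ and $\tau(n)\asto\infty$) combined with $\tau(n)/\log n\asto1/\ga$ from \eqref{tex2tau}. The additional material you include---the explicit rewriting of $\gamma$ via \eqref{mus}, the verification of Biggins' hypotheses, and the finiteness of $\gamma$---is handled in the paper separately (in the discussion following the theorem, in \refL{Lgamma}, and in \refR{RBIGG}), not in the proof itself; your final remarks about the role of \ref{BPfirst} are somewhat garbled, but this is peripheral to the actual argument.
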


\begin{proof}
  By \refT{TBiggins1}, $H(T_n)/\tau(n)\asto\gamma$, and the result follows
  by \eqref{tex2tau}. 
\end{proof}

\begin{remark}\label{RHfrag} 
The fragmentation trees in \refS{Sfrag} are of  a slightly different type
than the trees $T_n$ that are our main object of study, since they appear as
the family tree $\ctt$ stopped at a fixed time $t=\log(x_0/x_1)$ instead of
a random time $\tau(n)$, see \refR{Rfragtau}. This means that asymptotics
for the height of fragmentation trees follow directly from \refT{TBiggins1}
rather than from \refT{Theight}. 
In this section we usually consider only trees of the type $T_n$,
and leave corresponding results for fragmentation trees to the reader.
\end{remark}

\begin{remark}\label{Rsplit2}
Also the split trees defined by \citet{Devroye98split}, see \refR{Rsplit1},
are in general not exactly of the type of trees studied here, but for the
purpose of studying the height, they can be approximated by fragmentation trees
and similar results can be obtained, see 
\citet{BroutinDevroye2006} and \citet{BroutinDevroyeMcLeish2008}.
\end{remark}

By \eqref{theight}, $\gamma\ge0$, and thus
the fundamental constant $\gamma$ in \eqref{gamma0} and \eqref{theight}
can also be evaluated as
\begin{equation}
  \begin{split}
  \gamma&=\inf\bigset{a>0:\inf_{\zeta<0}\log m(\zeta,-a\zeta)<0}
\\&
=\inf\bigset{a>0:\inf_{\gth>0}\log m(-\gth/a,\gth)<0}.	
  \end{split}
\end{equation}
In our case, when \eqref{mus} holds, this simplifies to
\begin{equation}
  \begin{split}
\gam&
=\inf\bigset{a>0:\inf_{\gth>0}\bigcpar{\gth/a+\log \hmu(\gth)}<0}
\\&
=\inf\Bigset{a>0:\inf_{\gth>0}\Bigcpar{\frac{1}{a}+\frac{\log \hmu(\gth)}{\gth}}<0}
\\&
=\inf\Bigset{a>0:\frac{1}{a}<-\inf_{\gth>0}\frac{\log \hmu(\gth)}{\gth}}
  \end{split}
\end{equation}
and thus 
\begin{equation}\label{gamma2}
  \begin{split}
\gam\qw&
=-\inf_{\gth>0}\frac{\log \hmu(\gth)}{\gth}
=-\inf_{\gth\ge\ga}\frac{\log \hmu(\gth)}{\gth}.
  \end{split}
\end{equation}

Geometrically, \eqref{gamma2} says that $-\gam\qw$ is the slope of the
tangent from the origin to the curve $\log\hmu(\gth)$, $\gth>0$, provided
such a tangent exists. (Otherwise, $-\gam\qw$ is the slope of the asymptote,
as follows from \refL{Lgamma}\ref{Lgamma2} and \refR{Rgamma}  below.)
Analytically, $\gamma$ can be found as follows.
\begin{lemma}\label{Lgamma}
$0<\gb\qw\le \gamma<\infty$.
\begin{romenumerate}[-10pt]
\item \label{Lgamma1}
If $\gth>0$ is a solution of 
the equation
  \begin{equation}\label{mul}
	\gth\frac{\hmu'(\gth)}{\hmu(\gth)}=\log\hmu(\gth),
  \end{equation}
then
\begin{equation}\label{gamma3}
  \gam\qw
=-\frac{\log\hmu(\gth)}{\gth} 
=-\frac{\hmu'(\gth)}{\hmu(\gth)}
=-\bigpar{\log\hmu}'(\gth).
\end{equation}
Furthermore, \eqref{mul}
has at most one positive solution.

\item \label{Lgamma2}
If \eqref{mul} has no positive solution, then
\begin{equation}\label{gamma3b}
  \gam\qw
=-\lim_{\gth\to\infty}\frac{\log\hmu(\gth)}{\gth} 
=\sup\set{x:\mu[0,x)=0}.
\end{equation}
\end{romenumerate}
\end{lemma}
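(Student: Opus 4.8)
\textbf{Plan of proof of Lemma~\ref{Lgamma}.} The whole statement is an exercise in elementary convexity applied to the function $\gL(\gth):=\log\hmu(\gth)$, starting from the formula \eqref{gamma2}, i.e.\ $\gam\qw=-\inf_{\gth>0}\gL(\gth)/\gth$. First I would record the basic properties of $\gL$. By \ref{BPmub} the function $\hmu$ is finite on $(\gth_0,\infty)$ with $\gth_0:=\inf\set{\gth:\hmu(\gth)<\infty}<\ga$, and there it is positive, smooth (standard analyticity of a Laplace transform on the interior of its domain) and strictly decreasing (since $\mu\bigpar{(0,\infty)}>0$, as $\mu=c\gd_0$ is incompatible with $\hmu(\ga)=1$). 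Introducing the probability measure $\nu_\gth(\ddx t):=\hmu(\gth)\qw e^{-\gth t}\mu(\ddx t)$, one computes $\gL'(\gth)=-\E_{\nu_\gth}[t]$ and $\gL''(\gth)=\Var_{\nu_\gth}(t)\ge0$, so $\gL$ is convex on $(\gth_0,\infty)$, strictly so unless $\mu$ is a point mass; moreover $\gL(\ga)=\log1=0$ and $\gL'(\ga)=\hmu'(\ga)=-\gb$ by \eqref{el2}, with $\gL'(\ga)<0$.

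Next the two-sided bound $0<\gb\qw\le\gam<\infty$. Since $\gL$ is strictly decreasing and $\gL(\ga)=0$, for any $\gth>\ga$ we have $\gL(\gth)/\gth<0$, so the infimum in \eqref{gamma2} is strictly negative and $\gam\qw>0$, i.e.\ $\gam<\infty$. For the other direction, convexity keeps the graph of $\gL$ above its tangent at $\ga$, so $\gL(\gth)\ge\gL'(\ga)(\gth-\ga)$ and hence $\gL(\gth)/\gth\ge\gL'(\ga)\bigpar{1-\ga/\gth}\ge\gL'(\ga)$ for $\gth\ge\ga$ (because $\gL'(\ga)<0$ and $1-\ga/\gth\in[0,1)$), while $\gL(\gth)/\gth>0>\gL'(\ga)$ for $\gth\le\ga$; thus $-\gam\qw=\inf_{\gth>0}\gL(\gth)/\gth\ge\gL'(\ga)$, i.e.\ $\gam\qw\le-\gL'(\ga)=\gb$. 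Together with $\gb<\infty$ (by \eqref{el}) and $\gb>0$ (again since $\mu\ne c\gd_0$), this gives $0<\gb\qw\le\gam<\infty$.

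Now the calculus that produces \eqref{gamma3}--\eqref{gamma3b}. Put $h(\gth):=\gth\gL'(\gth)-\gL(\gth)$, so that $g(\gth):=\gL(\gth)/\gth$ has $g'(\gth)=\gth^{-2}h(\gth)$ and $h'(\gth)=\gth\gL''(\gth)\ge0$. If $\mu$ is not a point mass, $h$ is strictly increasing on $(\gth_0,\infty)$, hence has at most one zero there — which is the last assertion of part \ref{Lgamma1} — and $h(\ga)=\ga\gL'(\ga)<0$; if $\mu=c\gd_a$ then (by the Malthusian condition and \ref{BPfirst} one gets $a>0$, $c=e^{\ga a}>1$) $h\equiv-\log c\neq0$, so \eqref{mul} still has no root. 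In the case \ref{Lgamma1}, a positive root $\gth_1$ of \eqref{mul} is necessarily $>\ga$; monotonicity of $h$ gives $h<0$ on $(\gth_0,\gth_1)$ and $h>0$ on $(\gth_1,\infty)$, so $g$ attains its minimum at $\gth_1$, whence $\gam\qw=-\gL(\gth_1)/\gth_1$, and substituting $\gL(\gth_1)/\gth_1=\gL'(\gth_1)$ from \eqref{mul} yields the remaining expressions in \eqref{gamma3}. In the case \ref{Lgamma2} (including the point-mass case just discussed), $h$ cannot change sign away from its negative value at $\ga$, so $h<0$ throughout $(\gth_0,\infty)$, $g$ is strictly decreasing, and $\gam\qw=-\lim_{\gth\to\infty}g(\gth)$; it remains to evaluate this limit.

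To finish, I would show $\lim_{\gth\to\infty}\gL(\gth)/\gth=-a_0$ with $a_0:=\sup\set{x:\mu[0,x)=0}=\inf\supp\mu\in[0,\infty)$: fixing $\gth_1\in(\gth_0,\ga)$ with $\hmu(\gth_1)<\infty$, the upper bound comes from $\hmu(\gth)\le e^{-(\gth-\gth_1)a_0}\hmu(\gth_1)$, and the lower bound from $\hmu(\gth)\ge e^{-\gth(a_0+\eps)}\mu\bigpar{[a_0,a_0+\eps]}$ with $\mu\bigpar{[a_0,a_0+\eps]}>0$, letting $\eps\downarrow0$. This is \eqref{gamma3b}. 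One should also observe that case \ref{Lgamma2} is consistent with $\gam\qw>0$, i.e.\ $a_0>0$: this already follows from the unconditional bound $\gam\qw>0$ proved in the second paragraph (so there is no circularity), and in fact $\mu\set0>0$ would force, via a short estimate, $\gth\gL'(\gth)\to0$ and $h(\gth)\to-\log\mu\set0>0$, producing a root of \eqref{mul} and thereby excluding case \ref{Lgamma2}. I expect the main obstacle to be organisational rather than substantive: making the trichotomy (interior minimiser / point mass / genuine linear asymptote) exhaustive and mutually consistent, and keeping the $\gth\to\infty$ asymptotics clean despite the possibility $\hmu(\gth_0)=\infty$ — both handled by always working on $(\gth_1,\infty)$ for a fixed $\gth_1<\ga$ with $\hmu(\gth_1)<\infty$.
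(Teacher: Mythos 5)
Your proof is correct and follows essentially the same route as the paper: the tangent-line bound $\log\hmu(\gth)\ge(\log\hmu)'(\ga)(\gth-\ga)$ for $\gam\qw\le\gb$, and the monotonicity of $h(\gth)=\gth(\log\hmu)'(\gth)-\log\hmu(\gth)$ to locate the minimiser of $\log\hmu(\gth)/\gth$ (the paper merely asserts the final Laplace-transform limit that you verify in detail). Your point-mass digression is harmless but superfluous, since assumption \ref{BPnonlattice} already excludes $\mu$ being a single atom.
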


\begin{proof}
  Since $\log\hmu(\gth)<0$ for $\gth>\ga$, \eqref{gamma2} yields $\gam\qw>0$
  and thus $\gam<\infty$. 

It is easy to see, see \refL{LX}\ref{lxhmu} below,
that $\log\hmu(\gth)$ is convex and that it is differentiable 
in the interval $(\AAA,\infty)$ for some $\AAA<\ga$, and thus in particular
at
$\gth=\ga$;
furthermore, using \eqref{el2},
\begin{equation}\label{hmu'gb}
\bigpar{\log\hmu}'(\ga)
=\frac{\hmu'(\ga)}{\hmu(\ga)}
=\hmu'(\ga)=-\gb.
\end{equation}
Hence, for any real $\gth$,
\begin{equation}\label{bertil}
\log\hmu(\gth)
\ge \log\hmu(\ga)+(\gth-\ga)(\log\hmu)'(\ga)
=-\gb(\gth-\ga)
>-\gb\gth.
\end{equation}
Hence, for $\gth>0$,
$\log\hmu(\gth)/\gth> -\gb$, 
and \eqref{gamma2} yields $\gam\qw\le\gb$.
Thus $\gam\ge\gb\qw$.

Next, for any differentiable convex function $f(\gth)$ defined on an interval
$\cI\subseteq(0,\infty)$, the function 
$g(\gth):=\gth f'(\gth)-f(\gth)$ is increasing since
$g'(\gth)=\gth f''(\gth)\ge0$.
Hence, $g(\gth)$ has at most one zero in $\cI$, 
and since
$\bigpar{f(\gth)/\gth}'=g(\gth)/\gth^2$,
a zero of $g(\gth)$ is a global minimum point of $f(\gth)/\gth$ in $\cI$.
Taking $f(\gth):=\log\hmu(\gth)$ on $\cI:=\set{\gth>0:\hmu(\gth)<\infty}$,
we see that $g(\gth)=0$ is equivalent to \eqref{mul}. If \eqref{mul} has a
positive solution, it is thus unique and a minimum point of
$\log\hmu(\gth)/\gth$ which yields \eqref{gamma3} by \eqref{gamma2} and
\eqref{mul}. 

On the other hand, 
if \eqref{mul} has no positive solution, then $g(\gth)$ has a fixed
sign in $\cI$. 
Since $g(\ga)=\ga(\log\hmu)'(\ga)<0$, 
$g(\gth)<0$ for all $\gth\in\cI$
and $f(\gth)/\gth$ is strictly decreasing. 
Thus, the infimum in \eqref{gamma2} is the
limit as $\gth\to\infty$, which yields the first equality in \eqref{gamma3b}.
The final equality is a straightforward property of Laplace transforms.
\end{proof}

\begin{remark}\label{Rgamma}
The case \ref{Lgamma2} in \refL{Lgamma}
is exceptional. We see from \eqref{gamma3b} that
$\mu$ has no mass in $[0,\gam\qw)$, so no child is ever born to a parent of
  age less than $\gam\qw$. 
Moreover, by \eqref{gamma2}, $\hmu(\gth)\ge e^{-\gam\qw\gth}$ for all
$\gth>0$, and it follows easily that $\mu\set{\gam\qw}\ge1$, so $\mu$ has a
point mass at $\gam\qw$.
This case is thus exceptional, and does not appear in any of our examples.  
\end{remark}

The formula \eqref{gamma0} for $\gamma$ is given in 
\cite{Biggins95,Biggins97}, where it is shown that it is equivalent to the
following definition, which is more indirect but perhaps 
more fundamental, involving some other quantities of interest (see \eg{}
\refSS{SSprofile} below).
Let
\begin{equation}\label{gal}
  \ga(\zeta):=\inf\set{\gth:m(\zeta,\gth)\le1}
=\inf\bigset{\gth:\hmu(\gth)\le e^{\zeta}}
=\inf\bigset{\gth:\log\hmu(\gth)\le {\zeta}},
\end{equation}
noting that $\ga(0)=\ga$,
and define its one-sided Legendre transform
\begin{equation}\label{gaxx}
  \gaxx(x):=\inf_{\zeta<0}\bigset{x\zeta+\ga(\zeta)}.
\end{equation}
Then
\begin{equation}\label{gamma1}
  \gamma=\inf\set{x:\gaxx(x)<0}.
\end{equation}

Using \eqref{gal}, we can rewrite \eqref{gaxx} as, for $x\ge0$, 
\begin{equation}\label{gaxx2}
  \begin{split}
\gaxx(x)&
=\inf_{\substack{\zeta<0\\ \log\hmu(\gth)\le\zeta}}\bigset{x\zeta+\gth}	
=\inf_{\hmu(\gth)<1}\bigset{x\log\hmu(\gth)+\gth}
\\&
=\inf_{\gth>\ga}\bigset{x\log\hmu(\gth)+\gth}.	
  \end{split}
\end{equation}

We collect some elementary properties of these functions in a lemma.
(Some of these extend to general $\eta_i$, but not all; \cf{} \cite{Biggins95}.)
Define
\begin{equation}\label{AAA}
  \AAA:=
\inf\set{\gth:\hmu(\gth)<\infty}\in[-\infty,-\infty).
\end{equation}
Thus $\hmu(\gth)<\infty$ for
$\gth>\AAA$, but note that in the case $\AAA>-\infty$,
both $\hmu(\AAA)=\infty$  and $\hmu(\AAA)<\infty$
are possible.
Note also that \ref{BPmub} says that 
\begin{equation}\label{AAAga}
-\infty\le \AAA<\ga.  
\end{equation}
Furthermore, define $\bara\in(0,\infty]$ and $\baram\in\ooo$
by 
\begin{align}
  \bara\qw &= \sup\set{y\ge0:\mu[0,y)=0},\label{supp+}
\\
\baram\qw& = \inf\set{y>0:\mu(y,\infty)=0}, \label{supp-}
\end{align}
\ie, the lower and upper limits of the support of $\mu$.
Note that in all our examples in Sections \ref{Sex}--\ref{Smedian},
$\bara=\infty$ and $\baram=0$.
By \eqref{supp+}--\eqref{supp-}, 
\as{} each $\xi_i\in[\bara\qw,\baram\qw]$. In particular, see
\refR{Rage}, $\heir\in [\bara\qw,\baram\qw]$ \as, and since $\heir$ is not
concentrated at a single value as a consequence of \ref{BPnonlattice},
$\gb=\E\heir\in(\bara\qw,\baram\qw)$, see also \eqref{el3} and
\eqref{malthustau}. In other words,
\begin{equation}
0\le \baram <\gb\qw<\bara\le\infty.
\end{equation}
\begin{lemma}
  \label{LX}
  \begin{thmenumerate}
  \item \label{lxhmu}
$\log\hmu(\gth)$ is a convex and decreasing function on
$(-\infty,\infty)$ with $-\infty<\log\hmu(\gth)\le\infty$.

On $(\AAA,\infty)$, $\log\hmu(\gth)$ is
analytic, strictly decreasing and strictly convex
with $(\log\hmu)'(\gth)<0$
and $(\log\hmu)''(\gth)>0$.
If $\AAA>-\infty$, then $\loghmu(\gth)$ is right-continuous at $\AAA$ in the
sense that
$\lim_{\gth\downto\AAA}\loghmu(\gth)=\loghmu(\AAA)\le\infty$; hence
$\loghmu(\gth)$ is continuous on $[\AAA,\infty)$.

Furthermore, 
\begin{align}\label{lxi-}
\log\hmu(-\infty)&:=\lim_{\gth\to-\infty}\log\hmu(\gth)=\infty.  
\\
\log\hmu(\infty)&:=\lim_{\gth\to\infty}\log\hmu(\gth)=\log\mu\set0
\in[-\infty,0), \label{lxi+}
\end{align}
and
\begin{align}\label{lbaram}
\lim_{\gth\to-\infty}\frac{\log\hmu(\gth)}{-\gth}&=\baram\qw\in(0,\infty],
\\
\lim_{\gth\to\infty}\frac{-\log\hmu(\gth)}{\gth}&=\bara\qw\in\ooo. 
\label{lbara}
\end{align}

\item \label{lxga}
$\ga(\zeta)$ is a convex and decreasing function on
  $(-\infty,\infty)$ with
$-\infty<\ga(\zeta)\le\infty$.

On the interval $\Iga:=(\log\mu\set0,\log\hmu(\AAA))$,
$\ga(\zeta)$ is the inverse function of $\log\hmu:(\AAA,\infty)\to\Iga$; 
hence,
on $\Iga$, 
$\ga(\zeta)$ is analytic and strictly decreasing with $\ga'(\zeta)<0$ 
and $\ga''(\zeta)>0$.
Furthermore, $0\in\Iga$ and $\ga(0)=\ga$; 
hence $\ga(\zeta)>\ga$ for $\zeta<0$ 
and $\ga(\zeta)<\ga$ for $\zeta>0$. 

Moreover, 
see \eqref{el2},
\begin{equation}\label{ga'0}
  \ga'(0)
=\frac{1}{\hmu'(\ga)}=-\frac{1}{\gb},
\end{equation}
and
\begin{align}\label{lxii-}
\ga(-\infty)&:=\lim_{\zeta\to-\infty}\ga(\zeta)=\infty.  
\\
\ga(\infty)&:=\lim_{\zeta\to\infty}\ga(\zeta)
=\AAA
\in[-\infty,\ga), \label{lxii+}
\end{align}
and
$\ga(\zeta)=\infty$ if\/ $-\infty<\zeta\le\log\mu\set0$
and
$\ga(\zeta)=\AAA$ if\/ $\log\hmu(\AAA)\le\zeta<\infty$.
In particular, $\ga(\zeta)<\infty\iff \zeta>\log\mu\set0$;
hence $\ga(\zeta)<\infty$ for all $\zeta$ if and only if $\mu\set0=0$.
Furthermore,  $\ga(\zeta)<\infty$ for some $\zeta<0$.

\item \label{lxgaxx}
$\gaxx(x)$ is a concave and decreasing function on
  $(-\infty,\infty)$ with
$-\infty\le \gaxx(x)<\infty$.
We have
\begin{equation}\label{gaxxgb}
  \begin{cases}
	\gaxx(x) = \ga, & x\le 1/\gb,
\\
	\gaxx(x) < \ga, & x> 1/\gb
  \end{cases}
\end{equation}
and 
\begin{equation}
  \label{gaxxoo}
\lim_{\xtoo}\gaxx(x)=-\infty.
\end{equation}
Furthermore, where the last two cases apply only when $\bara<\infty$,
\begin{equation}\label{bara123}
  \begin{cases}
\gaxx(x)>-\infty,&x<\bara,\\
\gaxx(x)=\bara\log\mu\set{\bara\qw}\ge-\infty,&x=\bara,\\
\gaxx(x)=-\infty,&x>\bara,\\
  \end{cases}
\end{equation}
In particular, if the birth times $\xi_i$ can be arbitrarily small (as in
all our examples), so $\bara=+\infty$, then
$\gaxx(x)>-\infty$ for every real $x$.

Consequently,
$\gaxx(x):\bbR\to[-\infty,\infty)$ is continuous everywhere except possibly
  at $\bara$, and left-continuous everywhere.
  \end{thmenumerate}
\end{lemma}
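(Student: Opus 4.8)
The plan is to reduce everything to standard convex-analysis and Laplace-transform facts, organised around the single observation that $\hmu$ is log-convex, from which the properties of $\ga(\zeta)$ and $\gaxx(x)$ follow because these are obtained from $\loghmu$ by a generalised-inverse operation and a one-sided Legendre transform, both of which interact predictably with convexity. For part \ref{lxhmu}: $\hmu(\gth)=\intoo e^{-\gth t}\mu(\ddx t)\in(0,\infty]$, so $\loghmu(\gth)\in(-\infty,\infty]$; $\loghmu$ is convex by \Holder's inequality (the usual log-convexity of a Laplace transform) and decreasing because $\gth\mapsto e^{-\gth t}$ decreases for $t\ge0$. On $(\AAA,\infty)$, which is non-empty and contains $\ga$ by \ref{BPmub}, I would differentiate under the integral sign to get analyticity and $(\loghmu)''>0$ — strictness because $\mu$ is not a point mass, which holds by \ref{BPnonlattice} — hence $(\loghmu)'<0$; right-continuity at $\AAA$ when $\AAA>-\infty$ is monotone convergence. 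The limits are then elementary: $\hmu(\gth)\to\infty$ as $\gth\to-\infty$ since $\mu$ charges $(0,\infty)$ (by \ref{BPfirst} and \ref{BPsuper}), giving \eqref{lxi-}; $\hmu(\gth)\to\mu\set0<1$ as $\gth\to\infty$ by dominated convergence, giving \eqref{lxi+}; and \eqref{lbaram}--\eqref{lbara} are the Laplace-method statement that $\log\hmu(\gth)/|\gth|$ converges to the essential supremum, resp.\ infimum, of $t$ under $\mu$, i.e.\ to the corresponding endpoint of the support of $\mu$.

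For part \ref{lxga}, the organising remark is that the (closure of the) epigraph $\set{(\zeta,\gth):\gth\ge\ga(\zeta)}$ is exactly the reflection across the diagonal of $\set{(\gth,\zeta):\zeta\ge\loghmu(\gth)}$, so $\ga$ is convex precisely because $\loghmu$ is; $\ga$ is decreasing straight from the definition. On $\Iga$, $\ga$ is literally the inverse function of $\loghmu\colon(\AAA,\infty)\to\Iga$, so analyticity, strict monotonicity, and $\ga'(0)=1/\hmu'(\ga)=-1/\gb$ follow from the inverse-function theorem together with \eqref{hmu'gb}, while $0\in\Iga$ and $\ga(0)=\ga$ are \eqref{malthus}. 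The boundary values ($\ga=\infty$ for $\zeta\le\log\mu\set0$, $\ga=\AAA$ for $\zeta\ge\loghmu(\AAA)$, and the limits \eqref{lxii-}--\eqref{lxii+}) read off from the definition \eqref{gal} and part \ref{lxhmu}, and $\ga(\zeta)<\infty$ for some $\zeta<0$ because $\hmu$ drops below $1$ just to the right of $\ga$, so $\zeta:=\loghmu(\gth)<0$ with $\ga(\zeta)=\gth<\infty$ for such $\gth$.

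For part \ref{lxgaxx}, $\gaxx(x)=\inf_{\zeta<0}\set{x\zeta+\ga(\zeta)}$ is an infimum of functions of $x$ that are affine and, since $\zeta<0$, decreasing; hence $\gaxx$ is concave, upper semicontinuous and decreasing, and $<\infty$ everywhere by inserting one admissible $\zeta$ (part \ref{lxga}). For \eqref{gaxxgb} I would use the rewriting \eqref{gaxx2}, $\gaxx(x)=\inf_{\gth>\ga}\set{x\loghmu(\gth)+\gth}$: the summand is convex in $\gth$ with derivative $1-x\gb$ at $\gth=\ga$ by \eqref{hmu'gb}, so for $x\le1/\gb$ it is nondecreasing on $[\ga,\infty)$ and the infimum over $(\ga,\infty)$ equals its boundary value $\ga$, while for $x>1/\gb$ it is strictly decreasing at $\ga^+$, so the infimum is $<\ga$; \eqref{gaxxoo} follows by fixing $\gth_1>\ga$ (so $\loghmu(\gth_1)<0$) and letting $x\to\infty$ in $\gaxx(x)\le x\loghmu(\gth_1)+\gth_1$. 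The trichotomy \eqref{bara123}, vacuous unless $\bara<\infty$, rests on the refined asymptotic $\loghmu(\gth)+\gth/\bara\to\log\mu\set{\bara\qw}$ as $\gth\to\infty$ (monotone convergence in $e^{\gth/\bara}\hmu(\gth)$, using that $\mu$ has no mass below $\bara\qw$): this bounds $x\loghmu(\gth)+\gth$ below by a constant when $x<\bara$ (so $\gaxx(x)>-\infty$) and makes it tend to $-\infty$ when $x>\bara$ (so $\gaxx(x)=-\infty$); at $x=\bara$, since $(\loghmu)'(\gth)<-1/\bara$ for every finite $\gth$ (convexity together with \eqref{lbara}), the function $\bara\loghmu(\gth)+\gth$ is strictly decreasing and converges to $\bara\log\mu\set{\bara\qw}$, which is therefore its infimum. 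Finally, a concave function is continuous on the interior of the set where it is $>-\infty$, which by \eqref{bara123} is $(-\infty,\bara)$; and concavity gives $\liminf_{x\upto\bara}\gaxx(x)\ge\gaxx(\bara)$ (comparing along a chord to $\bara$, using $\gaxx>-\infty$ nearby) while upper semicontinuity gives the matching bound for $\limsup$, so $\gaxx$ is left-continuous everywhere.

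The step I expect to be the real work, rather than bookkeeping, is the handling of the endpoints of the support of $\mu$: the slope limits \eqref{lbaram}--\eqref{lbara} and, above all, the exact evaluation $\gaxx(\bara)=\bara\log\mu\set{\bara\qw}$ together with the claim that the infimum there is only approached and not attained. These require a genuine Laplace-method estimate and a short monotonicity argument, whereas all the convexity, monotonicity, analyticity and differentiability assertions are routine once log-convexity of $\hmu$ is in hand. Care is also needed throughout to distinguish infima that are attained from those realised only as limits (for instance in \eqref{gaxxgb} when $x\le1/\gb$) and to keep the various $\pm\infty$ conventions straight.
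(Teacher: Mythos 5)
Your proof is correct and follows essentially the same route as the paper's: everything is reduced to log-convexity of $\hmu$, parts \ref{lxga} and \ref{lxgaxx} are read off from the generalised inverse and the one-sided Legendre transform, and \eqref{bara123} comes from the behaviour of $e^{\gth/x}\hmu(\gth)$ as $\gth\to\infty$, exactly as in the paper (which works multiplicatively with $\exp\bigpar{\gaxx(x)/x}=\inf_{\gth>\ga}\intoo e^{\gth(x\qw-t)}\mu(\ddx t)$ where you take logarithms). Your treatment of \eqref{gaxxgb} via the derivative of $x\loghmu(\gth)+\gth$ at $\gth=\ga$ is the dual of the paper's argument via $\ga'(\zeta)\le-1/\gb$ for $\zeta<0$; note only that \eqref{gaxx2} is stated for $x\ge0$, so the range $x<0$ should be dispatched separately (trivially, since $\gaxx$ is decreasing, $\gaxx(x)\le\ga$ always, and $\gaxx(0)=\ga$), which the paper's $\zeta$-side argument handles automatically.

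One step needs a patch. For $x<\bara$ you claim that the convergence $\loghmu(\gth)+\gth/\bara\to\log\mu\set{\bara\qw}$ bounds $x\loghmu(\gth)+\gth$ below by a constant; but when $\bara<\infty$ and $\mu\set{\bara\qw}=0$ that limit is $-\infty$ and yields no lower bound. Either use the paper's direct estimate $\intoo e^{\gth(x\qw-t)}\mu(\ddx t)\ge\mu\set{t:t\le x\qw}>0$ (positive because $x\qw>\bara\qw$), or note that \eqref{lbara} gives $\loghmu(\gth)=-\gth/\bara+o(\gth)$, so $x\loghmu(\gth)+\gth=(1-x/\bara)\gth+o(\gth)\to+\infty$, and a finite convex function on $(\ga,\infty)$ tending to $+\infty$ as $\gth\to\infty$ and to the finite value $\ga$ as $\gth\downto\ga$ is bounded below. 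With that repair the argument is complete.
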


\begin{proof}
  \pfitemref{lxhmu} The log-convexity of $\hmu$
 is well-known and follows from \Holder's inequality. The remaining
 statements are also well-known properties of Laplace transforms,
and follow easily from the definition
 \eqref{Lm}, 
using monotone and dominated convergence together with \ref{BPfirst} and
\ref{BPmub} (or \ref{BPmalthus}) for \eqref{lxi-}--\eqref{lxi+}
and simple estimates for \eqref{lbaram}--\eqref{lbara};
note also that \ref{BPnonlattice} implies that $\mu$ is not concentrated at
one point.

\pfitemref{lxga}
That $\ga$ is convex follows easily from \eqref{gal} and the convexity of
$\log\hmu$. Furthermore, \eqref{gal} implies that $\ga$ is decreasing. 
The remaining properties also follow easily from \eqref{gal} and
\ref{lxhmu};
that $0\in\Iga$ follows from \ref{BPfirst} and \ref{BPmub}, and
the fact that $\ga(\zeta)$ is the inverse function of $\log\hmu(\gth)$ implies
\begin{equation}
  \ga'(\zeta)=\frac{1}{(\log\hmu)'(\ga(\zeta))}
=\frac{\hmu(\ga(\zeta))}{\hmu'(\ga(\zeta))}
=\frac{e^\zeta}{\hmu'(\ga(\zeta))},
\qquad \zeta\in\Iga,
\end{equation}
which in particular yields \eqref{ga'0}.

\pfitemref{lxgaxx}
That $\gaxx$ is concave and decreasing follows from \eqref{gaxx}.
Furthermore, by \ref{lxga}, $\ga(\zeta)<\infty$ for some $\zeta<0$, and thus
$\gaxx(x)\le x\zeta+\ga(\zeta)<\infty$ for every $x$.
This implies also \eqref{gaxxoo} by letting $\xtoo$ (with $\zeta<0$ fixed).

Since $\ga(\zeta)$ is convex, \eqref{ga'0} implies that
$\ga'(\zeta)\le-1/\gb$ for $\zeta<0$.
It follows that the infimum in \eqref{gaxx} is attained at $\zeta=0$ if
$x\le1/\gb$, but at some $\zeta<0$ (possibly $-\infty$) if $x>1/\gb$, and
\eqref{gaxxgb} follows, recalling $\ga(0)=\ga$.

Next, \eqref{gaxx2} implies that for any $x>0$,
\begin{equation}\label{2016}
  \begin{split}
\exp\bigpar{\gaxx(x)/x}
=\inf_{\gth>\ga} \bigset{\exp(\gth/x)\hmu(\gth)}
=\inf_{\gth>\ga} \Bigset{\intoo e^{\gth(x\qw-t)}\mu(\ddx t)}.
  \end{split}
\end{equation}
If $x=\bara<\infty$, then $t\ge x\qw$ on the support of $\mu$ by \eqref{supp+};
hence the integral in  \eqref{2016} is a decreasing function of $\gth$,  
and dominated convergence 
as $\gth\to\infty$
shows that the infimum
equals $\mu\set{\bara\qw}$.
Similarly, if $x>\bara$, then the infimum in \eqref{2016} is 0.
On the other hand, if $x<\bara$, then
$\intoo e^{\gth(x\qw-t)}\mu(\ddx t)\ge
\mu\set{t:t\le x\qw}>0$ for every $\gth>0$, and thus the infimum in
\eqref{2016} is positive.
This shows \eqref{bara123}. 

It follows that $\gaxx(x)$ is concave and finite
for $x\in(-\infty,\bara)$, and thus continuous there. By \eqref{bara123},
$\gaxx(x)$ is also trivially continuous for $x>\bara$.
Finally, 
$\gaxx(x)$ is left-continuous everywhere 
since the definition
\eqref{gaxx} implies
\begin{equation}\label{kastrup}
  \lim_{ y\upto x} \gaxx(y)
=
  \inf_{ y< x} \gaxx(y)
=
  \inf_{ y< x,\; \zeta<0}\set{y\zeta+\ga(\zeta)}
=
  \inf_{ \zeta<0}\set{x\zeta+\ga(\zeta)}
=\gaxx(x).
\end{equation}
\end{proof}

\begin{example}\label{EHRRT}
For the \rrt{} in \refE{ERRT}, 
originally treated by \citet{Devroye1987} in a related but somewhat
different way, see also \cite{Devroye1998}, 
we have $\hmu(\gth)=1/\gth$, $\gth>0$, see \eqref{hmuRRT}. 
Consequently, \eqref{mul} is
\begin{equation}
  -1 = -\log\gth
\end{equation}
with the solution $\gth=e$,
and then \eqref{gamma3} yields
\begin{equation}
  \gam\qw 
=-\bigpar{\log\hmu}'(\gth)
= \frac{1}{\gth}=e\qw
\end{equation}
\ie, $\gam=\gth=e$.
Since $\ga=1$, the limit in \eqref{theight} is $\bgam=\gamma$.

Furthermore, \eqref{gal} and \eqref{gaxx} yield after short calculations,
\begin{equation}
  \ga(\zeta):=\inf\Bigcpar{\gth>0:\frac{1}{\gth}\le e^{\zeta}} 
=e^{-\zeta},
\end{equation}
\begin{equation}\label{gaxxRRT}
  \gaxx(x)=\inf_{\zeta<0}\bigset{x\zeta+e^{-\zeta}}
=\inf_{t>0}\bigset{-xt+e^{t}}
=
\begin{cases}
  1, & x\le 1,\\
-x\log x +x, & x>1.
\end{cases}
\end{equation}
Since $\gb=1$, this agrees with \eqref{gaxxgb}.
\end{example}

\begin{example}\label{EHBST}
For the \bst{} in \refE{EBST}, originally treated by
\citet{Devroye1986}, 
see also \cite{Devroye1987}, \cite{Biggins97} and \cite{Devroye1998},
we have $\hmu(\gth)=2/(1+\gth)$, $\gth>-1$, see \eqref{hmuBST}. 
Consequently, \eqref{mul} is
\begin{equation}\label{pyrx}
  -\frac{\gth}{1+\gth} = \log\frac{2}{1+\gth}
\end{equation}
and \eqref{gamma3} is
\begin{equation}\label{pyrxx}
  \gam\qw = -\bigpar{\log\hmu}'(\gth)=\frac{1}{1+\gth},
\end{equation}
\ie, $\gam=1+\gth$.
Since $\ga=1$,  $\bgam=\gamma$.
We may substitute \eqref{pyrxx} in \eqref{pyrx}, noting that $\gth>0$
corresponds to $\gam>1$, and obtain
\begin{equation}
  \gam\qw-1=\log\bigpar{2\gam\qw}=-\log(\gam/2)
\end{equation}
or
\begin{equation}
  \label{gammabst}
\gam\log(\gam/2)+1-\gam=0,
\end{equation}
which has the root $\gam\doteq4.311070$.
(The theory above implies that \eqref{gammabst} has a unique root
$\gamma>1$.)

Furthermore, \eqref{gal} and \eqref{gaxx} yield after short calculations,
\begin{align}\label{gahbst}
  \ga(\zeta)&=\inf\Bigcpar{\gth>0:\frac{2}{1+\gth}\le e^{\zeta}} 
=2e^{-\zeta}-1,
\\
  \gaxx(x)&=\inf_{\zeta<0}\bigset{x\zeta+2e^{-\zeta}-1}
=\inf_{t>0}\bigset{-xt+2e^{t}-1}
\notag
\\&
=
\begin{cases}
  1, & x\le 2,\\
-x\log (x/2) +x-1, & x>2.
\end{cases}	
\label{gaxxbst}
\end{align}
Since $\gb=1/2$ for the \bst, this agrees with \eqref{gaxxgb}.
\end{example}

\begin{example}\label{EHlinear}
More generally, consider a linear preferential attachment tree as in 
\refE{Elinear+-}, with weights $w_k=\chi k+\rho$ as in \eqref{wlinear+-}.
(This was originally treated by \citet{Pittel1994}, at least in the case
$\chi\ge0$; see also \citet{BigginsGrey1996}.) 
Since only the quotient $\chi/\rho$ matters, and $\rho>0$,  we may and shall
for simplicity assume $\rho=1$. 
(It follows that \eqref{pyrrhus} below holds also in the general case
provided we replace $\chi$ by $\chi/\rho$.)

We then have $\hmu(\gth)=1/(\gth-\chi)$ by \eqref{hmulin+-}, 
and thus \eqref{mul} is, with $\gth>\chi$,
\begin{equation}\label{pyry}
-\frac{\gth}{\gth-\chi} = -\log(\gth-\chi)
\end{equation}
and \eqref{gamma3} is 
\begin{equation}\label{pyryy}
  \gam\qw 
=\frac{1}{\gth-\chi},
\end{equation}
\ie, $\gam=\gth-\chi$.
We may substitute this in \eqref{pyry} and obtain
\begin{equation}
\log \gam=\frac{\gth}{\gth-\chi}=\frac{\gam+\chi}{\gam}
\end{equation}
or
\begin{equation}\label{pyrg}
\gam\log\gam-\gam=\chi.
\end{equation}
The limit in \eqref{theight} is $\bgam:=\gam/\ga$; we have $\ga=\chi+1$ by
\eqref{alphalin}, and thus \eqref{pyrg} yields
\begin{equation}\label{pyrrhus}
\bgam\log\bigpar{(\chi+1)\bgam}-\bgam=\frac{\chi}{\chi+1}.
\end{equation}

\refE{EHRRT} is the case $\chi=0$. For the binary search tree in
\refE{EHBST} we have $\chi=-1$ and $\rho=2$, which is equivalent to
$\chi=-\frac12$ and $\rho=1$; thus we take $\chi=-\frac12$ above. Indeed,
\eqref{pyrrhus} with $\chi=-\frac12$ is the same as \eqref{gammabst}.
More generally, the \mary{} increasing tree in \refE{Elinear-} is obtained
by $\chi=-1/m$. 
For this \mary{} tree, \eqref{pyrrhus} yields,
\cf{} the binary case  \eqref{gammabst},
\begin{equation}\label{pyrrhus-mary}
\bgam\log\Bigpar{\frac{m-1}{m}\bgam}-\bgam+\frac{1}{m-1}=0.
\end{equation}
The height of the \mary{} increasing tree was found by 
\citet{BroutinDevroyeEtal2008}, 
see also \citet[Theorem 6.47]{Drmota}.
\end{example}

\begin{example}\label{EHmst}
  A somewhat more complicated example is the \mst{} in Section \ref{SSemst} or
  \ref{SSmst}. (For this example, it does not matter whether we include
  external nodes or not, since this only changes the height by
  1. Furthermore, $\mu$ is the same for both versions, so the calculations
  are the same.)
This was originally treated by \citet{Devroye1990}, see also
\citet{Mahmoud:Evolution}, 
\citet{Pittel1994}, 
\citet{Biggins97} and \citet{Devroye1998}. 

Recall that $\ga=1$, so $\bgam=\gamma$.
By \eqref{hmumst},
\begin{equation}
  \log\hmu(\gth) = \log m! - \sum_{i=1}^{m-1}\log(i+\gth).
\end{equation}
Hence \eqref{mul} is
\begin{equation}\label{mulmst}
  \gth\sum_{i=1}^{m-1}\frac{1}{\gth+i}
-\sum_{i=1}^{m-1}\log\xpar{\gth+i} +\log m! =0
\end{equation}
and \eqref{gamma3} is
\begin{equation}\label{gamma3mst}
  \gam\qw = \sum_{i=1}^{m-1}\frac{1}{\gth+i},
\end{equation}
which yields $\gam$ after (numerically) finding the unique positive root of
\eqref{mulmst}. 
\end{example}

\begin{example}\label{EHmstl}
  We can extend \refE{EHmst} for \mst{s}
to the generalization in \refE{Ehennequin}.
(The case $m=2$,
the median-of-$(2\ell+1)$ binary search tree, was treated by
\citet{Devroye93}; the general case was studied, by different methods,  
by \citet{ChauvinDrmota2006}, see also \citet[Section 6.5.2]{Drmota}.)
We still have $\ga=1$, so $\bgam=\gamma$.
It follows from \eqref{hmumstl} that
\eqref{mul} is
\begin{equation}\label{mulmstl}
 \sum_{i=1}^{(m-1)(\ell+1)}\frac{\gth}{\gth+i+\ell}
-\sum_{i=1}^{(m-1)(\ell+1)}\log\xpar{\gth+i+\ell} 
+\log\frac{(m\ell+m)!}{(\ell+1)!} =0 
\end{equation}
and \eqref{gamma3} is
\begin{equation}\label{gamma3mstl}
  \gam\qw = \sum_{i=1}^{(m-1)(\ell+1)}\frac{1}{\gth+i+\ell}.
\end{equation}
\end{example}

\begin{example}\label{EHpyramids}
Another example is the binary pyramids in \refE{E11}, where the height was
found by \citet{Mahmoud:pyramids} by similar methods,
see also \citet{BigginsGrey1996} and \citet{BroutinDevroyeMcLeish2008}.
The equations, now using \eqref{hmupyr}, become a bit involved;
we refer to \cite{Mahmoud:pyramids},  \cite{BigginsGrey1996}
and \cite{BroutinDevroyeMcLeish2008}
for a numerical solution.  
\end{example}

\begin{example}\label{EHfrag}
For the fragmentation tree in \refE{Ebinaryfrag}, we have a branching
process that differs from the one for the \bst{} in \refE{EHBST}, 
but the intensity $\mu$ is
the same, so all calculations in \refE{EHBST} are valid for this tree too.
Thus, see \refT{TBiggins1} and \refR{RHfrag},
$H(\ctt)/t\asto\gam$ with $\gam$ given by \eqref{gammabst}.
Furthermore, if we stop at $n$ nodes as in \refR{Rfragtau},
 $H(T_n)/\log n\asto\gam$, 
just as for the \bst.
More precise results for the height of this fragmentation tree, and \mary{}
generalizations of it, are given  by \citet{ChauvinDrmota2006}. 
\end{example}

\subsection{Moment convergence}

We can also obtain moment convergence in \refT{Theight}, in particular
convergence of the expectation $\E H(T_n)/\log n$ to $\bgam$,
at least if we assume the following additional condition on the birth times
for an individual in the \CMJp.
\begin{xenumerate}
\renewcommand{\theenumi}{{\upshape{(A*)}}}
\renewcommand{\labelenumi}{\theenumi}
\item \label{BP+}
There exists $\gd>0$  such that 
$\E e^{\gd \xi_2}<\infty$.
\end{xenumerate}
In other words, each individual gets at least two children ($N\ge2$),
and the age when the second child is born has an exponential
moment. (Equivalently, it has exponentially decreasing tails.)

The condition \ref{BP+} is satisfied
in all examples in Sections \ref{Sex}--\ref{Smedian},
since $\xi_2$ is the sum of one or several exponential waiting times. 

\begin{remark}
  We use \ref{BP+} in the proof of \refL{LM2} below. Some extra condition is
  clearly needed for \refL{LM2} (at least $\E e^{\gd\xi_1}<\infty$ for some
  $\gd>0$, since $\tau(n)\ge\xi_1$ if $n>1$ and, say, $\psi(t)=1$.)
However, we do not know whether \ref{BP+} really is needed for Lemmas
\ref{LM3-} and \ref{LM3} and for \refT{Ton}.
In fact, we conjecture that \refT{Ton} holds assuming only
\refBP{} and \refBPlastpsi.
\end{remark}

We begin with some lemmas. The first two are stated somewhat more generally
than actually needed here.
\begin{lemma}
  \label{LM1}
For every $r>0$, there exists $c_r$ such that, for large $t$,
\begin{equation}\label{lm1}
\P\bigpar{H(\ctt)\ge c_r t} \le e^{-rt}. 
\end{equation}
\end{lemma}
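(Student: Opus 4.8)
The plan is to prove the tail bound $\P\bigl(H(\ctt)\ge c_r t\bigr)\le e^{-rt}$ by a first-moment (union bound) argument over the nodes at a given depth, combined with the exponential moment condition on the birth times. First I would recall that $H(\ctt)\ge k$ if and only if there exists an individual in generation $k$ born at some time $\le t$; equivalently, writing $\gs_x$ for the birth time of $x$, $H(\ctt)\ge k\iff \min_{|x|=k}\gs_x\le t$, where the minimum is over individuals $x$ of depth $k$ in the family tree. (Here $|x|=k$ means $x$ is in the $k$-th generation.) For a single individual of depth $k$, its birth time is $\gs_x=\sum_{j=1}^k \zeta_j$, a sum of $k$ independent copies of birth-time increments, each distributed as one of the $\xi_i$; more precisely, along the genealogical path, the increment from generation $j-1$ to generation $j$ is some $\xi_{i_j}$ with $i_j\ge1$, hence stochastically bounded below by $\xi_1$ of that individual. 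So I would bound $\P(\gs_x\le t)$ via a Chernoff/Markov bound: for any $\gth>0$, $\P(\gs_x\le t)\le e^{\gth t}\,\E e^{-\gth\gs_x}$.

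The key computation is then to control $\E e^{-\gth\gs_x}$ summed over all depth-$k$ individuals. By the branching structure and the definition of $\hmu$, we have the standard many-to-one identity $\E\sum_{|x|=k} e^{-\gth\gs_x} = \hmu(\gth)^k$ — this is just iterating $\E\sum_{i=1}^N e^{-\gth\xi_i}=\hmu(\gth)$ down $k$ generations, using the i.i.d.\ structure of the $(\Xi_x)$. Therefore, by the union bound,
\begin{equation}
  \P\bigl(H(\ctt)\ge k\bigr)
  = \P\Bigl(\min_{|x|=k}\gs_x\le t\Bigr)
  \le \E\sum_{|x|=k}\ett{\gs_x\le t}
  \le e^{\gth t}\,\E\sum_{|x|=k}e^{-\gth\gs_x}
  = e^{\gth t}\hmu(\gth)^k .
\end{equation}
Now fix $\gth>\ga$, so that $\hmu(\gth)<1$, i.e.\ $\log\hmu(\gth)<0$; this $\gth$ exists by \ref{BPmalthus} and \refL{LX}\ref{lxhmu}. (The role of \ref{BP+} is precisely to guarantee $\hmu(\gth)<\infty$ for some $\gth$ large enough that we can also make $\hmu(\gth)<1$: since $\E e^{\gd\xi_2}<\infty$ and all further children are born no earlier than $\xi_2$, one checks $\hmu(\gth)<\infty$ and $\hmu(\gth)\to\mu\set0<1$ as $\gth\to\infty$, so indeed $\hmu(\gth)<1$ for large $\gth$; alternatively, \ref{BPmub} already suffices here.) Taking $k=\ceil{c t}$ gives $\P(H(\ctt)\ge ct)\le e^{\gth t}\hmu(\gth)^{ct}=\exp\bigl(t(\gth + c\log\hmu(\gth))\bigr)$, and since $\log\hmu(\gth)<0$ we can choose $c=c_r$ large enough that $\gth + c_r\log\hmu(\gth)\le -r$, which yields \eqref{lm1} for all sufficiently large $t$.

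The main obstacle, such as it is, is bookkeeping rather than depth: I must be careful that the many-to-one identity $\E\sum_{|x|=k}e^{-\gth\gs_x}=\hmu(\gth)^k$ is justified in the current setup (it follows from the independence assumptions on $(\Xi_x)_{x\in\voo}$ and Tonelli, since all terms are nonnegative), and that the relevant $\gth$ can be taken in the region where $\hmu(\gth)$ is both finite and strictly less than $1$ — this is where one invokes \ref{BPmub} (or \ref{BP+}) together with \ref{BPmalthus}. One should also note the bound degrades gracefully: the estimate $\P(H(\ctt)\ge ct)\le e^{-rt}$ for large $t$ is exactly what is claimed, with no uniformity needed beyond a single constant $c_r$ per $r$. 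I expect this lemma to then feed, via $H(T_n)=H(\cT_{\tau(n)})$ and the almost-sure asymptotics $\tau(n)/\log n\to1/\ga$ from \refT{Tex}, into the moment-convergence statement for $\E H(T_n)/\log n$, but that is the content of the later lemmas, not of this one.
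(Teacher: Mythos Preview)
Your proof is correct and is essentially the paper's argument made explicit. The paper bounds $\P(H(\ctt)\ge c_rt)$ by the expected number $\eta_t[c_rt,\infty)$ of individuals at depth $\ge c_rt$ and invokes \cite[Corollary~1]{Biggins95} for the upper bound $\limsup_t t^{-1}\log\eta_t[xt,\infty)\le\gaxx(x)$, then takes $c_r$ with $\gaxx(c_r)<-r$; you have simply unpacked that citation into the many-to-one identity $\E\sum_{|x|=k}e^{-\gth\gs_x}=\hmu(\gth)^k$ plus Chernoff, and indeed $\inf_{\gth>\ga}\{\gth+c\log\hmu(\gth)\}=\gaxx(c)$ by \eqref{gaxx2}, so the two bounds coincide.

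One minor correction: drop the parenthetical about \ref{BP+}. The existence of $\gth>\ga$ with $\hmu(\gth)<1$ (and finite) follows from \ref{BPmalthus} and the strict monotonicity of $\hmu$ in \refL{LX}\ref{lxhmu}, exactly as you say just before that parenthetical; condition \ref{BP+} concerns $\E e^{\gd\xi_2}<\infty$, which is irrelevant to finiteness or smallness of $\hmu(\gth)$ for positive $\gth$, and plays no role in this lemma --- it enters only in \refL{LM2}.
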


\begin{proof}
  Let $N_t\xoo$ be the number of nodes in $\ctt$ that have depth $\ge x$,
  and let $\eta_t\xoo:=\E \bigpar{N_t\xoo}$.
Then, by \cite[Theorem 4 and its proof]{Biggins95}, for any real $x$,
\begin{equation}\label{lme}
  \limsup_\ttoo\frac{\log\eta_t\xxoo{tx}}{t}\le\gaxx(x).
\end{equation}
In fact, \cite[Theorem 4]{Biggins95} shows that the limit exists and equals
$\gaxx(x)$ except possibly for one exceptional $x$.
The upper bound \eqref{lme} (for every $x$)
is the simple part of the proof and follows 
from \cite[Corollary 1]{Biggins95}.

By \eqref{gaxxoo}, we can choose $c_r$ such that $\gaxx(c_r)<-r$, and then
\eqref{lme} yields 
$\log\eta_t\xxoo{c_rt}\le -rt$ for large $t$.
This yields the result \eqref{lm1}, since
\begin{equation}
  \P\bigpar{H(\ctt)\ge c_rt} 
=\P\bigpar{N_t[c_rt,\infty)\ge1}
\le \eta_t[c_rt,\infty)
\le e^{-rt}.
\end{equation}
\end{proof}

\begin{lemma}\label{LM2}
  Assume \refBP{} and \ref{BP+}.
Then, for every $r\ge0$ there exists $c'_r$ such that
$\P\bigpar{\tau(n)\ge c'_r\log n}=O\bigpar{n^{-r}}$.
\end{lemma}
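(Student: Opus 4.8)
The plan is to bound $\tau(n)$ from above by estimating how quickly the weight $\zpsi_t$ (equivalently, the population size $Z_t$, or the number of nodes at a fixed small depth) can grow, and then showing that the event $\tau(n) \ge c'_r \log n$ forces an atypically slow growth, which under \ref{BP+} has exponentially small probability. First I would reduce to the weight $\psi_1(t) := \psi(t)\wedge 1$, so that $\zpsi_t \ge Z_t^{\psi_1}$; since stopping with a smaller weight only increases $\tau(n)$, it suffices to prove the estimate for the population-size process $Z_t = |\ctt|$ and the stopping time $\inf\{t : Z_t \ge n\}$. Thus the goal becomes: there exists $c'_r$ such that $\P(Z_{c'_r \log n} < n) = O(n^{-r})$, i.e.\ $\P(Z_t < e^{t/c'_r})$ decays like $e^{-r't}$ after the substitution $t = c'_r \log n$.

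Next I would construct a fast-growing subprocess to lower-bound $Z_t$. Under \ref{BP+} each individual has $N \ge 2$ children, with the second one born at age $\xi_2$ satisfying $\E e^{\gd \xi_2} < \infty$ for some $\gd > 0$. Restricting attention to the first two children of every individual, we obtain an embedded binary tree; along the infinite binary tree indexed by $\{1,2\}^*$, each edge carries an i.i.d.\ copy of a birth-time increment bounded above (in distribution) by $\xi_2$. So $Z_t \ge $ (number of nodes $w \in \{1,2\}^k$, summed over $k$, whose total birth time $\le t$), and in particular $Z_t \ge 2^k$ whenever there is some depth-$k$ node born by time $t$, which happens as soon as $\sum_{j=1}^k T_j \le t$ for the increments $T_j$ along one fixed ray, each stochastically dominated by $\xi_2$. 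A standard large-deviation / Chernoff bound: $\P(\sum_{j=1}^k T_j > t) \le e^{-\gd t}\bigl(\E e^{\gd \xi_2}\bigr)^k = e^{-\gd t} C^k$. Choosing $k = \lfloor \kappa t \rfloor$ with $\kappa$ small enough that $C^\kappa < e^{\gd}$, i.e.\ $\kappa \log C < \gd$, this bound is $\le e^{-\gd t/2}$ for large $t$; and on the complementary event $Z_t \ge 2^{\lfloor \kappa t\rfloor} \ge e^{(\kappa \log 2) t /2}$ for large $t$. Therefore $\P(Z_t < e^{ct}) \le e^{-\gd t /2}$ for an explicit $c = (\kappa \log 2)/2 > 0$ and all large $t$, and by the same argument with a larger (still finite) exponent $\gd$ replaced by $mr'$ via iterating/choosing $\kappa$ appropriately, or simply by relabelling, one gets $\P(Z_t < e^{ct}) = O(e^{-r' t})$ for any prescribed $r'$, at the cost of shrinking $c$. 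Setting $t = c'_r \log n$ with $c'_r := c^{-1}$ and $r' := r/c'_r$ (so $r' t = r \log n$), this gives $\P(\tau(n) \ge c'_r \log n) \le \P(Z_{c'_r \log n} < n) = O(n^{-r})$, as desired. (A cleaner bookkeeping: fix $r$, pick $\gd$ as in \ref{BP+}, then choose $\kappa$ with $\kappa \log C < \gd$ and additionally $\kappa$ small; the two exponents $\gd t$ and $(\kappa \log 2) t$ can both be made to dominate $r \log n = r' t$ by taking $c'_r$ large, i.e.\ $\kappa$ absorbed, since $r'$ scales like $1/c'_r$.)

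The main obstacle I anticipate is purely organisational rather than deep: making the dependence between the three free parameters ($\gd$ from \ref{BP+}, the chosen depth rate $\kappa$, and the resulting growth exponent $c$) transparent, so that for a \emph{given} target $r$ one genuinely can pick $c'_r$ making the bound $O(n^{-r})$ — one has to be careful that shrinking $\kappa$ to satisfy $\kappa \log C < \gd$ also shrinks the growth exponent $c$, hence enlarges $c'_r = 1/c$, but this is harmless since $c'_r$ is allowed to depend on $r$ (indeed on nothing but $r$ and the process). A secondary point is the stochastic domination step: one should note $\xi_2$ here denotes the age of the second-born child, so along a ray in the embedded binary tree the successive increments are i.i.d.\ copies of this $\xi_2$ (by the i.i.d.\ structure of the copies $(\Xi_x)$), and the Chernoff bound $\P(\xi_2 > s) \le e^{-\gd s}\E e^{\gd \xi_2}$ is immediate from Markov's inequality. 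No appeal to the branching-random-walk machinery of \refL{LM1} is needed for this direction; the elementary embedded-binary-tree argument suffices and is in fact why \ref{BP+} is the natural hypothesis.
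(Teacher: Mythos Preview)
Your proposal has two genuine gaps.

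\textbf{The reduction to $Z_t$ goes the wrong way.} You correctly observe that passing from $\psi$ to $\psi_1:=\psi\wedge 1$ only increases the stopping time, so it suffices to bound $\tau_1(n)$. But then you jump to the stopping time for $Z_t$, and this step fails: since $\psi_1\le 1$ you have $Z_t^{\psi_1}\le Z_t$, hence $\tau_1(n)\ge\inf\{t:Z_t\ge n\}$, which is the wrong inequality. For a general weight such as $\psi(t)=\ett{0\le t<1}$, a large $Z_t$ does not force a large $\zpsi_t$. This can be repaired (for example, once your embedded binary tree supplies $2^{\kappa t}$ individuals, run the process a further fixed time $M$ with $\P(\tau(1)\le M)\ge 1/2$ and use a binomial tail bound to get $\zpsi_{t+M}\ge 2^{\kappa t-2}$), but this is an extra argument you did not give. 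The paper sidesteps the issue entirely by bootstrapping in a different way: from the almost-sure convergence it first extracts the crude bound $\P(\tau(n)\ge b\log n)<e^{-1}$ for the \emph{original} $\psi$; then it takes a \emph{linear} path of length $L=\lceil r\log n\rceil$ down second children (its birth time $\tau_L$ is $\le B\log n$ with probability $1-O(n^{-r})$ by Chernoff on $\xi_2$, which is exactly where \ref{BP+} enters), and uses the $L$ first-child siblings along that path as roots of $L$ independent copies of the whole process, so that $\tau(n)\le\tau_L+\min_{i\le L}\tau^{(i)}(n)$ and $\P(\min_i\tau^{(i)}(n)\ge b\log n)\le e^{-L}\le n^{-r}$.

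\textbf{The step ``$Z_t\ge 2^k$ whenever some depth-$k$ node is born by time $t$'' is false.} A single depth-$k$ node only guarantees the $k+1$ nodes on its ancestral path. What you need is that \emph{all} $2^k$ depth-$k$ nodes of the embedded binary tree are born by time $t$; that follows from a union bound over the $2^k$ rays, giving $\P\bigl(\max_{\text{rays}}\sum_{j=1}^k T_j>t\bigr)\le 2^k e^{-\gd t}C^k$, after which you choose $\kappa$ with $\kappa(\log 2+\log C)<\gd$. This is an easy fix, but the argument as written does not establish the claimed lower bound on $Z_t$.
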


\begin{proof}
  First, if \refBPlastpsi{} holds, then $\tau(n)/\log n \asto \ga\qw$ by
  \refT{Tex}. In general, we may as in the proof of \refT{Tex} consider the
  truncation $\psi_1(t):=\psi(t)\land 1$ and the corresponding stopping time
  $\tau_1(n)$. Then $\tau_1(n)/\log n\asto\ga\qw$ and $\tau(n)\le\tau_1(n)$.
Hence, in any case, if $b:=2\ga\qw$, then 
$\P(\tau(n)<b\log n)\to1$; in particular, 
if $n$ is large enough (which we assume in the rest of the proof;
the result is trivial for small $n$),
\begin{equation}
  \label{landings}
\P(\tau(n)\ge b\log n)<e\qw.
\end{equation}

Let $L:=\ceil{r\log n}$. Let $v_0=o$, the root, and let $v_i$ be the second
child of $v_{i-1}$, $i\ge1$.
(Thus, denoting the individuals by strings in 
$\voo:=\bigcup_{n=0}^\infty \bbN^n$, see \refS{S:BP}, the nodes $v_i$
are $\emptyset,2,22,222,\dots$.)
Let $\taul:=\gs_{v_L}$, the time $v_L$ is born. 
Let $A:=\E e^{\gd\xi_2}$, which is finite by \ref{BP+},
and let $B:=r(\log A+1)/\gd$.
Then,
\begin{equation}\label{ofvandahl}
  \begin{split}
\P(\taul\ge B\log n)&
\le e^{-\gd B\log n}\E e^{\gd\taul}	
=  e^{-\gd B\log n}\bigpar{\E e^{\gd\xi_2}}^L
=  e^{-\gd B\log n}A^L
\\&
\le e^{-\gd B\log n + (1+r\log n)\log A}
= A n^{-\gd B+r\log A}
=A n^{-r}.
  \end{split}
\end{equation}

Let $w_i$ be the  first child of $v_{i-1}$, $i\ge1$.
Then $w_i$ is born before (or at the same time as) $v_i$, so $w_1,\dots,w_L$
are all born before (or at) $\taul$. Each $w_i$ 
starts an independent copy $\ctt\iii$ of the branching process
and its family tree (we do not distinguish between these two), with
local time $t=0$ when $w_i$ is born, \ie, at global time $\gs_{w_i}$. 
Let $Z_t^{\psi,(i)}$ be the total weight of $\ctt\iii$
and 
let $\tau\iii(n)$ be the corresponding stopping time
$\tau\iii(n):=\inf\set{t:Z_t^{\psi,(i)}\ge n}$.

For every $t\ge0$, the individuals (nodes)  in $\ctt\iii$ form a subset of
$\cT_{\gs_{w_i}+t}$, and they have the same ages in both trees; hence
the definition \eqref{zgf} implies that 
$\zpsi_{\gs_{w_i}+t}\ge Z_t^{\psi,(i)}$.
Consequently, $\zpsi_{\gs_{w_i}+\tau\iii(n)}\ge n$ and thus,
for every $i\le L$,
\begin{equation}
 \tau(n)\le \gs_{w_i}+\tau\iii(n)\le\taul+\tau\iii(n). 
\end{equation}
In other words,
\begin{equation}\label{lyman}
 \tau(n)\le \taul+\min_{i\le L}\tau\iii(n). 
\end{equation}

Moreover, the stopping times $\tau\iii(n)$ are independent, and have the
same distribution as $\tau(n)$.
Hence, \eqref{landings} implies that
\begin{equation}\label{serrander}
  \P\bigpar{\min_{i\le L}\tau\iii(n)\ge b\log n}
=
  \P\bigpar{\tau(n)\ge b\log n}^L
<e^{-L}
\le n^{-r}. 
\end{equation}
Combining \eqref{lyman}, \eqref{ofvandahl} and \eqref{serrander},
we obtain
\begin{equation}
  \P\bigpar{\tau(n)\ge (b+B)\log n} \le (A+1)n^{-r}.
\end{equation}
\end{proof}

The next lemma will immediately be improved in \refL{LM3}. 
Lemma \ref{LM3} is
trivially true for $n=1$ too; however, we assume $n\ge2$ since as said in
\refS{S:BP}, in principle we do not require $n$ to be an integer; any real
positive $n$ is possible. (We use this in the proof of \refL{LM3} below, for
convenience, when we do not round $m$ to an integer.)

\begin{lemma}\label{LM3-}
  Assume \refBP{} and \ref{BP+}.
Then there exists $c$ such that, for all $n$ 
\begin{equation}\label{lm4}
\P\bigpar{H(T_n)\ge c\log n}=O\bigpar{n\qw}.  
\end{equation} 
\end{lemma}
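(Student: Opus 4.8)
The plan is to combine Lemmas~\ref{LM1} and~\ref{LM2}. Recall that $T_n=\cT_{\tau(n)}$, and that the height is monotone under passing to a rooted subtree: if $s\le t$ then $\cT_s\subseteq\cT_t$ with each node having the same depth in both trees, so $H(\cT_s)\le H(\cT_t)$. Hence, for any deterministic $t$, on the event $\set{\tau(n)\le t}$ we have $H(T_n)=H(\cT_{\tau(n)})\le H(\cT_t)$, and therefore, by a union bound,
\begin{equation*}
\P\bigpar{H(T_n)\ge c\log n}\le \P\bigpar{\tau(n)\ge t}+\P\bigpar{H(\cT_t)\ge c\log n}
\end{equation*}
for every $c$ and every $t$. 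The idea is to take $t$ of order $\log n$, controlling the first term by \refL{LM2} and the second by \refL{LM1}.

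Concretely, I would first apply \refL{LM1} with $r=1$ to obtain a constant $c_1$ and a $t_0<\infty$ with $\P\bigpar{H(\cT_t)\ge c_1 t}\le e^{-t}$ for all $t\ge t_0$, and then apply \refL{LM2} with $r=1$ to obtain a constant $c_1'$, which we may assume satisfies $c_1'\ge 1$ (enlarging $c_1'$ only weakens the conclusion of that lemma), with $\P\bigpar{\tau(n)\ge c_1'\log n}=O\bigpar{n\qw}$. Setting $t:=c_1'\log n$ and $c:=c_1 c_1'$, the displayed inequality gives, for all $n$ large enough that $c_1'\log n\ge t_0$,
\begin{equation*}
\P\bigpar{H(T_n)\ge c\log n}\le O\bigpar{n\qw}+e^{-c_1'\log n}=O\bigpar{n\qw}+n^{-c_1'}=O\bigpar{n\qw},
\end{equation*}
using $c_1'\ge 1$. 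For the finitely many remaining values of $n$ the left-hand side is trivially at most $1$ while $n\qw$ is bounded below on them, so the bound \eqref{lm4} holds for all $n$ after enlarging the implied constant.

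The argument is essentially routine once Lemmas~\ref{LM1} and~\ref{LM2} are in hand; the only points requiring a little care are the ``for large $t$'' restriction in \refL{LM1} (handled by absorbing small $n$ into the $O(\cdot)$-constant) and making sure the exponents combine to at least the rate $n\qw$ (handled by taking $c_1'\ge 1$). Running the same scheme with a general parameter $r$ — using \refL{LM2} with parameter $r$ to get $c_r'$, and \refL{LM1} with parameter $r/c_r'$ — yields $\P\bigpar{H(T_n)\ge c_r\log n}=O\bigpar{n^{-r}}$ for every fixed $r$, which is the sharpening recorded in \refL{LM3}; here only the form $O(n\qw)$ is needed immediately, so that is all I would state at this point.
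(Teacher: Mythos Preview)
Your proof is correct and essentially identical to the paper's: the same union bound $\P(H(T_n)\ge c_1c_1'\log n)\le\P(\tau(n)>c_1'\log n)+\P(H(\cT_{c_1'\log n})\ge c_1c_1'\log n)$, the same choice $r=1$ in both lemmas with $c_1'\ge1$, and the same handling of small $n$. One caveat on your closing remark: the statement of \refL{LM3} requires a \emph{single} constant $C$ valid for all $t\ge1$, whereas your suggested scheme produces $c_r$ depending on $r$; the paper instead bootstraps from \refL{LM3-} by applying it to $m=An^t$ to get the uniform constant.
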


\begin{proof}
  Let $c_1$ and $c'_1$ be as in Lemmas \ref{LM1} and \ref{LM2} with $r=1$;
  we may assume $c'_1\ge1$.
Then by Lemmas \ref{LM2} and \ref{LM1}, recalling that $T_n:=\cT_{\tau(n)}$,
for large $n$,
  \begin{equation}
	\begin{split}
\P\bigpar{H(T_n) \ge c_1c'_1\log n}
&\le 
\P\bigpar{\tau(n)>c'_1\log n} + \P\bigpar{H(\cT_{c'_1\log n})\ge c_1c'_1\log n}
\\&
\le O\bigpar{n\qw}+e^{-c'_1\log n} 
=  O\bigpar{n\qw}.
	\end{split}
\raisetag{0.9\baselineskip}
  \end{equation}
This completes the proof, since \eqref{lm4} is trivial for small $n$.
\end{proof}

\begin{lemma}\label{LM3}
  Assume \refBP{} and \ref{BP+}.
Then there exists $C$ such that, for all $n\ge2$ and $t\ge1$,
\begin{equation}\label{lm3}
\P\bigpar{H(T_n)\ge Ct\log n}\le n^{-t}.  
\end{equation}
\end{lemma}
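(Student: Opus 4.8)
The plan is to re-run the proof of \refL{LM3-}, but to feed \refL{LM1} and \refL{LM2} parameters chosen so that the tail comes out as $n^{-t}$ rather than $O(n^{-1})$; the only real work is to check that every constant involved can be taken uniformly in $t\ge1$.

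First I would record what the proof of \refL{LM2} actually gives: $\P\bigpar{\tau(n)\ge c'_r\log n}\le (A+1)n^{-r}$ for every $n$ above a threshold \emph{not depending on $r$}, where $A=\E e^{\gd\xi_2}\ge1$ (finite by \ref{BP+}) and, writing $\vartheta:=(\log A+1)/\gd>0$, the number $c'_r=b+r\vartheta$ is affine in $r$ with $b=2\ga\qw>0$, and both $b,\vartheta$ are independent of $r$. Consequently, for $t\ge1$ one has $c'_t\le c'_1t$ and $c'_t/\vartheta=b/\vartheta+t\ge t$. So I would set $r_*:=1/\vartheta$ (a fixed constant, determined by the process, not by $t$), let $c_{r_*}$ be the constant furnished by \refL{LM1} for this $r_*$, and put $C:=c_{r_*}c'_1$. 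Given $t\ge1$ and $n$ large, take $u:=c'_t\log n$. On $\set{\tau(n)\le u}$ we have $T_n=\cT_{\tau(n)}\subseteq\cT_u$, hence $H(T_n)\le H(\cT_u)$, so
\begin{equation*}
  \P\bigpar{H(T_n)\ge Ct\log n}\le \P\bigpar{\tau(n)>u}+\P\bigpar{H(\cT_u)\ge Ct\log n}.
\end{equation*}
By \refL{LM2} with parameter $t$, the first term is $\le(A+1)n^{-t}$. For the second, note $c_{r_*}u=c_{r_*}c'_t\log n\le Ct\log n$, and that \refL{LM1} applies for $u$ large — which, since $r_*$ is fixed, holds for all $n$ above a fixed threshold — giving $\P\bigpar{H(\cT_u)\ge Ct\log n}\le\P\bigpar{H(\cT_u)\ge c_{r_*}u}\le e^{-r_*u}=n^{-c'_t/\vartheta}\le n^{-t}$. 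Thus $\P\bigpar{H(T_n)\ge Ct\log n}\le(A+2)n^{-t}$ for all $t\ge1$ and all $n\ge N_0$, with $N_0$ independent of $t$.

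Next I would remove the spurious constant $A+2$: applying the bound just obtained with $t+1$ in place of $t$ (still $\ge1$) gives $\P\bigpar{H(T_n)\ge C(t+1)\log n}\le(A+2)n^{-(t+1)}\le n^{-t}$ as soon as $n\ge A+2$, and since $C(t+1)\le 2Ct$ for $t\ge1$ this yields $\P\bigpar{H(T_n)\ge 2Ct\log n}\le n^{-t}$ for all $t\ge1$ and all $n\ge\max(N_0,A+2)$. The remaining finitely many values $2\le n<\max(N_0,A+2)$ I would dispatch crudely: $H(T_n)\le|T_n|-1$, and for each fixed $n$ the tail $\P\bigpar{H(\cT_u)\ge h}$ decays exponentially in $h$ (directly, since $\P\bigpar{H(\cT_u)\ge h}\le\sum_{k\ge h}\mu^{*k}([0,u])\le e^{\gth u}\hmu(\gth)^h/(1-\hmu(\gth))$ for any $\gth>\ga$), while $\tau(n)$ has an exponential tail for fixed $n$ (\refL{LM2} being affine in $r$ with an $O(n^{-r})=O(e^{-r\log n})$ bound); combining these with $v=h/M$ for suitable $M$ shows $\E e^{\gd_n H(T_n)}<\infty$ for some $\gd_n>0$, so $\P\bigpar{H(T_n)\ge Ct\log n}$ decays exponentially in $t$ for each such $n$, and enlarging $C$ once more makes it $\le n^{-t}$. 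Taking $C$ to be the largest of the finitely many constants that arise then completes the proof.

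The hard part is purely the bookkeeping: confirming that the multiplicative constant and the $n$-threshold in \refL{LM2}, and the $u$-threshold in \refL{LM1}, are all independent of $t$ — which works precisely because \refL{LM1} is invoked with the \emph{fixed} parameter $r_*=1/\vartheta$ rather than one growing with $t$ — together with the separate, mildly ad hoc treatment of the finitely many small $n$, where an exponential moment for $H(T_n)$ is what is needed.
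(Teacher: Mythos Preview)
Your argument is correct, but the paper's proof is considerably shorter and avoids almost all of the bookkeeping you do.  The paper does not re-enter Lemmas~\ref{LM1} and~\ref{LM2} with adjusted parameters; instead it exploits the \emph{monotonicity} $H(T_n)\le H(T_m)$ for $n\le m$ (which follows from $\tau(n)\le\tau(m)$) to bootstrap \refL{LM3-} directly.  Concretely: write \refL{LM3-} as $\P\bigpar{H(T_n)\ge c\log n}\le A/n$ for all $n$.  Given $n\ge2$ and $t\ge1$, set $m:=An^t\ge n$; then
\[
  \P\bigpar{H(T_n)\ge c\log m}\le \P\bigpar{H(T_m)\ge c\log m}\le A/m=n^{-t},
\]
and since $\log m=\log A+t\log n\le(1+\log A/\log2)\,t\log n$, taking $C:=c(1+\log A/\log2)$ gives \eqref{lm3} immediately.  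This trick gives the clean bound $n^{-t}$ with no multiplicative constant, no separate removal of an $(A+2)$ factor, and no special treatment of small $n$.  Your route---tracking that the thresholds in \refL{LM1} and \refL{LM2} are $t$-independent and invoking \refL{LM1} with a \emph{fixed} $r_*$---does work, and makes explicit which uniformities are really needed; but the monotonicity argument shows that \refL{LM3-} alone already contains the full strength of \refL{LM3}.
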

\begin{proof}
By \refL{LM3-}, there exists $c$ and $A$ such that,
for all $n$,
\begin{equation}\label{th}
\P\bigpar{H(T_n)\ge c\log n}\le A/n.   
\end{equation}
We may assume $A\ge1$.

Given $n\ge2$ and $t\ge1$, let $m:=An^t$. Then $m\ge n$ and thus 
$\tau(m)\ge \tau(n)$ and $H(T_m)\ge H(T_n)$. Consequently,
\eqref{th} applied to $m$ yields
\begin{equation}\label{bo}
\P\bigpar{H(T_n)\ge c\log m}
\le \P\bigpar{H(T_m)\ge c\log m}  \le A/m =n^{-t}.
\end{equation}
Moreover, $\log m=\log A+t\log n\le (1+\log A/\log2)t\log n$.
Hence, with $C:=c(1+\log A/\log 2)$, \eqref{bo} implies
\begin{equation}
\P\bigpar{H(T_n)\ge Ct\log n}
\le
\P\bigpar{H(T_n)\ge c\log m}
\le n^{-t}.
\end{equation}
\end{proof}

\begin{theorem}\label{Ton}
  Assume \refBP, \refBPlastpsi{} and \ref{BP+}.
Then the convergence in \eqref{theight} holds also in $L^r$ for every
$r>0$, \ie,
\begin{equation}\label{ton1}
\E |H(T_n)/\log n-\bgam|^r\to0.  
\end{equation}
In particular, 
\begin{equation}\label{ton2}
  \frac{\E \bigpar{H(T_n)^r}}{\log^r n}\to\bgam^r,
\qquad r>0.
\end{equation}
\end{theorem}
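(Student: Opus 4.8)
The plan is to upgrade the almost sure convergence \eqref{theight} to $L^r$-convergence by showing that the relevant family is uniformly integrable, the key input being the $n$-uniform tail bound of \refL{LM3}. Write $X_n:=H(T_n)/\log n$ for $n\ge2$; by \refT{Theight} (which uses \refBP{} and \refBPlastpsi) we have $X_n\asto\bgam$, and by \refL{LM3} (which uses \refBP{} and \ref{BP+}) there is a constant $C$ with
\[
\P\bigpar{X_n\ge Ct}\le n^{-t}\le 2^{-t},\qquad n\ge2,\ t\ge1.
\]

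First I would show $\sup_{n\ge2}\E X_n^p<\infty$ for every $p>0$. Since $X_n\ge0$,
\[
\E X_n^p=\intoo\P\bigpar{X_n>s^{1/p}}\dd s\le C^p+\intoo 2^{-s^{1/p}/C}\dd s,
\]
where on $\set{s\le C^p}$ we bound the integrand by $1$, and on $\set{s>C^p}$ we set $s^{1/p}=Ct$ with $t>1$ and apply the displayed tail bound; the last integral is finite (substitute $u=s^{1/p}$) and the whole bound does not depend on $n$.

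Fixing $r>0$ and applying this with $p=2r$, the family $\set{X_n^r}_{n\ge2}$ is bounded in $L^2$, hence uniformly integrable. As $|X_n-\bgam|^r$ is dominated by $2^r\bigpar{X_n^r+\bgam^r}$ when $r\ge1$ and by $X_n^r+\bgam^r$ when $0<r<1$, the family $\set{|X_n-\bgam|^r}_{n\ge2}$ is uniformly integrable as well; combined with $|X_n-\bgam|^r\to0$ \as{} this yields $\E|X_n-\bgam|^r\to0$, which is \eqref{ton1}. Likewise $X_n^r\to\bgam^r$ \as{} together with the uniform integrability of $\set{X_n^r}_{n\ge2}$ gives $\E X_n^r\to\bgam^r$, i.e.\ \eqref{ton2}.

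I do not anticipate a serious obstacle here: the real work is contained in \refL{LM3} (which rests on \refT{Tex} and the branching random walk estimates of \citet{Biggins95}), and the remaining argument is the standard principle that almost sure convergence together with an $n$-uniform bound on a higher moment forces $L^r$-convergence. The only point requiring a (short) computation is converting the rapidly decaying tail bound of \refL{LM3} into the $n$-uniform moment bound above.
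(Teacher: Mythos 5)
Your argument is correct and is essentially the paper's own proof: both use the $n$-uniform tail bound of \refL{LM3} to obtain $\sup_{n\ge2}\E X_n^p<\infty$ for every $p>0$, deduce uniform integrability of $X_n^r$ (and hence of $|X_n-\bgam|^r$), and combine this with the almost sure convergence from \refT{Theight}. The only cosmetic difference is that the paper computes the moment bound via $\E X_n^r=r\intoo t^{r-1}\P(X_n\ge t)\dd t$ rather than your layer-cake formula in the variable $s$.
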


\begin{proof}
  Let $X_n:=H(T_n)/\log n$. By \refL{LM3}, for $n\ge2$ and $t\ge1$,
$\P(X_n\ge Ct)\le n^{-t}\le 2^{-t}\le 2^{1-t}$, which obviously holds also
  for $t<1$.
Hence, for $r>0$,
\begin{equation}
  \E X_n^r = r\intoo t^{r-1} \P(X_n \ge t)\dd t
\le r\intoo t^{r-1} 2^{1-t/C}\dd t <\infty.
\end{equation}
This shows that each moment $\E X_n^r$ is uniformly bounded for $n\ge2$.
As is well-known, this implies uniform integrability of $X_n^r$ for each
$r$, and thus also of $|X_n-\bgam|^r$; since $X_n\asto\bgam$ by
\refT{Theight}, this implies \eqref{ton1} and \eqref{ton2}.
(See \eg{} \cite[Theorems 5.4.2 and 5.5.2]{Gut} for uniform integrability.)
\end{proof}

In particular, \refT{Ton}
shows that $\E H(T_n)\sim \bgam \log n$, and similarly for
higher moments, in Examples \ref{EHRRT}--\ref{EHpyramids}. 
We obtain also corresponding results for the fragmentation tree
in \refE{EHfrag}, using \refL{LM1}.

\begin{remark}
It follows from \eqref{ton1}, with $r=2$, that the variance
$\E|H(T_n)-\E H(T_n)|^2=o(\log^2 n)$.
In the case of a \bst, \citet{Reed} showed the much sharper result that
$\E|H(T_n)-\E H(T_n)|^2=O(1)$; this was extended to higher central moments
and to \mst{s} by \citet{DrmotaII2003}.
\end{remark}

\subsection{Saturation level}\label{SSsat}
In this subsection we will often assume that the random tree is \mary; 
more precisely,  that $N=m$ for some (non-random) integer $m$, 
\ie, every individual in the branching process gets $m$ children.
(There is no risk of confusion between the integer parameter $m$ and the
function $m$ in \eqref{m}; they never appear together.)
We call this the \emph{\mary{} case} in the present section.
(We have previously defined an \mary{} tree to be a tree where the children
of each nodes have labels in the set \set{1,\dots,m}. In the present
section, such labels are irrelevant, as are the order of the children, so we
can use this simpler definition.)

In the \mary{} case,
the infinite complete family tree $\cT_\infty$ is thus a complete \mary{}
tree; however, we are interested in the trees $\ctt$ for finite $t$, and in
particular in $T_n$, and there the outdegrees may be smaller than $m$ (but
never larger); note that any given node will get $m$ children eventually
(\ie, for large $t$ or $n$).
As said above, the saturation level $S(T_n)$ is defined as the last level
(generation) $k$ where all possible $m^k$ nodes exist; equivalently, it is the
first generation where some node has less than $m$ children.

We study the saturation level in basically
the same way as the height in the preceding
subsection, but now using a feature of 
\citet{Biggins95,Biggins97} that was not needed above: 
Let $\chi$ be a \oichar, \ie{} a
characteristic that takes the values 0 and 1 only
(excluding the trivial case when \as{}
$\chi(t)=0$ for all $t\ge0$),
and now let $B_t$ be the maximum of the position
$y_x$ of all individuals $x$ born at time $t$
such that the characteristic $\chi_x(t-\gs_x)=1$.
(I.e., only individuals with $\chi=1$ count.)
Then \refT{TBiggins1} still holds, for general $\eta_i$,
provided the following two conditions are
satisfied \cite{Biggins95,Biggins97}:

\begin{xenumerate}
\renewcommand{\theenumi}{{\upshape{(B\arabic{enumi})}}}
\renewcommand{\labelenumi}{\theenumi}
\item \label{BIGG1}
$\ga(\zeta)<\infty$ for some $\zeta<0$.
\item \label{BIGG2}
For every $\zeta<0$ such that $\ga(\zeta)<\infty$,
\begin{equation}\label{bigg2}
  \E \sup_{t>0}\bigpar{e^{-\ga(\zeta) t}\chi(t)}
<\infty.
\end{equation}
(The property \ref{BIGG2} is called \emph{well-regulated} in \cite{Biggins97}.)
\end{xenumerate}

\begin{remark}\label{RBIGG}
The case considered in \refSS{SSheight} above is the special
case when $\chi(t)=1$ for all $t\ge0$, and further $\eta_i=1$. 
We noted \ref{BIGG1} in
\refL{LX}\ref{lxga}, 
and \ref{BIGG2} is trivial 
(for this choice of $\eta_i$)
since $\ga(\zeta)\ge\ga>0$ when
$\zeta<0$, as also noted in \refL{LX}\ref{lxga}.
\end{remark}

\begin{remark}\label{RBiggins-Nerman}
  The characteristics treated in \citet{Biggins95} are independent of the
  life histories of all other individuals. However, the proofs
hold also for the more general characteristics in \refR{Rnerman7} that may
depend on the entire process of descendants of the individual. 
(The proofs in \cite{Biggins95}
use \cite[Theorem 5.4]{Nerman}, which is shown in \cite[Section 7]{Nerman}
to extend to such characteristics.)
\end{remark}

\begin{example}\label{Echim}
In the \mary{} case, 
denote, as usual, the birth times of the children of a typical individual
by
$\xi_1\le\dots\le\xi_m$, and define the characteristic 
\begin{equation}
  \label{chim}
\chi(t):=\ett{t<\xi_m},
\end{equation}
the indicator that not all $m$ children are born. 
Furthermore, let $\eta_i=-1$ for every $i$.
Then $-B_t$ is the minimum generation number of an individual that does not have
$m$ children at time $t$, \ie, $-B_t = S(\ctt)$, the saturation level of
$\ctt$.    
\end{example}

To study the saturation level,
we thus intend to use \refE{Echim} and apply \refT{TBiggins1} with $\eta_i=-1$.
For later use, 
we continue to use the notations in \eqref{m}--\eqref{gamma0} and
\eqref{gal}--\eqref{gaxx} for the case $\eta_i=1$, and let 
$\check m(\zeta,\gth)$, $\chgam$, $\chga(\zeta)$ and $\chgaxx(x)$ 
be the corresponding quantities for $\eta_i=-1$ (for general branching
processes satisfying our conditions; not just in the \mary{} case);
furthermore, let $\gamx:=-\chgam$.
Thus, \cf{} \eqref{mus},
\begin{align}\label{chm}
  \check m(\zeta,\gth)
=e^{\zeta}\hmu(\gth)
=m(-\zeta,\gth)
\end{align}
and consequently, \cf{} \eqref{gal}--\eqref{gamma1} and \eqref{gamma0}, 
\begin{align}
\label{chga}
  \chga(\zeta)&=\inf\bigset{\gth:\hmu(\gth)\le e^{-\zeta}}=\ga(-\zeta),
\\
\label{chgaxx}
\chgaxx(x)&=\inf_{\zeta<0}\bigset{x\zeta+\ga(-\zeta)}
=\inf_{\zeta>0}\bigset{-x\zeta+\ga(\zeta)},
\\
\gamx \label{chgamma1}
&= 
-\chgam
=-\inf\Bigset{a:\inf_{\zeta<0}\log \chm(\zeta,-a\zeta)<0}
\\&\label{chgamma2}
=-\inf\set{x:\chgaxx(x)<0}
=\sup\set{x:\chgaxx(-x)<0}.
\end{align}

For $x<0$ we also have, using \eqref{gal},
in analogy with \eqref{gaxx2},
\begin{equation}\label{chgaxx2}
  \begin{split}
\chgaxx(x)&
=\inf_{\substack{\zeta>0\\ \log\hmu(\gth)\le\zeta}}\bigset{-x\zeta+\gth}	
=\inf_{\gth}\bigset{-x\log_+\hmu(\gth)+\gth}
\\&
=\inf_{\gth\le\ga}\bigset{-x\log\hmu(\gth)+\gth}.	
  \end{split}
\end{equation}

We give some properties of these functions in \refL{LXch} below,
but we first state the main result for the saturation level in the \mary{}
case.
\begin{theorem}
  \label{Tfillup}
In the \mary{} case $N=m$,
under the assumptions \refBP{} and \refBPlastpsi, 
  \begin{equation}\label{tfillup}
	\frac{S(T_n)}{\log n} \asto \bgamx:=\frac{\gamx}{\ga} .
  \end{equation}
\end{theorem}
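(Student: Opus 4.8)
The plan is to mirror the proof of \refT{Theight}: first prove the corresponding statement for the branching process stopped at a fixed time $t$, namely that $S(\ctt)/t\asto\gamx$, and then transfer it to $T_n=\cT_{\tau(n)}$ using \refT{Tex}. For the first step I would invoke \refT{TBiggins1} in the generalised form allowed in \refSS{SSsat}, applied with the displacements $\eta_i=-1$ and the \oichar{} $\chi(t)=\ett{t<\xi_m}$ of \refE{Echim}, so that $-B_t=S(\ctt)$. By \refE{Echim}, $B_t/t\asto\chgam=-\gamx$, hence $S(\ctt)/t\asto\gamx$. To apply \refT{TBiggins1} here I must check the two hypotheses \ref{BIGG1} and \ref{BIGG2} for the pair $(\eta_i=-1,\chi)$. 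Condition \ref{BIGG1} is the statement that $\chga(\zeta)<\infty$ for some $\zeta<0$; by \eqref{chga}, $\chga(\zeta)=\ga(-\zeta)$, so this is the statement that $\ga(\zeta')<\infty$ for some $\zeta'>0$, which holds by \refL{LX}\ref{lxga} (indeed $\ga(\zeta')<\infty$ whenever $\zeta'>\log\mu\set0$, and $\mu\set0<1$ by \ref{BPfirst}). Condition \ref{BIGG2} requires, for every $\zeta<0$ with $\chga(\zeta)<\infty$, that $\E\sup_{t>0}\bigpar{e^{-\chga(\zeta)t}\chi(t)}<\infty$; since $\chi(t)=\ett{t<\xi_m}$ vanishes for $t\ge\xi_m$, the supremum is at most $1$ if $\chga(\zeta)\ge0$ and at most $e^{-\chga(\zeta)\xi_m}$ if $\chga(\zeta)<0$, so in either case it suffices to know that $\E e^{\gd\xi_m}<\infty$ for the relevant $\gd=-\chga(\zeta)>0$; but $\xi_m\le\xi_2+(\xi_m-\xi_2)$ is not automatically bounded, so I would instead argue that $\chga(\zeta)=\ga(-\zeta)\ge0$ for all $\zeta<0$ unless $-\zeta$ exceeds the zero $\ga$ of $\log\hmu$, and handle the remaining range $\zeta\in(-\log\hmu(\AAA),0)$ via the finiteness encoded in \ref{BPmub}. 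In fact the cleanest route is: for $\zeta<0$ we have $-\zeta>0$, and $\ga(-\zeta)\le\ga$ only when $-\zeta>0$; combined with $\chi\le1$ this gives a bound $\le1$ whenever $\ga(-\zeta)\ge0$, and when $\ga(-\zeta)<0$ we use that $\ga(-\zeta)<0$ forces $-\zeta>\log\hmu(0^+)$, and here \ref{BPmub} (equivalently $\E\Xib<\infty$ for some $\gth<\ga$) yields the needed exponential moment of $\xi_m\le\xi_1+\dots$; I would spell this out carefully since it is the technical heart of the verification. This is the step I expect to be the main obstacle: checking \ref{BIGG2} for the characteristic $\chi$ of \refE{Echim} in the regime where $\chga(\zeta)$ is negative.

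Having established $S(\ctt)/t\asto\gamx$, the transfer to $T_n$ is routine. By \refT{Tex}\ref{tex1}, almost surely $\tau(n)<\infty$ for every $n$ and $\tau(n)\asto\infty$ as \ntoo; by \refT{Tex}\ref{tex2} (using \refBPlastpsi) we have $\tau(n)/\log n\asto 1/\ga$, equation \eqref{tex2tau}. Since $T_n=\cT_{\tau(n)}$ and $\tau(n)\to\infty$ almost surely, the almost sure limit $S(\cT_t)/t\to\gamx$ gives $S(T_n)/\tau(n)=S(\cT_{\tau(n)})/\tau(n)\asto\gamx$, and multiplying by $\tau(n)/\log n\asto1/\ga$ yields
\begin{equation*}
\frac{S(T_n)}{\log n}=\frac{S(T_n)}{\tau(n)}\cdot\frac{\tau(n)}{\log n}\asto\gamx\cdot\frac1\ga=\bgamx,
\end{equation*}
which is exactly \eqref{tfillup}. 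The only subtlety is that $S(\cT_t)$ is a step function in $t$ and we are evaluating it along the random increasing sequence $\tau(n)$, but this is harmless because $S(\cT_t)/t\to\gamx$ holds for every real $t\to\infty$ along a full-measure event, and on that event the composition with any sequence $\tau(n)\to\infty$ converges to the same limit. I would also remark, as is done for the height in \refE{EHmst} and \refE{EHmstl}, that in the \mary{} examples $\gamx$ (equivalently $\chgam$) can be computed from $\hmu$ via the analogue of \refL{Lgamma}, using \eqref{chgaxx2} and \eqref{chgamma2}; a parallel lemma describing the properties of $\chga,\chgaxx$ (the promised \refL{LXch}) gives the existence and uniqueness of the relevant critical $\gth$, but this is not needed for the qualitative statement \eqref{tfillup} itself.

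Finally, I would note for completeness that the hypothesis \ref{BP+} is \emph{not} needed here — unlike in the moment-convergence result \refT{Ton} — because \refT{Tfillup} only asserts almost sure convergence, for which \refT{TBiggins1} together with \refT{Tex} suffices under \refBP{} and \refBPlastpsi{} alone.
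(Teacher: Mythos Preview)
Your plan and most of its execution match the paper's proof: invoke \refT{TBiggins1} in its extended form with $\eta_i=-1$ and the characteristic $\chi(t)=\ett{t<\xi_m}$ from \refE{Echim}, verify \ref{BIGG1}--\ref{BIGG2}, conclude $S(\ctt)/t\asto\gamx$, and transfer to $T_n$ via \eqref{tex2tau}. Your verification of \ref{BIGG1} and the transfer step are correct.

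The gap is in your verification of \ref{BIGG2}, which you rightly flag as the main obstacle but do not resolve. Your appeal to \ref{BPmub} is not to the point: \ref{BPmub} only asserts $\hmu(\gth)<\infty$ for \emph{some} $\gth<\ga$, and that $\gth$ may well be positive, so it does not by itself control $\E e^{\gd\xi_m}$ for the particular $\gd=-\chga(\zeta)>0$ that arises. The fragment ``$\xi_m\le\xi_1+\dots$'' is not a usable bound either. The clean argument (and the paper's) is this: since $N=m$, \eqref{hmub} gives
\[
\E \sup_{t>0}\bigl(e^{-\chga(\zeta)t}\chi(t)\bigr)
=\E e^{-\chga(\zeta)\xi_m}
\le \E\sum_{i=1}^m e^{-\chga(\zeta)\xi_i}=\hmu\bigl(\chga(\zeta)\bigr),
\]
so it suffices to check $\hmu(\chga(\zeta))<\infty$. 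This holds because $\chga(\zeta)=\ga(-\zeta)$ always lands where $\hmu$ is finite: if $\AAA=-\infty$ this is automatic since $\ga(-\zeta)>-\infty$; if $\AAA>-\infty$ with $\hmu(\AAA)<\infty$ then $\chga(\zeta)\ge\AAA$ suffices; and if $\AAA>-\infty$ with $\hmu(\AAA)=\infty$ then $\hmu(\gth)\to\infty$ as $\gth\downto\AAA$, so the definition \eqref{gal} forces $\ga(-\zeta)>\AAA$ strictly. With this three-case split \ref{BIGG2} is verified and the rest of your argument goes through unchanged.
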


\begin{proof}
By \eqref{chga} and \refL{LX}\ref{lxga},
$\check\ga(\zeta)=\ga(-\zeta)<\infty$ for every
$\zeta<0$, so \ref{BIGG1} holds for $\chga$.

Furthermore, \eqref{bigg2} (with $\chga(\zeta)$)
is trivial if $\chga(\zeta)\ge0$, so assume
$\chga(\zeta)<0$. Then
\eqref{chim} and \eqref{hmub} yield
\begin{equation}\label{mixmax}
\E  \sup_{t>0}\bigpar{e^{-\chga(\zeta) t}\chi(t)}
= \E e^{-\chga(\zeta)\xi_m}
\le \hmu(\chga(\zeta)).
\end{equation}

Recall $\AAA:=\inf\set{\gth:\hmu(\gth)<\infty}$ defined in \eqref{AAA}.
There are three cases:
\begin{romenumerate}[-5pt]
\item 
$\AAA=-\infty$. Then $\hmu(\gth)<\infty$ for every real $\gth$.
Furthermore, $\chga(\zeta)=\ga(-\zeta)>-\infty$, 
see \refL{LX}\ref{lxga}, and thus
  $\hmu(\chga(\zeta))<\infty$. 
\item 
$\AAA>-\infty$ and $\hmu(\AAA)<\infty$.
Then $\chga(\zeta)\ge \AAA$ for every $\zeta$, and thus 
$\hmu(\chga(\zeta))\le\hmu(\AAA)<\infty$. 
\item 
$\AAA>-\infty$ and $\hmu(\AAA)=\infty$.
Then, by monotone convergence, $\hmu(\zeta)\to\infty$ as $\zeta\downto \AAA$
and thus $\chga(\zeta)> \AAA$ for every $\zeta$, see \eqref{chga}; hence
$\hmu(\chga(\zeta))<\infty$. 
\end{romenumerate}

Consequently, in all three cases, 
$\hmu(\chga(\zeta))<\infty$ for all real $\zeta$ such that
$\chga(\zeta)<0$ (and, in fact, for all real $\zeta$), 
and thus \eqref{mixmax} shows that \ref{BIGG2} holds
for $\chga$.

Consequently, \refT{TBiggins1} applies to our $\eta_i=-1$ and $\chi$ in
\eqref{chim}, which 
yields
\begin{equation}\label{bt-}
  \frac{-S(\ctt)}{t}=\frac{B_t}{t}
\asto \chgam = -\gamx
\end{equation}
and the result follows by \eqref{tex2tau}
as in the proof of \refT{Theight}.
\end{proof}

As a corollary, we obtain moment convergence and convergence in $L^r$ as in
\refT{Ton}, assuming also \ref{BP+}. 

\begin{theorem}\label{TonS}
  Assume \refBP, \refBPlastpsi{} and \ref{BP+}
and $N=m$.
Then the convergence in \eqref{tfillup} holds also in $L^r$ for every
$r>0$. 
In particular, 
\begin{equation}\label{tonS2}
  \frac{\E \bigpar{S(T_n)^r}}{\log^r n}\to\bgamx^r,
\qquad r>0.
\end{equation}
\end{theorem}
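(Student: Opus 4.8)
The plan is to mimic exactly the proof of \refT{Ton}, which derived $L^r$ convergence for the height from the almost-sure convergence \eqref{theight} together with uniform tail bounds (Lemmas \ref{LM1}--\ref{LM3}). The key observation is that the saturation level is dominated by the height, $S(T_n)\le H(T_n)$ for every tree $T_n$ in the \mary{} case (the last full generation cannot exceed the deepest node), so all the tail estimates established for $H(T_n)$ apply verbatim to $S(T_n)$.

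First I would record the deterministic inequality $0\le S(T_n)\le H(T_n)$. Then, by \refL{LM3}, there exists $C$ such that $\P\bigpar{S(T_n)\ge Ct\log n}\le\P\bigpar{H(T_n)\ge Ct\log n}\le n^{-t}$ for all $n\ge2$ and $t\ge1$. Setting $Y_n:=S(T_n)/\log n$, this gives $\P(Y_n\ge Ct)\le n^{-t}\le 2^{-t}\le 2^{1-t}$ for $n\ge2$ and $t\ge1$, and trivially $\P(Y_n\ge Ct)\le 1\le 2^{1-t}$ for $t<1$; hence for each $r>0$,
\begin{equation*}
  \E Y_n^r = r\intoo t^{r-1}\P(Y_n\ge t)\dd t
  \le r\intoo t^{r-1} 2^{1-t/C}\dd t <\infty,
\end{equation*}
so $\set{Y_n^r}_{n\ge2}$ is bounded in $L^{1+\eps}$ for, say, $\eps=1$ (indeed in every $L^p$), and therefore uniformly integrable; the same then holds for $\abs{Y_n-\bgamx}^r$. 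Since \refT{Tfillup} gives $Y_n\asto\bgamx$, uniform integrability upgrades this to $L^r$ convergence, $\E\abs{S(T_n)/\log n-\bgamx}^r\to0$, and in particular $\E\bigpar{S(T_n)^r}/\log^r n\to\bgamx^r$, which is \eqref{tonS2}. (As in \refT{Ton}, one can cite \cite[Theorems 5.4.2 and 5.5.2]{Gut} for the uniform integrability implications.)

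There is essentially no obstacle here beyond noticing the domination $S(T_n)\le H(T_n)$: once that is in hand, the argument is a word-for-word copy of the proof of \refT{Ton}, with $H$ replaced by $S$ and \refT{Theight} replaced by \refT{Tfillup}. The only place where the hypotheses $N=m$ and \ref{BP+} genuinely enter is in the inputs already proved — \refT{Tfillup} needs $N=m$, and Lemmas \ref{LM2}--\ref{LM3} need \ref{BP+} — so no new conditions are required. The mild subtlety worth a sentence in the writeup is that \refL{LM3} is stated for $H(T_n)$, and we use it only through the monotonicity $S\le H$; alternatively one could re-derive Lemmas \ref{LM3-} and \ref{LM3} directly for $S$ using \refL{LM1} with the characteristic $\chi$ of \eqref{chim} and \refL{LM2}, but invoking the bound for $H$ is shorter and cleaner.
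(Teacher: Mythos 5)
Your proposal is correct and follows essentially the same route as the paper: the paper's proof likewise observes that $S(T_n)\le H(T_n)+1$, so that \refL{LM3} transfers to $S(T_n)$, and then repeats the uniform-integrability argument from the proof of \refT{Ton} with \refT{Tfillup} in place of \refT{Theight}. Your sharper domination $S(T_n)\le H(T_n)$ is also valid and changes nothing in the argument.
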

\begin{proof}
  Since $S(T_n)\le H(T_n)+1$, \refL{LM3} holds also for $S(T_n)$; hence the
result follows from \refT{Tfillup} by the argument in the proof of \refT{Ton}.
(In fact, in all cases of \mary{} trees with $|T_n|=n$
and in many other cases, \eg{} for \mst{s}, 
$S(T_n)\le C\log n$ deterministically for some $C$; then the results follow
from \refT{Tfillup} 
by dominated convergence
without using \ref{BP+} and \refL{LM3}.)
\end{proof}

\begin{remark}\label{RBIGG2}
  We see that \ref{BIGG2} (or some similar condition) is needed for
\refT{TBiggins1}; if we let
  $\eta_i=-1$ as above but take $\chi(t)=1$ for all $t\ge0$ (as in
  \refS{SSheight}), then obviously $B_t=0$, and $B_t/t$ does not converge to
  $\chgam$ (in general), so \refT{TBiggins1} does not hold.
\end{remark}

We give some formulas for $\gamx$, similar to \refL{Lgamma}.

\begin{lemma}
  \label{Lgamx}
We have
\begin{equation}\label{gamx2a}
  \begin{split}
\gamx\qw&
=\inf_{\gth<0}\frac{\log \hmu(\gth)}{-\gth}
=\inf_{\AAA<\gth<0}\frac{\log \hmu(\gth)}{|\gth|}.
  \end{split}
\end{equation}
Furthermore, 
$0\le\gamx\le\gb\qw$, and 
with $\AAA$  given by \eqref{AAA}:
\begin{romenumerate}
\item \label{Lgamx0}
If $\AAA\ge0$, then $\gamx=0$.

\item \label{Lgamx>0}
If $\AAA<0$, then $0<\gamx\le\gb\qw$.
Moreover,
if\/ $\gth\in(\AAA,0)$ is a solution of 
the equation \eqref{mul},
then
\begin{equation}\label{gamx3}
  \gamx\qw
  =\frac{\log\hmu(\gth)}{-\gth} 
=-\frac{\hmu'(\gth)}{\hmu(\gth)}
=-\bigpar{\log\hmu}'(\gth).
\end{equation}
Furthermore, \eqref{mul}
has at most one negative solution.

\item \label{Lgamx3}
If $\AAA<0$ but \eqref{mul} has no negative solution, then
\begin{equation}\label{gamx3b}
  \gamx\qw
=\lim_{\gth\to \AAA}\frac{\log\hmu(\gth)}{|\gth|}. 
\end{equation}

\end{romenumerate}
\end{lemma}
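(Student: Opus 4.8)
The plan is to mirror the proof of \refL{Lgamma} almost line for line, exploiting the symmetry $\chm(\zeta,\gth)=m(-\zeta,\gth)$ recorded in \eqref{chm} together with the formulas \eqref{chga}--\eqref{chgaxx2}. First I would establish the variational formula \eqref{gamx2a}. Starting from \eqref{chgamma2}, i.e.\ $\gamx=\sup\set{x:\chgaxx(-x)<0}$, and using \eqref{chgaxx2} to write $\chgaxx(-x)=\inf_{\gth\le\ga}\bigset{x\log\hmu(\gth)+\gth}$ for $x>0$, one sees (as in the passage leading to \eqref{gamma2}, but with the roles of the signs reversed and restricting to $\gth<0$, where $\log\hmu(\gth)>0$ by \refL{LX}\ref{lxhmu} and \eqref{lxi+}) that $\chgaxx(-x)<0$ holds iff $\inf_{\gth<0}\bigpar{x\log\hmu(\gth)+\gth}<0$, i.e.\ iff $1/x>\inf_{\gth<0}\frac{\log\hmu(\gth)}{-\gth}$. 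Hence $\gamx$ is the reciprocal of that infimum, giving the first equality in \eqref{gamx2a}; the second follows since $\log\hmu(\gth)=\infty$ for $\gth\le\AAA$, so only $\gth\in(\AAA,0)$ contributes. I should also note that $\gamx\ge0$ directly from \eqref{chgamma2}, since $\chgaxx(0)=\ga>0$.

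Next I would deal with the trivial case \ref{Lgamx0}: if $\AAA\ge0$ then the interval $(\AAA,0)$ in \eqref{gamx2a} is empty, so the infimum is over the empty set and equals $+\infty$, whence $\gamx\qw=\infty$ and $\gamx=0$. (Alternatively, invoke $-B_t=S(\ctt)=0$ via \eqref{bt-} and \refR{Rgamma}-type reasoning, but the analytic route is cleaner.) For \ref{Lgamx>0}--\ref{Lgamx3}, assume $\AAA<0$. Then $(\AAA,0)$ is nonempty and $\log\hmu$ is finite, analytic, strictly convex and strictly decreasing there by \refL{LX}\ref{lxhmu}. I apply exactly the calculus lemma used in the proof of \refL{Lgamma}: for a differentiable convex $f$ on an interval $\cI$, the function $g(\gth)=\gth f'(\gth)-f(\gth)$ is increasing (since $g'=\gth f''$), hence has at most one zero, and a zero is a global minimum of $f(\gth)/\gth$ --- but here one must be careful about signs because $\gth<0$, so $\bigpar{f/\gth}'=g(\gth)/\gth^2$ still holds and the same conclusion about minima of $f(\gth)/(-\gth)$ follows (a minimum of $f/\gth$ over negatives of $f/\gth^2$... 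I mean: critical points of $f(\gth)/\gth$ are exactly zeros of $g$, and on $(\AAA,0)$ we are minimizing $\log\hmu(\gth)/(-\gth)=-f(\gth)/\gth$). Taking $f=\log\hmu$ on $\cI=(\AAA,0)$, equation $g(\gth)=0$ is precisely \eqref{mul}; if it has a solution $\gth\in(\AAA,0)$ it is unique and is the minimizing point, and substituting into \eqref{gamx2a} gives \eqref{gamx3}, the last equality being $\hmu'(\gth)/\hmu(\gth)=(\log\hmu)'(\gth)$ and the middle one coming from $(\log\hmu)(\gth)=\gth(\log\hmu)'(\gth)$ at a zero of $g$. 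If \eqref{mul} has no solution in $(\AAA,0)$, then $g$ has constant sign there; evaluating at $\gth\to 0^-$, where $g(0)= -\log\hmu(0)\le -\log\mu(\infty)<0$ by \ref{BPsuper} (or simply $g(\ga)=\ga(\log\hmu)'(\ga)<0$ and monotonicity of $g$ pushes this down to all of $\cI$), we get $g<0$ on $(\AAA,0)$, so $-f(\gth)/\gth=\log\hmu(\gth)/(-\gth)$ is monotone on $(\AAA,0)$ and its infimum is the limit as $\gth\downarrow\AAA$, which is \eqref{gamx3b}.

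Finally, the bound $0\le\gamx\le\gb\qw$: in case \ref{Lgamx0} this is immediate since $\gamx=0$. In case $\AAA<0$, I would reuse the tangent inequality \eqref{bertil}, $\log\hmu(\gth)\ge -\gb(\gth-\ga)>-\gb\gth$ valid for all real $\gth$; for $\gth<0$ this gives $\log\hmu(\gth)/(-\gth)>\gb$ is the wrong direction --- so instead note that convexity of $\log\hmu$ and $(\log\hmu)'(\ga)=-\gb$ give $(\log\hmu)'(\gth)\le-\gb$ for $\gth<\ga$, hence for $\gth\in(\AAA,0)$, $\log\hmu(\gth)=\log\hmu(0)-\int_\gth^0(\log\hmu)'\ge \log\hmu(0)+\gb|\gth|\ge\gb|\gth|$ using $\log\hmu(0)=\log\mu(\infty)\ge0$ by \ref{BPsuper}; dividing by $|\gth|$ and taking the infimum in \eqref{gamx2a} yields $\gamx\qw\ge\gb$, i.e.\ $\gamx\le\gb\qw$, and it is $>0$ since the infimum in \eqref{gamx2a} is over a set bounded above (as $\log\hmu(\gth)\to\infty$ as $\gth\downarrow\AAA$ in subcase (iii), or $\log\hmu(\AAA)<\infty$ bounds things in subcase (ii)) --- more simply, $\gamx\qw\le\log\hmu(\gth_0)/|\gth_0|<\infty$ for any fixed $\gth_0\in(\AAA,0)$, so $\gamx>0$. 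I expect the only genuinely delicate point to be getting all the signs right when transcribing the convexity/Legendre-transform argument from $\gth>0$ (the height case) to $\gth<0$ (the saturation case), and in particular verifying that the relevant infimum in \eqref{gamx2a} is attained, respectively is a boundary limit, exactly according to whether \eqref{mul} has a negative root; everything else is a routine rerun of \refL{Lgamma} and its proof.
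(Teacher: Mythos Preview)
Your overall strategy matches the paper's proof: derive the variational formula \eqref{gamx2a} from the definitions (the paper does this via \eqref{chgamma1}--\eqref{chm} and a chain of rewritings culminating in \eqref{gamx2b}, equivalent to your route through \eqref{chgaxx2}), then handle the convexity/uniqueness argument for parts \ref{Lgamx>0}--\ref{Lgamx3} exactly as in \refL{Lgamma} (the paper applies that argument to $f(\gth)=\log\hmu(-\gth)$ on positive reals rather than working directly on $(\AAA,0)$, but this is cosmetic).

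Two small corrections are needed. First, $\chgaxx(0)=\AAA$ by \eqref{chgaxxgb}, not $\ga$, so your stated reason for $\gamx\ge0$ is wrong; the paper instead shows $\chgam\le0$ by checking that every $a>0$ lies in the set $\cE$ of \eqref{cE} (equivalently, you could simply note that $\chgaxx(x)=-\infty$ for all $x>0$, so the supremum in \eqref{chgamma2} is at least $0$). Second, \eqref{bertil} \emph{does} give the right direction: for $\gth<0$ it yields $\log\hmu(\gth)>-\gb\gth=\gb|\gth|$, hence $\log\hmu(\gth)/|\gth|>\gb$, and taking the infimum in \eqref{gamx2a} gives $\gamx\qw\ge\gb$, i.e.\ $\gamx\le\gb\qw$ --- exactly what is wanted and exactly what the paper does. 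Your alternative integration argument is correct too, but unnecessary.
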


\begin{proof}
By \eqref{chgamma1} and \eqref{chm},
 $ \chgam=\inf\set{a:a\in\cE}$,
where
\begin{equation}\label{cE}
  \cE:=\Bigset{a\in\bbR:\inf_{\zeta<0}\bigpar{\zeta+\log \hmu(-a\zeta)}<0}.
\end{equation}
If $a>0$, then as $\zeta\to-\infty$,
$\log\hmu(-a\zeta)\to\log\hmu(\infty)<0$, see \eqref{lxi+}, and thus
$\zeta+\log\hmu(-a\zeta)\to-\infty$; hence $a\in\cE$. Consequently
$\chgam\le0$,
so $\gamx=-\chgam\ge0$. 

\pfitemref{Lgamx0}
If $\AAA\ge0$, then $\hmu(\gth)=\infty$ for every $\gth<0$, and it follows from
the definition \eqref{cE} that
$a\notin\cE$ for every $a<0$; hence $\gamx=\chgam=0$.
In this case, \eqref{gamx2a} is trivial.

\pfitemref{Lgamx>0}
If $\AAA<0$, then $\hmu(\gth)<\infty$ for some $\gth<0$;
furthermore, $\hmu(\gth)\ge\hmu(0)=\E N>1$ by \ref{BPsuper}. Taking first
$\zeta<-\log\hmu(\gth)<0$ and then $a=-\gth/\zeta<0$, we see that
$a\in\cE$ and thus $\chgam\le a<0$; hence $\gamx>0$.
In this case we  have
\begin{equation}
    \chgam
=\inf\set{a<0:a\in\cE}
=\inf\bigset{a<0:\inf_{\zeta<0}\bigpar{\zeta+\log \hmu(-a\zeta)}<0}
\end{equation}
and thus,
\begin{equation}
  \begin{split}
\gamx
=-\chgam
&
=\sup\bigset{x>0:\inf_{\zeta<0}\bigpar{\zeta+\log \hmu(x\zeta)}<0}
\\&
=\sup\bigset{x>0:\inf_{\gth>0}\bigcpar{-\gth/x+\log \hmu(-\gth)}<0}
\\&
=\sup\Bigset{x>0:\inf_{\gth>0}
 \Bigcpar{-\frac{1}{x}+\frac{\log \hmu(-\gth)}{\gth}}<0}
\\&
=\sup\Bigset{x>0:\frac{1}{x}>\inf_{\gth>0}\frac{\log \hmu(-\gth)}{\gth}}
.
  \end{split}
\end{equation}
Hence,
\begin{equation}\label{gamx2b}
  \begin{split}
\gamx\qw&
=\inf_{\gth>0}\frac{\log \hmu(-\gth)}{\gth}
  \end{split}
\end{equation}
which yields \eqref{gamx2a}.

Extreme points of $\log\hmu(\gth)/\gth$ are given by \eqref{mul}, and the
argument in the proof of \refL{Lgamma}, applied to
$f(\gth)=\log\hmu(-\gth)$, shows that there is at most one extreme point in
$(\AAA,0)$ and that any extreme point is a minimum; 
this yields \eqref{gamx3}.
Similarly, if there is no negative solution of \eqref{mul}, then
$\log\hmu(\gth)/|\gth|$ is increasing on $(\AAA,0)$ and 
\eqref{gamx3b}  follows.

Finally, 
if $\gth< 0$, then, \eqref{bertil} implies
$\log\hmu(\gth)/|\gth|>\gb$
and thus \eqref{gamx2a} yields $\gamx\qw\ge\gb$, so $\gamx\le\gb\qw$.
\end{proof}

\begin{remark}
Geometrically, \eqref{gamx2a} says that $-\gamx\qw$ is the slope of the
tangent from the origin to the curve $\log\hmu(\gth)$, $\gth<0$, provided
such a tangent exists. 
Consequently, 
$-\gam\qw$ and $-\gamx\qw$ are the slopes of the two tangents from the
origin to this curve, provided there are two such tangents. (I.e., when
\eqref{mul} has one negative and one positive solution, which typically
is the case.)
Note that, analytically, $\gam$ and $\gam_-$ are given by the same formula
\eqref{gamma3} or \eqref{gamx3}, with $\gth$ the positive or negative root
of \eqref{mul}.  
\end{remark}

\begin{remark}
  \label{Rgamx}
\refL{Lgamx} shows that $\gamx>0\iff \AAA<0$, \ie, 
$\gamx>0$ if and only if
$\hmu(\gth)<\infty$ for some $\gth<0$.
In the \mary{} case $N=m$, this is by \eqref{hmub} equivalent to $\E
e^{s\xi_i}<\infty$ for every $i$ and some $s>0$, \ie, that the child birth
ages $\xi_i$, $i=1,\dots,m$, 
have some exponential moment. 
(With the usual ordering
$\xi_1\le\dots\le\xi_m$; this is equivalent to $\xi_m$ having an exponential moment.)
\end{remark}

We give some properties of $\chga(\zeta)$ and $\chgaxx(x)$, \cf{} \refL{LX}. 

\begin{lemma}\label{LXch}
  \begin{thmenumerate}
\item \label{lxchga}
$\chga(\zeta)=\ga(-\zeta)$ is a convex and increasing function on
  $(-\infty,\infty)$ with
$-\infty<\chga(\zeta)\le\infty$.
Furthermore, $\chga(0)=\ga$,
$\chga(\zeta)<\ga$ for $\zeta<0$ 
and $\chga(\zeta)>\ga$ for $\zeta>0$. 

\item \label{lxchgaxx}
$\chgaxx(x)$ is a concave and decreasing function on
  $(-\infty,\infty)$ with
$-\infty\le \chgaxx(x)<\infty$.
Moreover,
\begin{equation}\label{chgaxxgb}
  \begin{cases}
	\chgaxx(x) = \ga, & x\le -1/\gb,
\\
	\chgaxx(x) < \ga, & x> -1/\gb,
\\ 
\chgaxx(x)=\AAA, & x=0,
\\ 
\chgaxx(x)=-\infty, & x>0,
  \end{cases}
\end{equation}
and furthermore
\begin{equation}\label{chbaram123}
  \begin{cases}
\chgaxx(x)>-\infty,&x<\chbaram,\\
\chgaxx(x)=\baram\log\mu\set{\baram\qw}\ge-\infty,&x=\chbaram\neq0,\\
\chgaxx(x)=\AAA\ge-\infty,&x=\chbaram=0,\\
\chgaxx(x)=-\infty,&x>\chbaram,\\
  \end{cases}
\end{equation}
and thus
\begin{equation}\label{chbaram0}
\sup\set{x:\chgaxx(-x)=-\infty}=\baram.
\end{equation}
In particular, if the birth times $\xi_i$ are unbounded (as in all our
examples),  
so $\baram=0$, then
$\chgaxx(x)>-\infty$ for every $x<0$.

Consequently,
$\chgaxx(x):\bbR\to[-\infty,\infty)$ is continuous everywhere except possibly
  at $\chbaram$, and left-continuous everywhere.
  \end{thmenumerate}
\end{lemma}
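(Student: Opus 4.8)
The plan is to deduce everything from the already-established properties of $\ga$, $\loghmu$ and $\gaxx$ in \refL{LX}, via the reflection identities \eqref{chga}, \eqref{chgaxx}, \eqref{chgaxx2}, which express the ``$\eta_i=-1$'' quantities as the ``$\eta_i=1$'' quantities with $\gth$ (resp.\ $\zeta$) replaced by $-\gth$ (resp.\ $-\zeta$). With this in hand, part~\ref{lxchga} is essentially free: since $\chga(\zeta)=\ga(-\zeta)$, the substitution $\zeta\mapsto-\zeta$ preserves convexity and turns ``decreasing'' into ``increasing'', the bound $-\infty<\chga(\zeta)\le\infty$ transfers verbatim from \refL{LX}\ref{lxga}, and the equalities/inequalities of $\chga$ at and around $\zeta=0$ are exactly the statements about $\ga(\eta)$ at and around $\eta=0$ proved there.

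For part~\ref{lxchgaxx}, concavity and monotonicity of $\chgaxx$ are immediate from \eqref{chgaxx}, which writes $\chgaxx$ as an infimum over $\zeta<0$ of affine functions $x\mapsto x\zeta+\ga(-\zeta)$ with strictly negative slopes. Finiteness $\chgaxx(x)<\infty$ holds because, by \ref{BPfirst}, $\log\mu\set0<0$, so \refL{LX}\ref{lxga} gives $\ga(\eta)<\infty$ for some $\eta>0$, whence $\chgaxx(x)\le-x\eta+\ga(\eta)<\infty$. The case distinction \eqref{chgaxxgb} is obtained by locating the infimum in $\inf_{\zeta<0}\set{x\zeta+\chga(\zeta)}$ exactly as in the proof of \refL{LX}\ref{lxgaxx}: since $\chga$ is convex with $\chga'(0)=-\ga'(0)=1/\gb$ by \eqref{ga'0}, the function $\zeta\mapsto x\zeta+\chga(\zeta)$ is nonincreasing on $(-\infty,0)$ when $x\le-1/\gb$, so its infimum is the boundary value $\chga(0)=\ga$; for $x>-1/\gb$ the infimum is attained at some $\zeta\in[-\infty,0)$ and is $<\ga$; at $x=0$ it equals $\inf_{\zeta<0}\chga(\zeta)=\ga(\infty)=\AAA$ by \eqref{lxii+}; and for $x>0$, since $\chga$ is increasing, $\chgaxx(x)\le x\zeta+\chga(-1)\to-\infty$ as $\zeta\to-\infty$.

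Finally, \eqref{chbaram123} I would read off from \eqref{chgaxx2} by the same computation that yields \eqref{bara123}: for $x<0$, writing $y:=-x$, one has $\exp\bigpar{\chgaxx(x)/y}=\inf_{\gth\le\ga}\intoo e^{\gth(y\qw-t)}\mu(\ddx t)$, and letting $\gth\to-\infty$ while comparing $y\qw$ with the upper edge $\baram\qw$ of $\supp\mu$ produces the three nontrivial cases (with $\mu\set{\baram\qw}$ appearing precisely at $x=\chbaram$); the remaining case $x=\chbaram=0$ is the $x=0$ case already handled. Then \eqref{chbaram0} merely restates \eqref{chbaram123}, and the concluding continuity claim follows because a finite concave function is continuous on the interior $(-\infty,\chbaram)$ of its domain, $\chgaxx\equiv-\infty$ on $(\chbaram,\infty)$, and left-continuity everywhere follows from the defining infimum \eqref{chgaxx} exactly as in \eqref{kastrup}. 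The main thing to watch is the sign bookkeeping: the reflection $\gth\mapsto-\gth$ sends the \emph{lower} edge of $\supp\mu$, which governs \eqref{bara123}, to the \emph{upper} edge $\baram\qw$, and one must check that restricting the infimum to $\gth\le\ga$ rather than to all $\gth$ is harmless — which it is, since $\ga>0$ and the relevant limits are taken as $\gth\to-\infty$.
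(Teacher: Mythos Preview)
Your proposal is correct and follows essentially the same approach as the paper's own proof: both deduce part~\ref{lxchga} directly from \eqref{chga} and \refL{LX}\ref{lxga}, locate the infimum in \eqref{chgaxx} via the convexity of $\ga$ together with \eqref{ga'0} to obtain \eqref{chgaxxgb}, derive \eqref{chbaram123} from \eqref{chgaxx2} by the same integral computation as for \eqref{bara123} (now with $\gth\to-\infty$ and the upper edge $\baram\qw$ of $\supp\mu$), and obtain left-continuity by the \eqref{kastrup} argument. Your closing remark about the sign bookkeeping and the harmlessness of restricting to $\gth\le\ga$ is a useful sanity check but not needed beyond what \eqref{chgaxx2} already provides.
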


\begin{proof}
\pfitemref{lxchga}
Follows directly from \eqref{chga} and \refL{LX}\ref{lxga}.

\pfitemref{lxchgaxx}
Since $\ga(\zeta)$ is convex, 
 \eqref{ga'0} implies
$\ga'(\zeta)\ge-1/\gb$ for $\zeta\ge0$, with equality for $\zeta=0$.
It follows that the second infimum in \eqref{chgaxx} is attained at $\zeta=0$ 
(extending the infimum to $\zeta\ge0$)
if
$x\le-1/\gb$, but not if $x>-1/\gb$, and
thus the two first cases in \eqref{chgaxxgb} follow, recalling $\ga(0)=\ga$.
The third case follows directly from \eqref{chgaxx} by \eqref{lxii+} and the
fact that $\ga(\zeta)$ is decreasing, 
and the fourth case follows by letting
$\zeta\to\infty$ in the second infimum in \eqref{chgaxx}.

Next, \eqref{chgaxx2} implies that for any $x<0$,
\begin{equation}\label{2016-}
  \begin{split}
\exp\bigpar{\chgaxx(x)/|x|}
=\inf_{\gth\le\ga} \bigset{\exp(\gth/|x|)\hmu(\gth)}
=\inf_{\gth\le\ga} \Bigset{\intoo e^{\gth(|x|\qw-t)}\mu(\ddx t)}.
  \end{split}
\end{equation}
We argue as in the proof of \eqref{bara123} in \refL{LX}.
If $x=-\baram<0$, then $t\le |x|\qw$ on the support of $\mu$ by \eqref{supp-};
hence the integral in  \eqref{2016-} is an increasing function of $\gth$,  
and dominated convergence 
as $\gth\to-\infty$
shows that the infimum
equals $\mu\set{\baram\qw}$.
Similarly, if $-\baram<x<0$, then the infimum in \eqref{2016-} is 0.
On the other hand, if $x<-\baram$, then
$\intoo e^{\gth(|x|\qw-t)}\mu(\ddx t)\ge
\mu\set{t:t\ge |x|\qw}>0$ for every $\gth\le0$, and thus the infimum in
\eqref{2016-} is positive.
This shows \eqref{chbaram123} for $x<0$; the case $x\ge0$ follows from
the last two parts of \eqref{chgaxxgb}.
Obviously, \eqref{chbaram0} follows from \eqref{chbaram123}.

Since $\chgaxx$ is concave, it follows from \eqref{chbaram123} that it is
continuous except possibly at $\chbaram$.
The left-continuity of $\chgaxx(x)$, or equivalently right-continuity of
$\chgaxx(-x)$, follows from \eqref{chgaxx} which yields, \cf{} \eqref{kastrup},
\begin{equation}
  \lim_{ y\downto x} \chgaxx(-y)
=
  \inf_{ y> x} \chgaxx(-y)
=
  \inf_{ y> x,\; \zeta>0}\set{y\zeta+\ga(\zeta)}
=\chgaxx(-x).
\end{equation}
\end{proof}

\begin{remark} 
By \eqref{chgamma2} and \eqref{chbaram123},
\begin{equation}\label{gamxbaram}
\gamx\ge\baram.
\end{equation}
\end{remark}

\begin{remark}
  \label{Rgaxxx}
Let, \cf{} \eqref{gaxx} and \eqref{chgaxx}, 
\begin{equation}\label{gaxxx}
  \gaxxx(x):=\min\bigpar{\gaxx(x),\chgaxx(-x)}
=\inf_{-\infty<\zeta<\infty}\bigset{x\zeta+\ga(\zeta)}.
\end{equation}
By \eqref{gaxxgb} and \eqref{chgaxxgb}, 
for every $x$,
$\max\bigpar{\gaxx(x),\chgaxx(-x)}=\ga$ and thus
one of 
$\gaxx(x)$ and $\chgaxx(-x)$ equals $\gaxxx(x)$ while the other equals
$\ga$; which one that equals $\gaxxx(x)$ depends on whether $x$ is smaller
or greater than $1/\gb$. (For $x=1/\gb$, $\gaxxx(x)=\gaxx(x)=\chgaxx(-x)=\ga$;
for all other $x$, $\gaxxx(x)<\ga$ and $\gaxx(x)\neq\chgaxx(-x)$.)

For $x>0$,  we also have by \eqref{gaxx2} and \eqref{chgaxx2},
\begin{equation}\label{gaxxx2}
  \begin{split}
\gaxxx(x)&
=\inf_{-\infty<\gth<\infty}\bigset{x\log\hmu(\gth)+\gth}.	
  \end{split}
\end{equation}
Furthermore, by \eqref{bara123} and \eqref{chbaram123}, 
$\gaxxx(x)$ is finite in $(\baram,\bara)$ but $-\infty$ outside
$[\baram,\bara]$. 

By the convexity of $\ga(\zeta)$ and $\log\hmu(\gth)$,
with strict convexity in $\Iga$ and $(\AAA,\infty)$
and continuity on $(-\infty,\infty)$ and $[\AAA,\infty)$,
respectively, see
Lemma \ref{LX},
it follows that
for $x\in(\baram,\bara)$, where $\gaxxx(x)$ thus is finite, 
the
infima in \eqref{gaxxx} and \eqref{gaxxx2} are attained at some 
(unique) finite $\zeta$
and $\gth$. 
Moreover, if $\AAA>-\infty$ 
(which implies $\baram=0$)
and both $\hmu(\AAA)$ and the right derivative
$\hmu'(\AAA+)$ are finite,
let 
\begin{equation}\label{barao}
\barao:=\frac{1}{-(\log\hmu)'(\AAA+)}
=\frac{\hmu(\AAA)}{-\hmu'(\AAA+)}>0=\baram;
\end{equation}
otherwise, let $\barao:=\baram$.
In the case $x\in (\baram,\barao]$, 
the infimum in \eqref{gaxxx} is attained at $\zeta=\log\hmu(\AAA)$,
and the infimum in \eqref{gaxxx2} is attained at $\gth=\AAA$, so 
$\gaxxx(x)=x\log\hmu(\AAA)+\AAA$.
For $x\in(\barao,\bara)$, 
the infimum in \eqref{gaxxx} is attained at the $\zeta=\zeta(x)$ for
which $\ga'(\zeta)=-x$, and it follows from \refL{LX} that $\zeta(x)$ is an
analytic function of $x$, and thus $\gaxxx(x)$ is analytic in
$(\barao,\bara)$.
(Similarly, in this case, the infimum in \eqref{gaxxx2} is attained at
$\gth=\gth(x)>-\AAA$ with $(\log\hmu)'(\gth)=-1/x$.)
Hence, the function $\gaxxx(x)$ is analytic in $(\baram,\bara)$ except 
that
in the case $\barao>\baram$, \ie{} when \eqref{barao} applies, 
it is \emph{not} analytic at $\barao$. 
Note also that $\barao\qw=-(\log\hmu)'(\AAA+)
> -(\log\hmu)'(\ga)=\gb$, so $\barao<\gb\qw$.
\end{remark}

\begin{example}
  \label{EFBST}
Consider the binary search tree in \refE{EBST}.
By \refT{Tfillup}, the saturation level satisfies $S(T_n)\sim \gamx\log n$
\as{} (since $\ga=1$).
For the constant $\gamx$, we obtain
from \refL{Lgamx} 
the same equations
\eqref{pyrx} and \eqref{pyrxx} as for $\gam$ (and the height) 
in \refE{EHBST}, but now
with $-1<\gth<0$ and thus $\gamx<1$. Consequently, 
$\gamx$ is the (unique) 
root of \eqref{gammabst} in the interval $(0,1)$.
Numerically, $\gamx\doteq0.37336$.
(This result for the saturation level was shown by \citet{Devroye1986}.) 

Furthermore, by simple calculations, \cf{} \eqref{gahbst}--\eqref{gaxxbst},
\begin{align}
  \chga(\zeta)&=\ga(-\zeta)=2e^\zeta-1,
\\
\label{chgaxxbst}
\chgaxx(x)&=
\begin{cases}
  1,&x\le-2, \\
x\log(|x|/2)-x-1,&-2\le x\le 0,\\
-\infty, & x>0.
\end{cases}
\end{align}
Note that $\chgaxx(-x)$ for $0\le x\le 2$ and $\gaxx(x)$ for $x\ge2$
are given by the same analytic
expressions, but for different ranges of $x$; this is in accordance with 
\refR{Rgaxxx}, which shows that both functions are restrictions of the
single analytic function $\gaxxx(x)$, which in this case is given by,
\cf{} \eqref{gaxxbst} and \eqref{chgaxxbst},
\begin{align}
  \label{gaxxxbst}
\gaxxx(x)&=
\begin{cases}
  -\infty,&x<0, \\
-x\log(x/2)+x-1,& x\ge 0.\\
\end{cases}
\end{align}
The maximum is $\gaxxx(2)=1=\ga$ attained at $x=2=\gb\qw$.
\end{example}

\begin{example}  \label{EFmst}
More generally, for the \mst{} in \refSS{SSmst},
\refT{Tfillup} and \refL{Lgamx} 
show that
the saturation level satisfies $S(T_n)\sim \gamx\log n$
\as{},
where the constant $\gamx$
is given by
the same equations
\eqref{mulmst} and \eqref{gamma3mst} as for $\gam$
in \refE{EHmst}, but now
with $-1<\gth<0$.

Similarly, for the generalization in \refE{Ehennequin}, $\gamx$ is given by
\eqref{mulmstl} and \eqref{gamma3mstl}, with $-\ell-1<\gth<0$.

These results were, as the corresponding results for the height,
proved by a different method by
\citet{ChauvinDrmota2006}, see also \citet[Section 6.5.2]{Drmota}. 
\end{example}

\subsection{Profile} \label{SSprofile}

We have in this section so far considered the height and the saturation
level, which are the maximum and minimum depths of nodes, in the latter case
considering only nodes that are not \emph{full}, \ie, with less than the
maximum number of children.
The results of \citet{Biggins95,Biggins97} are more general, and
yield also results for the distribution of intermediate depths; the
distribution of depths of the nodes is called the \emph{profile} of the
tree.

Let in this section, for a rooted tree $T$ and 
a real number $s$,
\begin{align}
  \nle{s}(T)&:=\bigabs{\set{v\in T:\dep(v)\le s}},
\\
\nge{s}(T)&:=\bigabs{\set{v\in T:\dep(v)\ge s}},
\end{align}
 the number of nodes of depth $\le s$ and $\ge s$, respectively.
We later consider also the number of nodes with exactly a given depth, see
 \eqref{nzk}.

More generally, for a branching random walk with general offsets $\eta_i$
and a \oichar{} $\chi$ as in \refS{SSsat}, let $\Ngechi{s}(t)$
be the number of individuals at time $t$ that have $\chi=1$ and
position $\ge s$, i.e., 
\begin{equation}
  \label{Ngechi}
\Ngechi{s}(t):=\bigabs{\set{x:\chi_x(t-\gs_x)=1\text{ and } y_x\ge s}}.
\end{equation}
This includes the two quantities just defined by 
suitable choices of the
offsets $\eta_i$
and taking the characteristic $\chi(t)=1$ (for all $t\ge0$):
\begin{align}
  \nge{s}(\ctt)&=\Ngechi{s}(t), &&\eta_i=1 \text{ and } \chi(t)=1,\label{nge}
\\
  \nle{s}(\ctt)&=\Ngechi{-s}(t), &&\eta_i=-1 \text{ and } \chi(t)=1\label{nle}
.
\end{align}

One of the main results of \citet{Biggins95,Biggins97} is the following,
which we for convenience state first in the original form
(valid for general $\eta_i$, with the corresponding $\gaxx$).
\begin{theorem}[\citet{Biggins95,Biggins97}]
  \label{TBiggins2}
Suppose that 
$\eta_i$ and $\chi$ are such that
\ref{BIGG1} and \ref{BIGG2} hold. 
If $x\in\bbR$ is such
that $\gaxx(x)>0$ and $\gaxx(x+\eps)>-\infty$ for some $\eps>0$,
then
\begin{equation}
  \frac{\log \Ngechi{xt}(t)}t\asto \gaxx(x).  
\end{equation}
\qed
\end{theorem}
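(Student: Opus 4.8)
The plan is to follow \citet{Biggins95,Biggins97}: this is one of the main theorems of those papers, so I will only describe the architecture, since the analytic work is carried out there and, by \refR{RBiggins-Nerman}, extends to the characteristics used here. Write $a:=\gaxx(x)$. The proof splits into a first–moment upper bound, $\limsup_\ttoo t\qw\log\Ngechi{xt}(t)\le a$ a.s., and a lower bound, $\liminf_\ttoo t\qw\log\Ngechi{xt}(t)\ge a$ a.s.; the second is the substantial half, and the hypotheses $a>0$ and $\gaxx(x+\eps)>-\infty$ are there to exclude the boundary cases where this fails.

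For the upper bound, first I would compute the intensity $\E\,\Ngechi{s}(t)$. By a many-to-one (Campbell) identity this equals an integral of $\chi$ against the convolution powers of the joint time–position intensity of the marked point process (whose time marginal is $\mu$), and a Cram\'er-type Laplace analysis of that expression — exactly \cite[Theorem 4 and Corollary 1]{Biggins95}, already invoked in the proof of \refL{LM1} — gives
\begin{equation*}
  \limsup_{\ttoo}\frac{1}{t}\log\E\,\Ngechi{xt}(t)\le\gaxx(x).
\end{equation*}
Markov's inequality then yields $\P\bigpar{\Ngechi{xt}(t)\ge e^{(a+\gd)t}}\le e^{-\gd t/2}$ for large $t$ and any $\gd>0$; summing over $t$ in a fine arithmetic progression and applying Borel--Cantelli bounds $\Ngechi{xt}(t)$ along that progression. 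Since in our set-up particles do not move, $\Nge{s}(t)$ (the count ignoring $\chi$) is non-decreasing in $t$, non-increasing in $s$, and dominates $\Ngechi{s}(t)$; this lets one interpolate to all $t$ at the cost of passing to a nearby velocity, and the hypothesis $\gaxx(x+\eps)>-\infty$ (together with the continuity of $\gaxx$, \cf{} \refL{LX}\ref{lxgaxx}) is precisely what makes that interpolation harmless near the edge of the support of $\mu$. Combined with $a>0$, this gives the a.s.\ upper bound.

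For the lower bound I would use the dual parameter. By \eqref{gaxx} and \refL{LX}\ref{lxgaxx}, the infimum in $\gaxx(x)=\inf_{\zeta<0}\bigset{x\zeta+\ga(\zeta)}$ is attained at some finite $\zeta_0\le0$ (finiteness of the minimiser uses $\gaxx(x+\eps)>-\infty$, i.e.\ $x$ lies strictly below $\bara$). Tilting the reproduction law by the multiplicative factor $e^{\zeta_0 y-\ga(\zeta_0)t}$ produces a branching process under which the typical particle travels with velocity $x$; equivalently one passes to the additive (intrinsic) martingale attached to $\zeta_0$ and the shifted characteristic, whose limit is strictly positive a.s.\ once an $x\log x$-type integrability condition holds (\cf{} \eqref{xlogx}) — this is where \ref{BIGG1} and \ref{BIGG2} enter, together with \cite[Theorem 5.4]{Nerman} in the extended form of \refR{RBiggins-Nerman}. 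A law-of-large-numbers argument for the tilted characteristic-counted process then shows $t\qw\log\Ngechi{xt}(t)\to\gaxx(x)$ on an event of positive probability. Finally this is promoted to an almost-sure statement by a ``seed everywhere'' device: by supercriticality (\ref{BPsuper}) the family tree contains infinitely many disjoint, independent, identically distributed sub-branching-processes, so the positive-probability event occurs in at least one of them by the second Borel--Cantelli lemma, whence $\liminf_\ttoo t\qw\log\Ngechi{xt}(t)\ge\gaxx(x)$ a.s.

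The hard part will be the lower bound, and within it the non-degeneracy (uniform integrability) of the tilted additive martingale and the passage from positive probability to almost sure — exactly the places where \ref{BIGG1}--\ref{BIGG2} do their work and where \cite{Biggins95,Biggins97} put in the real effort. In our continuous-time \CMJ{} setting the only ingredient beyond those papers is \refR{RBiggins-Nerman}, which allows $\chi$ to depend on the entire line of descent.
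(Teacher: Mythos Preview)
The paper does not give its own proof of this statement: \refT{TBiggins2} is quoted verbatim from \cite{Biggins95,Biggins97} and closed with a bare \texttt{\textbackslash qed}, so there is nothing to compare your proposal against on the paper's side. Your outline --- first-moment upper bound via Markov and Borel--Cantelli along a grid plus monotonicity interpolation, lower bound via the tilted additive martingale at the dual parameter $\zeta_0$, non-degeneracy of that martingale, and upgrading positive probability to almost sure by seeding independent subtrees --- is a faithful sketch of the architecture in \cite{Biggins95}, and your remark that \refR{RBiggins-Nerman} is the only additional ingredient needed for the characteristics used here matches what the paper itself says.
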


We shall use the following version of \refT{TBiggins2}, for the special cases
$\eta_i=\pm1$ of interest to us.
(The uniform convergence \as{} in \eqref{tb2+} means that 
$\sup_{x\le x_1}\bigabs{\xfrac{\log \Ngechi{xt}(t)}t- \gaxx(x)}\asto0$,
and similarly in \eqref{tb2-}.)

\begin{theorem}
  \label{TBiggins2u}
  \begin{thmenumerate}
  \item \label{tbiggins2u+}
Consider the case $\eta_i=1$.
If $x<\gam$,
then,
\begin{equation}\label{tb2+}
  \frac{\log \Ngechi{xt}(t)}t\asto \gaxx(x).
\end{equation}
Moreover, this holds
 uniformly for $x\in(-\infty,x_1]$
for any $x_1<\gam$.
  \item \label{tbiggins2u-}
Consider the case $\eta_i=-1$, and assume that 
$\chi$ is such that \ref{BIGG2} holds.
If $x<\chgam$,
then,
\begin{equation}\label{tb2-}
  \frac{\log \Ngechi{xt}(t)}t\asto \chgaxx(x).
\end{equation}
Moreover, this holds uniformly for $x\in(-\infty,x_1]$
for every $x_1<\chgam$.
  \end{thmenumerate}
\end{theorem}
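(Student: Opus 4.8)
\textbf{Proof plan for Theorem \ref{TBiggins2u}.}
The statement is really a slight sharpening of Theorem \ref{TBiggins2}: we want pointwise \as{} convergence \emph{and} local-uniform \as{} convergence on the full half-line $(-\infty,x_1]$, in the two special cases $\eta_i=\pm1$. The plan is to derive \ref{tbiggins2u+} and \ref{tbiggins2u-} from \refT{TBiggins2} together with monotonicity and concavity properties of $\gaxx$ and $\chgaxx$ that were established in Lemmas \ref{LX} and \ref{LXch}. First I would check that the hypotheses of \refT{TBiggins2} are met: for $\eta_i=1$ and $\chi\equiv1$, \ref{BIGG1} and \ref{BIGG2} hold trivially (\refR{RBIGG}); for $\eta_i=-1$ we assume \ref{BIGG2} and note \ref{BIGG1} holds by \refL{LX}\ref{lxga} (via \eqref{chga}). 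Then for a \emph{fixed} $x$: in case \ref{tbiggins2u+}, $x<\gam$ means, by \eqref{gamma1} (the original definition, with the $\eta_i=1$ transform), that $\gaxx(x)>0$; and by \refL{LX}\ref{lxgaxx}, $\gaxx(y)>-\infty$ for all $y$ below $\bara$ (in all our examples $\bara=+\infty$, but even in general, since $x<\gam\le\gb\qw<\bara$, we certainly have $\gaxx(x+\eps)>-\infty$ for small $\eps$). So \refT{TBiggins2} applies and gives \eqref{tb2+} pointwise; the analogous argument with $\chgaxx$, $\chgam$ and \refL{LXch}\ref{lxchgaxx} gives \eqref{tb2-} pointwise.

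The local-uniform part is the only place any real work is needed, and even there it is a soft argument. Fix $x_1<\gam$ (resp.\ $x_1<\chgam$). First I would observe that $x\mapsto \Ngechi{xt}(t)$ is nonincreasing in $x$ for each fixed $t$ (more individuals clear a lower threshold), hence $x\mapsto \log\Ngechi{xt}(t)/t$ is nonincreasing, and the limit $\gaxx(x)$ (resp.\ $\chgaxx(x)$) is continuous and nonincreasing on $(-\infty,x_1]$ by \refL{LX}\ref{lxgaxx} (resp.\ \refL{LXch}\ref{lxchgaxx}) --- note $x_1<\bara$ so continuity holds on all of $(-\infty,x_1]$. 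Given $\eps>0$, pick a finite grid $-\infty < x^{(0)} < x^{(1)} < \dots < x^{(K)} = x_1$ (extending $x^{(0)}$ far enough to the left that $\gaxx$ has varied by less than $\eps$, using that $\gaxx(x)\to\ga$ as $x\to-\infty$ is actually constant $=\ga$ for $x\le\gb\qw$ by \eqref{gaxxgb}, so in fact $\gaxx$ is \emph{constant} to the left of $\gb\qw$ and only the part $[\gb\qw,x_1]$ needs a genuine grid) with consecutive values of the limit function differing by at most $\eps$. On the finite set $\{x^{(0)},\dots,x^{(K)}\}$ the pointwise result holds \as{} simultaneously (finite intersection of probability-one events). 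Then for arbitrary $x$ in a grid interval $[x^{(j)},x^{(j+1)}]$, monotonicity sandwiches $\log\Ngechi{xt}(t)/t$ between the values at $x^{(j+1)}$ and $x^{(j)}$, and $\gaxx(x)$ between $\gaxx(x^{(j+1)})$ and $\gaxx(x^{(j)})$; combining, $\bigabs{\log\Ngechi{xt}(t)/t-\gaxx(x)}\le \eps + o(1)$ uniformly on $[x^{(0)},x_1]$, hence on $(-\infty,x_1]$ after handling the flat left part separately. Letting $\eps\to0$ along a sequence gives the uniform \as{} convergence. The argument for \ref{tbiggins2u-} is verbatim the same with $\chgaxx$, $\chgam$, and using that $\chgaxx$ is constant $=\ga$ to the left of $-1/\gb$ by \eqref{chgaxxgb}.

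The main (and really only) obstacle is making sure the endpoints of the grid argument are handled: one must know that $\gaxx$ (resp.\ $\chgaxx$) is finite and continuous on the \emph{closed} interval up to $x_1$, which is exactly why the hypotheses $x_1<\gam$ and $x_1<\chgam$, together with $\gam\le\gb\qw<\bara$ (resp.\ $\chgam\le\gb\qw$, $-x_1>-\chgam\ge-\gb\qw>\chbaram$ in the cases of interest), are invoked via Lemmas \ref{LX}\ref{lxgaxx} and \ref{LXch}\ref{lxchgaxx}; and that the pointwise hypothesis $\gaxx(x+\eps)>-\infty$ of \refT{TBiggins2} is available at every grid point, which again follows since all grid points are $<x_1<\bara$. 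Everything else --- the monotonicity in the threshold, the finite intersection of almost-sure events, and the $\eps$-sandwiching --- is routine and I would state it briefly rather than in detail. I would close by remarking that \eqref{nge}--\eqref{nle} then immediately yield the corresponding uniform statements for $\nge{xt}(\ctt)$ and $\nle{xt}(\ctt)$, which is what the subsequent profile results will use.
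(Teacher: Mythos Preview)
Your plan is essentially the paper's own proof: pointwise convergence from \refT{TBiggins2}, then uniformity via a finite grid together with monotonicity of $x\mapsto\Ngechi{xt}(t)$ and continuity of the concave limit. Two small corrections are worth noting. First, you write $\gam\le\gb\qw$, but \refL{Lgamma} gives the reverse inequality $\gb\qw\le\gam$; the finiteness of $\gaxx(x+\eps)$ that you need for \refT{TBiggins2} comes more directly from \eqref{gamma1}: if $x+\eps<\gam$ then $\gaxx(x+\eps)\ge0$. Second, in part \ref{tbiggins2u-} the ``flat left part'' needs a concrete upper bound on $\Ngechi{xt}(t)$ uniformly in $x$, not just the constancy of $\chgaxx$; the paper does this by fixing some $x_2<-\gam$ and invoking \refT{TBiggins1} to conclude that \as{} for large $t$ every position exceeds $x_2t$, so $\Ngechi{xt}(t)=\Ngechi{x_2t}(t)$ for all $x<x_2$. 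With these fixes your argument goes through and matches the paper's.
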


\begin{proof}
\pfitemref{tbiggins2u+}
Let $0<x_1<\gam$ and let $\eps>0$.
Since the function $\gaxx(x)$ is concave, it is continuous on any open set 
where it is finite, and in particular on $(-\infty,\gam)$, see
\refL{LX}\ref{lxgaxx} and \eqref{gamma1}.
Hence, $\gaxx(x)$ is continuous on the compact set $[0,x_1]$, and it follows
that  we can choose an integer $M$ such that
$|\gaxx(x')-\gaxx(x'')|<\eps$ whenever $0\le x'<x''\le x_1$ with 
$x''-x'\le x_1/M$.
Let $z_i:=ix_1/M$, $i=0,\dots,M$. Then thus
\begin{equation}\label{dgaxx}
  \bigabs{\gaxx(z_i)-\gaxx(z_{i-1})}<\eps,
\qquad i=1,\dots,M.
\end{equation}

If $x\le x_1$ and $\eps'<\gam-x_1$, then
$x+\eps'<\gam$ so $\gaxx(x+\eps')\ge0>-\infty$ by \eqref{gamma1}. 
Furthermore, this implies $\gaxx(x)>0$ by \eqref{gaxxgb} and the concavity
of $\gaxx$.
Moreover,
 \ref{BIGG1} and   \ref{BIGG2} hold, see \refR{RBIGG}.
Hence,
  \refT{TBiggins2} 
applies and yields
  \begin{equation}\label{kk}
\frac{\log\Ngechi{x t}(t)}{t}\asto\gaxx(x).
  \end{equation}
Consequently, there exists \as{} $t_0$ such that for $t\ge t_0$,
\begin{equation}\label{kka}
  \Bigabs{\frac{\log\Ngechi{z_i t}(t)}{t}-\gaxx(z_i)}<\eps,
\qquad
i=0,\dots,M.
\end{equation}
If $x\in[0,x_1]$, then $z_{i-1}\le x\le z_i$ for some $i$ and thus, for $t\ge
t_0$,
by \eqref{kka} and \eqref{dgaxx} and the fact that $\gaxx$ is decreasing, 
\begin{equation}
 \frac{\log\Ngechi{x t}(t)}{t}
\le
\frac{\log\Ngechi{z_{i-1} t}(t)}{t}<\gaxx(z_{i-1})+\eps
<\gaxx(x)+2\eps
\end{equation}
and, similarly,
\begin{equation}
 \frac{\log\Ngechi{x t}(t)}{t}
\ge
\frac{\log\Ngechi{z_{i} t}(t)}{t}>\gaxx(z_{i})-\eps
>\gaxx(x)-2\eps.
\end{equation}
Consequently, \eqref{kk} holds uniformly for $x\in[0,x_1]$, for any
$x_1<\gam$.
This extends trivially to $x<0$ too, since then 
$\Ngechi{x t}(t)=\Ngechi0(t)$ and $\gaxx(x)=\gaxx(0)$, see \eqref{gaxxgb}.

\pfitemref{tbiggins2u-}
This is similar to part \ref{tbiggins2u+}, using \eqref{chgamma2} and
\eqref{chgaxxgb}. 
The most important difference is that \ref{BIGG2} now is not automatic, and
has to be assumed. 
(It was noted in the proof of \refT{Tfillup} that \ref{BIGG1} always holds.)
Furthermore, fix 
$x_1<\chgam$ and 
some $x_2<-\gam$, and consider first $x\in[x_2,x_1]$, 
showing \eqref{tb2-} uniformly for such $x$  by the argument above.
The result extends to $x<x_2$ too, since each position is $\ge - H(\ctt)$
and $-H(\ctt)/t\asto -\gam>x_2$ by \refT{TBiggins1}; thus \as, for large $t$,
$-H(\ctt)\ge x_2 t$ and then all positions are $\ge x_2t$, so
$\Ngechi{xt}(t)=\Ngechi{x_2t}(t)$ for every $x<x_2$; furthermore, 
$\chgaxx(x)=\ga=\chgaxx(x_2)$ by \eqref{chgaxxgb}.
(Recall that $\gam<\infty$ by \refL{Lgamma}, which also implies
$x_2<-\gam\le-\gb\qw$.) 
\end{proof}
We immediately get results for the upper part of the profile.

\begin{theorem}\label{Tnge}
  Under the assumptions \refBP{} and \refBPlastpsi,  
for every $x<\gam/\ga$, as \ntoo, a.s.
  \begin{equation}\label{tnge}
\nge{x\log n}(T_n) = n^{\gaxx(\ga x)/\ga+o(1)}	.
  \end{equation}
Moreover, \eqref{tnge} holds uniformly for $x\in[0,x_1]$ 
for every $x_1<\gam/\ga$.
\end{theorem}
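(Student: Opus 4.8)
The plan is to deduce Theorem~\ref{Tnge} directly from Theorem~\ref{TBiggins2u}\ref{tbiggins2u+} by transporting the asymptotics for the family tree $\ctt$ (at a deterministic time $t$) to the stopped tree $T_n=\cT_{\tau(n)}$ via the almost sure asymptotics $\tau(n)/\log n\asto 1/\ga$ from \refT{Tex}\ref{tex2}. First I would fix the choice $\eta_i=1$ and $\chi(t)\equiv1$, so that by \eqref{nge} the quantity $\Ngechi{s}(t)$ equals $\nge{s}(\ctt)$, the number of nodes of $\ctt$ of depth at least $s$; as noted in \refR{RBIGG}, \ref{BIGG1} and \ref{BIGG2} hold automatically in this case. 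Then \refT{TBiggins2u}\ref{tbiggins2u+} gives, for any $x_1<\gam$, that
\begin{equation*}
\frac{\log\nge{xt}(\ctt)}{t}\asto\gaxx(x)
\end{equation*}
uniformly for $x\in(-\infty,x_1]$.

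Next I would substitute $t=\tau(n)$. Write $\tau(n)=c_n\log n$ where $c_n\asto 1/\ga$. Given $x<\gam/\ga$, choose $x_1$ with $x\ga<x_1\ga<\gam$ (possible since $x\ga<\gam$), i.e.\ $x<x_1$ and $x_1\ga<\gam$; the target depth is $x\log n=(x/c_n)\tau(n)$, and since $x/c_n\to x\ga<x_1\ga<\gam$ we have $x/c_n\le x_1$ for $n$ large. Hence the uniform convergence above applies with the argument $x/c_n$ in place of $x$: almost surely
\begin{equation*}
\frac{\log\nge{x\log n}(T_n)}{\tau(n)}=\frac{\log\nge{(x/c_n)\tau(n)}(\cT_{\tau(n)})}{\tau(n)}=\gaxx(x/c_n)+o(1).
\end{equation*}
Multiplying by $\tau(n)/\log n=c_n\to1/\ga$ and using that $\gaxx$ is continuous on $(-\infty,\gam)$ (it is concave and finite there, see \refL{LX}\ref{lxgaxx}) so $\gaxx(x/c_n)\to\gaxx(x\ga)$, gives $\log\nge{x\log n}(T_n)/\log n\asto\gaxx(x\ga)/\ga$, which is exactly \eqref{tnge}.

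For the uniform statement over $x\in[0,x_1']$ with $x_1'<\gam/\ga$, I would run the same argument but keep track of uniformity: fix $x_1$ with $x_1'\ga<x_1\ga<\gam$; for $n$ large, $\sup_{x\le x_1'}x/c_n\le x_1$, so the uniform convergence in \refT{TBiggins2u}\ref{tbiggins2u+} on $(-\infty,x_1]$ controls $\log\nge{(x/c_n)\tau(n)}(\cT_{\tau(n)})/\tau(n)-\gaxx(x/c_n)$ uniformly in $x\le x_1'$; and $\gaxx$ is uniformly continuous on the compact set $[0,x_1\ga]$, so $\sup_{x\le x_1'}|\gaxx(x/c_n)\cdot c_n\ga-\gaxx(x\ga)|\to0$. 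Combining these two uniform estimates gives the uniform version of \eqref{tnge}. The main point requiring care — though not really an obstacle, since everything is supplied by \refT{Tex}\ref{tex2} and \refT{TBiggins2u} — is the rescaling of the spatial argument from $x$ (relative to $\log n$) to $x/c_n$ (relative to $\tau(n)$) and ensuring the slightly shifted argument still lies strictly below the critical value $\gam$, which is why one needs the extra room $x_1$ between $x\ga$ (resp.\ $x_1'\ga$) and $\gam$; the continuity/uniform continuity of $\gaxx$ away from $\gam$ then closes the argument.
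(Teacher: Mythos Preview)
Your proof is correct and follows essentially the same route as the paper: apply \refT{TBiggins2u}\ref{tbiggins2u+} with $\eta_i=1$ and $\chi\equiv1$ to get the uniform asymptotics for $\nge{xt}(\ctt)$, then substitute $t=\tau(n)$ and rescale the spatial argument using $\tau(n)/\log n\asto1/\ga$ together with the continuity of $\gaxx$ on $(-\infty,\gam)$. One small notational slip to fix: the bound you feed into \refT{TBiggins2u} must be $<\gam$, and since the rescaled argument $x/c_n\to x\ga$, the correct choice is $x_1\ga$ (the paper writes $x'$ with $x_1\ga<x'<\gam$) rather than $x_1$ itself --- as written, ``$x/c_n\le x_1$'' need not hold when $\ga>1$, and ``$x_1<\gam$'' need not hold when $\ga<1$.
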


\begin{proof}
Let $x_1<\gam/\ga$ and fix $x'$ with $x_1\ga<x'<\gam$.

Take $\eta_i=1$ and $\chi(t)=1$, as in \eqref{nge}.
Then \eqref{nge} and \eqref{tb2+} yield
\begin{equation}\label{kk+}
  \frac{\log \nge{xt}(\ctt)}t
=   \frac{\log \Ngechi{xt}(t)}t
\asto \gaxx(x),
\end{equation}
uniformly on $[0,x']$.
Replace $t$ by $\tau(n)$ and $x$ by $x\log n/\tau(n)$ in \eqref{kk+}.
Since $\log n/\tau(n)\asto\ga$ by \eqref{tex2tau},
it follows that \as, uniformly for $0\le x\le x_1$,
  \begin{equation}
\frac{\log\nge{x\log n}(T_n)}{\tau(n)}=\gaxx(x\log n/\tau(n))+o(1)
=\gaxx(x\ga)+o(1),
  \end{equation}
again using the fact that $\gaxx$ is continuous on (at least) $(-\infty,\gam)$.
Using \eqref{tex2tau} again, this yields
  \begin{equation}
\frac{\log\nge{x\log n}(T_n)}{\log n}
=\gaxx(x\ga)/\ga+o(1),
  \end{equation}
uniformly for $x\in[0,x_1]$, \as, which is \eqref{tnge}.
The case $x<0$, included for completeness, follows trivially from the case
$x=0$ and \eqref{gaxxgb}.
\end{proof}

\begin{remark}
  \label{Rtnge}
\refT{Tnge} does not hold (in general) for $x>\gam/\ga$. Indeed, for such
$x$, by \refT{Theight}, \as{} $H(T_n)<x\log n$ for large $n$ and thus
$\nge{x\log n}(T_n)=0$
and $\log\bigpar{\nge{x\log n}(T_n)}/\log n=-\infty$, while typically
$\gaxx(\ga x)>-\infty$, see \eqref{supp+}.
Furthermore, 
\refT{Tnge} fails for $x=\gam/\ga$ too, as is seen for example by 
the binary search tree, where it follows from \citet{Biggins77martingale}
that $H(T_n)-(\gam/\ga)\log n\to -\infty$ \as,
and thus \as{} $\nge{(\gam/\ga)\log n}(T_n)=0$ for all large $n$;
see \cite{McDiarmid1995} and \cite{Reed} for more precise results.
(Consequently, \eqref{tnge} does not hold uniformly on $(-\infty,\gam/\ga)$;
only on closed subintervals $(-\infty,x_1]$.)
\end{remark}

\begin{remark}
  \label{Rge}
By \eqref{gaxxgb}, the exponent $\gaxx(\ga x)/\ga$ in \eqref{tnge}
is less than 1 for $x>(\ga\gb)\qw$; hence only a polynomially small fraction
of the nodes have depth at least $x\log n$ for such $x$.
On the other hand, for $x\le(\ga\gb)\qw$, \eqref{tnge} is pretty useless and
says only that a fraction $n^{o(1)}$ of all nodes have depth at least
$\gabw$. We shall see in \refSS{SSdepth} below the stronger fact that 
most nodes have depth $\bigpar{\gabw+o(1)}\log n$.
\end{remark}

\begin{remark}\label{Rkork}
The proof shows that \eqref{tnge} holds also if 
we count nodes according to some
\oichar{} $\chi$. 
For example, 
we may choose $\chi(t):=\ett{t<\xi_1}$
and consider leaves only. 
Similarly, in the $m$-ary case $N=m$ we may 
use \eqref{chim} and
count only nodes that are not full, as in \refSS{SSsat}.
For the \mary{} search tree in \refS{SSemst}, we
may similarly consider external nodes (\ie, nodes without a key) only; this
yields the \emph{external profile}.
\end{remark}

For the lower part of the profile, we cannot use \refT{TBiggins2u} directly,
since \ref{BIGG2} does not hold for $\chi=1$, 
but we can use other characteristics.
Before proceeding, 
we define yet another function $\tgaxx(x)$, for $x\ge0$, by
\begin{equation}\label{tgaxx}
  \tgaxx(x):=
  \begin{cases}
x\sup_{y\ge x} \frac{\chgaxx(-y)}{y},
 & x>0,
\\
\max\bigpar{\chgaxx(0),0}, &x=0.
  \end{cases}
\end{equation}
Recall that $\chgaxx(0)=\ga(\infty)=\AAA$, see \eqref{chgaxxgb} and
\eqref{lxii+}. 

Furthermore, if $\hmu(0)$ and 
the right derivative
$\hmu'(0+)$ are finite, let
\begin{equation}\label{barai}
\barai:=\frac{1}{-(\log\hmu)'(0+)}
=\frac{\hmu(0)}{-\hmu'(0+)}
>0;
\end{equation}
if $\hmu(0)$ or $\hmu'(0+)$ is infinite, let $\barai:=0$.
(If $\AAA<0$, then $\hmu(\gth)$ is analytic at $0$, and we may simply write
$\hmu'(0)$.) 
 Note that $\hmu(0)=\E N$ by \eqref{hmub}, which also implies, using the
monotone convergence theorem, that the right derivative
\begin{equation}
  \begin{split}
\hmu'(0+)&
=
-\lim_{\gth\downto0}\frac{\hmu(0)-\hmu(\gth)}{\gth}	
=
-\lim_{\gth\downto0}\E \sum_{i=1}^N \ett{i\le N}\frac{1-e^{-\gth\xi_i}}{\gth}	
\\&
=-\E\sum_{i=1}^N \xi_i.
  \end{split}
\end{equation}
Hence, when $\barai>0$, or more generally when $\E N<\infty$,
\begin{equation}\label{barai2}
  \barai=\frac{\E N}{\E\sum_{i=1}^N \xi_i}.
\end{equation}

\begin{lemma}
\label{Ltgaxx}
\begin{thmenumerate}
\item \label{Ltgaxx-x}
The constant $\barai$ satisfies
\begin{equation}\label{baraim}
  0\le\baram\le\gamx\le\barai<\gb\qw.
\end{equation}
and
furthermore either
\begin{align}\label{baraig}
 0\le\gamx<\barai
\quad\text{or}\quad
0=\gamx=\barai.
\end{align}

\item \label{Ltgaxx-ga}
The function $\tgaxx(x)$ is increasing and continuous on $\ooo$, 
and it is the least non-negative concave majorant of
$\chgaxx(-x)$ on $\ooo$.

\item \label{Ltgaxx-gagb}
We have
\begin{equation}\label{tgaxxgb}
	\tgaxx(x) = 
  \begin{cases}
x\log\hmu(0)
= x\log(\E N)
, & 0\le x\le\barai \text{ and }\barai>0,
\\
\chgaxx(-x), & x\ge \barai,
\\
 \ga, & x\ge \gb\qw,
  \end{cases}
\end{equation}
with
  \begin{align}
&&\tgaxx(x)&<x\log(\E N),&& x>\barai.&&	\label{tgaxxgb3}
\intertext{and}
&&\tgaxx(x)&<\ga, && 0\le x<\gb\qw. &&\label{tgaxxgb2}
  \end{align}
Consequently,
if $\barai>0$, then $\tgaxx(x)$ is analytic except at the
two points $\barai$ and $\gb\qw$.

Furthermore, 
\begin{equation}\label{AAAA}
\tgaxx(0)=0
\iff  
\chgaxx(0)\le 0
\iff
\AAA\le0.
\end{equation}

\item \label{Ltgaxx-inf}
If $x>0$, or if $x\ge0$ and $\AAA\le0$, then
\begin{equation}\label{tgaxxinf}
  \begin{split}
\tgaxx(x)&
=\inf_{0\le\gth\le\ga}\bigset{x\log\hmu(\gth)+\gth},
  \end{split}
\end{equation}
where the infimum is attained at $\gth=0$ for $x\le\barai$, at $\gth=\ga$ for
$x\ge\gb\qw$ and in $(0,\ga)$ for $x\in(\barai,\gb\qw)$.

\item \label{Ltgaxx->}
If $\AAA<0$, then $\tgaxx(x)>\chgaxx(-x)$ for $0\le x<\barai$
and $\tgaxx(x)=\chgaxx(-x)$ for $x\ge\barai$.
On the other hand, if 
$\AAA\ge0$, then $\tgaxx(x)=\chgaxx(-x)$ for all $x\ge0$.
\end{thmenumerate}
\end{lemma}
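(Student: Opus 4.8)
The plan is to reduce every assertion of the lemma to the elementary convexity of $F:=\loghmu$. By \refL{LX}\ref{lxhmu}, $F$ is convex and decreasing on $\bbR$, analytic and strictly convex on $(\AAA,\infty)$, with $F'(\ga)=-\gb<0$ and $F(\ga)=0$, so $F(\gth)\ge0$ for $\gth\le\ga$; moreover $F(0)=\log(\E N)>0$ by \ref{BPsuper}, and by \eqref{barai} $\barai\qw=-F'(0+)$ (with $\barai=0$ when $F'(0+)=-\infty$ or $\hmu(0)=\infty$). The one genuinely delicate step is to pin down the piecewise shape of $\phi(x):=\chgaxx(-x)$ from the representation \eqref{chgaxx2}, $\phi(x)=\inf_{\AAA\le\gth\le\ga}\{xF(\gth)+\gth\}$ for $x>0$: where the minimiser is interior it is the point $\gth(x)$ with $F'(\gth(x))=-1/x$, and, arguing as for the twin function in \refR{Rgaxxx}, as $x$ increases $\gth(x)$ runs analytically and monotonically through $(\AAA,\ga)$ exactly on $(\barao,\gb\qw)$, is pinned at $\gth=\AAA$ on $(0,\barao]$ and at $\gth=\ga$ on $[\gb\qw,\infty)$; hence $\phi$ is concave (an infimum of affine functions of $x$), increasing on $\ooo$ (since $\chgaxx$ is decreasing, \refL{LXch}\ref{lxchgaxx}), equal to $-\infty$ on $[0,\baram)$, finite and continuous on $(\baram,\infty)$, affine outside $(\barao,\gb\qw)$, strictly concave inside it, with $\phi(0)=\chgaxx(0)=\AAA$ and $\phi\equiv\ga$ on $[\gb\qw,\infty)$. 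The bookkeeping here — $\AAA$ finite versus $-\infty$, $\hmu(\AAA)$ finite versus infinite, $\baram=0$ versus $\baram>0$, $\barai=0$ versus $\barai>0$ — is the main obstacle; everything else is routine convexity.

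For part~\ref{Ltgaxx-x}, the bounds $\baram\le\gamx\le\gb\qw$ are \eqref{gamxbaram} and \refL{Lgamx}, and $\barai<\gb\qw$ because $F'(0+)<F'(\ga)=-\gb$ by strict convexity (trivially if $\barai=0$). If $\AAA\ge0$ then $\gamx=0\le\barai$ by \refL{Lgamx}, and $\gamx=\barai$ forces $\barai=0$; if $\AAA<0$ then $\barai>0$ and $\gamx\qw=\inf_{\AAA<\gth<0}F(\gth)/|\gth|$ by \refL{Lgamx}, and the tangent inequality $F(\gth)\ge F(0)+\gth F'(0)$ with $F(0)>0$ gives $F(\gth)/|\gth|>-F'(0)=\barai\qw$ for every $\gth\in(\AAA,0)$; evaluating this (or passing to its limit) at the minimising argument furnished by \refL{Lgamx} — a root of \eqref{mul}, or $\AAA$, or $-\infty$ — together with strict convexity of $F$, gives $\gamx\qw>\barai\qw$, i.e.\ $\gamx<\barai$. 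This is \eqref{baraig}.

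I would prove parts~\ref{Ltgaxx-ga}--\ref{Ltgaxx->} geometrically, via $g(x):=\phi(x)/x$ on $(0,\infty)$. Concavity of $\phi$ gives $x\phi'(x)-\phi(x)\le-\phi(0)=-\AAA$, and since this is the numerator of $g'(x)$ and has derivative $x\phi''(x)\le0$, the function $g$ is unimodal. If $\AAA\ge0$, $g$ is non-increasing, so $\tgaxx(x)=x\sup_{y\ge x}g(y)=xg(x)=\phi(x)$ for $x>0$, while $\tgaxx(0)=\max(\AAA,0)=\AAA=\lim_{x\downto0}\phi(x)$: thus $\tgaxx=\phi$, which proves part~\ref{Ltgaxx->} in this case and is trivially the least non-negative concave majorant of $\phi$ (as $\phi\ge\phi(0)\ge0$). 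If $\AAA<0$ (so $\barai>0$), then $g\to-\infty$ near $0$ and $g(x)=\ga/x\to0^+$ as $x\to\infty$ with $g>0$ for large $x$, so $g$ attains a positive maximum at a unique $y^*$; the envelope identity $\phi'(x)=F(\gth(x))$ turns the tangency condition $\phi'(y^*)y^*=\phi(y^*)$ into $\gth(y^*)=0$, hence $y^*=\barai$ (using $F'(0)=-1/\barai$), with slope $\phi'(\barai)=F(0)=\log(\E N)$. Consequently $\tgaxx$ equals the line $x\log(\E N)$ on $[0,\barai]$ and equals $\phi$ on $[\barai,\infty)$ (constantly $\ga$ beyond $\gb\qw$) — which is \eqref{tgaxxgb} — and from this one reads off that $\tgaxx$ is continuous and increasing on $\ooo$, analytic off $\{\barai,\gb\qw\}$, that \eqref{AAAA} is merely $\tgaxx(0)=\max(\AAA,0)$, and that $\tgaxx$ is the least non-negative concave majorant of $\phi$ (any non-negative concave $\Phi\ge\phi$ has $\Phi(0)\ge0$, $\Phi(\barai)\ge\phi(\barai)$, hence dominates the chord from $(0,0)$ to $(\barai,\phi(\barai))$ on $[0,\barai]$ and dominates $\phi=\tgaxx$ on $[\barai,\infty)$). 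The strict inequalities \eqref{tgaxxgb3}, \eqref{tgaxxgb2} and the ``$>$'' in part~\ref{Ltgaxx->} then follow from strict concavity of $\phi$ on $(\barao,\gb\qw)$ and uniqueness of the minimiser in \eqref{chgaxx2}: for $0\le x<\barai$ that minimiser lies strictly below $0$, so $\phi(x)<xF(0)=\tgaxx(x)$; for $x>\barai$ the affine tangent strictly exceeds the strictly concave $\phi$; and $\tgaxx(x)<\ga$ for $x<\gb\qw$ since otherwise monotonicity of $\tgaxx$ would force $\tgaxx\equiv\ga$ on an interval where $\phi$ is strictly concave.

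Part~\ref{Ltgaxx-inf} is then immediate: restricting the infimum in \eqref{chgaxx2} to $0\le\gth\le\ga$ changes nothing when $x\ge\barai$ (the minimiser already lies there) and picks out the left endpoint $\gth=0$, value $xF(0)=x\log(\E N)$, when $0<x\le\barai$, since $xF(\gth)+\gth$ is then increasing on $[0,\ga]$; the statement of where the infimum is attained is the same trichotomy read off $F'(\gth)=-1/x$. Once the piecewise shape of $\chgaxx(-\cdot)$ and the single geometric fact that the tangent from the origin touches it at $x=\barai$ with slope $\log(\E N)$ are in place, every item of the lemma is a one-line consequence of convexity.
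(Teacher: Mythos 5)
Your proof is correct and follows essentially the same route as the paper: part (i) via the tangent inequality for $\log\hmu$ at $0$ combined with Lemma~\ref{Lgamx}, and parts (ii)--(v) via the variational representation \eqref{chgaxx2}/\eqref{gaxxx2}, identifying $\barai$ as the point where the minimiser $\gth(x)$ crosses $0$ and $\gb\qw$ as where it reaches $\ga$. Your unimodality-of-$\phi(x)/x$ and envelope-identity packaging ($x\phi'(x)-\phi(x)=-\gth(x)$) is just a geometric restatement of the paper's sign analysis of $f'(0+)=1-x/\barai$, so there is no substantive difference.
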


\begin{proof}
\pfitemref{Ltgaxx-x}
If $\gamx=0$, then \eqref{baraig} is trivial. 
Suppose now $\gamx>0$. Then, by \refL{Lgamx}, $\AAA<0$. Furthermore,
by \refL{LX}, $\log\hmu(\gth)$ is strictly convex on $(\AAA,\infty)$.
Thus \eqref{barai} applies, and, moreover, for $\gth\in(\AAA,0)$,
\begin{equation}
  \frac{\log\hmu(\gth)}{|\gth|}
=
  \frac{\log\hmu(0)}{|\gth|}
-
  \frac{\log\hmu(0)-\log\hmu(\gth)}{-\gth}
>
  \frac{\log\hmu(0)}{|\gth|}
-(\log\hmu)'(0).
\end{equation}
Hence, the infimum  of
$  \xfrac{\log\hmu(\gth)}{|\gth|}$ over any finite subinterval of $(\AAA,0)$
is strictly larger than $-(\log\hmu)'(0)=\barai\qw$.
Moreover, if $\AAA=-\infty$, then the strict convexity also implies
\begin{equation}
\lim_{\gth\to-\infty} \frac{\log\hmu(\gth)}{|\gth|}
=-\lim_{\gth\to-\infty}(\log\hmu)'(\gth)>-(\log\hmu)'(0).  
\end{equation}
Hence, in any case, \eqref{gamx2a} yields $\gamx\qw>\barai\qw$ and thus
$\gamx<\barai$
when
$\gamx>0$, which completes the proof of \eqref{baraig}.

By \eqref{baraig} and
\eqref{gamxbaram}, $0\le\baram\le\gamx\le\barai$, so to show \eqref{baraim},
it remains only to verify
$\barai<\gb\qw$. This is trivial if $\barai=0$, so assume $\barai>0$.
Then \eqref{barai} holds and $\AAA\le0$.
Hence $\log\hmu(\gth)$ is strictly convex on $[0,\ga]$ by \refL{LX}
and thus $(\log\hmu)'(0)< (\log\hmu)'(\ga)$, and thus by
\eqref{barai} and \eqref{hmu'gb},
\begin{equation}
\barai\qw= 
-(\log\hmu)'(0)>- (\log\hmu)'(\ga)
=\gb. 
\end{equation}

\pfitemref{Ltgaxx-gagb}
First, \eqref{AAAA} is immediate by \eqref{tgaxx} and \eqref{chgaxxgb}.

We now show \eqref{tgaxxgb}. 
For $x\ge\gb\qw$, \eqref{chgaxxgb} yields
$\chgaxx(-y)=\ga$ for every $y\ge x$, 
and thus \eqref{tgaxx} yields $\tgaxx(x)=\ga=\gaxx(-x)$.
Hence it suffices to consider $0\le x\le \gb\qw$.
In this range, we have $\chgaxx(-x)=\gaxxx(x)$, see \refR{Rgaxxx},
and it follows that for such $x$,
we may replace $\chgaxx(-y)$ by $\gaxxx(y)$ in
\eqref{tgaxx}, which will be a (minor) convenience.
Note first that by \eqref{gaxxx2}, for $x>0$,
\begin{equation}\label{gaxxx2x}
  \begin{split}
\frac{\gaxxx(x)}x
=\inf_{-\infty<\gth<\infty}\Bigset{\log\hmu(\gth)+\frac{\gth}x}.	
  \end{split}
\end{equation}
Assume that $\baram<x<\bara$ and 
let $\gth(x)$ be the (unique) $\gth$ where the infima in 
\eqref{gaxxx2} and \eqref{gaxxx2x}
are attained, see \refR{Rgaxxx}.
Let $f$ be the convex function $f(\gth):=x\log\hmu(\gth)+\gth$ in
\eqref{gaxxx2}, and note that if $\barai>0$, then \eqref{barai} implies
that the right derivative
\begin{equation}\label{f'0}
  f'(0+)=x\frac{\hmu'(0+)}{\hmu(0)}+1=-\frac{x}{\barai}+1.
\end{equation}

Suppose first that $\barai\le x\le\gb\qw$ and $x>0$.
If $\barai>0$, then \eqref{f'0} yields $f'(0+)=-x/\barai+1\le0$;
if $\barai=0$, then $f(0)=\infty$ or $f'(0+)=-\infty$. In all three cases,
the minimum point satisfies $\gth(x)\ge 0$. Hence, if 
$y\ge x$, then by \eqref{gaxxx2x},
\begin{equation}\label{qb}
\frac{\gaxxx(y)}{y}
\le
{\log\hmu(\gth(x))+\frac{\gth(x)}y}	
\le
{\log\hmu(\gth(x))+\frac{\gth(x)}x}	
= \frac{\gaxxx(x)}{x}.
\end{equation}
Thus, 
$\gaxxx(x)/x$  is decreasing on $[\barai,\infty)\cap(0,\infty)$
(and so is $\chgaxx(-x)/x$),
and it follows from \eqref{tgaxx} that $\tgaxx(x)=\gaxxx(x)=\chgaxx(-x)$
for $x\ge\barai$ with $0<x\le\gb\qw$.
Thus, the second line of \eqref{tgaxxgb} holds for $x\ge \barai$ with $x>0$.

On the other hand, if $\baram<x\le\barai$ (which entails $\barai>0$ and thus
$\AAA\le0$), 
we similarly have by \eqref{f'0}
$f'(0)\ge0$, and thus the minimum point $\gth(x)\le 0$. 
Hence, if also
$0< z\le x$, then by \eqref{gaxxx2x},
\begin{equation}\label{qb-}
\frac{\gaxxx(z)}{z}
\le
{\log\hmu(\gth(x))+\frac{\gth(x)}z}	
\le
{\log\hmu(\gth(x))+\frac{\gth(x)}x}	
= \frac{\gaxxx(x)}{x}.
\end{equation}
If $\baram=0$, we have shown that $\gaxxx(x)/x$ is increasing on $(0,\barai]$.
If $\baram>0$, we have shown that $\gaxxx(x)/x$ is increasing on
$[\baram,\barai]$, and since $\gaxxx(x)=-\infty$ for $x<\baram$ by
\eqref{gaxxx} and \eqref{chbaram123}, $\gaxxx(x)/x$ is increasing on
$(0,\barai]$ 
in this case too.

Hence, if $\barai>0$, then $\gaxxx(x)/x$ is increasing on $(0,\barai]$
and decreasing on $[\barai,\infty)$;  consequently
it has a maximum at $\barai$
and, for every $x\in(0,\barai]$,
\begin{equation}\label{maxgaxxx1}
  \max_{y\ge x}\frac{\gaxxx(y)}y  
=
  \frac{\gaxxx(\barai)}{\barai} . 
\end{equation}
Moreover, again by \eqref{f'0}, if $x=\barai>0$, then
$f'(0+)=0$, and thus the infimum in \eqref{gaxxx2x} is attained at $\gth=0$.
This yields the value
\begin{equation}\label{maxgaxxx2}
\frac{\gaxxx(\barai)}{\barai}
=\log\hmu(0)=\log\mu\ooo=\log(\E N),
\qquad \barai>0.
\end{equation}
It follows from \eqref{maxgaxxx1}--\eqref{maxgaxxx2} and \eqref{tgaxx} that
the first line of \eqref{tgaxxgb} holds for $0<x\le\barai$.  

Finally, consider $x=0$. By \refL{LXch}\ref{lxchgaxx}, $\chgaxx(0)=\AAA$.
If $\barai>0$, then $\hmu(0)<\infty$ and thus
$\AAA\le0$; hence \eqref{tgaxx} yields $\tgaxx(0)=0$. 
(Cf.\ \eqref{AAAA}.) 
Similarly,
if $\barai=0$, then $\hmu(0)$ or $\hmu'(0)$ is infinite, and thus
$\AAA\ge0$; hence \eqref{tgaxx} yields
$\tgaxx(0)=\AAA=\chgaxx(0)$. In both cases \eqref{tgaxxgb} holds. 
This completes the proof of \eqref{tgaxxgb}.

Moreover, 
if $\barai<x\le\gb\qw$,
then \eqref{f'0} yields $f'(0+)<0$,
and thus the infimum in \eqref{gaxxx2}
is not attained at $0$; 
hence 
$\gaxxx(x)<x\log\hmu(0)$. 
Furthermore, by \eqref{tgaxxgb} and \refR{Rgaxxx},
$\gaxx(x)=\chgaxx(-x)=\gaxxx(x)$; hence 
\eqref{tgaxxgb3} holds for $\barai<x\le\gb\qw$.
This extends to $x>\gb\qw$ since
$\tgaxx(x)$ is constant there by \eqref{tgaxxgb}.

Furthermore, it follows from \eqref{tgaxxgb} and
\eqref{chgaxxgb} that $\tgaxx(x)<\ga$ if $\barai\le x<\gb\qw$; 
if $\barai>0$, 
this extends to $0\le x<\gb\qw$, 
because
$\barai<\gb\qw$ and
$\tgaxx(x)$ is increasing on $[0,\barai]$ by \eqref{tgaxxgb}.
Hence \eqref{tgaxxgb2} holds.

\pfitemref{Ltgaxx-ga}
By \eqref{tgaxxgb}, $\tgaxx(x)$ is linear, continuous, and increasing on
$[0,\barai]$.
Furthermore,
by \refL{LXch}, $\chgaxx(-x)$ is concave, increasing and continuous on
$[\baram,\infty)$; 
hence, by \eqref{baraim} and \eqref{tgaxxgb},
$\tgaxx(x)=\chgaxx(-x)$ is concave, increasing and continuous on
$[\barai,\infty)$.
Consequently, $\tgaxx(x)$ is increasing and continuous on $\ooo$, and to
show that it is concave, it suffices to verify that, in the case $\barai>0$,
the left derivative at $\barai$ is at least as large as the right derivative.
To see this, note that 
$\tgaxx(x)=x\log(\E N)$ for $0\le x\le \barai$ 
by \eqref{tgaxxgb}, while
$\tgaxx(x)=\gaxxx(x)\le x\log(\E N)$ for $\barai\le x\le\gb\qw$ 
by \eqref{tgaxxgb} and
\eqref{maxgaxxx1}--\eqref{maxgaxxx2};
hence the left derivative equals $\log(\E N)$, and the right derivative
cannot be larger.

Moreover, $\tgaxx(x)$ is by the definition \eqref{tgaxx} 
a non-negative majorant of $\chgaxx(-x)$,
and we have just shown that it is concave.
It follows easily from \eqref{tgaxx} that it is the
least concave non-negative majorant.

\pfitemref{Ltgaxx-inf}
If $x=0$ and $\AAA\le0$, then \eqref{tgaxxinf}
follows by \eqref{AAAA}, so we may assume $x>0$.
If $0< x\le\barai$, then we have as above $f'(0+)\ge0$ 
by \eqref{f'0}
and thus the convex
function $f(\gth)$ is increasing on $\ooo$, 
so the infimum in \eqref{tgaxxinf} is attained at $\gth=0$
and equals $x\log\hmu(0)$, which
verifies \eqref{tgaxxinf} in this case by \eqref{tgaxxgb}.
Similarly, if $x\ge\gb\qw$, then, recalling \eqref{el2} and \ref{BPmalthus},
\begin{equation}\label{f'ga}
  f'(\ga)=x\frac{\hmu'(\ga)}{\hmu(\ga)}+1=-x\gb+1
\le0
\end{equation}
and thus $f(\gth)$ is decreasing for $\gth\le\ga$,
so the infimum in \eqref{tgaxxinf} is attained at $\gth=\ga$ 
and equals $x\log\hmu(\ga)+\ga=\ga$, which
verifies \eqref{tgaxxinf} in this case, again using \eqref{tgaxxgb}.
If $0<\barai<x<\gb\qw$, then $f'(0)<0<f'(\ga)$ so the minimum point
$\gth(x)$ in \eqref{gaxxx2} satisfies $0<\gth(x)<\ga$,
and a minor variation yields $0<\gth(x)<\ga$ also when 
$0=\barai<x<\gb\qw$.
Hence, if
$\barai<x<\gb\qw$, then  
the infima in \eqref{gaxxx2} and \eqref{tgaxxinf} are equal and
\eqref{tgaxxinf} follows in this case too. (Cf.\ \eqref{chgaxx2}.)

\pfitemref{Ltgaxx->}
By \eqref{tgaxx}, $\tgaxx(x)\ge\chgaxx(-x)$ for every $x$, and equality
holds for $x\ge\barai$ by \ref{Ltgaxx-gagb}.

For $x=0$,
\eqref{tgaxx} yields $\tgaxx(0)=\chgaxx(0)\iff \chgaxx(0)\ge0$,
and the result follows since $\chgaxx(0)=\AAA$ by \eqref{chgaxxgb}.

It remains to consider $0<x<\barai$. If $\AAA\ge0$, then $\hmu(\gth)=\infty$
for $\gth<0$. Hence \eqref{chgaxx2} and \eqref{tgaxxinf} yield
\begin{equation}
\chgaxx(-x)=\inf_{-\infty<\gth\le\ga}\bigset{x\log\hmu(\gth)+\gth}  
=\inf_{0\le\gth\le\ga}\bigset{x\log\hmu(\gth)+\gth}  
=\tgaxx(x).
\end{equation}
On the other hand, if $\AAA<0$, then $f(\gth)$ is analytic at $\gth=0$, and
$f'(0)>0$ by \eqref{f'0}; hence the infimum in \eqref{gaxxx2} is strictly
less than the value at $\gth=0$, \ie,
\begin{equation}
\gaxxx(x)
<x\log\hmu(0),
\end{equation}
which yields $\gaxxx(x)<\tgaxx(x)$ by \eqref{tgaxxgb}.
\end{proof}

\begin{theorem}\label{Tnle}
  Under the assumptions \refBP{} and \refBPlastpsi,  
as \ntoo, a.s.,
  \begin{equation}\label{tnle}
\nle{x\log n}(T_n) = n^{\tgaxx(\ga x)/\ga+o(1)},
  \end{equation}
uniformly in $x\ge x_0$ for every $x_0>0$;
if further $\AAA\le0$, then \eqref{tnle} holds  uniformly for all $x\ge0$.
\end{theorem}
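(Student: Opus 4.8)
The plan is to estimate $\log\nle{yt}(\ctt)$ for $y$ in a compact interval and then substitute $t=\tau(n)$, $y=x\log n/\tau(n)$, using $\tau(n)/\log n\asto1/\ga$ from \eqref{tex2tau} and the continuity of $\tgaxx$ (\refL{Ltgaxx}\ref{Ltgaxx-ga}).

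\emph{Upper bound.} By the branching property $\E\nle{yt}(\ctt)=\sum_{k\le yt}\mu^{*k}\xoot$, and Chernoff's inequality gives $\mu^{*k}\xoot\le e^{\gth t}\hmu(\gth)^k$ --- but only for $\gth\ge0$, since for $\gth<0$ the exponential bound on $\ett{s\le t}$ goes the wrong way. Summing a geometric series and optimising over $\gth\in[0,\ga]$ therefore yields $\E\nle{yt}(\ctt)\le(yt+1)\,e^{t\inf_{0\le\gth\le\ga}\{\gth+y\log\hmu(\gth)\}}=(yt+1)\,e^{t\tgaxx(y)}$, the last equality being the formula for $\tgaxx$ in \refL{Ltgaxx}\ref{Ltgaxx-inf}; note that it is exactly the constraint $\gth\ge0$ that replaces $\chgaxx(-y)$ by the larger $\tgaxx(y)$. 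Markov's inequality, summation over integer $t$, the Borel--Cantelli lemma, and monotonicity of $\nle{yt}(\ctt)$ in $t$ and $y$ then give, a.s.\ and uniformly on compacts, $\limsup_\ttoo\log\nle{yt}(\ctt)/t\le\tgaxx(y)$, hence the upper half of \eqref{tnle}.

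\emph{Lower bound, range $\ga x\ge\barai$}, where $\tgaxx(\ga x)=\chgaxx(-\ga x)$ (\refL{Ltgaxx}\ref{Ltgaxx->}). I would apply \refT{TBiggins2u}\ref{tbiggins2u-} with $\eta_i=-1$ and the $0$--$1$ characteristic $\chi(t):=\ett{t<\xi_1}$, which counts the leaves of $\ctt$ (as in \refR{Rkork}); that $\chi$ is well-regulated, i.e.\ \ref{BIGG1}--\ref{BIGG2} hold, is checked exactly as in the proof of \refT{Tfillup}, using $\E e^{-\chga(\zeta)\xi_1}\le\hmu(\chga(\zeta))<\infty$ for finite $\zeta$. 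Since a leaf of depth $\le yt$ is in particular a node of depth $\le yt$, \refT{TBiggins2u}\ref{tbiggins2u-} gives $\nle{yt}(\ctt)\ge\Ngechi{-yt}(t)=e^{t(\chgaxx(-y)+o(1))}$ a.s., uniformly for $y$ bounded below by any constant $>\gamx$ --- in particular uniformly on $[\barai,\infty)$, since $\gamx<\barai$ unless $\gamx=\barai=0$ (\refL{Ltgaxx}\ref{Ltgaxx-x}), and the point $\ga x=0$ is trivial. Substituting $t=\tau(n)$ gives $\nle{x\log n}(T_n)\ge n^{\tgaxx(\ga x)/\ga-o(1)}$ for $\ga x\ge\barai$; as $\nle{}$ is increasing in its depth argument and $\nle{x\log n}(T_n)\le|T_n|=n^{1+o(1)}$, this (with the matching trivial upper bound) extends uniformly to all larger $x$, including $x\to\infty$.

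\emph{Lower bound, range $0\le\ga x\le\barai$} (non-empty only when $\barai>0$, which forces $\E N<\infty$ and $\AAA\le0$), where $\tgaxx(\ga x)=\ga x\log(\E N)$. A bounded characteristic cannot attain this rate --- \refT{TBiggins2u} would give only the strictly smaller $\chgaxx(-\ga x)$ --- because the nodes dominating $\nle{yt}(\ctt)$ here are born long before time $t$. Instead I would use $\nle{yt}(\ctt)\ge G^{(t)}_{\floor{yt}}$, the size of generation $\floor{yt}$ of $\ctt$ present at time $t$. The Seneta--Heyde norming for the embedded supercritical Galton--Watson process gives $\log G_k/k\asto\log(\E N)$, while $\E\bigpar{G_k-G^{(t)}_k}=\mu^{*k}\bigl((t,\infty)\bigr)$ is exponentially (or, when $\AAA=0$, superpolynomially) smaller than $(\E N)^k$ as soon as $k\le(1-\eta)\barai t$, by a Chernoff or moment bound; Borel--Cantelli then yields $G^{(t)}_k\ge(\E N)^{k-o(k)}$ a.s., whence $\nle{x\log n}(T_n)\ge n^{\tgaxx(\ga x)/\ga-o(1)}$ for $\ga x<\barai$, and for $\ga x=\barai$ by continuity. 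The two ranges cover all $x\ge x_0$, so combining with the upper bound and patching the uniform statements proves \eqref{tnle}; the restriction $x\ge x_0>0$ (removable when $\AAA\le0$) is forced because $\tgaxx(0)=\max(\AAA,0)$ and the variational formula for $\tgaxx$ needs $x>0$ when $\AAA>0$. The main obstacle is conceptual rather than computational: recognising which of the two lower-bound arguments is sharp in each direction $y$ --- i.e.\ controlling the interplay between $\tgaxx$, $\chgaxx$ and $\barai$ encoded in \refL{Ltgaxx} --- and establishing the large-deviation fact that, in the regime $y<\barai$, almost all of generation $k$ has been born by time $t$; making the various convergences uniform in $x$ and handling the degenerate cases ($\AAA\ge0$, $\barai=0$, $\bara=\infty$) is routine but requires some care.
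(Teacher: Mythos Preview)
Your upper bound via first moments is correct and more direct than the paper's: the paper instead applies \refT{TBiggins2u}\ref{tbiggins2u-} to the bounded characteristic $\chi(t)=\ett{0\le t<1}$ to get $\limsup_t t^{-1}\log\Nlechi{xt}(t)\le\tgaxx(x)$, and then recovers the full count from the $\chi$-count by telescoping over unit time shifts. For the lower bound at $y\ge\barai$, your leaf characteristic and the paper's age-$<1$ characteristic play the same role.

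The genuine difference is the lower bound for $y<\barai$. The paper does not split on $\barai$; it uses a single time-dilation argument valid for every $x>0$: for any $y\ge x$ set $t':=tx/y\le t$, so that
\[
\Nlei{xt}(t)=\Nlei{yt'}(t)\ge\Nlei{yt'}(t')\ge\Nlechi{yt'}(t'),
\]
and \refT{TBiggins2u}\ref{tbiggins2u-} applied at time $t'$ gives $t'^{-1}\log\Nlechi{yt'}(t')\to\chgaxx(-y)$. Dividing by $t=yt'/x$ and taking the supremum over $y\ge x$ recovers precisely the definition $\tgaxx(x)=x\sup_{y\ge x}\chgaxx(-y)/y$, with no hypothesis beyond \refBP. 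This single stroke handles all $x>0$ at once.

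Your Galton--Watson route has a gap when $\AAA=0$. You need $\mu^{*k}((t,\infty))/(\E N)^k=\P(S_k>t)$, with $S_k$ a sum of $k$ i.i.d.\ variables of mean $1/\barai$, to decay fast enough for Borel--Cantelli. When $\AAA<0$ Cram\'er gives exponential decay, but $\barai>0$ guarantees only a finite \emph{first} moment of the summands, so in general no rate is available and the claimed ``superpolynomial'' decay is unfounded (Chebyshev would give at best $O(1/k)$, and only if a second moment exists). The argument is repairable by truncating birth times to force $\AAA<0$ and then letting the truncation level go to infinity --- but the paper's time-dilation sidesteps the case analysis entirely and is both shorter and more robust.
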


\begin{proof}
  Let $\chi$ be the \oichar{} 
  \begin{equation}\label{chi01}
\chi(t):=\ett{0\le t<1}, 	
  \end{equation}
meaning that we
  count only individuals that are less than one unit old.

In the present proof we consider both this characteristic $\chi$ and the
characteristic 1, and both offsets $\eta_i=1$ and $\eta_i=-1$; we therefore
introduce more notation to distinguish between the different cases.
Let $\Ngechi{s}(t)$ and $\Ngei{s}(t)$ denote the number in \eqref{Ngechi}
counted with the characteristic $\chi$ in \eqref{chi01}
and the characteristic $1$, respectively,
in both cases for the offsets $\eta_i=1$. We similarly
let $\Nlechi{s}(t)$ and $\Nlei{s}(t)$ denote the corresponding numbers of
individuals with position  less that $s$. Finally, 
we let $\chNgechi{s}(t)$ and $\chNgei{s}(t)$ denote the corresponding
numbers for the offsets $\eta_i=-1$. Note that
\begin{equation}\label{nchn}
  \Nlechi s(t)=\chNgechi{-s}(t)
\quad\text{and}\quad
  \Nlei s(t)=\chNgei{-s}(t).
\end{equation}
Furthermore,
\begin{equation}\label{nchi1}
  \Ngechi{s}(t)=\Ngei{s}(t)-\Ngei{s}(t-1).
\end{equation}

Note further that \ref{BIGG2} holds trivially for $\chi$ and $\eta_i=-1$, since
$\chga(\zeta)=\ga(-\zeta)>-\infty$ for each $\zeta$ by \refL{LXch}\ref{lxchga}
and thus
\begin{equation}
  \sup_{t>0}\bigpar{e^{-\ga(\zeta)t}\chi(t)}
=\max\bigset{0,e^{-\ga(\zeta)}}<\infty.
\end{equation}
Consequently, \refT{TBiggins2u}\ref{tbiggins2u-} applies to $\chi$ and
$\eta_i=-1$ and yields, for
every $x<\chgam$,
\begin{equation}
  \label{chkod}
\frac{\log \chNgechi{xt}(t)}t 
\asto 
\chgaxx(x).
\end{equation}
Using \eqref{nchn}, this means that for
every $x>\gamx=-\chgam$,
\begin{equation}
  \label{kork}
\frac{\log \Nlechi{xt}(t)}t 
=\frac{\log \chNgechi{-xt}(t)}t 
\asto 
\chgaxx(-x)
\le \tgaxx(x).
\end{equation}

Furthermore, if $0\le x<\gamx$, then \refT{TBiggins1} can be used as in the
proof of \refT{Tfillup}, see \eqref{bt-}, to show that \as{} 
$\Nlechi{xt}(t)=\chNgechi{-xt}(t)=0$ for all large $t$, and thus 
\begin{equation}
  \label{kod-}
\frac{\log \Nlechi{xt}(t)}t 
\asto 
-\infty\le
\tgaxx(x).
\end{equation}
(Alternatively, we may use \cite[Theorem 5(i)]{Biggins95} directly.)
Finally, if $x=\gamx$, then for any $\eps>0$, by \eqref{kork},
\begin{equation}\label{korkz}
  \limsup_{\ttoo}\frac{\log\Nlechi{xt}(t)}t
\le
  \limsup_{\ttoo}\frac{\log\Nlechi{(x+\eps)t}(t)}t
\le\tgaxx(x+\eps).
\end{equation}
Since $\tgaxx$ is continuous 
by \refL{Ltgaxx}
and $\eps>0$ in \eqref{korkz}
is arbitrary, it follows from 
\eqref{kork}--\eqref{korkz} that for every $x\ge0$,
\begin{equation}\label{bra}
  \limsup_{\ttoo}\frac{\log\Nlechi{xt}(t)}t
\le\tgaxx(x).
\end{equation}
(See also \cite[Theorem 5 and the comment after it on the upper
  bound]{Biggins95}.) 
Note also that for $x_1=\gam+1$, say, \as{} for large $t$, $H(\ctt)\le x_1t$
and thus $\Nlechi{xt}(t)=\Nlechi{x_1t}(t)$ for all $x\ge x_1$.
Since $\tgaxx(x)$ is continuous, the argument in the proof of \refT{Tnge}
shows that \as, \eqref{bra} holds uniformly for all $x\ge0$.

In other words, for any $\gd>0$, there exists \as{} $t_0>0$ such that for all
$t\ge t_0$ and all $x\ge0$,
\begin{equation}\label{bru}
{\log\Nlechi{xt}(t)}
\le\bigpar{\tgaxx(x)+\gd}t.
\end{equation}
Hence, for all $s>0$ and $t\ge u\ge t_0$,
\begin{equation}\label{bruu}
  \begin{split}
{\log\Nlechi{s}(u)}
&\le\bigpar{\tgaxx(s/u)+\gd} u
=\frac{\tgaxx(s/u)}{s/u}s+\gd u
\\&
\le
\frac{\tgaxx(s/t)}{s/t}s+\gd t
=\bigpar{\tgaxx(s/t)+\gd} t,	
  \end{split}
\end{equation}
using the fact  that by \eqref{tgaxx},
$\tgaxx(x)/x$ is decreasing for $x>0$.
Furthermore, \eqref{bruu}  holds for $s=0$ too, again by \eqref{bru}.

We now return to the characteristic 1. By \eqref{nchi1}, for 
$t\ge t_0$,
\begin{equation}
  \begin{split}
  \Nlei{s}(t)
&=
\sum_{i=0}^{\floor{t-t_0}}\Nlechi{s}(t-i) + \Nlei{s}(t-\floor{t-t_0}-1)
\\&
\le \sum_{i=0}^{\floor{t-t_0}}\Nlechi{s}(t-i) + Z_{t_0},	
  \end{split}
\end{equation}
since $t-\floor{t-t_0}-1\le t_0$.
With $t_0$ as above, we thus have by \eqref{bruu} \as, for $t\ge t_0$ and
all $s\ge0$,
\begin{equation}
  \begin{split}
  \Nlei{s}(t)
&
\le \sum_{i=0}^{\floor{t-t_0}}e^{(\tgaxx(s/t)+\gd)t} + Z_{t_0}
\le (t+1)e^{(\tgaxx(s/t)+\gd)t} + Z_{t_0}
\end{split}
\end{equation}
and consequently, for all $x\ge0$,
\begin{equation}\label{brb}
\frac{\log\Nlei{xt}(t)}t
\le\tgaxx(x)+\gd+\frac{\log(t+1)}t+\frac{Z_{t_0}}t.
\end{equation}
Hence, \as{} there exists $t_1\ge t_0$ such that for all $t\ge t_1$
and all $x\ge0$,
\begin{equation}
\frac{\log\Nlei{xt}(t)}t
\le\tgaxx(x)+2\gd.
\end{equation}
Consequently, \as,
\begin{equation}\label{nlei+}
  \begin{split}
\limsup_{\ttoo}
\frac{\log \Nlei{xt}(t)}t	
\le
\tgaxx(x),
  \end{split}
\end{equation}
uniformly for all $x\ge0$.

For a lower bound, consider first $x>0$. For a given $y\ge x>0$ and any
$t>0$, let $t':=tx/y$. Then $t'\le t$ and
\begin{equation}
  \Nlei{xt}(t)
=
  \Nlei{yt'}(t)
\ge
  \Nlei{yt'}(t')
\ge
  \Nlechi{yt'}(t').
\end{equation}
Consequently, letting \ttoo{} and using \eqref{kork},
\begin{equation}
  \begin{split}
\frac{\log \Nlei{xt}(t)}t	
\ge
\frac{\log \Nlechi{yt'}(t')}t	
=
\frac{\log \Nlechi{yt'}(t')}{yt'/x}
\asto
x\frac{\chgaxx(-y)}{y}.	
  \end{split}
\end{equation}
Hence, \as, 
\begin{equation}
  \begin{split}
\liminf_{\ttoo}
\frac{\log \Nlei{xt}(t)}t	
\ge
x\frac{\chgaxx(-y)}{y},
  \end{split}
\end{equation}
for every $y\ge x$, and thus by \eqref{tgaxx}, \as,
\begin{equation}\label{nlei-}
  \begin{split}
\liminf_{\ttoo}
\frac{\log \Nlei{xt}(t)}t	
\ge
\tgaxx(x).
  \end{split}
\end{equation}
For $x=0$, we have $\Nlei0(t)=1$ for all $t\ge0$, and thus 
$\xfrac{\log \Nlei{0}(t)}t=0$. Hence, 
\eqref{nlei-} holds for $x=0$ too if
(and only if) $\tgaxx(0)=0$,
which by \eqref{AAAA} holds if and only if $\AAA\le0$.

Note that $\nle{xt}(\ctt)=\Nlei{xt}(t)$, \cf{} \eqref{nle} and \eqref{nchn}.
Hence, combining \eqref{nlei+} and \eqref{nlei-}
we obtain
\begin{equation}\label{nlei}
  \begin{split}
\frac{\log \nle{xt}(\ctt)}t	
=
\frac{\log \Nlei{xt}(t)}t	
\asto
\tgaxx(x)
  \end{split}
\end{equation}
for every $x>0$, and in the case $\tgaxx(0)=0$ for every $x\ge0$.
Again, by the argument in the proof of \refT{TBiggins2u}, \eqref{nlei} holds
\as{} uniformly on every interval $[x_0,\infty)$ with $x_0>0$, and in the case
$\tgaxx(0)=0$, uniformly on $[0,\infty)$.

Finally, the proof is completed as the proof of \refT{Tnge}.
\end{proof}

\begin{remark}\label{Repif}
  The lower part of the profile is thus described by the function $\tgaxx(x)$
  and not by $\chgaxx(-x)$. 
(By \refL{Ltgaxx}\ref{Ltgaxx->},
there is a difference only if $\AAA<0$, and only for $x<\barai$.)
However, if we count only nodes described by a
  \oichar{} $\chi$ for which \ref{BIGG2} holds, then  \eqref{kork} holds.
For example, this applies to non-full nodes in an \mst{}, using the
characteristic \eqref{chim}; 
we use this in the proof of \refT{Talla} below.
Similarly, it applies to external nodes in an
\mst, and to leaves in any tree generated by a branching process with
$N\ge1$
(using the argument in the proof of \refT{Tfillup} to verify \ref{BIGG2}).
\end{remark}

\begin{example}\label{Ebaraimary}  
In the \mary{} case $N=m$, we have $\hmu(0)=\E N=m$, so $\AAA\le0$.
Furthermore, \eqref{barai2} holds and shows that
\begin{equation}\label{baraimary}
\barai=\frac{m}{\sum_{i=1}^m \E\xi_i}.  
\end{equation}
In particular, 
with the usual ordering
$\xi_1\le\dots\le\xi_m$,
$\barai>0\iff \E\xi_m<\infty$.
(This holds for all our examples of \mary{} trees, including the \mst{s}.)

For the \mst{}, we can as well use a random labelling of the children as in
\refSS{SSmst}, which yields (with the notation there)
$\E\xi_i=\E (S+X_i)=H_{m-1}$, and thus 
\begin{equation}\label{baraimst}
\barai=1/H_{m-1}   
\end{equation}
by \eqref{baraimary}. In particular, for the \bst, $\barai=1$.

In the \mary{} case with $\barai>0$, by \refL{Ltgaxx},
$\tgaxx(x)=x\log m$ for $x\in[0,\barai]$. Hence, \refT{Tnle} yields, \as,
uniformly for $k\le(\barai/\ga)\log n$,
  \begin{equation}\label{tnlex}
\nle{k}(T_n) = n^{\tgaxx(\ga k/\log n)/\ga+o(1)}
= n^{ k\log m/\log n+o(1)}
=m^{k+o(\log n)}.
  \end{equation}
Equivalently, \as,
uniformly for $k/\log n\in[\gd,\barai/\ga]$ for any $\gd>0$,
  \begin{equation}\label{tnlex2}
\nle{k}(T_n) 
=m^{k+o(k)}.
  \end{equation}
We can interpret this as saying that, \as, a ``large part'' (in a
logarithmic sense) of all possible nodes of depth $\le k$ are filled for 
$k\le(\barai/\ga)\log n$, 
but not for (substantially) larger $k$ as a consequence of \eqref{tgaxxgb3}.
Compare with \refT{Tfillup}, which says that \emph{all} possible nodes are
filled up to depth $S(T_n)\sim\bgamx\log n$ \as,  
and note that $\bgamx<\barai/\ga$ by
\eqref{baraig}, except in the trivial case $\barai=0$.

A stronger result will be proved in \refT{Talla}.
\end{example}

\begin{remark}
  \label{Rdepth0}
In the case $\AAA>0$, when $\tgaxx(0)>0$ by \eqref{AAAA}, \eqref{tnle} is
evidently false for $x=0$. If we consider $x=x(n)>0$ with $x\to0$ as \ntoo,
the situation is more complicated. 
There are examples where \eqref{tnle} fails due to irregular growth of the
tree, see \refE{Ebadapple}, but in other, more regular cases,  
\eqref{tnle} holds uniformly for all $x>0$ such that $x\log n\ge1$,
see \refT{Tnotbad}. 
(The upper bound \eqref{nlei+} always holds; the problem is the lower bound.)
\end{remark}

So far we have considered the number of nodes of depth $\le s$ and $\ge s$,
for a given  $s$.
We are also interested in the number of nodes with depth exactly a given
value, and we thus define, for a tree $T$ and an integer $k\ge0$, 
\begin{equation}
  \label{nzk}
  n_{k}(T):=\bigabs{\set{v\in T:\dep(v)=k}}.
\end{equation}
We shall see that the asymptotics of these numbers are governed by the
function $\hgaxx(x)$ defined by
\begin{equation}\label{hgaxx}
  \hgaxx(x):=\min\bigset{\gaxx(x),\tgaxx(x)}.
\end{equation}
Note that $\hgaxx(x)=\tgaxx(x)$ for $0\le x\le\gb\qw$
and $\hgaxx(x)=\gaxx(x)$ for $x\ge\gb\qw$, 
see \eqref{gaxxgb} and \eqref{tgaxxgb}.

\begin{lemma}
\label{Lhgaxx}
The function $\hgaxx(x)$ is concave on $\ooo$; it is finite for $x<\bara$ 
and $-\infty$ for $x>\bara$, and it is continuous except possibly at $\bara$.
Moreover, $\hgaxx(x)$ has a global maximum at $x=1/\gb$, with
$\hgaxx(1/\gb)=\ga$ and $\hgaxx(x)<\ga$ for $x\neq1/\gb$.
Furthermore,
  \begin{equation}\label{lhgaxx}
  \hgaxx(x)
=
  \begin{cases}
x\log\hmu(0)
= x\log(\E N)
, & 0\le x<\barai,
\\
\gaxxx(x), & x\ge \barai.
  \end{cases}
\end{equation}
Hence, $\hgaxx(x)$ is analytic on $(0,\bara)$, except at $\barai$ 
(if $\barai>0$), where it is not analytic.
Also, 
\begin{equation}\label{hgaxx0}
  \hgaxx(0)=0\iff\AAA\le0.
\end{equation}

If $x>0$, or if $x\ge0$ and $\AAA\le0$, then also
\begin{equation}\label{hgaxxinf}
  \begin{split}
\hgaxx(x)
=
\inf_{0\le\gth<\infty}\bigset{x\log\hmu(\gth)+\gth}.
  \end{split}
\end{equation}
\end{lemma}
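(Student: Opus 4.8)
The plan is to derive all the asserted properties of $\hgaxx$ from the corresponding properties of $\gaxx$ and $\tgaxx$ (Lemmas \ref{LX}\ref{lxgaxx} and \ref{Ltgaxx}), using the key structural fact recorded just before the statement: by \eqref{gaxxgb} and \eqref{tgaxxgb}, $\hgaxx(x)=\tgaxx(x)$ for $0\le x\le\gb\qw$ and $\hgaxx(x)=\gaxx(x)$ for $x\ge\gb\qw$. First I would establish concavity: each of $\gaxx$ and $\tgaxx$ is concave on $\ooo$, they agree with $\hgaxx$ on $[0,\gb\qw]$ and $[\gb\qw,\infty)$ respectively, and at the common point $x=\gb\qw$ we have $\gaxx(\gb\qw)=\tgaxx(\gb\qw)=\ga$; so $\hgaxx$ is a function that is concave on each of the two pieces and continuous at the junction, and since at $\gb\qw$ the left derivative (coming from $\tgaxx$, which is increasing by \refL{Ltgaxx}\ref{Ltgaxx-ga}) is $\ge0$ while the right derivative (coming from $\gaxx$, which is decreasing by \refL{LX}\ref{lxgaxx}) is $\le0$, the left derivative dominates the right and concavity on all of $\ooo$ follows.

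Next I would read off finiteness, the location of the maximum, and continuity. Finiteness for $x<\bara$ and the value $-\infty$ for $x>\bara$ follow from the corresponding statements for $\gaxx$ (valid for $x\ge\gb\qw$, which covers $x$ near and beyond $\bara$ since $\gb\qw<\bara$) together with finiteness of $\tgaxx$ on $\ooo$ by \refL{Ltgaxx}; continuity except possibly at $\bara$ follows from continuity of $\tgaxx$ everywhere and of $\gaxx$ away from $\bara$. For the maximum: on $[0,\gb\qw]$, $\hgaxx=\tgaxx$ is increasing with $\tgaxx(x)<\ga$ for $x<\gb\qw$ by \eqref{tgaxxgb2} and $\tgaxx(\gb\qw)=\ga$; on $[\gb\qw,\infty)$, $\hgaxx=\gaxx$ is decreasing with $\gaxx(x)<\ga$ for $x>\gb\qw$ by \eqref{gaxxgb}; hence the global maximum is at $x=1/\gb$ with value $\ga$ and $\hgaxx(x)<\ga$ elsewhere.

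Then I would prove the explicit formula \eqref{lhgaxx}. For $0\le x<\barai$ we have $x<\barai<\gb\qw$ by \eqref{baraim}, so $\hgaxx(x)=\tgaxx(x)=x\log\hmu(0)=x\log(\E N)$ by the first line of \eqref{tgaxxgb} (using $\hmu(0)=\E N$ from \eqref{hmub}). For $x\ge\barai$: if $\barai\le x\le\gb\qw$ then $\hgaxx(x)=\tgaxx(x)=\gaxxx(x)$ by the second line of \eqref{tgaxxgb} together with \refR{Rgaxxx} (which identifies $\chgaxx(-x)=\gaxxx(x)$ on this range and notes $\tgaxx=\chgaxx(-\cdot)$ there by \refL{Ltgaxx}\ref{Ltgaxx->}); if $x\ge\gb\qw$ then $\hgaxx(x)=\gaxx(x)$, and by \refR{Rgaxxx} again $\gaxx(x)=\gaxxx(x)$ for $x\ge\gb\qw$. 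This establishes \eqref{lhgaxx}; analyticity of $\hgaxx$ on $(0,\bara)$ except at $\barai$ then follows, since $x\mapsto x\log(\E N)$ is linear and $\gaxxx$ is analytic on $(\baram,\bara)$ except at $\barao$ by \refR{Rgaxxx}, and one checks that on $[\barai,\bara)$ the non-analyticity point $\barao$ does not intervene because $\barao<\gb\qw$ and on $[\barai,\gb\qw]$ we are in fact in the regime where $\hgaxx$ is given by $\gaxxx$ whose only possible break in $(\baram,\bara)$ is $\barao$; the linear and $\gaxxx$ pieces are glued at $\barai$ where the derivative jumps (strict inequality in \eqref{tgaxxgb3}). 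The characterization \eqref{hgaxx0} is immediate from \eqref{AAAA} since $\hgaxx(0)=\tgaxx(0)$.

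Finally, for the variational formula \eqref{hgaxxinf} I would combine \eqref{tgaxxinf} and \eqref{gaxx2}: under the stated hypothesis ($x>0$, or $x\ge0$ with $\AAA\le0$), \refL{Ltgaxx}\ref{Ltgaxx-inf} gives $\tgaxx(x)=\inf_{0\le\gth\le\ga}\{x\log\hmu(\gth)+\gth\}$, while \eqref{gaxx2} gives $\gaxx(x)=\inf_{\gth\ge\ga}\{x\log\hmu(\gth)+\gth\}$ for $x\ge0$; taking the minimum of the two and noting that the two $\gth$-ranges cover $[0,\infty)$ and overlap at $\gth=\ga$ yields $\hgaxx(x)=\inf_{0\le\gth<\infty}\{x\log\hmu(\gth)+\gth\}$. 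The main obstacle I anticipate is bookkeeping rather than depth: one must carefully track which of the three pieces ($x<\barai$, $\barai\le x\le\gb\qw$, $x\ge\gb\qw$) applies when invoking \refR{Rgaxxx} and \refL{Ltgaxx}, and verify the derivative inequality at the junction $\gb\qw$ for concavity; everything else is a routine patching of results already proved.
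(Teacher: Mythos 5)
Your proposal is correct and follows essentially the same route as the paper, whose proof is precisely the observation that everything is a consequence of the definition \eqref{hgaxx}, Lemmas \ref{LX}\ref{lxgaxx} and \ref{Ltgaxx}, \eqref{gaxxx} and \refR{Rgaxxx}, with \eqref{hgaxxinf} obtained by combining \eqref{gaxx2} and \eqref{tgaxxinf} exactly as you do. One small correction: the reason $\barao$ does not spoil analyticity on $[\barai,\bara)$ is not that $\barao<\gb\qw$ but that $\barao\le\barai$ (by convexity of $\log\hmu$, since $\barao$ and $\barai$ come from the slopes at $\AAA+$ and at $0+$ respectively), which is equally immediate from \refR{Rgaxxx}.
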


\begin{proof}
Simple consequences of 
the definition \eqref{hgaxx} and
Lemmas \ref{LX}\ref{lxgaxx} and \ref{Ltgaxx}
together with \eqref{gaxxx} and \refR{Rgaxxx},
with \eqref{hgaxxinf} following directly from \eqref{hgaxx}, \eqref{gaxx2}
and \eqref{tgaxxinf}.
\end{proof}

\begin{remark}
Compare the formulas 
\eqref{gaxx2}, \eqref{chgaxx2} (with $-x$), \eqref{gaxxx2}, \eqref{tgaxxinf} and
\eqref{hgaxxinf}, expressing various quantities as the infimum of the same
function over different ranges.
\end{remark}

\begin{theorem}\label{Tn=}
Assume \refBP{} and \refBPlastpsi.
If $0<x<\gam/\ga$, then \as,
as \ntoo,
  \begin{equation}\label{tn=}
\nz{\floor{x\log n}}(T_n) = n^{\hgaxx(\ga x)/\ga+o(1)}.
  \end{equation}
Moreover, this hold
uniformly for $x$ in any compact subset of $(0,\gam/\ga)$.
Furthermore, if further $\AAA\le0$, then \eqref{tn=} holds  uniformly for
$x$ in any compact subset of $[0,\gam/\ga)$.
\end{theorem}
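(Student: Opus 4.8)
The plan is to derive Theorem~\ref{Tn=} from the two one-sided profile estimates already established, namely \refT{Tnge} (for $\nge{s}$, governed by $\gaxx$) and \refT{Tnle} (for $\nle{s}$, governed by $\tgaxx$), together with the elementary monotonicity relations between $n_k$, $\nge{s}$ and $\nle{s}$. The key observation is that
\begin{equation*}
  \nz{k}(\ctt) \le \nge{k}(\ctt) \quad\text{and}\quad \nz{k}(\ctt) \le \nle{k}(\ctt),
\end{equation*}
so that $\nz{k}(\ctt) \le \min\bigpar{\nge{k}(\ctt),\nle{k}(\ctt)}$, which gives the upper bound matching $\hgaxx(\ga x)/\ga = \min\bigpar{\gaxx,\tgaxx}(\ga x)/\ga$ after translating between the $\ctt$-scale and the $T_n$-scale via $\tau(n)/\log n \asto 1/\ga$ exactly as in the proofs of Theorems \ref{Tnge} and \ref{Tnle}. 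So the upper half of \eqref{tn=} is almost immediate.

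The main work is the lower bound. First I would reduce to working with $\ctt$ at a fixed time $t$, proving $\log \nz{\floor{xt}}(\ctt)/t \asto \hgaxx(x)$ uniformly on compact subsets of $(0,\gam)$ (and of $[0,\gam)$ when $\AAA\le0$), and then transfer to $T_n=\cT_{\tau(n)}$ by the same substitution-of-$\tau(n)$ argument as before. For the lower bound in the $\ctt$-picture, the natural device is a ``one unit of time'' telescoping: if $s\le s'$ then $\nge{s}(\ctt) \le \sum_{k} \nz{k}(\ctt)$ over $k$ in the relevant range, but more usefully, for a fixed integer window, $\nge{s}(\cT_{t}) - \nge{s}(\cT_{t-1})$ counts nodes born in a unit time interval whose depth is $\ge s$, and such a node has bounded depth increment, so this difference is controlled by $\sum_{k=s}^{s+O(1)} n_k$ evaluated at a shifted time. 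Concretely, since a node present at time $t$ but not at time $t-1$ was born in $(t-1,t]$, and depths of nodes in $\ctt$ and $\cT_{t-1}$ agree, one gets
\begin{equation*}
  \nge{s}(\ctt) - \nge{s}(\cT_{t-1}) \le \sum_{j=0}^{J} \nz{\floor{s}+j}(\ctt)
\end{equation*}
for a suitable fixed $J$ depending on the branching data (in fact $J$ can be taken as large as one likes and absorbed into the $o(t)$ error), provided one argues that within a unit of time a newly born individual cannot itself have produced descendants more than $J$ generations deeper whose count dominates --- this is handled by the same kind of estimate as in \refL{LM1}, bounding $\P(H(\cT_1)\ge J)$ to be small and using that $\nge{s}$ grows subexponentially fast enough that the tail contributions from deep new births are negligible. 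Combining this with the already-known exponential growth rate $\gaxx(x)$ of $\nge{xt}(\ctt)$ from \refT{Tnge} and the growth rate $\tgaxx(x)$ of $\nle{xt}(\ctt)$ from \refT{Tnle}, one concludes that at least one of the summands $\nz{\floor{xt}+j}(\ctt)$ has logarithmic growth rate at least $\gaxx(x)$; a parallel telescoping using $\nle{s}(\cT_{t-1}) \le \nle{s}(\ctt)$ and the difference $\nle{s}(\ctt)-\nle{s}(\cT_{t-1})$ supplies the matching statement with $\tgaxx(x)$, and since both lower bounds hold, $\log\nz{\floor{xt}}(\ctt)/t$ is at least $\min\bigpar{\gaxx(x),\tgaxx(x)}+o(1) = \hgaxx(x)+o(1)$. (The shift $j$ only perturbs $x$ by $O(1/t)$, and $\hgaxx$ is continuous on $(0,\bara)$ by \refL{Lhgaxx}, so it washes out.)

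Two technical points need care and I expect the second to be the main obstacle. The first is the uniformity in $x$ on compact subsets: this is handled exactly as in the proof of \refT{TBiggins2u}, by covering a compact interval $[x_0,x_1]\subset(0,\gam/\ga)$ by finitely many points, using the almost-sure convergence (with a common exceptional null set) at those finitely many points, and interpolating using monotonicity of $\nge{\cdot}$ and $\nle{\cdot}$ together with continuity of $\gaxx$, $\tgaxx$ and hence $\hgaxx$. The second, harder point is making the telescoping lower bound rigorous near the endpoints and when $\AAA\le0$ allows $x=0$: one must show the ``spillover'' from new births into generations $\ge \floor{xt}+J+1$ really is lower-order, which requires that $\nge{s}(\ctt)$ does not decrease when $t$ increases (trivially true) \emph{and} that the unit-time increment is not concentrated entirely in those deeper generations. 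The cleanest way is to note that if the rate-$\gaxx(x)$ growth of $\nge{xt}(\ctt)$ came \emph{only} from generations $> \floor{xt}+J$, then $\nge{(x+\eps)t}(\ctt)$ would grow at the same rate $\gaxx(x) > \gaxx(x+\eps)$ for small $\eps$ with $J$ chosen so that $(\floor{xt}+J)/t < x+\eps$ eventually --- contradicting \refT{Tnge}. This forces a positive fraction (in the exponential sense) of $\nge{xt}(\ctt)$ into the window $[\floor{xt},\floor{xt}+J]$, which is what we need. The case $x=0$ under $\AAA\le0$ is then covered because $\hgaxx(0)=0$ by \eqref{hgaxx0} and $\nz{0}(\ctt)=1$, consistent with the uniform-on-$[0,x_1]$ statements in Theorems \ref{Tnge} and \ref{Tnle}. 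Finally, passing from $\ctt$ to $T_n$ via $\log n/\tau(n)\asto\ga$ and \eqref{tex2tau}, exactly as at the end of the proofs of \refT{Tnge} and \refT{Tnle}, yields \eqref{tn=}.
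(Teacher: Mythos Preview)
Your upper bound is correct and is exactly what the paper does: $\nz{k}\le\min\bigpar{\nge{k},\nle{k}}$ together with Theorems~\ref{Tnge} and~\ref{Tnle} gives the required upper estimate uniformly.

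The lower bound, however, has a real gap. Both of your devices---the time-telescoping and the ``cleanest way''---establish only that the \emph{window sum} $\sum_{j=0}^{J}\nz{\floor{xt}+j}(\ctt)$ (or $\sum_{j<\eps t}\nz{\floor{xt}+j}(\ctt)$) is of order $e^{\hgaxx(x)t}$, not that $\nz{\floor{xt}}(\ctt)$ itself is. With $J$ fixed your contradiction argument does not go through: from $\nge{xt+J+1}(\ctt)\ge e^{(\gaxx(x)-o(1))t}$ you cannot contradict \refT{Tnge}, because $(xt+J+1)/t\to x$ and $\gaxx$ is continuous, so the asserted growth rate at $xt+J+1$ \emph{is} $\gaxx(x)$. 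If instead you take $J=\eps t$, then knowing the window sum is large tells you only that \emph{some} $\nz{\floor{xt}+j}$ with $0\le j<\eps t$ is large, and nothing pins this down to $j=0$. (Your time-telescoping has a separate problem: a node born in $(t-1,t]$ can have \emph{any} depth---its depth is that of its parent plus one, and the parent may have been born long ago at great depth---so $\nge{s}(\ctt)-\nge{s}(\cT_{t-1})$ is not controlled by $\sum_{j\le J}\nz{\floor{s}+j}(\ctt)$ for any fixed $J$.)

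What is missing is a mechanism to pass from the window sum to a single level. The paper supplies it by first treating the case of \emph{bounded} offspring, $N\le M$: then trivially $\nz{k+j}(\ctt)\le M^{j}\nz{k}(\ctt)$, so
\[
\sum_{j=0}^{\lfloor\delta t\rfloor}\nz{k+j}(\ctt)\le M^{2\delta t}\,\nz{k}(\ctt),
\]
and combining this with the window lower bound (obtained, as you do, from the strict monotonicity of $\gaxx$ on $[\gb\qw,\gam)$ and of $\tgaxx$ on $[0,\gb\qw]$) gives $\nz{k}(\ctt)\ge e^{(\hgaxx(k/t)-C\delta)t}$. The general case is then handled by \emph{pruning} each individual to at most $M$ children and proving a separate lemma that $\hgaxx^{(M)}(x)\upto\hgaxx(x)$ uniformly on compact subsets of $(0,\gam)$ (and of $[0,\gam)$ when $\AAA\le0$). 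Without some such reduction, your argument does not close.
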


\begin{proof}
We will show the result in the form
  \begin{equation}\label{tn=2}
\frac{\log\nzfloor{x t}(\ctt)}{t} \asto\hgaxx(x),
  \end{equation}
uniformly for $x$ in any compact subset of $(0,\gam)$,
and if  $\AAA\le0$, uniformly for
$x$ in any compact subset of $[0,\gam)$.
This implies \eqref{tn=} by the usual argument using \eqref{tex2tau}.
Furthermore, since $\hgaxx(x)$ is continuous on $[0,\gam)$, it suffices to
  consider the case when $xt$ is an integer, \ie,
  \begin{equation}\label{tn=3}
\frac{\log\nz{k}(\ctt)}{t} =\hgaxx(k/t)+o(1),
  \end{equation}
uniformly for $k/t$ in any compact subset of $(0,\gam)$
or $[0,\gam)$, respectively.

The upper bound is easy. Since $\nz{k}(\ctt)\le \nle{k}(\ctt)$
and $\nz{k}(\ctt)\le \nge{k}(\ctt)$, we get the upper bound 
  \begin{equation}\label{tn<3}
\limsup_{\ttoo}\frac{\log\nz{k}(\ctt)}{t} \le\hgaxx(k/t),
  \end{equation}
from Theorems \ref{Tnle} and
\ref{Tnge}, or more precisely from \eqref{nlei+}
and \eqref{kk+}, uniformly for $1\le k\le x' t$ for any $x'<\gam$.

For the corresponding lower bound, we first consider the special case when
the number of offspring $N$ in the branching process is bounded, $N\le M$ for
some constant $M$. (This implies $\hmu(0)=\E N\le M<\infty$, and thus
$\AAA\le 0$.) 
Let $\gd>0$. 
On the interval
$[0,\gb\qw]$, $\hgaxx(x)=\tgaxx(x)$ is strictly increasing, and thus,
by compactness,
there exists $\eps>0$ such that if $0\le x\le y \le \gb\qw$ with
$y-x\ge\gd$, then $\tgaxx(y)-\tgaxx(x)\ge3\eps$.
Consequently, 
\eqref{nlei} implies that
\as, for $t$ large and for all $x\in[0,\gb\qw-\gd]$,
\begin{equation}
  \begin{split}
\nle{(x+\gd)t}(\ctt)	
 - \nle{xt}(\ctt)
&
\ge e^{(\tgaxx(x+\gd)-\eps)t}-e^{(\tgaxx(x)+\eps)t}	
\\&
\ge e^{\tgaxx(x)t}\bigpar{e^{2\eps t}-e^{\eps t}}	
\ge e^{\tgaxx(x)t}.
  \end{split}
\end{equation}
Furthermore, if $x=k/t$ for an integer $k$, then each node $v$ in $\ctt$ with
depth $h(v)\in[xt,(x+\gd)t)$ has an ancestor $v'$ with depth $k=xt$, and at
most $\sum_{i=0}^{\gd t} M^i \le M^{2\gd t}$ such nodes $v$ have the same
  ancestor $v'$; hence
\begin{equation}
  \begin{split}
\nz{k}(\ctt)
\ge M^{-2\gd t}\bigpar{\nle{(x+\gd)t}(\ctt) - \nle{xt}(\ctt)}
\ge e^{\tgaxx(x)t-2\gd t\log M}
  \end{split}
\end{equation}
and thus
\begin{equation}\label{jd1}
  \begin{split}
\frac{\log\nz{k}(\ctt)}t
\ge \tgaxx(x)-2\gd \log M
= \hgaxx(k/t)-2\gd \log M .
  \end{split}
\end{equation}

Similarly,  
on the interval
$[\gb\qw,\gam)$, $\hgaxx(x)=\gaxx(x)$ is strictly decreasing, and thus,
by compactness, for any $x_1<\gam$,
there exists $\eps>0$ such that if $\gb\qw\le x\le y \le x_1$ with
$y-x\ge\gd$, then $\gaxx(x)-\gaxx(y)\ge3\eps$. We may also assume $\eps\le\gd$.
Consequently, 
\eqref{kk+} implies that
\as, for $t$ large and for all $x\in[\gb\qw,x_1-\gd]$,
\begin{equation}\label{johannes}
  \begin{split}
\nge{xt}(\ctt)	
 - \nge{(x+\gd)t}(\ctt)
&
\ge e^{(\gaxx(x)-\eps)t}-e^{(\gaxx(x+\gd)+\eps)t}	
\\&
\ge e^{\gaxx(x)t}\bigpar{e^{-\eps t}-e^{-2\eps t}}	
\ge e^{\gaxx(x)t-2\eps t}
\ge e^{\gaxx(x)t-2\gd t}.
  \end{split}
\end{equation}
Hence, arguing as above, \as{} for large $t$, if $x=k/t$ for an integer $k$
with $x\in[\gb\qw,x_1-\gd]$, then
\begin{equation}
  \begin{split}
\nz{k}(\ctt)
\ge M^{-2\gd t}\bigpar{\nge{xt}(\ctt) - \nge{(x+\gd)t}(\ctt)}
\ge e^{\gaxx(x)t-2\gd t-2\gd t\log M}
  \end{split}
\end{equation}
and thus
\begin{equation}\label{jd2}
  \begin{split}
\frac{\log\nz{k}(\ctt)}t
\ge \gaxx(x)-2\gd (\log M+1)
= \hgaxx(k/t)-2\gd (\log M+1) .
  \end{split}
\end{equation}

Finally, if $k/t\in[\gb\qw-\gd,\gb\qw]$, we similarly obtain from
\eqref{johannes} 
\begin{equation}
  \begin{split}
\nz{k}(\ctt)
\ge M^{-3\gd t}\bigpar{\nge{\gb\qw}(\ctt) - \nge{(\gb\qw+\gd)t}(\ctt)}
\ge e^{\gaxx(\gb\qw)t-2\gd t-3\gd t\log M}
  \end{split}
\end{equation}
and thus, using $\gaxx(\gb\qw)=\ga\ge\hgaxx(k/t)$,
\begin{equation}\label{jd3}
  \begin{split}
\frac{\log\nz{k}(\ctt)}t
\ge \hgaxx(k/t)-\gd (3\log M+2).
  \end{split}
\end{equation}
Combining \eqref{jd1}, \eqref{jd2} and \eqref{jd3}, we see that \as,
\eqref{jd3} holds for all  $k\in[0,(x_1-\gd)t]$ and all  $t\ge t_0$ (with $t_0$
random and depending on $\gd$ and $x_1$, but not on $k$).
Since $\gd$ and $x_1$ are arbitrary, this implies that for any $x_2<\gam$, 
\as,
\begin{equation}\label{tn=4}
  \begin{split}
\frac{\log\nz{k}(\ctt)}t
\ge \hgaxx(k/t)+o(1)
  \end{split}
\end{equation}
uniformly for $k/t\in[0,x_2]$, which together with the upper bound
proved above yields \eqref{tn=3} in the special case when $N$ is bounded.

The general case follows by pruning the branching process and the
corresponding tree $\ctt$. Let $M$ be a large integer and 
let each individual keep only at most $M$ children,
for example by discarding all
children after the $M$ first of every individual. We denote the pruned tree
by $\ctt\MM$, and write similarly $\mu\MM$ and so on for other
quantities for the pruned version.
Note that, by monotone convergence and \eqref{hmub},
\begin{equation}\label{hmuMM}
\hmu\MM(\gth)\upto\hmu(\gth)
\qquad\text{as \Mtoo}.
\end{equation}
for every $\gth$. It is easy to verify that
\refBP{} except \ref{BPnonlattice} 
hold for the pruned process too, provided $M$ is large enough.
Furthermore, also \ref{BPnonlattice} holds except in some exceptional
cases; in those cases we can
modify the
pruning by selecting the $M$ surviving children by a suitable random
procedure that preserves both \ref{BPnonlattice} and \eqref{hmuMM}; 
we omit the details.
(Alternatively, it seems probable
that \ref{BPnonlattice} is not really needed for
the proofs in the present section, but we have not verified this in detail.)
Moreover, $\ctt\MM$ is a subtree of $\ctt$, and thus, by \eqref{tn=4} for
the special case just treated,
\begin{equation}\label{jd4}
  \begin{split}
\frac{\log\nz{k}(\ctt)}t
\ge \frac{\log\nz{k}(\ctt\MM)}t
\ge \hgaxxMM(x)+o(1),
  \end{split}
\end{equation}
\as{} for large $t$, uniformly for $k/t\in[0,x_1]$.
The result \eqref{tn=3} now follows from the following lemma.
\end{proof}

\begin{lemma}
Let $\hgaxxMM(x)$ be $\hgaxx$ for the branching process pruned to at most
$M$ children for each individual, using a pruning such that \eqref{hmuMM} holds.
If $0< x<\gam$, then, as $M\to\infty$,
\begin{equation}\label{hgaxxMM}
\hgaxxMM(x)\upto\hgaxx(x),   
\end{equation}
and this holds uniformly for $x$ in any compact subset of $(0,\gam)$;
if $\AAA\le0$, it holds
uniformly in any compact subset of $[0,\gam)$.
\end{lemma}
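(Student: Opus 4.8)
The plan is to establish the convergence \eqref{hgaxxMM} by reducing it to the convergence of the two ingredients that define $\hgaxx$, namely $\gaxx$ and $\tgaxx$, and these in turn to the convergence $\hmu\MM\upto\hmu$ in \eqref{hmuMM}. First I would fix some $x_1<\gam$ and restrict attention to $x\in[0,x_1]$ (or $[\gd,x_1]$ in the case $\AAA>0$). Since the pruned process is a subprocess of the original one, $\hmu\MM(\gth)\le\hmu(\gth)$ for every $\gth$, and hence from the formulas \eqref{gaxx2} and \eqref{tgaxxinf}--\eqref{hgaxxinf} (all of which express the relevant transforms as infima of $x\log\hmu(\gth)+\gth$ over various ranges of $\gth$) one gets immediately that $\hgaxxMM(x)\le\hgaxx(x)$; monotonicity in $M$ is likewise clear because pruning to more children only increases $\hmu\MM$. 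So the substance is the lower bound, i.e.\ that $\hgaxxMM(x)\to\hgaxx(x)$ from below, uniformly.

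For the lower bound I would argue as follows. By \refL{Lhgaxx}, for $x>0$ (or $x\ge0$ when $\AAA\le0$) we have
\begin{equation*}
  \hgaxx(x)=\inf_{0\le\gth<\infty}\bigset{x\log\hmu(\gth)+\gth},
\end{equation*}
and the same formula holds with $\hmu$ replaced by $\hmu\MM$ for the pruned process, provided $M$ is large enough that \refBP{} holds for it (which is part of the hypothesis, since we assumed a pruning with \eqref{hmuMM}). One then needs to check that the Malthusian parameter $\ga\MM$ of the pruned process converges to $\ga$; this follows from \eqref{hmuMM} and the continuity and monotonicity of $\hmu$, and similarly $\gb\MM$ and the relevant endpoints are well behaved, so the ``$\infty$'' in the range of the infimum can be replaced by a fixed large cutoff $\gth_0$ uniformly in $M$, once the infimum is known to be attained in a bounded region (which it is, for $x$ in a compact subset of $(0,\gam)$, by the coercivity coming from $\log\hmu\MM(\gth)\to\log\mu\MM\set0<0$ as $\gth\to\infty$, again uniformly for large $M$ by \eqref{hmuMM}). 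On the compact range $[0,\gth_0]$, Dini's theorem applied to the increasing sequence of continuous functions $\gth\mapsto x\log\hmu\MM(\gth)+\gth$ gives uniform convergence to $x\log\hmu(\gth)+\gth$, and taking infima then yields $\hgaxxMM(x)\to\hgaxx(x)$, with the convergence uniform in $x$ over the compact interval because the bounds obtained are uniform in $x$. The monotone increase in $M$ transfers from $\hmu\MM\upto\hmu$.

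The main obstacle I expect is handling the endpoint behaviour and the distinction between the three regimes in the definitions of $\tgaxx$ and $\hgaxx$ (namely $0\le x\le\barai\MM$, $\barai\MM\le x\le(\gb\MM)\qw$, and $x\ge(\gb\MM)\qw$), since the breakpoints $\barai\MM$ and $(\gb\MM)\qw$ themselves move with $M$; one must verify these converge ($\barai\MM\to\barai$ via \eqref{barai2} and the convergence of $\hmu\MM'(0+)$, and $\gb\MM\to\gb$ via \eqref{el2}) so that no ``gap'' opens up in the limit, and also treat the slightly delicate case $x=0$ when $\AAA>0$ where \eqref{hgaxx0} shows $\hgaxx(0)>0$ and one simply excludes $0$ from the interval. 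A clean way to sidestep most of this bookkeeping is to use the unified infimum formula above throughout (valid for $x>0$, or $x\ge0$ when $\AAA\le0$, and note $\AAA\MM\ge\AAA$ so $\AAA\le0$ is inherited), so that the three-regime case analysis of \refL{Ltgaxx} never has to be redone for the pruned process — only the single Dini-type argument on a bounded $\gth$-interval is needed.
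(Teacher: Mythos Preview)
Your overall strategy is exactly the paper's: reduce everything to the single infimum formula \eqref{hgaxxinf} for both the original and the pruned process, and then pass to the limit using $\hmu\MM\upto\hmu$. The monotonicity and the inequality $\hgaxxMM(x)\le\hgaxx(x)$ are immediate, as you say, and the paper also localises via compactness. So the skeleton is right.

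There is, however, a real gap in the Dini step when $\AAA>0$. On the interval $[0,\gth_0]$ you propose, the limit $\gth\mapsto x\log\hmu(\gth)+\gth$ equals $+\infty$ on $[0,\AAA)$ and is finite on $[\AAA,\gth_0]$; this is not a continuous real-valued function, so Dini's theorem as stated does not apply, and more to the point the minimiser $\gth_M$ for the pruned problem may well lie in $[0,\AAA)$ where $\hmu\MM$ is still finite. You therefore need an additional argument forcing $\gth_M$ into a compact subinterval of $[\AAA,\gth_0]$ (or bounding the value there). This is precisely why the paper splits into cases on the location of the minimiser $\gth(x)$: when $\gth(x)=\AAA>0$ it uses the monotonicity $\hmu\MM(\gth_1)\upto\hmu(\gth_1)=\infty$ for a fixed $\gth_1<\AAA$ to show $f_y\MM(\gth_1)$ eventually exceeds $f_y\MM(\AAA)$, together with the trivial bound $f_y\MM(\gth)\ge f_y\MM(\AAA)-(\AAA-\gth)$ for $\gth\le\AAA$. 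Your argument can be rescued along these lines, but ``Dini on $[0,\gth_0]$'' alone is not enough.

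Two smaller points. First, you wrote $\AAA\MM\ge\AAA$; in fact $\hmu\MM\le\hmu$ gives $\AAA\MM\le\AAA$ (larger domain of finiteness after pruning), though your conclusion that $\AAA\le0$ is inherited by the pruned process is still correct for this reason. Second, your coercivity justification for the cutoff $\gth_0$ uniform in $M$ is a bit thin: the paper sidesteps this entirely by working locally around the original minimiser $\gth(x)$, comparing values only at two nearby points $\gth_1<\gth(x)<\gth_2$ and using convexity to trap the pruned minimiser between them. That local approach avoids needing any uniform-in-$M$ growth estimate at infinity.
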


\begin{proof}
By \eqref{hmuMM} and \eqref{hgaxxinf}, $\hgaxxMM(x)$ is increasing in $M$
and $\hgaxxMM(x)\le\hgaxx(x)$ for every $x>0$.
If $\AAA\le0$, then also $\hgaxx(0)=0=\hgaxxMM(0)$ for every $M$ by
\eqref{hgaxx0}. 
Let $\eps>0$ and let $x>0$, or $x=0$ and $\AAA\le0$. We shall show that
there exists a neighbourhood $U_x$ of $x$, and an $M'<\infty$, such that for
all $y\in U_x$ and all $M\ge M'$,
\begin{equation}\label{jordgubb}
  \hgaxx(y)-\hgaxxMM(y)<\eps.
\end{equation}
This implies the result by a standard compactness argument.

Let, for $x>0$ and $\gth\ge0$, 
\begin{equation}\label{fx}
f_x(\gth):=x\loghmu(\gth)+\gth.  
\end{equation}
Let $\AAAo:=\max\set{\AAA,0}$.
By \refL{LX},
$f_x(\gth)=\infty$ if $0\le\gth<\AAAo$
and $f_x(\gth)<\infty$ if $\gth>\AAAo$.
Thus, \eqref{hgaxxinf} can be written 
\begin{equation}
  \hgaxx(x)
=\inf_{\gth\ge0}f_x(\gth)
=\inf_{\gth\ge\AAAo}f_x(\gth).
\end{equation}
\refL{LX} also implies that $f_x(\gth)$ is continuous and strictly convex on
$\AAAoo$. (We allow $f_x(\AAAo)=\infty$.)
If $x<\gam$, then $x+\eta<\gam$ for
some small $\eta>0$, which by \eqref{gamma1} implies $\gaxx(x+\eta)\ge0$ and
thus by \eqref{gaxx2}, for every $\gth>\ga$,
$(x+\eta)\loghmu(\gth)+\gth\ge0$ or
$\loghmu(\gth)\ge-\gth/(x+\eta)$; this yields
\begin{equation}
  f_x(\gth)\ge -x\frac{\gth}{x+\eta}+\gth = \frac{\eta\gth}{x+\eta}
\to\infty
\quad\text{as}\quad
\gth\to\infty.
\end{equation}
It follows that if  $0 < x < \gam$, then the strictly convex function
$f_x(\gth)$ attains its minimum at a unique point $\gth(x)\in\AAAoo$.

Assume first $\gth(x)>\AAAo$. 
Choose $\gth_1$ and $\gth_2$ with $\AAAo<\gth_1<\gth(x)<\gth_2<\infty$.
Then $f_x(\gth_1)>f_x(\gth(x))<f_x(\gth_2)$. Since $f_x\MM(\gth)\to
f_x(\gth)$ for every $\gth$ by \eqref{hmuMM}, it follows that 
for some $\gd>0$, some $M_0<\infty$ and all $M\ge M_0$,
\begin{equation}\label{mid3}
 f_x\MM(\gth_1)>f_x\MM(\gth(x))+\gd<f_x\MM(\gth_2).  
\end{equation}
If $\gth\in[\gth_1,\gth_2]$ and $M\ge M_0$, then
\begin{equation}
\hmu\MMo(\gth_2)
\le \hmu\MM(\gth_2)
\le\hmu\MM(\gth)
\le\hmu\MM(\gth_1)\le\hmu(\gth_1) 
\end{equation}
and thus
\begin{equation}\label{midsC}
|\log\hmu\MM(\gth)|\le 
C:=\max\bigset{|\loghmu\MMo(\gth_2)|,|\loghmu(\gth_1)|}.  
\end{equation}
Let $\gd_1=\gd/(3C)$. 
By \eqref{fx} and \eqref{midsC}, 
if $|y-x|<\gd_1$, $M\ge M_0$ and $\gth\in[\gth_1,\gth_2]$,
then
$|f_x\MM(\gth)-f_y\MM(\gth)|\le C|x-y|\le\gd/3$. Hence, \eqref{mid3} implies
\begin{equation}\label{mid3y}
 f_y\MM(\gth_1)>f_y\MM(\gth(x))<f_y\MM(\gth_2).  
\end{equation}
Since $f_y\MM(\gth)$ is convex, this implies that the minimum of
$f_y\MM(\gth)$ is attained at some $\gth\MM(y)\in[\gth_1,\gth_2]$, and the
same holds for $f_y(\gth)$.
Consequently, if $|y-x|<\gd_1$ and $M\ge M_0$, then,
using \eqref{fx} again,
\begin{equation}\label{oberon}
  \begin{split}
  \hgaxx(y)-\hgaxxMM(y)
&=\inf_{\gth\in[\gth_1,\gth_2]}f_y\xpar{\gth}
-
\inf_{\gth\in[\gth_1,\gth_2]}f_y\MM\xpar{\gth}
\\&
\le
\sup_{\gth\in[\gth_1,\gth_2]}\bigpar{f_y(\gth)-f_y\MM\xpar{\gth}}
\\&
\le
(x+\gd_1)\sup_{\gth\in[\gth_1,\gth_2]}
\bigpar{\log\hmu(\gth)-\log\hmu\MM\xpar{\gth}}. 	
  \end{split}
\end{equation}
Since $\hmu(\gth_1)<\infty$, it follows easily by \eqref{hmub} and
dominated convergence that
$\hmu\MM(\gth)\to\hmu(\gth)$ uniformly on $[\gth_1,\gth_2]$ as \Mtoo, and 
thus it follows from \eqref{oberon} that 
\eqref{jordgubb} holds
if $M$ is large enough, 
for all $y>0$ with $|y-x|<\gd_1$.

Next, assume $x>0$ and $\gth(x)=\AAAo$. (This entails $f_x(\AAAo)<\infty$
and thus $\hmu(\AAAo)<\infty$.)
If $\AAAo=0$, we choose $\gth_1=0$ and some $\gth_2>\gth(x)=0$;
we then can argue as above with minor modifications. Again, for $y-x$ small
and $M$ large, the infima are
attained on $[\gth_1,\gth_2]$ and \eqref{oberon} holds.

If $\gth(x)=\AAAo=\AAA>0$, we  choose $\gth_1$ and $\gth_2$ with
$0<\gth_1<\gth(x)<\gth_2<\infty$.
Then 
the argument above leading to \eqref{mid3y},
but ignoring $\gth_1$, shows that
if $|y-x|$ is small and $M$ large, then 
\begin{equation}
  f_y\MM(\AAA)<f_y\MM(\gth_2).
\end{equation}
Furthermore, 
$\hmu\MM(\gth_1)\upto\hmu(\gth_1)=\infty$,
and it follows 
from \eqref{fx} that $f_{x/2}\MM(\gth_1)\to\infty$ as \Mtoo.
Consequently, $M$ is large enough and $|y-x|<x/2$, then
\begin{equation}\label{puck}
  f_y\MM(\gth_1)\ge f_{x/2}\MM(\gth_1)>f_{2x}(\AAA)\ge f_y(\AAA)
\ge f_y\MM(\AAA).
\end{equation}
Hence we see again that 
if $|y-x|$ is small and $M$ large,
then \eqref{mid3y} holds
and thus the minimum of $f_y\MM(\gth)$ is attained in 
$[\gth_1,\gth_2]$.
Similarly, in analogy with \eqref{puck}, $f_y(\AAA)<f_y(\gth_2)$ if $|y-x|$
is small, and then the minimum of $f_y(\gth)$ is attained in $[\AAA,\gth_2]$
(since $f_y(\gth)=\infty$ for $\gth<\AAA$).

Furthermore,
since $\hmu(\gth)$ is decreasing, we have for $\gth\le\AAA$ and any $y>0$
\begin{equation}
  f_y\MM(\gth)\ge y\loghmu\MM(\AAA)+\gth
=f_y\MM(\AAA)-(\AAA-\gth).
\end{equation}
Consequently, provided we have chosen $\gth_1>\AAA-\eps/2$,
\begin{equation}
\inf_{\gth\in[\gth_1,\gth_2]}  f_y\MM(\gth)
\ge 
\inf_{\gth\in[\AAA,\gth_2]}  f_y\MM(\gth)-\eps/2.
\end{equation}
It follows that if $|y-x|$ is small and $M$ large, then
\begin{equation}\label{titania}
  \begin{split}
  \hgaxx(y)-\hgaxxMM(y)
&=\inf_{\gth\in[\AAA,\gth_2]}f_y\xpar{\gth}
-
\inf_{\gth\in[\gth_1,\gth_2]}f_y\MM\xpar{\gth}
\\&
\le
\inf_{\gth\in[\AAA,\gth_2]}f_y\xpar{\gth}
-
\inf_{\gth\in[\AAA,\gth_2]}f_y\MM\xpar{\gth}+\eps/2
\\&
\le
\sup_{\gth\in[\AAA,\gth_2]}\bigpar{f_y(\gth)-f_y\MM\xpar{\gth}}+\eps/2
\\&
\le
(x+\gd_1)\sup_{\gth\in[\AAA,\gth_2]}
\bigpar{\log\hmu(\gth)-\log\hmu\MM\xpar{\gth}}
+\eps/2.
  \end{split}
\end{equation}
In the present case $\hmu(\AAA)<\infty$ and it follows
that $\hmu\MM(\gth)\to\hmu(\gth)$ uniformly on $[\AAA,\gth_2]$,
\cf{} the statement after \eqref{oberon}. Hence, the \rhs{} of
\eqref{titania}
is smaller than $\eps$ if $M$
is large enough, so \eqref{jordgubb} holds in this case too for $|y-x|$
small and $M$ large.

Finally, assume $x=0$ and $\AAA\le0$. Then $\hgaxx(0)=0$ by \eqref{hgaxx0} 
and since $\hgaxx(x)$ is continuous at 0 by \refL{Lhgaxx}, there exists
$\gd$ such that $\hgaxx(y)<\eps$ for $0\le y<\gd$. Consequently, for such $y$
and all $M$,
\begin{equation}
  \hgaxx(y)-\hgaxxMM(y)
\le \hgaxx(y)<\eps.  
\end{equation}
This completes the proof that \eqref{jordgubb} holds in all cases, for $y$
in a neighbourhood of $x$ and large $M$.
\end{proof}

\begin{example}
  \label{EPBST}
  For the binary search tree already treated in Examples \ref{EHBST} and
  \ref{EFBST} we have, by \eqref{baraimst},
$\barai=1$,  and thus by \eqref{lhgaxx}
  and \eqref{gaxxxbst}
\begin{align}
  \label{hgaxxbst}
\hgaxx(x)&=
\begin{cases}
x\log 2,&0\le x\le 1, \\
x\log 2-x\log x+x-1,& x\ge 1.
\end{cases}
\end{align}
The profile is asymptotically given by \eqref{tn=}, with $\ga=1$, uniformly
for $x$ in any compact subset of $[0,\gam)$.
In particular, we see as in \refE{Ebaraimary} that for $k\le \log n$, a
``large part'' of all $2^k$ possible nodes at depth $k$ are occupied;
see also \refT{Talla} and \refE{Eallamst} below.
More precise results (obtained by other methods) are given by
\cite{ChauvinDrmotaJ2001}, 
\cite{ChauvinKMR2005}, 
\cite{DrmotaHwang2005}, 
\cite{DevroyeHwang2006}, 
\cite{FuchsHN2006},
\cite{DrmotaJansonNeininger2008}
and \cite[Section 6.5]{Drmota}.
\end{example}

\begin{example}\label{EPRRT}
  For the \rrt{} in Example \ref{EHRRT} 
we have, by \eqref{hmuRRT},
$\hmu(\gth)=1/\gth$, $\gth>\AAA=0$.
Hence, $\hmu(0)=\infty$, and thus $\barai=0$.
Consequently, by \eqref{lhgaxx}
  and \eqref{gaxxx2},
\cf{} \eqref{gaxxRRT},
\begin{align}
  \label{hgaxxrrt}
\hgaxx(x)&=
\gaxxx(x)=
-x\log x+x,
\qquad x\ge0.
\end{align}
The profile is asymptotically given by \eqref{tn=}, with $\ga=1$, uniformly
for $x$ in any compact subset of $[0,\gam)$.
In this case, more precise results (obtained by other methods) are given by
\cite{DevroyeHwang2006}, 
\cite{FuchsHN2006} 
and \cite[Section 6.3]{Drmota}.
\end{example}

\begin{example}\label{EPPREF}
For a simple example with $\AAA>0$, 
consider the (positive) linear preferential attachment tree in \refE{Elinear+},
with a parameter $\rho>0$ and, by \eqref{hmulin},
$\hmu(\gth)=\rho/(\gth-1)$ for $\gth>\AAA=1$.
Recall that $\ga=\rho+1$ and $\gb=\rho\qw$ by \eqref{ga+} and \eqref{betalin}.
Since $\hmu(0)=\infty$, we have $\barai=0$.
A simple calculation shows that for $x>0$, the infima in \eqref{gaxxx2} and
\eqref{hgaxxinf} are attained at $\gth=1+x$, and thus
\begin{equation}\label{hgaxxpref}
  \hgaxx(x)=\gaxxx(x)=x\log\rho-x\log x+x+1,
\qquad x\ge0.
\end{equation}
We obtain also, by \eqref{gaxx2} and \eqref{chgaxx2}, or by \refR{Rgaxxx}
and \eqref{hgaxxpref},
see also \eqref{gaxxgb} and \eqref{chgaxxgb},
\begin{align}
  \gaxx(x)&=
  \begin{cases}
	\rho+1, & x\le\rho,
\\
x\log\rho-x\log x+x+1, & x\ge\rho,
  \end{cases}
\\
  \chgaxx(-x)&=
  \begin{cases}
-\infty, & x<0,
\\
x\log\rho-x\log x+x+1, &0\le x\le\rho,
\\
	\rho+1, & x\ge\rho.
  \end{cases}
\end{align}

The profile is asymptotically given by \eqref{tn=}, with $\ga=\rho+1$, uniformly
for $x$ in any compact subset of $(0,\gam)$; note that \eqref{tn=} does
\emph{not} hold for $x=0$, since
$\hgaxx(0)=1$  while  $n_0(T_n)=1$.
However, this is the only exception, and
\eqref{tn=} extends, uniformly, to all $x\le x_1<\gam/\ga$ satisfying the
obvious condition 
$x\log n\ge1$, see \refT{Tnotbad} and \refE{EPlin2} below.
(More precise results, obtained by other methods,
in the case $\rho=1$, \ie{} the plane oriented recursive tree in \refE{EPORT},
are given 
by \cite{Hwang2007} and \cite{Sulzbach2008}.)
\end{example}

The following example shows that if $\AAA>0$, the restriction in \refT{Tn=}
that $x$ lies in a compact subset of $(0,\gam)$, and thus stays away
from 0, is necessary; the theorem does not hold in general for $x=x(n)\to0$,
even if we assume $k=x\log n\ge1$ or $x\log n\to\infty$.

\begin{example}
  \label{Ebadapple}
Let $(t_j)$ be a rapidly increasing sequence with $t_1=1$ and
$t_{j+1}>t_j+1$, $j\ge1$.
Let each individual get $\floor{e^{t_j}-e^{t_{j-1}}}$ children born at 
uniformly random times in $[t_j-1,t_j]$ for each $j\ge1$. (Let here
$t_0:=-\infty$, say.)
Then
\begin{equation}
  \hmu(1)\ge\sumji\floor{e^{t_j}-e^{t_{j-1}}}e^{-t_j}=\infty
\end{equation}
while for any $\gth>1$,
\begin{equation}
  \hmu(\gth)\le\sumji\floor{e^{t_j}-e^{t_{j-1}}}e^{-\gth(t_j-1)}<\infty.
\end{equation}
Hence, $\AAA=1$. It is easy to verify \refBP.

At time $t=t_{j+1}-1$, each individual has at most $e^{t_j}$ children, and
thus $n_k(\ctt)\le e^{kt_j}$ for every $k\ge0$; hence
\begin{equation}\label{psk}
  \frac{\log n_k(\ctt)}{t} \le \frac{kt_j}{t_{j+1}-1}
\le \frac{2kt_j}{t_{j+1}}.
\end{equation}
For any given function $\go(t)$ with $\go(t)=o(t)$ as \ttoo, we can choose
$(t_j)$ such that $\go(t_{j+1})/t_{j+1}<1/(jt_j)$, and then \eqref{psk}
shows that
\begin{equation}
  \liminf_\ttoo\frac{\log n_k(\ctt)}{t}=0
\end{equation}
uniformly for $k\le C\go(t)$, for any fixed $C$.

As a consequence, using the weight $\psi=1$ in \refE{E1}
(and assuming as we may that $\go(t)/t$ is decreasing),
it follows that
\begin{equation}
  \liminf_\ttoo\frac{\log n_k(T_n)}{\log n}=0
\end{equation}
uniformly for $k\le\go(\log n)$.
In particular, \eqref{tn=} does not hold for all $x=x(n)\to0$,
so it does not hold uniformly for $x>0$
(even assuming $x\log n\ge1$).
(In this case, $\hgaxx(\ga x)\to\hgaxx(0)=\AAA>0$.)

The same argument applies also to $\nle{k}$ 
and shows that \eqref{tnle} does not hold uniformly for
all $x>0$ such that $x\log n\ge1$.
\end{example}

In \refE{Ebadapple},
 the birth times are distributed very irregularly, and \eqref{tn=} fails
 already for depth $x\log n=1$, the children of the root.
In more regular cases, \eqref{tn=} holds for depth 1; the following theorem
shows that then it holds for all depths $x\log n\ge1$ with $x\le x'$ for
some $x'<\gam/\ga$.

\begin{theorem}
  \label{Tnotbad}
Suppose that the point process $\Xi$ is such that \as{}
\begin{equation}\label{tn=Xi}
  \Xi([0,t])=e^{(\AAA+o(1))t}.
\end{equation}
as \ttoo.
Then \as,
for any $x_1<\gam$,
as \ttoo,
\begin{equation}\label{ew}
\frac{\log \nz{k}(\ctt)}{t}
= \hgaxx(k/t)+o(1),
\end{equation}
uniformly for $1\le k\le x_1t$.
Furthermore, \eqref{tnle} holds uniformly for all $x$
such that $x\log n\ge1$, and 
\eqref{tn=} holds uniformly for all $x$ 
in any compact subset of $[0,\gam/\ga)$
such that $x\log n\ge1$.
\end{theorem}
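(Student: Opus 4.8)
The plan is to reduce the refined statement of \refT{Tnotbad} to the special case of a bounded offspring branching process treated inside the proof of \refT{Tn=}, and then transfer back via the same pruning argument, checking that under \eqref{tn=Xi} no loss occurs near depth $1$. First I would recall that the upper bound $\log\nz{k}(\ctt)/t\le\hgaxx(k/t)+o(1)$, uniform for $1\le k\le x_1 t$, is already available from \eqref{nlei+} and \eqref{kk+} exactly as in \refT{Tn=}; the only new content is the lower bound near $k/t\to0$. For the lower bound I would split as in \refT{Tn=} into the ranges $k/t\in[\barai,x_1]$, where the earlier uniform estimates \eqref{jd1}--\eqref{jd3} apply verbatim (they do not require $k/t$ bounded away from $0$ once $\barai>0$), and the range $0<k/t<\barai$, which is where $\hgaxx(x)=x\log\hmu(0)$ and the behaviour at very small depths matters.

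For the critical small-depth range I would use \eqref{tn=Xi} directly. The point is that \eqref{tn=Xi} says $\Xix{s}=e^{(\AAA+o(1))s}$ \as, so at time $s$ the root has $e^{(\AAA+o(1))s}$ children; since $\AAA\ge0$ is precisely the case where $\hmu(0)=\E N$ may be infinite (so $\barai=0$) or finite with $\log\hmu(0)=\log\E N$, the hypothesis pins the actual growth rate of the first generation to the value $\AAA=\max(\AAA,0)$ that governs $\hgaxx$ near $0$ via \eqref{lhgaxx} and \eqref{hgaxxinf} (note $\inf_{\gth\ge0}f_x(\gth)\to\AAAo$ as $x\to0$, by \refL{Lhgaxx} and the analysis of $f_x$ in the proof of the pruning lemma). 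Iterating: each node of depth $j$ started an independent copy of the process, so by \eqref{tn=Xi} applied to each of them and a Borel--Cantelli / union bound over the (sub-exponentially many) nodes, one gets, \as, for every fixed $j$ and $t\to\infty$, $\nz{j}(\ctt)\ge e^{(j\AAAo+o(1)) t}$ uniformly — more carefully, for $k=k(t)\to\infty$ slowly one builds the bound generation by generation, using at step $j$ that the worst node among $\nz{j-1}$ nodes still has $\ge e^{(\AAAo-\eps)\tau}$ children over a window $[\tau_0,\tau]$ of length $\Theta(t)$, the failure probability being exponentially small in $t$ by \eqref{tn=Xi} and summable against $\nz{j-1}\le e^{O(t)}$. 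This yields $\log\nz{k}(\ctt)/t\ge k\AAAo/t+o(1)=\hgaxx(k/t)+o(1)$ uniformly for $1\le k\le \barai t$ (and for $1\le k\le\eps t$ when $\barai=0$), matching the upper bound. Combining the two ranges gives \eqref{ew}; the statements for $\nle{k}$ and for \eqref{tn=}, \eqref{tnle} follow from \eqref{ew} by summing over $k$ and invoking \eqref{tex2tau} as in the proofs of Theorems \ref{Tnle} and \ref{Tn=}.

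The last thing to check is the pruning step: to pass from bounded $N$ to general $N$ one again compares $\ctt$ with $\ctt\MM$ and lets $M\to\infty$, but now one must verify that the pruned process still satisfies \eqref{tn=Xi} with the \emph{same} $\AAA$ — which it does not in general, since pruning to $M$ children caps the first-generation growth at $\log M$. The remedy is the same as in \refE{Ebadapple}'s foil: rather than prune, one keeps all children but notes that for $k\ge\barai t$ the earlier pruning-based lemma already gives the bound, while for $k<\barai t$ the direct generation-by-generation argument above uses \eqref{tn=Xi} for the unpruned process and needs no pruning at all. So the two regimes are handled by two different devices and glued by continuity of $\hgaxx$ at $\barai$ (\refL{Lhgaxx}). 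The main obstacle I anticipate is exactly this small-depth lower bound: making the ``worst node still grows at rate $\AAAo$'' argument uniform in $k$ up to $\barai t$ requires a careful union bound, because there are already $e^{\Theta(t)}$ nodes at depth $\barai t$ and one needs the deviation probability in \eqref{tn=Xi} to beat that — this forces one to use \eqref{tn=Xi} at time scales proportional to $t$ (not $o(t)$), which is fine since each of the $k\le\barai t$ generations can be allotted a window of length $\asymp t/k\cdot$(something), but bookkeeping the $o(1)$'s uniformly over all $k$ simultaneously is the delicate point.
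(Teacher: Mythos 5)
Your overall decomposition (reuse the upper bound and the lower bound for $k/t$ bounded away from $0$ from \refT{Tn=}, and supply a new lower bound for small $k/t$ from \eqref{tn=Xi}) is the right shape, but the small-depth lower bound — which is the entire new content of the theorem — does not work as you describe. First, your target value is wrong: in the relevant case $\AAA>0$ one has $\hmu(0)=\E N=\infty$, hence $\barai=0$ and $\hgaxx(x)\to\hgaxx(0)=\AAA>0$ as $x\downto0$; so what must be shown for $1\le k\le x_0t$ is $\nz{k}(\ctt)\ge e^{(\AAA-\eps)t}$, a bound \emph{independent of $k$}, not "$k\AAAo/t+o(1)$" (which tends to $0$) nor "$\nz{j}(\ctt)\ge e^{(j\AAAo+o(1))t}$" (which is false for $j\ge2$, since it would eventually exceed the total population $e^{(\ga+o(1))t}$). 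Second, the generation-by-generation iteration cannot be carried out in the range that matters: \eqref{tn=Xi} is an asymptotic statement, usable only on time windows of length $\Theta(t)$ (as you yourself note), but you need to handle $k$ up to $x_0t$ generations within total time $t$, so the windows would have to have length $O(1)$, where \eqref{tn=Xi} says nothing. These two defects cannot be patched within your scheme. (A smaller point: the estimates \eqref{jd1}--\eqref{jd3} do not "apply verbatim" down to $k/t=\barai=0$ when $\AAA>0$, because the pruning lemma only gives $\hgaxxMM\to\hgaxx$ uniformly on compacts of $(0,\gam)$ there; one must keep $x_0>0$ fixed and glue.)

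The paper's argument uses \eqref{tn=Xi} exactly once, at the root, and avoids iteration entirely. Call an individual \emph{good} if the subtree it roots satisfies $H\ge bs$ for all $s\ge t_0$ (with $b=\gam/2$); by \refT{TBiggins1} this has probability $\ge1/2$, so by the law of large numbers at least a third of the root's children born by time $(1-\gd)t$ are good, i.e.\ $G((1-\gd)t)\ge\frac13\Xi([0,(1-\gd)t])=e^{\AAA(1-\gd)t+o(t)}$. Each good child has age $\ge\gd t$ at time $t$ and hence, by goodness, contributes at least one descendant at \emph{every} depth $1,\dots,\ceil{b\gd t}$ of $\ctt$. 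Thus $\nz{k}(\ctt)\ge e^{\AAA(1-\gd)t+o(t)}$ simultaneously for all $1\le k\le b\gd t$, which matches $\hgaxx(k/t)=\hgaxx(0)+o(1)=\AAA+o(1)$ up to an error controlled by $\gd$ and the continuity of $\hgaxx$. This one-shot use of the hypothesis, combined with the height theorem for the children's subtrees, is the missing idea.
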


\begin{proof}
Recall first that by \refT{TBiggins1}, $H(\ctt)/t\asto\gam$. Hence, if we
let $b:=\gam/2$, then there exists $t_0$ such that with probability at least
$1/2$,
\begin{equation}\label{good}
  H(\ctt)> bt 
\qquad\text{for all } t\ge t_0.
\end{equation}

Say that an individual $x$ in the branching process is \emph{good} if
\eqref{good} holds for the subtree rooted at $x$ (with time measured from
the birth of $x$), and let $G(t)$ be the number of children of the root in
$\ctt$ that are good. Since the total number of children of the root is
$\Xi(\oot)$, and each is good with probability at least $1/2$ and these
events are independent of each other, the law of large numbers implies that
\as{} $G(t)\ge\frac{1}3\Xi\xoot$  for large $t$. 

Let $0<\gd<1$. We have seen that \as, for large $t$, the
number of good children of the root at time $(1-\gd)t$ is,
using the assumption \eqref{tn=Xi},
\begin{equation}\label{goody}
  G\bigpar{(1-\gd)t}\ge\frac{1}3\Xi([0,(1-\gd)t])
=e^{\AAA(1-\gd)t+o(t)}.
\end{equation}
Each of these good children sprouts a tree that at time $t$ has age at least
$\gd t$ and thus, by the definition of good, has at least one node in each
of the following $\ceil{b\gd t}$ generations (provided $t>t_0/\gd$).
Hence, for large $t$,
if $1\le k\le b\gd t$, then
$\nz k(\ctt) \ge  G\bigpar{(1-\gd)t}$ and thus, by \eqref{goody},
\begin{equation}\label{hw}
  \begin{split}
\frac{\log \nz{k}(\ctt)}{t}
&
\ge \AAA(1-\gd)-\gd
=\hgaxx(0)-(\AAA+1)\gd
.
  \end{split}
\end{equation}
Let $\eps>0$.
Since $\hgaxx(x)$ is continuous, 
we can find some small $\gd>0$ 
such that if $0\le x\le b\gd$, then $\hgaxx(x)<\hgaxx(0)+\eps$.
We may furthermore assume that $\gd<\eps/(\AAA+1)$, and then \eqref{hw}
implies that, with $x_0=b\gd$,
for any $k$ with $1\le k\le x_0t$,
\begin{equation}\label{birdie}
\frac{\log \nz{k}(\ctt)}{t}
\ge \hgaxx(k/t)-2\eps.
\end{equation}
For any $x_1<\gam$, 
the same inequality \eqref{birdie} holds, for large $t$, 
also for $k/t\in[x_0,x_1]$
by (the proof of) \refT{Tn=}, so \eqref{birdie} extends, for large $t$, 
to $1\le k\le x_1t$. 
Together with the upper bound \eqref{tn<3}, this yields \eqref{ew}.
The usual argument using the stopping times $\tau(n)$ yields \eqref{tn=} 
uniformly for $1\le x\log n\le x'\log n$, for any $x'<\gam/\ga$.
\end{proof}

\begin{example}\label{EPlin2}
For the linear preferential attachment in \refE{EPPREF}, 
the children are born according to a pure birth process with birth rates
$\gl_k=k+\rho$. From the birth of the first child, this
process 
stochastically dominates the Yule process $(\cY_t)$ in \refE{EYule1} (with
the standard
rate $\ga=1$), which
is a birth 
process with rates $\gl_k=k$.
Conversely, the process describing the births is dominated by a Yule process
started with $\ceil{\rho}$ individuals, \ie, the sum of $\ceil\rho$
independent Yule processes. It follows from \eqref{yule} that \as{}
\begin{equation}
  0<\liminf_{\ttoo} \bigpar{e^{-t}\Xi(\oot)} 
\le \limsup_{\ttoo} \bigpar{e^{-t}\Xi(\oot)}<\infty, 
\end{equation}
and in
particular \eqref{tn=Xi} holds.
Thus, \refT{Tnotbad} shows that if $x'<\gam/\ga$, then
\eqref{tn=} extends, uniformly, 
to all $x\le x'$ satisfying the obvious condition 
$x\log n\ge1$, as said in \refE{EPPREF} above.

The same arguments show that \eqref{tnle} extends
to all $x$ satisfying $x\log n\ge1$, now using the uniform 
upper bound \eqref{nlei+}. 
\end{example}

In the \mary{} case, we can give a sharper result for the lower part of the
profile, showing that in the range $k< (\barai/\ga)\log n$, the estimate
$m^{k+o(\log n)}$ in \eqref{tnlex} in \refE{Ebaraimary} can be improved to 
$m^k(1+o(1))$, at least in the case $\AAA<0$. (I.e., when the birth times
$\xi_i$ have some exponential moment.)

\begin{theorem}
  \label{Talla}
  In the \mary{} case $N=m$,
assume \refBP{} and \refBPlastpsi, 
and also
$\AAA<0$.
Then, $0<\gamx<\barai$ and for every $x_1\in(\gamx,\barai]$,
\as,
\begin{equation}\label{talla1}
  n_k(T_n)
=
m^k- n^{\chgaxx(-\ga k/\log n)/\ga+o(1)}
\end{equation}
uniformly for $\ga k/\log n\in[x_1,\barai]$.
Hence, for every $\eps>0$, \as
\begin{equation}\label{talla2}
  n_k(T_n)
=
m^k(1+o(1))
\end{equation}
uniformly for $k\le (\barai/\ga-\eps) \log n$.
\end{theorem}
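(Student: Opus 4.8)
The plan is to count the nodes at depth $k$ in an $m$-ary tree (generated by the CMJ branching process with $N=m$) by splitting them into the \emph{full} nodes (all $m$ children already born, i.e.\ outdegree $m$) and the \emph{non-full} nodes (outdegree $<m$). The key observation is that in the $m$-ary case, \emph{every possible node of depth $k$ exists in $\cT_\infty$}, so $n_k(T_n)$ counts exactly those of the $m^k$ potential positions at depth $k$ that have actually been filled by time $\tau(n)$. A position at depth $k$ is filled iff the corresponding individual has been born, iff all of its $k$ ancestors along the path have produced the relevant child. A node that is full contributes a full subtree below it, so for $k$ substantially below the saturation level all nodes at depth $k$ are full; the interesting range is just above $S(T_n)$, where the ``missing'' nodes of depth $k$ are governed by the non-full nodes of depth $<k$.

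The core decomposition is: a potential node $v$ at depth $k$ is \emph{absent} from $\ctt$ iff some ancestor $w$ of $v$ (possibly $v$'s parent) is non-full at time $t$ and the child of $w$ on the path to $v$ has not yet been born. Hence the number of absent nodes at depth $k$ is at most $\sum_{j=0}^{k-1} m^{\,k-1-j}\,\Nlechi{j}(t)$, where $\Nlechi{j}(t)$ is the number of non-full nodes at depth $\le j$ (using the $0$--$1$ characteristic $\chi(t)=\ett{t<\xi_m}$ of \refE{Echim}), and at least $\Nlechi{k-1}(t)$ (the non-full nodes at depth exactly $k-1$ each have at least one absent child). First I would record the elementary identity
\begin{equation*}
  m^k - \sum_{j=0}^{k-1} m^{\,k-1-j}\,\Nlechi{j}(t)
  \le n_k(\ctt) \le m^k.
\end{equation*}
Then, by \refR{Repif}, since \ref{BIGG2} holds for the characteristic $\chi$ of \refE{Echim} (this was verified inside the proof of \refT{Tfillup}), the estimate \eqref{kork} applies: $\log \Nlechi{xt}(t)/t \asto \chgaxx(-x)$ for $x>\gamx$, uniformly on compact subintervals of $(\gamx,\infty)$ by the uniformity already established in \refT{TBiggins2u}\ref{tbiggins2u-} (combined with \eqref{nchn}). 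Since $\AAA<0$, \refL{Lgamx}\ref{Lgamx>0} and \refL{Ltgaxx}\ref{Ltgaxx-x} give $0<\gamx<\barai\le\gb\qw$, and on $[\gamx,\barai]$ the function $\chgaxx(-x)$ equals $\gaxxx(x)=\hgaxx(x)$ and is strictly increasing (see \refR{Rgaxxx} and \refL{Ltgaxx}), so the sum $\sum_{j} m^{k-1-j}\Nlechi{j}(t)$ is dominated by a geometric-type sum whose largest term, after translating $t\to\tau(n)$ via \eqref{tex2tau}, is $n^{\chgaxx(-\ga k/\log n)/\ga+o(1)}$; the geometric factors $m^{k-1-j}$ contribute only $n^{o(1)}$ relative to this since $\chgaxx(-x)/x$ is decreasing on $(\gamx,\barai]$ (as shown in the proof of \refL{Ltgaxx}, cf.\ \eqref{qb}), which forces the $j=k-1$ term to dominate the whole sum up to a factor $n^{o(1)}$. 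This gives \eqref{talla1} uniformly for $\ga k/\log n\in[x_1,\barai]$ after the standard passage from $\ctt$ to $T_n=\cT_{\tau(n)}$.

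For \eqref{talla2}: fix $\eps>0$ and $x_1\in(\gamx,\barai)$. For $k\le (x_1/\ga)\log n$, all $m$ children of every node of depth $<k$ are present \whp{} once $k<S(T_n)\sim\bgamx\log n$; more carefully, \eqref{talla1} applied with $\ga k/\log n$ just below $x_1$ shows $m^k-n_k(T_n)\le n^{\chgaxx(-x_1)/\ga+o(1)}$, and since $\chgaxx(-x_1)<\ga$ (because $x_1>\gamx\ge\baram$ but $x_1\neq\gb\qw$, using \refL{LXch}\ref{lxchgaxx}), this bound is $n^{1-c+o(1)}=o(m^k)$ provided also $m^k\to\infty$ fast enough — which it does once $k\ge\gd\log n$ for a suitable small $\gd$, and for the remaining bounded-$k$ range $n_k(T_n)=m^k$ exactly \whp{} since the saturation level tends to infinity. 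For $\ga k/\log n\in[x_1,\barai-\ga\eps]$, \eqref{talla1} gives $m^k-n_k(T_n)=n^{\chgaxx(-\ga k/\log n)/\ga+o(1)}$ and $\chgaxx(-\ga k/\log n)/\ga < \barai\log\hmu(0)/\ga \cdot(\text{something})$ — more simply, $\chgaxx(-x)<x\log m=x\log\hmu(0)$ strictly for $x<\barai$ (the defining property of $\barai$, see \refL{Ltgaxx}\ref{Ltgaxx-gagb} and its proof, \eqref{maxgaxxx2}), hence $m^k-n_k(T_n)=n^{\chgaxx(-\ga k/\log n)/\ga+o(1)} = m^{k(1-c)+o(k)}=o(m^k)$ uniformly. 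Combining the two ranges covers all $k\le(\barai/\ga-\eps)\log n$.

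The main obstacle I anticipate is not the analytic input — that is all packaged in \refT{TBiggins2u}, \refL{Ltgaxx} and \refR{Rgaxxx} — but making the combinatorial absence-counting inequality uniform in $k$ and controlling the sum $\sum_{j<k} m^{k-1-j}\Nlechi{j}(t)$ simultaneously for all relevant $k$ and $t$ from a \emph{single} almost-sure event. This requires the uniform (in $x$, on compact subintervals) version of \eqref{kork}, and a careful argument that $\chgaxx(-x)/x$ being decreasing makes the $j=k-1$ summand dominant uniformly, so that the geometric weights $m^{k-1-j}$ are harmless. A secondary technical point is the boundary case $x=\barai$ in \eqref{talla1} and the transition near small $k$ in \eqref{talla2}, where one must invoke $S(T_n)\to\infty$ \as{} (from \refT{Tfillup}) to handle depths below the saturation level by hand; these are routine but need to be stated explicitly.
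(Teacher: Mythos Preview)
Your overall strategy is exactly the paper's: count the \emph{absent} nodes $\nb_k(T)=m^k-n_k(T)$ via the non-full ancestors, sandwich $\nb_k$ between $\sum_{j<k} m^{\,k-j-1}\no_j$ and $\sum_{j<k} m^{\,k-j}\no_j$ (with $\no_j$ the number of non-full nodes at depth $j$), and feed in the asymptotics of $\no_{\le xt}(\ctt)$ via \refT{TBiggins2u}\ref{tbiggins2u-} with the characteristic \eqref{chim}. The lower bound and the transfer to $T_n$ are fine.

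The gap is in your argument that the largest term dominates the sum. You claim that $\chgaxx(-x)/x$ is \emph{decreasing} on $(\gamx,\barai]$ and cite \eqref{qb}; in fact \eqref{qb} is about $[\barai,\infty)$, and on $(0,\barai]$ the quotient is \emph{increasing} (this is \eqref{qb-}). More importantly, neither direction of that monotonicity gives what you need. What is required is $\chgaxx(-b)-\chgaxx(-a)\ge(b-a)\log m$ for $a\le b\le\barai$, i.e.\ a \emph{lower} bound on the slope, whereas monotonicity of $\chgaxx(-x)/x$ only gives $\chgaxx(-b)-\chgaxx(-a)\ge(b-a)\,\chgaxx(-b)/b$, and $\chgaxx(-b)/b\le\log m$ on this range. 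The paper closes this gap by introducing an auxiliary extension $\aaa$ of $\chgaxx(-x)$ (linear with slope $\log m$ below $\gamx$) and proving the slope inequality \eqref{bbb}: since $\chgaxx(-x)$ is \emph{concave} on $[\gamx,\barai]$, lies below the line $x\log m$ there, and meets it at $x=\barai$, the left derivative at $\barai$ is at least $\log m$, and concavity then forces $(\chgaxx(-\cdot))'(x)\ge\log m$ throughout $(\gamx,\barai)$. That is the inequality that makes $m^{k-j}\no_{\le j}$ bounded by $\no_{\le k}$ up to $n^{o(1)}$, uniformly.

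For \eqref{talla2} your patching-by-ranges works in principle but is messier than necessary and has loose ends (you invoke \eqref{talla1} ``just below $x_1$'' where it does not apply). The paper bypasses all of this with a one-line observation: for any $m$-ary tree $T$, $n_k(T)/m^k$ is \emph{decreasing} in $k$ (because $n_{k+1}(T)\le m\,n_k(T)$). Hence once \eqref{talla1} yields $n_k(T_n)/m^k\to1$ for $k=\lfloor y\log n\rfloor$ with any fixed $y<\barai/\ga$, the same holds automatically for all smaller $k$.
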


\begin{proof}
For an \mary{} tree $T$, and $k\ge0$,
let $\nb_k(T):=m^k-n_k(T)$, \ie, the number of potential nodes of depth  $k$
that are  \emph{not} in $T$.
Furthermore, let $\vo_k(T)$ be the set of nodes in $T$ with depth $k$ that
have outdegree less than $m$, and let $\no_k(T):=|\vo_k(T)|$.
We similarly define $\no_{\le s}(T)$ for real $s\ge0$.

Every potential node that is not in $T$ has a unique ancestor in $T$ with
outdegree less than $m$, \ie, an ancestor in $\bigcup_{j=0}^\infty\vo_j(T)$,
and for every $j<k$, each node in $\vo_j(T)$ 
is the ancestor of between $m^{k-j-1}$ and $m^{k-j}$ potential nodes of
depth $k$ that are not in $T$.
Hence,
\begin{equation}\label{nb}
  \sum_{j=0}^{k-1} m^{k-j-1}\no_j(T)
\le \nb_k(T)
\le
  \sum_{j=0}^{k-1} m^{k-j}\no_j(T).
\end{equation}

 Since $\AAA<0$, we have $\gamx>0$ by \refL{Lgamx}, and
thus $\barai>\gamx$ by \eqref{baraig}.
Furthermore, since $\chgaxx(-x)$ is right-continuous by \refL{LXch},
\eqref{chgamma2} implies that $\chgaxx(\gamx)\ge0$.
Define, for this proof, 
\begin{equation}\label{aaa}
  \aaa(x):=
  \begin{cases}
	\chgaxx(-x), &   x\ge\gamx,
\\
\chgaxx(-\gamx)-(\gamx-x)\log m,& x\le\gamx.
  \end{cases}
\end{equation}
Recall that $\chgaxx(-x)$ is continuous on $[\baram,\infty)$ by \refL{LXch} and
  $\gamx\ge\baram$ by \eqref{gamxbaram};
hence 
$\aaa(x)$ is continuous on
  $(-\infty,\infty)$. 
Furthermore, $\aaa(x)$ is differentiable at every point except  
$\gamx$, $\gb\qw$ and, perhaps, $\barao$, see \refR{Rgaxxx}.
By \refL{Ltgaxx}\ref{Ltgaxx->} and \eqref{tgaxxgb},
$\aaa(x)=\chgaxx(-x)\le \tgaxx(x)=x\log m$ for $x\in[\gamx,\barai]$, with
equality for 
$x=\barai$, and thus the left derivative $(\aaa)'(\barai-)\ge\log m$.
Since $\chgaxx(-x)$ is convex, $(\aaa)'(x)\ge \log m$ for
$x\in(\gamx,\barai)$, and by the definition \eqref{aaa}, this trivially
holds for $x<\gamx$ too.
Consequently,
\begin{equation}\label{bbb}
    \aaa(y)\le \aaa(x)-(x-y)\log m,
\qquad y\le x\le \barai.
\end{equation}

We use \refT{TBiggins2u}\ref{tbiggins2u-} with characteristic $\chi$ given by
\eqref{chim}, noting that \ref{BIGG2} holds by the proof of \refT{Tfillup}.
This shows that,
for $x\in(\gamx,\infty)$,
  \begin{equation}\label{miserere}
\frac{\log \no_{\le xt}(\ctt)}t\asto\chgaxx(-x)
=\aaa(x).	
  \end{equation}
Since $\aaa(x)$ is continuous, we now can argue as in the proof of
\refT{Tnle}, see \eqref{kork}--\eqref{bra}, and obtain, \as,
  \begin{equation}
\limsup_{t\to\infty}\frac{\log \no_{\le xt}(\ctt)}t\le \aaa(x),
  \end{equation}
uniformly for $x\ge0$. (For $x<\gamx$, the limsup is actually 
$-\infty$,
since \as{} $\no_{\le xt}(\ctt)=0$ for large $t$.)
Consequently, by \eqref{tex2tau}, \as
  \begin{equation}
	\begin{split}
\limsup_{n\to\infty}\frac{\log \no_{\le x\log n}(T_n)}{\log n}
&=
\limsup_{n\to\infty}\frac{\log \no_{\le x(\log n/\tau(n))\tau(n)}(\cT_{\tau(n)})}{\tau(n)}.
\frac{\tau(n)}{\log n}	  
\\
&\le \aaa(\ga x)/\ga,
	\end{split}
\raisetag{0.8\baselineskip}
  \end{equation}
uniformly for $x\in[0,x_1]$, for any fixed $x_1$.

Consequently, for any $\gd>0$, \as, for large $n$
\begin{equation}
 \no_{\le x\log n}(T_n) \le n^{\aaa(\ga x)/\ga+\gd},
\qquad 0\le x\le \barai/\ga.
\end{equation}
For such $n$
and any $k\le(\barai/\ga)\log n$,
we then have, by \eqref{nb} and \eqref{bbb}, 
\begin{equation}
  \begin{split}
  \nb_k(T_n)&
\le \sum_{j=0}^{k-1} m^{k-j} n^{\aaa(\ga j/\log n)/\ga+\gd}
=
\sum_{j=0}^{k-1} n^{(k-j)\log m/\log n+\aaa(\ga j/\log n)/\ga+\gd}
\\&
\le
(\barai/\ga)\log n\cdot  n^{\aaa(\ga k/\log n)/\ga+\gd}.	
  \end{split}
\raisetag{\baselineskip}
\end{equation}
Since $\gd>0$ is arbitrary, this shows that
\as, for large $n$,
uniformly for $k\le(\barai/\ga)\log n$,
\begin{equation}\label{agnus}
  \nb_k(T_n)
\le  n^{\aaa(\ga k/\log n)/\ga+o(1)}.
\end{equation}

Similarly,
for any $x_1>\gamx/\ga$, \eqref{miserere} implies
  \begin{equation}
	\begin{split}
\lim_{n\to\infty}\frac{\log \no_{\le x\log n}(T_n)}{\log n}
&= \chgaxx(-\ga x)/\ga,
	\end{split}
\raisetag{0.8\baselineskip}
  \end{equation}
uniformly for $x\in[x_1,\barai/\ga]$, \as{} for any fixed $x_1$.
This yields, by \eqref{nb},
\as
\begin{equation}\label{agno}
 \nb_k(T_n)\ge \no_{\le k-1}(T_n) =n^{\chgaxx(-\ga k/\log n)/\ga+o(1)}, 
\end{equation}
uniformly for $\ga k/\log n\in[x_1',\barai]$, for any $x_1'>x_1$.
Combining \eqref{agnus} and \eqref{agno} (and changing $x_1$), we obtain 
\eqref{talla1}. 

Let $y\in(\gamx/\ga,\barai/\ga)$ and define $k:=\floor{y\log n}$.
Then $\ga k/\log n\to \ga y\in (\gamx,\barai)$, and thus,
using \refL{Ltgaxx}\ref{Ltgaxx->}\ref{Ltgaxx-gagb},
\begin{equation}
\chgaxx(-\ga k/\log n)\to\chgaxx(-\ga y)
<  \tgaxx(\ga y)
=\ga y \log m.
\end{equation}
Hence, for some $\gd=\gd(y)>0$ and large $n$,
\begin{equation}\label{vf}
  n^{\chgaxx(-\ga k/\log n)/\ga}
\le n^{ y\log m-\gd}
\le n^{ k\log m/\log n-\gd}
=m^k n^{-\gd}.
\end{equation}
For large $n$, \eqref{talla1} applies and yields by \eqref{vf}, \as,
\begin{equation}
  n_k(T_n)/m^k = 1 - O\bigpar{n^{-\gd+o(1)}}=1-o(1).
\end{equation}
Furthermore, for any \mary{} tree $T$, $n_k(T)/m^k$ is a decreasing function
of $k$, and thus \eqref{talla2} holds uniformly for all $k\le y\log n$.
\end{proof}

 Define, for an \mary{} tree $T$,
 \begin{equation}
\Sh(T):=\max\bigset{k:n_k(T)\ge \tfrac12  m^k}   
 \end{equation}
\ie, the last level that is at least half-full.
(We could consider any other fixed fraction.)

\begin{corollary}\label{Calla}
Assume
$N=m$,
\refBP{}, \refBPlastpsi{}
and $\AAA<0$. Then
$\Sh(T_n)/\log n\asto \barai/\ga$.
\end{corollary}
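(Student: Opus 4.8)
The plan is to deduce the corollary from \refT{Talla} together with an upper bound coming from \refT{Tn=}, in exactly the same style as \refT{Tfillup} was deduced from \refT{TBiggins1}. The point is that $\Sh(T_n)$ is squeezed between two thresholds: below $(\barai/\ga-\eps)\log n$ level $k$ is asymptotically full (in particular more than half-full) by \eqref{talla2}, while above $(\barai/\ga+\eps)\log n$ the profile estimate \eqref{tn=} gives $n_k(T_n)=n^{\hgaxx(\ga k/\log n)/\ga+o(1)}$ with $\hgaxx(\ga x)/\ga < x\log m$ for $x>\barai/\ga$ (by \eqref{tgaxxgb3} and \refL{Lhgaxx}), so $n_k(T_n)=m^k n^{-\gd+o(1)}\ll \tfrac12 m^k$. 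Hence \as{} for large $n$ we have $(\barai/\ga-\eps)\log n\le \Sh(T_n)\le(\barai/\ga+\eps)\log n$, and letting $\eps\to0$ along a countable sequence gives the claim.

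Concretely, first I would fix $\eps>0$. The lower bound: by \refT{Talla}, specifically \eqref{talla2}, \as{} $n_k(T_n)=m^k(1+o(1))\ge \tfrac12 m^k$ uniformly for $k\le(\barai/\ga-\eps)\log n$; hence \as{} $\Sh(T_n)\ge(\barai/\ga-\eps)\log n$ for large $n$ (recall $\gamx>0$ so $\barai/\ga-\eps$ is eventually positive once $\eps$ is small, and $\barai/\ga<\gam/\ga$ by \eqref{baraig}, so the relevant $k$ lie in the range where \refT{Talla} applies). The upper bound: pick $x'$ with $\barai/\ga<x'<\gam/\ga$ (possible since $\barai<\gb\qw\le\gamx\cdot$\dots\ actually since $\gamx<\barai$ and one checks $\barai/\ga<\gam/\ga$, which holds because $\barai<\gb\qw$ and $\gb\qw\le\gam/\ga$ follows from \refL{Lgamma}). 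For $\ga k/\log n$ in the compact interval $[\barai/\ga+\eps,\,x']$, \refT{Tn=} gives $n_k(T_n)=n^{\hgaxx(\ga k/\log n)/\ga+o(1)}$ uniformly; since $\hgaxx$ is continuous and $\hgaxx(\ga x)<\ga x\log m$ strictly on $(\barai/\ga,\gam/\ga)$ by \eqref{tgaxxgb3}, \refL{Lhgaxx} and $\hgaxx\le\gaxx<\ga$ there, compactness yields a uniform $\gd>0$ with $\hgaxx(\ga x)/\ga\le x\log m-\gd$, whence $n_k(T_n)\le m^k n^{-\gd+o(1)}$. For $k$ with $\ga k/\log n>x'$ we instead use \refT{Theight}: \as{} $H(T_n)<x'\log n/\ga\cdot\ga=x'\log n$ eventually, so $n_k(T_n)=0$. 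Either way $n_k(T_n)<\tfrac12 m^k$ for large $n$, so $\Sh(T_n)<(\barai/\ga+\eps)\log n$.

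Combining, \as{} $\bigl|\Sh(T_n)/\log n-\barai/\ga\bigr|\le\eps$ for all large $n$; taking a sequence $\eps\downarrow0$ (a countable intersection of almost-sure events) gives $\Sh(T_n)/\log n\asto\barai/\ga$.

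I expect the only genuine point requiring care is the upper-bound regime, and within it the verification that the exponent $\hgaxx(\ga x)/\ga$ is \emph{strictly} below $x\log m$ for $x\in(\barai/\ga,x']$, uniformly — this is where one must quote \eqref{tgaxxgb3} (for $x$ just above $\barai/\ga$, where $\hgaxx=\tgaxx$) together with $\hgaxx(\ga x)=\gaxx(\ga x)<\ga$ (for larger $x$, using \eqref{gaxxgb} and $\barai/\ga>\gb\qw$ is \emph{false}; rather one uses $\hgaxx=\gaxxx<\ga$ on $(\gb\qw,\gam/\ga)$ and $\tgaxx(x)<x\log m$ on $(\barai,\gb\qw)$, gluing at $\gb\qw$). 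A clean way to avoid the case split is to note that $\hgaxx$ is concave with maximum $\ga$ at $1/\gb$ and linear slope $\log m$ on $[0,\barai]$ (\refL{Lhgaxx}), so $\hgaxx(y)<y\log m$ for all $y>\barai$ by strict concavity past the point where the two curves are tangent; dividing by $\ga$ and using $\barai/\ga\le x$ gives the strict inequality, and compactness upgrades it to uniformity. Everything else is the routine ``replace $t$ by $\tau(n)$ and use $\log n/\tau(n)\asto\ga$'' passage already used repeatedly in this section.
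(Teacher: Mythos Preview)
Your lower bound via \eqref{talla2} is correct and matches the paper. The upper bound, however, contains a genuine error. You pick $x'$ with $\barai/\ga < x' < \gam/\ga$, handle $k/\log n \in [\barai/\ga+\eps,\,x']$ via \refT{Tn=}, and then for $k/\log n > x'$ claim ``\as{} $H(T_n) < x'\log n$ eventually''. This is false: \refT{Theight} gives $H(T_n)/\log n \asto \gam/\ga > x'$, so there \emph{are} nodes at depths between $x'\log n$ and roughly $(\gam/\ga)\log n$, and your case split leaves those levels uncovered. (Choosing $x' > \gam/\ga$ instead would make the height bound work but would take you outside the domain of \refT{Tn=}.)

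The clean fix---and essentially the paper's argument---is to stop checking all levels. In any $m$-ary tree, $n_k(T)/m^k$ is decreasing in $k$ (each level-$k$ node has at most $m$ children; this is noted at the end of the proof of \refT{Talla}), so it suffices to show $n_{k_0}(T_n) < \tfrac12 m^{k_0}$ for the single $k_0 = \lfloor x\log n\rfloor$ with $x = \barai/\ga+\eps$. The paper does this by bounding $n_{k_0} \le n_{\le k_0}$ and applying \refT{Tnle}, which yields $n_{\le k_0}(T_n) = n^{\tgaxx(\ga x)/\ga + o(1)} = m^{k_0} n^{-\gd+o(1)}$ using only \eqref{tgaxxgb3}. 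This sidesteps \refT{Tn=}, the compactness argument over an interval, and the case split at $\gb\qw$ that you wrestle with in your final paragraph.
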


\begin{proof}
  By \eqref{talla2}, for every $\eps>0$, 
\as{} $\Sh(T_n)>(\barai/\ga-\eps)\log n$ for large $n$.

Conversely, if $x=\barai/\ga+\eps$, with $\eps>0$, then 
$\tgaxx(\ga x)<\ga x \log m$ by \eqref{tgaxxgb3}, and consequently, for some
$\gd>0$, \eqref{tnle} yields,
with $k:=\floor{x\log n}$, \as,
\begin{equation}
  n_k(T_n)
\le n_{\le x\log n}(T_n) =n^{x \log m-\gd+o(1)}
=m^k n^{-\gd+o(1)} = o\bigpar{m^k}.
\end{equation}
Consequently, \as, for large $n$,
$\Sh(T_n)<k\le x\log n$.
\end{proof}

\begin{example}\label{Eallamst}
  For the \mst, we have $\ga=1$ and
$\barai=1/H_{m-1}$, where $H_{m-1}$ as usual is the  harmonic number.
Thus, \refC{Calla} and, in more detail, \refT{Talla}, show that
\emph{most} possible nodes exist up to depth
$\approx \barai\log n=H_{m-1}\qw\log n$,
but not further. In contrast, by \refT{Tfillup},
\emph{all} possible nodes exist only up to depth $\approx \gamx\log n$,
where $\gamx<\barai$.
In the binary case, $\gamx\doteq0.37336$
by \refE{EFBST}, while $\barai=1$.
\end{example}

\begin{remark}\label{Rduhre}
  Since $n_k(T_n)\le|T_n|=O(n)$ \as{} by \refT{Tex}, it follows from
  \eqref{talla2} that in the \mary{} case,
$\barai/\ga\le1/\log m$. This can also easily be seen
  analytically, and with strict inequality.
In fact, since $\log\hmu(\gth)$ is strictly convex on $[0,\ga]$,
\begin{equation}
\barai\qw=- (\log\hmu)'(0+)
>
-\frac{\log\hmu(\ga)-\log\hmu(0)}{\ga}
=\frac{\log m}{\ga}
\end{equation}
Hence,
\begin{equation}
  \barai <\ga/\log m.
\end{equation}
\end{remark}

\subsection{Typical depth}\label{SSdepth}

Given a rooted tree $T$, let the random variable $\hx(T)$ denote the 
depth of a random node, \ie, $\hx(T):=\dep(v)$ where $v$ is chosen uniformly at
random among the nodes of $T$.
It follows from the results on the profile in \refSS{SSprofile}
that 
a typical node in $T_n$ has depth $\approx (\ga\gb)\qw\log n$.
More precisely, we can show the following.

\begin{theorem}
  \label{Ttypical}
Assume \refBP{} and \refBPlastpsi.
Then, for every $\eps>0$, there exists $\gd>0$ such that \as,
conditioning on the tree $T_n$,
\begin{equation}\label{typicalgd}
  \P\bigpar{\bigabs{\hx(T_n)/\log n -(\ga\gb)\qw}>\eps\mid T_n}
=o\bigpar{n^{-\gd}}.
\end{equation}
In particular,
\begin{equation}
  \label{typicalp}
\bigpar{\hx(T_n)/\log n \mid T_n}\pto (\ga\gb)\qw \qquad\text{a.s.}
\end{equation}

Furthermore, for any $r>0$, 
\begin{equation}\label{typicalEas}
 \E \bigpar{\hx(T_n)^r\mid T_n}/\log^r n \asto (\ga\gb)^{-r}
.
\end{equation}
 \end{theorem}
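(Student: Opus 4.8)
The plan is to prove \refT{Ttypical} in three pieces: first the concentration bound \eqref{typicalgd}, then deduce the convergence in probability \eqref{typicalp}, and finally upgrade to the moment convergence \eqref{typicalEas}. Throughout I will write $\hx_n:=\hx(T_n)$ and note that, conditioned on $T_n$, the distribution of $\hx_n$ is exactly given by the profile: $\P(\hx_n=k\mid T_n)=n_k(T_n)/|T_n|$, and similarly $\P(\hx_n\ge s\mid T_n)=\nge{s}(T_n)/|T_n|$ and $\P(\hx_n\le s\mid T_n)=\nle{s}(T_n)/|T_n|$. So everything reduces to the estimates on $\nge{s}$ and $\nle{s}$ established in Sections \ref{SSprofile}, combined with $|T_n|/n\asto 1/m_\psi\in(0,\infty)$ from \refT{Tex}, which in particular gives $\log|T_n|=\log n+O(1)$ \as.

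For the upper tail of $\hx_n$: fix $\eps>0$ and set $x:=(\ga\gb)\qw+\eps$. If $x\ge\gam/\ga$ then by \refT{Theight} \as{} $H(T_n)<x\log n$ for large $n$, so $\P(\hx_n\ge x\log n\mid T_n)=0$; otherwise $x<\gam/\ga$ and by \eqref{gaxxgb} we have $\gaxx(\ga x)<\ga$ (since $\ga x>1/\gb$), so \refT{Tnge} gives $\nge{x\log n}(T_n)=n^{\gaxx(\ga x)/\ga+o(1)}$ \as, whence $\P(\hx_n\ge x\log n\mid T_n)=n^{(\gaxx(\ga x)-\ga)/\ga+o(1)}\to0$ at a polynomial rate. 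For the lower tail, set $x:=(\ga\gb)\qw-\eps<(\ga\gb)\qw\le\barai/\ga$ (the last inequality is $\gb\qw\ge\barai$, which is \eqref{baraim} in \refL{Ltgaxx}); here there is a subtlety, namely whether $\tgaxx(\ga x)<\ga$, and indeed \eqref{tgaxxgb2} gives exactly $\tgaxx(\ga x)<\ga$ for $0\le \ga x<\gb\qw$. Then \refT{Tnle} (if $\AAA\le0$) or \refT{Tnle} applied with $x\ge x_0>0$ (in general, which is fine since $x>0$) gives $\nle{x\log n}(T_n)=n^{\tgaxx(\ga x)/\ga+o(1)}$ \as, so $\P(\hx_n\le x\log n\mid T_n)=n^{(\tgaxx(\ga x)-\ga)/\ga+o(1)}\to0$ polynomially. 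Taking $\gd$ to be, say, half the minimum of the two exponents $(\ga-\gaxx(\ga x'))/\ga$ and $(\ga-\tgaxx(\ga x''))/\ga$ for the two relevant values $x',x''$, we obtain \eqref{typicalgd}; \eqref{typicalp} is then immediate.

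For the moment statement \eqref{typicalEas}, I would split $\E(\hx_n^r\mid T_n)$ into the contribution of nodes with $\dep(v)\le x_1\log n$ and those with $\dep(v)>x_1\log n$, where $x_1$ is any fixed constant with $(\ga\gb)\qw<x_1<\gam/\ga$. The first part is at most $(x_1\log n)^r$, and combined with \eqref{typicalgd} (used on both sides) gives $\E(\hx_n^r\mid T_n)=(\ga\gb)^{-r}\log^r n\,(1+o(1))$ for this truncated piece. The tail piece is $\sum_{k>x_1\log n}k^r n_k(T_n)/|T_n|\le \sum_{k>x_1\log n}k^r \nge{k}(T_n)/|T_n|$; I would bound $\nge{k}(T_n)$ crudely by $|T_n|$ for $k$ up to $C\log n$ and use that $\nge{k}(T_n)=0$ for $k>C\log n$ by \refT{Theight} (for some deterministic $C$, \as), so the tail sum is at most $(\log n)\cdot(C\log n)^r\cdot \sup_{x_1\le k/\log n\le C}\nge{k}(T_n)/|T_n|$, and by \refT{Tnge} (applied at $x_1$, using monotonicity of $\nge{\cdot}$ and of $\gaxx$) this supremum is $n^{(\gaxx(\ga x_1)-\ga)/\ga+o(1)}=o(1)$, in fact polynomially small, so the tail contributes $o(\log^r n)$. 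Dividing by $\log^r n$ and letting $\eps\to0$ gives \eqref{typicalEas}. The main obstacle, and the one place care is genuinely needed, is verifying in each regime that the relevant exponent ($\gaxx(\ga x)$ or $\tgaxx(\ga x)$) is \emph{strictly} below $\ga$ away from the critical value $x=(\ga\gb)\qw=1/(\ga\gb)$, together with tracking the \as{} uniformity in \refT{Tnge} and \refT{Tnle}; both are handled by \eqref{gaxxgb} and \eqref{tgaxxgb2}, but one must also deal with the degenerate possibility $(\ga\gb)\qw\ge\gam/\ga$ (which forces the lower-tail side to be trivial) and, for \refT{Tnle} when $\AAA>0$, the restriction to $x\ge x_0>0$ — harmless here since the relevant $x$ is bounded away from $0$.
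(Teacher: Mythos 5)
Your proof is correct and follows essentially the same route as the paper: both tails of the conditional distribution of $\hx(T_n)$ are controlled via Theorems \ref{Tnge} and \ref{Tnle} together with \eqref{tex2e}, with the crucial strict inequalities $\gaxx(\gb\qw+\eps\ga)<\ga$ and $\tgaxx(\gb\qw-\eps\ga)<\ga$ supplied by \eqref{gaxxgb} and \eqref{tgaxxgb2}, and the moment convergence rests on the a.s.\ bound $\hx(T_n)\le H(T_n)\le(\gam/\ga+1)\log n$ from \refT{Theight} (the paper simply invokes bounded convergence at that point, where you carry out an equivalent explicit truncation). One harmless slip: your parenthetical claim $(\ga\gb)\qw\le\barai/\ga$ is backwards, since \eqref{baraim} gives $\barai<\gb\qw$ and hence $(\ga\gb)\qw>\barai/\ga$ — but nothing in your argument uses it; the fact you actually need for the lower tail is $\tgaxx(\ga x)<\ga$ for $0<\ga x<\gb\qw$, which you correctly take from \eqref{tgaxxgb2}.
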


\begin{proof}
By Theorems \ref{Tnge} and \ref{Tnle},
assuming as we may $\eps<(\ga\gb)\qw$ and $(\ga\gb)\qw+\eps<\gam/\ga$,
and using also \eqref{tex2e}, \as{}
  \begin{equation}
	\begin{split}
&\P\bigpar{|\hx(T_n)/\log n-(\ga\gb)\qw|\ge \eps\mid T_n}
\\&\qquad
=\frac{\nge{((\ga\gb)\qw+\eps)\log n}(T_n)
+\nle{((\ga\gb)\qw-\eps)\log n}(T_n)}{|T_n|}
\\&\qquad
=\frac{n^{\gaxx(\gb\qw+\eps\ga)/\ga+o(1)} + n^{\tgaxx(\gb\qw-\eps\ga)/\ga+o(1)}}
{n^{1+o(1)}}
\\&\qquad
=n^{-\gd_1+o(1)},
	\end{split}
  \end{equation}
where 
\begin{equation}
\gd_1:=1-\max\bigset{\gaxx(\gb\qw+\eps\ga)/\ga,\tgaxx(\gb\qw-\eps\ga)/\ga}>0  
\end{equation}
by \eqref{gaxxgb} and \eqref{tgaxxgb2}. Hence \eqref{typicalgd} holds for every
$\gd<\gd_1$. 

Obviously, \eqref{typicalgd}  implies \eqref{typicalp}.

Furthermore, by \refT{Theight}, 
\as{} for all large $n$, 
\begin{equation}
\hx(T_n)/\log n\le H(T_n)/\log n\le \gam/\ga+1<\infty.  
\end{equation}
Consequently, the sequence of random variables
$\bigpar{\hx(T_n)^r/\log^r n \mid T_n}$
is \as{} bounded, and thus \eqref{typicalp} implies \eqref{typicalEas} by
dominated (or bounded) convergence.
\end{proof}

In \eqref{typicalEas}, we consider
the conditional distribution and conditional moments given $T_n$.
(This is the quenched version, see \refR{Rk}.)
We can also consider 
the unconditional distribution and moments (\ie, the annealed version); this 
means that we first sample a random $T_n$ and then a random node $v$ in $T_n$,
and consider the depth of $v$ in $T_n$.

\begin{theorem}  \label{TMtypical}
Assume \refBP{} and \refBPlastpsi.
Then
\begin{equation}\label{typicalpQ}
  \hx(T_n)/\log n \pto (\ga\gb)\qw.
\end{equation}

Moreover, if also \ref{BP+} holds,
then, for any $r>0$, 
\begin{equation}\label{typicalE}
 \E \bigpar{\hx(T_n)^r}/\log^r n \to (\ga\gb)^{-r}
.
\end{equation}
 \end{theorem}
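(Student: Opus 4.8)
The plan is to obtain Theorem~\ref{TMtypical} from the quenched version, Theorem~\ref{Ttypical}, by integrating (\ie, taking expectations). The convergence in probability \eqref{typicalpQ} is immediate: \eqref{typicalp} says that $\bigpar{\hx(T_n)/\log n \mid T_n}\pto(\ga\gb)\qw$ \as, and in particular the (unconditional) probability $\P\bigpar{\bigabs{\hx(T_n)/\log n-(\ga\gb)\qw}>\eps}=\E\,\P\bigpar{\bigabs{\hx(T_n)/\log n-(\ga\gb)\qw}>\eps\mid T_n}\to0$ by \eqref{typicalgd} and dominated convergence (the integrand is bounded by $1$). This gives \eqref{typicalpQ} without any extra hypothesis.

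For the moment convergence \eqref{typicalE}, the strategy is to upgrade \eqref{typicalpQ} to convergence in $L^r$ by establishing uniform integrability of $\bigpar{\hx(T_n)/\log n}^r$, and for this it suffices to show that these variables have uniformly bounded moments of some higher order, say order $2r$. Now $\hx(T_n)\le H(T_n)$ trivially, so $\E\bigpar{\hx(T_n)^{2r}}\le\E\bigpar{H(T_n)^{2r}}$, and here is exactly where \ref{BP+} enters: \refL{LM3} (which requires \refBP{} and \ref{BP+}) gives $\P\bigpar{H(T_n)\ge Ct\log n}\le n^{-t}$ for all $n\ge2$ and $t\ge1$, and the computation in the proof of \refT{Ton} then shows $\E\bigpar{H(T_n)/\log n}^{2r}$ is uniformly bounded for $n\ge2$. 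Hence $\bigpar{\hx(T_n)/\log n}^{r}$ is uniformly integrable; combined with \eqref{typicalpQ} this yields $\E\bigpar{\hx(T_n)/\log n}^r\to(\ga\gb)^{-r}$, which is \eqref{typicalE}.

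Alternatively, and perhaps more cleanly, one can note that \eqref{typicalEas} already gives $\E\bigpar{\hx(T_n)^r\mid T_n}/\log^r n\asto(\ga\gb)^{-r}$, and $\E\bigpar{\hx(T_n)^r\mid T_n}/\log^r n\le \bigpar{H(T_n)/\log n}^r$, which by \refL{LM3} (using \ref{BP+}) has uniformly bounded expectation; so $\E\bigpar{\hx(T_n)^r\mid T_n}/\log^r n$ is a uniformly integrable sequence of random variables converging \as{} to the constant $(\ga\gb)^{-r}$, whence its expectation $\E\bigpar{\hx(T_n)^r}/\log^r n$ converges to $(\ga\gb)^{-r}$ as well. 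Either route works; I would present the second since it reuses \eqref{typicalEas} verbatim.

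The one point that needs a little care — and the only place \ref{BP+} is genuinely used — is the uniform bound on $\E\bigpar{H(T_n)/\log n}^r$ (or the uniform integrability of $\E\bigpar{\hx(T_n)^r\mid T_n}/\log^r n$); everything else is soft. This is not really an obstacle, since \refL{LM3} and the argument in \refT{Ton} do precisely this, so the ``hard part'' is essentially already done earlier in the paper and the proof here is short. It is worth remarking (as the paper does elsewhere) that in many concrete cases, \eg{} \mst{s} and other trees where $H(T_n)\le C\log n$ deterministically, dominated convergence applies directly and \ref{BP+} is not needed; but for the stated generality we invoke \refL{LM3}.
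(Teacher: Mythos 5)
Your first route is exactly the paper's proof: $\hx(T_n)\le H(T_n)$, so \refL{LM3} applies to $\hx(T_n)$, and the uniform-integrability argument from the proof of \refT{Ton} converts \eqref{typicalpQ} into \eqref{typicalE}; the first part is likewise obtained by taking expectations in \eqref{typicalgd}. One caution about the alternative route you say you would actually present: the step ``$\E\bigpar{\hx(T_n)^r\mid T_n}/\log^r n$ is dominated by $\bigpar{H(T_n)/\log n}^r$, which has uniformly bounded expectation; so the sequence is uniformly integrable'' is not valid as written — a uniformly bounded first moment does not give uniform integrability. What you need is that the dominating variables $\bigpar{H(T_n)/\log n}^r$ are themselves uniformly integrable, which does follow from \refL{LM3} because the tail bound there yields uniformly bounded moments of \emph{all} orders (e.g.\ order $2r$), exactly as computed in the proof of \refT{Ton}. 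With that one-line repair the second route is also fine, but the first route avoids the issue entirely and is what the paper does.
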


\begin{proof}
The unconditional convergence \eqref{typicalpQ} follows from 
\eqref{typicalgd} (or \eqref{typicalp}) by taking the expectation.

Moreover,  we have $\hx(T_n)\le H(T_n)$. 
Hence, \refL{LM3} holds also for $\hx(T_n)$, and
\eqref{typicalE} follows from \eqref{typicalpQ} by the argument in the proof
of \refT{Ton}. 
\end{proof}

\begin{remark}\label{Rdepthkeys}
  For an \mst, we can also consider the depth of a uniformly random key.  
Since the number of keys per node is bounded, and \eqref{olle} (with
$\phi=\psi$) holds, it is easy to see that \refT{Ttypical} holds in this
setting too.

Similarly, one might consider \eg{} the depth of a random leaf in $T_n$.
\end{remark}

\begin{remark}
\citet[Section 8.5, in particular Corollary 8.5.4]{Jagers} 
has shown much more refined results for a fixed time $t$, showing that the 
generation number is asymptotically normally distributed under some
conditions.
(See also \cite{Martin-Lof} for a special case.)
However, as remarked there, the results proved in \cite{Jagers} do not show
\as{} 
convergence as \ttoo, and thus we cannot obtain results for $T_n=\cttaux$ by
our usual technique. We conjecture that results analoguous to 
the ones in \cite[Section 8.5]{Jagers} hold, so that the depth is
asymptotically normal with variance $\Theta(n)$ under very general
conditions, but we leave this as an open problem.  
\end{remark}

\begin{remark}
Consider the \mary{} case $N=m$.
  Since $T_n$ has about $n$ nodes by \refT{Tex}, and most of them have depth
  $\approx(\ga\gb)\qw\log n$, we necessarily have
$(\ga\gb)\qw\ge1/\log m$ in order to have room for all nodes.
This can also easily be seen  analytically,  with strict inequality.
Let $\bxi$ be the birth time $\xi_i$ of a randomly chosen child. (I.e, $i$
is chosen uniformly at random in \set{1,\dots,m}.)
Then, recalling \eqref{malthustau},
\begin{equation}
  \E e^{-\ga\bxi}=\frac{1}{m}\sum_{i=1}^m\E e^{-\ga\xi_i}=\frac{1}m.
\end{equation}
Jensen's inequality with the strictly convex function $x\log x$ yields,
with strict inequality since $\bxi$ is not concentrated at a single value
by \ref{BPnonlattice}, using \eqref{el3},
\begin{equation}
  \frac{1}{m}\log\frac{1}{m}
< \E \bigpar{-\ga\bxi e^{-\ga\bxi}} 
=-\ga \frac{1}{m}\sum_{i=1}^m \E\bigpar{\xi_i e^{-\ga\xi_i}}
=-\frac{\ga\gb}m.
\end{equation}
Thus $\ga\gb<\log m$, and $(\ga\gb)\qw>1/\log m$.
Compare \refR{Rduhre}.
\end{remark}

\subsection{Total path length}\label{SSpathlength}

The total path length $\TPL$ is
defined (for any rooted tree) as
the sum of the depths of all nodes:
\begin{equation}\label{TPL}
  \TPL(T):=\sum_{v\in T} \dep(v).
\end{equation}
The total path length is closely connected to the typical depth $\hx(T)$
studied in \refSS{SSdepth};
it follows from the definitions of $\hx(T)$ and $\TPL(T)$ that for any
fixed rooted tree $T$,
\begin{equation}
  \E\bigpar{\hx(T)}
=\frac{\sum_{v\in T}\dep(v)}{|T|}
=\frac{\TPL(T)}{|T|}.
\end{equation}
For a random tree, we thus obtain the same result for 
the conditional expectation:
\begin{equation}\label{tpl1}
  \E\bigpar{\hx(T)\mid T}
=\frac{\TPL(T)}{|T|}.
\end{equation}
In other words, $\TPL(T)$ equals the conditional expectation of $\hx(T)$
times the number of nodes.

\begin{theorem}
  \begin{thmenumerate}
  \item\label{tpli}%
Assume \refBP{} and \refBPlastpsi. Then
  \begin{equation}\label{tpla}
	\frac{\TPL(T_n)}{n\log n} \asto 
\frac{1}{\ga\gb \mpsi}.
  \end{equation}
\item \label{tplii}%
If furthermore $|T_n|\le Cn$ (deterministically) for some constant $C$,
then also
  \begin{equation}\label{tplb}
	\frac{\E\TPL(T_n)}{n\log n} \asto 
\frac{1}{\ga\gb \mpsi}.
  \end{equation}
  \end{thmenumerate}
\end{theorem}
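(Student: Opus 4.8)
The plan is to combine the identity \eqref{tpl1} with the quenched convergence of the typical depth in \refT{Ttypical} and the asymptotic size of $T_n$ from \refT{Tex}. Specifically, \eqref{tpl1} gives
\begin{equation*}
  \frac{\TPL(T_n)}{n\log n}
  = \frac{\E\bigpar{\hx(T_n)\mid T_n}}{\log n}\cdot\frac{|T_n|}{n}.
\end{equation*}
The second factor converges \as{} to $1/\mpsi$ by \eqref{tex2e} (using \refBPlastpsi), and the first factor converges \as{} to $(\ga\gb)\qw$ by \eqref{typicalEas} with $r=1$ in \refT{Ttypical}. Multiplying the two \as{} limits yields \eqref{tpla}, which is part \ref{tpli}.

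For part \ref{tplii}, I would deduce \eqref{tplb} from \eqref{tpla} by a domination argument. We have $\TPL(T_n)/(n\log n)\le |T_n|\cdot H(T_n)/(n\log n)\le C\cdot H(T_n)/\log n$ deterministically, using the hypothesis $|T_n|\le Cn$ and $\dep(v)\le H(T_n)$ for all $v\in T_n$. By \refL{LM3} (whose hypotheses, notably \ref{BP+}, hold here — though one should check whether \ref{BP+} needs to be among the assumptions of this theorem, or whether the deterministic bound $|T_n|\le Cn$ already forces enough structure; the clean route is simply to assume \ref{BP+} as in \refT{Ton}, or to note that in the relevant examples $H(T_n)\le C'\log n$ deterministically), the random variables $H(T_n)/\log n$ have all moments uniformly bounded, hence $\TPL(T_n)/(n\log n)$ is uniformly integrable; combined with the \as{} convergence \eqref{tpla} this gives convergence of expectations \eqref{tplb} by the standard uniform-integrability criterion (\eg{} \cite[Theorems 5.4.2 and 5.5.2]{Gut}), exactly as in the proof of \refT{Ton}.

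The main obstacle is bookkeeping around the hypotheses for part \ref{tplii}: \refT{Ttypical} and \refT{Tex} only need \refBP{} and \refBPlastpsi, but controlling $\E\TPL(T_n)$ requires a uniform integrability input, which in the rest of the paper comes via \refL{LM3} under the extra assumption \ref{BP+}. So I would either (a) add \ref{BP+} to the hypotheses of part \ref{tplii} — mirroring \refT{Ton} — or (b) observe that the deterministic bound $H(T_n)\le C''\log n$ available in all the concrete applications (\mary{} trees, \mst{s}, etc.) makes $\TPL(T_n)/(n\log n)$ deterministically bounded, so that bounded convergence applies directly with no moment estimates needed. Either way, the analytic heart of the statement is already done in Theorems \ref{Tex}, \ref{Ttypical} and \ref{Ton}; this theorem is essentially a corollary obtained by multiplying two known limits and applying the identity \eqref{tpl1}, so I would keep the proof short and point to those results.

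\begin{proof}
\pfitemref{tpli}
By \eqref{tpl1},
\begin{equation}\label{tplfac}
  \frac{\TPL(T_n)}{n\log n}
  =\frac{\E\bigpar{\hx(T_n)\mid T_n}}{\log n}\cdot\frac{|T_n|}{n}.
\end{equation}
By \eqref{typicalEas} in \refT{Ttypical} (with $r=1$), the first factor on
the \rhs{} converges \as{} to $(\ga\gb)\qw$, and by \eqref{tex2e} in
\refT{Tex}\ref{tex2}, the second factor converges \as{} to $1/\mpsi$.
Hence the product converges \as{} to $1/(\ga\gb\mpsi)$, which is \eqref{tpla}.

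\pfitemref{tplii}
Since $\dep(v)\le H(T_n)$ for every $v\in T_n$ and $|T_n|\le Cn$, we have
deterministically
\begin{equation*}
  0\le \frac{\TPL(T_n)}{n\log n}
  \le \frac{|T_n|}{n}\cdot\frac{H(T_n)}{\log n}
  \le C\,\frac{H(T_n)}{\log n}.
\end{equation*}
By \refL{LM3}, the random variables $H(T_n)/\log n$ have uniformly bounded
moments of every order; hence the sequence $\TPL(T_n)/(n\log n)$ is uniformly
integrable. Together with the \as{} convergence \eqref{tpla}, this yields
convergence of the expectations, \ie{} \eqref{tplb}
(see \eg{} \cite[Theorems 5.4.2 and 5.5.2]{Gut}).
\end{proof}
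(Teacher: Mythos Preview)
Your proof is correct and essentially identical to the paper's: part \ref{tpli} via the factorization \eqref{tpl1} combined with \eqref{tex2e} and \eqref{typicalEas}, and part \ref{tplii} via the bound $\TPL(T_n)\le|T_n|H(T_n)\le Cn\,H(T_n)$ together with \refL{LM3} and the uniform-integrability argument from \refT{Ton}. Your observation about the missing hypothesis \ref{BP+} is astute: the paper's own proof of part \ref{tplii} invokes \refL{LM3} (which assumes \ref{BP+}) without listing \ref{BP+} among the hypotheses of the theorem, so the gap you flagged is present in the paper as well.
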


\begin{proof}
\pfitemref{tpli}
Applying \eqref{tpl1} to $T_n$, we obtain
\begin{equation}
  \frac{\TPL(T_n)}{n\log n}
=\frac{|T_n|}{n}\cdot\frac{\E\bigpar{\hx(T_n)\mid T_n}}{\log n}
\end{equation}
and the result follows by \eqref{tex2e} and \eqref{typicalEas}.

\pfitemref{tplii}
By \eqref{TPL}, $L(T)\le |T| H(T)$. 
Hence, the assumption implies 
$L(T_n)/(n\log n)\le C H(T_n)/\log n$, and \eqref{tplb} follows from
\eqref{tpla} and \refL{LM3} by the argument in the proof of \refT{Ton}.
\end{proof}

\begin{example}
  For the \bst{} in \refE{EBST}, we use the weight $\psi=1$ and thus $\mpsi=1$;
  furthermore $\ga=1$ and $\gb=1/2$. Hence,
$\TPL(T_n)\sim 2n\log n$ a.s.
In this case, much more detailed results are known, see \eg{}
\cite{Regnier1989}, \cite{Rosler1991} and \cite{Rosler2001}.
\end{example}

\begin{remark}
  We can similarly study versions of the path length with the summation
  \eqref{TPL}
only over  a subset of all nodes $v$, for example 
just summing over the set of leaves $v$. We leave the details to the reader.
In an \emst, two standard examples are the internal
path length and the external path length. 
Another version for the \mst{} is the sum of the depths of all keys, \cf{}
\refR{Rdepthkeys}, see e.g., \cite{Baeza-Yates} and
\cite{Mahmoud:pathlength}. This is more natural in the case of $m$-ary
search trees since this is the natural measure for the efficiency of the
corresponding sorting algorithm. The total path length using the sum over
all nodes was considered in e.g., \cite{Holmgren2011} for studying cuttings
in split trees (there the $m$-ary search tree was given as one
example). Both versions of the path lengths i.e., the sum over all keys,
respectively the sum over all nodes were considered in e.g.,
\cite{BroutinHolmgren2012} (for the analysis of general split trees).  
\end{remark}

\section{Asymptotic normality?}\label{Sfurther}
As said in the introduction, it is natural to try to show
asymptotic normality of the number of fringe trees of a given type.
There are several previous results of this type for special cases.
Central limit
laws for fringe trees have been shown, by several different methods, 
for binary search trees in \eg{}
\cite{Devroye1991}, \cite{Devroye2002}, \cite{FlajoletGM1997}, \cite{Fuchs},
\cite{ChangF10}  and
\cite{HolmgrenJanson1},         
and
for random recursive trees in \eg{} 
\cite{Fuchs} and
\cite{HolmgrenJanson1}.
For $ m $-ary search trees, the situation is more complicated:
no results for general fringe trees have been published (this is work in
progress \cite{HolmgrenJansonSileikis}), but some special cases
(such as the degree distribution and 
the number of fringe trees of a given size)
and related quantities 
(the number of internal nodes)
have been treated,
and it turns out that central limit theorems hold for $m\le26$ but not for
$m\ge 27$, see \eg{}
\cite{MahmoudPittel1989}, \cite{Mahmoud:Evolution},
\cite{LewMahmoud1994}, \cite{ChernHwang2001},  \cite{Hwang2003},
\cite{ChauvinPouyanne}, 
\cite{FillKapur-mary}, 
\cite{SJ154}
and \cite{HolmgrenJanson2}. 
Further examples of asymptotic normality include the degree distribution of
plane oriented recursive trees (preferential attachment trees, see Example
\ref{EPORT})
\cite{BergeronFS92}, 
\cite{MS:trees}, 
\cite{SJ154}, \cite{SJ155}, 
and the number of internal nodes in \medianllbst{} for
$\ell\le58$, but not for  $\ell\ge59$, see
\cite{ChernHwang2001}, \cite{ChernHwangTsai2002}.
 
The examples of \mst{} and \medianllbst{s}
thus show that central limit theorems do not
always hold for fringe trees of the random trees generated by \CMJbp{es} 
as in the present paper.

\begin{problem}\label{PCLT}
Find a
characterization of the \CMJ{} processes that yield asymptotic normality for
the number of fringe trees of a given type.
\end{problem}

Using the methods of \refS{S:BP}, \refP{PCLT} can be seen as a special case
of the following problem for branching processes:
\begin{problem}\label{P2}
Find a
characterization of the \CMJbp{es}
such that for suitable
characteristics $\phi$ and $\psi$, and with $\tau(n)$ as in \refS{S:BP},
$\zgf_{\tau(n)}$ is asymptotically normal as \ntoo.  
\end{problem}

\refP{P2} considers a stopped branching process. It is closely related to
the following problem for fixed times:
\begin{problem}\label{P3}
Find a
characterization of the \CMJbp{es} 
such that for suitable
characteristics $\phi$,
$(\zgf_{t}-m_\gf Z_t)/\sqrt{Z_t}$ is asymptotically normal as \ttoo.  
\end{problem}

This problem has been studied, at least for some branching processes.
\citet[Theorems VIII.3.1 and VIII.12.1]{AsmussenH} give a central limit
theorem of this type for a somewhat different class of branching processes,
\viz{} multi-type Markov branching processes. In principle, 
as pointed out in \cite{AsmussenH},
this class
includes the \CMJbp{es} studied here (with the ``type'' taken as the
entire previous history of the individual), but the resulting type space is
typically so large that the technical conditions in \cite{AsmussenH} are not
satisfied. (In particular, ``Condition (M)''.) 
However, for the \CMJbp{es} used in the examples above, with life histories
that are composed of one or several independent waiting times, 
the process can be described using a finite dimensional type space. It seems
that the results in \cite{AsmussenH} then apply and can be translated to
conditions for these \CMJbp{es}. 
Presumably, the same conditions then apply to Problems \ref{PCLT} and
\ref{P2} too, 
but that remains an open problem.

Moreover, it seems likely that the same type of conditions apply to much
more general \CMJbp{es}. The conditions in \cite{AsmussenH} are stated in
terms of eigenvalues of a certain  operator $A$ defined by the process, and the
result says (under some technical assumptions) that 
if $\gl_1$ is the largest eigenvalue of $A$ (this eigenvalue is real), 
then we have asymptotic
normality if every other eigenvalue $\gl$ has $\Re\gl\le\frac12\gl_1$,
but (typically, at least) not otherwise.
The same condition also appears in the different but closely related context of
generalized \Polya{} urns, see \cite{SJ154}. We conjecture that this
condition (in a suitable form) applies to rather general \CMJbp{es}.
This has been proved in the discrete-time case \cite{SJbranching},
but the continuous-time case relevant here is more challenging.

\begin{remark}
In contrast, 
for conditioned Galton--Watson trees 
(see \refR{RcGWt}),
asymptotic normality for fringe trees holds in general, see \cite{SJ285}.
(Such trees are not treated in the present paper.)
\end{remark}

\section*{Acknowledgements}
We thank Peter Jagers and Olle Nerman for helpful comments
on branching processes
and 
Shankar Bhamidi
for suggesting the use of the limiting sin-tree to compute the constant in
\refT{TMC2}.

\appendix

\section{Birth processes} 
\label{AA}

Recall that a \emph{pure birth process} with birth rates $\gl_k\ge0$
is a continuous-time stochastic process $(\cX_t)_{t\ge0}$, taking values in
$\bbZgeo$ and  with some given
initial value $\cX_0=x_0$ (usually 0 or 1), which is Markov and such that
when $\cX_t=k$, the process jumps to $k+1$ with rate $\gl_k$; equivalently,
the process jumps from $k$ to $k+1$ after an exponential waiting time
$Y_k\sim\Exp(\gl_k)$, and all these waiting times are independent.
(We allow $\gl_k=0$; we then let $Y_k=\infty$.)
Thus, 
the time the process jumps to $k$ is
\begin{equation}\label{sk}
S_k:=\sum_{i=x_0}^{k-1} Y_i,
\qquad k\ge x_0+1,
\end{equation}
and, for $k\ge x_0$, with $S_{x_0}:=0$,
\begin{equation}
  \label{wc}
\cX(T)=k \iff S_k \le t<S_{k+1}.
\end{equation}

\begin{example}\label{EA1}
In the branching process corresponding to a general preferential
attachment tree in \refE{Epref},
the children of an individual 
are born at the jumps of a pure birth process (with birth rates $\gl_k=w_k$);
the birth process $\cX_t$ is thus the number of children at time $t$.
\end{example}

\begin{example}\label{EPoisson}
  As a special case (see \refE{ERRT}), the counting process $\Xi[0,t]$
  corresponding to a Poisson process $\Xi$ with intensity 1
is a pure birth process with constant intensity $\gl_k=1$, started at 0.
More generally, a pure birth process with constant birth rate $\gl_k=\gl$,
started at 0, is
a Poisson process with intensity $\gl$.
(We have earlier defined a Poisson process as a point process; the
corresponding pure birth process considered here
is also called a Poisson process. There is an obvious equivalence between
the two points of view, 
and hardly any risk of confusion.)
\end{example}

\begin{example}\label{EYule1}
The \emph{Yule process} in \eg{} \refE{ERRT} is a pure birth process with
birth rates $\gl_k=k$, started at 1.
More generally, for a \CMJ{} process where each individual gets children
according to a Poisson process with intensity $\ga>0$, 
the total size (number of individuals $Z_t$) 
is a pure birth process with birth rates $\gl_k=\ga k$, started at 1; 
we call this a \emph{Yule process with rate $\ga>0$}.
(It evidently differs from the standard case $\ga=1$ only by a simple change
of time.) 

If $(\cY_t)$ is a Yule process with rate $\ga$, it  thus follows from
\eqref{olle1} that 
\begin{equation}\label{yule}
e^{-\ga t}  \cY_t \to W
\end{equation}
for some random variable $W$. (Note that the intensity measure $\mu$ is 
$\ga\ddx t$, so \eqref{malthus} holds and the Malthusian parameter $\ga$
equals the rate $\ga$.) It is easy to verify \eqref{xlogx} and thus $W>0$
\as; in fact, it follows from \eqref{yulet} below that $W\sim\Exp(1)$. (This
is one of the few cases with a simple explicit distribution for the limit $W$.)
\end{example}

We state a general result on stopping a pure  birth process by an
exponential clock $\tau$.

\begin{theorem}\label{TA1}
  Let $(\cX_t)$ be a pure birth process with birth rates $\gl_k\ge0$, 
started at $\cX_0=0$.
Furthermore, let $\tau\sim\Exp(\ga)$ be independent of the birth process.
Then $X:=\cX_{\tau}$ has the distribution
\begin{equation}\label{anna}
  \P(X=k)=\frac{\ga}{\gl_k+\ga} \prod_{i=0}^{k-1} \frac{\gl_i}{\gl_i+\ga}
=\frac{\ga\prod_{i=0}^{k-1}\gl_i}{\prod_{i=0}^{k} (\gl_i+\ga)},
\qquad k\ge0.
\end{equation}
\end{theorem}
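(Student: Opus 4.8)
\textbf{Proof proposal for \refT{TA1}.}

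The plan is to compute $\P(X=k)=\P(\cX_\tau=k)$ directly by conditioning on the value of $\cX_\tau$ in terms of the jump times. By \eqref{wc}, $\cX_\tau=k$ holds if and only if $S_k\le\tau<S_{k+1}$, where $S_k=\sum_{i=0}^{k-1}Y_i$ is the time of the $k$:th jump and $Y_i\sim\Exp(\gl_i)$ are independent (with the convention $S_0=0$, and $Y_i=\infty$ when $\gl_i=0$, so that this case is automatically handled). Since $\tau\sim\Exp(\ga)$ is independent of the birth process, I would condition on the $Y_i$'s: the event $\set{S_k\le\tau<S_{k+1}}$ has conditional probability $\P(\tau\ge S_k\mid (Y_i))-\P(\tau\ge S_{k+1}\mid (Y_i))$, and by the memoryless property this equals $e^{-\ga S_k}-e^{-\ga S_{k+1}}=e^{-\ga S_k}\bigpar{1-e^{-\ga Y_k}}$. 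Taking expectations and using independence of the $Y_i$, this is
\begin{equation*}
\P(X=k)=\E\bigpar{e^{-\ga S_k}}\cdot\E\bigpar{1-e^{-\ga Y_k}}
=\Bigpar{\prod_{i=0}^{k-1}\E e^{-\ga Y_i}}\Bigpar{1-\E e^{-\ga Y_k}}.
\end{equation*}

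The remaining step is the elementary Laplace-transform computation $\E e^{-\ga Y_i}=\gl_i/(\gl_i+\ga)$ for $Y_i\sim\Exp(\gl_i)$ (valid also when $\gl_i=0$, giving $\E e^{-\ga Y_i}=0$), whence $1-\E e^{-\ga Y_k}=\ga/(\gl_k+\ga)$. Substituting yields
\begin{equation*}
\P(X=k)=\frac{\ga}{\gl_k+\ga}\prod_{i=0}^{k-1}\frac{\gl_i}{\gl_i+\ga},
\end{equation*}
which is \eqref{anna}; the second form in \eqref{anna} is just a rearrangement. As a sanity check one could note that the probabilities telescope: $\sum_{k\ge0}\P(X=k)=\sum_{k\ge0}\bigpar{\E e^{-\ga S_k}-\E e^{-\ga S_{k+1}}}$, which sums to $1$ provided $\E e^{-\ga S_k}\to0$, i.e.\ $S_k\to\infty$ a.s.; if instead $S_k$ stays bounded (the explosive case, $\sum 1/\gl_i<\infty$) there is a defect corresponding to $\cX_\tau=\infty$ with positive probability, so implicitly one assumes non-explosion, or reads \eqref{anna} as the distribution on $\bbZgeo$ possibly with mass at $+\infty$.

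There is no real obstacle here — the only point requiring a little care is the bookkeeping with $\gl_k=0$ (so that some $Y_k=\infty$ and the process may get stuck), which is handled by the conventions above since $e^{-\ga\cdot\infty}=0$ and the product/quotient formulas remain consistent. I would phrase the conditioning argument cleanly via the tower property rather than writing out the joint density of $(Y_0,\dots,Y_k)$, to keep it short. If one prefers, an alternative is a one-line induction on $k$ using that, started afresh after the first jump, the process restarted at state $1$ has the same structure with $\gl_i$ shifted; but the direct computation above is the most transparent.
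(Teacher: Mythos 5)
Your argument is correct and coincides essentially with the paper's second proof of Theorem~\ref{TA1}, which likewise reduces the claim to $\P(X\ge k)=\E e^{-\ga S_k}=\prod_{i=0}^{k-1}\E e^{-\ga Y_i}$ via the memoryless property and independence; you merely take the difference inside the expectation rather than differencing $\P(X\ge k)-\P(X\ge k+1)$ afterwards. (The paper also offers a second, purely probabilistic route via competing exponential clocks and induction, and your closing remarks on $\gl_k=0$ and the explosive case are consistent with the paper's conventions.)
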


We give two different proofs (both simple) to illustrate different ways of
arguing with exponential random variables; the first proof is more direct
probabilistic and the second more analytic.
(The second proof is essentially the same as \eqref{prefD}--\eqref{prefD=}
given in \refE{Epref}; it was there given for a special case but the
argument is general, as is shown below.)

\begin{proof}[First proof of \refT{TA1}]
Regard $\tau$ as an exponential random clock that strikes and stops the process.
When $\cX_t=k$ and $\tau>t$, so the process has not yet stopped,
the next event that happens is either that
the clock strikes (rate $\ga$), and then $X=k$, 
or that $\cX_t$ jumps to $k+1$ (rate $\gl_k$), and then $X>k$.
Consequently, 
\begin{align}
  \P(X=k\mid X\ge k) &=\frac{\ga}{\ga+\gl_k}
,\label{anna1}
\\
  \P(X\ge k+1\mid X\ge k) &=\frac{\gl_k}{\ga+\gl_k}
.\label{anna2}
\end{align}
By \eqref{anna2} and induction,
\begin{equation}
  \P(X\ge k)= \prod_{i=0}^{k-1} \frac{\gl_i}{\gl_i+\ga},
\end{equation}
and the result follows by \eqref{anna1}.
\end{proof}

\begin{proof}[Second proof of \refT{TA1}]
With the notation above, we have by \eqref{wc}
\begin{equation}
\P(X\ge k)=
\P(\cX_\tau\ge k)=
\P(S_k\le\tau).
\end{equation}
Conditioning on $S_k$, we have 
$\P(S_k\le\tau\mid S_k)=e^{-\ga S_k}$, and taking the expectation we find,
using \eqref{sk} and independence of $Y_1,\dots,Y_{k-1}$,
\begin{equation}
\P(X\ge k)
=\E e^{-\ga S_k}
=\prod_{i=0}^{k-1}\E e^{-\ga Y_i}
=\prod_{i=0}^{k-1}\frac{\gl_i}{\gl_i+\ga}.
\end{equation}
The result follows by taking the difference $\P(X\ge k)-\P(X\ge k+1)$.
\end{proof}

We consider in particular the linear case, when
\begin{equation}\label{linearbirth}
  \gl_k=\chi k+\rho,
\end{equation}
for some constants $\chi$ and $\rho$.
(As in \refE{Elinear+-}, only the ration $\chi/\rho$ matters, up to a change
of time scale, so we might assume $\chi\in\set{1,0,-1}$, but we shall not
require this.)
Note that Examples \ref{EPoisson} and \ref{EYule1} both are of this type,
with $(\chi,\rho)=$ $(0,\rho)$ and $(1,0)$ (or $(\ga,0)$), respectively.

Note that $\rho=\gl_0>0$, while $\chi$ can be any real number.
As in \refE{Elinear+-}, if $\chi<0$, we have to assume that $\rho=m|\chi|$
for a non-negative integer $m$ (and $\cX_0\le m$); then $\gl_m=0$ and the
process stops when it reaches $m$, so the values $\gl_k$, $k>m$, can be
ignored.

\begin{theorem}\label{TAlin}
  Let $(\cX_t)$ be a pure birth process with birth rates $\gl_k=\chi k+\rho$
as in \eqref{linearbirth}, for some $\chi$ and $\rho$, started at $\cX_0=0$.
Furthermore, let $\tau\sim\Exp(\ga)$ be independent of the birth process.
Then $X:=\cX_{\tau}$ has the distribution, when $\chi\neq0$,
\begin{equation}\label{talin}
  \begin{split}
  \P(X=k)&
=\frac{\ga\prod_{i=0}^{k-1}(\chi i+\rho)}{\prod_{i=0}^{k} (\chi i+\rho+\ga)}
\\&=
\frac{\ga}{\rho+\ga}\cdot\frac{\rise{\rho/\chi}k}{\rise{(\rho+\ga)/\chi+1}k}
,\qquad k\ge0.    
  \end{split}
\end{equation}
Thus, using the notation in \refD{DHG}, 
$X$ has the 
hypergeometric distribution $\HG\bigpar{\rho/\chi,1;(\rho+\ga)/\chi+1}$.

In the special case $\chi=0$ (so $\gl_k=\rho$ is constant),
we have instead
\begin{equation}\label{talin0}
  \begin{split}
  \P(X=k)&
=
  \frac{\ga}{\rho+\ga}\parfrac{\rho}{\rho+\ga}^{k}
,\qquad k\ge0.    
  \end{split}
\end{equation}
Thus, in this case
$X$ has the geometric distribution $\Geo\bigpar{\ga/(\ga+\rho)}$.
\end{theorem}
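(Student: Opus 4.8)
The plan is to apply \refT{TA1} directly with the linear birth rates $\gl_k = \chi k + \rho$, and then to massage the resulting product into the rising-factorial form that identifies the distribution as hypergeometric. First I would invoke \refT{TA1}, which gives
\begin{equation*}
  \P(X=k) = \frac{\ga \prod_{i=0}^{k-1}(\chi i + \rho)}{\prod_{i=0}^{k}(\chi i + \rho + \ga)},
  \qquad k\ge0,
\end{equation*}
establishing the first displayed equality in \eqref{talin} with no further work. The remaining content is purely algebraic: rewrite the products in terms of $\rise{\cdot}{\cdot}$.

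For the case $\chi\neq0$, I would factor out $\chi$ from each term: $\prod_{i=0}^{k-1}(\chi i + \rho) = \chi^k \prod_{i=0}^{k-1}(i + \rho/\chi) = \chi^k \rise{\rho/\chi}{k}$, using the definition \eqref{rise} of the rising factorial (note $\rise{x}{k} = x(x+1)\dotsm(x+k-1)$, and the product here runs $i=0,\dots,k-1$, i.e.\ over $\rho/\chi, \rho/\chi+1, \dots, \rho/\chi+k-1$). Similarly the denominator $\prod_{i=0}^{k}(\chi i + \rho + \ga) = \chi^{k+1}\prod_{i=0}^{k}(i + (\rho+\ga)/\chi)$. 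I would peel off the $i=0$ factor, which is $(\rho+\ga)/\chi$, leaving $\prod_{i=1}^{k}(i + (\rho+\ga)/\chi) = \prod_{j=0}^{k-1}(j+1+(\rho+\ga)/\chi) = \rise{(\rho+\ga)/\chi + 1}{k}$. Putting these together, the factors of $\chi$ give $\chi^k / \chi^{k+1} = 1/\chi$, which combines with the peeled-off denominator factor $(\rho+\ga)/\chi$ to give $\chi\qw \cdot \chi/(\rho+\ga) = 1/(\rho+\ga)$; multiplying by the $\ga$ in the numerator yields the prefactor $\ga/(\rho+\ga)$, and we obtain exactly the second line of \eqref{talin}. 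Comparing with \refD{DHG} (the definition of $\HG(a,b;c)$, whose probability function by \eqref{hg} is proportional to $\rise{a}{k}\rise{b}{k}/\rise{c}{k} \cdot 1/k!$, or in the $b=1$ case simply $\rise{a}{k}/\rise{c}{k} \cdot k!/k! $, i.e.\ $\rise{a}{k}/\rise{c-1}{k+1}\cdot(c-1)$ up to normalization), identifies $X \sim \HG(\rho/\chi, 1; (\rho+\ga)/\chi + 1)$; the normalization constant is automatic since \eqref{talin} is a probability distribution by construction. One should remark on the sign conventions when $\chi<0$ (so $\rho/\chi<0$): here $\rho = m|\chi|$ for an integer $m$, the product terminates at $k=m$, and $\rise{\rho/\chi}{k} = \rise{-m}{k}$ vanishes for $k>m$, consistent with \refR{RHG}.

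For the degenerate case $\chi=0$, all birth rates equal $\rho$, so \refT{TA1} gives $\P(X=k) = \ga \rho^k/(\rho+\ga)^{k+1} = \frac{\ga}{\rho+\ga}\bigpar{\frac{\rho}{\rho+\ga}}^k$, which is the geometric distribution $\Geo(\ga/(\ga+\rho))$ by the definition of $\Geo$ in \refS{Snotation} (probability function $p(1-p)^k$ with $p = \ga/(\ga+\rho)$, $1-p = \rho/(\ga+\rho)$). This requires no real argument beyond substitution. I do not anticipate any genuine obstacle here: the whole statement is an immediate corollary of \refT{TA1} together with bookkeeping of rising factorials; the only point demanding a little care is getting the index shifts in the denominator right (the extra factor $i=0$, giving $c = (\rho+\ga)/\chi + 1$ rather than $(\rho+\ga)/\chi$) and checking the prefactor telescopes correctly to $\ga/(\rho+\ga)$.
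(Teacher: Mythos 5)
Your proposal is correct and is exactly the paper's argument: the paper proves \refT{TAlin} with the single line ``An immediate corollary of \refT{TA1},'' and your write-up simply supplies the rising-factorial bookkeeping (correctly, including the index shift giving $c=(\rho+\ga)/\chi+1$ and the prefactor $\ga/(\rho+\ga)$, which is consistent with \eqref{waring}) that the paper leaves implicit.
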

\begin{proof}
  An immediate corollary of \refT{TA1}.
\end{proof}

In the linear case, it is also easy to find the distribution of $\cX_t$
for a fixed
$t$. We begin with the expectation.

\begin{theorem}\label{TAext}
 Let $(\cX_t)$ be a pure birth process with birth rates $\gl_k=\chi k+\rho$
as in \eqref{linearbirth}, for some $\chi$ and $\rho$, started at $\cX_0=x_0$.
Then, for every $t\ge0$,  
\begin{equation}\label{Ext}
  \E\cX_t=
  \begin{cases}
 \Bigpar{\frac{\rho}{\chi}+x_0} e^{\chi t}-\frac{\rho}{\chi} & \chi\neq0,
\\
\rho t+x_0, &\chi=0.
  \end{cases}
\end{equation}
Consequently, for every $\chi$ and every $t\ge0$,
\begin{equation}\label{ddExt}
  \frac{\ddx}{\ddx t} \E\cX_t = (\rho+\chi x_0) e^{\chi t}.
\end{equation}
\end{theorem}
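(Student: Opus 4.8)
The plan is to compute $\E\cX_t$ by setting up and solving a linear ordinary differential equation for the expectation, and then obtain \eqref{ddExt} by differentiating \eqref{Ext}. First I would let $f(t):=\E\cX_t$ and derive the differential equation it satisfies. Working from the Markov/birth structure: over an infinitesimal interval $[t,t+h]$, the process jumps from its current state $k$ to $k+1$ with probability $\gl_k h+o(h)=(\chi k+\rho)h+o(h)$, and otherwise stays put. Hence $\E\bigpar{\cX_{t+h}-\cX_t\mid \cX_t}=(\chi\cX_t+\rho)h+o(h)$, and taking expectations and letting $h\to0$ gives $f'(t)=\chi f(t)+\rho$, with initial condition $f(0)=x_0$. (To make this rigorous without hand-waving about $o(h)$, one can instead use the forward Kolmogorov equations for $p_k(t):=\P(\cX_t=k)$, namely $p_k'(t)=\gl_{k-1}p_{k-1}(t)-\gl_k p_k(t)$, multiply by $k$, sum over $k$, and use $\sum_k k\gl_{k-1}p_{k-1}-\sum_k k\gl_k p_k=\sum_j \gl_j p_j=\E\gl_{\cX_t}=\chi f(t)+\rho$; one must check that the sums converge and the interchange is valid, which follows since $\E\cX_t<\infty$ for all finite $t$, a standard fact for linear birth rates.)

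Next I would solve this first-order linear ODE. For $\chi\neq0$, the equation $f'=\chi f+\rho$ has general solution $f(t)=Ce^{\chi t}-\rho/\chi$; imposing $f(0)=x_0$ gives $C=\rho/\chi+x_0$, yielding $f(t)=\bigpar{\rho/\chi+x_0}e^{\chi t}-\rho/\chi$, which is the first case of \eqref{Ext}. For $\chi=0$ the equation is $f'=\rho$, so $f(t)=\rho t+x_0$, the second case. Finally, \eqref{ddExt} follows by direct differentiation: when $\chi\neq0$, $\ddd{t}f(t)=\chi\bigpar{\rho/\chi+x_0}e^{\chi t}=(\rho+\chi x_0)e^{\chi t}$; when $\chi=0$, $\ddd{t}f(t)=\rho=(\rho+\chi x_0)e^{\chi t}$ since $e^{\chi t}=1$. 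So the two cases unify into the single formula \eqref{ddExt}.

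The only genuine obstacle is the rigorous justification that $\E\cX_t<\infty$ for all finite $t$ and that the manipulation of the (infinite) Kolmogorov system is legitimate — i.e., ruling out explosion and validating the term-by-term operations. This is where I would invoke the standing framework: a pure birth process with rates $\gl_k=\chi k+\rho$ grows at most linearly in expectation (it is dominated by, or comparable to, a Yule-type process, see \refE{EYule1}), so it is non-explosive and has finite moments at every finite time; alternatively, when $\chi<0$ the process has a finite state space $\set{x_0,\dots,m}$ and there is nothing to check. Given this, the ODE derivation and its solution are routine, and the differentiation giving \eqref{ddExt} is immediate. An even slicker route, avoiding the ODE entirely, would be to use the known distribution of $\cX_t$ in the linear case (computable by the same generating-function techniques underlying \refT{TAlin}, or via the representation of $\cX_t$ as a sum of independent geometric/negative-binomial variables), but the ODE approach is the most transparent and self-contained, so that is what I would present.
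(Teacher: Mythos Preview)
Your proposal is correct and follows essentially the same route as the paper: derive the linear ODE $f'(t)=\chi f(t)+\rho$ for $f(t)=\E\cX_t$, solve it with initial value $f(0)=x_0$, and read off \eqref{ddExt}. The only cosmetic difference is in the rigor step: where you propose justifying the ODE via the forward Kolmogorov equations, the paper instead remarks that $\cX_t-\int_0^t(\chi\cX_s+\rho)\dd s$ is a martingale and takes expectations.
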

\begin{proof}
Since $\cX_t$ grows with a rate that is a linear function $\chi k+\rho$ of
the current state $k$, 
its expectation $\E\cX_t$ grows at rate $\chi\E\cX_t+\rho$, \ie,
\begin{equation}\label{ddext}
  \frac{\ddx}{\ddx t} \E\cX_t = \chi\E\cX_t+\rho.
\end{equation}
This differential equation,
with the initial value $\E\cX_0=x_0$, has the solution \eqref{Ext}.
(The reader that finds this argument too informal may note that
$\cX_t-\int_0^t(\chi\cX_s+\rho)\dd s$ is a martingale, and take the
expectation to obtain \eqref{ddext}.) 
\end{proof}

We give the distribution of $\cX_t$
only for the case $x_0=0$, leaving the general case
to the reader.
\begin{theorem}\label{TApxt}
 Let $(\cX_t)$ be a pure birth process with birth rates $\gl_k=\chi k+\rho$
as in \eqref{linearbirth}, for some $\chi\neq0$ and $\rho$, started at
$\cX_0=0$. 
Then, for all $t\ge0$ and $k\ge0$,  
\begin{equation}\label{Pxt}
\P(\cX_t=k)=
\frac{\rise{\rho/\chi}k}{k!}(1-e^{-\chi t})^k e^{-\rho t}
=
(-1)^k\binom{-\rho/\chi}{k}(1-e^{-\chi t})^k e^{-\rho t}.
\end{equation}
Equivalently, 
the \pgf{} is given by
\begin{equation}\label{pgfExt}
\E z^{\cX_t}=e^{-\rho t}\bigpar{1-z(1-e^{-\chi t})}^{-\rho/\chi}
=\parfrac{e^{-\chi t}}{1-z(1-e^{-\chi t})}^{\rho/\chi}.
\end{equation}

In the case $\chi>0$, this says
$\cX_t\sim\NBin(\rho/\chi,e^{-\chi t})$,
a negative binomial distribution.

In the case $\chi<0$, when necessarily $\rho=m|\chi|$ for some positive
integer $m$, \eqref{Pxt} can be written 
\begin{equation}\label{Pxt-}
\P(\cX_t=k)=
\binom{m}{k}(1-e^{-|\chi| t})^k e^{-(m-k)|\chi| t},
\end{equation}
and thus
$\cX_t\sim\Bin(\rho/|\chi|,1-e^{-|\chi| t})$,
a binomial distribution.

In the case $\chi=0$, we have instead
\begin{equation}\label{Pxt0}
\P(\cX_t=k)=
\frac{(\rho t)^k}{k!}e^{-\rho t}
\end{equation}
and
\begin{equation}\label{pgfExt0}
\E z^{\cX_t}=e^{(z-1)\rho t}.
\end{equation}
Thus $\cX_t\sim\Po(\rho t)$, a Poisson distribution, when $\chi=0$.
\end{theorem}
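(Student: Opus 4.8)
The final statement is Theorem~\ref{TApxt}, which describes the full distribution of a linear pure birth process $(\cX_t)$ at a fixed time $t$, started at $\cX_0=0$.

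The plan is to set up the forward Kolmogorov (master) equations for $p_k(t):=\P(\cX_t=k)$ and solve them via a generating function. First I would write, for the birth rates $\gl_k=\chi k+\rho$,
\begin{align}
  p_0'(t)&=-\rho\, p_0(t),\\
  p_k'(t)&=\gl_{k-1}p_{k-1}(t)-\gl_k p_k(t)
          =(\chi(k-1)+\rho)p_{k-1}(t)-(\chi k+\rho)p_k(t),\qquad k\ge1,
\end{align}
with initial condition $p_k(0)=\ett{k=0}$. Introducing the \pgf{} $G(t,z):=\E z^{\cX_t}=\sum_{k\ge0}p_k(t)z^k$, multiplying the $k$th equation by $z^k$ and summing gives a first-order linear PDE
\begin{equation*}
  \partial_t G(t,z)=\chi z(z-1)\,\partial_z G(t,z)+\rho(z-1)G(t,z),
\end{equation*}
with $G(0,z)=1$. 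This is the main computational step; it is routine but is where one must be careful. I would solve it by the method of characteristics: along curves $\dot z=-\chi z(z-1)$ the function $G$ satisfies an ODE in $t$ with the factor $\rho(z-1)$, which integrates to the stated closed form. For $\chi\ne0$ one finds $G(t,z)=e^{-\rho t}\bigl(1-z(1-e^{-\chi t})\bigr)^{-\rho/\chi}$, matching \eqref{pgfExt}; expanding this via the generalized binomial series yields \eqref{Pxt}, and rewriting the binomial coefficient gives the $\NBin$ form when $\chi>0$ and, when $\chi=-|\chi|<0$ with $\rho=m|\chi|$, the finite sum \eqref{Pxt-} with the $\Bin(m,1-e^{-|\chi|t})$ identification. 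For $\chi=0$ the PDE reduces to $\partial_t G=\rho(z-1)G$, giving $G(t,z)=e^{\rho t(z-1)}$ and hence the Poisson distribution \eqref{Pxt0}--\eqref{pgfExt0}.

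An alternative (perhaps cleaner) route I would mention as a sanity check: verify directly that the proposed $G(t,z)$ satisfies the PDE and the initial condition, which by uniqueness of solutions of this linear transport equation suffices; or, for $\chi>0$, recognize the process as the standard construction of a negative binomial / Yule-type process (the case $\rho/\chi$ integer is a sum of $\rho/\chi$ independent shifted-geometric-valued Yule processes, and the general case follows by analytic continuation in $\rho/\chi$, or by the explicit recursion). The consistency with \refT{TAext} (differentiating $\E z^{\cX_t}$ at $z=1$ recovers \eqref{Ext} with $x_0=0$) gives a further check.

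The main obstacle is purely bookkeeping: keeping the signs and the two cases ($\chi>0$ versus $\chi<0$, the latter forcing $\rho=m|\chi|$ and a terminating series) straight while converting between the forms $\rise{\rho/\chi}k/k!$, $(-1)^k\binom{-\rho/\chi}{k}$ and $\binom{m}{k}$. There is no genuine difficulty beyond care with the characteristic ODE $\dot z=-\chi z(z-1)$, whose solution $z\mapsto z/(z-(z-1)e^{\chi t})$ (a Möbius map fixing $0$ and $1$) must be inverted correctly; I expect this to be the only place where an error could creep in.
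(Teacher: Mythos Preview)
Your proposal is correct and closely parallels the paper's proof. Both start from the same Kolmogorov forward equations $p_k'(t)=\gl_{k-1}p_{k-1}(t)-\gl_k p_k(t)$ with the same initial data; the difference is only in which of two equivalent routes is treated as primary. The paper's main argument is the shorter one you list as a sanity check: simply verify that the stated formulas \eqref{Pxt} and \eqref{Pxt0} satisfy the forward equations, and invoke uniqueness. The PDE for $G(t,z)$ that you derive and solve by characteristics is mentioned by the paper only parenthetically as an alternative. Your derivation of the PDE is correct, and the characteristic method works as you describe, but the direct verification avoids the bookkeeping with the M\"obius map that you yourself flag as the likely source of error. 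In short: same ingredients, with the roles of ``main proof'' and ``alternative'' swapped.
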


Note the well-known fact 
for the Poisson process in \refE{EPoisson},
we thus have 
\eqref{Pxt0}, yielding
a Poisson distribution $\Po(\rho t)$.

\begin{proof}
  With $p_k(t):=\P(\cX_t=k)$, we have the initial values $p_0(0)=1$,
  $p_k(0)=0$, $k\ge1$, and the 
Kolmogorov forward equation (with $p_{-1}:=0$)
\begin{equation}
  p'_k(t)=\gl_{k-1}p_{k-1}(t)-\gl_kp_{k}(t)
=(\chi(k-1)+\rho)p_{k-1}(t)-(\chi k+\rho)p_{k}(t).
\end{equation}
This system of differential
equations determines $p_k(t)$ uniquely for $k=0,1,\dots$,
and it is easily verified that the functions in \eqref{Pxt} and \eqref{Pxt0}
give a solution. The probability generating functions \eqref{pgfExt} and
\eqref{pgfExt0} follow by summation. (Alternatively, one may derive and
solve a partial differential for $g(z,t):=\E z^{\cX_t}$.)

In the case $\chi<0$ and $\rho=m|\chi|=-m\chi$,
\eqref{Pxt} can be written as
\begin{equation}
\P(\cX_t=k)=
\binom{m}{k}(e^{|\chi| t}-1)^k e^{-m|\chi| t}
\end{equation}
and \eqref{Pxt-} follows.
\end{proof}

\begin{example}\label{EYuleA}
  Let $\cY_t$ be a Yule process, see \refE{EYule1}. This birth process starts
  at $\cY_0=1$, so we consider instead $\cY_t-1$ which is a pure birth process
  started at 0, with birth rates $\gl_k=k+1$, $k\ge0$.
\refT{TApxt} applies with $\chi=\rho=1$ and yields 
the well-known result
$\cY_t-1\sim\NBin(1,e^{-t})=\Geo(e^{-t})$, and thus 
\begin{equation}\label{yulet}
\cY_t\sim\Gei(e^{-t}),
\qquad t\ge0.  
\end{equation}
For the expectation we get from \eqref{yulet}, or directly from \refT{TAext}
with 
$\chi=1$, $\rho=0$ and $x_0=1$, 
$\E \cY_t=e^t$.

Similarly, \refT{TAlin} applied to $\cY_t-1$ shows that if $\tau\sim\Exp(\ga)$
is independent of the Yule process, then 
\begin{equation}\label{yuletau}
\cY_\tau-1\sim\HG(1,1;\ga+2).  
\end{equation}
\end{example}

\section{Hypergeometric functions and distributions}\label{ASShyper}

Recall that the hypergeometric function $\FF(a,b;c;z)$ 
(also denoted by $\FFx(a,b;c;z)$) 
is defined by the sum
\begin{equation}
  \label{hyper}
\FF(a,b;c;z)
:=\sum_{n=0}^\infty \frac{\rise{a}n\rise{b}n}{\rise{c}n}\cdot\frac{z^n}{n!},
\end{equation}
see \eg{} \cite[\S15.2]{NIST} or \cite[\S5.5]{CM}.
In general, the parameters $a,b,c$ can be arbitrary complex numbers (except
that $c=0,-1,-2,\dots$ is allowed only in special cases), and $z$ may be a
complex number, but we are here only interested in real $a,b,c$ and $z$.
If $a\in\bbZleo$ or $b\in\bbZleo$, then the hypergeometric terms in
\eqref{hyper} vanish for $n>|a|$ or $n>|b|$, respectively, so
$\FF(a,b;c;z)$ is a polynomial; otherwise
the series \eqref{hyper}
converges for $|z|<1$ and diverges for $|z|>1$.
(The hypergeometric
function $\FF(a,b;c;z)$ extends by analytic continuation to
$z\in\bbC\setminus[1,\infty)$, but we have no use for this extension here.)

The hypergeometric series \eqref{hyper} 
converges for $z=1$ if and only if
$a\in\bbZleo$, $b\in\bbZleo$ (in these cases the sum is finite, as said
above)
or $\Re(c-a-b)>0$, and then its sum is, 
as shown by \citet{Gauss},
see also 
\cite[(15.4.20), (15.4.24)]{NIST}, 
\begin{equation}\label{gauss}
  \FF(a,b;c;1)=\frac{\gG(c)\gG(c-a-b)}{\gG(c-a)\gG(c-b)}.
\end{equation}

We say that a random variable has a \emph{(general) hypergeometric distibution}
if its \pgf{} is, up to a normalization constant, a hypergeometric function
$\FF(a,b;c;z)$, for some $a,b,c$.
We denote such a distribution by $\HG(a,b;c)$. (There seems
to be no standard notation.)
Some such distributions appear above in the study of random trees, and we
give here some general properties and examples of such
distributions, as a background and for easy reference.
See further \eg{} \citet[Chapter 6]{Johnson} and the references given there.

We repeat the definition somewhat more formally:
\begin{definition}
  \label{DHG}
The \emph{general hypergeometric distribution} $\HG(a,b;c)$ is the
distribution of a non-negative integer-valued random variable $X$ such that
\begin{equation}
  \label{hg}
\P(X=k)=C\frac{\rise{a}k\rise{b}k}{\rise{c}k\,k!}, \qquad k\ge0,
\end{equation}
for some constant $C$. 
Equivalently, the \pgf{} is
\begin{equation}\label{hgpgf}
  \E z^X = C \FF(a,b;c;z).
\end{equation}
\end{definition}

By \eqref{hgpgf},
the normalizing constant $C$ is necessarily $1/\FF(a,b;c;1)$
and thus, by  \eqref{gauss}, 
\begin{equation}\label{hgC}
  C=\frac{1}{\FF(a,b;c;1)}=\frac{\gG(c-a)\gG(c-b)}{\gG(c)\gG(c-a-b)}.
\end{equation}
Furthermore, the \pgf{} \eqref{hgpgf} is 
\begin{equation}
  \E z^X=\frac{\FF(a,b;c;z)}{\FF(a,b;c;1)}.
\end{equation}
Note the symmetry $\HG(a,b;c)=\HG(b,a;c)$.

\begin{remark}\label{RHG}
The definition \eqref{hg} is equivalent to
  \begin{equation}
  \frac{\P(X=k+1)}{\P(X=k)}
=\frac{(k+a)(k+b)}{(k+c)(k+1)}, \qquad k\ge0,	
  \end{equation}
with a suitable interpretation if $\P(X=k)=0$.
\end{remark}

A hypergeometric distribution $\HG(a,b;c)$ does not exist for all real
parameters $a,b,c$. We see from \eqref{hg} and \eqref{hgC} that a necessary
and sufficient condition for the existence of $\HG(a,b;c)$ is that
$\rise{a}k\rise{b}k/\rise{c}k\ge 0$ and that $\FF(a,b;c;1)<\infty$.
We do not give precise necessary and sufficient conditions for this here,
see \eg{} \cite{Johnson},
but we note the following cases where $\HG(a,b;c)$ exists;
these comprise
all cases of interest to us (and to others as far as we know), if we recall
that $a$ and $b$ can be interchanged.
\begin{romenumerate}
\item \label{hg+++}
$a,b> 0$, $c>a+b$.
The distribution has support $\bbZgeo$.
(Examples are \eqref{pdlin} and \refT{TAlin} for $\chi>0$.)
\item \label{hg-+-}
$a\in\bbZleo$, $b>0$, $c<a+1$.
The distribution has support \set{0,\dots,|a|}.
(Examples are \eqref{pd-} and \refT{TAlin} for $\chi<0$.) 
\item \label{hg--+}
$a\in\bbZleo$, $b<a+1$,  $c>0$.
The distribution has support \set{0,\dots,|a|}.
(One example is \refE{EHGclassical} below. Typically, as there, we have both
$a,b\in\bbZleo$ and $c>0$; then, if we do not assume $b\le a$, the support is
\set{0,\dots,\min(|a|,|b|)}.)
\end{romenumerate}
 
\begin{remark}\label{RHG0}
  As said above, the hypergeometric function \eqref{hyper} in general
does not exist
  when $c\in \bbZleo$.
However, it is still possible to define $\HG(a,b;c)$ is some cases.
We assume that also $a\in\bbZleo$. (Of course, the case $b\in\bbZleo$ is
similar, by symmetry.)
If $c<a$ (and $b>0$, included in \ref{hg-+-}), there is no problem with the
definition 
\eqref{hg}, letting $X$ have support \set{0,\dots,|a|}.
Also the case $c=a$ is interpreted in this way; in particular, note that
$\HG(-n,1;-n)$ is the uniform distribution on \set{0,\dots,|a|}
\cite[\S 6.10.1]{Johnson}.

In the case $a,c\in\bbZleo$ and $a<c$,  \eqref{hg} does not make sense,
since it would yield infinite values.
We can extend the definition to this case by rewriting \eqref{hg} as
\begin{equation}
  \label{hg0}
\P(X=k)=C'\frac{\rise{a}k\rise{b}k}{\gG(c+k)\,k!}, \qquad k\ge0,
\end{equation}
\cf{} \eqref{rise}, where now 
$C'=\gG(c)C=\gG(c)/\FF(a,b;c;1)$, which exists by
continuity also for $c\in\bbZleo$. (The function $\FF(a,b;c;z)/\gG(c)$ is
denoted $\mathbf{F}(a,b;c;z)$ in \cite{NIST}.)
With this interpretation, we can define $\HG(a,b;c)$ also in this case,
for suitable $b$.
(See \refE{EHGclassical} for an example.)
Note that for this case, $\P(X=k)=0$ for $0\le k\le |c|$. 
It is easily verified that if $X\sim\HG(a,b;c)$ with $c=-n\in\bbZleo$,
then
$X=X'+n+1$ with $X'\sim \HG(a+n+1,b+n+1;n+2)$. (This makes it possible to
reduce to the case $c\notin\bbZleo$.)
\end{remark}

\begin{example}\label{EHGclassical}
  The classical hypergeometric distribution describes the number of red
  balls in a sample of $m$ balls drawn (without replacement) from a 
  population of $N$ balls, of which $n$ are red;
see \eg{} \cite[Section II.6]{FellerI}. 
(Here $n,m,N$ are integers with
  $0\le n\le N$ and $0\le m\le N$.)
The probability function is
  \begin{equation}\label{HGc}
	\begin{split}
\P(X=k)&=\frac{\binom{n}{k}\binom{N-n}{m-k}}{\binom{N}{m}}
\\
&=\frac{n!\,(N-n)!\,(N-m)!\,m!}{k!\,(n-k)!\,(m-k)!\,(N-m-n+k)!\,N!}
\\
&=\frac{(N-m)!\,(N-n)!}{N!\,(N-m-n)!}
\cdot\frac{\rise{-m}k\rise{-n}k}{\rise{N-m-n+1}k}.
	\end{split}
  \end{equation}
(In the case $m+n>N$, the final line in \eqref{HGc} is not valid, but it can
be interpreted as in \refR{RHG0}.)
This is thus the distribution $\HG(-m,-n;N-m-n+1)$.
\end{example}

\begin{remark}
  In the case $a=-n\in\bbZleo$, when $X\sim\HG(a,b;c)$ satisfies $0\le X\le n$,
it is easy to see that $n-X\sim \HG(a,a-c+1;a-b+1)=\HG(-n,1-n-c;1-n-b)$.
(Natural examples can be seen from \refE{EHGclassical}.)
\end{remark}

\begin{remark}
  Most hypergeometric distributions in this paper are of the special form
  $\HG(a,1;c)$, \ie, with $b=1$.
In this case, \eqref{hg} and \eqref{hgC} simplify and yield
\begin{equation}\label{waring}
  \P(X=k)=\frac{c-a-1}{c-1}\cdot\frac{\rise{a}k}{\rise{c}k}.
\end{equation}
Such distributions are also called \emph{Waring distributions} 
\cite[\S  6.10.4]{Johnson}. 
\end{remark}

As said above a hypergeometric variable $X\sim\HG(a,b;c)$ with $a\in\bbZleo$ or
$b\in\bbZleo$ is bounded, and thus has moments of all orders. 
If $a,b\notin\bbZleo$, the distribution has a power-law tail, and thus only
a finite number of moments. We give a precise 
asymptotic  formula for $\P(X=k)$ and then formulas for (factorial) moments.

\begin{theorem}\label{THGtail}
  Suppose that $X\sim\HG(a,b;c)$, and that $a,b\notin\bbZleo$. Then, as
  \ktoo,
  \begin{equation}
\P(X=k)\sim \frac{\gG(c-a)\gG(c-b)}{\gG(a)\gG(b)\gG(c-a-b)} k^{a+b-c-1}.	
  \end{equation}
\end{theorem}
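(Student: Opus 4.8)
\textbf{Proof plan for Theorem~\ref{THGtail}.} The plan is to extract the asymptotics directly from the explicit probability function \eqref{hg} together with the formula \eqref{hgC} for the normalizing constant, using the classical asymptotics of ratios of Gamma functions. First I would recall that by \eqref{hg} and \eqref{hgC},
\begin{equation*}
  \P(X=k)=\frac{\gG(c-a)\gG(c-b)}{\gG(c)\gG(c-a-b)}\cdot
  \frac{\rise{a}k\rise{b}k}{\rise{c}k\,k!}.
\end{equation*}
Since $a,b\notin\bbZleo$ (and $c\notin\bbZleo$ is forced by $\FF(a,b;c;1)<\infty$, as the distribution exists), every rising factorial here can be written via \eqref{rise} as a ratio of Gamma functions: $\rise{a}k=\gG(a+k)/\gG(a)$, and similarly for $b$, $c$, and $k!=\gG(k+1)/\gG(1)$. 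Thus
\begin{equation*}
  \frac{\rise{a}k\rise{b}k}{\rise{c}k\,k!}
  =\frac{\gG(c)}{\gG(a)\gG(b)}\cdot
  \frac{\gG(a+k)\gG(b+k)}{\gG(c+k)\gG(k+1)}.
\end{equation*}

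The main step is then to apply the standard asymptotic $\gG(x+\alpha)/\gG(x+\beta)\sim x^{\alpha-\beta}$ as $x\to\infty$ (a consequence of Stirling's formula). Writing $\gG(a+k)\gG(b+k)/\bigl(\gG(c+k)\gG(1+k)\bigr)$ and pairing the factors appropriately, one gets
\begin{equation*}
  \frac{\gG(a+k)}{\gG(c+k)}\sim k^{a-c},\qquad
  \frac{\gG(b+k)}{\gG(1+k)}\sim k^{b-1}
\end{equation*}
as \ktoo, so the product is $\sim k^{a+b-c-1}$. Combining this with the constant $\gG(c)/\bigl(\gG(a)\gG(b)\bigr)$ from the rising-factorial rewrite and the prefactor $\gG(c-a)\gG(c-b)/\bigl(\gG(c)\gG(c-a-b)\bigr)$ from \eqref{hgC}, the two factors $\gG(c)$ cancel and we obtain
\begin{equation*}
  \P(X=k)\sim\frac{\gG(c-a)\gG(c-b)}{\gG(a)\gG(b)\gG(c-a-b)}\,k^{a+b-c-1},
\end{equation*}
which is the claim.

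I do not expect a serious obstacle here: the argument is essentially bookkeeping with Gamma-function ratios. The only points requiring a little care are (i) checking that all Gamma values appearing in the denominators are finite and nonzero — this is exactly guaranteed by the hypotheses $a,b\notin\bbZleo$ together with the existence of $\HG(a,b;c)$, which (via $\FF(a,b;c;1)<\infty$ and \eqref{gauss}) forces $c-a-b>0$ and $c\notin\bbZleo$, so $\gG(a),\gG(b),\gG(c-a-b),\gG(c-a),\gG(c-b)$ are all finite and nonzero; and (ii) justifying the splitting of the four-Gamma product into the two stated ratios, which is harmless since $\gG(x+\alpha)/\gG(x+\beta)\sim x^{\alpha-\beta}$ holds for any fixed real $\alpha,\beta$ and the product of two such asymptotics is the asymptotic of the product. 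Alternatively, one can avoid splitting and just invoke the single asymptotic $\gG(a+k)\gG(b+k)/\bigl(\gG(c+k)\gG(1+k)\bigr)\sim k^{a+b-c-1}$ by taking logarithms and applying Stirling to each of the four terms; this is the version I would actually write out.
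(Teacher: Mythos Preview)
Your proposal is correct and is essentially identical to the paper's own proof: the paper likewise rewrites \eqref{hg}--\eqref{hgC} via \eqref{rise} as $\P(X=k)=\dfrac{\gG(c-a)\gG(c-b)}{\gG(a)\gG(b)\gG(c-a-b)}\cdot\dfrac{\gG(k+a)\gG(k+b)}{\gG(k+c)\gG(k+1)}$ and then invokes $\gG(x+s)/\gG(x)\sim x^s$ as $x\to\infty$. Your added remarks on checking finiteness of the Gamma factors and on the splitting are fine but more than the paper bothers with.
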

\begin{proof}
  By \eqref{hg} (or \eqref{hg0}), \eqref{hgC} and \eqref{rise},
	  \begin{equation}
\P(X=k)=
\frac{\gG(c-a)\gG(c-b)}{\gG(a)\gG(b)\gG(c-a-b)}\cdot
\frac{\gG(k+a)\gG(k+b)}{\gG(k+c)\gG(k+1)}.
	  \end{equation}
The result follows since $\gG(x+s)/\gG(x)\sim x^s$ as $x\to\infty$ for
every fixed $s$.
\end{proof}

\begin{theorem}\label{THGmom}
Suppose $X\sim\HG(a,b;c)$ and $m\in\bbZgeo$.
  \begin{romenumerate}
  \item \label{THGmomNo}
If $a,b\notin\bbZleo$ and $m\ge c-a-b$, then the moment $\E X^m$ and the
factorial moment $\E\fall Xm$ are infinite.
\item \label{THGmomYes}
If $a\in\bbZleo$,  $b\in\bbZleo$ or $m<c-a-b$,
then
  \begin{equation}\label{EHGm}
	\E \fall{X}m= \frac{\rise{a}m\rise{b}m}{\rise{c-a-b-m}m}.
  \end{equation}
In particular, for $a\in\bbZleo$,  $b\in\bbZleo$ or $c>a+b+1$,
\begin{equation}\label{EHG}
  \E X =\frac{ab}{c-a-b-1}
\end{equation}
and, for $a\in\bbZleo$,  $b\in\bbZleo$ or $c>a+b+2$,
\begin{equation}\label{VHG}
  \Var X =\frac{ab(c-a-1)(c-b-1)}{(c-a-b-1)^2(c-a-b-2)}.
\end{equation}
\end{romenumerate}
\end{theorem}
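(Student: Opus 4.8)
The plan is to obtain the factorial moments by a direct manipulation of the explicit mass function \eqref{hg}, using the Pochhammer shift identity $\rise{x}{m+j}=\rise{x}{m}\rise{x+m}{j}$ and Gauss's evaluation \eqref{gauss}, and to read off part~\ref{THGmomNo} from the tail estimate of \refT{THGtail}. The ordinary moments $\E X^m$ then need no separate argument, since they are finite nonnegative combinations of the factorial moments $\E\fall{X}{j}$, $j\le m$ (or, for the divergence direction, are bounded below by $\E\fall{X}{m}$).

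For part~\ref{THGmomNo}, assume $a,b\notin\bbZleo$. By \refT{THGtail} we have $\P(X=k)\sim c_0\,k^{a+b-c-1}$ as \ktoo{} with $c_0>0$. Since $\fall{k}{m}=0$ for $0\le k\le m-1$ and $\fall{k}{m}\sim k^m$, the series $\E\fall{X}{m}=\sum_{k\ge m}\fall{k}{m}\P(X=k)$ has nonnegative terms asymptotic to $c_0\,k^{m+a+b-c-1}$, hence diverges precisely when $m+a+b-c-1\ge-1$, i.e.\ $m\ge c-a-b$. As $0\le\fall{k}{m}\le k^m$ for $k\ge m$, this also gives $\E X^m\ge\E\fall{X}{m}=\infty$.

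For part~\ref{THGmomYes} all terms in sight are nonnegative, so Tonelli justifies every rearrangement. Using $\fall{k}{m}/k!=1/(k-m)!$ for $k\ge m$, substituting $k=m+j$ in \eqref{hg}, and applying the shift identity, I would get
\begin{equation}
  \E\fall{X}{m}
= C\sum_{j\ge0}\frac{\rise{a}{m+j}\rise{b}{m+j}}{\rise{c}{m+j}\,j!}
= C\,\frac{\rise{a}{m}\rise{b}{m}}{\rise{c}{m}}\,\FF(a+m,b+m;c+m;1).
\end{equation}
The hypothesis of part~\ref{THGmomYes} is exactly what makes this last series either terminating ($a\in\bbZleo$ or $b\in\bbZleo$) or convergent with $c-a-b-m>0$; in both cases \eqref{gauss} evaluates it as $\gG(c+m)\gG(c-a-b-m)/\bigl(\gG(c-a)\gG(c-b)\bigr)$. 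Plugging in $C$ from \eqref{hgC} and $\rise{c}{m}=\gG(c+m)/\gG(c)$, all gamma factors cancel except $\rise{a}{m}\rise{b}{m}\,\gG(c-a-b-m)/\gG(c-a-b)=\rise{a}{m}\rise{b}{m}/\rise{c-a-b-m}{m}$, which is \eqref{EHGm}. Setting $m=1$ gives \eqref{EHG}; the identity $\Var X=\E\fall{X}{2}+\E X-(\E X)^2$ combined with \eqref{EHGm} for $m=1,2$ yields \eqref{VHG} after a short simplification.

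There is no genuine obstacle here — the argument is bookkeeping with Pochhammer symbols — but three minor points need care: verifying that \eqref{gauss} applies in exactly the claimed parameter range (this is the content of the case split in part~\ref{THGmomYes}); disposing of the degenerate sub-cases where $a$ or $b$ equals a non-positive integer $-n$ with $n<m$, in which $\rise{a}{m}$ or $\rise{b}{m}$ already vanishes and \eqref{EHGm} reads $0=0$; and the case $c\in\bbZleo$, where the computation goes through verbatim with the convention of \refR{RHG0}, or one may instead reduce to $c\notin\bbZleo$ via the identity $X=X'+|c|+1$ with $X'\sim\HG(a+|c|+1,b+|c|+1;|c|+2)$ recorded there.
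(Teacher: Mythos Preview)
Your proposal is correct and is essentially the same approach as the paper's proof. The paper phrases the computation for part~\ref{THGmomYes} via differentiating the \pgf{} \eqref{hgpgf} $m$ times and evaluating at $z=1$, which amounts to exactly the index shift $k=m+j$ and Pochhammer identity you perform directly on the mass function; both routes arrive at $C\,\dfrac{\rise{a}{m}\rise{b}{m}}{\rise{c}{m}}\,\FF(a+m,b+m;c+m;1)$ and then apply \eqref{gauss} and \eqref{hgC}. Part~\ref{THGmomNo} is handled identically in both, by appeal to \refT{THGtail}.
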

\begin{proof}
\pfitemref{THGmomNo}
Follows from \refT{THGtail}.

\pfitemref{THGmomYes}
  It follows from \eqref{hyper}, using \eqref{gauss} and \eqref{rise}, that 
  \begin{equation*}
	\begin{split}
	E\fall{X}m
&=
\frac{\frac{\ddx^m}{\ddx z^m}\FF(a,b;c;z)|_{z=1}}{\FF(a,b;c;1)}
= \frac{\rise{a}m\rise{b}m}{\rise{c}m}\cdot\frac{\FF(a+m,b+m;c+m;1)}{\FF(a,b;c;1)}
\\&
= \frac{\rise{a}m\rise{b}m}{\rise{c-a-b-m}m}.	  
	\end{split}
  \end{equation*}
This proves \eqref{EHGm}. Taking $m=1$ we find \eqref{EHG}, and the cases
$m=1$ and $m=2$ yield \eqref{VHG} after a small calculation, using $\Var X
= \E \fall X2+\E X-(\E X)^2$.
\end{proof}

We see from Theorems \ref{TAlin} and \ref{TApxt} that a hypergeometric
distribution can arise as a mixture $\NBin(a,e^{-\chi \tau})$ 
of negative binomial distributions,
or a mixture $\Bin(m,1-e^{-|\chi| \tau})$ of binomial distributions. 
In this case, $\tau\sim\Exp(\ga)$, so the random
parameter $e^{-|\chi|\tau}$ in the negative binomial or binomial
distribution has the beta
distribution $B(\ga/|\chi|,1)$,
and $1-e^{-|\chi|\tau}$  has the beta
distribution $B(1,\ga/|\chi|)$.
This extends to mixing using arbitrary beta distributions.

\begin{theorem}\label{Tmix}
  \begin{thmenumerate}
  \item \label{TmixNBin}
Let $\gax,\gbx,r>0$ be real numbers. 
Let $X$ have a distribution that is a mixture of 
the negative binomial distribution $\NBin(r,p)$ with 
$p\sim B(\gax,\gbx)$. 
Then
$X\sim\HG(r,\gbx;r+\gax+\gbx)$. 
  \item \label{TmixBin}
Let $\gax,\gbx,m>0$, with $m$ an integer. 
Let $X$ have a distribution that is a mixture of 
the binomial distribution $\Bin(m,p)$ with 
$p\sim B(\gax,\gbx)$. 
Then
$X\sim\HG(-m,\gax;1-\gbx-m)$. 
  \end{thmenumerate}
\end{theorem}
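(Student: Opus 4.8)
The plan is, in each case, to write the mixed law as an integral of the relevant (negative) binomial mass function against the $B(\gax,\gbx)$ density, to evaluate that integral as a beta integral, and then to recognise the outcome as a hypergeometric mass function via \eqref{hg}--\eqref{hgC}, or more robustly via the ratio characterisation \refR{RHG}. For part \ref{TmixNBin}, I would start from the fact (see \refT{TApxt}) that $\NBin(r,p)$ has mass function $\rise{r}k\,p^r(1-p)^k/k!$ for $k\ge0$, so that
\[
\P(X=k)=\frac{\rise{r}k}{k!\,B(\gax,\gbx)}\intoi p^{r+\gax-1}(1-p)^{k+\gbx-1}\dd p
=\frac{\rise{r}k}{k!}\cdot\frac{B(r+\gax,\,k+\gbx)}{B(\gax,\gbx)}.
\]
Writing $B(r+\gax,k+\gbx)=\gG(r+\gax)\gG(k+\gbx)/\gG(r+\gax+\gbx+k)$ and extracting the $k$-independent factors through $\gG(k+s)=\gG(s)\rise{s}k$ (with $s=\gbx$ and $s=r+\gax+\gbx$) turns this into $C\,\rise{r}k\rise{\gbx}k/\bigpar{\rise{r+\gax+\gbx}k\,k!}$ for an explicit constant $C$; comparison with \eqref{hg} identifies the distribution as $\HG(r,\gbx;r+\gax+\gbx)$, and checking that $C$ equals the prescribed value \eqref{hgC} confirms consistency. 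Here $a=r>0$, $b=\gbx>0$ and $c-a-b=\gax>0$, so this is the genuine case \ref{hg+++} and no degenerate interpretation is required.

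For part \ref{TmixBin}, the same step with $\P(Y=k)=\binom{m}k\,p^k(1-p)^{m-k}$ for $0\le k\le m$ gives $\P(X=k)=\binom{m}k\,B(k+\gax,m-k+\gbx)/B(\gax,\gbx)$. Rather than simplify all the Gamma factors, I would compute the ratio of consecutive probabilities,
\[
\frac{\P(X=k+1)}{\P(X=k)}=\frac{m-k}{k+1}\cdot\frac{k+\gax}{m-k-1+\gbx}
=\frac{(k-m)(k+\gax)}{(k+1-\gbx-m)(k+1)},
\]
which by \refR{RHG}, together with the support $\set{0,\dots,m}$ (forced by the factor $\binom{m}k$), identifies the law of $X$ as $\HG(-m,\gax;1-\gbx-m)$.

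Both computations are routine beta-integral manipulations, and I do not anticipate a genuine obstacle. The two points that deserve a little care are (a) verifying that the normalising constant produced in part \ref{TmixNBin} is exactly the one prescribed by \eqref{hgC}, and (b) the fact that in part \ref{TmixBin} the third parameter $c=1-\gbx-m$ may be a non-positive integer (for instance when $\gbx=1$), so that the mass function should then be read through \eqref{hg0} and \refR{RHG0} rather than \eqref{hg}; this is precisely the reason I would carry out the identification in part \ref{TmixBin} through the ratio in \refR{RHG}, which remains valid in that case.
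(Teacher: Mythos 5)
Your proposal is correct and follows essentially the same route as the paper: in both parts the paper also evaluates the beta integral to get $\P(X=k)$ proportional to $\rise{r}k\,B(r+\gax,k+\gbx)$ (resp.\ $\fall mk\,B(k+\gax,m-k+\gbx)$) and then rewrites the Gamma ratios as rising factorials, leaving the normalising constant to be fixed by \eqref{hgC}. Your only deviation is identifying part \ref{TmixBin} via the consecutive-probability ratio of \refR{RHG} rather than by direct factorial manipulation; this is a cosmetic variant, and your accompanying remark about $c=1-\gbx-m$ possibly lying in $\bbZleo$ is a sensible precaution (the paper relegates that issue to \refR{RHG0}).
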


\begin{proof}
The proofs of both parts are similar: we use the definitions of the negative
binomial, binomial and beta distributions, evaluate a beta integral and make
some manipulations using \eqref{rise} and \eqref{fall}.
It is not difficult to keep track of the constant factors during the
calculations (and, indeed, this is a useful check, which we leave to the
reader), but it is simpler to ignore them, since the final factor is
determined by \eqref{hgC} and thus does not need to be computed;
we thus just write $C_1,\dots$ for
various constants (depending on the parameters but not on $k$).
\CCreset
\pfitemref{TmixNBin}
  \begin{equation}
	\begin{split}
\P(X=k)&=
\intoi \frac{\rise{r}k}{k!}p^r(1-p)^k\cdot
\frac{p^{\gax-1}(1-p)^{\gbx-1}}{B(\gax,\gbx)}\dd p
\\&
=\frac{\rise{r}k}{k!}\frac{B(r+\gax,k+\gbx)}{B(\gax,\gbx)}
=\CC\frac{\rise{r}k\gG(\gbx+k)}
 {k!\,\gG(r+\gax+\gbx+k)}
\\&
=
\CC
\frac{\rise{r}k\rise{\gbx}k}{\rise{r+\gax+\gbx}kk!}.
	\end{split}
  \end{equation}

\pfitemref{TmixBin}
  \begin{equation}
	\begin{split}
\P(X=k)&=
\intoi \frac{\fall mk}{k!}p^k(1-p)^{m-k}\cdot
\frac{p^{\gax-1}(1-p)^{\gbx-1}}{B(\gax,\gbx)}\dd p
\\&
=\frac{\fall mk}{k!}\frac{B(k+\gax,m-k+\gbx)}{B(\gax,\gbx)}
=\CC\frac{\fall mk\gG(k+\gax)\gG(\gbx+m-k)}
 {k!}
\\&
=\CC\frac{\fall mk\gG(k+\gax)}{\fall{\gbx+m-1}{k}k!}
=\CC\frac{\rise{-m}k\rise{\gax}k}{\rise{1-\gbx-m}{k}k!}
. \end{split}
\raisetag{\baselineskip}
  \end{equation}
\end{proof}

\begin{remark}\label{Rbetabin}
In this context the resulting hypergeometric distributions are known as 
\emph{beta-negative binomial distributions}
and \emph{beta-binomial distributions}.
(Note that we can obtain any distribution of type \ref{hg+++} or \ref{hg-+-}
above.) 
There are also several other names used for various cases of the general
hypergeometric distribution,
see \cite{Johnson}; in particular, case \ref{hg-+-} is sometimes called
\emph{negative hypergeometric}.
\end{remark}

\begin{example}\label{EYule2}
  \citet{Yule} considered a simple model of evolution, where each existing
  species creates new species in the same genus with a constant rate $\gl_s$,
and also (independently) new species in new genera with another rate
$\gl_g$.
What is the limiting distribution of the number of species in a random
genus?

Note that
the evolution of all species, ignoring their genus, is a \CMJ{} branching
process, where each individual gets children according to a Poisson process
with intensity $\gl_s+\gl_g$.  
Hence, assuming that we start with a single species,
the total number of species forms a Yule process with rate
$\gll$, see \refE{EYule1}.
Similarly, the number of species in the same genus as the root (the original
species) forms a Yule process with rate $\gl_s$.

One way to treat this problem is to consider each genus as an
individual in a \CMJ{} process, where each individual has an internal
Yule process $\cY_t$ with rate $\gl_s$ (the number of species in the genus),
and new births occur with rate $\gl_g\cY_t$. Since $\E \cY_t=e^{\gl_s t}$, 
see \refT{TAext}, the offspring process has intensity 
$\mu(\ddx t)=\gl_ge^{\gl_s t}\dd t$, 
from which it follows that \eqref{malthus} holds with the
Malthusian parameter $\ga=\gll$. 
The assumptions \refBP{} hold, and \refT{TBP} shows that
 the number of species in a random genus converges in
distribution to $\cY_\taux$, the number of species in the root at time
$\taux\sim\Exp(\gll)$.
The Yule process $\cY_t$ starts as 1, but we may as in \refE{EYuleA} (which is
the case $\gl_s=1$) apply \refT{TAlin} 
to $\cY_t-1$, with $\chi=\rho=\gl_s$, and it follows that the 
asymptotic distribution of the number of species in a given genus is 
$1+\HG(1,1;3+\gl_g/\gl_s)$.
(One can, as said above, 
also use \refT{Tmix}\ref{TmixNBin}, since \refT{TApxt} implies that
$\cY_t-1$ has the geometric distribution
$\NBin(1,e^{-\gl_s t})=\Geo(e^{-\gl_st})$, and $e^{-\gl_s\taux}\sim
B((\gll)/\gl_s,1)$.) 

This result was found by \citet{Yule} (by a different method), and
a distribution of the form $1+\HG(1,1;c)$ is therefore called a 
\emph{Yule distribution}, 
see further \cite{Simon} and \cite[\S 6.10.3]{Johnson}.
(Here $c>2$. Often one writes $c=2+\rho$, with $\rho>0$; in the present
example, thus $\rho=(\gll)/\gl_s$.)
Note that the case $c=3$ appears in \eqref{rrtM} and (shifted to
$\HG(1,1;3)$) in \eqref{mstkeys}.

An alternative method to treat this example is to consider the Yule process
(with rate $\gll$) of all species.
Call the first species in each genus the \emph{progenitor} of the
genus, and give each progenitor a mark; then
each species (except the original one) is marked with probability
$p=\gl_g/(\gll)$, and these marks are independent of each other and of
everything else. 
Hence, we obtain the same asymptotic distribution of fringe trees (except
for the mark at the root)
if we
sample a random progenitor as if we sample a random species, \cf{}
\refS{Ssample}. 
Using \refT{TBP} it follows that
the number of species in a random genus converges in distribution 
to the number of descendants in the same genus of a fixed individual stopped
at a random age $\tau\sim\Exp(\gll)$. 
This yields the same result as above.

Note also that
if we erase the edges between different genera and only keep the edges
between species in the same genus in Yule's model, we obtain a growing
forest. If we let $F_n$ be this forest when it has reached $n$ nodes,  
we obtain a growing forest process which is the same as UGROW defined by 
\citet{DevroyeMcDR}. 
Results for the size of the subtree rooted at a given node in UGROW
are given by 
\citet{DevroyeMcDR} and \citet{Pakes}.
\end{example}

\begin{example}\label{EBose}
Let $n$ indistinguishable balls be distributed in $m\ge2$ labelled boxes, so
that 
each of the $\binom{n+m-1}{m-1}$ possibilities has the same probability. 
(This is called \emph{Bose--Einstein statistics},
see \eg{}
\cite[page 39]{FellerI} or \cite[Example 12.2]{SJ264}.)
Equivalently, we consider a uniformly random composition of $n$ into $m$
(possibly empty) parts.

The number $X_1$ of balls in a given box, say box 1, 
has the distribution
\begin{equation}
  \P(X_1=k)=\frac{\binom{n-k+m-2}{m-2}}{\binom{n+m-1}{m-1}}
=\frac{(m-1)\fall nk}{(n+m-1)\fall{n+m-2}k},
\qquad k=0,\dots,n,
\end{equation}
which shows that $X_1\sim\HG(-n,1;2-n-m)$.
See
\cite[Chapter 10.4.2]{Johnson}. 
\end{example}

\begin{example}
\Polya's urn contains balls of different colours. We draw a ball uniformly
at random and replace it together with $c$ new balls of the same colour.
This is repeated $n$ times. 
Let $W$ be the number of white balls drawn, assuming that the urn initially
contains $w$ white and $b$ black (or non-white) balls.

We assume $c\neq0$, to avoid the trivial case $c=0$ when $W$ has a binomial
distribution, but $c<0$ is allowed, meaning that balls are removed. 
In particular, $c=-1$ gives drawing without replacement, 
when $W$ has the classical hypergeometric distribution
in \refE{EHGclassical}.
(It is natural to let $c,b,w$ be integers, but the model has a natural
interpretation also for real values of these parameters, see \eg{}
\cite[Remark 4.2]{SJ154}.) 
In the case $c<0$, we
assume that $b,w$ and $n$ are such that we never can be required to remove a
ball of a colour that is no longer present, or draw a ball when the urn is
empty. 

It is easy to see that
$\P(X=k)$ is proportional to
\begin{multline}
  \binom nk \prod_{i=1}^k (w+(i-1)c) \prod_{j=1}^{n-k} (b+(j-1)c)
=
  \binom nk c^n  \rise{w/c}k \rise{b/c}{n-k}
\\
=c^n\rise{b/c}n \frac{\fall{n}{k}\rise{w/c}k}{k!\,\fall{b/c+n-1}{k}}  
\end{multline}
and thus
$W\sim\HG(-n,w/c;1-n-b/c)$, see \cite[\S 6.2.4]{Johnson}.

For a connection with \refE{EBose}, suppose instead that the urn starts with
one ball each of $m$ colours (including white), and that $c=1$.
The number $W$ of white balls drawn is the same as if we start with 1 white and
$m-1$ black balls, and thus $W\sim\HG(-n,1;2-n-m)$. On the other hand, it is
easy to see by induction, that for each $n$, the composition of the urn is
uniform over all possible colour combinations. 
Thus $W$ has the same distribution as $X_1$ in \refE{EBose} (with colours
corresponding to boxes).
\end{example}

\section{Order statistics of exponential variables}\label{AC}

Let, for $1\le k\le m$, $V_{m,k}$ be the $k$:th smallest of $m$ \iid{} 
$\Exp(1)$ random variables $E_1,\dots,E_m$; further, let $V_{m,0}=0$.

\begin{theorem}
  \label{TC}
Let $1\le k\le m$.
Then the following holds.
\begin{romenumerate}
\item \label{tca}
If $Y_i\sim\Exp(i)$ are independent, then
\begin{equation}
V_{m,k}\eqd \sum_{i=m-k+1}^m Y_i.  
\end{equation}

\item \label{tcb}
$V_{m,k}$ has the density function
  \begin{equation}\label{etcb}
	\frac{m!}{(m-k)!\,(k-1)!}e^{-(m-k+1)x}\xpar{1-e^{-x}}^{k-1},
\qquad x>0.
  \end{equation}

\item \label{tcc}
$e^{-V_{m,k}}$ has the beta distribution $B(m-k+1,k)$.

\item \label{tcd}
For any $\gth>-(m-k+1)$,
\begin{equation}
  \E e^{-\gth V_{m,k}} = \prod_{i=m-k+1}^m\frac{i}{i+\gth} 
= \frac{\fall{m}k}{\fall{m+\gth}k}.
\end{equation}
\end{romenumerate}
\end{theorem}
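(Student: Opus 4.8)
The plan is to prove the four parts in the order (i) $\to$ (ii) $\to$ (iii) $\to$ (iv), with (i) carrying most of the weight and the rest following by routine manipulations; I do not expect a genuine obstacle, as this is a classical fact, but I would be careful at the gap-independence step in (i).

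First I would establish (i) via the memoryless property of the exponential distribution. Write $V_{m,0}=0<V_{m,1}<\dots<V_{m,m}$ and consider the gaps $G_j:=V_{m,j}-V_{m,j-1}$, $j=1,\dots,m$. The minimum $V_{m,1}$ of $m$ independent $\Exp(1)$ variables is $\Exp(m)$; conditioned on which variable attains the minimum and on the value $V_{m,1}$, the memoryless property says the remaining $m-1$ variables, shifted down by $V_{m,1}$, are again independent $\Exp(1)$ and independent of $V_{m,1}$. Iterating, $G_1,\dots,G_m$ are independent with $G_j\sim\Exp(m-j+1)$. Hence $V_{m,k}=\sum_{j=1}^k G_j$, and reindexing by $i=m-j+1$ (so $i$ runs over $m,m-1,\dots,m-k+1$ as $j$ runs over $1,\dots,k$) gives $V_{m,k}\eqd\sum_{i=m-k+1}^m Y_i$ with $Y_i\sim\Exp(i)$ independent. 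The point requiring a little care is the rigorous justification of gap independence; the safe fallback is to compute directly the joint density of $(V_{m,1},\dots,V_{m,m})$, namely $m!\,e^{-\sum x_i}$ on $\{0<x_1<\dots<x_m\}$, change variables to the gaps, and read off the product form $\prod_{j=1}^m (m-j+1)e^{-(m-j+1)g_j}$.

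For (ii) I would invoke the classical formula for the density of the $k$:th order statistic of $m$ i.i.d.\ variables with c.d.f.\ $F$ and density $f$, namely $\frac{m!}{(m-k)!\,(k-1)!}F(x)^{k-1}(1-F(x))^{m-k}f(x)$; with $F(x)=1-e^{-x}$ and $f(x)=e^{-x}$ this is exactly \eqref{etcb}. (Alternatively, convolve the $k$ exponential densities from (i).) Part (iii) is then a change of variables: if $U:=e^{-V_{m,k}}$, then for $u\in(0,1)$ the density of $U$ is the density of $V_{m,k}$ at $-\log u$ divided by $u$, which from \eqref{etcb} equals $\frac{m!}{(m-k)!\,(k-1)!}u^{m-k}(1-u)^{k-1}$; since $\frac{m!}{(m-k)!\,(k-1)!}=\frac{\gG(m+1)}{\gG(m-k+1)\gG(k)}=1/B(m-k+1,k)$, this is the $B(m-k+1,k)$ density.

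Finally, (iv) follows from (i) by independence: $\E e^{-\gth V_{m,k}}=\prod_{i=m-k+1}^m\E e^{-\gth Y_i}=\prod_{i=m-k+1}^m\frac{i}{i+\gth}$, which is finite exactly when the smallest factor satisfies $m-k+1+\gth>0$, i.e.\ $\gth>-(m-k+1)$; then $\prod_{i=m-k+1}^m i=\fall{m}k$ and $\prod_{i=m-k+1}^m(i+\gth)=\fall{m+\gth}k$, giving the closed form. Equivalently one can use (iii): $\E e^{-\gth V_{m,k}}=\E U^{\gth}$ for $U\sim B(m-k+1,k)$, and the beta-moment identity $\E U^{\gth}=B(m-k+1+\gth,k)/B(m-k+1,k)$ yields the same expression. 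The only spot I would treat with extra care is, as noted, the gap-independence argument in (i); everything else is bookkeeping.
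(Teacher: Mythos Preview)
Your proposal is correct and follows essentially the same approach as the paper: part (i) via the memoryless property and independence of the gaps, and (iv) as an immediate consequence of (i) (or (iii)). The only cosmetic difference is that the paper proves (iii) first, via the observation that $e^{-E_i}\sim U(0,1)$ so that $e^{-V_{m,k}}$ is the $k$:th largest uniform order statistic, and then deduces (ii) from (iii); you go in the reverse order, proving (ii) from the standard order-statistic density formula and then (iii) by change of variables, which is equivalent.
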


\begin{proof}
  \pfitemref{tca}
Consider $m$ independent exponential clocks that strike at
$E_1,\dots,\allowbreak E_m$. 
As is well-known, by the lack of memory for exponential
distributions, the waiting times $W_i:=V_{m,i}-V_{m,i-1}$ between the
strikes are independent and exponentially distributed, with
$W_i\sim\Exp(m-i+1)$ since there are $m-i+1$ clocks left. 
Let $Y_i:=W_{m-i+1}\sim\Exp(i)$ and note that $V_{m,k}=\sum_{i=1}^k W_i$.

\pfitemref{tcc}
Let $U_i:=e^{-E_i}$. Then $U_1,\dots,U_m\sim U(0,1)$ are $m$ \iid{} uniform
random variables, and $e^{-V_{m,k}}$ is the $k$:th largest of them; it is
  well-known that this has the beta distribution $B(m-1+k,k)$.

\pfitemref{tcb}
By \ref{tcc},
$e^{-V_{m,k}}$ has the density 
\begin{equation}
  \frac{\gG(m+1)}{\gG(m-k+1)\gG(k)} x^{m-k}(1-x)^{k-1}
=   \frac{m!}{(m-k)!\,(k-1)!} x^{m-k}(1-x)^{k-1},
\end{equation}
for $0<x<1$, and \eqref{etcb} follows.

\pfitemref{tcd}
A simple consequence of \ref{tca}, or alternatively of \ref{tcc}.
\end{proof}

\newcommand\AAP{\emph{Adv. Appl. Probab.} }
\newcommand\JAP{\emph{J. Appl. Probab.} }
\newcommand\JAMS{\emph{J. \AMS} }
\newcommand\MAMS{\emph{Memoirs \AMS} }
\newcommand\PAMS{\emph{Proc. \AMS} }
\newcommand\TAMS{\emph{Trans. \AMS} }
\newcommand\AnnMS{\emph{Ann. Math. Statist.} }
\newcommand\AnnPr{\emph{Ann. Probab.} }
\newcommand\CPC{\emph{Combin. Probab. Comput.} }
\newcommand\JMAA{\emph{J. Math. Anal. Appl.} }
\newcommand\RSA{\emph{Random Struct. Alg.} }
\newcommand\ZW{\emph{Z. Wahrsch. Verw. Gebiete} }
\newcommand\DMTCS{\jour{Discr. Math. Theor. Comput. Sci.} }

\newcommand\AMS{Amer. Math. Soc.}
\newcommand\Springer{Springer-Verlag}
\newcommand\Wiley{Wiley}

\newcommand\vol{\textbf}
\newcommand\jour{\emph}
\newcommand\book{\emph}
\newcommand\inbook{\emph}
\def\no#1#2,{\unskip#2, no. #1,} 
\newcommand\toappear{\unskip, to appear}

\newcommand\arxiv[1]{\texttt{arXiv:#1}}
\newcommand\arXiv{\arxiv}

\def\nobibitem#1\par{}

\end{document}